\newcommand{\hk}{\hspace*{.26in}}
\newcommand{\trace}{\ensuremath{tr}}
\long\def\comment#1{}
\renewcommand\vec[1]{\ensuremath\boldsymbol{#1}}
\renewcommand{\trace}{\ensuremath{\operatorname{tr}}}
\newcommand{\Ocal}{\ensuremath{\mathcal{O}}}
\newcommand{\Gcal}{\ensuremath{\mathcal{G}}}
\newcommand{\Xcal}{\ensuremath{\mathcal{X}}}
\theoremstyle{plain}
\newtheorem{theorem}{Theorem}
\numberwithin{theorem}{section}
\newtheorem{proposition}{Proposition}
\numberwithin{proposition}{section}
\newtheorem{lemma}{Lemma}
\numberwithin{lemma}{section}
\newtheorem{definition}{Definition}
\numberwithin{definition}{section}
\numberwithin{condition}{section}
\numberwithin{problem}{section}
\newtheorem{corollary}{Corollary}
\numberwithin{corollary}{section}
\numberwithin{assumption}{section}
\newtheorem{example}{Example}
\numberwithin{example}{section}
\numberwithin{conjecture}{section}
\theoremstyle{definition}
\numberwithin{remark}{section}
\renewenvironment{abstract}
 {\small
  \begin{center}
  \bfseries \abstractname\vspace{-.5em}\vspace{0pt}
  \end{center}
  \list{}{%
    \setlength{\leftmargin}{15mm}% <---------- CHANGE HERE
    \setlength{\rightmargin}{\leftmargin}%
  }%
  \item\relax}
 {\endlist}
\begin{document}
\small \normalsize

\comment{\begin{frontmatter}

\author{\fnms{Nhat} \snm{Ho}\ead[label=e1]{minhnhat@umich.edu} and
\fnms{XuanLong} \snm{Nguyen} \thanksref{t1} \ead[label=e2]{xuanlong@umich.edu}}
\affiliation{University of Michigan}
\thankstext{t1}{This research is supported in part by grants
NSF CCF-1115769, NSF OCI-1047871, NSF CAREER DMS-1351362,
and NSF CNS-1409303. The authors would like to thank
Elisabeth Gassiat, Xuming He, Judith Rousseau, Naisying 
Wang, Shuheng Zhou and several others for valuable discussions related to 
this work.}

\address{Department of Statistics \\
University of Michigan \\
456 West Hall \\
Ann Arbor, Michigan 48109-1107\\
USA \\
\printead{e1,e2}}

\runauthor{N. Ho, X. Nguyen}
}

\begin{center}

\textbf{\Large  Identifiability and optimal rates of convergence\\
\vspace{3 mm}for parameters of multiple types in finite mixtures}

\vspace{3 mm}

\large{Nhat Ho and XuanLong Nguyen}

\vspace{3 mm}

\large{Technical report 536 \\
Department of Statistics \\
University of Michigan}

\vspace{3 mm}
\large{January 9, 2015}

\end{center}

\begin{abstract}
This paper studies identifiability and convergence behaviors for
parameters of multiple types in finite mixtures, and the effects 
of model fitting with extra mixing components. First, we present
 a general theory for strong
identifiability, which extends from the previous work of~\cite{Nguyen-13}
and~\cite{Chen-95}
to address a broad range of mixture models and to handle
matrix-variate parameters. These models are shown to share the same Wasserstein distance
based optimal rates of convergence for the space of mixing distributions  
--- $n^{-1/2}$ under $W_1$ for the exact-fitted and 
$n^{-1/4}$ under $W_2$ for the over-fitted setting,
where $n$ is the sample size. 
This theory, however, is not applicable to several important model classes, 
including location-scale multivariate Gaussian mixtures, 
shape-scale Gamma mixtures and location-scale-shape skew-normal mixtures.
The second part of this work is devoted to demonstrating that for
these "weakly identifiable" classes, algebraic structures of 
the density family play a fundamental role in determining 
convergence rates of the model parameters, which
display a very rich spectrum of behaviors. For instance, the optimal rate 
of parameter estimation in an over-fitted location-covariance 
Gaussian mixture is precisely determined by the 
order of a solvable system of polynomial equations --- 
these rates deteriorate rapidly as more extra components are added 
to the model. The established rates for a variety
of settings are illustrated by a simulation study.
\footnote{This research is supported in part by grants
NSF CCF-1115769, NSF OCI-1047871, NSF CAREER DMS-1351362,
and NSF CNS-1409303. The authors would like to acknowledge
Elisabeth Gassiat, Xuming He, Judith Rousseau, Naisying 
Wang, Shuheng Zhou and several others for valuable discussions related to 
this work.

AMS 2000 subject classification: Primary 62F15, 62G05; secondary 62G20.

Keywords and phrases: mixture models, strong identifiability, weak identifiability, Wasserstein distances, minimax bounds, maximum likelihood estimation, system of polynomial equations.}
\end{abstract}

%%% Introduction section                     
\section{Introduction}
\comment{Understanding mixture models has become a focal point of consideration recently. Not only do mixture models provide people with numerous ways to combine relatively simple models into richer classes of statistical models \citep{Lindsay-1995,Mclachlan-1988}, but also they are demonstrated to perform consistently under identifiable conditions of class of kernel density functions \citep{Yakowitzspragins-1968, Teicher-1960, Teicher-1961, Teicher-1963, Teicher-1967} as well as sufficiently well-posed conditions of estimated parameters. However, most of the consistent results for mixture models have so far concentrated on the convergence behavior of the data density under maximum-likelihood approach or on the convergence behavior of posterior distribution of data density under Bayesian's setting. Key recent references include \citep{Wasserman-2000, Ghosal-1999, Ghosal-2000, Ghosal-2001, Ghosal-2007, Shen-2001, Shen-1994, Vandegeer-1996, Walker-2007}.It induces a pressing need to understand better the convergence behavior of mixing measures as in many practical problems, especially clustering ones, assessing and differentiating the quality of mixing measures allow scientists to understand deeply the underlying structures of the models. Important work in this direction are by Chen, who used the L1-metric on the cumulative distribution functions on the real lines to study converegence rates of the mixing measures or by Nguyen, who used Wasserstein distance \citep{Villani-2003, Villani-2009} to establish the convergence rates of posterior distribution of the mixing measures. There are major advantages of using Wasserstein distance to measure the distance between two probability measures. Firstly, Wasserstein distance can compute the distance between two discrete probability measures effectively while other popular distances, such as hellinger distance or total variation distance, are most likely to yield unexpected results. Secondly, under well-posed conditions of parameter spaces, convergence in Wasserstein distance between estimated mixing measures and true mixing measure will lead to the convergence of estimated support points to the true support points as well as the convergence of estimated mixing coefficients to the true mixing coefficients. As we mentioned before, this result is particularly important in many practical problems. Last but not least, as being indicated in \citep{Nguyen-13}, Wasserstein distance allows people to establish the convergence rate of mixing measures when the parameter space is a multi-dimensional or even an abstract space.
However, both papers \citep{Chen-95} and \citep{Nguyen-13} only achieved rate of convergence of mixing measures for parameter spaces without the inclusion of covariance matrices . Therefore, to the best of author's knowledge, the convergence rate of mixing measures for classes of density functions under matrix-variate parameter spaces remained unknowned.}

Mixture models are popular modeling tools for making inference about 
heterogeneous data~\citep{Lindsay-1995,Mclachlan-1988}. Under the mixture 
modeling, data are viewed as samples from a collection of 
unobserved or latent subpopulations, each
posits its own distribution and associated parameters. 
Learning about subpopulation-specific parameters is essential to 
understanding of the underlying heterogeneity.
Theoretical issues related to parameter estimation in mixture models,
however, remain poorly understood --- as noted in a recent textbook~\citep{DasGupta-08} (pg. 571), 
``mixture models are riddled with difficulties such as 
nonidentifiability''. 
%This situation is in contrast with the 
%large body of theory on
%mixture model-based density estimation, where the issue
%of parameter identifiability and parameter estimation are typically bypassed
%(see, e.g. ~\citep{Genovese-Wasserman-00,Ghosal-2001}
%the references therein).

Research about parameter identifiability for mixture models goes back
to the early work of~\cite{Teicher-1961,Teicher-1963,Yakowitzspragins-1968} and
others, and continues to attract much interest~\citep{PeterHall-2003, PeterHall-2005, HallNeeman-2005, Allman-2009}. To address parameter estimation rates, a natural approach is to study
the behavior of mixing distributions that arise in the mixture model. 
This approach is well-developed in the context of nonparametric 
deconvolution~\citep{Carroll-Hall-88, Zhang-90, Fan-91}, but these results
are confined to only a specific type of model -- the location mixtures. 
Beyond location mixtures there have been far fewer results.
In particular, for finite mixture models, a notable contribution was made by Chen, who 
proposed a notion of strong identifiability and established the convergence
of the mixing distribution for a class of over-fitted finite mixtures~\citep{Chen-95}. Over-fitted
finite mixtures, as opposed to exact-fitted ones, are mixtures that
allow extra mixing components in their model specification, when
the actual number of mixing components is bounded by a known constant.
Chen's work, however, was restricted to models that have only a single scalar parameter.
This restriction was effectively removed by Nguyen, who showed that 
Wasserstein distances (cf.~\citep{Villani-2009}) provide a natural source of
metrics for deriving rates of convergence of mixing distributions~\citep{Nguyen-13}.
He established rates of convergence of mixing distributions for
a number of finite and infinite mixture models with multi-dimensional parameters.
Rousseau and Mengersen studied
over-fitted mixtures in a Bayesian estimation setting~\citep{Rousseau-Mengersen-11}. 
Although they did not focus on mixing distributions per se, they
showed that the mixing probabilities associated with extra mixing 
components vanish at a standard $n^{-1/2}$ rate, subject to 
a strong identifiability condition on the density class.
Finally, we mention a related literature in computer 
science, which focuses almost exclusively on the analysis
of computationally efficient procedures for clustering with
exact-fitted Gaussian mixtures
(e.g.,~\citep{Dasgupta-99, Belkin-Sinha-10, Kalai-etal-12}).

Due to requirements of strong identifiability, the existing theories
described above are applicable to only certain classes of mixture models, 
typically those that carry a single parameter type. Finite mixture models with multiple 
varying parameters 
(location, scale, shape, covariance matrix) are considerably more 
complex and many do not satisfy such strong identifiability assumptions.
They include location-scale mixtures of Gaussians, 
shape-scale mixtures of Gammas, location-scale-shape mixtures of of skew-normals
(also known as skew-Gaussians). A theory for such models remains open.

\paragraph{Setting}
The goal of this paper is to establish rates of convergence
for parameters of multiple types, including matrix-variate parameters, that
arise in a variety of finite mixture models.
Assume that each subpopulation is distributed by a density function (with respect
to Lebesgue measure on an Euclidean space $\mathcal{X}$) that belongs 
to a known density class  
$\left\{f(x|\theta,\Sigma),\theta \in \Theta \subset \mathbb{R}^{d_{1}},
\Sigma \in \Omega \subset S_{d_{2}}^{++}, x\in \mathcal{X} \right\}$.
Here, $d_{1} \geq 1,d_{2} \geq 0$, $S_{d_{2}}^{++}$ is the set of all $d_2\times d_2$
symmetric positive definite matrices. A finite mixture
density with $k$ mixing components can be defined in terms of $f$ 
and a discrete mixing measure $G = \sum_{i=1}^{k} p_i \delta_{(\theta_i,\Sigma_i)}$
with $k$ support points as follows
\[p_G(x) = \int f(x|\theta,\Sigma) dG(\theta,\Sigma) = \sum_{i=1}^{k}p_i f(x|\theta_i,
\Sigma_i).\]
Examples for $f$ studied in this paper include the location-covariance 
family (when $d_1 = d_2 \geq 1$) under Gaussian or some 
elliptical families of distributions, the location-covariance-shape 
family (when $d_1 > d_2$) under the generalized multivariate Gaussian,
skew-Gaussian or the exponentially modified Student's t-distribution, and the 
location-rate-shape family (when $d_1 = 3, d_2 = 0$) under Gamma or other distributions.
The combination of location parameter with
covariance matrix, shape and rate parameters in mixture modeling 
enables rich and more accurate description of 
heterogeneity, but the interaction among varying parameter types can be complex, 
resulting in varied identifiability and convergence behaviors. 
In addition, we shall treat the settings of exact-fitted mixtures
and over-fitted mixtures separately, as the later typically carries
more complex behavior than the former.
%The over-fitted setting in the case $d_{1} \geq 1, d_{2}=0$ was only
%partially addressed by~\cite{Rousseau-Mengersen-11} (who studied the behavior
%of mixing proportion parameters $p_i$'s, but not atoms $\theta_i$'s of the mixing 
%measures). Although \cite{Nguyen-13} 
%established the convergence of mixing measures in infinite mixture models,
%which include the overfitted setting as a special case, this theory is restricted to 
%only location mixtures.

\comment{It is conspicuous that in clustering problems, the inclusion of covariance matrix in the model is of paramount importance since we would like to know how the correlations among dimensions actually influence the distribution of data in each group.  Moreover, the constraint that $d_{1}$ and $d_{2}$ are not necessarily equal allows us to consider classes of density functions other than location-covariance(or elliptical distribution) ones. For instance, in the paper, we consider the location-covariance-shape family, which includes generalized multivariate Gaussian distribution and exponentially t-distribution, or location-covariance-shape-rate family, which includes modified multivariate normal-gamma distribution. The appearance of shape or rate in mixture models is rather popular in practice, especially clustering problems,  because apart from the true location and true covariance matrix, we also would like to know about the true shape or rate of density functions to determine the exact nature of each group. 
}

As shown by Nguyen, the convergence of mixture model parameters can be measured
in terms of a Wassertein distance on the space of mixing measures $G$~\citep{Nguyen-13}.
Let $G=\sum_{i=1}^{k}p_i \delta_{(\theta_i,\Sigma_i)}$ and
$G_0 = \sum_{i=1}^{k_0}p_i^0 \delta_{(\theta_i^0,\Sigma_i^0)}$ be two discrete probability
measures on $\Theta \times \Omega$, which is equipped with metric $\rho$.
Recall the Wasserstein distance of order $r$, for a given
$r \geq 1$:
\begin{eqnarray}
W_{r}(G,G_0)=\left(\mathop {\inf }\limits_{\vec{q}}{\mathop {\sum }\limits_{i,j}{q_{ij}\rho^{r}((\theta_{i},\Sigma_{i}),(\theta_{j}^0,\Sigma_{j}^0))}}\right)^{1/r} \nonumber,
\end{eqnarray}
where the infimum is taken over all joint probability distributions $\vec{q}$ on 
$[1,\ldots,k] \times [1,\ldots,k_0]$ such that, when expressing $\vec{q}$ as a $k\times k_0$ matrix, the
marginal constraints hold:
$\mathop {\sum }\limits_{j}{q_{ij}}=p_{i}$ and $\mathop {\sum }\limits_{i}{q_{ij}}=p_{j}'$. 
Suppose that a sequence of mixing measures $G_n \rightarrow G_0$ under $W_r$ metric
at a rate $\omega_n = o(1)$. If all $G_n$ have the same number of atoms $k=k_0$ as that of $G_0$, 
then the set of atoms of $G_n$ converge to the 
$k_0$ atoms of $G_0$ at the same rate $\omega_n$ under $\rho$ metric. 
If $G_n$ have varying $k_n \in [k_0,k]$ number of atoms, where $k$ is a fixed upper bound,
then a subsequence of $G_n$ can be constructed so that each atom of $G_0$ is a 
limit point of a certain subset of atoms of $G_n$ --- the convergence to each such limit
also happens at rate $\omega_n$.  Some atoms of $G_n$ may have limit points that 
are not among $G_0$'s atoms --- the mass associated with those atoms of $G_n$ 
must vanish at the generally faster rate $\omega_n^r$.

In order to establish the rates of convergence for the mixing measure $G$, our strategy is
to derive sharp bounds which relate the Wasserstein distance of mixing measures 
$G,G'$ and a distance between corresponding mixture densities $p_G,p_{G'}$,
such as the variational distance $V(p_G, p_{G'})$.
It is relatively simple to obtain upper bounds for the variational distance of mixing
densities ($V$ for short) in terms of Wasserstein distances $W_r(G,G')$
(shorthanded by $W_r$). 
Establishing (sharp) lower bounds for $V$ in terms of $W_r$ is the main 
challenge. Such a bound may not hold, due to a possible lack of
identifiability of the mixing measures: one may have $p_{G} = p_{G'}$,
so clearly $V = 0$ but $G\neq G'$, so that $W_r \neq 0$.

\paragraph{General theory of strong identifiability}
\comment{
The plan for this paper is as follows. In section 2, the paper focuses on establishing the tight upper bound of hellinger distance between two  $p_{G}$ and $p_{G'}$ based on $W_{1}(G,G')$ or $W_{2}^{2}(G,G')$ for classes of density functions $\left\{f(x|\theta,\Sigma),\theta \in \mathbb{R}^{d_{1}},\Sigma \in S_{d_{2}}^{++}\right\}$. It is an extension of the results from section 2 of \cite{Nguyen-13}, where the paper considered families of density functions $\left\{g(x|\theta),\theta \in \mathbb{R}^{d}\right\}$, $d \geq 1$. Examples \ref{example-Gaussian}, \ref{example-Student}, \ref{example-modstudent}, and \ref{example-gamma} attempt to give speficic upper bound of $h(p_{G},p_{G_{'}})$ in terms of $W_{1}(G,G')$ or $W_{2}(G,G')$ up to some constant powers under various classes of density functions with matrix-variate parameter spaces.}
The classical identifiability condition requires that $p_{G} = p_{G'}$ entails $G = G'$.
This amounts to the linear independence of elements $f$ in the density class~\citep{Teicher-1963}.
In order to establish quantitative lower bounds on a distance of mixture densities, 
we introduce several notions of strong identifiability,
extending from the definition of~\cite{Chen-95} to handle multiple parameter types, including
matrix-variate parameters. There are two kinds of strong identifiability.
One such notion involves taking the first-order derivatives of the function $f$ 
with respect to all parameters in the model, and insisting that these quantities be linearly
independent in sense to be precisely defined.
This criterion will be called ``strong identifiability in the first order'', or
simply first-order identifiability. When the
second-order derivatives are also involved, we obtain the second-order identifiability
criterion. It is worth noting that prior studies on parameter estimation
rates tend to center primarily the second-order identifiability condition
or something even stronger~\citep{Chen-95,Liu-Shao-04,Rousseau-Mengersen-11,Nguyen-13}.
We show that for exact-fitted mixtures, the first-order identifiability
condition (along with some additional regularity conditions) suffices
for obtaining that
\begin{equation}
\label{Eqn-b1}
V(p_{G},p_{G_0}) \gtrsim W_1(G,G_0),
\end{equation}
when $W_1(G,G_0)$ is sufficiently small.  Moreover, for a broad
range of density classes, we also have $V \lesssim W_1$,  for which
we actually obtain $V(p_{G},p_{G_{0}}) \asymp W_{1}(G,G_{0})$. A consequence of this fact is
that for any estimation procedure that admits the $n^{-1/2}$ convergence
rate for the mixture density under $V$ distance, the mixture model parameters
also converge at the same rate under Euclidean metric.

Turning to the over-fitted setting, second-order identifiability along with mild
regularity conditions would be sufficient for establishing that
for any $G$ that has \emph{at most} $k$ support points where $k \geq k_{0}+1$ and $k$ is fixed, 
\begin{equation}
\label{Eqn-b2}
V(p_{G},p_{G_0}) \gtrsim W_2^2(G,G_0).
\end{equation}
when $W_{2}(G,G_{0})$ is sufficiently small. 
The lower bound $W_{2}^{2}(G,G_{0})$ is sharp,
 i.e we can not improve the lower bound to $W_{1}^{r}$ for any $r<2$
(notably, $W_2 \geq W_1$). A consequence
of this result is, take any standard estimation method (such that the MLE) 
which yields $n^{-1/2}$ convergence rate for $p_G$, the induced
rate of convergence for the mixing measure $G$ is the minimax optimal $n^{-1/4}$
under $W_2$. It also follows that the mixing probability mass converge at $n^{-1/2}$ rate
(which recovers the result of ~\cite{Rousseau-Mengersen-11}),
in addition to showing that the component parameters converge 
at $n^{-1/4}$ rate.  

We also show that there is a range 
of mixture models with varying parameters of multiple types that satisfies the 
developed strong identifiability criteria. All such models exhibit 
the same kind of rate for parameter estimation. In particular,
the second-order identifiability criterion (thus the first-order identifiability) 
is satisfied by many density families $f$
%$\left\{f(x|\theta,\Sigma),\theta \in \Theta \subset \mathbb{R}^{d_{1}},
%\Sigma \in 
%\Omega \subset S_{d_{2}}^{++} \right\}$ as $d_{2} \geq 1$ 
including the multivariate Student's t-distribution, the
exponentially modified multivariate Student's t-distribution. 
Second-order identifiability also holds 
for several mixture models with multiple types of (scalar) parameters. These results
are presented in Section~\ref{Sec:Characterization}. The
proofs of these characterization theorems are rather technical, but one useful insight one
can draw from them is that
the strong identifiability condition (in either the first or the second order) 
is essentially determined by the smoothness
of the kernel density in question (which can be expressed in terms of how fast
the corresponding characteristic function vanishes toward infinity).

%%%%%%%%%%%%%%%%%%%%%%%%%%%%%%%%%%%%%%%%%%%%%%%%%%%%%%%%%%%%%%%%%%%%%%%%%%%%%%%%%%%%%%

\begin{center}
\begin{table} 
\captionsetup{justification=centering}
\footnotesize{
\begin{center}

\begin{tabular}{| p{1.6cm} | p{1.8cm} | p{2.3cm} | p{2.3cm} | p{2.5cm} | p{2.3cm}|}
    \hline
    & \footnotesize{Density classes} & \footnotesize{Exact-fitted mixtures} & 
\footnotesize{Over-fitted mixtures} & 
\footnotesize{MLE rate for $G$ \newline for $n$-iid sample} & \footnotesize{Minimax lower bound for $G$} \\ 
\hhline{|=|=|=|=|=|=|}
\footnotesize{(I) \newline First-order \newline identifiable} 
& \footnotesize{Generalized Gaussian, Student's t, ...} 
& \footnotesize{$V \gtrsim W_{1}$} &  
&\footnotesize{Exact-fit: \newline $W_1 \lesssim n^{-1/2}$} 
& \footnotesize{Exact-fit: \newline $W_1 \gtrsim n^{-1/2}$} \\ 
\hline
(II)\newline
Second-order \newline identifiable & 
Student's t, exponentially modified Student's t, ...& 
same as (I) & 
$V \gtrsim W_{2}^{2}$ & 
Exact-fit: \newline same as (I) &
Exact-fit: \newline same as (I) \\ 
\cline{5-6} & & & & 
Over-fit: \newline $W_2 \lesssim n^{-1/4}$ & 
Over-fit: \newline $W_1 \gtrsim n^{-1/4}$ \\ 
\hhline{|=|=|=|=|=|=|}
Not \newline second-order \newline identifiable & 
location-scale multivariate Gaussian & 
same as (I) & 
$V \gtrsim W_{\overline{r}}^{\overline{r}}$,  \newline
$\overline{r}$ depending on $k-k_0$&
Exact-fit: \newline same as (I) &
Exact-fit: \newline same as (I) \\ \cline{5-6} & & & 
If $k-k_{0}=1$, $\overline{r}=4$ \newline 
If $k-k_{0}=2$, $\overline{r}=6$ & 
Over-fit: \newline $W_{\overline{r}} \lesssim n^{-1/2\overline{r}}$ & 
Over-fit: \newline $W_1 \gtrsim n^{-1/2\overline{r}}$ \\
    \hline
    & 
Gamma distribution & 
Generic case: \newline
$V \gtrsim W_{1}$ & 
Generic case: \newline
$V \gtrsim W_{2}^{2}$ & 
Generic: $W_1 \lesssim n^{-1/2}$ or 
$W_2 \lesssim n^{-1/4}$ & 
Generic: \newline
$W_1 \gtrsim n^{-1/2}$ \newline
$W_2 \gtrsim n^{-1/4}$ \\ \cline{3-6} & & 
Patho. case: \newline
$V \not \gtrsim W_{r}^{r}$ for any $r \geq 1$ & 
Patho. case: \newline $V \not \gtrsim W_{r}^{r}$ 
for any $r \geq 1$ & 
Patho. case: \newline unknown& 
Patho. case: logarithmic, i.e $W_r \gtrsim n^{-1/r}$
\newline $\forall r \geq 1$ \\ \cline{2-6} 
Not \newline first-order \newline identifiability & 
Location-exponential distribution & 
$V \not \gtrsim W_{1}^{r}$ \newline $\forall r \geq 1$ & 
$V \not \gtrsim W_{1}^{r}$ \newline $\forall r \geq 1$ & 
Unknown  & 
logarithmic \newline
$W_1 \gtrsim n^{-1/r}$ \newline $\forall r\geq 1$
\\ 
\cline{2-6} & & & & Exact fit: & Exact-fit: \\ \cline{5-6} & & 
Generic case: \newline
$V \gtrsim W_{1}$ & Generic case: \newline $V \gtrsim W_{\overline{m}}^{\overline{m}}$, where $\overline{m}=\overline{r}$ or $\overline{r}+1$ & Generic: \newline $W_{1} \lesssim n^{-1/2}$ & Generic: \newline $W_{1} \gtrsim n^{-1/2}$ \\ \cline{3-6} 
& & Patho. conformant: \newline
$V \gtrsim W_{2}^{2}$ & Patho. conformant: \newline unknown & Patho. conformant: \newline $W_{2} \lesssim n^{-1/4}$ & Patho. conformant: \newline $W_{2} \gtrsim n^{-1/4}$ \\ \cline{3-6} & Skew-Gaussian distribution & Patho. non-conformant: \newline $V \gtrsim W_{\overline{s}}^{\overline{s}}$ for some $\overline{s}$ & Patho. non-conformant: \newline unknown & Patho. non-conformant:  \newline $W_{\overline{s}} \lesssim n^{-1/2\overline{s}}$ & Patho. non-conformant: \newline $W_{3} \gtrsim n^{-1/6}$, or $W_{4} \gtrsim n^{-1/8}$, or $W_{5} \gtrsim n^{-1/10}$, or $\ldots$ \\ \cline{3-6} & & Otherwise: \newline $V \not \gtrsim W_{1}^{r}$ for any $r \geq 1$ & Otherwise: \newline unknown &  Otherwise: \newline unknown & Otherwise: \newline logarithmic \\ \cline{5-6} & & & & 
Over-fit: \newline
$n^{-1/2\overline{m}}$ \newline
or unknown \newline &  
Over-fit: \newline
unknown \\ \hline
\end{tabular}
\caption{
Summary of results established in this paper. 
To be precise, all upper bounds for
MLE rates are of the form $(\log n/n)^{-\gamma}$, but
the logarithmic term is removed in the table to avoid cluttering.}
\label{tablesummary}
\end{center}}
\end{table}
\end{center}

%%%%%%%%%%%%%%%%%%%%%%%%%%%%%%%%%%%%%%%%%%%%%%%%%%%%%%%%%%%%%%%%%%%%%%%%%%%%%

%%%%%%%%%%%%%%%%%%%%%%%%%%%%%%%%%%%%%%%%%%%%%%%%%%%%%%%%%%%%%%%%%%%%%%%%%%%%%%%%%%%%%%

\comment{\begin{center}
\begin{table} 
\captionsetup{justification=centering}
\footnotesize{
\begin{center}
    \begin{tabular}{| p{1.6cm} | p{1.8cm} | p{1.8cm} | p{1.8cm} | p{2cm} | p{2cm}|}
    \hline
    & \footnotesize{Density classes} & \footnotesize{Exact-fitted mixtures} & 
\footnotesize{Over-fitted mixtures} & 
\footnotesize{MLE rate for $G$ \newline for $n$-iid sample} & \footnotesize{Minimax lower bound for $G$} \\ 
\hhline{|=|=|=|=|=|=|}
\footnotesize{(I) \newline First-order \newline identifiable} 
& \footnotesize{Generalized Gaussian, Student's t,...} 
& \footnotesize{$V \gtrsim W_{1}$} &  
&\footnotesize{Exact-fit: \newline $W_1 \lesssim n^{-1/2}$} 
& \footnotesize{Exact-fit: \newline $W_1 \gtrsim n^{-1/2}$} \\ 
\hline
(II)\newline
Second-order \newline identifiable & 
Student's t, modified Gaussian-Gamma & 
same as (I) & 
$V \gtrsim W_{2}^{2}$ & 
Exact-fit: \newline same as (I) &
Exact-fit: \newline same as (I) \\ 
\cline{5-6} & & & & 
Over-fit: \newline $W_2 \lesssim n^{-1/4}$ & 
Over-fit: \newline $W_1 \gtrsim n^{-1/4}$ \\ 
\hhline{|=|=|=|=|=|=|}
Not \newline second-order \newline identifiable & 
location-covariance Gaussian distribution & 
same as (I) & 
$V \gtrsim W_{\overline{r}}^{\overline{r}}$,  \newline
$\overline{r}$ dependent on $k-k_0$ &
Exact-fit: \newline same as (I) &
Exact-fit: \newline same as (I) \\ \cline{5-6} & & & 
If $k-k_{0}=1$, $\overline{r}=4$ \newline 
If $k-k_{0}=2$, $\overline{r}=6$& 
Over-fit: \newline $W_{\overline{r}} \lesssim n^{-1/2\overline{r}}$ & 
Over-fit: \newline $W_1 \gtrsim n^{-1/2\overline{r}}$ \\
    \hline
    & 
Gamma distribution & 
Generic case: \newline
$V \gtrsim W_{1}$ & 
Generic case: \newline
$V \gtrsim W_{2}^{2}$ & 
Generic: $W_1 \lesssim n^{-1/2}$ or 
$W_2 \lesssim n^{-1/4}$ & 
Generic: \newline
$W_1 \gtrsim n^{-1/2}$ \newline
$W_2 \gtrsim n^{-1/4}$ \\ \cline{3-6} & & 
Patho. case: \newline
$V \not \gtrsim W_{r}^{r}$ for any $r \geq 1$ & 
Patho. case: \newline $V \not \gtrsim W_{r}^{r}$ 
for any $r \geq 1$ & 
Patho. case: \newline unknown& 
Patho. case: logarithmic $W_r \gtrsim n^{-1/r}$
\newline $\forall r \geq 1$ \\ \cline{2-6} 
Not \newline first-order \newline identifiable & 
Location-exponential distribution & 
$V \not \gtrsim W_{1}^{r}$ \newline $\forall r \geq 1$ & 
$V \not \gtrsim W_{1}^{r}$ \newline $\forall r \geq 1$ & 
Unknown  & 
logarithmic \newline
$W_1 \gtrsim n^{-1/r}$ \newline $\forall r\geq 1$
\\ 
\cline{2-6} & Skew-Gaussian distribution & 
Generic case:\newline
$V \gtrsim W_{1}^{1}$ \newline
Patho. case: \newline
$V \gtrsim W_{2}^{2}$ or 
$V \gtrsim W_{\overline{s}}^{\overline{s}}$ or 
$V \not \gtrsim W_{1}^{r}$ for any $r \geq 1$ and some $\overline{s}$.  & 
Generic case: $V \gtrsim W_{\overline{m}}^{\overline{m}}$, \newline
$\overline{m}$ dependent on $k-k_0$ \newline
If $k-k_{0}=1$, $\overline{m}=4$ \newline
If $k-k_{0}=2$, $\overline{m}=6$ \newline Patho. case: \newline
unknown& 
Exact-fit: \newline
various rates \newline
dependent on $G_0$& 
Exact-fit: \newline 
various rates \newline
dependent on $G_0$\\ \cline{5-6} & & & & 
Over-fit: \newline
$n^{-1/2\overline{m}}$ \newline
or unknown &  Over-fit: \newline
unknown \\ \hline
\end{tabular}
\caption{
Summary of results established in this paper. 
To be precise, all upper bounds for
MLE rates are of the form $(\log n/n)^{-\gamma}$, but
the logarithmic term is removed in the table to avoid cluttering.}
\label{tablesummary}
\end{center}}
\end{table}
\end{center}}

%%%%%%%%%%%%%%%%%%%%%%%%%%%%%%%%%%%%%%%%%%%%%%%%%%%%%%%%%%%%%%%%%%%%%%%%%%%%%

\paragraph{Theory for weakly identifiable classes}
We hurry up to point out that many common density classes do not satisfy either or both 
strong identifiability criteria. The Gamma family of distributions (with both
shape and scale parameters vary) is not identifiable in the first order. 
Neither is the family of skew-Gaussian distributions~\citep{Azzalini-1999, Azzalini-1996}.
Convergence behavior for the  mixture parameters of these two families are unknown,
in both exact and over-fitted settings.
The ubiquitous Gaussian family, when both location and scale/covariance parameters
vary, is identifiable in the first order, but \emph{not} in the second order.
So, the general theory described above can be applied to analyze exact-fitted Gaussian
mixtures, but not for over-fitted Gaussian mixtures. It turns out that these
classes of mixture models 
require a separate and novel treatment. Throughout this work, we shall call such
density families ``weakly identifiable classes'', i.e., those that are identifiable 
in the classical sense, but
not in the sense of strong identifiability taken in either the first or second order.

Weak identifiability leads to an extremely rich (and previously unreported)
spectrum of convergence behavior. 
It is no longer possible to establish inequalities~\eqref{Eqn-b1} 
and~\eqref{Eqn-b2}, because they do not hold in general. 
Instead, we shall be able to establish sharp bounds of the types $V\gtrsim W_r^r$ for
some precise value of $r$, which depends on the specific class of density in consideration.
This entails minimax 
optimal but non-standard rates of convergence for mixture model parameters.
In our theory for these weakly identifiable classes, the algebraic structure of the
density $f$, not merely its smoothness,  will now play the fundamental role in determining
the rates.

\textit{Gaussian mixtures:} We will first discuss the Gaussian family of densities of 
the standard form
$f(x|\theta,\Sigma)$, where $\theta \in \mathbb{R}^d$ and $\Sigma \in S_{d}^{++}$
are mean and covariance parameters, respectively.
The lack of strong identifiability in the second order is due to the following identity:
\[\frac{\partial^2 f}{\partial \theta^2}(x|\theta,\Sigma) = 
2 \frac{\partial f}{\partial \Sigma}(x|\theta,\Sigma),\]
which entails that the derivatives of $f$ taken with respect to the parameters up to the 
second order are not linearly independent. 
Moreover, this algebraic structure plays the fundamental role in our proof
for the following inequality:
\begin{eqnarray}
V(p_{G},p_{G_{0}}) \gtrsim W_{\overline{r}}^{\overline{r}}(G,G_{0}), \label{eqn:introductionone}
\end{eqnarray}
where $\overline{r} \geq 1$ is defined as the minimum value of 
$r \geq 1$ such that the following system of polynomial equations \begin{eqnarray}
\mathop {\sum }\limits_{j=1}^{k-k_{0}+1}{\mathop {\sum }\limits_{n_{1}+2n_{2} = \alpha }{\dfrac{c_{j}^{2}a_{j}^{n_{1}}b_{j}^{n_{2}}}{n_{1}!n_{2}!}}}=0 \ \ \text{for all } \ 1 \leq \alpha \leq r \nonumber
\end{eqnarray}
does not have any non-trivial 
real solution $\left\{(c_{j},a_{j},b_{j})\right\}_{j=1}^{k-k_{0}+1}$. We
emphasize that the lower bound in Eq.~\eqref{eqn:introductionone} is sharp, in that 
it cannot be replaced by $W_1^r$ (or $W_r^r$) for any $r < \overline{r}$. A consequence
of this fact, by invoking standard results from asymptotic statistics, 
is that the minimax optimal rate of convergence for estimating $G$ is 
$n^{-1/2\overline{r}}$ under $W_{\overline{r}}$ distance metric. 
The authors find this correspondence quite striking -- one which links precisely 
the minimax optimal estimation rate of 
mixing measures arising from an over-fitted Gaussian mixture to
the solvability of an explicit system of polynomial equations.

Determining the solvability of a system of polynomial equations is a basic question
in (computational) algebraic geometry. For the system described above, there does not
seem to be an obvious answer to the general value of $\overline{r}$. Since the number of variables 
in this system is $3(k-k_{0}+1)$, one expects that 
$\overline{r}$ keeps increasing as $k-k_{0}$ increases. In fact, using a standard method
of Groebner bases~\citep{Bruno-Thesis}, 
we can show that for $k-k_0 = 1$ and $2$, $\overline{r} = 4$ and $6$, respectively.
In addition if $k-k_0\geq 3$, then $\overline{r} \geq 7$. Thus, the 
convergence rate of
the mixing measure for 
over-fitted Gaussian mixture deteriorates very quickly as more
extra components are included in the model.

\textit{Gamma mixtures:} We shall now briefly describe several other model classes studied in this paper.
Gamma densities represent one such class: the Gamma density $f(x|a,b)$ has
two positive parameters, $a$ for shape and $b$ for rate. This family
is not identifiable in the first order. 
The lack of identifiability boils down to 
the fundamental identity~\eqref{key-gamma}. By exploiting this identity, we
can show that there are particular combinations of the true parameter
values which prevent the Gamma class from enjoying strong convergence properties. By
excluding the measure-zero set of pathological cases of true mixing measures, 
the Gamma density class in fact can be shown to be strongly identifiable
in both orders. Thus, this class is \emph{almost} strongly identifiable, using
the terminology of~\cite{Allman-2009}.
The generic/pathological dichotomy in the convergence behavior within the Gamma class
is quite interesting: in the measure-one generic set of true mixing measures, 
the mixing measure can be estimated at the standard rate 
(i.e., $n^{-1/2}$ under $W_1$ for exact-fitted
and $n^{-1/4}$ under $W_2$ for over-fitted mixtures). The pathological cases
are not so forgiving: even for exact-fitted mixtures, one can do no better
than a logarithmic rate of convergence.

\textit{Location-exponential mixtures:}
Lest some wonder whether this unusually slow rate for the
exact-fitted mixture setting can happen only in the measurably
negligible (pathological) cases, we also introduce a location-extension
of the Gamma family, the location-exponential class: $f(x|\theta,\sigma)
:= \frac{1}{\sigma}\exp -\frac{x-\theta}{\sigma} 1(x>\theta)$. 
We show that the minimax lower bound for estimating the mixing measure in an
exact-fitted mixture of location-exponentials is no faster than a logarithmic
rate.

\textit{Skew-Gaussian mixtures:} The most fascinating example among 
those studied is perhaps skew-Gaussian distributions. This density class generalizes the Gaussian 
distributions, by having an extra parameter, shape, which 
controls density skewness. The skew-Gaussian family
exhibits an extremely broad spectrum of behavior, some of which shared with the
Gamma family, some with the Gaussian, but this family is really a league of its own.
It is not identifiable in the first order, for a reason that is 
somewhat similar to that of the Gamma family described above. As a consequence, one can construct
a full measure set of generic cases for the true mixing measures according to
which, the exact-fitted mixture model admits strong identifiablity and
convergence rate (as in the general theory). 

Within the seemingly benign setting of exact-fitted mixtures,
the pathological cases for the skew-Gaussian carry a very rich 
structure, resulting in a variety of behaviors:
for some subset of true mixing measures, the convergence rate is tied to solvability of
a certain system of polynomial equations;
% (in the same manner as that of 
% over-fitted Gaussian mixtures). 
for some other subset, the
convergence is poor -- the rate can be logarithmic at best.

Turning to over-fitted mixtures of skew-Gaussian
distributions, unfortunately our theory remains incomplete. 
The culprit lies in the fundamental identity~\eqref{key-skewnormal},
which shows that the first and second order derivatives of the skew-Gaussian
densities are dependent on a \emph{nonlinear} manner. This is in contrast to
the linear dependence that characterizes Gaussian and Gamma densities.
Thus, the method of proof that works well for the 
previous examples is no longer adequate -- the rates obtained are probably
not optimal. 

\paragraph{Key proof ideas} We now provide a brief description of our
method of proofs for the results  obtained in this paper, a summary of which
given in Table~\ref{tablesummary}. 
There are two different theories: a general
theory for the strongly identifiable classes and specialized theory for
weakly identifiable classes. Within each model classes, the
key technical objective is the same:
to derive sharp inequalities of the form $V(p_G,p_{G_0}) \gtrsim W_r^r(G,G_0)$,
where sharpness is expressed in the choice of $r$. 

For strongly identifiable classes, either in the first or the second order, the
starting point of our proof is an application of Taylor expansion on the
mixture density difference $p_{G_n}-p_{G_0}$, where $G_n$ represents 
a sequence of mixing measures that tend to $G_0$ in Waserstein distance
$W_r$, where $r=1$ or 2, the assumed order of strong identifiablity. The main
part of the proof involves trying to force all the Taylor coefficients in
the Taylor expansion to vanish according to the converging sequence of $G_n$.
If that is proved to be impossible, then one can arrive at the bound of the 
form $V\gtrsim W_r^r$. Thus, our proof technique is similar to that 
of~\cite{Nguyen-13}. To show
that the derived inequalities are sharp, we resort to careful constructions of a
``worst-case''sequence of $G_n$. 

For weakly identifiable classes, the Taylor expansion technique continues to
provide the proof's backbone, but the key issue now is determining
the ``correct''order up to which the Taylor expansion is exercised. Since high-order
derivatives of the density $f$ are no longer independent, the dependence
has to be taken into account before one can fall back to a similar technique
afforded by the general theory described above. If the high-order derivatives
are linearly dependent, as is the case of Gaussian densities,
it is possible to reduce the original Taylor expansion in terms of only a subset of
such derivative quantities that are linearly independent. This reduction
process paves the way for a system of polynomial equations to emerge.
It follows then that the right exponent $r$ in the desired bound described
above can be linked to the order of such a system which admits a non-trivial
solution. 

\paragraph{Practical implications} 
Problematic convergence behaviors exhibited by
widely utilized models such as Gaussian mixtures
may have long been observed in practice,
but to our knowledge, most of the obtained convergence rates are established
for the first time in this paper,
particularly those of weakly identifiable classes.
The results established for the popular Gaussian class
present a formal reminder about the limitation of Gaussian mixtures when 
it comes to assessing the quality of parameter estimation, but
only when the number of mixing components is unknown.
Since a tendency in practice is to ``over-fit'' the mixture 
generously with many more extra 
mixing components, our theory warns against this practice,
because the convergence rate for
subpopulation-specific parameters deteriorates rapidly with the number of 
redundant components. In particular, we expect that the value 
$\overline{r}$ in the rate $n^{-1/2\overline{r}}$ tends to infinity as 
the number of redundant Gaussian components increases to infinity.
To complete the spectrum of rates, we note the
logarithmic rate $(\log n)^{-1/2}$ of convergence of the mixing measure 
in infinite Gaussian location mixtures, via a Bayes estimate~\citep{Nguyen-13} 
or kernel-based deconvolution~\citep{Caillerie-etal-11}.

For Gamma and skew-Gaussian mixtures,
(for applications, see, e.g.~\citep{Ghosal-13,Lee-13, Wiper-01})
our theory paints a wide spectrum of convergence behaviors within each model class. 
We hope that the theoretical results obtained here may 
hint at practically useful ways for determining benign scenarios when the mixture
models enjoy strong identifiability properties and favorable convergence rates,
and for identifying pathological scenarios where the practioners would do
well by avoiding them.

\paragraph{Paper organization} The rest of the paper is organized as follows.
Section \ref{Sec-upperbound} provides some preliminary backgrounds and facts.
Section \ref{Sec-identifiability} presents a general theory of strong identifiability,
by addressing the exact-fitted and over-fitted settings separately 
before
% \ref{Sec:Definitionandgeneralbound}
providing a characteration of density classes for which the general theory
is applicable. 
Section~\ref{Sec-weakidentifiability} is devoted to a theory for weakly identifiable
classes, by treating each of the described three density classes separately.
% --- although the method of proof is somewhat similar at a high-level,
%ingenuity is required to address each class separately by accounting for
%each special algebraic structure. 
Section \ref{Sec-convergence} contains easy consequences of the theory
developed earlier -- this includes minimax bounds and the convergence
rates of the maximum likelihood estimation, which are optimal in many cases.
The theoretical bounds are illustrated via simulations in Section~\ref{Sec:simulation}.
%
%concentrates on establishing  (optimal) convergence rates for multiple classes of kernel density functions, especially those considered in Theorem \ref{identifiability-univariatecharacterization}, Theorem \ref{identifiability-multivariatecharacterization}, and Section \ref{Sec-weakidentifiability}, as well as simulations to illustrate the theory we have developed in previous sections.
Self-contained proofs of representative theorems are given in Section \ref{Sec-proofs},
 while proofs of remaining results are presented in the Appendix.

\comment{
With all the quantitative bounds in \eqref{Eqn-b1}, \eqref{Eqn-b2}, and Section \ref{Sec-weakidentifiability} available, we can transport the convergence rates of density estimation to that of mixing measure estimation. In Section \ref{Sec-convergence}, we study the convergence behavior of the maximum likelihood estimates for 
mixing measures in finite mixture models. Due to lack of space, we specifically focus on mixture of location-scale multivariate Gaussian distribution. The classical results in the papers \cite{Wong-Shen-95, Vandegeer-1996, Ghosal-2001}   are used to get the convergence rate of density estimation. Although the convergence rate for location families was rather well-studied, both in finite and infinite mixtures \cite{Zhang-90}, \cite{Fan-91}, \cite{Nguyen-13}, the convegence rate for location-scale multivariate Gaussian distribution has been unknown. The main difficulty is due to the involment of covariance matrices in which the traditional deconvolution method is failed to apply. Now, we assume that $n$-sample $X_{1},\ldots,X_{n}$ are generated 
according to $p_{G_{0}}$. Let $\widehat{G}_n$ be the MLE estimate of $G_{0}$ in exact-fitted case and $\overline{G}_{n}$ be the MLE estimate of $G_{0}$ in over-fitted case. The convergence rate 
$(\log(n)/n)^{1/2}$ is found for the exact-fitted case, which is optimal up to $(\log(n))^{1/2}$ while the rate $(\log(n)/n)^{2\overline{r}}$ is found for the over-fitted case. Combining with specific values of $\overline{r}$, the rate is $(\log(n)/n)^{1/8}$ as $k-k_{0}=1$ while the rate is $(\log(n)/n)^{1/12}$ as $k-k_{0}=2$. The rate becomes even worse when $k-k_{0} \geq 3$ since it is slower than $(\log(n)/n)^{1/14}$. By using Le Cam's approach ~\cite{LeCam-73, LeCam-86} for minimax lower bound, we can show that the optimal convergence rate of $\overline{G}_{n}$ relatively to the Wasserstein metric $W_{2}$ is $(\log(n)/n)^{1/2\overline{r}}$.
\comment{\begin{table} 
\label{table:summaryoflowerbounds}
\captionsetup{justification=centering}
\footnotesize{
\begin{center}
    \begin{tabular}{| p{2cm} | p{2.5cm} | p{1.8cm} | p{1.8cm} | p{2.5cm} | p{2cm}|}
    \hline
    & \footnotesize{Examples} & \footnotesize{Exact-fit} & \footnotesize{Over-fit} & \footnotesize{MLE rate} & \footnotesize{Minimax rate} \\ \hline
    \footnotesize{First-order identifiability} & \footnotesize{Generalized Gaussian, Student t's,...} & \footnotesize{$V \gtrsim W_{1}$} &  &\footnotesize{Exact-fit: $(\log(n)/n)^{1/2}$} & \footnotesize{Exact-fit: $n^{1/2}$} \\ \hline
    Second-order identifiability & Student t's, modified Gaussian-Gamma,... & $V \gtrsim W_{1}$ & $V \gtrsim W_{2}^{2}$ & Exact-fit: $(\log(n)/n)^{1/2}$ & Exact-fit: $n^{1/2}$ \\ \cline{5-6} & & & & Over-fit: $(\log(n)/n)^{1/4}$ & Over-fit: $(\log(n)/n)^{1/4}$ \\ \hline
      Not second-order identifiability & location-scale multivariate Gaussian distribution & $V \gtrsim W_{1}$  & $V \gtrsim W_{\overline{r}}^{\overline{r}}$ for some $\overline{r}$. \newline If $k-k_{0}=1$, $\overline{r}=4$. & Exact-fit: $(\log(n)/n)^{1/2}$ & Exact-fit: $n^{1/2}$ \\ \cline{5-6} & & & If $k-k_{0}=2$, $\overline{r}=6$& Over-fit: $(\log(n)/n)^{1/2\overline{r}}$ & Over-fit: $(
\log(n)/n)^{1/2\overline{r}}$ \\
    \hline
    Not first-order identifiability & Gamma distribution & General case: $V \gtrsim W_{1}$ & General case: $V \gtrsim W_{2}^{2}$ & General case: $(\log(n)/n)^{1/2}$ or $(\log(n)/n)^{1/4}$ & General case: at most $n^{1/2}$ \\ \cline{3-6} & & Singular case: $V \not \gtrsim W_{r}^{r}$ for any $r \geq 1$ & Singular case: $V \not \gtrsim W_{r}^{r}$ for any $r \geq 1$ & Singular case: unknown& Singular case: unknown \\ \cline{2-6} & Location-exponential distribution & $V \not \gtrsim W_{1}^{r}$ for any $r \geq 1$& $V \not \gtrsim W_{1}^{r}$ for any $r \geq 1$ & Exponential rate & Exponential rate \\ \cline{2-6} & Skew normal distribution & $V \gtrsim W_{1}^{1}$ or $V \gtrsim W_{2}^{2}$ or $V \gtrsim W_{\overline{s}}^{\overline{s}}$ or $V \not \gtrsim W_{1}^{r}$ for any $r \geq 1$ and some $\overline{s}$.  & General case: $V \gtrsim W_{\overline{r}}^{\overline{r}}$ for some $\overline{r}$ & Exact-fit: either polynomial or exponential rate & Exact-fit: either polynomial or exponential rate \\ \cline{4-6} & & & Singular case: unknown & Over-fit: either $(\log(n)/n)^{1/2\overline{r}}$ or unknown &  Over-fit: unknown \\ \hline
    \end{tabular}
\caption{\footnotesize{Lower bounds of $V(p_{G},p_{G_{0}})$ in terms of Wasserstein distance between $G$ and $G_{0}$ under strongly identifiable and weakly identifiable classes}}
\end{center}}
\end{table}}
}

\paragraph{Notation} 
Divergence distances studied in this paper include the total variational 
distance $V(p_{G},p_{G'})={\displaystyle \dfrac{1}{2}\int {|p_{G}(x)-p_{G'}(x)|}d\mu(x)}$ and the Hellinger distance
$h^{2}(p_{G},p_{G'})=\dfrac{1}{2} {\displaystyle \int {(\sqrt{p_{G}(x)}-\sqrt{p_{G'}(x)})^{2}}d\mu(x)}$.
As $K,L \in \mathbb{N}$, the first derivative of real function $g:\mathbb{R}^{K \times L} \to \mathbb{R}$ of matrix $\Sigma$ is defined as a $K\times L$ matrix whose 
$(i,j)$ element is $\partial g/\partial \Sigma_{ij}$. The second derivative of $g$,
denoted by $\dfrac{\partial^2 g}{\partial \Sigma^{2}}$ is a 
$K^2 \times L^2$ matrix made of $KL$ blocks of $K\times L$ matrix, whose $(i,j)$-block is
given by $\dfrac{\partial}{\partial \Sigma} \left(\dfrac{\partial g}{\partial \Sigma_{ij}} \right)$.
Additionally, as $N \in \mathbb{N}$, for function 
$g_{2}: \mathbb{R}^{N} \times \mathbb{R}^{K \times L} \to \mathbb{R}$
defined on $(\theta,\Sigma)$, 
the joint derivative between the vector component and matrix component 
$\dfrac{\partial^{2}{g_{2}}}{\partial{\theta}\partial{\Sigma}}=\dfrac{\partial^{2}{g_{2}}}{\partial{\Sigma}\partial{\theta}}$ is a $(KN) \times L$ matrix of 
$KL$ blocks for $N$-columns, whose $(i,j)$-block is given by
$\dfrac{\partial}{\partial\theta} \left(\dfrac{\partial g_2}{\partial \Sigma_{ij}} \right)$. Finally, for any symmetric matrix $\Sigma \in \mathbb{R}^{d \times d}$, 
$\lambda_{1}(\Sigma)$ and $\lambda_{d}(\Sigma)$ respectively denote its smallest and largest eigenvalue.

%%% Section 2
\section{Preliminaries}
\label{Sec-upperbound}

\comment{
Given $d_{1} \geq 1, d_{2} \geq 0$ and $\Theta \subset \mathbb{R}^{d_{1}}$, $\Omega \subset S_{d_{2}}^{++}$, where $S_{d_{2}}^{++}$ is the set of all positive definite matrices in $\mathbb{R}^{d_{2} \times d_{2}}$. We equip the space $\Theta \times \Omega$ with a distance $\rho$ to form a metric space $(\Theta \times \Omega,\rho)$. A finite discrete probability measure $G$ on a measure space equipped with the Borel sigma algebra takes the form $G=\mathop {\sum }\limits_{i=1}^{k}{p_{i}\delta_{(\theta_{i},\Sigma_{i})}}$ for some $k \in \mathbb{N}$, where $\textbf{p}=(p_{1},p_{2},\ldots,p_{k})$ denotes the proportion vector and $(\vec{\theta},\vec{\Sigma})=((\theta_{1},\Sigma_{1}),\ldots,(\theta_{k},\Sigma_{k}))$ denotes the associated atoms in $\Theta \times \Omega$.}

First of all, we need to define our notion of distances on the 
space of mixing measures $G$. In this paper, we restrict ourself to the 
space of discrete mixing measures with exactly $k_{0}$ 
distinct support points on $\Theta \times \Omega$, 
which is denoted by $\mathcal{E}_{k_{0}}(\Theta \times \Omega)$, and the 
space of discrete mixing measures with at most $k$ distinct support 
points on $\Theta \times \Omega$, which is denoted by 
$\mathcal{O}_{k}(\Theta \times \Omega)$. In addition, let $\mathcal{G}(\Theta \times \Omega)=\mathop {\cup }\limits_{k \in \mathbb{N}}{\mathcal{E}_{k}(\Theta \times \Omega)}$ be the set of all discrete measures with finite support points. 
Consider mixing measure $G=\mathop {\sum }\limits_{i=1}^{k}{p_{i}\delta_{(\theta_{i},\Sigma_{i})}}$, where $\textbf{p}=(p_{1},p_{2},\ldots,p_{k})$ denotes the proportion vector and $(\vec{\theta},\vec{\Sigma})=((\theta_{1},\Sigma_{1}),\ldots,(\theta_{k},\Sigma_{k}))$ denotes the supporting atoms in $\Theta \times \Omega$. Likewise, let
$G' = \sum_{i=1}^{k'}p'_i \delta_{(\theta'_i, \Sigma_{i}')}$. A coupling between $\vec{p}$ and
$\vec{p'}$ is a joint distribution $\vec{q}$ on $[1\ldots,k]\times [1,\ldots, k']$, which
is expressed as a matrix
$\vec{q}=(q_{ij})_{1 \leq i \leq k,1\ \leq j \leq k} \in [0,1]^{k \times k}$
and admits marginal constraints 
$\mathop {\sum }\limits_{i=1}^{k}{q_{ij}}=p_{j}'$ and $\mathop {\sum  }\limits_{j=1}^{k'}{q_{ij}}=p_{i}$ for any $i=1,2,\ldots,k$ and $j=1,2,\ldots,k'$. 
We call $\vec{q}$ a coupling of $\vec{p}$ and $\vec{p'}$,
and use $\mathcal{Q}(\vec{p},\vec{p'})$ to denote the space of all such couplings.

As in~\cite{Nguyen-13}, our tool for analyzing the identifiability and convergence
of parameters in a mixture model is by adopting Wasserstein distances, which
can be defined as the optimal cost of moving mass from one probability
measure to another~\citep{Villani-2009}. For any $r \geq 1$, the $r$-th order Wasserstein distance between 
$G$ and $G'$ is given by
\begin{eqnarray}
W_{r}(G,G') & = &
\biggr ( \inf_{\vec{q} \in \mathcal{Q}(\vec{p},\vec{p'})}{\mathop {\sum }\limits_{i,j}{q_{ij}(\|\theta_{i}-\theta_{j}'\|+\|\Sigma_{i}-\Sigma_{j}'\|)^{r}}}
\biggr )^{1/r}. \nonumber
\end{eqnarray}
In both equations in the above display, $\|\cdot\|$ denotes either the $l_{2}$ norm for 
elements in $\mathbb{R}^d$ or the entrywise $l_{2}$ norm for matrices.
A central theme of the paper is the relationship between the Wasserstein distances of mixing
measures $G,G'$ and distances of corresponding mixture densities $p_G,p_{G'}$.
Recall that mixture density $p_G$ is obtained by combining a mixing measure 
$G \in \mathcal{G}(\Theta \times \Omega)$ with a family of density functions 
$\left\{f(x|\theta,\Sigma),\theta \in \Theta,\Sigma \in\Omega\right\}$:
\begin{eqnarray}
p_{G}(x)=\int{f(x|\theta,\Sigma)}dG(\theta,\Sigma)=\mathop {\sum }\limits_{i=1}^{k}{p_{i}f(x|\theta_{i},\Sigma_{i})}. \nonumber
\end{eqnarray}

Clearly if $G = G'$ then $p_{G} = p_{G'}$. Intuively, if $W_1(G,G')$ or $W_2(G,G')$
is small, so is a distance between $p_G$ and $p_{G'}$. This can be quantified
by establishing an upper bound for the distance of $p_{G}$ and $p_{G'}$ in terms of 
$W_1(G,G')$ or $W_2(G,G')$. A general notion of distance between probability densities defined
on a common space is $f$-divergence (or Ali-Silvey distance)~\cite{Ali-1966}:
an $f$-divergence between two probability density functions $f$ and $g$ is 
defined as ${\displaystyle \rho_{\phi}\left(f,g\right)=\int {\phi\left(\dfrac{g}{f}\right)f}d\mu}$, where $\phi:\mathbb{R} \to \mathbb{R}$ is a convex function. Similarly, 
the $f$-divergence between $p_{G}$ and $p_{G'}$ is 
$\rho_{\phi}(p_{G},p_{G'})=\int {\phi\left(\dfrac{p_{G'}}{p_{G}}\right)p_{G}}d\mu$.
As $\phi(x)=\dfrac{1}{2}(\sqrt{x}-1)^{2}$, we obtain the squared Hellinger distance ($\rho_{h}^{2} \equiv h^{2})$. As $\phi(x)=\dfrac{1}{2}|x-1|$, we obtain the variational distance ($\rho_{V} \equiv V$).

A simple way of establishing an upper bound for
an $f$-divergence between $p_G$ and $p_{G'}$ is via
the ``composite transportation distance'' between mixing measures $G,G'$:
\begin{eqnarray}
d_{\rho_{\phi}}(G,G')=\mathop {\inf }\limits_{\vec{q} \in \mathcal{Q}(\vec{p},\vec{p'})}{\mathop {\sum }\limits_{i,j}{q_{ij}\rho_{\phi}(f_{i},f_{j}')}} \nonumber
\end{eqnarray}
where $f_{i}=f(x|\theta_{i},\Sigma_{i})$ and 
$f_{j}'=f(x|\theta_{j}',\Sigma_{j}')$ for any $i,j$. The following inequality
regarding the relationship between $\rho_{\phi}(p_{G},p_{G'})$ and $d_{\rho_{\phi}}(G,G')$
is a simple consequence of Jensen's inequality~\citep{Nguyen-13}:
\[\rho_{\phi}(p_{G},p_{G'}) \leq d_{\rho_{\phi}}(G,G').\]
It is straightforward to derive upper bounds for
$d_{\rho_{\phi}}(G,G')$ in terms of Wasserstein distances 
$W_r$, by taking into
account specific structures of the density family $f$, and then combine
with the inequality in the previous display
to arrive at upper bounds for $\rho_{\phi}(p_{G},p_{G'})$ in terms of
Wasserstein distances. Here are a few examples.

\begin{example}\label{example-Gaussian}
(Multivariate generalized Gaussian distribution \citep{Weisel-13})\\ The density family $f$ takes the form
$f(x|\theta,m,\Sigma)=\dfrac{m\Gamma(d/2)}{\pi^{d/2}\Gamma(d/(2m))|\Sigma|^{1/2}}\exp(-((x-\theta)^{T}\Sigma^{-1}(x-\theta))^{m})$, where $\theta \in \mathbb{R}^{d}, m >0$, and $\Sigma  \in S_{d}^{++}$. If $\Theta_{1}$ is bounded subset of $\mathbb{R}^{d}$, $\Theta_{2}=\left\{m \in \mathbb{R}^{+}: 1 \leq \underline{m} \right. \\ \left. \leq m \leq \overline{m}\right\}$, and $\Omega=\left\{\Sigma \in S_{d}^{++}: \underline{\lambda} \leq \sqrt{\lambda_{1}(\Sigma)} \leq \sqrt{\lambda_{d}(\Sigma)} \leq \overline{\lambda}\right\}$, where $\underline{\lambda},\overline{\lambda}>0$, then for any $G_{1},G_{2} \in \mathcal{G}(\Theta_{1} \times \Theta_{2}  \times \Omega)$, we obtain $h^{2}(p_{G_{1}},p_{G_{2}}) \lesssim W_{2}^{2}(G_{1},G_{2})$ and $V(p_{G_{1}},p_{G_{2}}) \lesssim W_{1}(G_{1},G_{2})$. 
%This result can be regarded as the general result of the Gaussian distribution case($m=1$) without covariance matrix in \cite{Nguyen-13}.
\end{example}
\begin{example}\label{example-Student}(Multivariate Student's t-distribution) \\
The density family $f$ takes the form
$f(x|\theta,\Sigma)=C_{\nu}(\nu+(x-\theta)^{T}\Sigma^{-1}(x-\theta))^{-(\nu+d)/2}$, where $\nu$ is a fixed positive degree of freedom and $C_{\nu}=\dfrac{\Gamma((\nu+d)/2)\nu^{\nu/2}}{\Gamma(\nu/2)\pi^{d/2}}$. If $\Theta$ is bounded subset of $\mathbb{R}^{d}$ and $\Omega=\left\{\Sigma \in S_{d}^{++}: \underline{\lambda} \leq \sqrt{\lambda_{1}(\Sigma)} \leq \sqrt{\lambda_{d}(\Sigma)} \leq \overline{\lambda}\right\}$, then for any $G_{1},G_{2} \in \mathcal{G}(\Theta  \times \Omega)$, we obtain $h^{2}(p_{G_{1}},p_{G_{2}}) \lesssim W_{2}^{2}(G_{1},G_{2})$ and $V(p_{G_{1}},p_{G_{2}}) \lesssim W_{1}(G_{1},G_{2})$.
\end{example}

\begin{example}\label{example-modstudent}(Exponentially modified multivariate Student's t-distribution)\\
Let $f(x|\theta,\lambda,\Sigma)$ to be density function of $X=Y+Z$, where $Y$ follows multivariate t-distribution with location $\theta$, covariance matrix $\Sigma$, fixed positive degree of freedom $\nu$, and $Z$ is distributed by the product of $d$ independent exponential distributions with combined shape $\lambda=(\lambda_{1},\ldots,\lambda_{d})$.  If $\Theta$ is bounded subset of $\mathbb{R}^{d} \times \mathbb{R}^{d}_{+}$, where $\mathbb{R}^{d}_{+}=\left\{x \in \mathbb{R}^{d}:x_{i}>0 \ \forall i \right\}$, and $\Omega=\left\{\Sigma \in S_{d}^{++}: \underline{\lambda} \leq \sqrt{\lambda_{1}(\Sigma)} \leq \right. \\ \left.\sqrt{\lambda_{d}(\Sigma)} \leq \overline{\lambda}\right\}$, then for any $G_{1},G_{2} \in \mathcal{G}(\Theta \times \Omega)$, $h^{2}(p_{G_{1}},p_{G_{2}}) \lesssim W_{2}^{2}(G_{1},G_{2})$ and $V(p_{G_{1}},p_{G_{2}}) \lesssim W_{1}(G_{1},G_{2})$.
\end{example}

\begin{example}\label{example-gamma}(Modified Gaussian-Gamma distribution) \\
Let $f(x|\theta,\lambda,\beta,\Sigma)$ to be density function of $X=Y+Z$, where $Y$ is 
distributed by multivariate Gaussian distribution with mean $\theta$, covariance matrix 
$\Sigma$, and $Z$ is distributed by the product of independent Gamma distributions 
with combined shape vector $\alpha=(\alpha_{1},\ldots,\alpha_{d})$ and combined 
rate vector $\beta=(\beta_{1},...,\beta_{d})$. 
If $\Theta$ is bounded subset of 
$\mathbb{R}^{d} \times \mathbb{R}^{d}_{+} \times \mathbb{R}^{d}_{+}$ and 
$\Omega=\left\{\Sigma \in S_{d}^{++}: \underline{\lambda} \leq \sqrt{\lambda_{1}(\Sigma)} \leq \sqrt{\lambda_{d}(\Sigma)} \right. \\ \left. \leq \overline{\lambda}\right\}$,  
then for any $G_{1},G_{2} \in \mathcal{G}(\Theta \times \Omega)$, 
$h^{2}(p_{G_{1}},p_{G_{2}}) \lesssim V(p_{G_{1}},p_{G_{2}}) \lesssim W_{1}(G_{1},G_{2})$.
\end{example}

%%% Section 3
\section{General theory of strong identifiability}
\label{Sec-identifiability}

The objective of this section is to develop a general theory according to which
a small distance between mixture densities $p_G$ and $p_{G'}$ entails
a small Wasserstein distance between mixing measures $G$ and $G'$. 
The classical identifiability criteria requires that 
$p_{G} = p_{G'}$ entail $G = G'$, which essentially equivalent to
a linear independence requirement for the class of density family 
$\{f(x|\theta,\Sigma)| \theta \in \Theta, \Sigma \in \Omega\}$.
To obtain quantitative bounds, we need stronger notions of identifiability,
ones which
involve higher order derivatives of density function $f$, taken with respect to 
the multivariate and matrix-variate parameters present in the mixture model. 
The advantage of this theory, which extends from the work of~\cite{Nguyen-13}
and~\cite{Chen-95}, is that it is holds generally for a broad
range of mixture models, which allow for the same bounds on the Wasserstein
distances of mixing measures to hold. This in turn leads to ``standard'' rates of convergence
for the mixing measure. On the other hand, many popular mixture
models such as the location-covariance Gaussian mixture, mixture of Gamma, and mixture of skew-Gaussian distributions do not submit to the general
theory. Instead they require separate and fundamentally distinct
treatments; moreover, such models also exhibit non-standard rates of 
convergence for the mixing measure. Readers interested in results for such
models may skip directly to Section~\ref{Sec-weakidentifiability}. 

\comment{
Given $d_{1} \geq 1,d_{2} \geq 0$, we consider family of matrix-variate density functions $\left\{f(x|\theta,\Sigma),\theta \in \Theta,\Sigma \in \Omega\right\}$, where $\Theta \subset \mathbb{R}^{d_{1}}$ and $\Omega \subset S_{d_{2}}^{++}$, which is the set of all positive definite matrices in $\mathbb{R}^{d_{2} \times d_{2}}$. We will adapt the notion of strong identifiability in \cite{Chen-95, Nguyen-13}  to this family of density functions.
}

\subsection{Definitions and general bounds} \label{Sec:Definitionandgeneralbound}
\begin{definition}
\label{definition-firstorder}
The family $\left\{f(x|\theta,\Sigma),\theta \in \Theta, \Sigma \in \Omega \right\}$ is \textbf{identifiable in the first-order} if  $f(x|\theta,\Sigma)$ is differentiable in $(\theta,\Sigma)$ and the following assumption holds
\begin{itemize}
\item[A1.] For any finite $k$ different pairs 
$(\theta_{1},\Sigma_{1}),...,(\theta_{k},\Sigma_{k}) \in \Theta \times \Omega$, 
if we have $\alpha_{i} \in \mathbb{R},\beta_{i} \in \mathbb{R}^{d_{1}}$ and \textbf{symmetric matrices} $\gamma_i \in \mathbb{R}^{d_{2} \times d_{2}}$ (for all $i=1,\ldots,k$) 
such that
\begin{eqnarray}
\mathop {\sum }\limits_{i=1}^{k}{\alpha_{i}f(x|\theta_{i},\Sigma_{i})+\beta_{i}^{T}\dfrac{\partial{f}}{\partial{\theta}}(x|\theta_{i},\Sigma_{i})+\trace\left(\dfrac{\partial{f}}{\partial{\Sigma}}(x|\theta_{i},\Sigma_{i})^{T}\gamma_{i}\right)}=0 \ \ \text{for almost all} \ x \nonumber
\end{eqnarray}
then this will entail that 
$\alpha_{i}=0,\beta_{i}=\vec{0} \in \mathbb{R}^{d_{1}},\gamma_{i}=\vec{0} \in \mathbb{R}^{d_{2} \times d_{2}}$ for $i=1,\ldots, k$.
\end{itemize}
\end{definition}

\paragraph{Remark.} 
The condition that $\gamma_{i}$ is symmetric in Definition \ref{definition-firstorder} is crucial,
without which the identifiability condition would fail for many classes of density. For instance,
assume that $\dfrac{\partial{f}}{\partial{\Sigma}}(x|\theta_{i},\Sigma_{i})$ are symmetric matrices for all $i$ (this clearly holds for any elliptical distributions, such as multivariate Gaussian, Student's t-distribution, and logistics distribution). If we choose $\gamma_{i}$ to be anti-symmetric matrices,
then by choosing $\alpha_{i}=0$, $\beta_{i}=\vec{0}$, $(\gamma_{i})_{uu}=0$ for all $1 \leq u \leq d_{2}$ (i.e. all diagonal elements are 0) , the equation in condition A.1 holds while $\gamma_{i}$ 
can be different from $\vec{0}$ for all $i$.

%\textit{Remark:} This is indeed a relaxed condition of strong identifiability than that of \cite{Chen-95, Nguyen-13}. By adapting theorem 1 from \cite{Nguyen-13} to this new notion of strong identifiability, we have the following theorems for class of density functions involving covariance matrix.

Additionally, we say the family of densities $f$ is uniformly Lipschitz up to
the first order if the following holds: there are positive
constants $\delta_1,\delta_2$ such that for any $R_{1},R_{2},R_{3}>0$, 
$\gamma_{1} \in \mathbb{R}^{d_{1}}$, $\gamma_{2} \in \mathbb{R}^{d_{2} \times d_{2}}$,
$R_{1} \leq \sqrt{\lambda_{1}(\Sigma)} \leq \sqrt{\lambda_{d_{2}}(\Sigma)} \leq R_{2}$, $||\theta|| \leq R_{3}$, $\theta_{1},\theta_{2} \in \Theta$, $\Sigma_{1},\Sigma_{2} \in \Omega$,  there are positive constants $C(R_1,R_{2})$
and $C(R_3)$ such that for all $x \in \mathcal{X}$

\begin{equation}
\label{Eqn:Lipschitz1}
\left|\gamma_{1}^{T} \biggr ( \dfrac{\partial{f}}{\partial{\theta}}(x|\theta_{1},\Sigma)-\dfrac{\partial{f}}{\partial{\theta}}(x|\theta_{2},\Sigma) \biggr ) \right| \leq C(R_{1},R_{2})||\theta_{1}-\theta_{2}||^{\delta_{1}}||\gamma_{1}|| %\tag{MA.1.1}
\end{equation}
and 
\begin{equation}
\label{Eqn:Lipschitz2}
\left|\trace\left(\left(\dfrac{\partial{f}}{\partial{\Sigma}}(x|\theta,\Sigma_{1})-\dfrac{\partial{f}}{\partial{\Sigma}}(x|\theta,\Sigma_{2})\right)^{T}\gamma_{2}\right)\right| \leq C(R_{3})||\Sigma_{1}-\Sigma_{2}||^{\delta_{2}}||\gamma_{2}||. %\tag{MA.2.1}
\end{equation}

First-order identifiability is sufficient for deriving a lower bound of $V(p_{G},p_{G_0})$ in terms
of $W_1(G,G_0)$, under the \emph{exact-fitted} setting: This is the setting where $G_0$ has
exactly $k_0$ support points, $k_0$ known:
%$G_{0}=\mathop {\sum }\limits_{i=1}^{k_{0}}{p_{i}^{0}\delta_{(\theta_{i}^{0},\Sigma_{i}^{0})}}$ to be a fixed probability measure in $\mathcal{E}_{k_{0}}(\Theta \times \Omega)$ where $k_{0}$ is a fixed positive integer. We have the following result in the exact-fitted case
\begin{theorem}{\bf(Exact-fitted setting)}
\label{theorem-firstorder}
Suppose that the density family $f$ is identifiable in the first order and 
admits uniform Lipschitz property up to the first order. Then there are positive constants
$\epsilon_{0}$ and $C_0$, both depending on $G_0$, such that as long as 
$G \in \mathcal{E}_{k_0}(\Theta \times \Omega)$ and $W_{1}(G,G_{0}) \leq \epsilon_{0}$,
we have
\[V(p_{G},p_{G_{0}}) \geq C_0 W_{1}(G,G_{0}).\]

\end{theorem}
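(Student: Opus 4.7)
\medskip
\noindent\textbf{Proof sketch.}
The plan is proof by contradiction: suppose no such pair $(C_0,\epsilon_0)$ exists. Then there is a sequence $G_n \in \mathcal{E}_{k_0}(\Theta\times\Omega)$ with $W_1(G_n,G_0) \to 0$ and
\[
V(p_{G_n},p_{G_0})/W_1(G_n,G_0) \;\to\; 0.
\]
Write $G_n = \sum_{i=1}^{k_0} p_i^n \delta_{(\theta_i^n,\Sigma_i^n)}$ and $G_0 = \sum_{i=1}^{k_0}p_i^0\delta_{(\theta_i^0,\Sigma_i^0)}$. Since $G_n$ has the \emph{exact} number of atoms $k_0$ and $W_1(G_n,G_0)\to 0$, a standard Wasserstein compactness argument lets us (after relabeling and passing to a subsequence) assume $(\theta_i^n,\Sigma_i^n)\to(\theta_i^0,\Sigma_i^0)$ and $p_i^n\to p_i^0$ for each $i=1,\ldots,k_0$. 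Moreover, for $n$ large enough the optimal coupling in $W_1$ matches atom $i$ of $G_n$ with atom $i$ of $G_0$, so
\[
d_n := W_1(G_n,G_0) \;\asymp\; \sum_{i=1}^{k_0}|p_i^n-p_i^0| + \sum_{i=1}^{k_0}p_i^0\bigl(\|\theta_i^n-\theta_i^0\|+\|\Sigma_i^n-\Sigma_i^0\|\bigr),
\]
with constants depending only on $G_0$.

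The next step is a first-order Taylor expansion of $p_{G_n}-p_{G_0}$ about the atoms of $G_0$. Setting $\Delta p_i := p_i^n-p_i^0$, $\Delta\theta_i := \theta_i^n-\theta_i^0$, $\Delta\Sigma_i := \Sigma_i^n-\Sigma_i^0$ (all symmetric), we write
\begin{align*}
p_{G_n}(x) - p_{G_0}(x)
&= \sum_{i=1}^{k_0} \Delta p_i\, f(x|\theta_i^0,\Sigma_i^0)
 + \sum_{i=1}^{k_0} p_i^n \Bigl[ \Delta\theta_i^T \tfrac{\partial f}{\partial\theta}(x|\theta_i^0,\Sigma_i^0) \\
&\quad{}+ \trace\bigl(\tfrac{\partial f}{\partial\Sigma}(x|\theta_i^0,\Sigma_i^0)^T \Delta\Sigma_i\bigr) \Bigr] + R_n(x),
\end{align*}
where the uniform Lipschitz assumptions \eqref{Eqn:Lipschitz1}--\eqref{Eqn:Lipschitz2} give a pointwise remainder bound $|R_n(x)|\le C\sum_i p_i^n\bigl(\|\Delta\theta_i\|^{1+\delta_1}+\|\Delta\Sigma_i\|^{1+\delta_2}\bigr)=o(d_n)$, so $\int |R_n(x)|\,d\mu(x)/d_n\to 0$ (after applying the Lipschitz bounds against a shared integrable envelope, which follows from the Lipschitz constants being locally uniform in $\theta,\Sigma$).

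Now divide the display by $d_n$ and consider the coefficients $\alpha_i^n := \Delta p_i/d_n$, $\beta_i^n := p_i^n\Delta\theta_i/d_n$, $\gamma_i^n := p_i^n\Delta\Sigma_i/d_n$. By the equivalence for $d_n$ above, the total $\ell_1$ norm $\sum_i(|\alpha_i^n|+\|\beta_i^n\|+\|\gamma_i^n\|)$ is bounded away from $0$ and bounded above, so along a further subsequence these coefficients converge to limits $(\alpha_i,\beta_i,\gamma_i)$ that are \emph{not all zero}, with each $\gamma_i$ symmetric (as a limit of symmetric matrices). Meanwhile, because $2V(p_{G_n},p_{G_0})=\int|p_{G_n}-p_{G_0}|\,d\mu = o(d_n)$, we get
\[
\int \Bigl| \sum_{i=1}^{k_0}\bigl[\alpha_i f(x|\theta_i^0,\Sigma_i^0)+\beta_i^T\tfrac{\partial f}{\partial\theta}(x|\theta_i^0,\Sigma_i^0)+\trace(\tfrac{\partial f}{\partial\Sigma}(x|\theta_i^0,\Sigma_i^0)^T\gamma_i)\bigr] \Bigr|\,d\mu(x) = 0,
\]
so the bracketed linear combination vanishes for $\mu$-almost every $x$. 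Since the pairs $(\theta_i^0,\Sigma_i^0)$ are distinct and the $\gamma_i$ are symmetric, the first-order identifiability condition A1 forces $\alpha_i=0$, $\beta_i=\vec 0$, $\gamma_i=\vec 0$ for all $i$, contradicting non-triviality of the limit.

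The main obstacle I anticipate is verifying that the normalized coefficient vector is bounded away from zero; this is exactly where the exact-fitted structure ($k_n\equiv k_0$) is used, because it guarantees the two-sided equivalence between $W_1(G_n,G_0)$ and $\sum |\Delta p_i|+\sum p_i^0(\|\Delta\theta_i\|+\|\Delta\Sigma_i\|)$. A secondary technical point is the control of $\int|R_n|\,d\mu/d_n \to 0$, which requires that the Lipschitz constants $C(R_1,R_2)$ and $C(R_3)$ can be integrated against $x$ over a compact neighborhood of the atoms of $G_0$; this follows from the assumptions stated in the theorem together with compactness of $\Theta$ and $\Omega$ near the support of $G_0$.
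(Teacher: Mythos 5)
Your proof follows essentially the same route as the paper: proof by contradiction, first-order Taylor expansion of $p_{G_n}-p_{G_0}$ about the atoms of $G_0$, normalization of the coefficient vector, and then invoking first-order identifiability. The exact-fitted structure ($k_n\equiv k_0$) is used in exactly the same place — to guarantee the optimal $W_1$-coupling is essentially diagonal, yielding $W_1(G_n,G_0) \asymp \sum_i|\Delta p_i^n| + \sum_i p_i^0(\|\Delta\theta_i^n\|+\|\Delta\Sigma_i^n\|)$. Your two-sided equivalence is a slightly cleaner way to see the non-triviality of the limiting coefficient vector than the paper's max-coefficient normalization, but it serves the same role.

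There is, however, a genuine gap in the handling of the remainder. You claim $\int|R_n(x)|\,d\mu(x)/d_n\to 0$ because the Lipschitz bounds "apply against a shared integrable envelope, which follows from the Lipschitz constants being locally uniform in $\theta,\Sigma$." This does not follow: the uniform Lipschitz conditions \eqref{Eqn:Lipschitz1}--\eqref{Eqn:Lipschitz2} in the paper give bounds whose constants $C(R_1,R_2)$, $C(R_3)$ are uniform in $x$, not integrable over $\mathcal{X}$. Since $\mathcal{X}$ is typically unbounded (Lebesgue measure on an Euclidean space), a constant envelope is not integrable, and local uniformity of the constants in $(\theta,\Sigma)$ says nothing about integrability in $x$. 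What you actually have is the pointwise bound $\sup_x |R_n(x)|/d_n \to 0$, which is weaker than $\int|R_n|/d_n \to 0$ but is all you need. The paper's argument is the correct one: apply Fatou's lemma directly to the full nonnegative integrand $|p_{G_n}(x)-p_{G_0}(x)|/d_n$. Since $\int|p_{G_n}-p_{G_0}|/d_n = 2V/d_n \to 0$ and the integrand converges pointwise (along a subsequence) to $\bigl|\sum_i[\alpha_i f+\beta_i^T\partial_\theta f+\trace(\partial_\Sigma f^T\gamma_i)]\bigr|$ — using only pointwise, not integral, vanishing of $R_n/d_n$ — Fatou forces that limit to equal zero for $\mu$-a.e.\ $x$, after which identifiability gives the contradiction. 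You should replace the $\int|R_n|/d_n\to 0$ step with this Fatou argument; as written your passage from integral convergence to almost-everywhere vanishing of the limit is unjustified.
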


Note that we do not impose any boundedness on $\Theta$ or $\Omega$. 
Nonetheless, the bound is of local nature, in the sense
that it holds only for those $G$ sufficiently close
to $G_0$ by a Wassertein distance at most $\epsilon_0$, which again
varies with $G_0$. It is possible to extend this type of bound to hold 
globally over a compact subset of the space of mixing measures, under
a mild regularity condition,
as the following corollary asserts:

\begin{corollary}\label{corollary-lowerbound} 
Suppose that the density family $f$ is identifiable in the first order,
and admits uniform Lipschitz property up the first order. 
Further, there is a positive constant $\alpha > 0$ such that
for any $G_{1},G_{2} \in \mathcal{E}_{k_{0}}(\Theta \times \Omega)$, we have 
$V(p_{G_{1}},p_{G_{2}}) \lesssim W_{1}^{\alpha}(G_{1},
G_{2})$.
Then, for a fixed compact subset $\mathcal{G}$ of 
$\mathcal{E}_{k_{0}}(\Theta \times \Omega)$, 
there is a positive constant $C_0 = C_0(G_0)$ such that
\begin{eqnarray}
V(p_{G},p_{G_{0}}) \geq C_0
W_{1}(G,G_{0}) \quad \text{for \ all} \ G \in \mathcal{G}. \nonumber
\end{eqnarray}
\end{corollary}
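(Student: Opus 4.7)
The plan is to argue by contradiction, combining the local bound from Theorem \ref{theorem-firstorder} with the compactness of $\mathcal{G}$ and the assumed upper bound $V \lesssim W_1^{\alpha}$. Suppose the claim fails. Then there exists a sequence $G_n \in \mathcal{G}$ with $G_n \neq G_0$ for which $V(p_{G_n},p_{G_0})/W_1(G_n,G_0) \to 0$. By compactness of $\mathcal{G}$, after passing to a subsequence (still denoted $G_n$), we may assume $W_1(G_n,G^*) \to 0$ for some $G^* \in \mathcal{G} \subset \mathcal{E}_{k_0}(\Theta \times \Omega)$. I would then split into two cases according to whether or not $G^* = G_0$.

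In the first case, $G^* = G_0$, so eventually $W_1(G_n,G_0) \leq \epsilon_0(G_0)$, where $\epsilon_0$ is the radius supplied by Theorem \ref{theorem-firstorder}. That theorem then gives $V(p_{G_n},p_{G_0}) \geq C_0(G_0)\,W_1(G_n,G_0)$ with $C_0(G_0)>0$, directly contradicting $V(p_{G_n},p_{G_0})/W_1(G_n,G_0) \to 0$.

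In the second case, $G^* \neq G_0$, so $W_1(G_n,G_0) \to W_1(G^*,G_0) > 0$ by continuity of $W_1$, and hence the denominator is bounded away from zero. Together with the contradiction hypothesis, this forces $V(p_{G_n},p_{G_0}) \to 0$. On the other hand, the assumed global upper bound gives $V(p_{G_n},p_{G^*}) \lesssim W_1^{\alpha}(G_n,G^*) \to 0$. The triangle inequality for $V$ then yields $V(p_{G^*},p_{G_0}) = 0$, i.e.\ $p_{G^*} \equiv p_{G_0}$. Now first-order identifiability (condition A1 applied with $\beta_i = \vec{0}$ and $\gamma_i = \vec{0}$) implies classical identifiability of the family $\{f(x|\theta,\Sigma)\}$, so $p_{G^*} = p_{G_0}$ forces $G^* = G_0$, contradicting the case assumption.

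The conceptual heart of the argument is routine compactness once both ingredients are in place, so the main obstacle is actually stating and assembling them correctly: the local lower bound $V \gtrsim W_1$ from Theorem \ref{theorem-firstorder} handles sequences approaching $G_0$, while the global upper bound $V \lesssim W_1^{\alpha}$ (satisfied in the settings of Examples \ref{example-Gaussian}--\ref{example-gamma}) is what allows one to rule out ``distant'' limit points via classical identifiability. Without the upper-bound hypothesis, one could not exclude a sequence $G_n$ whose densities $p_{G_n}$ approach $p_{G_0}$ despite $G_n$ drifting away from $G_0$ in $W_1$, which is precisely the scenario the hypothesis $V \lesssim W_1^{\alpha}$ forbids.
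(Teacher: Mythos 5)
Your proof is correct and follows essentially the same route as the paper's: you apply the compactness of $\mathcal{G}$ to extract a convergent subsequence and then split on whether the limit point equals $G_0$, whereas the paper first peels off the ball $W_1(G,G_0)\le\epsilon_0$ via Theorem \ref{theorem-firstorder} and applies compactness only outside it, but both versions combine the local lower bound, the global upper bound $V\lesssim W_1^{\alpha}$, continuity of $V$ and $W_1$, and classical identifiability (implied by first-order identifiability) in exactly the same way. The reorganization is cosmetic; the ingredients and the contradiction argument are identical.
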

We shall verify in the sequel that
the classes of densities $f$ described in Examples
\ref{example-Gaussian}, \ref{example-Student}, \ref{example-modstudent}, 
and \ref{example-gamma} are all identifiable in the first order. Thus, a remarkable
consequence of the result above is that for such classes of densities, the variational distance $V$
on mixture densities and the Wasserstein distance $W_1$ on the corresponding mixing
measures are in fact equivalent in the exact-fitted setting. That is, when 
$G$ share the same number of support points as that of $G_0$, we have
\[V(p_{G},p_{G_{0}}) \asymp W_{1}(G,G_{0})\]

Moving to the \emph{over-fitted} setting, where $G_0$ has exactly
$k_0$ support points lying in the interior of $\Theta \times \Omega$, but $k_0$ is unknown and only an upper bound for $k_0$ is given,
a stronger identifiability condition is required. This condition involves the second-order
derivatives of the density class $f$ that extends from the notion of
strong identifiability considered by~\cite{Chen-95,Nguyen-13}:

\begin{definition}\label{definition-secondorder}
The family $\left\{f(x|\theta,\Sigma),\theta \in \Theta, \Sigma \in \Omega \right\}$ is \textbf{identifiable in the second-order} if $f(x|\theta,\Sigma)$ is twice differentiable in $(\theta,\Sigma)$ and the following assumption holds
\begin{itemize}
\item[A2.] For any finite $k$ different pairs $(\theta_{1},\Sigma_{1}),...,(\theta_{k},\Sigma_{k}) \in \Theta \times \Omega$, if we have $\alpha_{i} \in \mathbb{R},\beta_{i},\nu_{i} \in \mathbb{R}^{d_{1}}$, $ \gamma_{i}, \eta_{i}$ \textbf{symmetric matrices} in $\mathbb{R}^{d_{2} \times d_{2}}$ as $i=1,\ldots, k$ such that
\begin{eqnarray}
\sum_{i=1}^{k} \biggr \{ \alpha_{i}f(x|\theta_{i},\Sigma_{i}) +
\beta_{i}^{T} \dfrac{\partial{f}}{\partial{\theta}}(x|\theta_{i},\Sigma_{i}) +
\nu_{i}^{T} \dfrac{\partial^{2}{f}}
{\partial{\theta}^{2}} (x|\theta_{i},\Sigma_{i}) \nu_{i} & + &  \notag \\
\trace \left(\dfrac{\partial{f}}{\partial{\Sigma}}(x|\theta_{i},\Sigma_{i})^{T}
\gamma_{i}\right ) 
+ 2\nu_{i}^{T}\left[\dfrac{\partial}{\partial{\theta}}\left(\trace \left(\dfrac{\partial{f}}{\partial{\Sigma}}(x|\theta_{i},\Sigma_{i})^{T}\eta_{i}\right)\right)\right]  & + & \notag \\
\trace \left(\dfrac{\partial{}}{\partial{\Sigma}}\left(\trace\left(\dfrac{\partial{f}}{\partial{\Sigma}}(x|\theta_{i},\Sigma_{i})^{T}\eta_{i}\right)\right)^{T}\eta_{i}\right)
\biggr \}
& = & 0 \quad \text{for almost all} \ x, \nonumber
\end{eqnarray}
then this will entail that 
$\alpha_{i}=0,\beta_{i}=\nu_{i} = \vec{0} \in \mathbb{R}^{d_{1}},\gamma_{i}=\eta_{i} = \vec{0} \in \mathbb{R}^{d_{2} \times d_{2}}$ for $i=1,\ldots,k$.\\\hk
\end{itemize}
\end{definition}

In addition, we say the family of densities $f$ is uniformly Lipschitz up to the second order
if the following holds: there are positive constants $\delta_3,\delta_4$ such that for any
$R_{4},R_{5},R_{6}>0$, 
$\gamma_{1} \in \mathbb{R}^{d_{1}}$, $\gamma_{2} \in \mathbb{R}^{d_{2} \times d_{2}}$,
$R_{4} \leq \sqrt{\lambda_{1}(\Sigma)} \leq \sqrt{\lambda_{d_{2}}(\Sigma)} \leq R_{5}$, $||\theta|| \leq R_{6}$, $\theta_{1},\theta_{2} \in \Theta$, $\Sigma_{1},\Sigma_{2} \in \Omega$,  there are positive constants $C_1$ depending on $(R_4,R_{5})$
and $C_2$ depending on $R_6$ such that for all $x \in \mathcal{X}$
\begin{equation}
\label{Eqn:Lipschitz3}
|\gamma_{1}^{T}(\dfrac{\partial^{2}{f}}{\partial{\theta}\partial{\theta^{T}}}(x|\theta_{1},\Sigma)-\dfrac{\partial^{2}{f}}{\partial{\theta}\partial{\theta^{T}}}(x|\theta_{2},\Sigma))\gamma_{1}| \leq C_1 \|\theta_{1}-\theta_{2}\|_{1}^{\delta_{3}}\|\gamma_{1}\|_{2}^{2} \notag %\tag{MA.1.2}
\end{equation}
and 
\begin{multline}
\label{Eqn:Lipschitz4}
\left|\text{tr}\left(\left[\dfrac{\partial{}}{\partial{\Sigma}}\left(\text{tr}\left(\dfrac{\partial{f}}{\partial{\Sigma}}(x|\theta,\Sigma_{1})^{T}\gamma_{2}\right)\right)-\dfrac{\partial{}}{\partial{\Sigma}}\left(\text{tr}\left(\dfrac{\partial{f}}{\partial{\Sigma}}(x|\theta,\Sigma_{2})^{T}\gamma_{2}\right)\right)\right]^{T}\gamma_{2}\right)\right| \leq \notag \\
C_2\|\Sigma_{1}-\Sigma_{2}\|_{2}^{\delta_{4}}\|\gamma_{2}\|_{2}^{2}. %\tag{MA.2.2}
\end{multline}

Let $k \geq 2$ and $k_{0} \geq 1$ be fixed positive integers where $k \geq k_{0}+1$. 
$G_0 \in \mathcal{E}_{k_0}$ while $G$ varies in $\mathcal{O}_{k}$. Then,
we can establish the following result

\begin{theorem} {\bf (Over-fitted setting)} \label{theorem-secondorder} 
\begin{itemize}
\item [(a)] Suppose that the density family $f$ is identifiable in the second order and admits 
uniform Lipschitz property up to the second order. 
Moreover, $\Theta$ is bounded subset of $\mathbb{R}^{d_{1}}$ and 
$\Omega$ is subset of $S_{d_{2}}^{++}$ such that the largest eigenvalues of 
elements of $\Omega$ are bounded above. 
In addition, suppose that
$\mathop {\lim }\limits_{\lambda_{1}(\Sigma) \to 0}{f(x|\theta,\Sigma)}=0$ for all $x \in \mathcal{X}$ and $\theta \in \Omega$.
Then there are positive constants $\epsilon_{0}$ and $C_{0}$ depending on $G_0$
such that as long as  $W_{2}(G,G_{0}) \leq \epsilon_{0}$,
\begin{eqnarray}
V(p_{G},p_{G_{0}}) \geq C_{0}W_{2}^{2}(G,G_{0}). \nonumber
\end{eqnarray}
\item [(b)] (Optimality of bound for variation distance) Assume that $f$ is second-order differentiable with respect to $\theta, \Sigma$ and ${\displaystyle \mathop {\sup }\limits_{\theta \in \Theta, \Sigma \in \Omega}{\int \limits_{x \in \mathcal{X}}{\left|\dfrac{\partial^{2}{f}}{\partial{\theta^{\alpha_{1}}}\partial{\Sigma^{\alpha_{2}}}}(x|\theta,\Sigma)\right|}dx}} <\infty$ for all $\alpha_{1}=(\alpha_{i}^{1})_{i=1}^{d_{1}} \in \mathbb{N}^{d_{1}}$, $\alpha_{2}=(\alpha_{uv}^{2})_{1 \leq u,v \leq d_{2}} \in \mathbb{N}^{d_{2} \times d_{2}}$ such that $\mathop {\sum }\limits_{i=1}^{d_{1}}{\alpha_{i}^{1}}+\mathop {\sum }\limits_{1 \leq u,v \leq d_{2}}{\alpha_{uv}^{2}} =2$. Then, for any $1 \leq r <2$:
\begin{eqnarray}
%\mathop {\lim }\limits_{\epsilon \to 0}{\mathop {\inf }\limits_{G \in \mathcal{O}_{k}(\Theta \times \Omega)}{\left\{\mathop {\sup }\limits_{x \in \mathcal{X}}{|p_{G}(x)-p_{G_{0}}(x)|}/W_{1}^{r}(G,G_{0}):W_{1}(G,G_{0}) \leq \epsilon \right\}}} =0. \nonumber
\lim_{\epsilon \rightarrow 0} \inf_{G \in \mathcal{O}_k(\Theta\times\Omega)}
\biggr \{ V(p_{G},p_{G_0})/W_1^r(G,G_0): W_1(G,G_0) \leq \epsilon \biggr \} = 0. \nonumber
\end{eqnarray}
\item[(c)] (Optimality of bound for Hellinger distance) Assume that $f$ is second-order differentiable with respect to $\theta$, $\Sigma$ and we can find $c_{0}$ sufficiently small such that 
\begin{eqnarray}
{\displaystyle \mathop {\sup }\limits_{||\theta-\theta^{'}||+||\Sigma-\Sigma^{'}|| \leq c_{0}}{\int \limits_{x \in \mathcal{X}}{\left(\dfrac{\partial^{2}{f}}{\partial{\theta^{\alpha_{1}}}\partial{\Sigma^{\alpha_{2}}}}(x|\theta,\Sigma)\right)^{2}/f(x|\theta^{'},\Sigma^{'})}dx}} <\infty, \nonumber
\end{eqnarray} 
where $\alpha_{1}, \alpha_{2}$ are defined as that of part (b). Then, for any $1 \leq r <2$:
\begin{eqnarray}
\label{lb-strong}
\lim_{\epsilon \rightarrow 0} \inf_{G \in \mathcal{O}_k(\Theta\times\Omega)}
\biggr \{ h(p_{G},p_{G_0})/W_1^r(G,G_0): W_1(G,G_0) \leq \epsilon \biggr \} = 0. 
\end{eqnarray}
\end{itemize}
\end{theorem}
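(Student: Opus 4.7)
My plan for part (a) is a proof by contradiction that follows the Taylor expansion template of the first-order case, now pushed to second order and adapted to the matrix-variate setting. Suppose the claim fails: there exists a sequence $G_n \in \mathcal{O}_k(\Theta \times \Omega)$ with $W_2(G_n, G_0) \to 0$ and $V(p_{G_n}, p_{G_0})/W_2^2(G_n, G_0) \to 0$. Boundedness of $\Theta$, the upper bound on $\lambda_{d_2}(\Sigma)$, and the hypothesis that $f(x|\theta,\Sigma)\to 0$ as $\lambda_1(\Sigma)\to 0$ together guarantee that, along a subsequence, every atom $(\theta_i^n,\Sigma_i^n)$ of $G_n$ converges to some atom $(\theta_{s(i)}^0,\Sigma_{s(i)}^0)$ of $G_0$, partitioning the $k_n\le k$ atoms of $G_n$ into $k_0$ clusters $C_1,\ldots,C_{k_0}$. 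Using the optimal $W_2$ coupling I would rewrite
\[p_{G_n}(x)-p_{G_0}(x) = \sum_{j=1}^{k_0}\sum_{i\in C_j} p_i^n\bigl[f(x|\theta_i^n,\Sigma_i^n) - f(x|\theta_j^0,\Sigma_j^0)\bigr] + \sum_{j=1}^{k_0}\Bigl(\sum_{i\in C_j}p_i^n - p_j^0\Bigr) f(x|\theta_j^0,\Sigma_j^0).\]

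Next I would Taylor-expand each $f(x|\theta_i^n,\Sigma_i^n)$ around $(\theta_j^0,\Sigma_j^0)$ to second order; the uniform Lipschitz assumption at order two bounds the remainder by $C(\|\theta_i^n-\theta_j^0\|+\|\Sigma_i^n-\Sigma_j^0\|)^{2+\delta}$, which summed against transport mass is $o(W_2^2(G_n,G_0))$. Dividing by $W_2^2(G_n,G_0)$ and collecting terms pinned at each $(\theta_j^0,\Sigma_j^0)$ gives rescaled coefficients $\alpha_j^n\in\mathbb{R}$, $\beta_j^n\in\mathbb{R}^{d_1}$, $\gamma_j^n$ symmetric in $\mathbb{R}^{d_2\times d_2}$, and quadratic pieces of the form $\sum_{i\in C_j}p_i^n (v_i^n)(v_i^n)^T/W_2^2$ (and analogous mixed and pure-$\Sigma$ pieces) multiplied against the second-order derivatives of $f$; each such PSD coefficient matrix can be expressed, after a spectral decomposition, as a finite sum of rank-one pieces $\nu\nu^T$ matching the shape in Definition~\ref{definition-secondorder}. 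By compactness, after a further subsequence all coefficients converge to limits $(\alpha_j,\beta_j,\gamma_j,\nu_j,\eta_j)$, and the hypothesis $V/W_2^2 \to 0$ forces the corresponding linear combination
\[\sum_{j=1}^{k_0}\Bigl\{\alpha_j f + \beta_j^T\partial_\theta f + \operatorname{tr}(\gamma_j^T\partial_\Sigma f) + \nu_j^T\partial_\theta^2 f\,\nu_j + 2\nu_j^T[\partial_\theta\operatorname{tr}(\partial_\Sigma f^T\eta_j)] + \operatorname{tr}([\partial_\Sigma\operatorname{tr}(\partial_\Sigma f^T\eta_j)]^T\eta_j)\Bigr\}=0\]
to vanish for almost every $x$. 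Second-order identifiability then forces every $(\alpha_j,\beta_j,\gamma_j,\nu_j,\eta_j)$ to be zero. The hard part I expect is the bookkeeping that this simultaneous vanishing is incompatible with the $W_2^2$ normalization: since $W_2^2(G_n,G_0)$ is by construction the sum of $q_{ij}^n(\|\theta_i^n-\theta_j^0\|^2+\|\Sigma_i^n-\Sigma_j^0\|^2)$, at least one of the rescaled quadratic coefficients must stay bounded away from zero, contradicting the limiting zero. The matrix-variate setting (symmetry of $\gamma_i,\eta_i$, and keeping $\Sigma_i^n$ positive definite) is where care is needed.

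For parts (b) and (c), I would exhibit an explicit sequence saturating the bound via a symmetric split of one atom of $G_0$. Fix $(\theta_1^0,\Sigma_1^0)$ in the interior, pick any unit $v\in\mathbb{R}^{d_1}$, and set
\[G_n = \tfrac{p_1^0}{2}\delta_{(\theta_1^0+\epsilon_n v,\,\Sigma_1^0)} + \tfrac{p_1^0}{2}\delta_{(\theta_1^0-\epsilon_n v,\,\Sigma_1^0)} + \sum_{j=2}^{k_0} p_j^0\,\delta_{(\theta_j^0,\Sigma_j^0)} \in \mathcal{O}_k(\Theta\times\Omega),\]
with $\epsilon_n\downarrow 0$. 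The natural coupling splits the mass $p_1^0$ at $(\theta_1^0,\Sigma_1^0)$ equally between the two perturbed atoms, giving $W_1(G_n,G_0)=p_1^0\epsilon_n$; any other transport plan must still move $p_1^0$ of mass over a distance of at least $\epsilon_n$, so $W_1\asymp\epsilon_n$. A second-order Taylor expansion around $(\theta_1^0,\Sigma_1^0)$ cancels the first-order contributions by symmetry of the split, yielding $p_{G_n}(x)-p_{G_0}(x)=\tfrac{p_1^0\epsilon_n^2}{2}v^T\partial_\theta^2 f(x|\theta_1^0,\Sigma_1^0)v + O(\epsilon_n^{2+\delta})$. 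Integrating in $x$ and using the uniform integrability assumption of part (b) gives $V(p_{G_n},p_{G_0})\lesssim\epsilon_n^2$, so $V/W_1^r\lesssim\epsilon_n^{2-r}\to 0$ for any $r<2$. For part (c), the stronger ratio bound gives $h^2(p_{G_n},p_{G_0})\le\int(p_{G_n}-p_{G_0})^2/p_{G_0}\,dx\lesssim\epsilon_n^4$, hence $h/W_1^r\lesssim\epsilon_n^{2-r}\to 0$, establishing~\eqref{lb-strong}. The only mild subtlety is verifying that the Taylor remainder integrates to a genuinely lower-order quantity under each set of integrability hypotheses, which is a direct consequence of the assumed finiteness of the relevant second-derivative integrals.
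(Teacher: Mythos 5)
Your approach to parts (b) and (c) is correct and a touch simpler than the paper's: you split one atom only in the $\theta$-direction (a symmetric $\pm\epsilon_n v$ perturbation), whereas the paper splits in both $\theta$ and $\Sigma$ simultaneously. Either way the first-order Taylor terms cancel by symmetry, $V\lesssim\epsilon_n^2$, $W_1\asymp\epsilon_n$, and the Hellinger refinement follows from $h^2\leq\int(p_{G_n}-p_{G_0})^2/p_{G_0}$ together with the assumed integrability of $(\partial^2 f)^2/f$. This is a valid alternative.

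Part (a), however, has a genuine gap. You assert that boundedness of $\Theta$, the eigenvalue bound, and the condition $\lim_{\lambda_1(\Sigma)\to 0}f(x|\theta,\Sigma)=0$ ``together guarantee that, along a subsequence, every atom $(\theta_i^n,\Sigma_i^n)$ of $G_n$ converges to some atom $(\theta_{s(i)}^0,\Sigma_{s(i)}^0)$ of $G_0$.'' That is false, and is exactly the subtlety that distinguishes the over-fitted from the exact-fitted case. Because $G_n$ may have strictly more atoms than $G_0$, some atoms $(\theta_{ij}^n,\Sigma_{ij}^n)$ can converge to limit pairs that are \emph{not} among the support points of $G_0$; weak convergence only forces the total mass on such atoms to vanish, not their positions to collapse onto $G_0$'s atoms. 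Worse, the limiting $\Sigma$ for those stray atoms need not be positive definite at all --- its smallest eigenvalue may tend to $0$, so the limit lies outside $\Omega$. Your cluster decomposition $\sum_j\sum_{i\in C_j}(\cdots)$ therefore omits these contributions entirely, and there is nowhere for them to go in your bookkeeping. The paper handles this by explicitly introducing $m$ extra limit points and writing $p_{G_n}-p_{G_0}=A_n+B_n+C_n$, where $C_n(x)=\sum_{i>k_0+m_1}\sum_j p_{ij}^n f(x|\theta_{ij}^n,\Sigma_{ij}^n)$ collects the atoms drifting toward \emph{singular} matrices; it is precisely the hypothesis $f(x|\theta,\Sigma)\to 0$ as $\lambda_1(\Sigma)\to 0$ that kills $C_n(x)$ pointwise, and the limit points with strictly positive-definite limit $\Sigma$ are absorbed into the Taylor expansion with $p_i^0=0$. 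Without this treatment the second-order identifiability hypothesis is applied to an incomplete expansion and the contradiction is not established. Relatedly, your closing contradiction (``at least one rescaled quadratic coefficient stays bounded away from zero'') is too quick: $W_2^2$ is bounded by $d(G_n,G_0)=\sum p_{ij}^n(\|\Delta\theta_{ij}^n\|^2+\|\Delta\Sigma_{ij}^n\|^2)+\sum_i|p_{i\cdot}^n-p_i^0|$, which has a mass-imbalance term as well as quadratic terms; the correct argument shows that vanishing of all quadratic coefficients forces $\sum_i|p_{i\cdot}^n-p_i^0|/d(G_n,G_0)\to 1$, which in turn forces some coefficient of $f(\cdot|\theta_i^0,\Sigma_i^0)$ (a zeroth-order term) not to vanish, and that is the contradiction.
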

Here and elsewhere, the ratio $V/W_r$ is set to be $\infty$ if $W_r(G,G_0) = 0$.
We make a few remarks.
\begin{itemize}
\item [(i)] A counterpart of part (a) for finite mixtures
with multivariate parameters was given in~\cite{Nguyen-13} (Proposition 1). 
The proof in that paper has a problem: it relies on Nguyen's Theorem 1, 
which holds only for the exact-fitted setting, but not for the over-fitted setting.
This was pointed out to the second 
author by Elisabeth Gassiat who attributed it to Jonas Kahn.
Fortunately, this error
can be simply corrected by replacing Nguyen's Theorem 1 with a weaker version, which
holds for the over-fitted setting and suffices for our purpose,
for which his method of proof continues
to apply. For part (a), 
it suffices to prove only the following weaker version:
\[\lim_{\epsilon \rightarrow 0} \inf_{G \in \mathcal{O}_k(\Theta\times\Omega)}
\biggr \{ V(p_G,p_{G_0})/W_2^2(G,G_0): W_2(G,G_0) \leq \epsilon \biggr \} > 0.\]
%
%\begin{eqnarray}
%\mathop {\lim }\limits_{\epsilon \to 0}{\mathop {\inf }\limits_{G \in \mathcal{O}_{k}(\Theta \times \Omega)}{\left\{\mathop {\sup }\limits_{x \in \mathcal{X}}{|p_{G}(x)-p_{G_{0}}(x)|}/W_{2}^{2}(G,G_{0}):W_{2}(G,G_{0}) \leq \epsilon \right\}}} > 0. \nonumber
%\end{eqnarray}
\item [(ii)] The 
mild condition $\mathop {\lim }\limits_{\lambda_{1}(\Sigma) \to 0}{f(x|\theta,\Sigma)}=0$ 
is important for the matrix-variate parameter $\Sigma$. In particular,
it is useful for addressing the scenario when the smallest eigenvalue of matrix parameter
$\Sigma$ is not bounded away from $0$. This condition, however,
%There are many families of density functions satisfying this condition, such as multivariate generalized Gaussian distribution, multivariate Student's t-distribution, multivariate logistic distribution. 
can be removed if we impose that $\Sigma$ is a positive definite matrix whose
eigenvalues are bounded away from 0.
%Last but not least, the conclusion in part (b) shows that $W_{2}^{2}(G,G_{0})$ is the best lower bound of $V(p_{G},p_{G_{0}})$ as $W_{2}(G,G_{0})$ is sufficiently small under special condition of density function $f$, such as the support $\mathcal{X}$ of $x$ is a bounded set or the kernel function $f$ is heavy-tailed. Examples when the support $\mathcal{X}$ is bounded are Von-Mises distribution or multivariate logistics normal distribution. The examples for the case when $f$ is heavy-tailed are multivariate generalized Gaussian distribution, multivariate Student's t-distribution. These distributions are widely used by people for modeling data. 
%

\item [(iii)] Part (b) demonstrates the sharpness of the bound in part (a). In particular,
we cannot improve the lower bound in part (a) to any quantity $W_1^r(G,G_0)$ for any $r < 2$.
For any estimation method that yields $n^{-1/2}$ convergence rate under
the Hellinger distance for $p_G$, part (a) induces $n^{-1/4}$ convergence rate under $W_2$
for $G$. Part (c) implies that $n^{-1/4}$ is minimax optimal.

\item [(iv)] The boundedness of $\Theta$, as well as the boundedness from above of the eigenvalues
of elements of $\Omega$ are both necessary conditions. Indeed, it is possible to show that
if one of these two conditions is not met, it is not possible to obtain the lower
bound of $V(p_{G},p_{G_{0}})$ as established, because distance $h \geq V$ can vanish
much faster than $W_r(G,G_0)$, as can be seen by:
\end{itemize}

\begin{proposition} \label{proposition:unboundenesssupport} 
Let $\Theta$ be a subset of $\mathbb{R}^{d_{1}}$ and $\Omega=S_{d_{2}}^{++}$. Then for any $r \geq 1$ and $\beta >0$ we have 
\begin{eqnarray}
\mathop {\lim }\limits_{\epsilon \to 0}{\mathop {\inf }\limits_{G \in \mathcal{O}_{k}(\Theta \times \Omega)}{\left\{\exp\left(\dfrac{1}{W_{r}^{\beta}(G,G_{0})}\right)h(p_{G},p_{G_{0}}):W_{r}(G,G_{0}) \leq \epsilon \right\}}} =0. \nonumber
\end{eqnarray}
\end{proposition}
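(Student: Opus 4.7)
The plan is to exhibit, for each sufficiently small $\epsilon>0$, an explicit $G_\epsilon\in\mathcal{O}_k(\Theta\times\Omega)$ with $W_r(G_\epsilon,G_0)\leq\epsilon$ that forces $\exp(1/W_r^\beta(G_\epsilon,G_0))\,h(p_{G_\epsilon},p_{G_0})\to 0$. The construction exploits the unboundedness of $\Omega=S_{d_2}^{++}$ by splitting the first atom of $G_0=\sum_{i=1}^{k_0}p_i^0\delta_{(\theta_i^0,\Sigma_i^0)}$ into two: one kept in place with most of its mass, and an auxiliary atom at the same $\theta$-coordinate paired with a matrix $\Sigma_n$ of very large norm, carrying a tiny mass $\tau_n\downarrow 0$. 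Concretely, for $\Sigma_n:=\lambda_n I_{d_2}$ with $\lambda_n\to\infty$ to be chosen, I would take
\begin{eqnarray*}
G_n\;:=\;(p_1^0-\tau_n)\,\delta_{(\theta_1^0,\Sigma_1^0)} + \tau_n\,\delta_{(\theta_1^0,\Sigma_n)} + \sum_{i=2}^{k_0}p_i^0\,\delta_{(\theta_i^0,\Sigma_i^0)},
\end{eqnarray*}
which has $k_0+1\leq k$ atoms, consistent with the over-fitted regime implicit in Remark~(iv).

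Two elementary estimates drive the argument. A transport plan that leaves the original atoms in place and routes the extra $\tau_n$ mass back to $(\theta_1^0,\Sigma_1^0)$, together with the straightforward lower bound observing that the marginal mass at $(\theta_1^0,\Sigma_n)$ must travel at least $\min_j(\|\theta_1^0-\theta_j^0\|+\|\Sigma_n-\Sigma_j^0\|)$, yields $W_r^r(G_n,G_0)\asymp \tau_n\,\|\Sigma_n\|^r$ as $\|\Sigma_n\|\to\infty$. Separately, since $p_{G_n}-p_{G_0}=\tau_n\bigl[f(\cdot|\theta_1^0,\Sigma_n)-f(\cdot|\theta_1^0,\Sigma_1^0)\bigr]$, the triangle inequality gives $V(p_{G_n},p_{G_0})\leq \tau_n$, and the standard inequality $h^2\leq V$ yields
\[
h(p_{G_n},p_{G_0})\;\leq\;\sqrt{\tau_n}.
\]

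With these two estimates in hand, I would calibrate $\|\Sigma_n\|:=\tau_n^{-1/r}(\log(1/\tau_n))^{-1/(2\beta)}$, so that $W_r(G_n,G_0)\asymp(\log(1/\tau_n))^{-1/(2\beta)}\to 0$ and therefore $1/W_r^\beta(G_n,G_0)\asymp(\log(1/\tau_n))^{1/2}$. Consequently
\[
\exp\!\Bigl(\tfrac{1}{W_r^\beta(G_n,G_0)}\Bigr)\,h(p_{G_n},p_{G_0})\;\lesssim\;\exp\!\Bigl(\log^{1/2}(1/\tau_n)-\tfrac{1}{2}\log(1/\tau_n)\Bigr)\;\longrightarrow\;0
\]
as $\tau_n\to 0$, and since $W_r(G_n,G_0)\to 0$ along this sequence this witnesses the claimed infimum tending to zero.

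The only delicate step is the calibration of $\|\Sigma_n\|$: it must be small enough to force $W_r\to 0$ (which requires $\|\Sigma_n\|=o(\tau_n^{-1/r})$) and yet large enough that $\exp(1/W_r^\beta)$ cannot defeat the $\sqrt{\tau_n}$ decay of $h$ (which requires $\|\Sigma_n\|^r\gg\tau_n^{-1}/\log^{r/\beta}(1/\tau_n)$); the sub-polynomial window chosen above lies comfortably in between for any $r\geq 1$ and $\beta>0$. Notably, no smoothness or algebraic assumption on $f$ enters the argument, so the phenomenon is driven purely by the unboundedness of $\Omega$.
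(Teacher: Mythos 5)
Your construction is correct and is essentially the same device the paper uses: split off an auxiliary atom at $(\theta_1^0,\Sigma_n)$ with tiny mass $\tau_n$ and a covariance of diverging norm, so that $W_r^r(G_n,G_0)\asymp\tau_n\|\Sigma_n\|^r$ can be made to vanish very slowly while $h^2(p_{G_n},p_{G_0})\leq V(p_{G_n},p_{G_0})\leq\tau_n$ decays geometrically faster. The paper's proof fixes $\tau_n=\exp(-n)$ and $\Sigma_{k_0+1}^n=\Sigma_1^0+\exp(n/r)n^{-1/(2\beta)}I_{d_2}$, yielding $1/W_r^\beta\asymp\sqrt{n}$ against $h^2\lesssim\exp(-n)$, which is exactly your calibration under the substitution $u=\log(1/\tau_n)=n$; the only cosmetic difference is that you leave $\tau_n$ free and anchor $\Sigma_n$ at the origin rather than at $\Sigma_1^0$.
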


\comment{
Now, using the idea of Theorem \ref{theorem-firstorder} and \ref{theorem-secondorder}, under exact-fitted case, we achieve the following result regarding the lower bound of $V(p_{G},p_{G_{0}})$ in terms of Wasserstein distance between $G$ and $G_{0}$ as the class of density functions $f$ is identifiable in the second order.
\begin{proposition}(Exact-fitted case)\label{proposition:exactfittedcase}
Suppose that the density family $f$ is identifiable in the second
order and admits uniform Lipschitz property up the second order. Then there are
positive constants $\epsilon_0:=\epsilon_0(G_0, \Theta,\Omega)$
and $C_0:=C_0(G_0)$ such that as long as $G\in \Gcal_k(\Theta \times \Omega)$
and $W_2(G,G_0) \leq \epsilon_0$, we have
\[V(p_{G},p_{G_{0}}) \geq C_0(W_{1}(G,G_{0}) + W_{2}^{2}(G,G_{0})).\]
\end{proposition}

\paragraph{Remark.} The result of Theorem \ref{proposition:exactfittedcase} is tighter than 
that of Proposition  \cite{Nguyen-13}, where it was
shown that $V \gtrsim W_2^2$ under the second-order identifiability criterion for
the multivariate setting.\footnote{
There is a mistaken in the proof Elisabeth Gassiat and Jonas Kahn pointed out a mistake in the original 
statement of Theorem 1 of~\cite{Nguyen-13} 
which does not hold in the over-fitted setting (private communication).
This error is also present in an analogous result for the univariate case by~\cite{Chen-95}.
The mistake can be corrected by simply adding the condition that $G$ has 
exactly the same number of support points as $G_0$. Note that under this 
additional condition, the original proof of~\cite{Nguyen-13} remains valid.}
The conclusion of Proposition~\ref{proposition:exactfittedcase} appears stronger than that of
Theorem~\ref{theorem-firstorder}. 
%On the other hand, the second-order identifiability
%criterion turns out to be difficult to verify for matrix-variate parameter spaces.
}

As in the exact-fitted setting, in order to establish the bound 
$V \gtrsim W_2^2$ globally, we simply add a compactness condition
on the subset within which $G$ varies:

\begin{corollary} \label{corollary-overfitted} Assume that $\Theta$ and $\Omega$ are two compact subsets of $\mathbb{R}^{d_{1}}$ and $S_{d_{2}}^{++}$ respectively. Suppose that the density family $f$ is identifiable in the second order and admits uniform Lipschitz property up to the second order. Further, there is a positive constant $\alpha \leq 2$ such that for any $G_{1}, G_{2} \in \mathcal{O}_{k}(\Theta \times \Omega)$, we have $V(p_{G_{1}},p_{G_{2}}) \lesssim W_{2}^{\alpha}(G_{1},G_{2})$. Then for a fixed compact subset $\mathcal{O}$ of $\mathcal{O}_{k}(\Theta \times \Omega)$ there is a positive constant $C_{0}=C_{0}(G_{0})$ such that
\begin{eqnarray}
V(p_{G},p_{G_{0}}) \geq C_{0}W_{2}^{2}(G,G_{0}) \ \ \text{for all} \ G \in \mathcal{O}. \nonumber
\end{eqnarray}
\end{corollary}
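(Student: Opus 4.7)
The plan is to combine the local lower bound of Theorem \ref{theorem-secondorder}(a) with a compactness argument, following the same template used to upgrade Theorem \ref{theorem-firstorder} into Corollary \ref{corollary-lowerbound}. Theorem \ref{theorem-secondorder}(a) supplies constants $\epsilon_{0}>0$ and $C_{0}'=C_{0}'(G_{0})>0$ such that $V(p_{G},p_{G_{0}})\geq C_{0}'W_{2}^{2}(G,G_{0})$ for every $G\in\mathcal{O}_{k}(\Theta\times\Omega)$ with $W_{2}(G,G_{0})\leq\epsilon_{0}$. Hence the claimed inequality already holds on the near subset $\mathcal{O}_{\mathrm{n}}:=\{G\in\mathcal{O}:W_{2}(G,G_{0})\leq\epsilon_{0}\}$ with constant $C_{0}'$, and the task reduces to bounding $V(p_{G},p_{G_{0}})$ from below on the far subset $\mathcal{O}_{\mathrm{f}}:=\{G\in\mathcal{O}:W_{2}(G,G_{0})\geq\epsilon_{0}\}$.

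For the far part I would introduce $m:=\inf_{G\in\mathcal{O}_{\mathrm{f}}}V(p_{G},p_{G_{0}})$ and prove $m>0$ by compactness and contradiction. Pick $(G_{n})\subset\mathcal{O}_{\mathrm{f}}$ with $V(p_{G_{n}},p_{G_{0}})\to m$. Since $\mathcal{O}$ is $W_{2}$-compact and $G\mapsto W_{2}(G,G_{0})$ is continuous, a subsequence converges under $W_{2}$ to some $G^{\ast}\in\mathcal{O}_{\mathrm{f}}$, hence $W_{2}(G^{\ast},G_{0})\geq\epsilon_{0}>0$. The assumption $V(p_{G_{1}},p_{G_{2}})\lesssim W_{2}^{\alpha}(G_{1},G_{2})$ then gives $V(p_{G_{n}},p_{G^{\ast}})\to 0$, so by the triangle inequality for $V$ we obtain $V(p_{G^{\ast}},p_{G_{0}})=m$. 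If $m=0$, then $p_{G^{\ast}}=p_{G_{0}}$ almost everywhere; but second-order identifiability (Definition \ref{definition-secondorder}) contains classical identifiability as a special case --- simply take all derivative coefficients to be zero --- so this forces $G^{\ast}=G_{0}$, contradicting $W_{2}(G^{\ast},G_{0})\geq\epsilon_{0}$. Therefore $m>0$.

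To finish, set $D:=\sup_{G\in\mathcal{O}}W_{2}(G,G_{0})$, which is finite by compactness. For any $G\in\mathcal{O}_{\mathrm{f}}$ we then have $V(p_{G},p_{G_{0}})\geq m\geq (m/D^{2})\,W_{2}^{2}(G,G_{0})$, and with $C_{0}:=\min\{C_{0}',\,m/D^{2}\}$ the inequality $V(p_{G},p_{G_{0}})\geq C_{0}W_{2}^{2}(G,G_{0})$ holds uniformly over $\mathcal{O}$. The main obstacle is the $m>0$ step: it relies essentially on two facts that should be recorded explicitly, namely the continuity of $G\mapsto p_{G}$ in variational distance granted by $V\lesssim W_{2}^{\alpha}$, and the fact that second-order identifiability subsumes classical identifiability of the kernel class. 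Everything else is a routine patching of the local and far regimes.
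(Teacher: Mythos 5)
Your proposal is correct and follows essentially the same route as the paper's own proof of the analogous Corollary \ref{corollary-lowerbound}: patch the local bound from Theorem \ref{theorem-secondorder}(a) with a compactness argument on the far region, where $W_2$-compactness of $\mathcal{O}$, continuity of $G\mapsto p_G$ in $V$ via the hypothesis $V\lesssim W_2^{\alpha}$, and classical identifiability (which is indeed subsumed by condition A2 upon setting all the derivative coefficients to zero) combine to rule out $\inf V = 0$ away from $G_0$. The only cosmetic difference is that the paper argues by contradiction directly on the ratio $V/W_2^2$, whereas you bound $V$ from below by a positive constant $m$ on the far region and then divide by $D^2 = \sup_{\mathcal{O}} W_2^2(G,G_0)$; these are equivalent since $D$ is finite by compactness.
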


\comment{A simple setting in which
the compactness condition specified in both Proposition \ref{proposition-lowerbound} and Proposition \ref{proposition-overfitted} hold is when 
the mixing measures have compact support. However, under this setting
the gain in the lower bound obtained by Proposition~\ref{proposition:exactfittedcase} is lost
(due to the fact that under compact support, we have
$W_2^2(G,G_0) = O(W_1(G,G_0))$).
}

\subsection{ Characterization of strong identifiability} \label{Sec:Characterization}

In this subsection we identify a broad range of density classes for which the strong identifiability
conditions developed previously hold either in the first or the second order. 
Then we also present a general result 
which shows how strong identifiablity conditions continue to be preserved under 
certain transformations with respect to the parameter space. 

First, we consider univariate density functions with parameters of multiple types:
\begin{theorem}\label{identifiability-univariatecharacterization} (Densities with multiple varying parameters)
\begin{itemize}
\item[(a)] Generalized univariate logistic density function: 
Let 
$f(x|\theta,\sigma) :=
\dfrac{1}{\sigma}f((x-\theta)/\sigma)$, 
where $f(x)=\dfrac{\Gamma(p+q)}{\Gamma(p)\Gamma(q)}\dfrac{\exp(px)}{(1+\exp(x))^{p+q}}$, 
and $p,q$ are fixed positive integers. Then the family 
$\left\{f(x|\theta,\sigma),\theta \in \mathbb{R}, \right. \\ \left. \sigma \in \mathbb{R}_{+}\right\}$ is 
identifiable in the second order.
\item[(b)] Generalized Gumbel density function: 
Let $f(x|\theta,\sigma,\lambda) :=\dfrac{1}{\sigma}f((x-\theta)/\sigma,\lambda)$, 
where $f(x,\lambda) =\dfrac{\lambda^{\lambda}}{\Gamma(\lambda)}\exp(-\lambda(x+\exp(-x)))$ 
as $\lambda>0$. Then the family 
$\left\{f(x|\theta,\sigma,\lambda),\theta \in \mathbb{R},\sigma \in \mathbb{R}_{+},\lambda \in \mathbb{R}_{+}\right\}$ is identifiable in the second order.
\item[(c)] Univariate Weibull distribution: Let $f_{X}(x|\nu,\lambda)=\dfrac{\nu}{\lambda}\left(\dfrac{x}{\lambda}\right)^{\nu-1}\exp\left(-\left(\dfrac{x}{\lambda}\right)^{\nu}\right)$,
for $x \geq 0$, where $\nu,\lambda>0$ are shape and scale parameters, respectively. 
Then the family $\left\{f_{X}(x|\nu,\lambda),\nu \in \mathbb{R}_{+},\lambda \in \mathbb{R}_{+}\right\}$ is identifiable in the second order.
\item[(d)] Von Mises distributions ~\citep{Mardia-1975, Fraser-1981,Kent-1983}:
Denote $f(x|\mu,\kappa)=\dfrac{1}{2\pi I_{0}(\kappa)}\exp(\kappa\cos(x-\mu)).1_{\left\{x \in [0,2\pi)\right\}}$, where $\mu \in [0,2\pi), \kappa >0$, and $I_{0}(\kappa)$ is the modified Bessel function of order 0. Then the family $\left\{f(x|\mu,\kappa),\mu \in [0,2\pi),\kappa \in \mathbb{R}_{+}\right\}$ is identifiable in the second order.
\end{itemize}
\end{theorem}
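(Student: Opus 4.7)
\vspace{2mm}
\noindent\textbf{Proof proposal.} The plan is to treat the four families separately, but with a unified spirit: transform the identifiability equation into the frequency domain (or a series expansion), peel off distinct location parameters by an asymptotic or linear-independence argument, and finally exploit analytic / special-function properties of the kernel to force every remaining coefficient to vanish.

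For parts (a) and (b), both are location-scale families of the form $f(x|\theta,\sigma,\lambda)=\frac{1}{\sigma}g((x-\theta)/\sigma;\lambda)$ on $\mathbb{R}$ with analytic (shape) parameter $\lambda$ in (b). I would pass to the Fourier transform. The characteristic function of $f(\cdot|\theta,\sigma,\lambda)$ is $\widehat f(t;\theta,\sigma,\lambda)=e^{i\theta t}\widehat g(\sigma t;\lambda)$, so each partial derivative becomes an explicit polynomial-in-$t$ multiple of $\widehat g$ and its derivatives in $\sigma,\lambda$, all evaluated at $\sigma t$. After substitution, the vanishing linear combination becomes, for almost every $t\in\mathbb{R}$,
\begin{equation*}
\sum_{i=1}^{k} e^{i\theta_i t}\, P_i(t;\sigma_i,\lambda_i,\widehat g)\;=\;0,
\end{equation*}
where $P_i$ is an explicit analytic function in $t$ built from $\widehat g(\sigma_i t)$ and its partial derivatives. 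Since the $\theta_i$ are distinct, the first step is to show that each $P_i\equiv 0$; I would do this by the standard Vandermonde-type argument, multiplying by $e^{-i\theta_j t}$ and integrating against a smooth bump, or equivalently invoking linear independence of $\{e^{i\theta_i t}\}$ on any open interval (after noting polynomial growth of $P_i$ is controlled by the smoothness of $\widehat g$). This reduces the problem for each fixed $i$ to an analytic identity in $t$ involving $\widehat g(\sigma_i t;\lambda_i)$ and finitely many of its derivatives, with scalar coefficients $\alpha_i,\beta_i,\nu_i,\gamma_i,\eta_i$. A careful look at the explicit form of $\widehat g$ — for the generalized logistic this is a ratio of Beta functions / gamma functions on the imaginary axis, and for the Gumbel it is again expressible through gamma functions — lets me compare asymptotic expansions as $t\to 0$ (or $t\to\infty$) and conclude successively that each coefficient vanishes. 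The only real obstacle is checking non-degeneracy of these asymptotic expansions; this is a routine but tedious verification that the orders of vanishing/growth of $\widehat g^{(m)}$ produced by the different coefficients are distinct, so that no cancellation between different terms can occur.

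For (c), the Weibull family has no location parameter, so Fourier methods are less convenient; instead I would use the substitution $u=(x/\lambda)^\nu$, which turns $f_X$ into an exponential density in $u$ and makes all $\partial_\nu,\partial_\lambda$ derivatives explicit in $u$ and $\log u$. After the change of variables, the identifiability equation becomes, over $u>0$,
\begin{equation*}
\sum_{i=1}^k e^{-(u/\lambda_i)^{\nu_i}\text{-style terms}}\;Q_i(u,\log u;\alpha_i,\beta_i,\gamma_i,\nu_i,\eta_i)\;=\;0,
\end{equation*}
where each $Q_i$ is a polynomial in $\log u$ with analytic coefficients in $u$. Comparing dominant exponential rates as $u\to\infty$ separates the terms by pairs $(\nu_i,\lambda_i)$: the pair minimizing $u^{\nu_i}/\lambda_i^{\nu_i}$ dominates, so its $Q_i$ must vanish identically; iterating peels off all pairs. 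For each individual $i$, $Q_i\equiv 0$ as a polynomial in the algebraically independent functions $1,\log u,(\log u)^2,u^{\nu_i}\log u,u^{\nu_i},\ldots$ then forces every scalar coefficient to zero. The main subtlety is to verify these polynomial and logarithmic monomials really are linearly independent over the analytic functions that appear, which I would handle by inspecting leading powers of $\log u$ and $u$.

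Finally for (d), the Von Mises family lives on the circle, so I would expand everything in Fourier series on $[0,2\pi)$. Since $f(x|\mu,\kappa)=\frac{1}{2\pi I_0(\kappa)}\sum_{n\in\mathbb{Z}} I_{|n|}(\kappa)\, e^{in(x-\mu)}$, the identifiability equation becomes, for every $n\in\mathbb{Z}$,
\begin{equation*}
\sum_{i=1}^k e^{-in\mu_i}\, R_{i,n}(\kappa_i;\alpha_i,\beta_i,\nu_i,\gamma_i,\eta_i)\;=\;0,
\end{equation*}
where $R_{i,n}$ is an explicit combination of $I_{|n|}(\kappa_i)/I_0(\kappa_i)$, its $\kappa$-derivatives, and powers of $n$ coming from the $\mu$-derivatives. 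As $n$ ranges over $\mathbb{Z}$, the system of equations indexed by $n$ has a Vandermonde-like structure in $e^{-i\mu_i}$, and letting $|n|\to\infty$ combined with the known asymptotics of $I_{|n|}(\kappa)$ (which is monotone increasing in $\kappa$) separates the $(\mu_i,\kappa_i)$: the pair with the largest $\kappa_i$ dominates. Then, as in the other cases, one concludes pair-by-pair that every scalar coefficient vanishes.

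The main obstacle throughout is the step that shows no algebraic cancellation can conspire to keep a nontrivial coefficient alive: in each family this reduces to checking that a certain finite family of concrete special functions (powers of $t$ times $\widehat g^{(m)}$, or Bessel ratios $I_n/I_0$ and their $\kappa$-derivatives, or monomials in $u,\log u$) is linearly independent in the appropriate asymptotic regime. This is where the explicit form of each kernel is doing the real work, and where the proof is inevitably case-specific rather than deducible from a single abstract lemma.
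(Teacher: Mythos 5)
Your overall program---transform to a characteristic / moment generating function and then separate components by asymptotic dominance---is the paper's strategy in spirit, but the separation step you propose has a genuine gap. In parts (a) and (b) you write the transformed identity as $\sum_{i}e^{i\theta_i t}P_i(t;\sigma_i,\lambda_i,\widehat g)=0$ and claim "since the $\theta_i$ are distinct" each $P_i\equiv 0$ by linear independence of $\{e^{i\theta_i t}\}$. But only the \emph{pairs} $(\theta_i,\sigma_i)$ (resp.\ triples with $\lambda_i$) are assumed distinct; the $\theta_i$ alone may coincide. Worse, even when the $\theta_i$ are distinct, $\widehat g(\sigma_i t)$ is not a tempered factor: for the generalized logistic, $\widehat g(t)$ is built from $\Gamma(p+it)\Gamma(q-it)\asymp \text{poly}(t)\cdot e^{\pi t}/(e^{2\pi t}-1)$, so $e^{i\theta_i t}\widehat g(\sigma_i t)$ carries an exponential rate that depends \emph{jointly} on $\theta_i$ and $\sigma_i$. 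Multiplying by $e^{-i\theta_j t}$ and bump-testing does not decouple; the effective "frequencies" are combinations like $\pi\sigma_i\pm i\theta_i$, not $\theta_i$. The paper handles this precisely by clearing the $(e^{2\pi\sigma_j t}-1)$ denominators and applying a Laplace transform in which each $(\theta_j,\sigma_j)$ pair contributes a distinct pole $z=i\theta_j'+e(\sigma_1,\dots,\sigma_k)$; the peel-off then happens by highest-order poles, extracting first $\beta,\gamma$ (top-degree residues), then lower ones, rather than by $\theta$ alone.

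A second concrete issue is part (c): your substitution $u=(x/\lambda)^\nu$ is component-dependent (each $i$ has its own $(\nu_i,\lambda_i)$), so no single change of variable reduces all $k$ terms simultaneously. The paper instead uses the common substitution $Y=\log X$, which converts each Weibull component into a location--scale (Gumbel-type) form and lets it reuse the MGF machinery of part (b), ordering by $\nu_i$ and then $\lambda_i$ and taking $t\to+\infty$. Finally, for (d) your Fourier-series / large-$|n|$ idea is plausible but is not what the paper does; the paper (following Kent 1983) analytically continues $x\mapsto y+iz$, chooses a real $y^*$ so the quantities $\kappa_j\cos(y^*-\mu_j)$ are pairwise distinct, and lets $z\to\infty$ to dominate by the largest of these, which handles the circular geometry in one shot without needing the asymptotics of $I_n(\kappa)/I_0(\kappa)$ in $n$. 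Until you replace the "separate by $\theta_i$" step with a separation that tracks the joint exponential rates in $\theta$ and $\sigma$ (or $\mu$ and $\kappa$), the argument does not close.
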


Next, we turn to density function classes with matrix-variate parameter spaces,
as introduced in Section~\ref{Sec-upperbound}:

\begin{theorem}\label{identifiability-multivariatecharacterization} (Densities with matrix-variate parameters)
\begin{itemize}
\item[(a)] The family $\left\{f(x|\theta,\Sigma,m),\theta \in \mathbb{R}^{d}, \Sigma \in S_{d}^{++}, m \geq 1\right\}$ of multivariate generalized Gaussian distribution is identifiable in the first order.
\item[(b)] The family $\left\{f(x|\theta,\Sigma),\theta \in \mathbb{R}^{d},\Sigma \in S_{d}^{++}\right\}$ of multivariate t-distribution with fixed odd degree of freedom is  identifiable in the second order.
\item[(c)] The family $\left\{f(x|\theta,\Sigma,\lambda), \theta \in \mathbb{R}^{d}, \Sigma \in S_{d}^{++},\lambda \in \mathbb{R}^{d}_{+}\right\}$ of exponentially modified multivariate t-distribution with fixed odd degree of freedom is identifiable in the second order.
 \item[(d)] The family $\left\{f(x|\theta,\Sigma,a,b),\theta \in \mathbb{R}^{d},\Sigma \in S_{d}^{++}, a \in \mathbb{R}^{d}_{+},b \in \mathbb{R}^{d}_{+}, d \geq 2 \right\}$ of modified multivariate Gaussian-Gamma distribution is  identifiable in the first order.
\end{itemize}
\end{theorem}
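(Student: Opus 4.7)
The plan is to prove each of the four parts by contradiction: assume the prescribed linear combination of $f$ and its derivatives vanishes almost everywhere, and then deduce that every coefficient is zero. Because the offending identity holds for all $x$, I would pass to an integral transform (Fourier transform, or a direct asymptotic analysis in $x$ when no tractable transform is available) to convert the a.e.\ equation into a statement about analytic functions, where distinct support points correspond to distinct singularities or distinct exponential decay rates. Ordering the support points by a dominant parameter and peeling off one group at a time is the workhorse technique throughout; inside a single group, the algebraic independence of the derivative operators must be exploited to force the $\alpha_i, \beta_i, \gamma_i$ (and $\nu_i, \eta_i$ in the second-order case) to vanish individually.

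For parts (b) and (c) the key fact is that for odd $\nu$ the multivariate Student's t has a characteristic function expressible in closed form as a polynomial in $\xi$ times $\exp(-\sqrt{\nu}\,\|\Sigma^{1/2}\xi\|)$. Under the Fourier transform, first- and second-order derivatives with respect to $(\theta,\Sigma)$ become polynomials in $\xi$ multiplied by this characteristic function. For (c), the exponential modification contributes an extra rational factor $\prod_{j=1}^d (\lambda_j - i\xi_j)^{-1}$, and clearing denominators turns the identity into a polynomial-in-$\xi$ equation. Sorting by the dominant exponential rates as $\|\xi\|\to\infty$ separates contributions with different $\Sigma_i$; within each $\Sigma_i$, comparing polynomial coefficients separates different $\theta_i$; and within each $(\theta_i,\Sigma_i)$ the linear independence of the remaining derivative operators separates $\alpha_i, \beta_i, \nu_i, \gamma_i, \eta_i$. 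For (d) the same route works with the Gaussian characteristic function $\exp(i\xi^T\theta - \tfrac12 \xi^T\Sigma\xi)$ in place of the Student's; the restriction $d\geq 2$ enters because in dimension one the derivatives with respect to $(a,b)$ and $\Sigma$ can mix in a degenerate way that kills distinguishability.

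Part (a) is the most delicate since the generalized Gaussian admits no tractable Fourier transform for general $m$, so the plan is to work directly in the spatial variable. After absorbing normalization constants, the summand attached to $(\theta_i,\Sigma_i,m_i)$ is a polynomial in $y_i:=(x-\theta_i)^T\Sigma_i^{-1}(x-\theta_i)$ multiplied by $\exp(-y_i^{m_i})$. As $\|x\|\to\infty$ along a generic ray, the slowest-decaying exponential dominates asymptotically; ordering the support points lexicographically by $(m_i,\lambda_{d}(\Sigma_i))$ and iterating peels off the outermost group. Within a fixed $m_i$, different $\Sigma_i$ are then separated by their principal-direction decay rates, and different $\theta_i$ are finally separated by the polynomial pre-factors arising from the first derivatives.

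The main obstacle throughout is handling the matrix-variate parameter $\Sigma$. The symmetry constraint on $\gamma_i$ imposed in Definition \ref{definition-firstorder} is essential: since $\partial f/\partial\Sigma$ is itself symmetric for every density considered here, an antisymmetric $\gamma_i$ would annihilate $\trace(\gamma_i^T\,\partial f/\partial\Sigma)$ for free and destroy identifiability. Beyond this bookkeeping, the proof of (b) and (c) must track the quadratic-in-$(x-\theta_i)$ terms arising from $\partial f/\partial\Sigma$ against the $\partial^2 f/\partial\theta^2$ terms present in the second-order definition, since it is precisely these terms which collapse for Gaussians via $\partial^2 f/\partial\theta^2 = 2\,\partial f/\partial\Sigma$ and cause the weak identifiability analyzed in Section \ref{Sec-weakidentifiability}; verifying that no analogous collapse occurs for the t, exponentially modified t, Gaussian--Gamma, and generalized Gaussian families is what ultimately pins down the identifiability claims.
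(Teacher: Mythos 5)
Your proposal tracks the paper's strategy closely: reduce the identifiability claim via an integral transform (Fourier for the Student's-t cases, a moment-generating-function transform for the Gaussian--Gamma, direct spatial asymptotics for the generalized Gaussian), then peel off one support point at a time by sorting according to decay rates, with the closed-form residue computation of the Student's characteristic function for odd $\nu$ (Lemma~\ref{lemma-complexformula}) supplying the polynomial-times-$e^{-\sigma\sqrt{\nu}|t|}$ structure you anticipate. Two technical refinements the paper uses that your outline glosses over are worth flagging: in part~(a) the paper does not sort by the eigenvalue $\lambda_d(\Sigma_i)$ (which can coincide for distinct $\Sigma_i$), but instead writes $x=x_1 x'$ and picks a \emph{generic} direction $x'$ outside a finite union of hyperplanes and conics so that the scalars $(x')^T\Sigma_i^{-1}x'$ and $(x')^T\theta_i$ become pairwise distinct, a device borrowed from Yakowitz--Spragins; and in parts~(b)--(c), after the Fourier transform, terms sharing the same $\|\Sigma_j^{1/2}t'\|$ but differing in $\theta_j'$ produce oscillatory factors $e^{i\theta_j' t_1}$ that cannot be separated by comparing polynomial coefficients as you suggest, so the paper applies a further Laplace transform and reads off distinct poles at $s=\sigma_j\sqrt{\nu}-i\theta_j'$. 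Your high-level plan would force you to discover both of these fixes, so I regard them as imprecisions rather than a gap, but they are exactly the steps where the matrix-variate structure requires the most care.
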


We note that these theorems are quite similar to Chen's analysis on classes of 
density with single parameter spaces (cf.~\cite{Chen-95}). 
The proofs of these results, however, are technically nontrivial even if conceptually somewhat
straightforward. For the transparency of our idea, we only demonstrate the results in Theorem \ref{identifiability-univariatecharacterization} and Theorem \ref{identifiability-multivariatecharacterization} up to the first-order identifiability. The proof technique for the second-order identifiability is similar. They are given in the Appendices.
As can be seen in these proofs, the strong identifiability of these density
classes are established by exploiting how the corresponding
characteristics functions (i.e., Fourier transform of the density) vanish at 
infinity. Thus it can be concluded that the common feature in establishing strong
identifiability hinges on the smoothness of the density $f$ in question. 
(It is interesting to contrast this with the story in the next section,
where  we shall meet weakly identifiable
density classes whose algebraic structures play a more significant role in 
our theory).

We also add several technical remarks: 
Regarding part (a), we demonstrate in Proposition \ref{proposition:generaloverfittedGaussian} 
later that the class of multivariate Gaussian or generalized Gaussian distribution is not 
identifiable in the second order. 
The condition \textit{odd degree of freedom} in part (b) and (c) of 
Theorem \ref{identifiability-multivariatecharacterization} is mainly due to our proof 
technique. We believe both (b) and (c) hold for any fixed positive degree of freedom, 
but do not have a proof for such setting.

Before ending this section, we state a general result which is a response to
a question posed by Xuming He on the identifiability in
transformed parameter spaces..
The following theorem states that 
the first-order identifiability with respect to a transformed parameter space is preserved
under some regularity conditions of the transformation operator.
Let $T$ be a bijective mapping from $\Theta^{*} \times \Omega^{*}$ to $\Theta \times \Omega$ such that 
\[T(\eta,\Lambda)=(T_{1}(\eta,\Lambda),T_{2}(\eta,\Lambda))=(\theta,\Sigma)\] 
for all $(\eta,\Lambda) \in \Theta^{*} \times \Omega^{*}$, where $\Theta^{*} \subset \mathbb{R}^{d_{1}}$, $\Omega^{*} \subset S_{d_{2}}^{++}$. Define the class of density functions $\left\{g(x|\eta,\Lambda),\eta \in \Theta^{*},\Lambda \in \Omega^{*}\right\}$ by
\[g(x|\eta,\Lambda) := f(x|T(\eta,\Lambda)).\] 
Additionally, for any $(\eta,\Lambda) \in \Theta^{*} \times \Omega^{*}$, let
$J(\eta,\Lambda) \in \mathbb{R}^{(d_{1}+d_{2}^{2}) \times (d_{1}+d_{2}^{2})}$ be 
the modified Jacobian matrix of $T(\eta,\Lambda)$, i.e. the usual Jacobian matrix when 
$(\eta,\Lambda)$ is taken as a $d_{1}+d_{2}^{2}$ vector. 

\begin{theorem} \label{theorem:identifiabilitytransformation}
Assume that $\left\{f(x|\theta,\Sigma),\theta \in \Theta, \Sigma \in \Omega \right\}$ 
is identifiable in the first order. Then the class of density functions 
$\left\{g(x|\eta,\Lambda),\eta \in \Theta^{*},
\Lambda \in \Omega^{*} \right\}$ is identifiable in the first order if and only if 
the modified Jacobian matrix $J(\eta,\Lambda)$ is non-singular for all 
$(\eta,\Lambda) \in \Theta^{*} \times \Omega^{*}$.
\end{theorem}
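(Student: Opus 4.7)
The plan is to reduce first-order identifiability of $g$ to that of $f$ by the chain rule: the modified Jacobian $J(\eta,\Lambda)$ will appear exactly as the linear map relating the coefficient tuples in the two identifiability equations, so the theorem becomes a statement about when this linear map is injective.

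\textbf{Step 1 (Chain-rule translation).} Set $(\theta_i,\Sigma_i) := T(\eta_i,\Lambda_i)$; since $T$ is bijective, distinct $(\eta_i,\Lambda_i)$ yield distinct $(\theta_i,\Sigma_i)$ in $\Theta\times\Omega$. Differentiating $g(x|\eta,\Lambda)=f(x|T(\eta,\Lambda))$ and collecting terms gives, for any $\alpha_i\in\mathbb{R}$, $\beta_i\in\mathbb{R}^{d_1}$, and symmetric $\gamma_i\in\mathbb{R}^{d_2\times d_2}$,
\begin{align*}
& \alpha_i g(x|\eta_i,\Lambda_i) + \beta_i^T \dfrac{\partial g}{\partial \eta}(x|\eta_i,\Lambda_i) + \trace\!\bigl(\tfrac{\partial g}{\partial \Lambda}(x|\eta_i,\Lambda_i)^T \gamma_i\bigr) \\
&\qquad = \alpha_i f(x|\theta_i,\Sigma_i) + \tilde\beta_i^T \dfrac{\partial f}{\partial \theta}(x|\theta_i,\Sigma_i) + \trace\!\bigl(\tfrac{\partial f}{\partial \Sigma}(x|\theta_i,\Sigma_i)^T \tilde\gamma_i\bigr),
\end{align*}
where the new pair $(\tilde\beta_i,\tilde\gamma_i)$ is obtained from $(\beta_i,\gamma_i)$ by applying $J(\eta_i,\Lambda_i)$ to the corresponding vectorized parameter. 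Because $T_2$ takes values in $S_{d_2}^{++}$, every partial derivative $\partial T_2/\partial\xi_s$ is symmetric, so $\tilde\gamma_i$ inherits symmetry from $\gamma_i$. Thus the right-hand side is exactly in the form required by Definition~\ref{definition-firstorder} for $f$.

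\textbf{Step 2 (Sufficiency).} Suppose $J(\eta,\Lambda)$ is non-singular everywhere. If the first-order identifiability equation for $g$ holds, summing the identities of Step 1 produces a null linear combination of the form appearing in Definition~\ref{definition-firstorder} for $f$, evaluated at the distinct points $(\theta_i,\Sigma_i)$, with coefficients $\alpha_i$ and $(\tilde\beta_i,\tilde\gamma_i)$. The first-order identifiability of $f$ then forces $\alpha_i=0$ and $(\tilde\beta_i,\tilde\gamma_i)=0$. Since $(\tilde\beta_i,\tilde\gamma_i)$ is the image of $(\beta_i,\gamma_i)$ under the non-singular map $J(\eta_i,\Lambda_i)$, we conclude $\beta_i=0$ and $\gamma_i=0$, giving first-order identifiability of $g$.

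\textbf{Step 3 (Necessity).} Conversely, if $J(\eta_0,\Lambda_0)$ is singular at some point, choose a nonzero vector in its kernel whose matrix block is symmetric, yielding $(\beta_0,\gamma_0)\neq 0$ with $\gamma_0$ symmetric and $J(\eta_0,\Lambda_0)(\beta_0,\gamma_0)=0$. Taking $k=1$, $\alpha_1=0$, $(\eta_1,\Lambda_1)=(\eta_0,\Lambda_0)$, $\beta_1=\beta_0$, $\gamma_1=\gamma_0$, Step 1 shows that $\beta_1^T\partial g/\partial\eta + \trace((\partial g/\partial\Lambda)^T\gamma_1)$ vanishes identically in $x$ because the $f$-side coefficients $(\tilde\beta_1,\tilde\gamma_1)=J(\eta_0,\Lambda_0)(\beta_0,\gamma_0)$ are zero. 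This is a nontrivial null combination, so $g$ fails to be first-order identifiable.

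\textbf{Main obstacle.} The subtle point throughout is the symmetry constraint on the matrix coefficients $\gamma_i$. Definition~\ref{definition-firstorder} quantifies only over symmetric $\gamma$, so one must show (i) in Step 2 that $\tilde\gamma_i$ is symmetric whenever $\gamma_i$ is, and (ii) in Step 3 that a singular $J$ admits a kernel vector whose second block is a symmetric matrix. Part (i) follows immediately from $T_2(\eta,\Lambda)\in S_{d_2}^{++}$ and the linearity of symmetry; part (ii) requires a careful matching of the paper's vectorization convention for $J$ (a $(d_1+d_2^2)\times(d_1+d_2^2)$ matrix indexing both symmetric input and output matrices) with the restriction to the symmetric subspace. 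Once this bookkeeping is settled, the proof is a purely algebraic consequence of the chain rule.
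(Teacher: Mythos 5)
Your argument is correct and follows exactly the paper's proof strategy: the chain rule translates the first-order identifiability relation for $g$ into one for $f$ with coefficient tuples $(\tilde\beta_i,\tilde\gamma_i)=J(\eta_i,\Lambda_i)(\beta_i,\gamma_i)$, so identifiability of $g$ reduces to injectivity of each $J(\eta_i,\Lambda_i)$, with the two directions handled exactly as you do. If anything, your Step~3 is slightly cleaner than the paper's: it constructs the counterexample directly from a kernel vector, whereas the paper phrases it as a contradiction and contains a slip ("the non-singularity of matrix $J(\eta_0,\Lambda_0)$ leads to non-uniqueness" should read "singularity"); the symmetry bookkeeping you flag in (i) and (ii) is a real subtlety that the paper also leaves implicit, so your awareness of it is a point in your favor rather than a defect of the proposal.
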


The conclusion of Theorem 
\ref{theorem:identifiabilitytransformation} still holds if we replace the first-order 
identifiability by the second-order identifiability.
As we have seen previously, strong identifiablity
(either in the first or second order) yields sharp lower bounds of $V(p_G,p_{G_0})$
in terms of Wasserstein distances $W_r(G,G_0)$. It is useful to know that in the
transformed parameter space, one may still enjoy the same inequality. Specifically,
for any discrete probability measure $Q = \sum_{i=1}^{k} p_i \delta_{(\eta_i,\Lambda_i)}
\in \mathcal{E}_{k}(\Theta^{*} \times \Omega^{*})$, 
denote
\begin{eqnarray}
p_{Q}'(x)=\int {g(x|\eta,\Lambda)}dQ(\eta,\Lambda)=\mathop {\sum }\limits_{i=1}^{k}{p_{i}g(x|\eta_{i},\Lambda_{i})}. \nonumber
\end{eqnarray}
Let $Q_{0}$ to be a fixed discrete probability measure on $\mathcal{E}_{k_{0}}(\Theta^{*} \times \Omega^{*})$, while probability measure $Q$ varies in $\mathcal{E}_{k_{0}}(\Theta^{*} \times \Omega^{*})$.
\comment{
 as the first-order identifiability is preserved under the non-singularity of the modified Jacobian matrix of bijective mapping $T$, the lower bound of the total variation distance between $p_{Q}'$ and $p_{Q_{0}}'$ in terms of $W_{1}(Q,Q_{0})$ continues to hold under certain conditions of class of density functions 
$\left\{f(x|\theta,\Sigma),\theta \in \Theta,\Sigma \in \Omega\right\}$ and the bijective mapping $T$.}

\begin{corollary} \label{corollary:lowerboundtransformation} Assume that the conditions of Theorem \ref{theorem:identifiabilitytransformation} hold. Further, suppose that 
the first derivative of $f$ in terms of $\theta$,$\Sigma$ and the first derivative of $T$ in terms of $\eta,\Lambda$ are $\alpha$-H\"older continuous and bounded where $\alpha>0$. Then there are positive constants $\epsilon_{0} := \epsilon_0(Q_{0})$ and
$C_0 := C_0(Q_{0})$ such that as long as $Q \in \mathcal{E}_{k}(\Theta^{*} \times \Omega^{*})$ and $W_{1}(Q,Q_{0}) \leq \epsilon_{0}$, we have
\begin{eqnarray}
V(p_{Q}',p_{Q_{0}}') \geq C_{0}W_{1}(Q,Q_{0}). \nonumber
\end{eqnarray}
\end{corollary}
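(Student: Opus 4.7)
The plan is to reduce the claim to a direct application of Theorem~\ref{theorem-firstorder} to the transformed density family $\{g(x|\eta,\Lambda)\}_{(\eta,\Lambda)\in\Theta^{*}\times\Omega^{*}}$, treating $Q_{0}$ in the role of $G_{0}$ and $Q$ in the role of $G$ (in the exact-fitted regime). To invoke Theorem~\ref{theorem-firstorder} I must verify two hypotheses for the family $g$: (i) first-order identifiability, and (ii) the uniform Lipschitz property \eqref{Eqn:Lipschitz1}--\eqref{Eqn:Lipschitz2} up to the first order. Item (i) is immediate from Theorem~\ref{theorem:identifiabilitytransformation}, since ``the conditions of Theorem~\ref{theorem:identifiabilitytransformation}'' are assumed to hold and include non-singularity of the modified Jacobian matrix $J(\eta,\Lambda)$ throughout $\Theta^{*}\times\Omega^{*}$. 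All the real work therefore goes into (ii), which is where the H\"older-continuity/boundedness hypothesis on the first derivatives of $f$ and of $T$ is consumed.

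\textbf{Lipschitz bound via chain rule.} Viewing $(\eta,\Lambda)$ as a single vector of length $d_{1}+d_{2}^{2}$, the chain rule expresses each first partial of $g$ as a linear combination of entries of $\partial f/\partial\theta$ and $\partial f/\partial\Sigma$ evaluated at $T(\eta,\Lambda)$, with coefficients given by entries of the Jacobian of $T$. Schematically,
\[
\frac{\partial g}{\partial \eta_{i}}(x|\eta,\Lambda)
= \Bigl\langle \frac{\partial f}{\partial \theta}(x|T(\eta,\Lambda)),\, \frac{\partial T_{1}}{\partial \eta_{i}}(\eta,\Lambda)\Bigr\rangle
+ \trace\!\Bigl(\frac{\partial f}{\partial \Sigma}(x|T(\eta,\Lambda))^{T}\frac{\partial T_{2}}{\partial \eta_{i}}(\eta,\Lambda)\Bigr),
\]
with the analogous identity for $\partial g/\partial \Lambda_{uv}$. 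To bound the left-hand side of \eqref{Eqn:Lipschitz1} for $g$ at $(\eta_{1},\Lambda)$ versus $(\eta_{2},\Lambda)$, I would add and subtract a hybrid term in which the Jacobian entries of $T$ are evaluated at $(\eta_{1},\Lambda)$ but the partials of $f$ are evaluated at $T(\eta_{2},\Lambda)$. This splits the difference into two pieces: the first involves $\partial_{\theta}f(x|T(\eta_{1},\Lambda))-\partial_{\theta}f(x|T(\eta_{2},\Lambda))$ and its $\Sigma$-analogue, which are $\alpha$-H\"older in $\|T(\eta_{1},\Lambda)-T(\eta_{2},\Lambda)\|\lesssim \|\eta_{1}-\eta_{2}\|$ (the last inequality uses boundedness of $\partial T/\partial \eta$), multiplied by the bounded entries of the Jacobian of $T$; the second involves differences of $\partial T_{1}/\partial \eta_{i}$ and $\partial T_{2}/\partial \eta_{i}$ at $\eta_{1},\eta_{2}$, $\alpha$-H\"older in $\|\eta_{1}-\eta_{2}\|$ by assumption, multiplied by the bounded first derivatives of $f$. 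Summing gives a bound of the form $C\|\eta_{1}-\eta_{2}\|^{\alpha}\|\gamma_{1}\|$, which is exactly \eqref{Eqn:Lipschitz1} for $g$ with exponent $\delta_{1}=\alpha$. The symmetric argument with $\eta$ fixed and $\Lambda_{1},\Lambda_{2}$ varying yields \eqref{Eqn:Lipschitz2} for $g$ with $\delta_{2}=\alpha$.

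\textbf{Conclusion and main obstacle.} With (i) and (ii) verified, Theorem~\ref{theorem-firstorder} applied to the family $g$ at $Q_{0}$ produces positive constants $\epsilon_{0}=\epsilon_{0}(Q_{0})$ and $C_{0}=C_{0}(Q_{0})$ such that $V(p_{Q}^{\prime},p_{Q_{0}}^{\prime})\ge C_{0}W_{1}(Q,Q_{0})$ whenever $Q$ lies in the same exact-fitted space and $W_{1}(Q,Q_{0})\le \epsilon_{0}$, which is the desired inequality. The main obstacle is bookkeeping in the chain-rule step: the second coordinate $\Lambda$ is a symmetric matrix, so the Jacobian of $T_{2}$ with respect to $\Lambda$ is a tensorial object, and one must verify that all H\"older and boundedness constants can be pulled out \emph{uniformly} on a neighborhood of the support of $Q_{0}$ rather than only pointwise. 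Because the relevant neighborhood is compact (it surrounds the finitely many atoms of $Q_{0}$), the boundedness clause of the hypothesis supplies the required uniform constants, and no additional difficulty arises.
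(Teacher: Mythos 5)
Your proposal is correct and takes essentially the same approach as the paper: verify first-order identifiability of $g$ via Theorem~\ref{theorem:identifiabilitytransformation}, verify the uniform Lipschitz conditions~\eqref{Eqn:Lipschitz1}--\eqref{Eqn:Lipschitz2} for $g$ by the chain rule together with the add-and-subtract decomposition controlled by H\"older continuity of $\partial f$ and $\partial T$ and boundedness, and then invoke Theorem~\ref{theorem-firstorder}. The only cosmetic difference is that the paper phrases the chain-rule bound component-by-component rather than via the hybrid-term language, but the estimates are identical.
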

\paragraph{Remark.} If $\Theta$ and $\Omega$ are bounded sets, the condition on the boundedness of the first derivative of $f$ in terms of $\theta,\Sigma$ and the first derivative of $g$ in terms of $\eta,\Lambda$ can be left out. Additionally, the restriction that these derivatives should be $\alpha$-H\"older continuous can be relaxed to only that the first derivative of $f$ and the first derivative of $g$ are $\alpha_{1}$-H\"older continuous and $\alpha_{2}$-H\"older continuous where $\alpha_{1},\alpha_{2}>0$ can be different.

%%% Section 4
\section{Theory for weakly identifiable classes} 
\label{Sec-weakidentifiability}
\newcommand{\rbar}{\ensuremath{{\overline{r}}}}
\newcommand{\Ecal}{\ensuremath{{\mathcal{E}}}}

The general theory of strong identifiability developed in the previous section
encompasses many classes of distributions, but they are not applicable to 
some important classes, those that we shall call \emph{weakly identifiable} classes
of distributions. These are the families of densities that 
are identifiable in the classical sense in a finite mixture setting, but they do 
not satisfy the strong identifiability conditions we have defined previously. 
Such classes of densities give rise to the ubiquitous location-covariance
Gaussian mixture, as well as mixture of Gamma distributions,
and mixture of skew-Gaussian distributions. We will see that these density
classes carry a quite varied and fascinating range of behaviors:
the specific algebraic structure of the density class in 
question now plays the fundamental role in determining identifiability and convergence
properties for model parameters and the mixing measure.

\subsection{Over-fitted mixture of location-covariance Gaussian distributions}

Location-covariance Gaussian distributions belong to the broader class of 
generalized Gaussians (cf. Example~\ref{example-Gaussian}),
which is identifiable in the first order according to
Theorem~\ref{identifiability-multivariatecharacterization}.
The class of location-covariance Gaussian distributions, however, is not identifiable
in the second order. This implies that in the over-fitted mixture
setting, Theorem~\ref{theorem-secondorder} is not applicable. 

In this section the multivariate Gaussian densities
$\left\{f(x|\theta,\Sigma),\theta \in \mathbb{R}^{d},\Sigma \in S_{d}^{++} \right\}$
is defined in the usual way,
i.e., 
$f(x|\theta,\Sigma)=\dfrac{1}{(2\pi)^{d/2}|\Sigma|^{1/2}}\exp(-(x-\theta)^{T}\Sigma^{-1}(x-\theta)/2)$. (Note that the scaling in the exponent 
slightly differs from the version 
given in Example~\ref{example-Gaussian}, where Gaussian distribution corresponds to setting $m=1$,
but this discrepancy is inconsequential). In fact, using
the same approach as the proof of Theorem \ref{identifiability-multivariatecharacterization}, 
we can verify that for any fixed positive number $m>1$, the class of generalized Gaussian
distributions is also identifiable in the second order. So within this broader family,
it is essentially only the class of Gaussian distributions with \emph{both} location and covariance parameters 
varying that is weakly identifiable.

\begin{proposition} \label{proposition:generaloverfittedGaussian} The family $\left\{f(x|\theta, \Sigma), \theta \in \mathbb{R}^{d}, \Sigma \in S_{d}^{++} \right\}$ of multivariate Gaussian distribution is not identifiable in the second order. 
\end{proposition}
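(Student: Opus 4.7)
The plan is to exhibit an explicit non-trivial choice of coefficients that makes the linear combination in condition A2 vanish identically. The whole proof will hinge on a single algebraic identity satisfied by the Gaussian density, namely the heat-equation-type relation
\begin{equation*}
\frac{\partial^{2} f}{\partial \theta \, \partial \theta^{T}}(x\,|\,\theta,\Sigma) \;=\; 2 \, \frac{\partial f}{\partial \Sigma}(x\,|\,\theta,\Sigma).
\end{equation*}
This is mentioned in the introduction and is the same identity that motivated defining the weakly identifiable setting.

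First I will verify the identity. A direct calculation gives $\frac{\partial f}{\partial \theta} = \Sigma^{-1}(x-\theta) f$ and $\frac{\partial^{2} f}{\partial \theta \partial \theta^{T}} = \bigl[\Sigma^{-1}(x-\theta)(x-\theta)^{T}\Sigma^{-1} - \Sigma^{-1}\bigr] f$, while differentiating $\log f = \mathrm{const} - \tfrac{1}{2}\log|\Sigma| - \tfrac{1}{2}(x-\theta)^{T}\Sigma^{-1}(x-\theta)$ with respect to the symmetric matrix $\Sigma$ yields $\frac{\partial f}{\partial \Sigma} = \tfrac{1}{2}\bigl[\Sigma^{-1}(x-\theta)(x-\theta)^{T}\Sigma^{-1} - \Sigma^{-1}\bigr] f$. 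Comparing these two expressions establishes the identity. I will state this as a one-line lemma.

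Next, I take $k = 1$ and fix an arbitrary pair $(\theta_{1},\Sigma_{1}) \in \mathbb{R}^{d}\times S_{d}^{++}$. Choose any non-zero vector $v \in \mathbb{R}^{d}$ and set
\begin{equation*}
\alpha_{1} = 0, \quad \beta_{1} = \vec{0}, \quad \nu_{1} = v, \quad \eta_{1} = \vec{0}, \quad \gamma_{1} = -2\,v v^{T}.
\end{equation*}
Note that $\gamma_{1}$ is symmetric (as required by A2) and non-zero since $v \neq \vec{0}$, while $\nu_{1} \neq \vec{0}$ as well, so the tuple of coefficients is non-trivial. Substituting into the sum appearing in A2, only the $\nu_{1}$-quadratic term and the $\gamma_{1}$-trace term survive, and by the identity above,
\begin{equation*}
\nu_{1}^{T} \frac{\partial^{2} f}{\partial \theta^{2}}(x\,|\,\theta_{1},\Sigma_{1}) \nu_{1} \;=\; \mathrm{tr}\!\left( v v^{T} \cdot 2\frac{\partial f}{\partial \Sigma}(x\,|\,\theta_{1},\Sigma_{1}) \right) \;=\; -\,\mathrm{tr}\!\left( \frac{\partial f}{\partial \Sigma}(x\,|\,\theta_{1},\Sigma_{1})^{T} \gamma_{1} \right),
\end{equation*}
using symmetry of $\partial f/\partial \Sigma$ and the Frobenius inner product identification. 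Hence the two terms cancel for every $x$, and the whole sum vanishes identically, exhibiting a non-trivial relation that violates A2.

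The main (and essentially the only) obstacle is a careful bookkeeping of what $\nu^{T}\!\bigl(\partial^{2} f/\partial\theta^{2}\bigr)\nu$ means as a quadratic form against the Hessian and how it pairs with $\mathrm{tr}\bigl((\partial f/\partial \Sigma)^{T}\gamma\bigr)$ — this is purely a matter of matching the Frobenius inner product conventions used in Definition 3.2. Once that is in place, the counterexample is immediate and demonstrates that the Gaussian family fails second-order identifiability even for a single support point, a fortiori for any $k \geq 1$.
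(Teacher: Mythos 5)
Your proof is correct and takes essentially the same route as the paper: both rely on the identity $\frac{\partial^2 f}{\partial\theta^2} = 2\,\frac{\partial f}{\partial\Sigma}$ (stated in the paper as Lemma A.1) and then exhibit the non-trivial tuple with $\gamma_i = -2\nu_i\nu_i^T$ and all other coefficients zero. Your specialization to $k=1$ and the explicit Frobenius-pairing check are fine, and the paper's argument is the same choice carried out for general $k$.
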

\begin{proof}
\comment{Let $k \geq 1$ be a fixed positive integer and $k$ different pairs $(\theta_{1},\Sigma_{1}),...,(\theta_{k},\Sigma_{k}) \in \Theta \times \Omega$. Assume that we have $\alpha_{i} \in \mathbb{R},\beta_{i},\nu_{i} \in \mathbb{R}^{d_{1}}$, $\gamma_{i}, \eta_{i}$ symmetric matrices in $\mathbb{R}^{d_{2} \times d_{2}}$ as $i=1,\ldots, k$ such that
\begin{eqnarray}
\sum_{i=1}^{k} \biggr \{ \alpha_{i}f(x|\theta_{i},\Sigma_{i}) +
\beta_{i}^{T} \dfrac{\partial{f}}{\partial{\theta}}(x|\theta_{i},\Sigma_{i}) +
\nu_{i}^{T} \dfrac{\partial^{2}{f}}
{\partial{\theta}\partial{\theta^T}} (x|\theta_{i},\Sigma_{i}) \nu_{i} & + &  \notag \\
\trace \left(\dfrac{\partial{f}}{\partial{\Sigma}}(x|\theta_{i},\Sigma_{i})^{T}
\gamma_{i}\right ) 
+ 2\nu_{i}^{T}\left[\dfrac{\partial}{\partial{\theta}}\left(\trace \left(\dfrac{\partial{f}}{\partial{\Sigma}}(x|\theta_{i},\Sigma_{i})^{T}\eta_{i}\right)\right)\right]  & + & \notag \\
 \trace \left(\dfrac{\partial{}}{\partial{\Sigma}}\left(\trace\left(\dfrac{\partial{f}}{\partial{\Sigma}}(x|\theta_{i},\Sigma_{i})^{T}\eta_{i}\right)\right)^{T}\eta_{i}\right) 
\biggr \}
& = & 0 \quad \text{for almost all} \ x, \nonumber
\end{eqnarray}
}
The proof is immediate thanks to the following key identity,
which holds for all $\theta \in \mathbb{R}^{d}$ and $\Sigma \in S_{d}^{++}$:
\begin{equation}
\label{key-gaussian}
\dfrac{\partial^{2}{f}}{\partial{\theta}^{2}}(x|\theta,\Sigma)=2\dfrac{\partial{f}}{\partial{\Sigma}}(x|\theta,\Sigma).
\end{equation}
This identity is stated as Lemma~\ref{lemma:multivariatenormaldistribution} whose proof
is given in the Appendix.
Now, by choosing $\alpha_{i}=0 \in \mathbb{R}$, $\beta_{i}= \vec{0} \in \mathbb{R}^{d}$, $\eta_{i}= \vec{0} \in \mathbb{R}^{d \times d}$, and $2\nu_{i}\nu_{i}^{T}+\gamma_{i}=0$ for all $1 \leq i \leq k$, 
the equation given in [A2.] of Definition~\ref{definition-secondorder}
is clearly satisfied for all $x$. 
Since $\nu_{i}$ and $\gamma_{i}$ need not be $\vec{0}$, the second-order identifiability
does not hold.
\comment{ the above equation becomes
\begin{eqnarray}
\sum_{i=1}^{k} \biggr \{\alpha_{i}+ 4(x-\theta_{i})^{T}\Sigma_{i}^{-1}\nu_{i}\nu_{i}^{T}\Sigma_{i}^{-1}(x-\theta_{i}) - 2\nu_{i}^{T}\Sigma_{i}^{-1}\nu_{i}+ 2\beta_{i}^{T}\Sigma_{i}^{-1}(x-\theta_{i})-\dfrac{1}{2}\trace(\Sigma_{i}^{-1}\gamma_{i})+ \nonumber \\
(x-\theta_{i})^{T}\Sigma_{i}^{-1}\gamma_{i}\Sigma_{i}^{-1}(x-\theta_{i}) + \dfrac{\trace(\Sigma_{i}^{-1}\eta_{i})^{2}}{4} - \dfrac{\trace(\Sigma_{i}^{-1}\eta_{i})\trace(\Sigma_{i}^{-1}(x-\theta_{i})(x-\theta_{i})^{T}\Sigma_{i}^{-1}\eta_{i})}{2}+ \nonumber \\
 \dfrac{\trace(\Sigma_{i}^{-1}\eta_{i}\Sigma_{i}^{-1}\eta_{i})}{2} + 2(x-\theta_{i})^{T}\Sigma_{i}^{-1}\eta_{i}\Sigma_{i}^{-1}\eta_{i}\Sigma_{i}^{-1}(x-\theta_{i})+ \left((x-\theta_{i})^{T}\Sigma_{i}^{-1}\eta_{i}\Sigma_{i}^{-1}(x-\theta_{i})\right)^{2} - \nonumber \\
 \nu_{i}^{T}\Sigma_{i}^{-1}(x-\theta_{i})^{T}\trace(\Sigma_{i}^{-1}\eta_{i})-2\nu_{i}^{T}\Sigma_{i}^{-1}\eta_{i}\Sigma_{i}^{-1}(x-\theta_{i})+ \nonumber \\
2\nu_{i}^{T}\Sigma_{i}^{-1}(x-\theta_{i})(x-\theta_{i})^{T}\Sigma_{i}^{-1}\eta_{i}\Sigma_{i}^{-1}(x-\theta_{i}) \biggr \}\dfrac{\exp(-(x-\theta_{i})^{T}\Sigma_{i}^{-1}(x-\theta_{i}))}{\pi^{d/2}|\Sigma_{i}|^{1/2}}= 0. \nonumber
\end{eqnarray}}
\end{proof}
Identity~\eqref{key-gaussian} is the reason that strong identifiability fails for over-fitted
location-scale mixture of Gaussians. We shall see that it 
also provides the key for uncovering the precise convergence behavior of the 
mixing measure in the over-fitted Gaussian mixture model.

Let $G_{0}$ be a fixed probability measure with exactly $k_{0}$ support points, 
$\Theta$ is bounded subset of $\mathbb{R}^{d}$ and $\Omega$ is subset of $S_{d}^{++}$ 
where the largest eigenvalue of their elements are bounded above. Let $G$ vary
in the larger set $\mathcal{O}_k(\Theta \times \Omega)$, where $k \geq k_0 + 1$.
We shall no longer expect bounds of the kind $V \gtrsim W_2^2$
such as those established by Theorem~\ref{theorem-secondorder}. In fact, we can obtain  
sharp bounds of the type
$V(p_{G},p_{G_0}) \gtrsim W_{r}^{r}(G,G_0)$, where $r$ is determined by the (in)solvability
of a system of polynomial equations that we now describe.

For any fixed $k,k_{0} \geq 1$ where $k \geq k_{0}+1$, 
we define $\overline{r} \geq 1$ to be the \emph{minimum} 
value of $r \geq 1$ such that the following system of polynomial equations \begin{eqnarray}
\mathop {\sum }\limits_{j=1}^{k-k_{0}+1}{\mathop {\sum }\limits_{\substack{n_{1}+2n_{2} = \alpha \\ n_{1},n_{2} \geq 0}}{\dfrac{c_{j}^{2}a_{j}^{n_{1}}b_{j}^{n_{2}}}{n_{1}!n_{2}!}}}=0 \ \ \text{for each} \;
\alpha = 1,\ldots, r 
\label{eqn:generalovefittedGaussianzero}
\end{eqnarray}
does not have any \textbf{non-trivial} solution for the unknowns
$(c_{1},\ldots,c_{k-k_{0}+1},a_{1},\ldots,a_{k-k_{0}+1},b_{1},\ldots,b_{k-k_{0}+1})$. 
A solution is considered non-trivial if 
$c_{1},\ldots,c_{k-k_{0}+1}$ differ from 0 and at least one of 
$a_{1},\ldots,a_{k-k_{0}+1}$ differs from 0.

\paragraph{Remark.} This is a system of $r$ polynomial equations for $3(k-k_0+1)$ unknowns.
The condition $c_{1},\ldots,c_{k-k_{0}+1} \neq 0$ is very important. 
In fact, if $c_{1}=0$, then by choosing $a_{1} \neq 0$, $a_{i}=0$ for all $2 \leq i \leq k-k_{0}+1$ and $b_{j}=0$ for all $1 \leq j \leq k-k_{0}+1$, we can check that $\mathop {\sum }\limits_{j=1}^{k-k_{0}+1}{\mathop {\sum }\limits_{\substack{n_{1}+2n_{2} = \alpha \\ n_{1},n_{2} \geq 0}}{\dfrac{c_{j}^{2}a_{j}^{n_{1}}b_{j}^{n_{2}}}{n_{1}!n_{2}!}}}=0$ is satisfied for all $\alpha \geq 1$. Therefore, without this condition, $\overline{r}$ does not exist.

\paragraph{Example.}
To get a feel for the system of equations~\eqref{eqn:generalovefittedGaussianzero},
let us consider the case $k=k_0+1$, and let $r=3$. Then we obtain the equations:
\begin{eqnarray*}
& &c_{1}^{2}a_{1}+c_{2}^{2}a_{2}=0 \\ 
& &\dfrac{1}{2}(c_{1}^{2}a_{1}^{2}+c_{2}^{2}a_{2}^{2})+ c_{1}^{2}b_{1}+c_{2}^{2}b_{2} = 0 \\ 
& &\dfrac{1}{3!}(c_{1}^{2}a_{1}^{3}+c_{2}^{2}a_{2}^{3})+ c_{1}^{2}a_{1}b_{1}+c_{2}^{2}a_{2}b_{2} =0.
\end{eqnarray*}
It is simple to see that a non-trivial solution exists, by choosing $c_2 = c_1 \neq 0$, 
$a_1 = 1, a_2 = -1, b_1 = b_2 =-1/2$. 
Hence, $\overline{r} \geq 4$. For $r=4$, the system consists of the three equations
given above, plus
\begin{equation*}
\dfrac{1}{4!}(c_{1}^{2}a_{1}^{4}+c_{2}^{2}a_{2}^{4})+\dfrac{1}{2!}(c_{1}^{2}a_{1}^{2}b_{1}+c_{2}^{2}a_{2}^{2}b_{2})+\dfrac{1}{2!}(c_{1}^{2}b_{1}^{2}+c_{2}^{2}b_{2}^{2})=0 \label{S.1.4}
\end{equation*}
It can be shown in the sequel that this system has no non-trivial solution. Therefore for 
$k=k_0+1$, we have $\overline{r} = 4$.
Determining the exact value of $\overline{r}$ in the general case appears very difficult.
Even for the specific value of $k-k_0$, finding $\rbar$ is not easy.
There are well-developed methods in computational algebra for dealing with this type of
polynomial equations, such as Groebner bases~\citep{Bruno-Thesis} and 
resultants~\citep{Sturmfels}. Using the Groebner bases method, we can show that:
\begin{proposition}\label{proposition-specificvalueoverliner} 
{\bf (Values of $\rbar$)}
\begin{itemize}
\item[(i)] If $k-k_{0}=1$, $\overline{r}=4$.
\item[(ii)] If $k-k_{0}=2$, $\overline{r}=6$.
\item[(iii)] If $k-k_{0} \geq 3$, $\overline{r} \geq 7$.
\end{itemize}
\end{proposition}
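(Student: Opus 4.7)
The plan is to recognize that the system of polynomial equations encodes the vanishing of Taylor coefficients of a specific generating function. Using the identity
\[\sum_{n_1+2n_2=\alpha} \frac{a^{n_1}b^{n_2}}{n_1!n_2!} = [t^\alpha]\exp(at+bt^2),\]
the system for a given $r$ is equivalent to requiring that $F(t):=\sum_{j=1}^{m} c_j^2 \exp(a_j t + b_j t^2)$ satisfies $F(t)-F(0) = O(t^{r+1})$, where $m:=k-k_0+1$. A key monotonicity observation is the ``splitting trick'': replacing a triple $(c_j,a_j,b_j)$ by two identical copies $(c_j/\sqrt{2},a_j,b_j)$ leaves $F$ unchanged and yields a valid $(m{+}1)$-solution from an $m$-solution, so $\overline{r}$ is nondecreasing in $m$. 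It therefore suffices to construct non-trivial solutions at the minimal admissible $m$ in each case.

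For part (i), $m=2$: the explicit example in the paper already gives a non-trivial solution at $r=3$, so $\overline{r}\geq 4$. For the reverse inequality, I would first note that $a_1=0$ forces $a_2=0$ by the $\alpha=1$ equation and positivity of $c_j^2$, violating non-triviality; so I normalize $a_1=1$ via the weighted scaling $(a,b)\mapsto(\lambda a,\lambda^2 b)$ and set $\beta:=c_1^2/c_2^2>0$, obtaining $a_2=-\beta$. The equations for $\alpha=2,3$ then solve uniquely for $b_1=-(2\beta+1)/6$ and $b_2=-\beta(2+\beta)/6$. Substituting into $\alpha=4$ collapses to the scalar identity $\beta(\beta+1)(\beta^2+\beta+1)=0$, which admits no positive real root, establishing $\overline{r}\leq 4$.

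For part (iii), $m\geq 4$: I construct an explicit non-trivial solution at $r=6$ for $m=4$ and extend to larger $m$ via splitting. The symmetric ansatz with two pairs $(c,\pm 1,b_1)$, $(d,\pm s,b_2)$ makes the odd-order equations vanish by parity, leaving three constraints in five free parameters $(c,d,s,b_1,b_2)$ modulo scaling. With $\lambda:=d^2/c^2$ and the reparametrization $b_1=-1/2+\epsilon_1$, $b_2=-s^2/2+\epsilon_2$, equation $\alpha=2$ gives $\epsilon_1=-\lambda\epsilon_2$, equation $\alpha=4$ gives $\epsilon_2^2=(1+\lambda s^4)/(6\lambda(\lambda+1))$, and equation $\alpha=6$, after substitution, becomes a polynomial relation between $\lambda$ and $s^2$. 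Specializing to $\lambda=1$ collapses the problem to $192(1+s^6)^2=225(1-s^4)^2(1+s^4)$, which is palindromic in $s^2$ and equivalent to $11y^2-22y-64=0$ with $y=s^2+s^{-2}\geq 2$; the root $y=1+5\sqrt{33}/11>2$ is admissible and produces real positive values of $s^2$, yielding a valid non-trivial solution.

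For part (ii), $m=3$: the lower bound $\overline{r}\geq 6$ follows from the symmetric ansatz $(c_0,0,b_0), (c_1,\pm 1,b_1)$, where odd-order equations vanish automatically and the remaining $\alpha=2,4$ constraints admit an explicit one-parameter family (perturbing $b_1=-1/2+\epsilon$ gives $c_0^2=12\epsilon^2/(1-6\epsilon^2)$ and $b_0=-(1-6\epsilon^2)/(6\epsilon)$ for small $\epsilon>0$). The upper bound $\overline{r}\leq 6$ is the main obstacle: after normalizing $a_1=1$, one faces six polynomial equations in seven real unknowns with positivity constraints $c_j^2>0$. The strategy is to compute a Gr\"obner basis of this ideal under a lexicographic term order, successively eliminate variables until arriving at a univariate polynomial in a single ratio (e.g.\ $c_1^2/c_2^2$), and show that its real roots violate positivity --- in direct analogy with the polynomial $\beta^3+2\beta^2+2\beta+1$ in case (i). The hard part is that, unlike case (i), the elimination is not tractable by hand and must be carried out by computer algebra; conceptually, the essential subtlety throughout is that the obstruction lives on the real-positive part of the variety, not on its full complex locus.
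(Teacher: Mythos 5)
Your argument is correct, and in several places it takes a genuinely different and cleaner route than the paper, so the comparison is worth spelling out. The paper's own proof relies on Gr\"obner-basis computations for all three parts, verified by computer algebra. You instead isolate two structural ideas: the generating-function observation $\sum_{n_1+2n_2=\alpha}\frac{a^{n_1}b^{n_2}}{n_1!n_2!}=[t^\alpha]\exp(at+bt^2)$, and the ``splitting'' monotonicity that $\overline{r}$ is nondecreasing in $m=k-k_0+1$. These buy you two things. First, for part (i), after normalizing $a_1=1$ and writing $\beta=c_1^2/c_2^2$, the $\alpha=1,2,3$ equations uniquely determine $(a_2,b_1,b_2)$ in closed form, and substituting into $\alpha=4$ yields $\beta(\beta+1)(\beta^2+\beta+1)=0$ up to a nonzero multiple (I verified $b_1=-(2\beta+1)/6$, $b_2=-\beta(2+\beta)/6$, and the collapse of $\alpha=4$ after multiplying by $72$), which has no positive root --- the same conclusion the paper reads off from the Gr\"obner basis element $x^6+2x^4+2x^2+1$ with $x^2=1/\beta$, but now reached by an elementary hand elimination. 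Second, and more significantly, for the lower bounds in parts (ii) and (iii) you replace the paper's numerical/symbolic solution-hunting with explicit symmetric ansatze: $(c_0,0,b_0),(c_1,\pm1,b_1)$ for $m=3$, $r=5$, and two $\pm$-pairs for $m=4$, $r=6$. I verified both: the $m=3$ ansatz gives $c_0^2=12\epsilon^2/(1-6\epsilon^2)$, $b_0=-(1-6\epsilon^2)/(6\epsilon)$, valid for small $\epsilon\neq0$; and the $m=4$ ansatz with $\lambda=1$ correctly reduces $\alpha=6$ combined with $\alpha=4$ to $192(1+s^6)^2=225(1+s^4)(1-s^4)^2$, which under $y=s^2+s^{-2}$ becomes $11y^2-22y-64=0$ with admissible root $y=1+5\sqrt{33}/11>2$, giving a valid $s^2>0$, $s^2\neq1$. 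Splitting then handles $m>4$. This is more informative than the paper's construction for part (iii), which as written fixes $c_1=\cdots=c_m=1$ with $(a_i,b_i)=(0,0)$ for $i\geq4$ and $a_1=a_2=1$; because the $(1,0,0)$ padding contributes nothing, that specialization lies on the variety already shown empty in part (ii), and indeed a direct check shows the $\alpha=5$ equation fails under the forced values $a_3=-2$, $b_3=-4/3$. Your explicit four-point construction is therefore a genuine repair, not just a reformulation. The one place where you and the paper are in the same boat is the upper bound $\overline{r}\leq6$ for part (ii): you correctly note this remains a computer-algebra verification, and you correctly identify the essential subtlety --- the obstruction lives on the positive real slice of the variety, not its complex locus --- which is precisely why Gr\"obner elimination to a sign-definite polynomial (rather than, say, a Nullstellensatz argument) is the right tool.
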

\paragraph{Remark.} 
The results of this proposition appear to suggest that that $\overline{r} =
2(k-k_0+1)$. We leave this as a conjecture.
\comment{
Determining a general upper bound for $\overline{r}$ appears 
more manageable, as most over-determined systems of polynomial equations 
in general are inconsistent, i.e they admit no solution. 
We expect that $\overline{r} \leq 3(k-k_{0})+1$, which is the number of 
variables after rescaling in system \eqref{eqn:generalovefittedGaussianzero},
but we do not have a proof.}

The main result for this section is a precise relationship between
the identifiability and convergence behavior of mixing measures
in an over-fitted Gaussian mixture with the solvability of system
of equations~\eqref{eqn:generalovefittedGaussianzero}.
\begin{theorem}(Over-fitted Gaussian mixture) Let $\rbar$ be defined in the
preceeding paragraphs.
\label{theorem:generaloverfittedGaussian} 
\begin{itemize}
\item [(a)] For any $1 \leq r < \overline{r}$, there holds:
\begin{eqnarray}
\mathop {\lim }\limits_{\epsilon \to 0}{\mathop {\inf }\limits_{G \in \mathcal{O}_{k}(\Theta \times \Omega)}{\biggr \{ h(p_{G},p_{G_{0}})/
W_{1}^{r}(G,G_{0}): W_{1}(G,G_{0}) \leq \epsilon \biggr \}}} = 0. 
\label{lb-gauss}
\end{eqnarray}
\item [(b)] For any $c_{0}>0$, define
$\mathcal{O}_{k,c_{0}}(\Theta \times \Omega)=\left\{G=\mathop {\sum }\limits_{i=1}^{k^{*}}{p_{i}\delta_{(\theta_{i},\Sigma_{i})}} \in \mathcal{O}_{k}(\Theta \times \Omega):  p_{i} \geq c_{0} \ \forall \ 1 \leq i \leq k^{*}\right\}$. 
Then, for $G \in \mathcal{O}_{k,c_{0}}(\Theta \times \Omega)$ and 
$W_{\overline{r}}(G,G_{0})$ sufficiently small, there holds:
\begin{eqnarray}
V(p_{G},p_{G_{0}}) \gtrsim 
W_{\overline{r}}^{\overline{r}}(G,G_{0}) \geq W_{1}^{\overline{r}}(G,G_{0}). \nonumber
\end{eqnarray}
\end{itemize}
\end{theorem}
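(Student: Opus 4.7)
The proof plan hinges on exploiting identity~\eqref{key-gaussian}, $\partial^2 f/\partial\theta^2=2\,\partial f/\partial\Sigma$, which lets me reduce every mixed Taylor coefficient of $p_G-p_{G_0}$ to a scalar multiple of a pure $\theta$-derivative. Specifically, after Taylor-expanding $f(x|\theta_i^0+\Delta\theta,\Sigma_i^0+\Delta\Sigma)-f(x|\theta_i^0,\Sigma_i^0)$ and substituting $\partial f/\partial\Sigma=(1/2)\,\partial^2 f/\partial\theta^2$, the coefficient of $\partial^\alpha f/\partial\theta^\alpha$ becomes (up to harmless combinatorial constants) the scalar quantity
\[
\sum_{n_1+2n_2=\alpha}\frac{(\Delta\theta)^{n_1}(\Delta\Sigma)^{n_2}}{n_1!\,n_2!}.
\]
Summed over the cluster of atoms of $G$ converging to $(\theta_i^0,\Sigma_i^0)$, weighted by their masses, this is exactly the $\alpha$-th equation of~\eqref{eqn:generalovefittedGaussianzero} after the identifications $c_j^2\leftrightarrow p_j$, $a_j\leftrightarrow\Delta\theta_j$, $b_j\leftrightarrow\Delta\Sigma_j$. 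Thus $\overline r$ is precisely the smallest order at which the leading Taylor coefficients can be forced to vanish.

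For part (a), fix $1\le r<\overline r$. By minimality of $\overline r$, the $r$-equation system admits a non-trivial solution $(c_j^\ast,a_j^\ast,b_j^\ast)_{j=1}^{k-k_0+1}$. Fix an atom $(\theta_1^0,\Sigma_1^0)$ of $G_0$, a unit vector $e\in\mathbb R^d$, and a fixed symmetric matrix $M\in\mathbb R^{d\times d}$, and define
\[
G_n=\sum_{j=1}^{k-k_0+1}p_{j,n}\,\delta_{(\theta_1^0+\epsilon_n a_j^\ast e,\,\Sigma_1^0+\epsilon_n^2 b_j^\ast M)}+\sum_{i=2}^{k_0}p_i^0\,\delta_{(\theta_i^0,\Sigma_i^0)},
\]
with $p_{j,n}:=p_1^0\,c_j^{\ast 2}/\sum_\ell c_\ell^{\ast 2}>0$, so $\sum_j p_{j,n}=p_1^0$ and $W_1(G_n,G_0)\asymp\epsilon_n$. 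By the collapse above and the choice of $(c_j^\ast,a_j^\ast,b_j^\ast)$, every Taylor coefficient at orders $\alpha=1,\ldots,\overline r-1$ vanishes, leaving a pointwise remainder
\[
|p_{G_n}(x)-p_{G_0}(x)|\le C\,\epsilon_n^{\overline r}\,P(x)\tilde f(x),
\]
with $P$ a polynomial and $\tilde f$ a Gaussian density with covariance close to $\Sigma_1^0$. Gaussian integrability yields $\int P(x)^2\tilde f(x)^2/p_{G_0}(x)\,dx<\infty$, and since $(\sqrt a+\sqrt b)^2\ge b$ gives $h^2\le\tfrac12\int(p-q)^2/q$, we obtain $h^2(p_{G_n},p_{G_0})\lesssim\epsilon_n^{2\overline r}$; hence $h/W_1^r\lesssim\epsilon_n^{\overline r-r}\to0$.

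For part (b), I would argue by contradiction. Assume a sequence $G_n\in\mathcal O_{k,c_0}(\Theta\times\Omega)$ with $W_{\overline r}(G_n,G_0)\to0$ satisfies $V(p_{G_n},p_{G_0})/W_{\overline r}^{\overline r}(G_n,G_0)\to0$. The constraint $p_{j,n}\ge c_0$ bounds the number of atoms and is preserved along subsequences, so I may pass to a subsequence in which the atoms of $G_n$ partition into $k_0$ clusters converging to the atoms of $G_0$, with at least one cluster of size $\ge 2$. Cluster-by-cluster Taylor expansion together with~\eqref{key-gaussian} reduces $p_{G_n}-p_{G_0}$ to a linear combination of $\{\partial^\alpha f(\cdot|\theta_i^0,\Sigma_i^0)/\partial\theta^\alpha\}_{i,\alpha}$ whose coefficients are sums of the scalar quantity displayed in the first paragraph, plus a higher-order remainder. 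Rescale the per-atom parameters by suitable powers of a single scalar $D_n$ (chosen so the maximum rescaled magnitude equals one) and pass to a subsequential limit. The hypothesis $V/W_{\overline r}^{\overline r}\to0$ combined with the classical linear independence of $\{\partial^\alpha f(\cdot|\theta_i^0,\Sigma_i^0)/\partial\theta^\alpha\}_{i,\alpha}$ (a Fourier-transform fact for Gaussians) forces the limiting coefficient of each $\partial^\alpha f/\partial\theta^\alpha$ at orders $\alpha=1,\ldots,\overline r$ to vanish. Within the oversplit cluster, the limiting parameters thus provide a non-trivial solution of~\eqref{eqn:generalovefittedGaussianzero} with $r=\overline r$, contradicting the definition of $\overline r$. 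The second inequality $W_{\overline r}^{\overline r}\ge W_1^{\overline r}$ is immediate from $W_{\overline r}\ge W_1$.

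The main obstacle is the normalization step in part (b): $W_{\overline r}^{\overline r}$ aggregates contributions from $\|\Delta\theta\|$, $\|\Delta\Sigma\|^{1/2}$, and residual masses $|p_{j,n}-p_i^0|$, and these may scale differently across clusters and across the index $\alpha$. One must choose the rescaling of the $3(k-k_0+1)$ unknowns $(c_j,a_j,b_j)$ in the oversplit cluster so that (i) they stay bounded in the limit, (ii) at least one $c_j$ is bounded below in absolute value, which is precisely what the constraint $p_{j,n}\ge c_0>0$ buys, and (iii) the limiting equations correspond to the full system~\eqref{eqn:generalovefittedGaussianzero} at order $\overline r$ rather than a strict sub-system (so the minimality of $\overline r$ can be invoked). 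Matching these three requirements to the Wasserstein rescaling, and handling the remainder terms of order higher than $\overline r$ uniformly in $n$, is the bulk of the argument.
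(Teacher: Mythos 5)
Your overall strategy for both parts matches the paper's: in (a), build a worst-case sequence from a non-trivial solution of the order-$(\overline r-1)$ system; in (b), argue by contradiction, cluster the atoms, and show that vanishing of all normalized Taylor coefficients would produce a non-trivial solution of the order-$\overline r$ system.

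There are two places where your argument, as written, does not close.

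First, in part (a) the choice of $M$ cannot be an arbitrary fixed symmetric matrix. The reduction of the Taylor expansion of $p_{G_n}-p_{G_0}$ to scalar coefficients of $\bigl(e^T\tfrac{\partial}{\partial\theta}\bigr)^\alpha f$ through identity~\eqref{key-gaussian} only works if the $\Sigma$-perturbation is aligned with the $\theta$-perturbation, i.e.\ $M\propto ee^T$: the relevant scalar identity is $e^T\bigl(\tfrac{\partial^2 f}{\partial\theta^2}\bigr)e = 2\,\trace\bigl(\tfrac{\partial f}{\partial\Sigma}\,ee^T\bigr)$. With a generic $M$, the first-order $\Sigma$-term contributes a \emph{different} linear functional of the Hessian tensor than the quadratic $\theta$-term, and the collapse to the univariate polynomial chain~\eqref{eqn:generalovefittedGaussianzero} fails. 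The paper sidesteps this entirely by perturbing only the first coordinate of $\theta$ and only the $(1,1)$-entry of $\Sigma$, which is the $e=e_1$, $M=e_1e_1^T$ instance of your construction; with that choice your part (a) argument is sound.

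Second, in part (b) you correctly enumerate the three requirements (boundedness of rescaled unknowns, $c_j$ bounded away from zero via the $c_0$-constraint, recovering the full order-$\overline r$ system) but your sketch — ``rescale by powers of a single scalar $D_n$'' — does not specify how to reconcile the heterogeneous scales. The paper uses a genuinely two-tiered normalization: first a global $D_n:=\sum_i\sum_j p_{ij}^n(|\Delta\theta_{ij}^n|^{\overline r}+|\Delta v_{ij}^n|^{\overline r})+\sum_i|p_{i\cdot}^n-p_i^0|$, and then, inside one distinguished cluster, separate scales $\overline p_n=\max_j p_{1j}^n$ and $\overline M_n=\max_j\{|\Delta\theta_{1j}^n|,|\Delta v_{1j}^n|^{1/2}\}$, dividing $E_\alpha$ by $\overline p_n\overline M_n^\alpha$ so that the parabolic rescaling (power $1$ for $\Delta\theta$, power $2$ for $\Delta v$) emerges. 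Moreover, the paper splits into two cases: (i) all normalized coefficients $E_\alpha\to0$, in which case the displaced-mass contributions in $D_n$ vanish relative to the cluster contributions and the $\overline p_n,\overline M_n$ rescaling produces the polynomial system; and (ii) not all $E_\alpha\to0$, in which case one divides by $\max_\alpha|E_\alpha|$ and invokes linear independence of Gaussian $\theta$-derivatives. Your description merges these two cases into one and thereby obscures where the contradiction actually arises in each. You have correctly identified that this normalization is the crux, but the proposal stops short of resolving it — which is precisely the step that carries the weight of the theorem.
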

We make several remarks.
\begin{itemize} 
\item [(i)] %Close investigation of the proof of part (a) and (b) implies that as $c_{0}$ is sufficiently small, $W_{\overline{r}}^{\overline{r}}$ is the best lower bound of $V$ when $G \in \overline{O}_{k,c_{0}}$. 
Close investigation of the proof of part (a) and part (b) together shows that $W_\rbar^\rbar(G,G_0)$ is the sharp lower bound for 
the distance of mixture densities $h(p_G,p_{G_0}) \geq V(p_G,p_{G_0})$ when $c_{0}$ is sufficiently small. 
In particular, we cannot improve the the lower bound to $W_1^r$ for any $r < \rbar$.
\item [(ii)] This theorem yields an interesting
link between the convergence behavior of $G$ and the solvability of system of equation
\eqref{eqn:generalovefittedGaussianzero}. Part (b) is that, take any standard
estimation method such as the MLE, which yields $n^{-1/2}$ convergence rate 
under Hellinger distance for the mixture density under fairly general conditions, 
the convergence rate for $G$ under $W_{\rbar}$ is $n^{-1/(2\rbar)}$. 
Moreover, part (a) entails that $n^{-1/2\rbar}$ is also a minimax lower bound
for $G$ under $W_{\overline{r}}$ or $W_{1}$ distance.
\item [(iii)] The convergence behavior of $G$ depends only on the
number of extra mixing components $k-k_0$ assumed in the finite mixture model. 
The convergence rate deteriorates astonishingly fast as $k-k_0$ increases. 
For a practitioner this amounts to a sober caution against over-fitting
the mixture model with many more Gaussian components than actually needed.
\item [(iv)] As we have seen from part (b) of Theorem \ref{theorem:generaloverfittedGaussian}, 
under the general setting of $k-k_{0}$, $G$ is restricted to 
the set $\mathcal{O}_{k}(\Theta \times \Omega)$ to $\mathcal{O}_{k,c_{0}}(\Theta \times \Omega)$,
which places a constraint on the mixing probability mass.
%for any fixed $c_{0} >0$ to guarantee the lower bound of $V(p_{G},p_{G_{0}})$. 
%The main goal of putting the constraint $p_{i} \geq c_{0}$ for all $i$ is to prevent $p_{i}/p_{j}$ from going to either $0$ or $\infty$ for any $i,j$, thereby leaving out the boundary cases and allowing system of polynomial equations to appear naturally. 
However, this restriction seems to be an artifact of our proof technique. In fact,
it can be removed with extra hard work, 
at least for the case $k-k_0 \leq 2$, as the following proposition demonstrates:
\end{itemize}
\begin{proposition} 
\label{theorem:Gaussianoverfittedbytwo} Let $k-k_{0} = 1$ or $2$. 
For $G \in \mathcal{O}_{k}(\Theta \times \Omega)$ and
$W_{\overline{r}}(G,G_{0})$ sufficiently small, 
\begin{eqnarray}
V(p_{G},p_{G_{0}}) \gtrsim W_{\overline{r}}^{\overline{r}}(G,G_{0}). \nonumber
\end{eqnarray}
\end{proposition}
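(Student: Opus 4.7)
The plan is to follow the contradiction-plus-Taylor-expansion scheme used to prove Theorem \ref{theorem:generaloverfittedGaussian}(b), but to replace the mass lower bound hypothesis with a more delicate normalization that accommodates atoms whose probabilities vanish. Suppose for contradiction that there is a sequence $(G_n) \subset \mathcal{O}_k(\Theta \times \Omega)$ with $W_{\overline{r}}(G_n, G_0) \to 0$ and $V(p_{G_n}, p_{G_0})/W_{\overline{r}}^{\overline{r}}(G_n, G_0) \to 0$. Writing $G_n = \sum_j p_j^n \delta_{(\theta_j^n, \Sigma_j^n)}$ and extracting a subsequence, I would partition the support points of $G_n$ into clusters $S_1, \ldots, S_{k_0}$ according to which atom $(\theta_i^0, \Sigma_i^0)$ of $G_0$ they converge to. Since $k-k_0 \in \{1,2\}$, we have $|S_i| \le k-k_0+1 \le 3$ for every $i$, and only a handful of cluster-size configurations can occur.

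Next, I would Taylor-expand $p_{G_n}(x) - p_{G_0}(x)$ around each $(\theta_i^0, \Sigma_i^0)$ up to order $\overline{r}$, then apply the Gaussian identity $\partial^2 f/\partial\theta^2 = 2\,\partial f/\partial\Sigma$ repeatedly to collapse the expansion onto a linearly independent family of partial derivatives of $f$ at the $k_0$ atoms of $G_0$. The coefficient of each basis element is a polynomial in the increments $\Delta\theta_j^n = \theta_j^n - \theta_i^0$, $\Delta\Sigma_j^n = \Sigma_j^n - \Sigma_i^0$ and the mass perturbations $p_j^n - \mathbf{1}_{\{j=j^\ast\}}p_i^0$ for $j \in S_i$. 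I would divide every such coefficient by a common normalizing scale $M_n$, chosen so that the largest rescaled coefficient stays bounded away from zero, and pass to a further subsequence so that all rescaled coefficients converge. The bound $V(p_{G_n}, p_{G_0}) \ge |\int \psi(x)(p_{G_n}-p_{G_0})(x)\,dx|$ for appropriate test functions $\psi$, combined with the hypothesis $V/W_{\overline{r}}^{\overline{r}} \to 0$ and the linear independence of the basis, would force every limiting rescaled coefficient to vanish, which in turn would translate into a non-trivial real solution of the polynomial system \eqref{eqn:generalovefittedGaussianzero} for $\alpha = 1, \ldots, \overline{r}$ --- contradicting Proposition \ref{proposition-specificvalueoverliner}.

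The main obstacle, and the reason the statement is delicate, is the choice of $M_n$ when some $p_j^n \to 0$: the naive normalization by $W_{\overline{r}}^{\overline{r}}(G_n, G_0)$ used in Theorem \ref{theorem:generaloverfittedGaussian}(b) allows the coefficient $c_j$ attached to a vanishing atom to drop to zero in the limit, producing exactly the trivial solution excluded from the definition of $\overline{r}$. To repair this, for each cluster $S_i$ I would use a hybrid scale of the form $M_i^n = \max_{j\in S_i}\bigl(\sqrt{p_j^n}\,(\|\Delta\theta_j^n\|+\|\Delta\Sigma_j^n\|)\bigr) + \max_{j\in S_i}|p_j^n - \mathbf{1}_{\{j=j^\ast\}}p_i^0|$, and set $M_n = \max_i (M_i^n)^{\overline{r}}$; the rescaling $c_j^n := \sqrt{p_j^n}$, $a_j^n := \Delta\theta_j^n$, $b_j^n := \Delta\Sigma_j^n$ (with appropriate powers) then yields a limit in which not all $c_j$ can simultaneously vanish alongside their accompanying $a_j$ and $b_j$. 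The hard part will be the case $|S_i| = 3$, which only arises when $k-k_0 = 2$: here several sub-configurations (one, two, or three vanishing masses) must be analyzed separately, and within each sub-configuration one must verify that cancellation among the Taylor coefficients cannot absorb the excess mass-scale without violating the non-solvability given by Proposition \ref{proposition-specificvalueoverliner} at $\overline{r} = 6$. Once these configurations are exhausted, the contradiction closes and the inequality $V(p_G, p_{G_0}) \gtrsim W_{\overline{r}}^{\overline{r}}(G, G_0)$ holds on all of $\mathcal{O}_k(\Theta \times \Omega)$ for $W_{\overline{r}}(G, G_0)$ sufficiently small.
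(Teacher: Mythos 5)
You correctly isolate the obstacle: with the normalization used in Theorem \ref{theorem:generaloverfittedGaussian}(b), an atom whose mass $p_j^n\to 0$ can produce a limiting $c_j=0$, which is precisely the trivial solution excluded in the definition of $\overline{r}$, so the contradiction with Proposition \ref{proposition-specificvalueoverliner} cannot be closed. However, the hybrid normalization you propose to repair this does not work. The polynomial system \eqref{eqn:generalovefittedGaussianzero} is anisotropically homogeneous: each equation has the form $\sum_j c_j^2 a_j^{n_1} b_j^{n_2}$ with $n_1 + 2n_2 = \alpha$, and the limits are obtained by dividing by a product $\overline{p}_n\,\overline{M}_n^{\alpha}$, where $\overline{p}_n$ controls only the masses and $\overline{M}_n$ is the anisotropic scale $\max\{|\Delta\theta_j|,|\Delta v_j|^{1/2}\}$. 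Your proposed scale $M_i^n=\max_j\sqrt{p_j^n}\bigl(\|\Delta\theta_j^n\|+\|\Delta\Sigma_j^n\|\bigr)+\ldots$ mixes the mass degree ($1/2$) with the position degree ($1$), but the polynomial coefficients always carry mass degree exactly $1$ and position degree $\alpha$, which varies with the equation. After dividing by any power of $M_i^n$ the limits are not simultaneously finite and nontrivial across all $\alpha\in\{1,\ldots,\overline{r}\}$, so the claimed "limit in which not all $c_j$ can vanish" is not obtained. Moreover the weight you give to $\|\Delta\Sigma_j^n\|$ is the wrong one: in the Gaussian reduction a covariance perturbation has weight $2$, so $\|\Delta\Sigma_j^n\|^{1/2}$ is what must appear on equal footing with $\|\Delta\theta_j^n\|$, and this is not accidental but essential to producing the polynomial system.

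The paper's actual argument keeps the standard anisotropic rescaling and instead attacks the degenerate mass regime head-on. After establishing (as in Step 8 of Theorem \ref{theorem:generaloverfittedGaussian}) that some cluster index $i^*$ survives the normalization, and noting that the $v$-contribution $\sum_{i,j}p_{ij}^n|\Delta v_{ij}^n|^{\overline{r}}/d(G_n,G_0)\to 0$ (a consequence of $F_{2\overline{r}}\to 0$), one may normalize the coefficients $F_\alpha(\theta_{i^*}^0,v_{i^*}^0)$ purely by $\sum_j p_{i^*j}^n|\Delta\theta_{i^*j}^n|^{\overline{r}}$. Since $k-k_0\le 2$, the cluster at $i^*$ has $s_{i^*}\le 3$. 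For $s_{i^*}=1$ the equation $F_1'\to 0$ alone is impossible. For $s_{i^*}\ge 2$ one sets $a:=\lim p_{i^*1}^n\Delta\theta_{i^*1}^n/p_{i^*2}^n\Delta\theta_{i^*2}^n$ (and similarly for the third atom when $s_{i^*}=3$) and splits on whether the mass ratios $p_{i^*1}^n/p_{i^*2}^n$ (etc.) tend to $0$, to $\infty$, or to a finite nonzero limit. In the nondegenerate regime the equations $F_\alpha'\to 0$ reduce exactly to system \eqref{eqn:generalovefittedGaussianzero}, which Proposition \ref{proposition-specificvalueoverliner} says has no nontrivial real solution, a contradiction. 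In the degenerate regimes one introduces ratios $k_j^n:=\Delta v_{i^*j}^n/(\Delta\theta_{i^*j}^n)^2$ and derives, from the vanishing of specific $F_\alpha'$, incompatible scalar limit equations (for instance $\tfrac12+k\to 0$ together with $\tfrac1{3!}+k\to 0$), or falls back on Groebner-basis computations for reduced subsystems. The paper's proof is therefore not a uniform rescaling argument but an exhaustive, and for $k-k_0=2$ quite lengthy, case analysis of the mass-ratio degeneracies; your sketch gestures at "sub-configurations" but does not yet supply the step that replaces the polynomial non-solvability when the trivial limit $c_j=0$ is forced.
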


\subsection{Mixture of Gamma distributions and the location extension}
The Gamma family of univariate densities takes the form
$f(x|a,b) := \dfrac{b^{a}}{\Gamma(a)}x^{a-1}\exp(-bx)$ for $x > 0$, and $0$ otherwise, 
where $a,b$ are positive shape and rate parameters, respectively. 
\begin{proposition}\label{example-notGamma}
The Gamma family of distributions is \emph{not} identifiable in the first order.
\end{proposition}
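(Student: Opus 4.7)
The plan is to exhibit a concrete nontrivial linear combination of $f$, $\partial f/\partial a$, $\partial f/\partial b$ evaluated at two distinct parameter pairs that vanishes identically in $x$, thereby falsifying condition A1 of Definition~\ref{definition-firstorder}. The core observation is a structural identity analogous to the Gaussian identity~\eqref{key-gaussian}: differentiating with respect to the rate parameter $b$ produces a shift in the shape parameter $a$.

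First I would compute directly from $f(x|a,b)=\frac{b^{a}}{\Gamma(a)}x^{a-1}e^{-bx}$ that
\begin{eqnarray*}
\frac{\partial f}{\partial b}(x|a,b)=\frac{a}{b}\,f(x|a,b)-\frac{b^{a}}{\Gamma(a)}x^{a}e^{-bx}.
\end{eqnarray*}
Then, using the functional equation $\Gamma(a+1)=a\,\Gamma(a)$, the second term is rewritten via
\begin{eqnarray*}
\frac{b^{a}}{\Gamma(a)}x^{a}e^{-bx}=\frac{a}{b}\cdot\frac{b^{a+1}}{\Gamma(a+1)}x^{a}e^{-bx}=\frac{a}{b}\,f(x|a+1,b),
\end{eqnarray*}
which yields the key identity
\begin{eqnarray*}
\frac{\partial f}{\partial b}(x|a,b)=\frac{a}{b}\bigl[f(x|a,b)-f(x|a+1,b)\bigr] \qquad \text{for all } x>0.
\end{eqnarray*}
This is precisely the identity \eqref{key-gamma} alluded to in the introduction and is the algebraic source of weak identifiability for the Gamma class.

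Next I would use this identity to produce a counterexample to condition A1. Fix any $a,b>0$ and take the two distinct pairs $(a_{1},b_{1})=(a,b)$ and $(a_{2},b_{2})=(a+1,b)$. Choose coefficients $\alpha_{1}=a/b$, $\gamma_{1}=-1$, $\alpha_{2}=-a/b$, and $\beta_{1}=\beta_{2}=\gamma_{2}=0$. Substituting the identity into the linear combination
\begin{eqnarray*}
\alpha_{1}f(x|a,b)+\gamma_{1}\frac{\partial f}{\partial b}(x|a,b)+\alpha_{2}f(x|a+1,b)
\end{eqnarray*}
shows directly that it vanishes for all $x>0$, yet the coefficients are nontrivial since $a/b\neq 0$. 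By Definition~\ref{definition-firstorder}, this demonstrates that the Gamma family fails to be identifiable in the first order.

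There is no real obstacle here; the entire argument is driven by the single identity above, and the only subtlety is verifying the rewriting step that uses $\Gamma(a+1)=a\Gamma(a)$ to reinterpret $\partial_{b}f$ as a difference of two Gamma densities with shape parameters shifted by one. This identity will also play a central role later in analyzing the generic versus pathological dichotomy for Gamma mixtures, so it is worth isolating and stating explicitly in the proof.
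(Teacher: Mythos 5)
Your proof is correct and takes essentially the same approach as the paper: you derive the key identity $\partial_{b} f(x|a,b) = \frac{a}{b}[f(x|a,b)-f(x|a+1,b)]$ (the paper's Eq.~\eqref{key-gamma}) and use it to exhibit a nontrivial vanishing linear combination at the two pairs $(a,b)$ and $(a+1,b)$. The only cosmetic difference is a relabeling of which pair carries the nonzero $\gamma$-coefficient (you place it at $(a,b)$, the paper at the pair with the smaller shape parameter); the construction is otherwise identical.
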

\begin{proof}
The proof is immediate thanks to the following algebraic identity, which holds
for any $a,b > 0$:
\begin{equation}
\label{key-gamma}
\frac{\partial f}{\partial b} = \frac{a}{b}f(x|a,b) - \frac{a}{b}f(x|a+1,b).
\end{equation}
Now given $k=2$, $a_{2}=a_{1}-1,b_{1}=b_{2}$. 
By choosing $\beta_{1}=\beta_{2}=0$, $\gamma_{1}=0$, 
$\alpha_{1}b_{1}=\gamma_{2}a_{2}$, $\alpha_{2}b_{1}=-\gamma_{2}a_{2}$ and 
$\alpha_{1}= - \alpha_{2} \neq 0$, then we can verify that
\begin{eqnarray}
\mathop {\sum }\limits_{i=1}^{2}{\alpha_{i}f(x|a_{i},b_{i})+\beta_{i}\dfrac{\partial{f}}{\partial{a}}(x|a_{i},b_{i})+\gamma_{i}\dfrac{\partial{f}}{\partial{b}}(x|a_{i},b_{i})}=0. \nonumber
\end{eqnarray}
\end{proof}
The Gamma family is still strongly identifiable in the first order
if either shape or rate parameter is fixed. It is when both parameters 
are allowed to vary that strong identifiablity is violated. Thus, neither Theorem~\ref{theorem-firstorder} 
nor Theorem~\ref{theorem-secondorder} is applicable to shape-rate Gamma mixtures. 
Comparing the algebraic identity~\eqref{key-gaussian} for the Gaussian 
and~\eqref{key-gamma} for the Gamma reveals an interesting feature
for the latter. In particular, the linear dependence of the collection of Gamma
density functions and its derivatives are due to certain specific combinations
of the Gamma parameter values. This suggests that outside of these value combinations
the Gamma densities may well be identifiable in the first order and even the second order 
Indeed, this observation leads to the following results, which we shall state in
two separate mixture settings.

Fix the true mixing measure
$G_{0}=\mathop {\sum }\limits_{i=1}^{k_{0}}{p_{i}^{0}\delta_{(a_{i}^{0},b_{i}^{0})}}
\in \Ecal_{k_0}(\Theta)$ where $k_{0} \geq 2$ and $\Theta \subset \mathbb{R}^{2,+}$. 

\begin{theorem}{\bf (Exact-fitted Gamma mixtures)} 
\label{theorem:exactfittedGamma}
\begin{itemize}
\item [(a)] (Generic cases) Assume that $\left\{|a_{i}^{0}-a_{j}^{0}|,|b_{i}^{0}-b_{j}^{0}|\right\} \neq \left\{1,0\right\}$ for all $1 \leq i,j \leq k_{0}$, and $a_{i}^{0} \geq 1$ for all $1 \leq i \leq k_{0}$. 
Then for $G \in \mathcal{E}_{k_{0}}(\Theta)$ and $W_{1}(G,G_{0})$ sufficiently small, we have
\begin{eqnarray}
V(p_{G},p_{G_{0}}) \gtrsim W_{1}(G,G_{0}). \nonumber
\end{eqnarray}
\item [(b)] (Pathological cases)
If there exist $1 \leq i,j \leq k_{0}$ such that  $\left\{|a_{i}^{0}-a_{j}^{0}|,|b_{i}^{0}-b_{j}^{0}|\right\} = \left\{1,0\right\}$, 
then for any $r \geq 1$, 
\begin{eqnarray}
\mathop {\lim }\limits_{\epsilon \to 0}{\mathop {\inf }\limits_{G \in \mathcal{E}_{k_{0}}(\Theta)}{\biggr\{
V(p_{G},p_{G_{0}})/W_{r}^{r}(G,G_{0}): W_{r}(G,G_{0}) \leq \epsilon \biggr \}}} = 0. \nonumber
\end{eqnarray}
\end{itemize}
\end{theorem}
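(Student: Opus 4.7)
My strategy is to mimic the Taylor-expansion argument that underlies Theorem \ref{theorem-firstorder}, even though the global first-order identifiability of the Gamma family fails. The generic condition is designed precisely so that the relevant collection of functions is linearly independent \emph{at the support of $G_0$}, which is all one needs to make the proof of Theorem \ref{theorem-firstorder} go through. I proceed by contradiction: assume there exists a sequence $G_n \in \mathcal{E}_{k_0}(\Theta)$ with $d_n := W_1(G_n,G_0) \to 0$ but $V(p_{G_n},p_{G_0})/d_n \to 0$. Since $|G_n|=|G_0|=k_0$ with $W_1(G_n,G_0)\to 0$, for $n$ large I may label atoms so that $(a_i^n,b_i^n)\to(a_i^0,b_i^0)$ and $p_i^n\to p_i^0$, and standard bookkeeping gives $d_n \asymp \sum_i |p_i^n-p_i^0| + \sum_i p_i^0(|a_i^n-a_i^0|+|b_i^n-b_i^0|)$. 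Taylor expanding $p_{G_n}-p_{G_0}$ to first order around each $(a_i^0,b_i^0)$, then dividing by $d_n$ and extracting a convergent subsequence of the normalized coefficients $(\alpha_i^n,\beta_i^n,\gamma_i^n)=(p_i^n-p_i^0, p_i^n(a_i^n-a_i^0), p_i^n(b_i^n-b_i^0))/d_n$, I obtain a nontrivial limit $(\alpha_i,\beta_i,\gamma_i)$ such that
\[
\sum_{i=1}^{k_0}\bigl[\alpha_i f(x|a_i^0,b_i^0)+\beta_i\partial_a f(x|a_i^0,b_i^0)+\gamma_i\partial_b f(x|a_i^0,b_i^0)\bigr]=0 \quad\text{for a.e. } x,
\]
where the $L^1$ negligibility of the Taylor remainder uses the hypothesis $a_i^0\geq 1$ to keep $f$ and its derivatives uniformly integrable near $0$.

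\textbf{The linear-independence step.} The main technical work is to force the limiting coefficients above to vanish. I apply identity \eqref{key-gamma} to rewrite each $\partial_b f(x|a_i^0,b_i^0)$ as $(a_i^0/b_i^0)[f(x|a_i^0,b_i^0)-f(x|a_i^0+1,b_i^0)]$, obtaining
\[
\sum_{i=1}^{k_0}\Bigl[(\alpha_i+\tfrac{a_i^0}{b_i^0}\gamma_i)\,f(x|a_i^0,b_i^0)-\tfrac{a_i^0}{b_i^0}\gamma_i\,f(x|a_i^0+1,b_i^0)+\beta_i\,\partial_a f(x|a_i^0,b_i^0)\Bigr]=0.
\]
The generic hypothesis $\{|a_i^0-a_j^0|,|b_i^0-b_j^0|\}\neq\{1,0\}$ guarantees that the $2k_0$ parameter pairs $\{(a_i^0,b_i^0)\}\cup\{(a_i^0+1,b_i^0)\}$ are all distinct. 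To show linear independence of this $3k_0$-element collection, I pass to the Laplace transform: $\mathcal{L}[f(\cdot|a,b)](s)=(b/(b+s))^{a}$ and $\mathcal{L}[\partial_a f(\cdot|a,b)](s)=(b/(b+s))^{a}\log(b/(b+s))$. Functions with different $b_i^0$ have different singularities at $s=-b_i^0$ and are trivially separated; for each fixed $b$-value, after the substitution $u=b/(b+s)$ one is left with a set of functions of the form $u^{a_i^0},u^{a_i^0+1},u^{a_i^0}\log u$ (over the indices $i$ with that value of $b$), whose linear independence is an elementary fact. Once independence is established, $a_i^0\geq 1$ yields $\gamma_i=0$, hence $\alpha_i=0$ and $\beta_i=0$, contradicting nontriviality.

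\textbf{Plan for part (b) (pathological case).} Here I construct an explicit sequence. Under the hypothesis I may relabel so that $(a_1^0,b_1^0)=(a,b)$ and $(a_2^0,b_2^0)=(a+1,b)$ for some $a,b>0$. For $\epsilon_n\downarrow 0$ and $\delta_n := p_1^0\epsilon_n\,a/b$, define
\[
G_n := (p_1^0-\delta_n)\delta_{(a,\,b+\epsilon_n)} + (p_2^0+\delta_n)\delta_{(a+1,\,b)} + \sum_{i\geq 3} p_i^0\delta_{(a_i^0,b_i^0)}\in\mathcal{E}_{k_0}(\Theta).
\]
A first-order Taylor expansion of $f(x|a,b+\epsilon_n)$ around $b$, combined with identity \eqref{key-gamma}, shows that all $O(\epsilon_n)$ contributions cancel exactly, leaving $p_{G_n}(x)-p_{G_0}(x)=O(\epsilon_n^2)$ in $L^1$, i.e.\ $V(p_{G_n},p_{G_0})=O(\epsilon_n^2)$. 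On the Wasserstein side, the optimal transport from $G_0$ to $G_n$ sends $p_1^0-\delta_n$ units from $(a,b)$ to $(a,b+\epsilon_n)$ at unit cost $\epsilon_n^r$, plus $\delta_n$ units from $(a,b)$ to $(a+1,b)$ at unit cost $1$; hence $W_r^r(G_n,G_0)\asymp \delta_n\asymp \epsilon_n$ for $r\geq 1$ and small $\epsilon_n$. Therefore $V(p_{G_n},p_{G_0})/W_r^r(G_n,G_0)=O(\epsilon_n)\to 0$ while $W_r(G_n,G_0)\to 0$, giving the claim for every $r\geq 1$.

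\textbf{Expected main obstacle.} The delicate step is establishing linear independence of $\{f(x|a_i^0,b_i^0),f(x|a_i^0+1,b_i^0),\partial_a f(x|a_i^0,b_i^0)\}_{i=1}^{k_0}$ in part (a); the Laplace-transform reduction sketched above is the route I plan to take, but some care is needed to handle indices that share the same $b$-value so that the argument reduces cleanly to independence of $\{u^{a_i^0},u^{a_i^0+1},u^{a_i^0}\log u\}$. A secondary but routine issue is obtaining uniform $L^1$ control of the Taylor remainder, for which the condition $a_i^0\geq 1$ is precisely what is needed to tame the behavior of $f(x|a,b)$ near $x=0$.
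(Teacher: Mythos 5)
Both parts of your proposal are correct, and the overall strategy — reduce part (a) to verifying first-order identifiability \emph{at the specific generic $G_0$}, and in part (b) exploit the algebraic identity~\eqref{key-gamma} to build a sequence whose first-order Taylor contribution collapses — is the same one the paper uses. The differences are in how the key linear-independence step in (a) is carried out.

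For part (a), the paper does not invoke~\eqref{key-gamma} explicitly; instead it writes each of $f$, $\partial_a f$, $\partial_b f$ directly as a (polynomial in $x$ and $\log x$) times $e^{-b_i^0 x}$, orders the $b_i^0$, multiplies by $e^{b_{\min}x}$, and sends $x \to \infty$. The terms with larger $b_i^0$ vanish exponentially, and the surviving group with $b_i^0 = b_{\min}$ is a generalized polynomial whose exponents are $a_i^0 - 1$ and $a_i^0$; the condition $a_i^0 \ge 1$ keeps every exponent nonnegative so the "peel off the leading term" argument forces all the $\beta$'s to zero. Your route — change basis via~\eqref{key-gamma} to $\{f(\cdot\,|\,a_i^0,b_i^0), f(\cdot\,|\,a_i^0{+}1,b_i^0), \partial_a f(\cdot\,|\,a_i^0,b_i^0)\}$ and prove independence via Laplace transforms — is also fine: the generic condition makes the $2k_0$ parameter pairs distinct, different $b$'s produce different singularity locations in $s$, and within each $b$ the substitution $u = b/(b+s)$ reduces to independence of $\{u^{a_i^0}, u^{a_i^0+1}, u^{a_i^0}\log u\}$. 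Compared to the paper's method, yours trades a short real-variable asymptotic argument for a complex-analytic separation by singularity; both are routine, but the $x\to\infty$ argument is a bit more elementary and does not require discussing analytic continuation through branch points, which you yourself flag as requiring "some care." One small misattribution: you write that $a_i^0 \ge 1$ "yields $\gamma_i = 0$" once independence is established. Actually $a_i^0 > 0$ suffices there; the role of $a_i^0 \ge 1$ in your argument (as in the paper's) is to control the Taylor remainder in $L^1$ near $x=0$, and (in the paper's $x$-space route) to guarantee all exponents of the generalized polynomial are nonnegative.

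For part (b), your construction is the mirror image of the paper's (you label $a_2^0 = a_1^0 + 1$ and perturb $b_1$; the paper labels $a_2^0 = a_1^0 - 1$ and perturbs $b_2$), but both exploit exactly the same cancellation coming from~\eqref{key-gamma}, and your Wasserstein estimate $W_r^r(G_n,G_0) \asymp \delta_n \asymp \epsilon_n$ together with $V(p_{G_n},p_{G_0}) = O(\epsilon_n^2)$ gives the claim for all $r \ge 1$. One tiny inaccuracy in your wording: with $\delta_n = p_1^0 \epsilon_n a / b$ the first-order coefficients do not vanish \emph{exactly} — they are $O(\epsilon_n^2)$ — but this is precisely what you need, so the argument goes through unchanged.
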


%%%%%%%%%%%%%%%%
\comment{ \paragraph{Remark.} The conclusion of part b) of Theorem \ref{theorem:exactfittedGamma} implies that in this case we do not have the lower bound of $V(p_{G},p_{G_{0}})$ in terms of $W_{r}^{r}(G,G_{0})$ for any $r \geq 1$ as $W_{r}(G,G_{0})$ are sufficiently small. 
\paragraph{Illustration for part b) of Theorem \ref{theorem:exactfittedGamma}:} We consider $G_{0}=\dfrac{1}{3}\delta_{(a_{1}^{0},b_{1}^{0})}+\dfrac{2}{3}\delta_{(a_{2}^{0},b_{2}^{0})}$ where $a_{1}^{0}=3,a_{2}^{0}=2,b_{1}^{0}=b_{2}^{0}=2.5$ and $G_{n}=p_{1}^{(n)}\delta_{(a_{1}^{(n)},b_{1}^{(n)})}+p_{2}^{(n)}\delta_{(a_{2}^{(n)},b_{2}^{(n)})}$ are constructed in the same way as that of part b) of Theorem \ref{theorem:exactfittedGamma}. The results for $r=1,2,4,10$ are shown in Figure \ref{figure-exacfittedGamma}. In all of the images in Figure \ref{figure-exacfittedGamma}, as $n$ becomes bigger, the ratios $V(p_{G_{n}},p_{G_{0}})/W_{r}^{r}(G_{n},G_{0})$ are gradually close to 0. As a consequence, the conclusion of part b) of Theorem \ref{theorem:exactfittedGamma} is legitimate.
\begin{figure*}
\centering
\begin{minipage}[b]{.4\textwidth}
\includegraphics[width=45mm,height=45mm]{exactfitGammafirstorder.jpg}
\label{overflow}
\end{minipage}\qquad
\begin{minipage}[b]{.4\textwidth}
\centering
\includegraphics[width=45mm,height=45mm]{ExactfitGammasecondorder.jpg}
\label{overflow}
\end{minipage}
%\end{figure*}
%\begin{figure*}
%\centering
\begin{minipage}[b]{.4\textwidth}
\includegraphics[width=45mm,height=45mm]{ExactfitGammafourthorder.jpg}
\label{overflow}
\end{minipage}\qquad
\begin{minipage}[b]{.4\textwidth}
\centering
\includegraphics[width=45mm,height=45mm]{ExactfitGammatenthorder.jpg}
\label{overflow}
\end{minipage}
\caption{Illustration of the relation between $V(p_{G},p_{G_{0}})$ and $W_{r}^{r}(G_{n},G_{0})$. }
\label{figure-exacfittedGamma}
\end{figure*}}
%%%%%%%%%%%%%%%%%%%
Turning to the over-fitted Gamma mixture setting, as before
let $G_0 \in \Ecal_{k_0}(\Theta)$, while $G$ varies in a larger subset of $\Ocal_{k}(\Theta)$
for some given $k \geq k_0+1$.
\begin{theorem} {\bf (Over-fitted Gamma mixture)}
\label{theorem:overfittedGamma}
\begin{itemize}
\item [(a)] (Generic cases) Assume that $\left\{|a_{i}^{0}-a_{j}^{0}|,|b_{i}^{0}-b_{j}^{0}|\right\} \not \in \biggr\{\left\{1,0\right\}, \left\{2,0\right\}\biggr\}$ for all $1 \leq i,j \leq k_{0}$, and $a_{i}^{0} \geq 1$ for all $1 \leq i \leq k_{0}$. For any $c_{0}>0$, define a subset of $\Ocal_k(\Theta)$:
\begin{eqnarray}
\mathcal{O}_{k,c_{0}}(\Theta)=
\biggr \{G=\mathop {\sum }\limits_{i=1}^{k^{'}}{p_{i}\delta_{(a_{i},b_{i})}}: k^{'} \leq k \ \text{and } \ |a_{i}-a_{j}^{0}| \not \in [1-c_{0},1+c_{0}] \cup [2-c_{0},2+c_{0}] \forall \ (i,j) 
\biggr \}. \nonumber
\end{eqnarray}
Then, for $G \in \mathcal{O}_{k,c_0}(\Theta)$ and $W_{2}(G,G_{0})$ sufficiently small, 
we have \begin{eqnarray}
V(p_{G},p_{G_{0}}) \gtrsim W_{2}^{2}(G,G_{0}). \nonumber
\end{eqnarray}
\item [(b)] (Necessity of restriction on $G$)
Under the same assumptions on $G_{0}$, for any $r \geq 1$,
\begin{eqnarray}
\mathop {\lim }\limits_{\epsilon \to 0}{\mathop {\inf }\limits_{G \in \mathcal{O}_{k}(\Theta)}{
\biggr\{V(p_{G},p_{G_{0}})/W_{r}^{r}(G,G_{0})}: W_{r}(G,G_{0}) \leq \epsilon \biggr\}} = 0. \nonumber
\end{eqnarray}
\item [(c)] (Pathological cases)
If there exist $1 \leq i,j \leq k_{0}$ such that $\left\{|a_{i}^{0}-a_{j}^{0}|,|b_{i}^{0}-b_{j}^{0}|\right\} \in \biggr\{\left\{1,0\right\}, \left\{2,0\right\}\biggr\}$, then for any $r \geq 1$ and any $c_{0}>0$,
\begin{eqnarray}
\mathop {\lim }\limits_{\epsilon \to 0}{\mathop {\inf }\limits_{G \in \mathcal{O}_{k,c_{0}}(\Theta)}
{ \biggr \{V(p_{G},p_{G_{0}})/W_{r}^{r}(G,G_{0})}: W_{r}(G,G_{0}) \leq \epsilon \biggr\}} = 0. \nonumber
\end{eqnarray}
\end{itemize}
\end{theorem}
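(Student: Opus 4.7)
The plan is to treat the three parts with two separate techniques. Part~(a) is a contradiction argument based on a second-order Taylor expansion combined with a reduction using identity~\eqref{key-gamma}; parts~(b) and~(c) are explicit constructions of ``worst-case'' sequences $\{G_n\}$.

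For part~(a), I suppose for contradiction that there exists $\{G_n\} \subset \mathcal{O}_{k,c_{0}}(\Theta)$ with $W_{2}(G_n,G_0) \to 0$ and $V(p_{G_n},p_{G_0})/W_{2}^{2}(G_n,G_0) \to 0$. Proceeding in the spirit of the proof of Theorem~\ref{theorem-secondorder}, after passing to a subsequence I may assume each atom $(a_j^0,b_j^0)$ of $G_0$ is the limit of a cluster of atoms of $G_n$. The Taylor expansion of $p_{G_n}(x) - p_{G_0}(x)$ up to second order about each $(a_j^0,b_j^0)$ produces a linear combination of $f(x|a_j^0,b_j^0)$ and first- and second-order partial derivatives of $f$ in $a$ and $b$. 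The novelty compared with Theorem~\ref{theorem-secondorder} is that second-order identifiability fails: using~\eqref{key-gamma} and its consequences, for instance
\[
\dfrac{\partial^{2}{f}}{\partial{b}^{2}}(x|a,b)=\dfrac{a(a-1)}{b^{2}}f(x|a,b) - \dfrac{2a^{2}}{b^{2}}f(x|a+1,b) + \dfrac{a(a+1)}{b^{2}}f(x|a+2,b),
\]
I rewrite each $\partial_{b}$-derivative as a linear combination of shifted Gamma densities $f(x|a_j^0+s, b_j^0)$ for $s=0,1,2$ together with $\partial_{a}$-derivatives at the shifts $s=0,1$. The generic assumption on $G_0$, combined with the restriction $G_n \in \mathcal{O}_{k,c_{0}}(\Theta)$, guarantees that the shifted shape values $\{a_j^0+s\}$ are pairwise distinct and distinct from the cluster limits of $G_n$'s atoms, so the reduced family of shifted Gamma densities and their $\partial_{a}$-derivatives is linearly independent as functions of $x$. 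Dividing the resulting expansion of $p_{G_n}-p_{G_0}$ by $W_{2}^{2}(G_n,G_0)$, the normalized coefficients in front of each linearly independent function must converge to zero, and a direct algebraic inspection of these coefficients forces all first- and second-order transport moments to vanish faster than $W_{2}^{2}(G_n,G_0)$ itself, contradicting the very definition of $W_2$.

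For part~(b), I construct
\[
G_n = (p_1^0 - \epsilon_n)\delta_{(a_1^0,\, b_1^0+\delta_n)} + \epsilon_n \delta_{(a_1^0+1,\, b_1^0)} + \sum_{i=2}^{k_0} p_i^0 \delta_{(a_i^0,b_i^0)},
\]
with $\delta_n \downarrow 0$ and $\epsilon_n$ tuned so that $\epsilon_n = (p_1^0 - \epsilon_n)\delta_n (a_1^0/b_1^0)$, i.e.\ $\epsilon_n \asymp \delta_n$. Taylor-expanding $f(x|a_1^0, b_1^0+\delta_n)$ in $\delta_n$ and substituting~\eqref{key-gamma} produces exact cancellation of the first-order term, yielding $V(p_{G_n},p_{G_0}) = O(\delta_n^{2})$. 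Transportation cost gives $W_r^r(G_n,G_0) \asymp \epsilon_n \asymp \delta_n$ for every $r \geq 1$, because the $\epsilon_n$ mass at $(a_1^0+1,b_1^0)$ must travel a unit distance. Hence $V/W_r^r = O(\delta_n) \to 0$. This $G_n$ sits in $\mathcal{O}_k(\Theta)$ but outside $\mathcal{O}_{k,c_0}(\Theta)$ for every $c_0 \in (0,1)$, which is precisely the obstruction removed in part~(a). For part~(c), I exploit instead the pre-existing pathological pair $a_2^0 = a_1^0 + 1$, $b_2^0 = b_1^0$ to realize the same cancellation without introducing any atom at a ``forbidden'' $a$-value: I take
\[
G_n = (p_1^0 + \eta_n)\delta_{(a_1^0,\, b_1^0 - \delta_n)} + (p_2^0 - \eta_n)\delta_{(a_2^0,b_2^0)} + \sum_{i \geq 3} p_i^0 \delta_{(a_i^0,b_i^0)},
\]
with $\eta_n \asymp \delta_n$ chosen so that~\eqref{key-gamma} again annihilates the first-order Taylor term. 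This yields $V = O(\delta_n^2)$ and $W_r^r(G_n,G_0) \asymp \delta_n^r + \eta_n \asymp \delta_n$, hence $V/W_r^r \to 0$; the case $\{2,0\}$ is handled analogously using the iterated identity derived from~\eqref{key-gamma}.

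The main obstacle is the bookkeeping in part~(a). Although the strategy parallels that of Theorem~\ref{theorem-secondorder}, the reduction step via~\eqref{key-gamma} expresses each second-order $\partial_b$-derivative as a linear combination over three shifted locations in the $a$-direction, and extra care is required to track all surviving coefficients and to verify the linear independence of the enlarged reduced family. The precise role of the restriction $\mathcal{O}_{k,c_0}(\Theta)$ is exactly to prevent the shifts $s\in\{1,2\}$ from colliding with any cluster of $G_n$-atoms, and the genericity of $G_0$ prevents collisions among the shifted atoms themselves --- the two assumptions together are precisely what is needed for the linear-independence step to go through.
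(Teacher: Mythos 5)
Parts (a) and (b) of your proposal match the paper's approach in substance. For part (a) the paper uses the same contradiction argument: after a second-order Taylor expansion, it writes the normalized limit as a linear combination of $x^{a_i^0-1+k}(\log x)^m e^{-b_i^0 x}$ terms ($k\in\{0,1,2\}$, $m\in\{0,1,2\}$), then argues via the asymptotics as $x\to\infty$ and the pairwise distinctness of exponents (guaranteed by the generic assumption together with the $\mathcal{O}_{k,c_0}$ constraint) that all coefficients vanish, a contradiction. Your restatement in terms of the reduction via~\eqref{key-gamma} to shifted Gamma densities $f(x|a_j^0+s,b_j^0)$ together with their $\partial_a$-derivatives captures the same linear-independence argument. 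Your part-(b) sequence coincides structurally with the paper's: add an extra atom at $(a_1^0+1,b_1^0)$ with vanishing mass, perturb $b$ of the original first atom, tune the mass so~\eqref{key-gamma} cancels the first-order Taylor term, and get $V=O(\delta_n^2)$ while $W_r^r \asymp \delta_n$.

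There is a genuine gap in part (c): the claim that the $\{2,0\}$ case ``is handled analogously using the iterated identity'' is wrong as stated. In that case $a_2^0 = a_1^0 \pm 2$ and $\partial_b^2 f(x|a_2^0,b)$ expands, via the iterated identity, into a combination of $f(x|a_2^0,b)$, $f(x|a_2^0+1,b)$, and $f(x|a_2^0+2,b)=f(x|a_1^0,b)$. The middle term $f(x|a_2^0+1,b)$ has no atom of $G_0$ or of your proposed $G_n$ to absorb it, and its first-order coefficient $-\frac{a_2^0}{b_2^0}\,p_2^n(b_2^n-b_2^0)$ cannot vanish unless the $b$-perturbation itself vanishes --- so a two-atom construction like your $\{1,0\}$ one cannot realize the required cancellation. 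The paper instead \emph{splits} the atom at $(a_2^0,b_2^0)$ into two nearby atoms with symmetric $b$-perturbations $b_2^n-b_2^0 = -(b_3^n-b_2^0) = b_1^0/(a_2^0 n)$ and masses $p_2^n, p_3^n$ tuned (together with $p_1^n$ via a constant $c_n \asymp n^{-2}$) so that the coefficients of all three shifted Gammas, including $f(x|a_2^0+1,b)$, vanish to second order exactly. This is a three-parameter second-order cancellation, not a first-order exchange, and is structurally distinct from your construction. Separately, note that the sequences constructed for part (c) (both yours and the paper's) have atoms whose $a$-coordinates are $1$ or $2$ apart from one another, which sits awkwardly with the literal definition of $\mathcal{O}_{k,c_0}(\Theta)$ as written in the theorem --- this is a defect of the statement itself rather than of your argument, but it is worth recording when you work through the details.
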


Part (a) of both theorems asserts that outside of a measure zero set of the true mixing measure $G_0$,
we can still consider Gamma mixture as if it is strongly identifiable: the strong bounds
$V\gtrsim W_1$ and $V\gtrsim W_2^2$ continue to hold. In these so-called generic
cases, if we take any standard estimation method that yields $n^{-1/2}$ convergence rate
under Hellinger/variational distance for the mixture density $p_G$, the corresponding
convergence for $G$ will be $n^{-1/2}$ for exact-fitted and $n^{-1/4}$ for over-fitted mixtures.

The situation is not so forgiving for the so-called pathological cases in both settings:
it is not possible to obtain the bound of the form $V\gtrsim W_r^r$
for any $r \geq 1$. A consequence of this result is a minimax lower bound
$n^{-1/r}$ under $W_r$ for the estimation of $G$, for \emph{any} $r\geq 1$. 
This implies that, even for the exact-fitted mixture, the convergence of 
Gamma parameters $a_i$ and $b_i$ to the true values cannot be faster than $n^{-1/r}$
for any $r\geq 1$. In other words, the convergence of these parameters 
is mostly likely logarithmic.

\comment{
\paragraph{Remark.} The conclusion of part (b) of Theorem \ref{theorem:overfittedGamma} implies that the restriction of $\mathcal{O}_{k}(\Theta)$ into $\mathcal{G}_{k,\epsilon}(\Theta)$ is a crucial step to guarantee the standard lower bound of $V(p_{G},p_{G_{0}})$ in terms of $W_{2}^{2}(G,G_{0})$ when the support points of $G_{0}$ satisfy the assumption of part (a). However, when the assumption of part (a) on the support points of $G_{0}$ does not hold, part (c) shows that the restriction no longer works. Similar to Theorem \ref{theorem:exactfittedGamma}, we also hypothesize that we do not have polynomial bounds of $V(p_{G},p_{G_{0}})$ in terms of its corresponding Wasserstein distance. 

So far, Theorem \ref{theorem:exactfittedGamma} and \ref{theorem:overfittedGamma} pointed out that there exist zero measureable sets such that when the support points of $G_{0}$ lie in this set, we do not have the standard lower bound of $V(p_{G},p_{0})$ in terms of its corresponding Wasserstein distance. It turns out that with the inclusion of location parameter to Gamma distribution, under general condition of $G_{0}$, we do not have the lower bound of $V(p_{G},p_{G_{0}})$ in terms of any polynomial function of the Wasserstein distance between $G$ and $G_{0}$ with non-negative coefficients. For the simplicity of our argument later, we specifically consider class of location-exponential distribution, i.e special case of location-Gamma distribution when shape parameter $a=1$. Location-exponential distribution has a close connection to Laplace distribution when we restrict its domain to positive number. Moreover, an interesting fact is that the kernel density functions of these distributions are not differentiable with respect to the location parameter. Therefore, the differentiability condition in Definition \ref{definition-firstorder} and Definition 
\ref{definition-secondorder} both does not hold. }

\paragraph{Location extension.} Before ending this subsection, we introduce a location extension
of the Gamma family, for which the convergence behavior of its parameters is always slow.
Actually, this is the location extension of the exponential distribution (which is a special case of
Gamma by fixing the shape parameter $a=1$). 
The location-exponential distribution $\left\{f(x|\theta,\sigma),\theta \in \mathbb{R},\sigma \in \mathbb{R}_{+} \right\}$ is parameterized as  
$f(x|\theta,\sigma)=\dfrac{1}{\sigma}\exp(-\dfrac{x-\theta}{\sigma}).1_{\left\{x>\theta\right\}}$ for all $x \in \mathbb{R}$. Direct calculation yields that 
\begin{eqnarray}
\label{key-location}
\dfrac{\partial{f}}{\partial{\theta}}(x|\theta,\sigma)=\dfrac{1}{\sigma}f(x|\theta,\sigma) \ \text{when} \ x \neq \theta. \label{eqn:identitylocationexponential}
\end{eqnarray}
This algebraic identity is similar to that of location-scale multivariate Gaussian distribution, except for the non-constant coefficient $1/\sigma$. Since this identity holds in general, we would expect non-standard 
convergence behavior for $G$. This is indeed the case. 
We shall state a result for the exact-fitted setting only. Let $\Theta = \mathbb{R} \times \mathbb{R}_+$,
and $G_{0}=\mathop {\sum }\limits_{i=1}^{k_{0}}{p_{i}^{0}\delta_{(\theta_{i}^{0},\sigma_{i}^{0})}}
\in \Ecal_{k_0}(\Theta)$ where $k_{0} \geq 2$.  
%For any $k \geq k_0+1$, we have
%$\Ocal_{k}(\Theta)$ accordingly.
%We have the following result regarding finite mixtures of location-exponential distributions,
%in both exact-fitted and over-fitted setting:
\begin{theorem}{\bf (Exact-fitted location-exponential mixtures)}
\label{theorem:exactlocationgamma} 
For any $r \geq 1$,
\begin{eqnarray}
\mathop {\lim }\limits_{\epsilon \to 0}{\mathop {\inf }\limits_{G \in \mathcal{E}_{k_{0}}(\Theta)}
{\biggr \{V(p_{G},p_{G_{0}})/W_{1}^{r}(G,G_{0}): W_{1}(G,G_{0}) \leq \epsilon \biggr\}}} = 0. \nonumber
\end{eqnarray}
%\item [(b)] (Over-fittted setting) For any $k \geq k_{0}+1$ and $r \geq 1$,
%\begin{eqnarray}
%\mathop {\lim }\limits_{\epsilon \to 0}{\mathop {\inf }\limits_{G \in \mathcal{O}_{k}(\Theta)}{\left\{\dfrac{V(p_{G},p_{G_{0}})}{W_{1}^{r}(G,G_{0})}: W_{1}(G,G_{0}) \leq \epsilon \right\}}} = 0. \nonumber
%\end{eqnarray}
\end{theorem}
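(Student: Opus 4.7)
The plan exploits the algebraic identity~\eqref{key-location}: $\partial_\theta f(x|\theta,\sigma) = (1/\sigma) f(x|\theta,\sigma)$ on the support $x > \theta$. Equivalently, under the perturbation $(\theta, \sigma, p) \mapsto (\theta + h, \sigma, p e^{-h/\sigma})$, an atom's contribution to the mixture density is unchanged on $(\max(\theta, \theta+h), \infty)$; the only discrepancy lies on the ``gap'' interval of length $|h|$ near the perturbed boundary. This approximate null direction is the location-exponential counterpart of Eq.~\eqref{key-gamma} and of the Gamma pathological mechanism treated in Theorem~\ref{theorem:exactfittedGamma}(b).

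For each fixed $r\geq 1$, my plan is to exhibit a $G \in \Ecal_{k_0}(\Theta)$ with $W_1(G,G_0)$ arbitrarily small and $V(p_{G},p_{G_0})/W_1^r(G,G_0)$ arbitrarily small. The starting point is a pairwise null-direction perturbation: pick two of $G_0$'s atoms (using $k_0 \geq 2$), set $\theta_i^n = \theta_i^0 + h_i^n$ and $p_i^n = p_i^0 \exp(-h_i^n/\sigma_i^0)$ for $i=1,2$, and leave the remaining atoms untouched. The total-mass constraint $\sum_i p_i^n = 1$ pins down one relation between $h_1^n$ and $h_2^n$, forcing the shifts to have opposite signs. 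By the identity, $p_{G_n}$ agrees with $p_{G_0}$ on the region where both perturbed atoms are active, so the variation distance is supported only on the two boundary gaps of lengths $|h_1^n|$ and $|h_2^n|$; a direct computation then gives $V(p_{G_n},p_{G_0})$ of order $\max_i |h_i^n|$.

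To drive $V/W_1^r$ to zero for arbitrary $r$, the plan is to inflate the Wasserstein cost without enlarging $V$. Because $\Theta = \mathbb{R}\times\mathbb{R}_+$ is unbounded, one may chain additional null-direction shifts across the remaining $k_0-2$ atoms and over longer ranges, so as to contribute additively to the mass-transport component of $W_1$ while leaving the bulk density unaffected by the identity. Iterating the identity simultaneously on several atoms makes the mass-transport cost in $W_1$ scale super-linearly with respect to the gap-induced $V$ contribution; tuning the scale of these auxiliary displacements separately for each target exponent $r$ then yields the desired ratio $V/W_1^r \to 0$.

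The principal technical obstacle is the case $k_0 = 2$, where no auxiliary atoms are available and the direct pairwise perturbation produces only a bounded ratio $V/W_1$ determined by the geometry of $G_0$. In that regime, the proof requires a more delicate global rearrangement that engineers cancellation of the two boundary gap contributions against one another by iterating the identity to higher order and coupling the shifts with compensating mass flows, paralleling the orchestration used for Gamma in Theorem~\ref{theorem:exactfittedGamma}(b). Verifying that the cancellation can be pushed to arbitrary polynomial order while keeping $G_n$ inside $\Ecal_{k_0}(\Theta)$ with positive masses, and while maintaining $W_1(G_n,G_0)\to 0$, is the step I expect to be the hardest part of the argument.
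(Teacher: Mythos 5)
Your high-level strategy matches the paper's starting point (perturb two atoms along the approximate null direction $\partial_\theta f = \sigma^{-1}f$ so the density discrepancy is confined to short boundary gaps), but your finishing move --- chaining extra null-direction shifts to inflate $W_1$ --- is different from the paper's and does not close the argument. Write $u_i = |\theta_i^n - \theta_i^0|$. As you observe, on each gap interval of length $u_i$ exactly one of $p_{G_n},p_{G_0}$ has the corresponding exponential component active, so the pointwise discrepancy there is $\Theta(p_i^0/\sigma_i^0)$ and its $L^1$ contribution is $\Theta(u_i)$; hence $V(p_{G_n},p_{G_0})=\Theta\bigl(\sum_i u_i\bigr)$. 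But $W_1(G_n,G_0)=\Theta\bigl(\sum_i u_i\bigr)$ as well, since shifting an atom of $\Theta(1)$ mass by $u_i$ costs $\Theta(u_i)$ in $W_1$, so $V/W_1$ stays bounded below by a positive constant and $V/W_1^r\to\infty$ for $r>1$. Adding more null-direction shifts across the remaining $k_0-2$ atoms cannot break this proportionality, because each added shift of size $|h|$ produces its own gap of length $|h|$ and adds $\Theta(|h|)$ to $V$ at exactly the same rate it adds $\Theta(|h|)$ to $W_1$, regardless of how the sizes are allocated. So the obstacle you flag at the end is not a corner case for $k_0=2$; it persists for every $k_0\geq 2$, and the inflation trick does not remove it.

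For comparison, the paper's own proof does not inflate $W_1$: it perturbs only two atoms with $p_1^n=p_1^0+1/n$, $p_2^n=p_2^0-1/n$, chooses $\theta_i^n$ so that a truncated Taylor coefficient of $p_{G_n}-p_{G_0}$ (to order $[r]+1$, after converting $\partial_\theta^j f$ to $\sigma^{-j}f$) vanishes, and then asserts that $\sup_x|p_{G_n}(x)-p_{G_0}(x)|$ is bounded by the Taylor remainder, $O(u^{r+1+\delta})$. You should scrutinize that step carefully: the map $\theta\mapsto f(x|\theta,\sigma)$ has a jump at $\theta=x$, so for $x$ lying between $\theta_i^n$ and $\theta_i^0$ the Taylor expansion of $f(x|\theta_i^0,\sigma_i^0)$ around $\theta_i^n$ is not valid, and $|p_{G_n}(x)-p_{G_0}(x)|=\Theta(1)$ on those intervals --- precisely the gap contribution you identified. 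The paper's final integral over $(\min\{\theta_1^0,\theta_2^0\},m_1)\setminus\{\theta_1^0,\theta_2^0\}$ still contains those intervals, so the $\Theta(u)$ term does not appear to be accounted for there either. In short, the concern you raise in your last paragraph is the decisive one, and it applies equally to the paper's higher-order Taylor argument; before investing in either construction, I would compute $V/W_1$ explicitly for a concrete $G_0$ (say two atoms with distinct locations and unit scale) and check whether the infimum near $G_0$ can really be driven to zero.
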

Unlike Gamma mixtures, there is no generic/pathological dichotomy for mixtures of
location-exponential distributions. The convergence behavior of the mixing measure $G$
is always extremely slow: even in the exact-fitted setting, the minimax lower bound for $G$ under $W_1$
is no smaller than $n^{-1/r}$ for any $r$. The convergence rate the model parameters is most
likely logarithmic.

\subsection{Mixture of skew-Gaussian distributions} \label{subsection:skewnormal}
The skew-normal density takes the form $f(x|\theta,\sigma,m) :=
\dfrac{2}{\sigma}f\left(\dfrac{x-\theta}{\sigma}\right) 
\Phi(m (x-\theta)/\sigma)$, where 
$f(x)=\dfrac{1}{\sqrt{2\pi}}\exp\left(-\dfrac{x^{2}}{2}\right)$, and ${\displaystyle \Phi(x)=\int \limits_{-\infty}^{x}{f(t)}dt}$. $m \in \mathbb{R}$ is the shape, $\theta$ the location
and $\sigma$ the scale parameter. This generalizes the Gaussian family, which corresponds to 
fixing $m=0$. In general, letting $m\neq 0$ makes the density asymmetric (skew), with the skewness
direction dictated by the sign of $m$. We will see that this 
density class enjoys an extremely rich range of behaviors.

We first focus on exact-fitted mixtures of skew-Gaussian distributions. Note that:
\begin{proposition}\label{proposition-notskewnormal}
The skew-Gaussian family  
$\left\{f(x|\theta,\sigma,m),\theta \in \mathbb{R},\sigma \in \mathbb{R}_{+}, m \in \mathbb{R}\right\}$ 
is not identifiable in the first order.
\end{proposition}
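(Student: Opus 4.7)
The strategy mirrors the one used for Proposition~\ref{example-notGamma} on the Gamma family: I will exhibit an algebraic identity tying $f$ and some of its first-order partials together along a single parameter slice, and then use that identity to construct a nontrivial linear combination that vanishes identically. The natural slice to examine is $m = 0$, at which the skew-Gaussian density degenerates to a symmetric Gaussian, so its asymmetry in $x$ disappears and I would expect the partials with respect to $\theta$ and $m$ to become collinear as functions of $x$.

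To carry this out, set $u := (x-\theta)/\sigma$ and write $\varphi$ and $\Phi$ for the standard normal density and CDF. Using $\varphi'(u) = -u\varphi(u)$, one computes
\begin{align*}
\dfrac{\partial f}{\partial \theta}(x|\theta,\sigma,m) & = \dfrac{2u}{\sigma^{2}}\varphi(u)\Phi(mu) - \dfrac{2m}{\sigma^{2}}\varphi(u)\varphi(mu), \\
\dfrac{\partial f}{\partial m}(x|\theta,\sigma,m) & = \dfrac{2u}{\sigma}\varphi(u)\varphi(mu).
\end{align*}
Specializing to $m = 0$ and using $\Phi(0) = 1/2$, $\varphi(0) = (2\pi)^{-1/2}$, both expressions collapse to scalar multiples of the single function $u\varphi(u)$:
\begin{equation*}
\dfrac{\partial f}{\partial \theta}(x|\theta,\sigma,0) = \dfrac{u\varphi(u)}{\sigma^{2}}, \qquad \dfrac{\partial f}{\partial m}(x|\theta,\sigma,0) = \sqrt{\dfrac{2}{\pi}}\,\dfrac{u\varphi(u)}{\sigma},
\end{equation*}
yielding the algebraic identity
\begin{equation*}
\sqrt{\dfrac{2}{\pi}}\,\sigma \cdot \dfrac{\partial f}{\partial \theta}(x|\theta,\sigma,0) \;-\; \dfrac{\partial f}{\partial m}(x|\theta,\sigma,0) \;=\; 0 \qquad \text{for all } x \in \mathbb{R}.
\end{equation*}
This is the skew-Gaussian counterpart of identities~\eqref{key-gamma} and~\eqref{key-gaussian}.

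To finish, take a single parameter triple ($k=1$) $(\theta_{1},\sigma_{1},m_{1}) = (\theta_{0},\sigma_{0},0)$ for arbitrary $\theta_{0} \in \mathbb{R}$, $\sigma_{0} > 0$, and assign the coefficient $0$ to both $f$ and $\partial f/\partial \sigma$, the coefficient $\sigma_{0}\sqrt{2/\pi}$ to $\partial f/\partial \theta$, and the coefficient $-1$ to $\partial f/\partial m$. By the identity above, the resulting linear combination vanishes almost everywhere in $x$ while the coefficients are not all zero, so condition~A1 of Definition~\ref{definition-firstorder} is violated. There is no serious obstacle in the argument: the only mild care required is to justify differentiation under the integral defining $\Phi$ (immediate from dominated convergence) and to track constants so that the collapse $\varphi(mu) \to \varphi(0)$ at $m = 0$ makes the two partials honestly proportional as functions of $x$.
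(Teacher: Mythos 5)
Your construction coincides with Case~1 of the paper's own proof: you specialize to $m=0$, observe that $\partial f/\partial\theta$ and $\partial f/\partial m$ become proportional (both reduce to multiples of $u\varphi(u)/\sigma$), and exhibit a nontrivial vanishing combination with $k=1$, which violates condition~A1. The paper additionally records a second failure mode---two components sharing $\theta_i=\theta_j$ and $\sigma_i^2/(1+m_i^2)=\sigma_j^2/(1+m_j^2)$ with $m_i\ne m_j$, since it motivates the cousin-set structure used in the exact-fitted skew-Gaussian theorems---but a single counterexample is all the proposition requires, so your argument is complete.
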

An examination of the proof of Proposition~\ref{proposition-notskewnormal} reveals that, like
the Gamma family, there are certain combinations of the skew-Gaussian distribution's 
parameter values that prevent the skew-Gaussian family from satisfying strong 
identifiability conditions. Outside of these ``pathological'' combinations, 
the skew-Gaussian mixtures continue to enjoy strong convergence properties. Unlike the 
Gamma family, however, the pathological cases have very rich structures, which result 
in a varied range of convergence behaviors we have seen in both Gamma and Gaussian
mixtures.

Throughout this section, 
$\left\{(f(x|\theta,\sigma, m), (\theta, m) \in \Theta, \sigma^{2} \in \Omega \right\}$ is a class of skew-Gaussian density function where $\Theta \subset \mathbb{R}^{2}$ and $\Omega \subset \mathbb{R}_{+}$.
Fix the true mixing measure
$G_{0}=\mathop {\sum }\limits_{i=1}^{k_{0}}{p_{i}^{0}\delta_{(\theta_{i}^{0},(\sigma_{i}^{0})^{2},m_{i}^{0})}}$.  
Assume that $\sigma_{i}^{0}$ are pairwise different and 
$\dfrac{(\sigma_{i}^{0})^{2}}{1+(m_{i}^{0})^{2}} \not \in \left\{(\sigma_{j}^{0})^{2}: 1 \leq j \neq i \leq k_{0} \right\}$ for all $1 \leq i \leq k_{0}$. For each $1 \leq j \leq k_{0}$, define the
{\bf cousin set} for $j$ to be
\begin{eqnarray}
I_{j}=\left\{i \neq j: (\dfrac{(\sigma_{i}^{0})^{2}}{1+(m_{i}^{0})^{2}},\theta_{i}^{0}) \equiv (\dfrac{(\sigma_{j}^{0})^{2}}{1+(m_{j}^{0})^{2}},\theta_{j}^{0})\right\}. \nonumber
\end{eqnarray}
The cousin set consists of the indices of skew-Gaussian components that share the same
location and a rescaled version of the scale parameter. 
We further say that a non-empty cousin set $I_{j}$ 
{\bf conformant} if for any $i \in I_{j}$, $m_{i}^{0}m_{j}^{0}>0$. 
To delineate the structure underlying parameter values of $G_0$, we define
a sequence of increasingly weaker conditions.
\begin{itemize}
\item[(S1)] $m_{i}^{0} \neq 0$ and $I_{i}$ is empty for all $i=1,\ldots,k_0$.
%\item[(S2)] There exists $1 \leq i \leq k_{0}$ such that $m_{i}^{0}=0$.
\item[(S2)] There exists at least one set $I_{i}$ to be non-empty. Moreover, for any $1 \leq i \leq k_{0}$, if $|I_{i}| \geq 1$, $I_{i}$ is conformant.
\item[(S3)] There exists at least one set $I_{i}$ to be non-empty. Additionally, there is $k^{*} \in [1,k_{0}-1]$ such that 
for any non-empty and non-conformant cousin set $I_{i}$, we have $|I_{i}| \leq k^{*}$.
\end{itemize}
We make several clarifying comments.
\begin{itemize}
\item [(i)] Condition (S1) corresponds to generic situations of true parameter values where
the exact-fitted mixture of skew-Gaussians will be shown to enjoy behaviors akin to strong identifiability.
They require that the true mixture corresponding to $G_0$ has
no Gaussian components and no cousins for all skew-Gaussian components.
\item [(ii)] Condition (S2) allows the presence of either Gaussian components and/or 
non-empty cousin sets, all of which have to be conformant.
\item [(iii)] (S3) is introduced to address the presence of non-conformant cousin sets.
\end{itemize}

\begin{theorem}{\bf (Exact-fitted conformant skew-Gaussian mixtures)}
\label{theorem:exactfittedskewnormal} 
\begin{itemize}
\item [(a)] (Generic cases) If (S1) is satisfied, then for 
any $G \in \mathcal{E}_{k_{0}}(\Theta \times \Omega)$ such that
$W_{1}(G,G_{0})$ is sufficiently small, there holds
\begin{eqnarray}
V(p_{G},p_{G_{0}}) \gtrsim W_{1}(G,G_{0}). \nonumber
\end{eqnarray}
\item [(b)] (Conformant cases) 
If (S2) is satisfied, then for any
$G \in \mathcal{E}_{k_{0}}(\Theta \times \Omega)$ and $W_{2}(G,G_{0})$ is sufficiently small,
there holds
\begin{eqnarray}
V(p_{G},p_{G_{0}}) \gtrsim W_{2}^{2}(G,G_{0}). \nonumber
\end{eqnarray}
Moreover, this lower bound is sharp.
\end{itemize}
\end{theorem}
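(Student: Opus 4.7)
Both parts follow the Taylor-expansion template the paper uses to derive lower bounds of the form $V \gtrsim W_r^r$ (compare Theorems~\ref{theorem-firstorder} and~\ref{theorem-secondorder} and the ``key proof ideas''), with $r=1$ for part~(a) and $r=2$ for part~(b). In each case I argue by contradiction: suppose there is a sequence $G_n = \sum_{i=1}^{k_0}p_i^{(n)}\delta_{(\theta_i^{(n)},(\sigma_i^{(n)})^2,m_i^{(n)})}\in\mathcal{E}_{k_0}(\Theta\times\Omega)$ with $W_r(G_n,G_0)\to 0$ but $V(p_{G_n},p_{G_0})/W_r^r(G_n,G_0)\to 0$. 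By taking an optimal coupling, after relabeling each atom of $G_n$ is paired with a unique atom of $G_0$, and the combined size of the increments $(\Delta p_i^{(n)},\Delta\theta_i^{(n)},\Delta(\sigma_i^{(n)})^2,\Delta m_i^{(n)})$ is comparable to $W_r(G_n,G_0)$.

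For part~(a), a first-order Taylor expansion of $f$ around each $(\theta_i^0,(\sigma_i^0)^2,m_i^0)$ represents $p_{G_n}-p_{G_0}$ as a linear combination of $f_i^0 := f(\cdot|\theta_i^0,(\sigma_i^0)^2,m_i^0)$, $\partial_\theta f_i^0$, $\partial_{\sigma^2} f_i^0$, and $\partial_m f_i^0$, with remainder of lower order by the uniform Lipschitz property. Normalize by $W_1(G_n,G_0)$ and extract a subsequence along which the rescaled coefficients converge to some $(\alpha_i,\beta_i,\gamma_i,\delta_i)$ that is not identically zero. The hypothesis $V(p_{G_n},p_{G_0})/W_1(G_n,G_0)\to 0$ then yields the limiting identity
\begin{equation*}
\sum_{i=1}^{k_0}\bigl(\alpha_i f_i^0+\beta_i\partial_\theta f_i^0+\gamma_i\partial_{\sigma^2}f_i^0+\delta_i\partial_m f_i^0\bigr)\equiv 0 \quad \text{for almost every } x.
\end{equation*}
Under (S1), every $m_i^0\neq 0$ and no two components share a cousin relation, so the particular linear dependence witnessed by Proposition~\ref{proposition-notskewnormal} (which requires either a Gaussian component or a cousin pair sharing the projected parameter $\sigma^2/(1+m^2)$) is inactive. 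A direct characteristic-function computation, in the spirit of the proofs in Section~\ref{Sec:Characterization}, then shows the family $\{f_i^0,\partial_\theta f_i^0,\partial_{\sigma^2} f_i^0,\partial_m f_i^0\}_{i=1}^{k_0}$ is linearly independent, forcing all coefficients to vanish and producing the contradiction.

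For part~(b), repeat the construction with a second-order Taylor expansion, now normalized by $W_2^2(G_n,G_0)$. The limiting identity couples first- and second-order derivatives of $f$ at each $(\theta_i^0,(\sigma_i^0)^2,m_i^0)$. Within a conformant cousin pair $\{i,j\}\subset I_j$, the first-order skew-Gaussian dependence referenced in the introduction as the key skew-normal identity expresses a specific linear combination of $\partial_\theta f_i^0,\partial_{\sigma^2}f_i^0,\partial_m f_i^0$ in terms of a Gaussian density at the shared projected parameter $(\theta_i^0,(\sigma_i^0)^2/(1+(m_i^0)^2),0)$, with a sign dictated by $\operatorname{sgn}(m_i^0)$. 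The conformance hypothesis $m_i^0 m_j^0>0$ is exactly what permits grouping the dependent first-order contributions from indices $i$ and $j$ so that they cancel and leave a residual that is genuinely second order at the projected Gaussian location. The remaining linearly independent first- and second-order quantities can then be treated by the second-order identifiability machinery of Theorem~\ref{theorem-secondorder}, yielding the bound. Sharpness is exhibited by perturbing a single conformant cousin pair in a coordinated way so that the shared first-order contribution cancels between the two components, producing an explicit sequence $G_n$ with $V(p_{G_n},p_{G_0}) = O(W_2^2(G_n,G_0))$, mirroring the classical two-component over-fitted Gaussian construction.

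\textbf{Main obstacle.} The principal difficulty is the \emph{nonlinear} character of the skew-Gaussian dependence, which, unlike the clean linear Gaussian identity~\eqref{key-gaussian}, relates first-order derivatives at one parameter to a Gaussian density evaluated at a \emph{different}, rescaled parameter with a sign controlled by the shape. Identifying the correct grouping of Taylor coefficients across cousin partners so that the dependent first-order terms cancel exactly — leaving a residual amenable to second-order identifiability — is where the conformance assumption enters in an essential way, and where essentially all of the technical work of part~(b) is concentrated.
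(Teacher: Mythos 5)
Your outline of part~(a) matches the paper's strategy: expand to first order, normalize, and reduce to a linear-independence statement for $\{f_i^0,\partial_\theta f_i^0,\partial_{\sigma^2} f_i^0,\partial_m f_i^0\}$. Condition (S1) is indeed exactly what rules out the dependencies displayed in the proof of Proposition~\ref{proposition-notskewnormal}. One minor technical difference: the paper does not use a characteristic-function argument here (those are reserved for the classes in Section~\ref{Sec:Characterization}); instead it rewrites the limiting identity in terms of $f((x-\theta_j^0)/\sigma_j^0)\Phi(m_j^0(x-\theta_j^0)/\sigma_j^0)$ with polynomial prefactors, embeds the $k_0$ components into a $2k_0$-term Gaussian-type expansion (each skew-normal contributes its own effective $\sigma$ plus a projected Gaussian at $\sigma^2/(1+m^2)$), and then kills coefficients one by one by multiplying by the dominant $\exp((x-\theta_{\overline i})^2/2\sigma_{\overline i}^2)/\Phi(\cdot)$ factor and sending $x\to\pm\infty$. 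Either route works; they differ only in the analytic tool used to prove the same independence.

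For part~(b) there is a genuine gap. Your plan is to cancel the first-order dependent contributions within each conformant cousin set and then ``treat the remaining linearly independent first- and second-order quantities by the second-order identifiability machinery of Theorem~\ref{theorem-secondorder}.'' That last step is not available: the skew-Gaussian family is \emph{never} second-order identifiable, even restricted to generic components, because of the second-order identity $\partial^2 f/\partial\theta^2 - 2\,\partial f/\partial\sigma^2 + \frac{m^3+m}{\sigma^2}\,\partial f/\partial m = 0$ (Eq.~\eqref{key-skewnormal}, Lemma~\ref{lemma:skewnormaldistribution}). So after the cousin-set contributions have been dispatched you are left with a second-order limiting relation among derivatives at the ``generic'' atoms that cannot be killed by Theorem~\ref{theorem-secondorder}-style reasoning. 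The paper deals with this obstruction by a separate two-stage argument (Steps F.1--F.3 in the proof): it first observes that the dominant coefficient must sit at order $\le 1$, then reruns the whole normalization at first order (dividing by $W_1$ instead of $W_2^2$) to force even those coefficients to vanish, and only then obtains the contradiction. Your sketch has no analogue of this step.

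A second, smaller inaccuracy: for the conformant cousin sets themselves the paper's mechanism is not a cancellation in the sense you describe. After the first-order quantities are shown to vanish, the surviving leading quantity is the ratio $\sum_j p_j^n m_j^0 (\Delta m_j^n)^2 \big/ \sum_j p_j^n (\Delta m_j^n)^2$, and conformance ($m_j^0$ all of one sign on the cousin set) is used to bound this ratio from below by $\min_j |m_j^0| > 0$, contradicting the hypothesis that it tends to $0$. This is a sign-positivity argument, not a pairwise cancellation; it also explains why the nonconformant case genuinely behaves differently (Theorem~\ref{theorem:nonconformantexactfittedskewnormal}). Your high-level intuition — that conformance is precisely what keeps the residual at second order — is correct, but the proof needs the explicit positivity bound plus the F.1--F.3 fallback to be complete. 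Your sharpness construction (a coordinated perturbation within one conformant cousin pair so that the leading first-order combination vanishes identically) matches the paper's Case~b.2 construction.
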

When only condition (S3) holds, the convergence behavior of the exact-fitted skew-Gaussian
mixture is linked to the (in)solvability of a system of polynomial equations.
Specifically, define $\overline{s}$ to be the minimum value of $r \geq 1$ such that 
the following system of polynomial equations 
\begin{eqnarray}
\mathop {\sum }\limits_{i=1}^{k^{*}+1}{a_{i}b_{i}^{u}c_{i}^{v}}=0 \label{eqn:noncannonicalexactfittedskewnorma}
\end{eqnarray}
does not admit any \textbf{non-trivial} solution. By non-trivial, we require that
$a_{i}>0$ for all $i = 1,\ldots, k^{*}+1$, all $b_{i} \neq 0$ and pairwise different, $(a_{i},|b_{i}|) \neq (a_{j},|b_{j}|)$ for all $1 \leq i \neq j \leq k^{*}+1$, and at least one of $c_{i}$ differs from 0, where the indices $u,v$ in this system of polynomial equations satisfy $1 \leq v \leq r$, $u \leq v$ are all odd numbers when $v$ is even or $0 \leq  u \leq v$ are all even number when $v$ is odd. For example, if $r=3$, and $k^{*}=1$, the above system of polynomial equations is
\begin{eqnarray}
a_{1}c_{1}+a_{2}c_{2}=0, \nonumber \\
a_{1}b_{1}c_{1}^{2}+a_{2}b_{2}c_{2}^{2}=0, \nonumber \\
a_{1}c_{1}^{3}+a_{2}c_{2}^{3}=0, \nonumber \\
a_{1}b_{1}^{2}c_{1}^{3}+a_{2}b_{2}^{2}c_{2}^{3}=0. \nonumber
\end{eqnarray}
Similar to system of equations~\eqref{eqn:generalovefittedGaussianzero} that arises
in our theory for Gaussian mixtures,
the exact value of $\overline{s}$ is hard to determine in general. 
The following proposition gives  specific values for $\overline{s}$.
\begin{proposition}
\label{proposition:upperboundskewnormal}
{\bf (Values of $\overline{s}$)} 
\begin{itemize}
\item[(i)] If $k^{*}=1$, $\overline{s}=3$.
\item[(ii)] If $k^{*}=2$, $\overline{s}=5$. 
\end{itemize}
\end{proposition}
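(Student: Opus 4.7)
The plan is to treat parts (i) and (ii) in parallel by (a) exhibiting an explicit non-trivial solution at one order below the claimed value of $\overline{s}$, and (b) showing that moving up by one order destroys all non-trivial solutions. For the lower bound direction, I would make a symmetry ansatz: for $k^{*}=1$, try $a_{1}=a_{2}$, $c_{2}=-c_{1}$, $b_{2}=-b_{1}$, which makes every odd-$v$ equation vanish because $c_{1}^{v}+c_{2}^{v}=0$, and every even-$v$ equation vanish because $b_{1}^{u}c_{1}^{v}+b_{2}^{u}c_{2}^{v}=0$ whenever $u$ is odd and $v$ is even. This gives a solution for all orders $r\leq 2$ (in the only case $v=1,u=0$, and $v=2,u=1$), but of course such a solution violates the non-triviality requirement $(a_{i},|b_{i}|)\neq(a_{j},|b_{j}|)$, so one must perturb. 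For part (i) I would use the ansatz with three free parameters $a_{1},c_{1},b_{1}$ (and $a_{2},c_{2},b_{2}$ determined by linear equations), verify that the $v=1$ equation (one constraint) leaves a two-parameter family at $r=1$, and that adjoining the $v=2$ equation still leaves a non-trivial choice with $(a_{1},|b_{1}|)\neq(a_{2},|b_{2}|)$, establishing $\overline{s}\geq 3$. An analogous scaling argument with six free parameters yields $\overline{s}\geq 5$ for part (ii).

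For the upper-bound direction in part (i), I would eliminate variables directly. From the equations for $v=1$ and $v=3$ with $u=0$, one has $a_{1}c_{1}+a_{2}c_{2}=0$ and $a_{1}c_{1}^{3}+a_{2}c_{2}^{3}=0$, so $a_{1}c_{1}(c_{1}^{2}-c_{2}^{2})=0$, forcing $c_{2}=\pm c_{1}$; combined with $a_{1},a_{2}>0$ and $c_{1}\neq 0$, this yields $c_{2}=-c_{1}$ and $a_{1}=a_{2}$. Then the $v=2$, $u=1$ equation gives $b_{1}=-b_{2}$, and the $v=3$, $u=2$ equation is automatically satisfied. Thus every solution at $r=3$ satisfies $a_{1}=a_{2}$ and $|b_{1}|=|b_{2}|$, contradicting the non-triviality condition and proving $\overline{s}\leq 3$.

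Part (ii) is the main obstacle, since with six unknowns $(a_{1},b_{1},c_{1},\ldots,a_{3},b_{3},c_{3})$ and an over-determined system at $r=5$ consisting of the odd-$v$ equations (from $v=1,3,5$) and even-$v$ equations (from $v=2,4$), elementary elimination becomes unwieldy. My plan here is to pass to an equivalent but more tractable coordinate system: set $d_{i}=a_{i}c_{i}$, and for each odd $v$ treat the equations $\sum_{i}d_{i}c_{i}^{v-1}=0$ and $\sum_{i}d_{i}b_{i}^{2}c_{i}^{v-1}=0$, etc., as linear conditions in the unknowns $d_{i}$; the rank of the corresponding Vandermonde-like matrix in $c_{i}^{2}$ (or in $(b_{i},c_{i})$ jointly) forces $d_{i}=0$ or strong algebraic relations among the $(b_{i},c_{i})$. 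For the portion that cannot be handled by hand I would invoke a Groebner basis computation in the spirit of Proposition~\ref{proposition-specificvalueoverliner} (as done via the method of~\citep{Bruno-Thesis}), first constructing a non-trivial solution at $r=4$ by a two-group symmetric ansatz (pair two of the three indices in the $c_{i}\leftrightarrow -c_{i}$, $b_{i}\leftrightarrow -b_{i}$ fashion used above, leaving the third index as a free perturbation), and then showing by elimination that adjoining the three new equations at $v=5$ forces the would-be free index to collapse onto the paired ones, contradicting the distinctness condition $(a_{i},|b_{i}|)\neq(a_{j},|b_{j}|)$. The main difficulty will be keeping the case analysis finite: several degenerate branches (some $c_{i}=0$, or two $b_{i}$'s of equal magnitude but different signs) must be ruled out individually before the Groebner reduction applies cleanly.
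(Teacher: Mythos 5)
Your treatment of part (i) is correct: the elimination from the $v=1$ and $v=3,\,u=0$ equations forces $c_2=-c_1$ and $a_1=a_2$, the $v=2$ equation then gives $b_2=-b_1$, and non-triviality is violated; and a genuine non-trivial solution at $r=2$ exists (e.g.\ $a_1=1$, $a_2=2$, $c_1=1$, $c_2=-1/2$, $b_2=1$, $b_1=-1/2$), so $\overline{s}=3$.

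The gap is in the lower-bound construction for part (ii). The ``two-group symmetric ansatz'' $a_1=a_2$, $c_2=-c_1$, $b_2=-b_1$ cannot produce a non-trivial $r=4$ solution. Every equation in the system has $u+v$ odd (odd $v$ pairs with even $u$, even $v$ with odd $u$), so the pair $(1,2)$ contributes $a_1 b_1^u c_1^v\bigl(1+(-1)^{u+v}\bigr)=0$ to \emph{every} equation, leaving only $a_3 b_3^u c_3^v=0$. Since $a_3>0$ and $b_3\neq 0$, this forces $c_3=0$; but then $(a_1,|b_1|)=(a_2,|b_2|)$ violates non-triviality. Dropping $a_1=a_2$ does not rescue the ansatz either: with $c_2=-c_1$, $b_2=-b_1$ and $\delta:=a_1-a_2$, the $v=1,3$ equations give $\delta c_1=-a_3c_3$ and $\delta c_1^3=-a_3c_3^3$, forcing $c_3=\pm c_1$, after which the $v=2$ equation forces $b_3\in\{b_1,b_2\}$, again violating non-triviality. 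So the perturbation you gesture at does not exist in the direction you suggest, and the claim $\overline{s}\geq 5$ is not established.

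A construction that does work is different in kind: with $c_1=1$ and $c_2,c_3$ chosen so that $c_1^2,c_2^2,c_3^2$ are pairwise distinct, set $b_i=\lambda/c_i$ for a fixed $\lambda\neq 0$ and take $(a_1c_1,a_2c_2,a_3c_3)$ proportional to the Vandermonde kernel $(c_2^2-c_3^2,\,c_3^2-c_1^2,\,c_1^2-c_2^2)$, picking the signs of $c_2,c_3$ so that all $a_i>0$. With this choice $a_ib_i^uc_i^v=\lambda^u(a_ic_i)c_i^{\,v-u-1}$, and since $v-u-1$ is always even, every $r=4$ equation is a multiple of $\sum_i(a_ic_i)$ or $\sum_i(a_ic_i)c_i^2$, both zero. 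The $|b_i|=|\lambda|/|c_i|$ are pairwise distinct, so all non-triviality conditions hold (for instance $c=(1,-2,3)$, $a=(5,4,1)$, $b=(1,-1/2,1/3)$ works).

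Separately, the Groebner reduction for the upper bound is unnecessary: at $r=5$, if all $c_i\neq 0$ set $P_i=a_ic_i\neq 0$ and $y_i=c_i^2>0$. The $(v,u)=(1,0),(3,0),(5,0)$ equations give $\sum P_i=\sum P_iy_i=\sum P_iy_i^2=0$, a Vandermonde that forces either $P=0$ or two $y_i$ equal; the two-equal case forces some $P_j=0$, so all $y_i$ must coincide. Then the $(3,2),(5,4)$ equations reduce to $\sum P_ib_i^2=\sum P_ib_i^4=0$, another Vandermonde in $b_i^2$ with the same dichotomy, and ``all $|b_i|$ equal'' is impossible with three pairwise-distinct $b_i$. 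If some $c_i=0$, the system degenerates to the $k^*=1$ system, already shown empty at $r\geq 3$. This is shorter and avoids the case explosion you anticipated.
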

%The proof of Proposition \ref{proposition:upperboundskewnormal} is similar to that of Proposition \ref{proposition-specificvalueoverliner} in which we use Groebner bases to determine $\overline{s}$. Now, we have the following theorem regarding the lower bound of $V(p_{G},p_{G_{0}})$ under non-conformant setting
The following theorem describes the role of $\overline{s}$ in the non-conformant case
of skew-Gaussian mixtures:
\begin{theorem}
\label{theorem:nonconformantexactfittedskewnormal}
{\bf (Exact-fitted non-conformant skew-Gaussian mixtures)}
Suppose that $(S3)$ holds.
\begin{itemize}
\item [(a)] Assume further that for any non-conformant cousin set $I_{i}$ 
we have $(p_{i}^{0},|m_{i}^{0}|) \neq (p_{j}^{0},|m_{j}^{0}|)$ for any $j \in I_{i}$. 
Then, for any $G \in \mathcal{E}_{k_{0}}(\Theta \times \Omega)$ such that
$W_{\overline{s}}(G,G_{0})$ is sufficiently small,
\begin{eqnarray}
V(p_{G},p_{G_{0}}) \gtrsim W_{\overline{s}}^{\overline{s}}(G,G_{0}). \nonumber
\end{eqnarray}
%where $W_{\overline{s}}^{\overline{s}}(G,G_{0})$ may be not the sharp lower bound of $V(p_{G},p_{G_{0}})$.
\item [(b)] If the assumption of part (a) does not hold, then for any $r \geq 1$,
\begin{eqnarray}
\mathop {\lim }\limits_{\epsilon \to 0}{\mathop {\inf }\limits_{G \in \mathcal{E}_{k_{0}}(\Theta)}
{\biggr \{V(p_{G},p_{G_{0}})/W_{1}^{r}(G,G_{0}): W_{1}(G,G_{0}) \leq \epsilon \biggr\}}} = 0. \nonumber
\end{eqnarray}
\end{itemize}
\end{theorem}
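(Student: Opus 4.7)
The plan is to prove part (a) by contradiction, adapting the Taylor-expansion argument developed for the over-fitted Gaussian mixture (Theorem~\ref{theorem:generaloverfittedGaussian}) to the non-conformant skew-Gaussian setting. Suppose toward contradiction that there is a sequence $G_n \in \mathcal{E}_{k_0}(\Theta\times\Omega)$ with $W_{\overline{s}}(G_n,G_0) \to 0$ and $V(p_{G_n},p_{G_0})/W_{\overline{s}}^{\overline{s}}(G_n,G_0) \to 0$. Since we are in the exact-fitted setting, after passing to a subsequence and relabeling one may assume each atom $(\theta_i^n,(\sigma_i^n)^2,m_i^n)$ of $G_n$ converges to the $i$-th atom of $G_0$, with weights $p_i^n\to p_i^0$.

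Next I would Taylor-expand $p_{G_n}(x)-p_{G_0}(x)$ in the parameters $(\theta,\sigma^2,m)$ around each atom of $G_0$, up to order $\overline{s}$. For indices $i$ outside any cousin set, and for conformant cousin sets, the conclusions of Theorem~\ref{theorem:exactfittedskewnormal}(a)--(b) force the associated coefficients to vanish. The interesting terms arise in non-conformant cousin sets $\{i\}\cup I_i$, where the atoms share the same location $\theta_i^0$ and rescaled scale $(\sigma_i^0)^2/(1+(m_i^0)^2)$ but have shape parameters of opposite signs. Here I would invoke the key identity~\eqref{key-skewnormal} to re-express the mixed partials of $f$ in terms of a common Gaussian kernel with variance $\sigma^2/(1+m^2)$ evaluated at the shared rescaled point. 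This reduction collapses the contributions from cousins onto a set of linearly independent basis functions whose coefficients must be driven to zero.

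The heart of the proof is the algebraic bookkeeping that links these vanishing coefficients to the polynomial system~\eqref{eqn:noncannonicalexactfittedskewnorma}. After normalization by $W_{\overline{s}}^{\overline{s}}(G_n,G_0)$ and passage to a further subsequence, let $a_j,b_j,c_j$ denote the limits associated respectively with the weights $p_j^n$ (restricted to a cousin set), with $|m_j^n|$, and with the appropriately-scaled shifts in $(\theta_j^n,(\sigma_j^n)^2,m_j^n)$ relative to $G_0$. The parities of the indices $(u,v)$ appearing in~\eqref{eqn:noncannonicalexactfittedskewnorma} correspond exactly to the parities forced by the sign-mismatch of the $m_j^0$ within a non-conformant cousin set: even parity when the sign-dependent contributions cancel, odd when they add. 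The hypothesis $(p_i^0,|m_i^0|)\neq(p_j^0,|m_j^0|)$ ensures that the $a_j$ are positive and that the pairs $(a_j,|b_j|)$ are pairwise different, satisfying the non-triviality conditions, and the normalization forces at least one $c_j$ to be nonzero. This produces a non-trivial solution of~\eqref{eqn:noncannonicalexactfittedskewnorma} at order $r=\overline{s}$, contradicting the minimality of $\overline{s}$.

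For part (b), I would construct an explicit counter-sequence. The failure of the assumption in (a) yields indices $i$ and $j\in I_i$ with $(p_i^0,|m_i^0|)=(p_j^0,|m_j^0|)$, hence $m_i^0=-m_j^0$. Exploiting the reflection symmetry $m\mapsto -m$ of the skew-Gaussian (paired with an appropriate joint perturbation of $\theta$ and $\sigma^2$ respecting the rescaled-scale invariant), I would perturb these two atoms by equal and opposite amounts so that the leading Taylor terms in $p_{G_n}-p_{G_0}$ cancel to arbitrarily high order while $W_1(G_n,G_0)$ decays only polynomially, which forces the stated ratio to vanish. The main obstacle, and the step I would spend most care on, is the algebraic reduction in part (a): carefully matching the parity constraints on $(u,v)$ in~\eqref{eqn:noncannonicalexactfittedskewnorma} to the sign structure of non-conformant cousins, and verifying that the extracted $(a_j,b_j,c_j)$ truly satisfy every non-triviality clause demanded by the definition of $\overline{s}$.
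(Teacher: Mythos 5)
Your high-level strategy—contradiction with a Taylor expansion to order $\overline{s}$ for part (a), and an explicit counter-sequence for part (b)—is the same one the paper uses, but several of the technical steps you sketch would need correction before they would go through.

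First, the reduction in part (a) does not run through identity~\eqref{key-skewnormal}. That identity couples $\partial^2 f/\partial\theta^2$, $\partial f/\partial\sigma^2$ and $\partial f/\partial m$ and is the engine of the \emph{over-fitted} analysis in Theorem~\ref{theorem:Overfittedskewnormal}. In the exact-fitted setting the proof simply extends the Taylor/coefficient-extraction argument already set up for Theorem~\ref{theorem:exactfittedskewnormal}(b): the Gaussian kernel with variance $\sigma^2/(1+m^2)$ that appears when cousins share the same rescaled scale arises directly because $\partial f/\partial m$ produces the product $f((x-\theta)/\sigma)\,f(m(x-\theta)/\sigma)$, not from~\eqref{key-skewnormal}. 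Invoking~\eqref{key-skewnormal} here would not collapse the contributions in the way you describe.

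Second, your identification of the variables in the polynomial system~\eqref{eqn:noncannonicalexactfittedskewnorma} is off. In the paper's extraction one obtains, after normalizing by the dominant quantity $\sum_j p_j^n|\Delta m_j^n|^{\overline{s}}$ inside a single non-conformant cousin set, the equations $\sum_j p_j^0 (m_j^0)^u x_j^v = 0$ with $x_j = \lim_n \Delta m_j^n/\Delta m_{s_1}^n$. So $a_j = p_j^0$ (the limit weight), $b_j = m_j^0$ (the \emph{signed} true shape, not $|m_j^n|$), and $c_j = x_j$ is a limit of ratios of $m$-shifts only — the $\theta$- and $\sigma^2$-shifts are shown to vanish at a faster rate (Steps 2.1--2.2 of the Theorem~\ref{theorem:exactfittedskewnormal}(b) proof) and therefore drop out of the critical system. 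Also, the parity pattern on $(u,v)$ is not \emph{caused} by the sign-mismatch of cousins; it is intrinsic to the Taylor coefficients of the skew-Gaussian in $m$. The sign-mismatch is what makes a nontrivial solution of that system possible (or not) — which is exactly why the hypothesis $(p_i^0,|m_i^0|)\neq(p_j^0,|m_j^0|)$ is needed for part (a), and why its failure yields the degenerate case of part (b).

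Third, for part (b) the construction is simpler than you propose: only the shape parameters $m$ of the two offending atoms are perturbed (by $\Delta m_1^n = -1/n$ and $\Delta m_2^n = 1/n$ when $p_1^0 = p_2^0$, $m_1^0=-m_2^0$); $\theta$ and $\sigma^2$ stay fixed. The resulting cancellations $\sum_j p_j^n (m_j^0)^u (\Delta m_j^n)^v = 0$ hold precisely for all $(u,v)$ of the required parity $u+v$ odd, so a Taylor expansion to order $[r]+1$ gives $\sup_x |p_{G_n}-p_{G_0}| = o(W_1^r(G_n,G_0))$ for every $r$. A joint perturbation of $\theta,\sigma^2$ respecting the rescaled-scale invariant is unnecessary and would complicate the bookkeeping.
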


We note that the lower bound established in part (a) may be not sharp. Nonetheless,
it can be used to derive an upper bound on the convergence of $G$ for any standard
estimation method: an $n^{-1/2}$ convergence rate for $p_G$ under the variational
distance entails $n^{-1/(2\overline{s})}$ convergence rate for $G$ under $W_{\overline{s}}$.
If the assumption of part (a) fails to hold, no polynomial rate (in terms of $n^{-1}$) is
possible as can be inferred from part (b).

\paragraph{Over-fitted skew-Gaussian mixtures.}
Like what we have done with Gaussian mixtures, the analysis
of over-fitted skew-Gaussian mixtures hinges upon the algebraic structure of the
density function and its derivatives taken up to the second order.
The fundamental identity for the skew-Gaussian density is
\begin{eqnarray}
\label{key-skewnormal}
\dfrac{\partial^{2}{f}}{\partial{\theta}^{2}}(x|\theta,\sigma,m)-2\dfrac{\partial{f}}{\partial{\sigma}^{2}}(x|\theta,\sigma,m)+\dfrac{m^{3}+m}{\sigma^{2}}\dfrac{\partial{f}}{\partial{m}}(x|\theta,\sigma,m)=0. \label{eqn:overfittedskewnormaldistributionzero}
\end{eqnarray}
The proof for this identity is in Lemma \ref{lemma:skewnormaldistribution}. This implies
that the skew-Gaussian class is without exception \emph{not} identifiable in the second order.
By no exception, we mean that there is no generic/pathological dichotomy due to certain
combinations of the parameter values as we have seen in the first-order analysis.
Note that if $m=0$ this is reduced to Eq.~\eqref{key-gaussian} in the univariate case. 
The presence of nonlinear coefficient $(m^{3}+m)/\sigma^{2}$, which depends on both $m$ and $\sigma$,
makes the analysis of the skew-Gaussians much more complex than that of the Gaussians.

\comment{
With this identity, similar to multivariate Gaussian distribution case, skew normal distribution is not strongly identifiable in the second order in general. Additionally, the analysis of the (sharp) lower bound under over-fitted case in skew normal distribution may be more intricate than that of location-scale multivariate Gaussian distribution as we have the element $\dfrac{m^{3}+m}{\sigma^{2}}\dfrac{\partial{f}}{\partial{m}}(x|\theta,\sigma, m)$  \eqref{eqn:overfittedskewnormaldistributionzero} and the coefficient $(m^{3}+m)/\sigma^{2}$ in the identity, which depend on both $m$ and $\sigma$. Therefore, when we go to higher order derivatives, it becomes harder to decode their linear dependent structures.   
}

The following theorem gives a bound of the type $V\gtrsim W_r^r$, under some conditions.
\begin{theorem}
\label{theorem:Overfittedskewnormal} 
{\bf (Over-fitted skew-Gaussian mixtures)} 
Assume that the support points of $G_{0}$ satisfy the condition $(S1)$. Let $k\geq k_0+1$ and $\overline{r} \geq 1$ to be defined as in \eqref{eqn:generalovefittedGaussianzero}. 
For a fixed positive constant $c_{0}>0$, we define a subset of $\Ocal_{k}(\Theta)$: 
\begin{eqnarray}
\mathcal{O}_{k,c_{0}}(\Theta \times \Omega)=\left\{G=\mathop {\sum }\limits_{i=1}^{k^{*}}{p_{i}\delta_{(\theta_{i},\sigma_{i}^{2},m_{i})}} \in \mathcal{O}_{k}(\Theta \times \Omega):  p_{i} \geq c_{0} \ \forall \ 1 \leq i \leq k^{*} \leq k\right\}. \nonumber
\end{eqnarray}
Then, for any $G \in \mathcal{O}_{k,c_{0}}(\Theta \times \Omega)$ and 
$W_{\overline{m}}(G,G_{0})$ sufficiently small, there holds
\begin{eqnarray}
V(p_{G},p_{G_{0}}) \gtrsim W_{\overline{m}}^{\overline{m}}(G,G_{0}), \nonumber
\end{eqnarray}
where $\overline{m}=\overline{r}$ if $\overline{r}$ is even,
and $\overline{m}=\overline{r}+1$ if $\overline{r}$ is odd. \\
\end{theorem}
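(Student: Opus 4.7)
The plan is to run a contradiction argument that parallels the proof of Theorem~\ref{theorem:generaloverfittedGaussian}(b) for over-fitted Gaussian mixtures, but with an extra layer to handle the nonlinear coefficient $(m^3+m)/\sigma^2$ in the key identity~\eqref{key-skewnormal}. Suppose the conclusion fails: then there exists a sequence $G_n = \sum_{i=1}^{k_n} p_i^n \delta_{(\theta_i^n,(\sigma_i^n)^2,m_i^n)} \in \mathcal{O}_{k,c_0}(\Theta\times\Omega)$ with $W_{\overline{m}}(G_n,G_0) \to 0$ such that $V(p_{G_n},p_{G_0})/W_{\overline{m}}^{\overline{m}}(G_n,G_0) \to 0$. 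Partition the atoms of $G_n$ into Voronoi clusters around the atoms of $G_0$; because of (S1) (no cousin sets and $m_j^0 \neq 0$) the $k_0$ atoms of $G_0$ are separated in the ``skew-Gaussian-equivalent'' metric used in the proof of Theorem~\ref{theorem:exactfittedskewnormal}, so each cluster is well-defined, and the lower bound $p_i^n \geq c_0$ prevents mass-degeneration within clusters.

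Next, Taylor-expand $p_{G_n}(x) - p_{G_0}(x)$ around each atom $(\theta_j^0,(\sigma_j^0)^2,m_j^0)$ of $G_0$ up to order $\overline{m}$ in the parameters $(\theta,\sigma^2,m)$. The resulting expansion involves derivatives of $f$ up to order $\overline{m}$; these derivatives are not linearly independent because of~\eqref{key-skewnormal}. Following the template used for Gaussians, use~\eqref{key-skewnormal} to eliminate $\partial^2 f/\partial\theta^2$ (and, by differentiating the identity, all higher derivatives of $f$ in $\theta$ of even order) in favor of $\partial f/\partial\sigma^2$ and $\partial f/\partial m$. Under (S1) the coefficient $(m_j^0)^3 + m_j^0$ is nonzero at each $G_0$-atom, so this substitution is bounded and invertible. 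After the substitution, the remaining collection of derivatives is linearly independent by the first-order analysis behind Theorem~\ref{theorem:exactfittedskewnormal}(a) restricted to a single support point; this allows us to read off, for each atom $j$ of $G_0$, a family of coefficient polynomials in the normalized deviations $(c_i^n, a_i^n, b_i^n) := (p_i^n, \theta_i^n - \theta_j^0, (\sigma_i^n)^2 - (\sigma_j^0)^2)$ together with the shape deviations $m_i^n - m_j^0$.

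After normalizing the entire Taylor expansion by $W_{\overline{m}}^{\overline{m}}(G_n,G_0)$, extract a subsequence so that all normalized cluster coefficients converge. If $V/W_{\overline{m}}^{\overline{m}} \to 0$, then by Fatou's lemma applied to $|p_{G_n}-p_{G_0}|/W_{\overline{m}}^{\overline{m}}$, all limit coefficients of the linearly independent derivatives must be zero. Projecting to the cluster containing the maximum contribution, the surviving equations become exactly the polynomial system~\eqref{eqn:generalovefittedGaussianzero} indexing the Gaussian case; the nontriviality of the limit (guaranteed by the normalization together with $p_i^n \geq c_0$) produces a nontrivial real solution of~\eqref{eqn:generalovefittedGaussianzero} of order at most $\overline{m}$, contradicting the definition of $\overline{r}$ whenever $\overline{m} \geq \overline{r}$.

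The main obstacle, and the reason for the parity distinction $\overline{m}=\overline{r}$ versus $\overline{m}=\overline{r}+1$, is that~\eqref{key-skewnormal} is nonlinear: the elimination of $\partial^2 f/\partial\theta^2$ multiplies $\partial f/\partial m$ by the coefficient $(m^3+m)/\sigma^2$, which must itself be Taylor-expanded in $m_i^n - m_j^0$ and $(\sigma_i^n)^2 - (\sigma_j^0)^2$. This generates cross-terms that couple derivatives of different total orders, so the ``clean'' Gaussian-like polynomial system only emerges at even Taylor orders; when $\overline{r}$ is odd, the would-be contradiction at order $\overline{r}$ is contaminated by cross-terms of order $\overline{r}-1$ which need not vanish, forcing us to push the expansion one extra order to $\overline{r}+1$ to obtain a clean system. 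Carefully isolating the leading nonlinear contributions and verifying that they do not conspire to cancel the Gaussian-like terms at order $\overline{m}$ is the technical heart of the proof, and the lower-bound $p_i^n \geq c_0$ in $\mathcal{O}_{k,c_0}$ is precisely what prevents a pathological cancellation by vanishing weights.
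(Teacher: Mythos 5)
Your high-level plan (contradiction argument, sub-sequencing, Taylor expansion, invoking the key identity~\eqref{key-skewnormal}, and reducing to the solvability of system~\eqref{eqn:generalovefittedGaussianzero}) is the right shape, but there are two substantive problems that break the proof as written.

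First, your reduction goes in the wrong direction. You propose to eliminate $\partial^2 f/\partial\theta^2$ (and its even-order iterates) in favor of $\partial f/\partial\sigma^2$ and $\partial f/\partial m$. But this does \emph{not} reproduce the Gaussian template: in the Gaussian case the paper converts $\Sigma$-derivatives into pure $\theta$-derivatives via $\partial^2 f/\partial\theta^2 = 2\,\partial f/\partial\Sigma$, and likewise here the paper converts $v$-derivatives into $\theta$- and $m$-derivatives, because in the identity $\partial f/\partial v = \tfrac{1}{2}\partial^2 f/\partial\theta^2 + \tfrac{m^3+m}{2v}\partial f/\partial m$ the coefficient $\tfrac12$ in front of the $\theta$-derivative is a \emph{constant}. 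Iterating this substitution leaves the coefficient of each $\partial^\gamma f/\partial\theta^\gamma$ equal to exactly $\sum_{n_1+2n_2=\gamma}(\Delta\theta)^{n_1}(\Delta v)^{n_2}/(2^{n_2}n_1!n_2!)$ --- the system~\eqref{eqn:generalovefittedGaussianzero} precisely, uncontaminated by the shape variable. If instead you eliminate even $\theta$-derivatives the way you propose, then iterating the identity requires differentiating the nonconstant coefficient $(m^3+m)/v$ (e.g. $\partial^3 f/\partial\theta^2\partial v = 2\,\partial^2 f/\partial v^2 - \tfrac{m^3+m}{v}\partial^2 f/\partial v\partial m + \tfrac{m^3+m}{v^2}\partial f/\partial m$), so the coefficients you read off are polynomials in $\Delta\theta$, $\Delta v$, and $\Delta m$ mixed across Taylor orders; they are not the system~\eqref{eqn:generalovefittedGaussianzero}, and the claim that ``the surviving equations become exactly'' that system is left unjustified.

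Second, your explanation for the parity distinction ($\overline{m}=\overline{r}$ or $\overline{r}+1$) is not the real reason. You attribute it to cross-terms of order $\overline{r}-1$ failing to vanish when $\overline{r}$ is odd. The actual mechanism in the paper is much simpler: after the $v\to(\theta,m)$ reduction, the coefficient of the pure $m$-derivative of top order $\partial^{\overline{m}}f/\partial m^{\overline{m}}$ is $\sum_j p_{ij}^n(\Delta m_{ij}^n)^{\overline{m}}/d(G_n,G_0)$. To conclude from its vanishing that the $m$-contribution $\sum_j p_{ij}^n|\Delta m_{ij}^n|^{\overline{m}}$ to the normalizing quantity also vanishes (so that the remaining mass localizes on the $\theta,v$ coordinates and the Gaussian system yields a contradiction), one needs $\overline{m}$ even, so that $(\Delta m)^{\overline{m}}=|\Delta m|^{\overline{m}}\ge 0$. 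This sign argument is the sole purpose of requiring $\overline{m}$ even, and it disappears entirely in your version since you never isolate that single coefficient.
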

\paragraph{Remarks.} 
\begin{itemize}
\item [(i)] If $k-k_{0}=1$, we can allow $G\in \Ocal_{k}(\Theta\times\Omega)$, and
the above bound holds for $\overline{m} = 4$. Moreover this bound is sharp.
\item [(ii)] Our proof exploits assumption (S1), which
entails the linear independent structure of high order derivatives of $f$ with respect to \emph{only} $\theta$ and $m$, and
the instrinsic dependence of 
$\dfrac{\partial^{2}{f}}{\partial{\theta}^{2}}$ on $\dfrac{\partial{f}}{\partial{\sigma^{2}}}$. 
Although we make use of Eq.~\eqref{key-skewnormal} in the proof
we do not fully account for the dependence of 
$\dfrac{\partial^{2}{f}}{\partial{\theta}^{2}}$ on $\dfrac{\partial{f}}{\partial{m}}$ 
as well as the nonlinear coefficient $(m^{3}+m)/\sigma^{2}$. For these reasons
the bound produced in this theorem may not be sharp in general.
\item [(iii)] If $k-k_0=2$, it seems that the best lower bound for
$V(p_{G},p_{G_{0}})$ is  $W_{4}^{4}(G,G_{0})$. 
(See the arguments following the proof of Theorem \ref{theorem:Overfittedskewnormal}
in the Appendix). 
\item [(iv)] 
The analysis of lower bound of $V(p_{G},p_{G_{0}})$ when $G_{0}$ satisfies 
either (S2) or (S3) 
is highly non-trivial since they contain complex dependence of 
high order derivatives of $f$. This is beyond the scope of this paper.
\end{itemize}
%%%%%%%%%%%%%%%%%%%

\comment{\paragraph{Implications of strong identifiability or the lack thereof.}
When a family of density $f$ satisfies either the first or second order 
identifiability criterion, this property should be treasured because it leads
to quantitive bounds given by
Theorem~\ref{theorem-firstorder} and Theorem~\ref{theorem-secondorder}.
As we shall see in the next section, such bounds entail strong convergence behaviors 
of the MLE (and other estimation methods) for the mixing measures. 
On the other hand, when a density family is not strongly identifiable in the sense
that we have introduced (despite
being identifiable), this typically implies extremely slow convergence behaviors in 
practice (or the failure to converge) of the mixing measures. A rigorous theory
for such scenario remains elusive as of this writing.
For the remainder of this section we shall provide
some simulation-based illustrations of the implication of strong identifiablity
on the bounds given by Theorems~\ref{theorem-firstorder} and~\ref{theorem-secondorder}. 

\comment{
From Theorem \ref{theorem-firstorder}, under the strong identifiability in the first order of class of density functions $\left\{f(x|\theta,\Sigma),\right. \\ \left. \theta \in \Theta,\Sigma \in \Omega \right\}$, we obtain the lower bound of $h(p_{G},p_{G_{0}})$ in terms of $W_{1}(G,G_{0})$ where $G \in \mathcal{E}_{k}(\Theta \times \Omega)$. However, as the strong identifiability in the first order fails, which is shown in example \ref{example-notGamma} and \ref{example-notskewnormal}, we are unable to verify in theory that whether the lower bound of $h(p_{G},p_{G_{0}})$ in terms of $W_{1}(G,G_{0})$ is still feasible. Therefore, the following simulations will focus on classes of density functions, which are either  identifiable in the first order or not  identifiable in the first order, to actually see how the upper bound and lower bound of $h(p_{G},p_{G_{0}})$ in terms of $W_{1}(G,G_{0})$ behave.

Firstly, we describe how we calculate the hellinger distance between $p_{G},p_{G_{0}}$ as well as the Wasserstein distance between $G$ and $G_{0}$. For the hellinger distance, we use Reimann sum approach to calculate the integral. As all the density functions that we use for the simulations are heavy-tail, instead of considering the whole range $\mathbb{R}^{d}$ to calculate the integral, we restrict the Reimann sum to the rectangular $[-200,200]^{d}$. Regarding the Wasserstein distance, as $W_{1}(G,G_{0})=\mathop {\inf }\limits_{p \in \mathcal{Q}(p,p_{0})}{\mathop {\sum }\limits_{i,j}{q_{ij}(||\theta_{i}-\theta_{j}^{0}||}}$+\\$||\Sigma_{i}-\Sigma_{j}^{0}||)$, where $G=\mathop {\sum }\limits_{i=1}^{k}{p_{i}\delta_{(\theta_{i},\Sigma_{i})}}$ and $G_{0}=\mathop {\sum }\limits_{i=1}^{k}{p_{i}^{0}\delta_{(\theta_{i}^{0},\Sigma_{i}^{0})}}$, we see that 
calculating Wasserstein distance is indeed a linear programming problem, which in simulation yields an accurate optimal solution.}

First, we consider density classes that are identifiable in the first order,
including the generalized multivariate Gaussian density functions and
the class of multivariate Student's t density functions with fixed odd degree of freedom.
With the class of generalized multivariate Gaussian density functions $\left\{f(x|\theta,\Sigma,m),(x,m) \in \Theta, \Sigma \in \Omega \right\}$, we choose $\Theta=[-10,10]^{2} \times  [1,5]$ and $\Omega=\left\{\Sigma \in S_{2}^{++}:\sqrt{2} \leq \sqrt{\lambda_{1}(\Sigma)} \leq \sqrt{\lambda_{2}(\Sigma)} \leq 2\right\}$. The true mixing probability measure $G_{0}$ has exactly $k=2$ support points with locations $\theta_{1}^{0}=(-2,2)$, $\theta_{2}^{0}=(-4,4)$, covariances $\Sigma_{1}^{0}=\begin{pmatrix} 9/4 & 1/5 \\ 1/5 & 13/6 \end{pmatrix}$, $\Sigma_{2}^{0}=\begin{pmatrix} 5/2 & 2/5 \\ 2/5 & 7/3 \end{pmatrix}$, and shapes $m_{1}^{0}=1, m_{2}^{0}=3$. We randomly generate 
6000 discrete measures $G \in \mathcal{E}_{2}(\Theta \times \Omega)$ (uniformly in its parameter
spaces) and calculate the values of $h(p_{G},p_{G_{0}})$ and $W_{1}(G,G_{0}),W_{2}(G,G_{0})$. 
The distribution of their values is plotted in top panels of Figure \ref{figure-lowerboundstudentdistribution}. 
The upper line in the left top panel of Figure \ref{figure-lowerboundstudentdistribution} represents the upper bound of $h(p_{G},p_{G_{0}})$ in terms of $W_{1}(G,G_{0})$, which is
proven in Example \ref{example-Gaussian}, while the lower line signifies the lower bound of $h(p_{G},p_{G_{0}})$ in terms of $W_{1}(G,G_{0})$, which is proven in 
Theorem \ref{theorem-firstorder}. Additionally, the lower line and upper line in the right top panel 
of Figure \ref{figure-lowerboundstudentdistribution} respectively indicates the lower 
bound and  bound of $h(p_{G},p_{G_{0}})$ in terms of $W_{2}(G,G_{0})$. Again both bounds
are proved in Example \ref{example-Gaussian}  and Theorem \ref{theorem-firstorder}. 
Regarding the class of multivariate Student's t density functions $\left\{f(x|\theta,\Sigma),\theta \in \Theta,\Sigma \in \Omega \right\}$, 
we choose $\Theta=[-10,10]^{2}$ and $\Omega=\left\{\Sigma \in S_{2}^{++}:\sqrt{2} \leq \sqrt{\lambda_{1}(\Sigma)} \leq \sqrt{\lambda_{2}(\Sigma)} \leq 2\right\}$. 
The true mixing probability measure $G_{0}$ has exactly $k=2$ support points with locations and covariances identical to those used in the generalized multivariate Gaussian distributions. 
%We also randomly choose 6000 probability measures $G \in \mathcal{E}_{2}(\Theta \times \Omega)$ and plot the values of $h(p_{G},p_{G_{0}}),W_{1}(G,G_{0}),W_{2}(G,G_{0})$ in 
The plots are presented in bottom panels of Figure \ref{figure-lowerboundstudentdistribution}. 
The lower line and upper line in both left bottom panel and right bottom panel of Figure 
\ref{figure-lowerboundstudentdistribution} respectively represent the lower bound and  upper bound of $h(p_{G},p_{G_{0}})$ in terms of $W_{1}(G,G_{0})$ and $W_{2}(G,G_{0})$, which are shown in example \ref{example-Student} and Theorem \ref{theorem-firstorder}. In both cases, the
theoretical lower and upper bound for the Hellinger distance of mixture densities in
terms of Wasserstein distances of mixing measures are validated.
\begin{figure*}
\centering
\begin{minipage}[b]{.4\textwidth}
\includegraphics[width=40mm,height=40mm]{generalizedgaussian6000w1version2.jpg}
\label{overflow}
\end{minipage}\qquad
\begin{minipage}[b]{.4\textwidth}
\centering
\includegraphics[width=40mm,height=40mm]{generalizedgaussian6000w2version2.jpg}
\label{overflow}
\end{minipage}
%\end{figure*}
%\begin{figure*}
%\centering
\begin{minipage}[b]{.4\textwidth}
\includegraphics[width=45mm,height=45mm]{studentdistribution6000w1version2.jpg}
\label{overflow}
\end{minipage}\qquad
\begin{minipage}[b]{.4\textwidth}
\centering
\includegraphics[width=45mm,height=45mm]{studentdistribution6000w2version2.jpg}
\label{overflow}
\end{minipage}
\caption{Examples where first-order identifiability criterion holds.
Top panels are for family of generalized multivariate Gaussian density functions. 
Bottom panels are family multivariate Student's t-density functions. 
Left: $W_{1}(G,G_{0})$ vs. $h(p_{G},p_{G_{0}})$. 
Right: $W_{2}(G,G_{0})$ vs.  $h(p_{G},p_{G_{0}})$.}
\label{figure-lowerboundstudentdistribution}
\end{figure*}}
%%%%%%%%%%%%%%%%%%%%%%%%

%%% Section 5
\section{Minimax lower bounds, MLE rates and illustrations}
\subsection{Convergence of MLE and minimax lower bounds}
\label{Sec-convergence}

\comment{Given $n$-iid sample $X_{1},X_{2},...,X_{n}$ distributed according to unknown distribution 
$P$ having density function $p_{G_{0}}$ with respect to Lebesgue measure $\mu$
on $\Xcal$, where $G_{0}$ is unknown true mixing distribution with exactly $k_{0}$ support points. Let $k$ be fixed positive integer such that $k \geq k_{0}+1$.
The support of $G_0$ is $\Theta\times\Omega$. In this section we shall 
assume that $\Theta$ is a bounded set in $\mathbb{R}^{d_{1}}$ and 
\[\Omega=\left\{\Sigma \in S_{d_{2}}^{++}:\underline{\lambda} \leq \sqrt{\lambda_{1}(\Sigma)} \ \leq \ \sqrt{\lambda_{d}(\Sigma)} \leq \overline{\lambda}\right\},\]
where $0< \underline{\lambda},\overline{\lambda}$ are known and $d_{1} \geq 1, d_{2} \geq 0$. We denote $\Theta^{*}=\Theta \times \Omega$. 
The family of density functions $\left\{f(x|\theta,\Sigma),\theta \in \Theta,\Sigma \in \Omega\right\}$ 
is assumed known. The maximum likelihood estimator
for $G_0$ in the over-fitted mixture setting is given by
\begin{eqnarray}
\widehat{G}_{n}=\mathop {\arg \max}\limits_{G \in \mathcal{O}_{k}(\Theta^{*})}{\mathop {\sum }\limits_{i=1}^{n}{\log(p_{G}(X_{i}))}}. \nonumber
\end{eqnarray}
For the exact-fitted mixture setting, $\mathcal{O}_{k}$ is replaced by $\mathcal{E}_{k_{0}}$. }

\comment{
Herethereupon, when we said that the convergence rate of $W_{r}(\widehat{G}_{n},G_{0})$ is $\delta_{n}$ for some real number $r \geq 1$ and sequence $\delta_{n} \to 0$, it means that $P(W_{r}(\widehat{G}_{n},G_{0})>c\delta_{n}) \to 0$ as $n \to \infty$ where $c$ is some sufficiently large positive constant.
}

\comment{According to the standard asymptotic theory for the MLE (cf.~\cite{Vandegeer-1996}),
under boundedness assumptions such as the one described in the previous paragraph, 
and a sufficient regularity condition on the smoothness of density $f$, one 
can show that the MLE for the \emph{mixture density} 
yields $(\log n/n)^{1/2}$ rate under Hellinger distance metric.
That is, $h(p_{\widehat{G}_n},p_{G_0}) = O_P((\log n/n)^{1/2})$,
where $O_P$ denotes in $p_{G_0}$-probability bound. In fact, it is simple to verify that
this bound is applicable to all density classes considered in this paper.
As a consequence, whenever the (strong or weak)
identifiability bound of the form $V \gtrsim W_r^r$ holds,
we obtain that $W_r(\widehat{G}_n,G_0) \lesssim (\log n/n)n^{1/2r}$ in probability.

In addition, if we can also show that $V \gtrsim W_r^r \geq W_1^r$ 
is the best bound possible in a precise sense -- for instance, in the
sense given by Eq.~\eqref{lb-strong} (for $r=2$) or Eq.~\eqref{lb-gauss} (for $r = \rbar$), 
then an immediate consequence, by invoking Le Cam's method (cf.~\cite{Yu-97}),
is the following minimax lower bound:
\[\inf_{\widehat{G}} \sup_{G_0} W_1(\widehat{G}_n,G_0) \gtrsim n^{-1/(2r')},\]
where $r'$ is any constant $r'\in [1,r)$, the supremum is taken over the 
given set of possible values for $G_0$ (which is determined by the setting),
and the infimum is taken over all possible estimators. Combining with
an upper bound of the form $(\log n/n)^{1/2r}$ guaranteed by the MLE method,
we conclude that $n^{-1/2r}$ is the optimal estimation rate, up to
a logarithmic term, under $W_r$ distance for the mixing measure.

For mixtures of Gamma distributions, location-exponential distributions and
skew-Gaussian distributions, we have seen pathological settings where $V$ cannot be
lower bounded by a multiple of $W_r^r$ for any $r \geq 1$. This entails that the minimax
estimation rate cannot be faster than $n^{-1/r}$ for any $r\geq 1$. This implies
that the minimax rate for estimating $G_0$ in such settings  
cannot be faster than a logarithmic rate.

In summary, we obtain a large number of convergence rates and minimax lower bounds
for the mixing measure under a variety of density classes. They are collected
in Table 1.}

\comment{
\begin{itemize} 
\item[•](Strongly identifiable classes) With all classes being considered in Theorem \ref{identifiability-univariatecharacterization} and Theorem \ref{identifiability-multivariatecharacterization},
\begin{itemize}
\item[(1)] (Exact-fitted) The convergence rate of $W_{1}(\widehat{G}_{n},G_{0})$ is $(\log(n)/n)^{1/2}$.
\item[(2)] (Over-fitted) The convergence rate of $W_{2}(\widehat{G}_{n},G_{0})$ is $(\log(n)/n)^{1/4}$. 
\item[(3)](Minimax Over-fitted) The optimal convergence of $W_{2}(\widehat{G}_{n},G_{0})$ is $(1/n)^{1/4}$.
\end{itemize}
\item[•] (Weakly identifiable classes)
\begin{itemize}
\item[(a)](Location-covariance Gaussian distribution) 
\begin{itemize}
\item[(a.1)](Exact-fitted) The convergence rate of $W_{1}(\widehat{G}_{n},G_{0})$ is $(\log(n)/n)^{1/2}$.
\item[(a.2)](Over-fitted) The convergence rate of $W_{\overline{r}}(\widehat{G}_{n},G_{0})$ is $(\log(n)/n)^{1/2\overline{r}}$ where $\overline{r}$ is defined in \eqref{eqn:generalovefittedGaussianzero}.
\item[(a.3)](Minimax Over-fitted) The optimal convergence rate of $W_{\overline{r}}(\widehat{G}_{n},G_{0})$ is $(\log(n)/n)^{1/2\overline{r}}$. 
\end{itemize}
\item[(b)](Gamma distribution) 
\begin{itemize}
\item[(b.1)] (Exact-fitted) The convergence rate of $W_{1}(\widehat{G}_{n},G_{0})$ is $(\log(n)/n)^{1/2}$ (under generic condition of $G_{0}$) and logarithmic (under pathological condition of $G_{0}$).
\item[(b.2)] (Over-fitted) The convergence rate of $W_{2}(\widehat{G}_{n},G_{0})$ is $(\log(n)/n)^{1/4}$ (under generic condition of $G_{0}$) and logarithmic (under pathological condition of $G_{0}$).
\end{itemize}
\item[(c)] (Location-exponential distribution) The convergence rate of $W_{1}(\widehat{G}_{n},G_{0})$ is logarithmic.
\item[(d)] (Skew normal distribution)
\begin{itemize}
\item[(d.1)](Exact-fitted) The convergence rate of $W_{1}(\widehat{G}_{n},G_{0})$ can be $(\log(n)/n)^{1/2}$ or $(\log(n)/n)^{1/4}$ or $(\log(n)/n)^{1/2\overline{s}}$ where $\overline{s}$ is defined in \eqref{eqn:noncannonicalexactfittedskewnorma} or even logarithmic.
\item[(d.2)] (Over-fitted) If $G_{0}$ satisfies condition $(S.1)$ in Section \ref{subsection:skewnormal}, the convergence rate of $W_{\overline{m}}(\widehat{G}_{n},G_{0})$ under condition $(S.1)$ is $(\log(n)/n)^{1/2\overline{m}}$ where $\overline{m}$ is defined in Theorem \ref{theorem:Overfittedskewnormal}.
\end{itemize}
\end{itemize}
\end{itemize}

}

\comment{
To invoke the latter, we introduce several standard
notions. For any positive integer number $k_{1}$,
define $\mathcal{P}_{k_{1}}(\Theta^{*})=\left\{p_{G}: G \in \mathcal{O}_{k_{1}}(\Theta^{*})\right\}$, $\overline{\mathcal{P}}_{k_{1}}(\Theta^{*})=\left\{p_{\frac{G+G_{0}}{2}}: G \in \mathcal{O}_{k_{1}}(\Theta^{*})\right\}$, and $\overline{\mathcal{P}}_{k_{1}}^{1/2}(\Theta^{*})=\left\{\left(p_{\frac{G+G_{0}}{2}}\right)^{1/2}: G \in \mathcal{O}_{k_{1}}(\Theta^{*})\right\}$. 
For any $\delta>0$, define the intersection of a Hellinger ball centering at
$p_{G_{0}}$ and set $\overline{\mathcal{P}}_{k_{1}}^{1/2}(\Theta^{*})$:
 \begin{eqnarray}
 \overline{\mathcal{P}}_{k_{1}}^{1/2}(\Theta^{*},\delta)=\left\{\left(p_{\frac{G+G_{0}}{2}}\right)^{1/2} \in \overline{\mathcal{P}}_{k_{1}}^{1/2}: h(p_{\frac{G+G_{0}}{2}},p_{G_{0}}) \leq \delta\right\}. \nonumber
 \end{eqnarray}
The size of this set is captured by the entropy integral:
 \begin{eqnarray}
 \mathcal{J}_{B}(\delta, \overline{\mathcal{P}}_{k_{1}}^{1/2}(\Theta^{*},\delta),\mu)=\mathop {\int }\limits_{\delta^{2}/2^{13}}^{\delta}{H_{B}^{1/2}(u, \overline{\mathcal{P}}_{k_{1}}^{1/2}(\Theta^{*},u),\mu)}du \vee \delta, \nonumber
 \end{eqnarray}
 where $H_{B}$ denotes the bracketing entropy of $ \overline{\mathcal{P}}_{k_{1}}^{1/2}(\Theta^{*})$ under $L_{2}$ distance.
\begin{proposition}\label{proposition:exactfitrate}(Exact-fitted case)
Assume that $f$ is identifiable in the first order and admits uniformly Lipschitz property up to the first order. Take $\Psi(\delta) \geq \mathcal{J}_{B}(\delta, 
\overline{\mathcal{P}}_{k_{0}}^{1/2}(\Theta^{*},\delta),\mu_{0})$ in 
such a way that $\dfrac{\Psi(\delta)}{\delta^{2}}$ is a non-increasing function of $\delta$. 
Then for a universal constant $c$, constant $C_1 = C_1(\Theta^*)$,
$\left\{\delta_{n}\right\}$ is a non-negative sequence such that
\begin{eqnarray}
\sqrt{n}\delta_{n}^{2} \geq c\Psi(\delta_{n}), \nonumber
\end{eqnarray}
and for all $\delta \geq \dfrac{\delta_{n}}{\sqrt{C_{1}}}$, we have
\begin{eqnarray}
P(W_{1}(\widehat{G}_{n},G_{0})>\delta) & \leq & c\exp\left(-\dfrac{nC_{1}^{2}\delta^{2}}{c^{2}}\right). \nonumber
\end{eqnarray}
\end{proposition}

\begin{proof} 
By Theorem~\ref{theorem-firstorder}, 
\begin{eqnarray}
C_{1}(\Theta^{*})W_{1}^{2}(G,G_{0}) \leq V^2(p_G,p_{G_0})
\leq h^{2}(p_{G},p_{G_{0}}) \ \text{for all} \ G \in \mathcal{E}_{k_{0}}(\Theta^{*}), \label{eq:rate1}
\end{eqnarray}
where $C_{1}(\Theta^{*})$ is a positive constant depending only on $\Theta^{*}$.
Now, invoking Theorem 7.4 of \cite{Vandegeer}, as $\delta \geq \delta_{n}$, 
there is a universal constant $c>0$ such that
\begin{eqnarray}
P(h(p_{\widehat{G}_{n}},p_{G})>\delta) \leq c\exp\left(-\dfrac{n\delta^{2}}{c^{2}}\right). \nonumber
\end{eqnarray}
\end{proof}
Using the same argument as that of $\widehat{G}_{n}$, we have the following result regarding the convergence rate of $\overline{G}_{n}$
\begin{proposition} \label{proposition:overfitrate}(Over-fitted case)
Assume that $f$ is identifiable in the second order and admits uniformly Lipschitz property up to the second order. Take $\Psi(\delta) \geq \mathcal{J}_{B}(\delta, 
\overline{\mathcal{P}}_{k}^{1/2}(\Theta^{*},\delta),\mu_{0})$ in 
such a way that $\dfrac{\Psi(\delta)}{\delta^{2}}$ is a non-increasing function of $\delta$. 
Then for a universal constant $c$, constant $C_1 = C_1(\Theta^*)$,
$\left\{\delta_{n}\right\}$ is a non-negative sequence such that
\begin{eqnarray}
\sqrt{n}\delta_{n}^{2} \geq c\Psi(\delta_{n}), \nonumber
\end{eqnarray}
and for all $\delta \geq \dfrac{\delta_{n}}{\sqrt{C_{1}}}$, we have
\begin{eqnarray}
P(W_{2}(\overline{G}_{n},G_{0})>\sqrt{\delta}) & \leq & c\exp\left(-\dfrac{nC_{1}^{2}\delta^{2}}{c^{2}}\right).
\nonumber
\end{eqnarray}
\end{proposition}

As we have seen from Proposition \ref{proposition:exactfitrate} and Proposition \ref{proposition:overfitrate}, the main ingredient for obtaining the convergence rate of $\widehat{G}_{n}$ and $\overline{G}_{n}$ lies in the heart of the lower bound of $h^{2}(p_{G},p_{G_{0}})$ in terms of the Wasserstein distance. Therefore, with all the lower bounds we developed in Section \ref{Sec-weakidentifiability}, the convergence rates of $\widehat{G}_{n}$ and $\overline{G}_{n}$ under weak identifiability of $f$ are still be able to established. 

Now, we proceed to establish the convergence behavior of the MLE as well as its minimax lower bound for a number
of finite mixtures of interest, particularly those equipped with matrix-variate
parameters and multiple parameter types.

\begin{theorem}\label{theorem-specificexactfitmultivariaterate} (Exact-fitted cases)
\begin{itemize}
\item[(a)] Multivariate generalized Gaussian distributions: 
Given $\Theta=[-a_{n},a_{n}]^{d} \times [m_{1},m_{2}]$, where $a_{n} \leq L(\log(n))^{\gamma}$ as $L$ is some positive consant, $\gamma>0$, $m_{1},m_{2} \geq 1$, and $\Omega=\left\{\Sigma \in S_{d}^{++}: \underline{\lambda} \leq \sqrt{\lambda_{1}(\Sigma)} \right. \\ \left. \leq \ \sqrt{\lambda_{d}(\Sigma)} \leq \overline{\lambda}\right\}$. 
Let $\left\{f(x|\theta,\Sigma,m),(\theta,m) \in \Theta,\Sigma \in \Omega\right\}$ be a 
family of multivariate generalized Gaussian distribution. Then,
\begin{eqnarray}
\label{Eqn-conv}
P(W_{1}(\widehat{G}_{n},G_{0})>\delta_{n}) \lesssim  \exp(-c\log(n)), 
\end{eqnarray}
where $\delta_{n}$ is a sufficiently large multiple of $(\log n/n)^{1/2}$,
$c$ is a positive constant depending only on 
$L,\gamma,m_{1},m_{2},\underline{\lambda},\overline{\lambda}$.
\item[(b)] Multivariate Student's t-distributions:
Given $\Theta=[-a_{n},a_{n}]^{d}$, where $a_{n} \leq L(\log(n))^{\gamma}$ as $L$ is some positive constant and $\gamma>0$, and $\Omega=\left\{\Sigma \in S_{d}^{++}:\underline{\lambda} \leq \sqrt{\lambda_{1}(\Sigma)} \leq \sqrt{\lambda_{d}(\Sigma)} \leq \overline{\lambda}\right\}$. 
Let
$\left\{f(x|\theta,\Sigma),\theta \in \Theta,\Sigma \in \Omega\right\}$ be a family of 
multivariate t-distributions with fixed odd degree of freedom. Then the bound~\eqref{Eqn-conv}
holds
%\begin{eqnarray}
%P(W_{1}(\widehat{G}_{n},G_{0})>\delta_{n}) \lesssim  \exp(-c\log(n)), \nonumber
%\end{eqnarray}
where $\delta_{n}$ is a sufficiently large multiple of $(\log n/n)^{1/2}$,
$c$ is a positive constant depending only on $L,\gamma,\underline{\lambda},\overline{\lambda}$.
\end{itemize}
\end{theorem}

\begin{theorem}\label{theorem-specificoverfitmultivariaterate} (Over-fitted cases)
\begin{itemize}
\item[(a)](Multivariate generalized Gaussian distribution with all shapes bigger than 1) Assume the conditions of part (a) of Theorem \ref{theorem-specificexactfitmultivariaterate} holds. If all the shape parameters of $G_{0}$ are bigger than 1, then $P(W_{2}(\overline{G}_{n},G_{0})>\delta_{n}) \lesssim \exp(-c\log(n))$ where $\delta_{n}$ is a sufficiently large multiple of $(\log(n)/n)^{1/4}$, $c$ is a positive constant depending only on $L,\gamma,m_{1},m_{2},\underline{\lambda},\overline{\lambda}$.
\item[(b)](Location-scale multivariate Gaussian distribution) Assume the same conditions on $\Theta$ and $\Omega$ as that of part (b) of Theorem \ref{theorem-specificexactfitmultivariaterate}.
\begin{itemize}
\item[(b.1)] There holds $P(W_{2}(\overline{G}_{n},G_{0}) > \delta_{n}') \lesssim \exp(-c_{1}\log(n))$ where $\delta_{n}'$ is a sufficiently large multiple of $(\log(n)/n)^{1/2\overline{r}}$, $\overline{r}$ is a constant defined in Theorem \ref{theorem:generaloverfittedGaussian}, and $c_{1}$ is a positive constant depending only on $L,\gamma,\underline{\lambda},\overline{\lambda}$.
\item[(b.2)](Minimax lower bound) The convergence rate of $W_{2}(\overline{G}_{n},G_{0})$ cannot be faster than $(\log(n)/n)^{1/\overline{r}}$.
\end{itemize}
\end{itemize}
\end{theorem}
As we have achieved the rate of convergence of mixing measures $\widehat{G}_{n}$ to $G_{0}$ 
under symmetric classes of density functions in 
Theorem \ref{theorem-specificmultivariaterate}, 
we also would like to obtain the convergence rate of mixing measures under skewed classes of density functions. 
The following results establish a similar convergence behavior for
finite mixtures of skewed classes of density functions.

\begin{theorem}\label{proposition-specificunivariaterate}(Skew normal distribution) Assume that $\Theta=[-a_{n},a_{n}]^{2}$ where $a_{n} \leq L(\log(n))^{\gamma}$ as $L$ is some positive constant and $\gamma >0$ and $\Omega=\left\{\sigma>0:\underline{\sigma}\leq \sigma \leq \overline{\sigma}\right\}$. Let $\left\{f(x|\theta,\sigma,m), (\theta,m)\in \Theta, \sigma \in \Omega \right\}$ be the class of skew normal distributions. 
\begin{itemize}
\item[(a)] (Exact-fitted case)
\begin{itemize}
\item[(a.1)] If the support points of $G_{0}$ satisfy condition (S.1) in Theorem \ref{theorem:exactfitskewnormal}, there holds $P(W_{1}(\widehat{G}_{n},G_{0})>\delta_{n}) \lesssim \exp(-c\log(n))$ where $\delta_{n}$ is a sufficiently large multiple of $(\log(n)/n)^{1/2}$ and $c$ is positive constant depending only on $L,\gamma,\underline{\sigma},\overline{\sigma}$.
\item[(a.2)] If the support points of $G_{0}$ satisfy either condition $(S.2)$ or $(S.3)$, there holds $P(W_{1}(\widehat{G}_{n},G_{0})>\delta_{n}') \lesssim \exp(-c_{1}\log(n))$ where $\delta_{n}'$ is a sufficiently large multiple of $(\log(n)/n)^{1/4}$.
\item[(a.3)] If the support points of $G_{0}$ satisfy condition $(S.4)$, there holds $P(W_{1}(\widehat{G}_{n},G_{0})>\delta_{n}'') \lesssim \exp(-c_{2}\log(n))$ where $\delta_{n}''$ is a sufficiently large multiple of $(\log(n)/n)^{1/(2k^{*}+1)}$, $k^{*}$ is a constant defined in Theorem \ref{}. When $k^{*}=1$, the convergence rate of $W_{1}(\widehat{G}_{n},G_{0})$ cannot be faster than $(\log(n)/n)^{1/3}$. 
\end{itemize}
\item [(b)] Mixture of generalized univariate logistics distributions:
Given $\Theta=[-a_{n},a_{n}]$, where $a_{n} \leq L(\log(n))^{\gamma}$ as $L$ is some positive constant and $\gamma>0$ and $\Omega=\left\{\sigma >0: \underline{\sigma} \leq \sigma \leq \overline{\sigma}\right\}$. Let $\left\{f(x|\theta,\sigma), \theta \in \Theta, \sigma \in \Omega \right\}$ be 
the class of generalized univariate logistics distributions.
Then the bound~\eqref{Eqn-conv}
holds
%\begin{eqnarray}
%P(W_{1}(\widehat{G}_{n},G_{0})>\delta_{n}) \lesssim  \exp(-c\log(n)), \nonumber
%\end{eqnarray}
where $\delta_{n}$ is a sufficiently large multiple of $(\log n/n)^{1/2}$, and 
$c$ is a positive constant depending only on $L,\gamma,\underline{\sigma},\overline{\sigma}$.

\item [(c)] Mixture of generalized univariate Gumbel distributions:
Given $\Theta=[-a_{n},a_{n}] \times [\lambda_{1},\lambda_{2}]$, where $a_{n} \leq L(log(n))^{\gamma}$ as $L$ is some positive constant and $\gamma>0$,$\lambda_{1},\lambda_{2}>0$ and $\Omega=\left\{\sigma >0: \underline{\sigma} \leq \sigma \leq \overline{\sigma}\right\}$. 
Let $\left\{f(x|\theta,\sigma,\lambda),(\theta,\lambda) \in \Theta, \sigma \in \Omega\right\}$ be 
the class of generalized univariate Gumbel distributions. 
Then the bound~\eqref{Eqn-conv}
holds
%\begin{eqnarray}
%P(W_{1}(\widehat{G}_{n},G_{0})>\delta_{n}) \lesssim  \exp(-c\log(n)), \nonumber
%\end{eqnarray}
where $\delta_{n}$ is a sufficiently large multiple of $(\log n/n)^{1/2}$, and 
$c$ is a positive constant depending only on $L,\gamma,\lambda_{1},\lambda_{2},\underline{\sigma},\overline{\sigma}$.
\end{itemize}
\end{theorem}

The proof of both theorems involve verifying the assumptions required by Proposition
\ref{theorem-generalrate}. While much of the hard work have already been achieved
for verifying the strong identifiability criteria (in Section~\ref{Sec-characterization}),
some difficulty remains in verifying the conditions involving the entropy integral
of certain classes of mixture densities. The details of the proof for both theorems
are deferred to the Appendix.}

%%% Section 6$
%\section{Simulation}
%\label{Sec-simulation}

\comment{\paragraph{Illustrations.}
To illustrate the theoretical convergence rates established above, we generate $n$-iid samples
from a Gaussian location-scale mixture model that has three mixing components. 
The true parameters for the mixing
measure $G_0$ are:
$\mu_{1}^{0}=(0,3),\mu_{2}^{0}=(1,-4),\mu_{3}^{0}=(5,2)$, and $\Sigma_{1}^{0}=M_{1}\begin{pmatrix} 5 & 0 \\0 & 0.1 \end{pmatrix}M_{1}^{T}=\begin{pmatrix} 4.2824 & 1.7324 \\ 1.7324 & 0.81759 \end{pmatrix}$, where $M_{1}=\begin{pmatrix} \cos(\pi/8) & -\sin(\pi/8) \\ \sin(\pi/8) & \cos(\pi/8) \end{pmatrix}$, $\Sigma_{2}^{0}=M_{2}\begin{pmatrix} 3 & 0 \\ 0 & 0.5 \end{pmatrix}M_{2}^{T}$\\$=\begin{pmatrix} 1.75 & -1.25 \\ -1.25 & 1.75 \end{pmatrix}$, 
where $M_{2}=\begin{pmatrix} \cos(3\pi/4) & -\sin(3\pi/4) \\ \sin(3\pi/4) & \cos(3\pi/4) \end{pmatrix}$, and $ \Sigma_{3}^{0}=\begin{pmatrix} 1 & 0 \\ 0 & 4 \end{pmatrix}$ as well as true ratios $\pi_{1}^{0}=0.3,\pi_{2}^{0}=0.4,\pi_{3}^{0}=0.3$. Maximum likelihood estimators are obtained by the EM algorithm. Since the EM algorithm can
only reach to local maxima of the likelihood, we restart the algorithm multiple times 
to improve the possibility of obtaining the global maximum. 
In Figure~\ref{figure-EMstronglyidentifiableGaussianw2}, Wasserstein distances $W_1(\widehat{G}_n,G_0)$
and $W_2(\widehat{G}_n,G_0)$ are plotted against varying sample size $n$ (for each $n$, the
experiment is repeated 10 times to obtain the error bars).
The plots in the top panels show that the convergence
rate under $W_1$ distance is indeed of the order $(\log n/n)^{1/2}$ as established by the theory,
while the bottom panel appears to show a convergence rate of order $(\log n/n)^{1/4}$
under $W_2$ metric. This confirms our theory as well, because for mixing measures
with compact support, we have $W_2^2 \lesssim W_1$. 
Finally, we note that in the case of Gamma mixtures, for which the strong identifiability
criterion does not hold (even in the first order), the EM algorithm has difficulty converging
to a local maximum.
This is also the scenario where the question regarding convergence rate of the 
MLE remains open.

\begin{figure*}
\centering
\begin{minipage}[b]{.4\textwidth}
\includegraphics[width=70mm,height=50mm]{Gaussianmixturew1vstheory120.jpg}
\end{minipage}
\quad \quad
\begin{minipage}[b]{.4\textwidth}
\includegraphics[width=70mm,height=50mm]{Ratiotheorylinew1.jpg} 
\end{minipage} 
\begin{minipage}[b]{.4\textwidth}
\includegraphics[width=70mm,height=50mm]{Gaussianmixturew2vstheory120.jpg}
\end{minipage}
\quad \quad
\begin{minipage}[b]{.4\textwidth}
\includegraphics[width=70mm,height=50mm]{Ratiotheoryline1w2.jpg}
\end{minipage}
\caption{
Top panel, left: $W_1(\widehat{G}_n,G_0)$ vs.  $(\log n/n)^{1/2}$. 
Top panel, right: $W_1(\widehat{G}_n,G_0)/(\log n/n)^{1/2}$.
Bottom panel, left: $W_2(\widehat{G}_n,G_0)$ vs $(\log n/n)^{1/4}$. 
Bottom panel, right: $W_2(\widehat{G}_n,G_0)/(\log n/n)^{1/4}$.
}
\label{figure-EMstronglyidentifiableGaussianw2}
\end{figure*}
}

Given $n$-iid sample $X_{1},X_{2},...,X_{n}$ distributed according to 
mixture density $p_{G_{0}}$, % with respect to Lebesgue measure $\mu$ on $\Xcal$, 
where $G_{0}$ is unknown true mixing distribution with exactly $k_{0}$ support points,
and class of densities 
$\left\{f(x|\theta,\Sigma),\theta \in \Theta,\Sigma \in \Omega\right\}$ 
is assumed known. 
Given $k \in \mathbb{N}$ such that $k \geq k_{0}+1$.
The support of $G_0$ is $\Theta\times\Omega$. In this section we shall 
assume that $\Theta$ is a compact subset of $\mathbb{R}^{d_{1}}$ and 
$\Omega=\left\{\Sigma \in S_{d_{2}}^{++}:\underline{\lambda} \leq \sqrt{\lambda_{1}(\Sigma)} \ \leq \ \sqrt{\lambda_{d}(\Sigma)} \leq \overline{\lambda}\right\}$,
where $0< \underline{\lambda},\overline{\lambda}$ are known and $d_{1} \geq 1, d_{2} \geq 0$. 
%We denote $\Theta^{*}=\Theta \times \Omega$. 
The maximum likelihood estimator
for $G_0$ in the over-fitted mixture setting is given by
\begin{eqnarray}
\widehat{G}_{n}=\mathop {\arg \max}\limits_{G \in \mathcal{O}_{k}(\Theta \times \Omega)}{\mathop {\sum }\limits_{i=1}^{n}{\log(p_{G}(X_{i}))}}. \nonumber
\end{eqnarray}
For the exact-fitted mixture setting, $\mathcal{O}_{k}$ is replaced by $\mathcal{E}_{k_{0}}$. 

\comment{
Herethereupon, when we said that the convergence rate of $W_{r}(\widehat{G}_{n},G_{0})$ is $\delta_{n}$ for some real number $r \geq 1$ and sequence $\delta_{n} \to 0$, it means that $P(W_{r}(\widehat{G}_{n},G_{0})>c\delta_{n}) \to 0$ as $n \to \infty$ where $c$ is some sufficiently large positive constant.
}

According to the standard asymptotic theory for the MLE (cf.,e.g.,~\cite{Vandegeer-1996}),
under the boundedness assumptions given above, along
with a sufficient regularity condition on the smoothness of density $f$, one 
can show that the MLE for the \emph{mixture density} 
yields $(\log n/n)^{1/2}$ rate under Hellinger distance.
That is, $h(p_{\widehat{G}_n},p_{G_0}) = O_P((\log n/n)^{1/2})$,
where $O_P$ denotes in $p_{G_0}$-probability bound. 
It is relatively simple to verify that
this bound is applicable to all density classes considered in this paper.
As a consequence, whenever an
identifiability bound of the form $V \gtrsim W_r^r$ holds,
we obtain that $W_r(\widehat{G}_n,G_0) \lesssim (\log n/n)^{1/2r}$ in probability.

Furthermore, if we can also show that $h \gtrsim W_r^r \geq W_1^r$ 
is the best bound possible in a precise sense -- for instance, in the
sense given by part (c) of Theorem \ref{theorem-secondorder} (for $r=2$) or part (a) of Theorem \ref{theorem:generaloverfittedGaussian} (for $r = \rbar$), 
then an immediate consequence, by invoking Le Cam's method (cf.~\cite{Yu-97}),
is the following minimax lower bound:
\[\inf_{\widehat{G}_{n}} \sup_{G_0} W_1(\widehat{G}_n,G_0) \gtrsim n^{-1/(2r')},\]
where $r'$ is any constant $r'\in [1,r)$, the supremum is taken over the 
given set of possible values for $G_0$,
and the infimum is taken over all possible estimators. Combining with
an upper bound of the form $(\log n/n)^{1/2r}$ guaranteed by the MLE method,
we conclude that $n^{-1/2r}$ is the optimal estimation rate, up to
a logarithmic term, under $W_r$ distance for the mixing measure.

For mixtures of Gamma, location-exponential and
skew-Gaussian distributions, we have seen pathological settings where $V$ cannot be
lower bounded by a multiple of $W_r^r$ for any $r \geq 1$. This entails that the minimax
estimation rate cannot be faster than $n^{-1/r}$ for any $r\geq 1$. It follows 
that the minimax rate for estimating $G_0$ in such settings  
cannot be faster than a logarithmic rate.

In summary, we obtain a number of convergence rates and minimax lower bounds
for the mixing measure under many density classes. They are collected
in Table ~\ref{tablesummary}.

\comment{
\begin{itemize} 
\item[•](Strongly identifiable classes) With all classes being considered in Theorem \ref{identifiability-univariatecharacterization} and Theorem \ref{identifiability-multivariatecharacterization},
\begin{itemize}
\item[(1)] (Exact-fitted) The convergence rate of $W_{1}(\widehat{G}_{n},G_{0})$ is $(\log(n)/n)^{1/2}$.
\item[(2)] (Over-fitted) The convergence rate of $W_{2}(\widehat{G}_{n},G_{0})$ is $(\log(n)/n)^{1/4}$. 
\item[(3)](Minimax Over-fitted) The optimal convergence of $W_{2}(\widehat{G}_{n},G_{0})$ is $(1/n)^{1/4}$.
\end{itemize}
\item[•] (Weakly identifiable classes)
\begin{itemize}
\item[(a)](Location-covariance Gaussian distribution) 
\begin{itemize}
\item[(a.1)](Exact-fitted) The convergence rate of $W_{1}(\widehat{G}_{n},G_{0})$ is $(\log(n)/n)^{1/2}$.
\item[(a.2)](Over-fitted) The convergence rate of $W_{\overline{r}}(\widehat{G}_{n},G_{0})$ is $(\log(n)/n)^{1/2\overline{r}}$ where $\overline{r}$ is defined in \eqref{eqn:generalovefittedGaussianzero}.
\item[(a.3)](Minimax Over-fitted) The optimal convergence rate of $W_{\overline{r}}(\widehat{G}_{n},G_{0})$ is $(\log(n)/n)^{1/2\overline{r}}$. 
\end{itemize}
\item[(b)](Gamma distribution) 
\begin{itemize}
\item[(b.1)] (Exact-fitted) The convergence rate of $W_{1}(\widehat{G}_{n},G_{0})$ is $(\log(n)/n)^{1/2}$ (under generic condition of $G_{0}$) and logarithmic (under pathological condition of $G_{0}$).
\item[(b.2)] (Over-fitted) The convergence rate of $W_{2}(\widehat{G}_{n},G_{0})$ is $(\log(n)/n)^{1/4}$ (under generic condition of $G_{0}$) and logarithmic (under pathological condition of $G_{0}$).
\end{itemize}
\item[(c)] (Location-exponential distribution) The convergence rate of $W_{1}(\widehat{G}_{n},G_{0})$ is logarithmic.
\item[(d)] (Skew normal distribution)
\begin{itemize}
\item[(d.1)](Exact-fitted) The convergence rate of $W_{1}(\widehat{G}_{n},G_{0})$ can be $(\log(n)/n)^{1/2}$ or $(\log(n)/n)^{1/4}$ or $(\log(n)/n)^{1/2\overline{s}}$ where $\overline{s}$ is defined in \eqref{eqn:noncannonicalexactfittedskewnorma} or even logarithmic.
\item[(d.2)] (Over-fitted) If $G_{0}$ satisfies condition $(S.1)$ in Section \ref{subsection:skewnormal}, the convergence rate of $W_{\overline{m}}(\widehat{G}_{n},G_{0})$ under condition $(S.1)$ is $(\log(n)/n)^{1/2\overline{m}}$ where $\overline{m}$ is defined in Theorem \ref{theorem:Overfittedskewnormal}.
\end{itemize}
\end{itemize}
\end{itemize}

}

\comment{
To invoke the latter, we introduce several standard
notions. For any positive integer number $k_{1}$,
define $\mathcal{P}_{k_{1}}(\Theta^{*})=\left\{p_{G}: G \in \mathcal{O}_{k_{1}}(\Theta^{*})\right\}$, $\overline{\mathcal{P}}_{k_{1}}(\Theta^{*})=\left\{p_{\frac{G+G_{0}}{2}}: G \in \mathcal{O}_{k_{1}}(\Theta^{*})\right\}$, and $\overline{\mathcal{P}}_{k_{1}}^{1/2}(\Theta^{*})=\left\{\left(p_{\frac{G+G_{0}}{2}}\right)^{1/2}: G \in \mathcal{O}_{k_{1}}(\Theta^{*})\right\}$. 
For any $\delta>0$, define the intersection of a Hellinger ball centering at
$p_{G_{0}}$ and set $\overline{\mathcal{P}}_{k_{1}}^{1/2}(\Theta^{*})$:
 \begin{eqnarray}
 \overline{\mathcal{P}}_{k_{1}}^{1/2}(\Theta^{*},\delta)=\left\{\left(p_{\frac{G+G_{0}}{2}}\right)^{1/2} \in \overline{\mathcal{P}}_{k_{1}}^{1/2}: h(p_{\frac{G+G_{0}}{2}},p_{G_{0}}) \leq \delta\right\}. \nonumber
 \end{eqnarray}
The size of this set is captured by the entropy integral:
 \begin{eqnarray}
 \mathcal{J}_{B}(\delta, \overline{\mathcal{P}}_{k_{1}}^{1/2}(\Theta^{*},\delta),\mu)=\mathop {\int }\limits_{\delta^{2}/2^{13}}^{\delta}{H_{B}^{1/2}(u, \overline{\mathcal{P}}_{k_{1}}^{1/2}(\Theta^{*},u),\mu)}du \vee \delta, \nonumber
 \end{eqnarray}
 where $H_{B}$ denotes the bracketing entropy of $ \overline{\mathcal{P}}_{k_{1}}^{1/2}(\Theta^{*})$ under $L_{2}$ distance.
\begin{proposition}\label{proposition:exactfitrate}(Exact-fitted case)
Assume that $f$ is identifiable in the first order and admits uniformly Lipschitz property up to the first order. Take $\Psi(\delta) \geq \mathcal{J}_{B}(\delta, 
\overline{\mathcal{P}}_{k_{0}}^{1/2}(\Theta^{*},\delta),\mu_{0})$ in 
such a way that $\dfrac{\Psi(\delta)}{\delta^{2}}$ is a non-increasing function of $\delta$. 
Then for a universal constant $c$, constant $C_1 = C_1(\Theta^*)$,
$\left\{\delta_{n}\right\}$ is a non-negative sequence such that
\begin{eqnarray}
\sqrt{n}\delta_{n}^{2} \geq c\Psi(\delta_{n}), \nonumber
\end{eqnarray}
and for all $\delta \geq \dfrac{\delta_{n}}{\sqrt{C_{1}}}$, we have
\begin{eqnarray}
P(W_{1}(\widehat{G}_{n},G_{0})>\delta) & \leq & c\exp\left(-\dfrac{nC_{1}^{2}\delta^{2}}{c^{2}}\right). \nonumber
\end{eqnarray}
\end{proposition}

\begin{proof} 
By Theorem~\ref{theorem-firstorder}, 
\begin{eqnarray}
C_{1}(\Theta^{*})W_{1}^{2}(G,G_{0}) \leq V^2(p_G,p_{G_0})
\leq h^{2}(p_{G},p_{G_{0}}) \ \text{for all} \ G \in \mathcal{E}_{k_{0}}(\Theta^{*}), \label{eq:rate1}
\end{eqnarray}
where $C_{1}(\Theta^{*})$ is a positive constant depending only on $\Theta^{*}$.
Now, invoking Theorem 7.4 of \cite{Vandegeer}, as $\delta \geq \delta_{n}$, 
there is a universal constant $c>0$ such that
\begin{eqnarray}
P(h(p_{\widehat{G}_{n}},p_{G})>\delta) \leq c\exp\left(-\dfrac{n\delta^{2}}{c^{2}}\right). \nonumber
\end{eqnarray}
\end{proof}
Using the same argument as that of $\widehat{G}_{n}$, we have the following result regarding the convergence rate of $\overline{G}_{n}$
\begin{proposition} \label{proposition:overfitrate}(Over-fitted case)
Assume that $f$ is identifiable in the second order and admits uniformly Lipschitz property up to the second order. Take $\Psi(\delta) \geq \mathcal{J}_{B}(\delta, 
\overline{\mathcal{P}}_{k}^{1/2}(\Theta^{*},\delta),\mu_{0})$ in 
such a way that $\dfrac{\Psi(\delta)}{\delta^{2}}$ is a non-increasing function of $\delta$. 
Then for a universal constant $c$, constant $C_1 = C_1(\Theta^*)$,
$\left\{\delta_{n}\right\}$ is a non-negative sequence such that
\begin{eqnarray}
\sqrt{n}\delta_{n}^{2} \geq c\Psi(\delta_{n}), \nonumber
\end{eqnarray}
and for all $\delta \geq \dfrac{\delta_{n}}{\sqrt{C_{1}}}$, we have
\begin{eqnarray}
P(W_{2}(\overline{G}_{n},G_{0})>\sqrt{\delta}) & \leq & c\exp\left(-\dfrac{nC_{1}^{2}\delta^{2}}{c^{2}}\right).
\nonumber
\end{eqnarray}
\end{proposition}

As we have seen from Proposition \ref{proposition:exactfitrate} and Proposition \ref{proposition:overfitrate}, the main ingredient for obtaining the convergence rate of $\widehat{G}_{n}$ and $\overline{G}_{n}$ lies in the heart of the lower bound of $h^{2}(p_{G},p_{G_{0}})$ in terms of the Wasserstein distance. Therefore, with all the lower bounds we developed in Section \ref{Sec-weakidentifiability}, the convergence rates of $\widehat{G}_{n}$ and $\overline{G}_{n}$ under weak identifiability of $f$ are still be able to established. 

Now, we proceed to establish the convergence behavior of the MLE as well as its minimax lower bound for a number
of finite mixtures of interest, particularly those equipped with matrix-variate
parameters and multiple parameter types.

\begin{theorem}\label{theorem-specificexactfitmultivariaterate} (Exact-fitted cases)
\begin{itemize}
\item[(a)] Multivariate generalized Gaussian distributions: 
Given $\Theta=[-a_{n},a_{n}]^{d} \times [m_{1},m_{2}]$, where $a_{n} \leq L(\log(n))^{\gamma}$ as $L$ is some positive consant, $\gamma>0$, $m_{1},m_{2} \geq 1$, and $\Omega=\left\{\Sigma \in S_{d}^{++}: \underline{\lambda} \leq \sqrt{\lambda_{1}(\Sigma)} \right. \\ \left. \leq \ \sqrt{\lambda_{d}(\Sigma)} \leq \overline{\lambda}\right\}$. 
Let $\left\{f(x|\theta,\Sigma,m),(\theta,m) \in \Theta,\Sigma \in \Omega\right\}$ be a 
family of multivariate generalized Gaussian distribution. Then,
\begin{eqnarray}
\label{Eqn-conv}
P(W_{1}(\widehat{G}_{n},G_{0})>\delta_{n}) \lesssim  \exp(-c\log(n)), 
\end{eqnarray}
where $\delta_{n}$ is a sufficiently large multiple of $(\log n/n)^{1/2}$,
$c$ is a positive constant depending only on 
$L,\gamma,m_{1},m_{2},\underline{\lambda},\overline{\lambda}$.
\item[(b)] Multivariate Student's t-distributions:
Given $\Theta=[-a_{n},a_{n}]^{d}$, where $a_{n} \leq L(\log(n))^{\gamma}$ as $L$ is some positive constant and $\gamma>0$, and $\Omega=\left\{\Sigma \in S_{d}^{++}:\underline{\lambda} \leq \sqrt{\lambda_{1}(\Sigma)} \leq \sqrt{\lambda_{d}(\Sigma)} \leq \overline{\lambda}\right\}$. 
Let
$\left\{f(x|\theta,\Sigma),\theta \in \Theta,\Sigma \in \Omega\right\}$ be a family of 
multivariate t-distributions with fixed odd degree of freedom. Then the bound~\eqref{Eqn-conv}
holds
%\begin{eqnarray}
%P(W_{1}(\widehat{G}_{n},G_{0})>\delta_{n}) \lesssim  \exp(-c\log(n)), \nonumber
%\end{eqnarray}
where $\delta_{n}$ is a sufficiently large multiple of $(\log n/n)^{1/2}$,
$c$ is a positive constant depending only on $L,\gamma,\underline{\lambda},\overline{\lambda}$.
\end{itemize}
\end{theorem}

\begin{theorem}\label{theorem-specificoverfitmultivariaterate} (Over-fitted cases)
\begin{itemize}
\item[(a)](Multivariate generalized Gaussian distribution with all shapes bigger than 1) Assume the conditions of part (a) of Theorem \ref{theorem-specificexactfitmultivariaterate} holds. If all the shape parameters of $G_{0}$ are bigger than 1, then $P(W_{2}(\overline{G}_{n},G_{0})>\delta_{n}) \lesssim \exp(-c\log(n))$ where $\delta_{n}$ is a sufficiently large multiple of $(\log(n)/n)^{1/4}$, $c$ is a positive constant depending only on $L,\gamma,m_{1},m_{2},\underline{\lambda},\overline{\lambda}$.
\item[(b)](Location-scale multivariate Gaussian distribution) Assume the same conditions on $\Theta$ and $\Omega$ as that of part (b) of Theorem \ref{theorem-specificexactfitmultivariaterate}.
\begin{itemize}
\item[(b.1)] There holds $P(W_{2}(\overline{G}_{n},G_{0}) > \delta_{n}') \lesssim \exp(-c_{1}\log(n))$ where $\delta_{n}'$ is a sufficiently large multiple of $(\log(n)/n)^{1/2\overline{r}}$, $\overline{r}$ is a constant defined in Theorem \ref{theorem:generaloverfittedGaussian}, and $c_{1}$ is a positive constant depending only on $L,\gamma,\underline{\lambda},\overline{\lambda}$.
\item[(b.2)](Minimax lower bound) The convergence rate of $W_{2}(\overline{G}_{n},G_{0})$ cannot be faster than $(\log(n)/n)^{1/\overline{r}}$.
\end{itemize}
\end{itemize}
\end{theorem}
As we have achieved the rate of convergence of mixing measures $\widehat{G}_{n}$ to $G_{0}$ 
under symmetric classes of density functions in 
Theorem \ref{theorem-specificmultivariaterate}, 
we also would like to obtain the convergence rate of mixing measures under skewed classes of density functions. 
The following results establish a similar convergence behavior for
finite mixtures of skewed classes of density functions.

\begin{theorem}\label{proposition-specificunivariaterate}(Skew normal distribution) Assume that $\Theta=[-a_{n},a_{n}]^{2}$ where $a_{n} \leq L(\log(n))^{\gamma}$ as $L$ is some positive constant and $\gamma >0$ and $\Omega=\left\{\sigma>0:\underline{\sigma}\leq \sigma \leq \overline{\sigma}\right\}$. Let $\left\{f(x|\theta,\sigma,m), (\theta,m)\in \Theta, \sigma \in \Omega \right\}$ be the class of skew normal distributions. 
\begin{itemize}
\item[(a)] (Exact-fitted case)
\begin{itemize}
\item[(a.1)] If the support points of $G_{0}$ satisfy condition (S.1) in Theorem \ref{theorem:exactfitskewnormal}, there holds $P(W_{1}(\widehat{G}_{n},G_{0})>\delta_{n}) \lesssim \exp(-c\log(n))$ where $\delta_{n}$ is a sufficiently large multiple of $(\log(n)/n)^{1/2}$ and $c$ is positive constant depending only on $L,\gamma,\underline{\sigma},\overline{\sigma}$.
\item[(a.2)] If the support points of $G_{0}$ satisfy either condition $(S.2)$ or $(S.3)$, there holds $P(W_{1}(\widehat{G}_{n},G_{0})>\delta_{n}') \lesssim \exp(-c_{1}\log(n))$ where $\delta_{n}'$ is a sufficiently large multiple of $(\log(n)/n)^{1/4}$.
\item[(a.3)] If the support points of $G_{0}$ satisfy condition $(S.4)$, there holds $P(W_{1}(\widehat{G}_{n},G_{0})>\delta_{n}'') \lesssim \exp(-c_{2}\log(n))$ where $\delta_{n}''$ is a sufficiently large multiple of $(\log(n)/n)^{1/(2k^{*}+1)}$, $k^{*}$ is a constant defined in Theorem \ref{}. When $k^{*}=1$, the convergence rate of $W_{1}(\widehat{G}_{n},G_{0})$ cannot be faster than $(\log(n)/n)^{1/3}$. 
\end{itemize}
\item [(b)] Mixture of generalized univariate logistics distributions:
Given $\Theta=[-a_{n},a_{n}]$, where $a_{n} \leq L(\log(n))^{\gamma}$ as $L$ is some positive constant and $\gamma>0$ and $\Omega=\left\{\sigma >0: \underline{\sigma} \leq \sigma \leq \overline{\sigma}\right\}$. Let $\left\{f(x|\theta,\sigma), \theta \in \Theta, \sigma \in \Omega \right\}$ be 
the class of generalized univariate logistics distributions.
Then the bound~\eqref{Eqn-conv}
holds
%\begin{eqnarray}
%P(W_{1}(\widehat{G}_{n},G_{0})>\delta_{n}) \lesssim  \exp(-c\log(n)), \nonumber
%\end{eqnarray}
where $\delta_{n}$ is a sufficiently large multiple of $(\log n/n)^{1/2}$, and 
$c$ is a positive constant depending only on $L,\gamma,\underline{\sigma},\overline{\sigma}$.

\item [(c)] Mixture of generalized univariate Gumbel distributions:
Given $\Theta=[-a_{n},a_{n}] \times [\lambda_{1},\lambda_{2}]$, where $a_{n} \leq L(log(n))^{\gamma}$ as $L$ is some positive constant and $\gamma>0$,$\lambda_{1},\lambda_{2}>0$ and $\Omega=\left\{\sigma >0: \underline{\sigma} \leq \sigma \leq \overline{\sigma}\right\}$. 
Let $\left\{f(x|\theta,\sigma,\lambda),(\theta,\lambda) \in \Theta, \sigma \in \Omega\right\}$ be 
the class of generalized univariate Gumbel distributions. 
Then the bound~\eqref{Eqn-conv}
holds
%\begin{eqnarray}
%P(W_{1}(\widehat{G}_{n},G_{0})>\delta_{n}) \lesssim  \exp(-c\log(n)), \nonumber
%\end{eqnarray}
where $\delta_{n}$ is a sufficiently large multiple of $(\log n/n)^{1/2}$, and 
$c$ is a positive constant depending only on $L,\gamma,\lambda_{1},\lambda_{2},\underline{\sigma},\overline{\sigma}$.
\end{itemize}
\end{theorem}

The proof of both theorems involve verifying the assumptions required by Proposition
\ref{theorem-generalrate}. While much of the hard work have already been achieved
for verifying the strong identifiability criteria (in Section~\ref{Sec-characterization}),
some difficulty remains in verifying the conditions involving the entropy integral
of certain classes of mixture densities. The details of the proof for both theorems
are deferred to the Appendix.}

%%% Section 6$
%\section{Simulation}
%\label{Sec-simulation}

\comment{\paragraph{Illustrations.}
To illustrate the theoretical convergence rates established above, we generate $n$-iid samples
from a Gaussian location-scale mixture model that has three mixing components. 
The true parameters for the mixing
measure $G_0$ are:
$\mu_{1}^{0}=(0,3),\mu_{2}^{0}=(1,-4),\mu_{3}^{0}=(5,2)$, and $\Sigma_{1}^{0}=M_{1}\begin{pmatrix} 5 & 0 \\0 & 0.1 \end{pmatrix}M_{1}^{T}=\begin{pmatrix} 4.2824 & 1.7324 \\ 1.7324 & 0.81759 \end{pmatrix}$, where $M_{1}=\begin{pmatrix} \cos(\pi/8) & -\sin(\pi/8) \\ \sin(\pi/8) & \cos(\pi/8) \end{pmatrix}$, $\Sigma_{2}^{0}=M_{2}\begin{pmatrix} 3 & 0 \\ 0 & 0.5 \end{pmatrix}M_{2}^{T}$\\$=\begin{pmatrix} 1.75 & -1.25 \\ -1.25 & 1.75 \end{pmatrix}$, 
where $M_{2}=\begin{pmatrix} \cos(3\pi/4) & -\sin(3\pi/4) \\ \sin(3\pi/4) & \cos(3\pi/4) \end{pmatrix}$, and $ \Sigma_{3}^{0}=\begin{pmatrix} 1 & 0 \\ 0 & 4 \end{pmatrix}$ as well as true ratios $\pi_{1}^{0}=0.3,\pi_{2}^{0}=0.4,\pi_{3}^{0}=0.3$. Maximum likelihood estimators are obtained by the EM algorithm. Since the EM algorithm can
only reach to local maxima of the likelihood, we restart the algorithm multiple times 
to improve the possibility of obtaining the global maximum. 
In Figure~\ref{figure-EMstronglyidentifiableGaussianw2}, Wasserstein distances $W_1(\widehat{G}_n,G_0)$
and $W_2(\widehat{G}_n,G_0)$ are plotted against varying sample size $n$ (for each $n$, the
experiment is repeated 10 times to obtain the error bars).
The plots in the top panels show that the convergence
rate under $W_1$ distance is indeed of the order $(\log n/n)^{1/2}$ as established by the theory,
while the bottom panel appears to show a convergence rate of order $(\log n/n)^{1/4}$
under $W_2$ metric. This confirms our theory as well, because for mixing measures
with compact support, we have $W_2^2 \lesssim W_1$. 
Finally, we note that in the case of Gamma mixtures, for which the strong identifiability
criterion does not hold (even in the first order), the EM algorithm has difficulty converging
to a local maximum.
This is also the scenario where the question regarding convergence rate of the 
MLE remains open.

\begin{figure*}
\centering
\begin{minipage}[b]{.4\textwidth}
\includegraphics[width=70mm,height=50mm]{Gaussianmixturew1vstheory120.jpg}
\end{minipage}
\quad \quad
\begin{minipage}[b]{.4\textwidth}
\includegraphics[width=70mm,height=50mm]{Ratiotheorylinew1.jpg} 
\end{minipage} 
\begin{minipage}[b]{.4\textwidth}
\includegraphics[width=70mm,height=50mm]{Gaussianmixturew2vstheory120.jpg}
\end{minipage}
\quad \quad
\begin{minipage}[b]{.4\textwidth}
\includegraphics[width=70mm,height=50mm]{Ratiotheoryline1w2.jpg}
\end{minipage}
\caption{
Top panel, left: $W_1(\widehat{G}_n,G_0)$ vs.  $(\log n/n)^{1/2}$. 
Top panel, right: $W_1(\widehat{G}_n,G_0)/(\log n/n)^{1/2}$.
Bottom panel, left: $W_2(\widehat{G}_n,G_0)$ vs $(\log n/n)^{1/4}$. 
Bottom panel, right: $W_2(\widehat{G}_n,G_0)/(\log n/n)^{1/4}$.
}
\label{figure-EMstronglyidentifiableGaussianw2}
\end{figure*}
}
\label{Simulations}
\subsection{Illustrations}
\label{Sec:simulation}
For the remainder of this section we shall illustrate via simulations the rich spectrum
of convergence behaviors of the mixing measure in a number of settings. This is reflected
by the identifiability bound $V\gtrsim W_r^r$ and its sharpness 
for varying values of $r$, as well as the convergence rate of the MLE.

\paragraph{Strong identifiability bounds.} We illustrate the bound $V \gtrsim W_1$
for exact-fitted mixtures, and $V\gtrsim W_2^2$ for over-fitted mixtures of
the class of Student's t-distributions. See
Figure~\ref{figure-exactfitted-overfitted-Student}. The upper bounds of $V$ and $h$
were also proved earlier in Section~\ref{Sec-upperbound}.
For details, we choose 
$\Theta=[-10,10]^{2}$ and $\Omega=\left\{\Sigma \in S_{2}^{++}: \sqrt{2} \leq \sqrt{\lambda_{1}(\Sigma)} \leq \sqrt{\lambda_{d}(\Sigma)} \leq 2 \right\}$. The true mixing probability measure $G_{0}$ has exactly $k_{0}=2$ support points with locations $\theta_{1}^{0}=(-2,2)$, $\theta_{2}^{0}=(-4,4)$, covariances $\Sigma_{1}^{0}= \begin{pmatrix} 9/4 & 1/5 \\ 1/5 & 13/6 \end{pmatrix}$, $\Sigma_{2}^{0}=\begin{pmatrix} 5/2 & 2/5 \\ 2/5 & 7/3 \end{pmatrix}$, and $p_{1}^{0}=1/3, p_{2}^{0}=2/3$.
5000 random samples of discrete mixing measures $G \in \Ecal_2$, 5000 samples of  $G\in \Ocal_3$ 
were generated to construct these plots.
% 
%We randomly generate 5000 discrete measures $G \in \mathcal{E}_{2}(\Theta \times \Omega)$ (Exact-fitted setting) and 5000 discrete measures $G \in \mathcal{O}_{3}(\Theta \times \Omega)$ (Over-fitted setting). 

\paragraph{Weak identifiability bounds.}
We experiment with two interesting classes of densities: Gaussian and skew-Gaussian densities. 
According to our theory, sharp bounds of the form $V\gtrsim W_r^r$ continue to hold, 
but with varying values of $r$ depending on the specific mixture setting. $r$ can
also vary dramatically  within the same density class.

The results for mixtures of location-covariance Gaussian distributions is given in 
Figure~\ref{figure-weak-ident-Gaussian}.  Simulation details are as follows.
The true mixing measure $G_{0}$ has exactly $k_{0}=2$ support points with locations $\theta_{1}^{0}=-2$, $\theta_{2}^{0}=4$, scales $\sigma_{1}^{0}=1$, $\sigma_{2}^{0}=2$, and $p_{1}^{0}=1/3, p_{2}^{0}=2/3$. 
5000 random samples of discrete mixing measures $G \in \Ecal_2$, 5000 samples of  $G\in \Ocal_3$
and another 5000 for $G \in \Ocal_4$, where the support points are uniformly generated in
$\Theta=[-10,10]$ and $\Omega=[0.5,5]$. 
%We randomly generate 5000 discrete measures $G \in \mathcal{E}_{2}(\Theta \times \Omega)$ (Exact-fitted case), 5000 discrete measures $G \in \mathcal{O}_{3}(\Theta \times \Omega)$ (Over-fitted setting with one extra point), and 5000 discrete measures $G \in \mathcal{O}_{4}(\Theta \times \Omega)$ (Over-fitted setting with two extra points) where $\Theta=[-10,10]$ and $\Omega=[0.5,5]$. 

The bounds for skew-Gaussian mixtures are illustrated by 
Figure \ref{figure-weak-ident-skewgauss}. Here are the simulation details.
The true parameters for mixing measure $G_{0}$ will be divided into three cases.
\begin{itemize}
\item[•] Generic case: $(\theta_{1}^{0}, m_{1}^{0}, \sigma_{1}^{0}) = (-2, 1, 1), (\theta_{2}^{0}, m_{2}^{0}, \sigma_{2}^{0}) = (4, 2, 2), (\theta_{3}^{0}, m_{3}^{0}, \sigma_{3}^{0}) = (-5, -3, 3)$, $p_{1}^{0}=p_{2}^{0}=p_{3}^{0}=1/3$. 
\item[•] Conformant case: $(\theta_{1}^{0}, m_{1}^{0}, \sigma_{1}^{0}) = (-2, 0, 1), (\theta_{2}^{0}, m_{2}^{0}, \sigma_{2}^{0}) = (4, \sqrt{3}, 2), (\theta_{3}^{0}, m_{3}^{0}, \sigma_{3}^{0}) = (4, \sqrt{8}, 3)$, $p_{1}^{0}=p_{2}^{0}=p_{3}^{0}=1/3$.
\item[•] Non-conformant case: $(\theta_{1}^{0}, m_{1}^{0}, \sigma_{1}^{0}) = (-2, 0, 1), (\theta_{2}^{0}, m_{2}^{0}, \sigma_{2}^{0}) = (4, \sqrt{3}, 2), (\theta_{3}^{0}, m_{3}^{0}, \sigma_{3}^{0}) = (4, -\sqrt{8}, 3)$, $p_{1}^{0}=p_{2}^{0}=p_{3}^{0}=1/3$.
\end{itemize}
As before, 5000 random samples of discrete mixing measures $G \in \Ecal_2$, 
5000 samples of  $G\in \Ocal_3$ and another 5000 for 
$G \in \Ocal_4$, where the support points are uniformly generated in
$\Theta=[-10,10]$ and $\Omega=[0.5,5]$. 

It can be observed that both lower bounds and upper bounds match exactly our theory
developed in the previous two sections.

\begin{figure*}[t]
\centering
\captionsetup{justification=centering}
\begin{minipage}[b]{.25\textwidth}
\includegraphics[width=50mm,height=40mm]{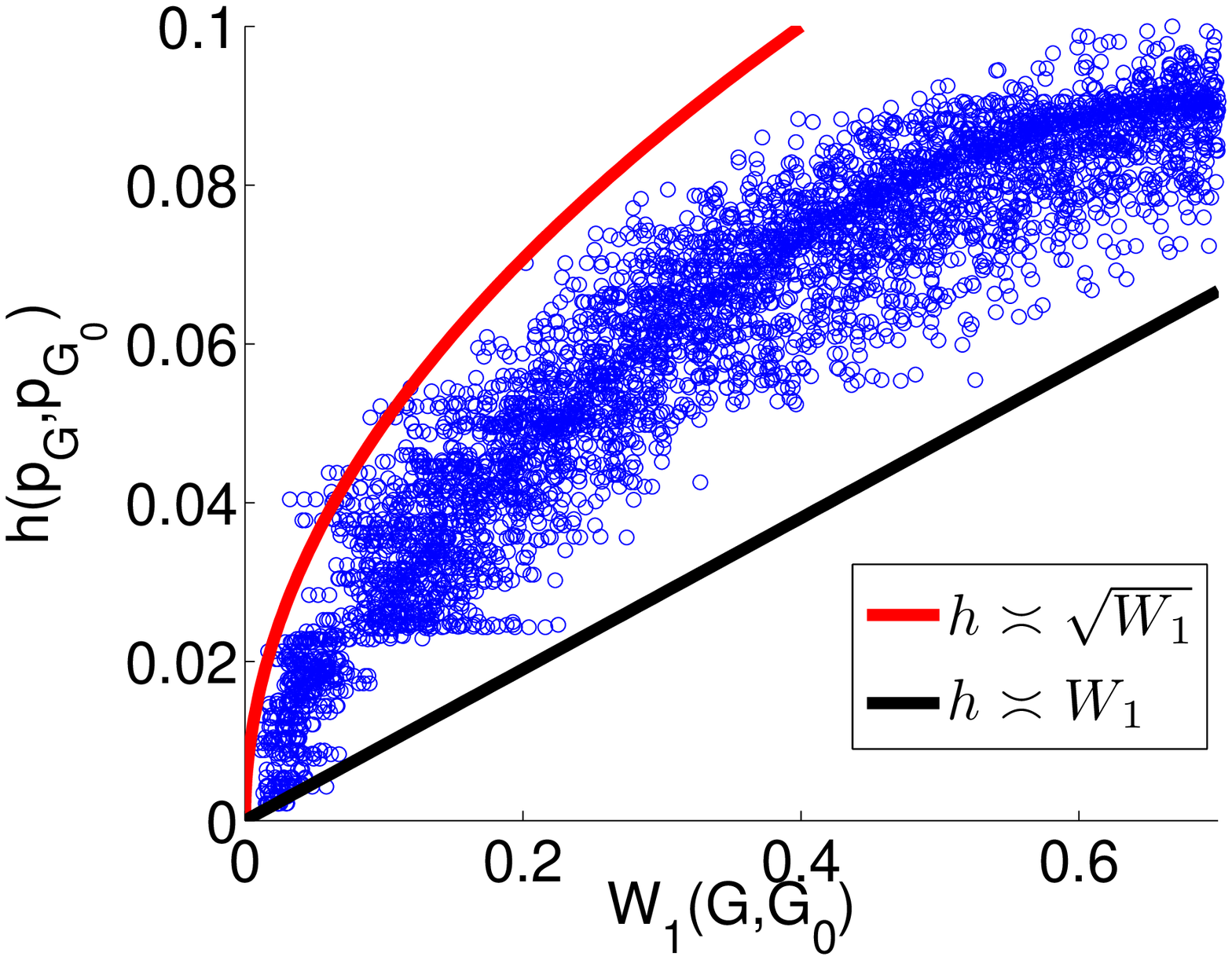}
\end{minipage}
\quad \quad
\begin{minipage}[b]{.25\textwidth}
\includegraphics[width=50mm,height=40mm]{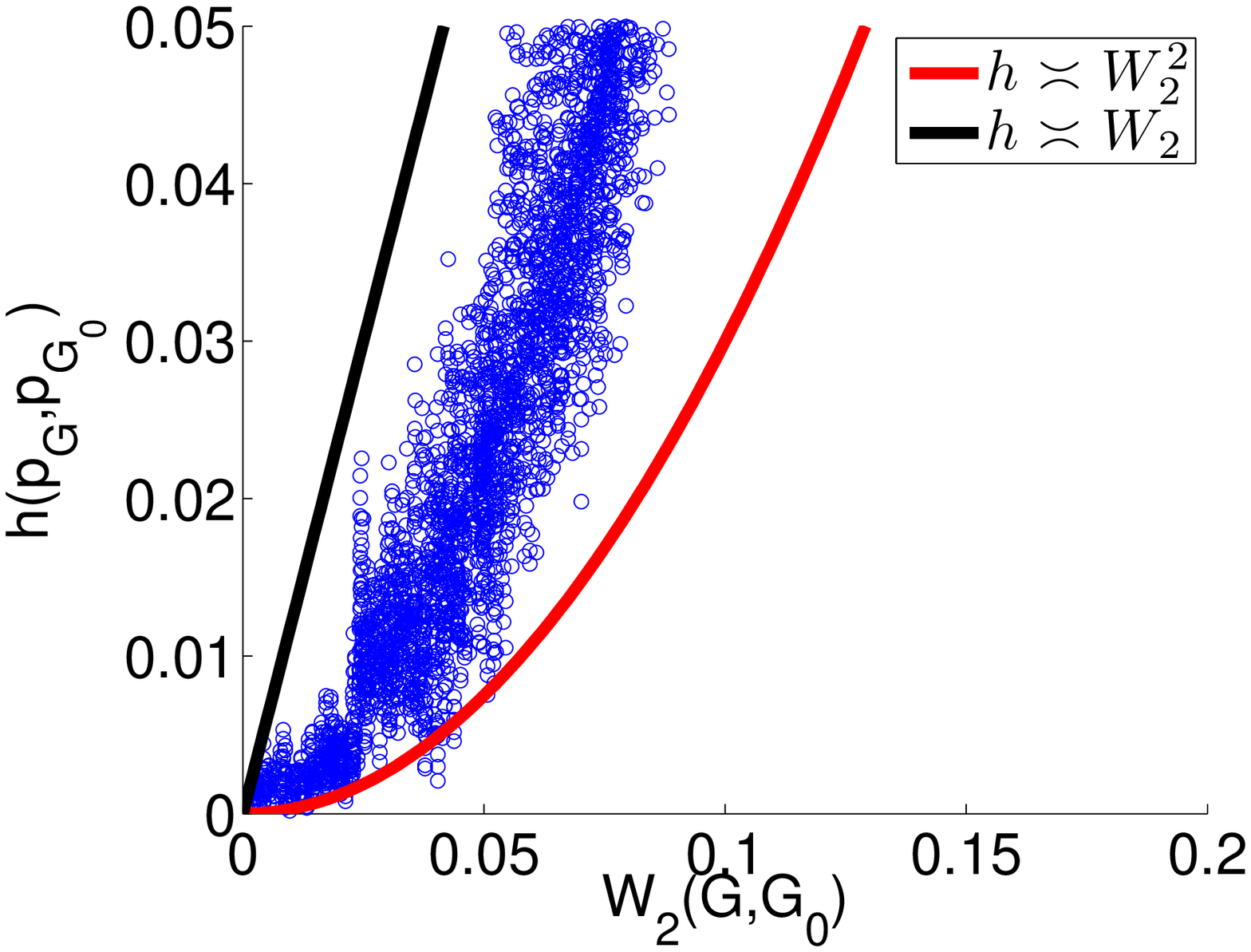} 
\end{minipage}
\caption{\footnotesize{Mixture of Student's t-distributions.
Left: Exact-fitted setting.
Right: Over-fitted setting.
}}
\label{figure-exactfitted-overfitted-Student}
\end{figure*}

\begin{figure*}[h]
\centering
\captionsetup{justification=centering}
\begin{minipage}[b]{.25\textwidth}
\includegraphics[width=50mm,height=50mm]{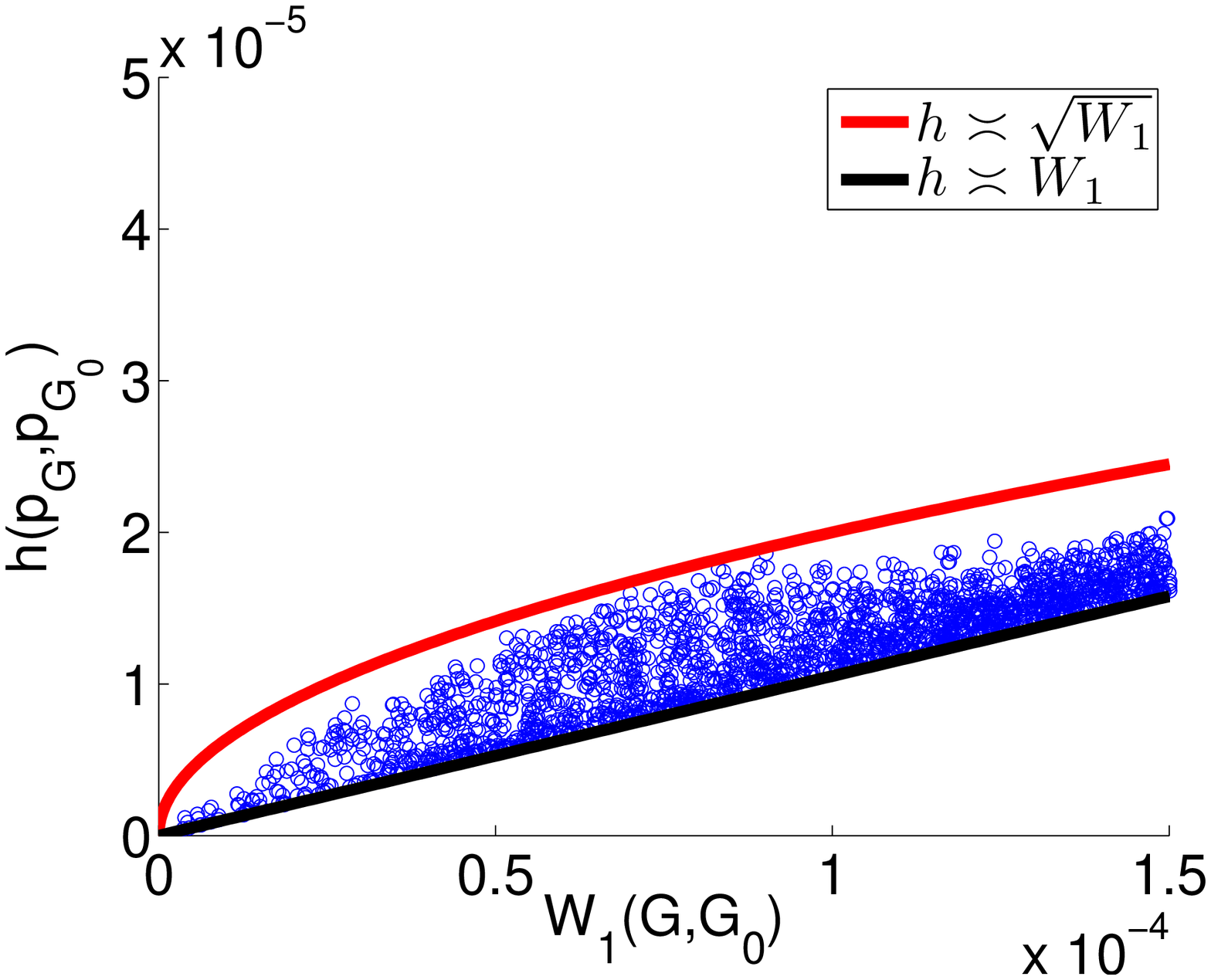}
\end{minipage}
\quad \quad
\begin{minipage}[b]{.25\textwidth}
\includegraphics[width=50mm,height=50mm]{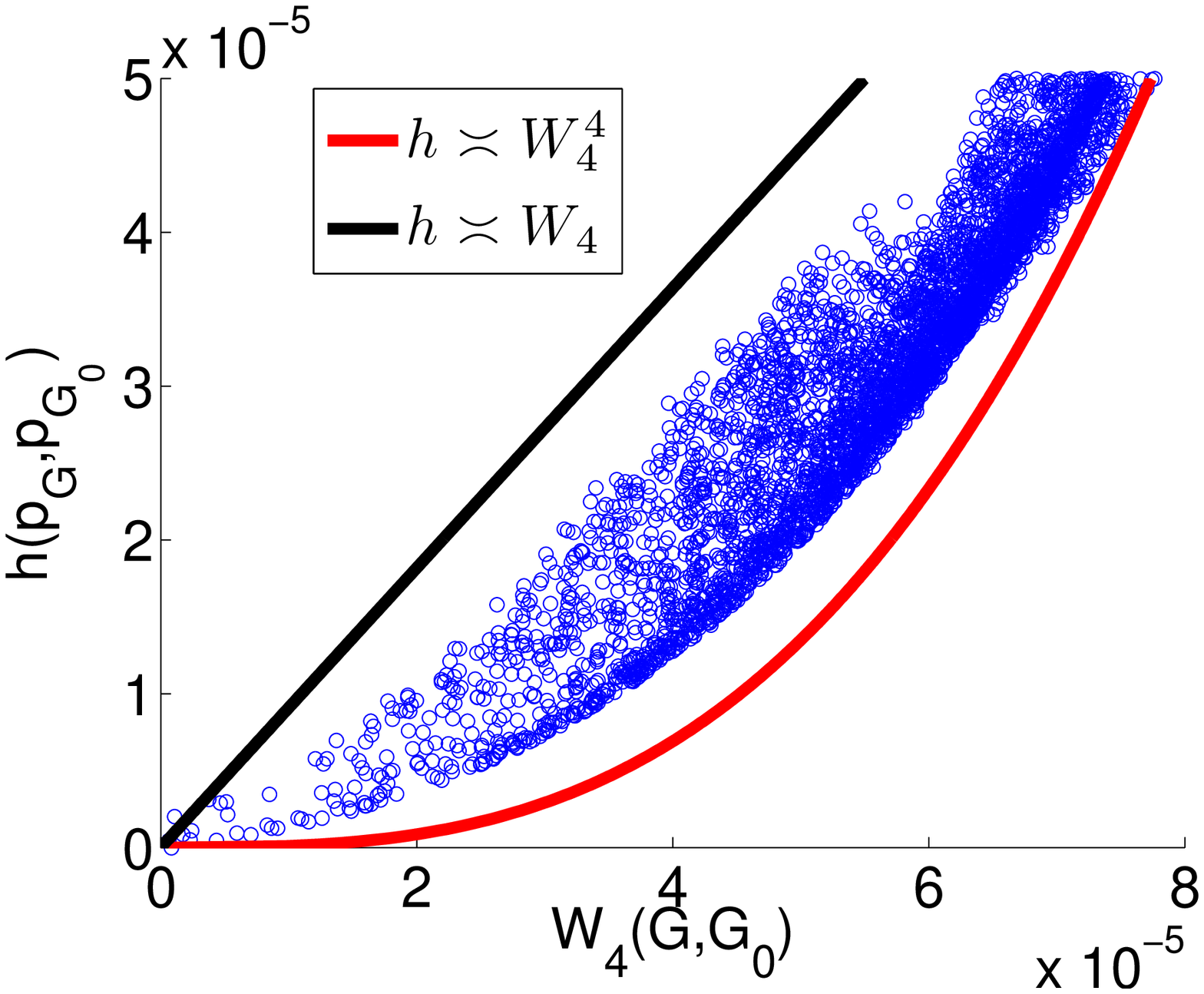} 
\end{minipage}
\quad \quad
\begin{minipage}[b]{.25\textwidth}
\includegraphics[width=50mm,height=50mm]{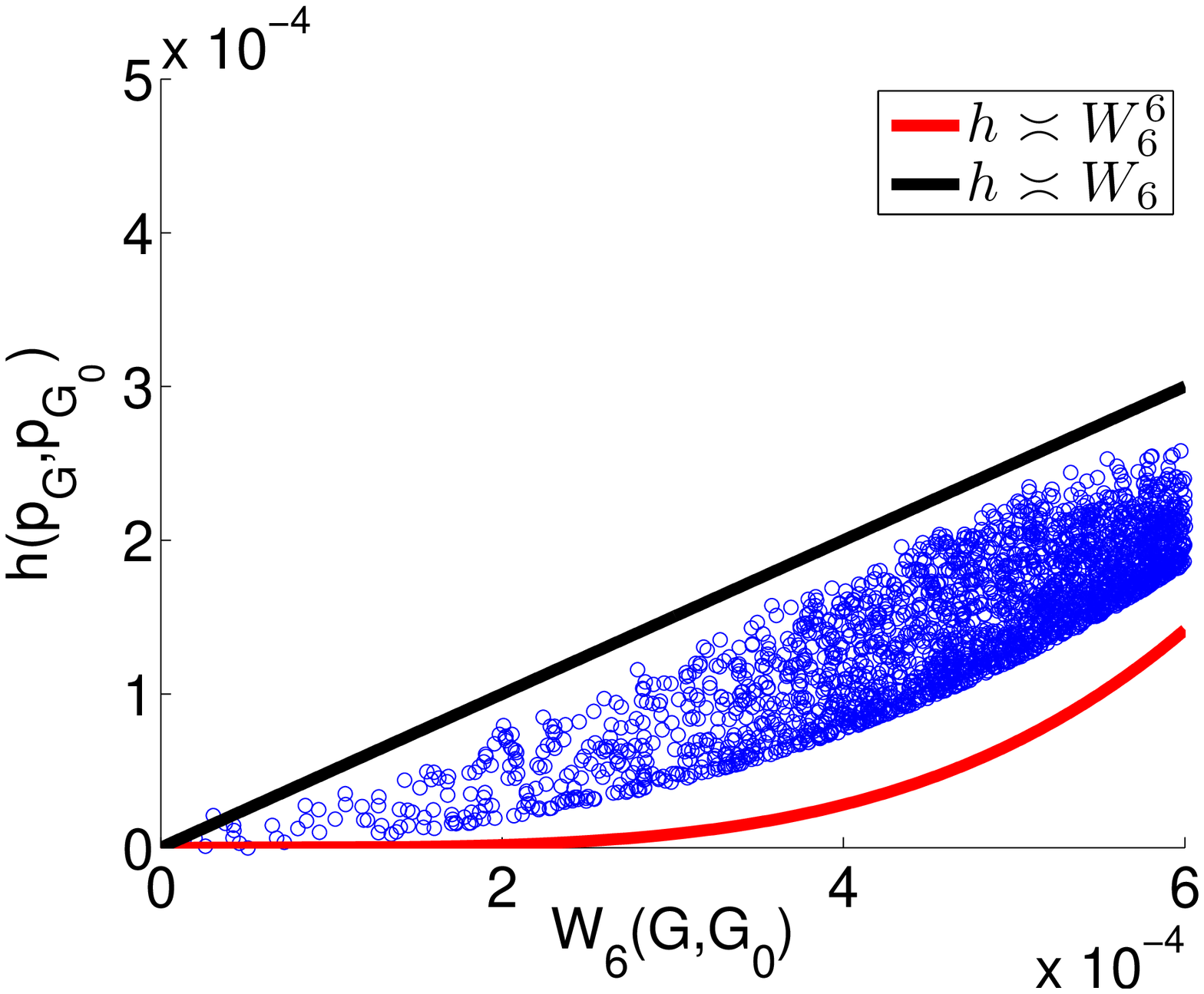} 
\end{minipage}
\caption{\footnotesize{Location-scale Gaussian mixtures. From left to right:
(1) Exact-fitted setting; (2) Over-fitted by one component; (3) Over-fitted by two components.
}}
\label{figure-weak-ident-Gaussian}
\end{figure*}

\begin{figure*}[h]
\centering
\captionsetup{justification=centering}
\begin{minipage}[b]{.2\textwidth}
\includegraphics[width=40mm,height=40mm]{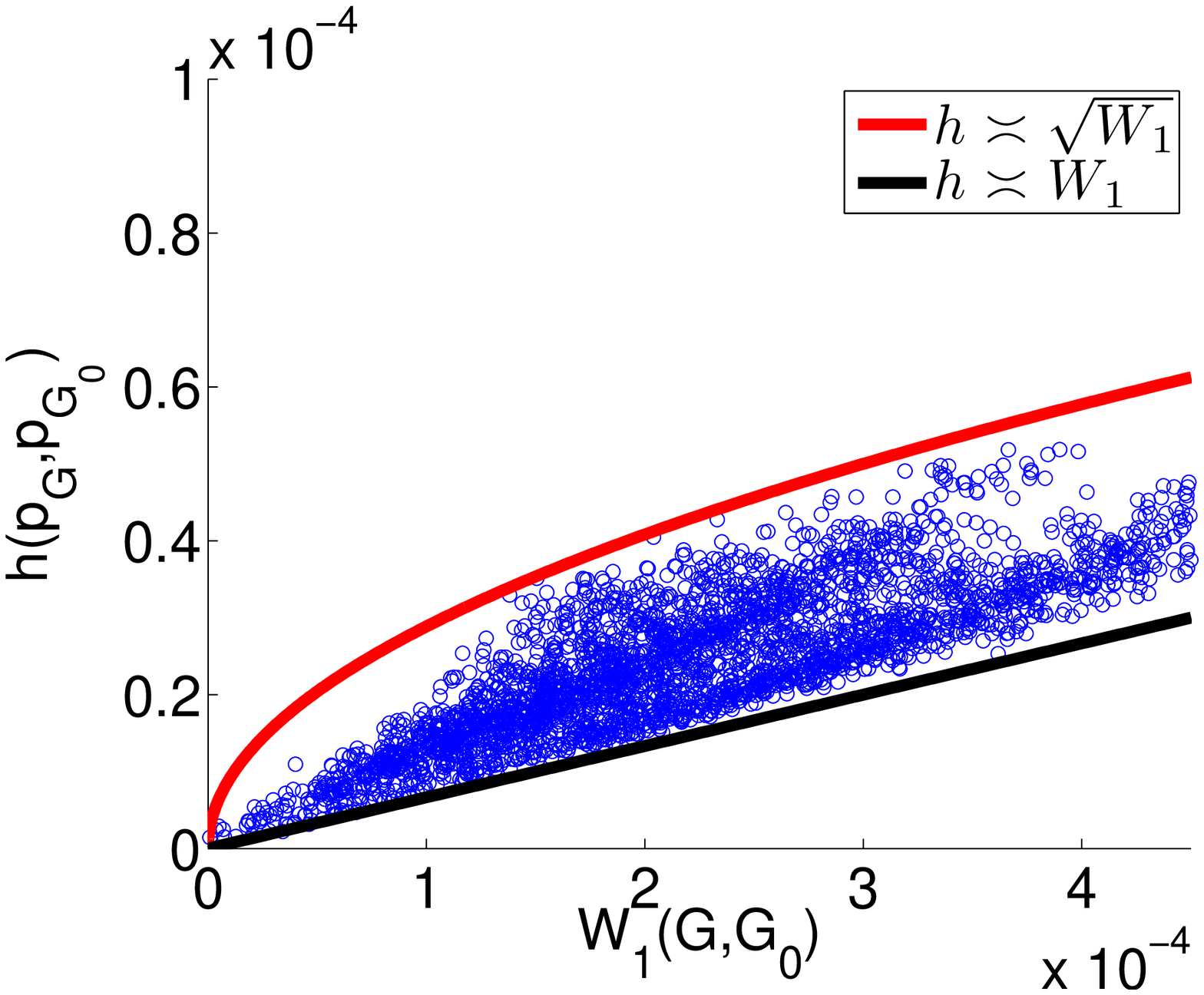}
\end{minipage}
\quad \quad
\begin{minipage}[b]{.2\textwidth}
\includegraphics[width=40mm,height=40mm]{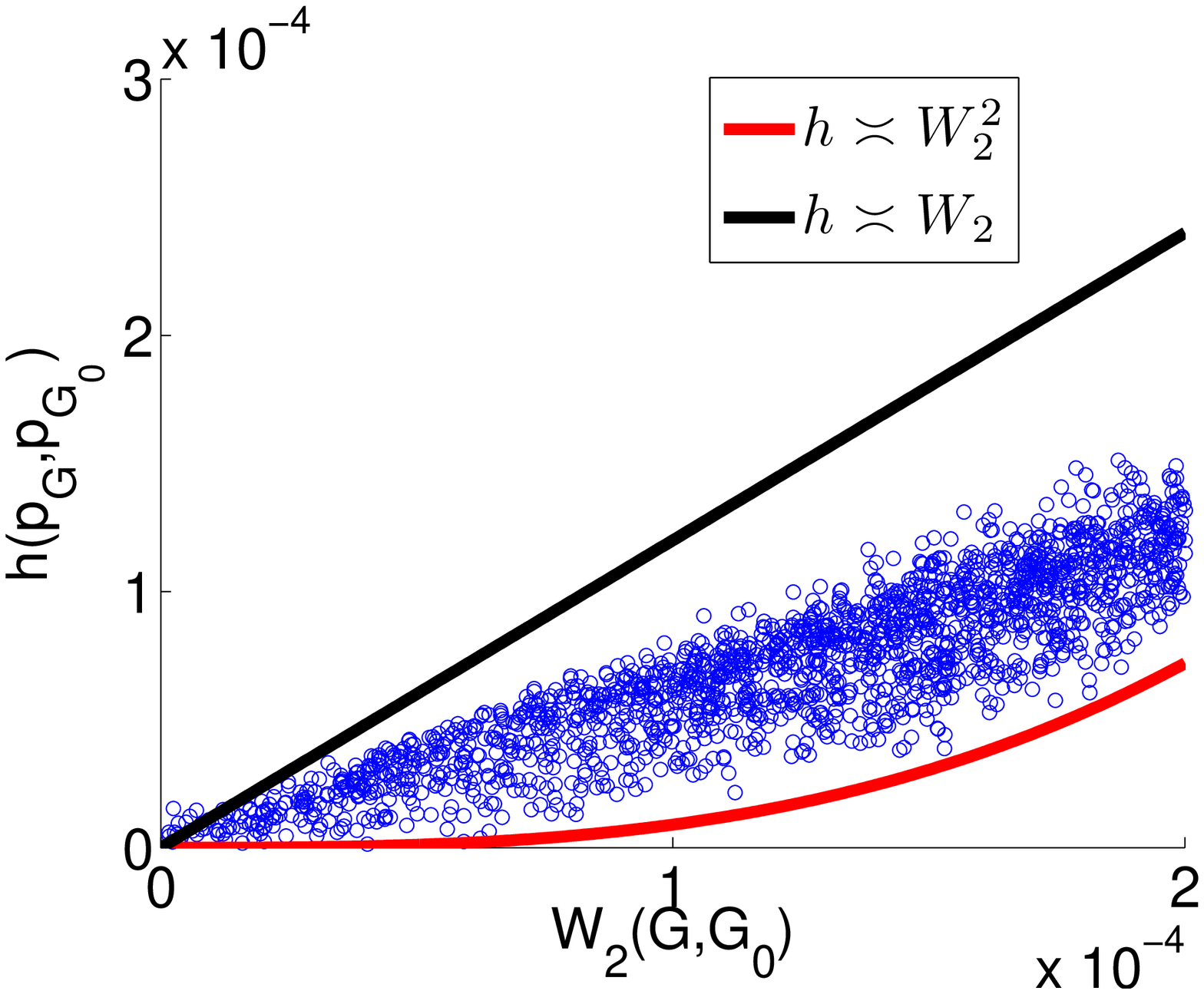} 
\end{minipage}
\quad \quad
\begin{minipage}[b]{.2\textwidth}
\includegraphics[width=40mm,height=40mm]{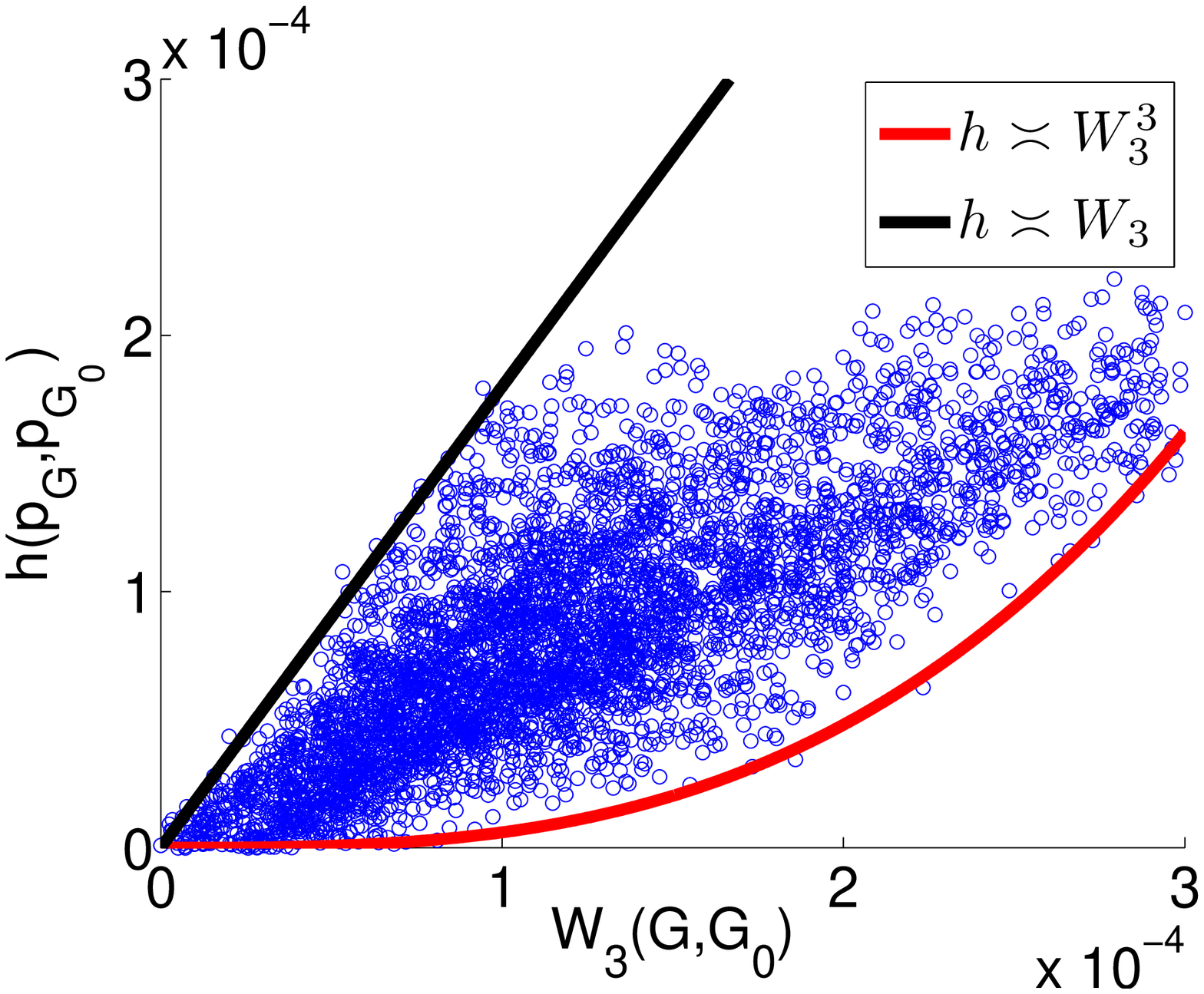} 
\end{minipage}
\quad \quad
\begin{minipage}[b]{.2\textwidth}
\includegraphics[width=40mm,height=40mm]{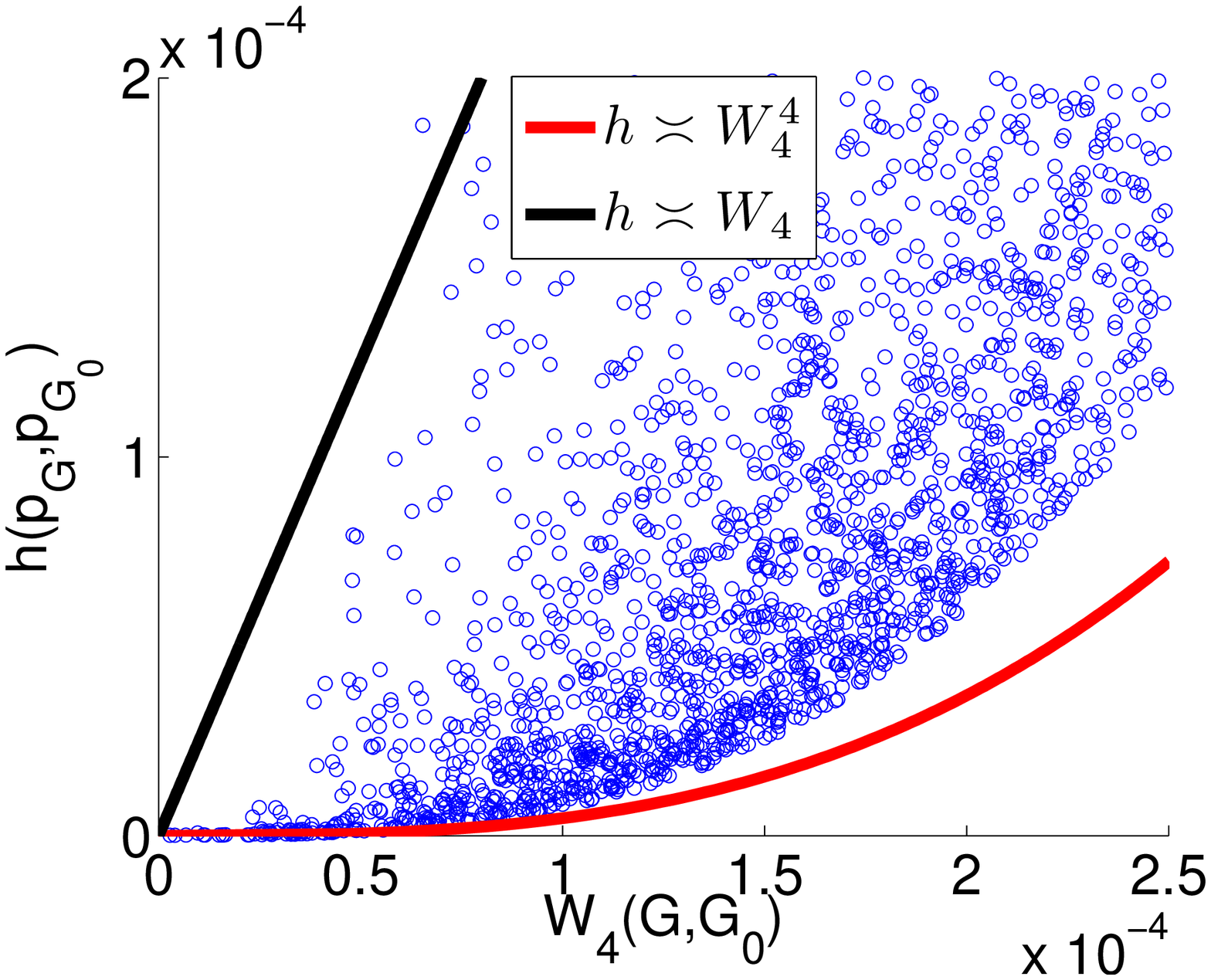} 
\end{minipage}
\caption{\footnotesize{Skew-Gaussian mixtures. From left to right:
(1) Exact-fitted generic case; (2) Exact-fitted conformant case; (3) Exact-fitted
non-conformant case; (4) Over-fitted by one component.}}
\label{figure-weak-ident-skewgauss}
\end{figure*}

\paragraph{Convergence rates of MLE.}
First, we generate $n$-iid samples from a mixture of location-scale multivariate Gaussian 
distributions which has exactly three components.
The true parameters for the mixing
measure $G_0$ are:
$\theta_{1}^{0}=(0,3), \theta_{2}^{0}=(1,-4), \theta_{3}^{0}=(5,2)$, $\Sigma_{1}^{0}=\begin{pmatrix} 4.2824 & 1.7324 \\ 1.7324 & 0.81759 \end{pmatrix}$, $\Sigma_{2}^{0}=\begin{pmatrix} 1.75 & -1.25 \\ -1.25 & 1.75 \end{pmatrix}$, $ \Sigma_{3}^{0}=\begin{pmatrix} 1 & 0 \\ 0 & 4 \end{pmatrix}$, and $\pi_{1}^{0}=0.3,\pi_{2}^{0}=0.4,\pi_{3}^{0}=0.3$. Maximum likelihood estimators are obtained by the EM algorithm as we assume 
that the data come from a mixture of $k$ Gaussians where $k \geq k_{0}=3$. 
See Figure \ref{figure-exactfitted-overfitted-Gaussian},
where the Wasserstein distance metrics are plotted against
varying sample size $n$. The error bards are obtained by running
the experiment 7 times for each $n$.

%
%The plot in first panel shows the convergence rate of exact-fitted case under $W_{1}$ distance is indeed of the order $(\log(n)/n)^{1/2}$ while the second panel appears to show the convergence rate of order $(\log(n)/n)^{1/8}$ under $W_{4}$ metric as we $k-k_{0}=1$, i.e we only over-fit by one point. Similarly, other panels confirm that rates worsen dramatically by adding a few more extra mixing components.
%Additionally, the bottom left panel implies the convergence of order $(\log(n)/n)^{1/12}$ under $W_{6}$ metric when we over-fit by two points. The bottom right panel combines three regimes together and shows how over-fit the model truly slows down the convergence behavior of mixing measures. Additionally, the top left panel and bottom right panel also suggest that the convergence rates under $W_{4},W_{6}$ cannot be faster than $(\log(n)/n)^{1/4}$ and $(\log(n)/n)^{1/6}$ respectively. 
These simulations are in complete agreement with the established convergence theory 
and confirm that the convergence slows down rapidly as $k-k_0$ increases.

\begin{figure*}
\centering
\captionsetup{justification=centering}
\begin{minipage}[b]{.2\textwidth}
\includegraphics[width=40mm,height=40mm]{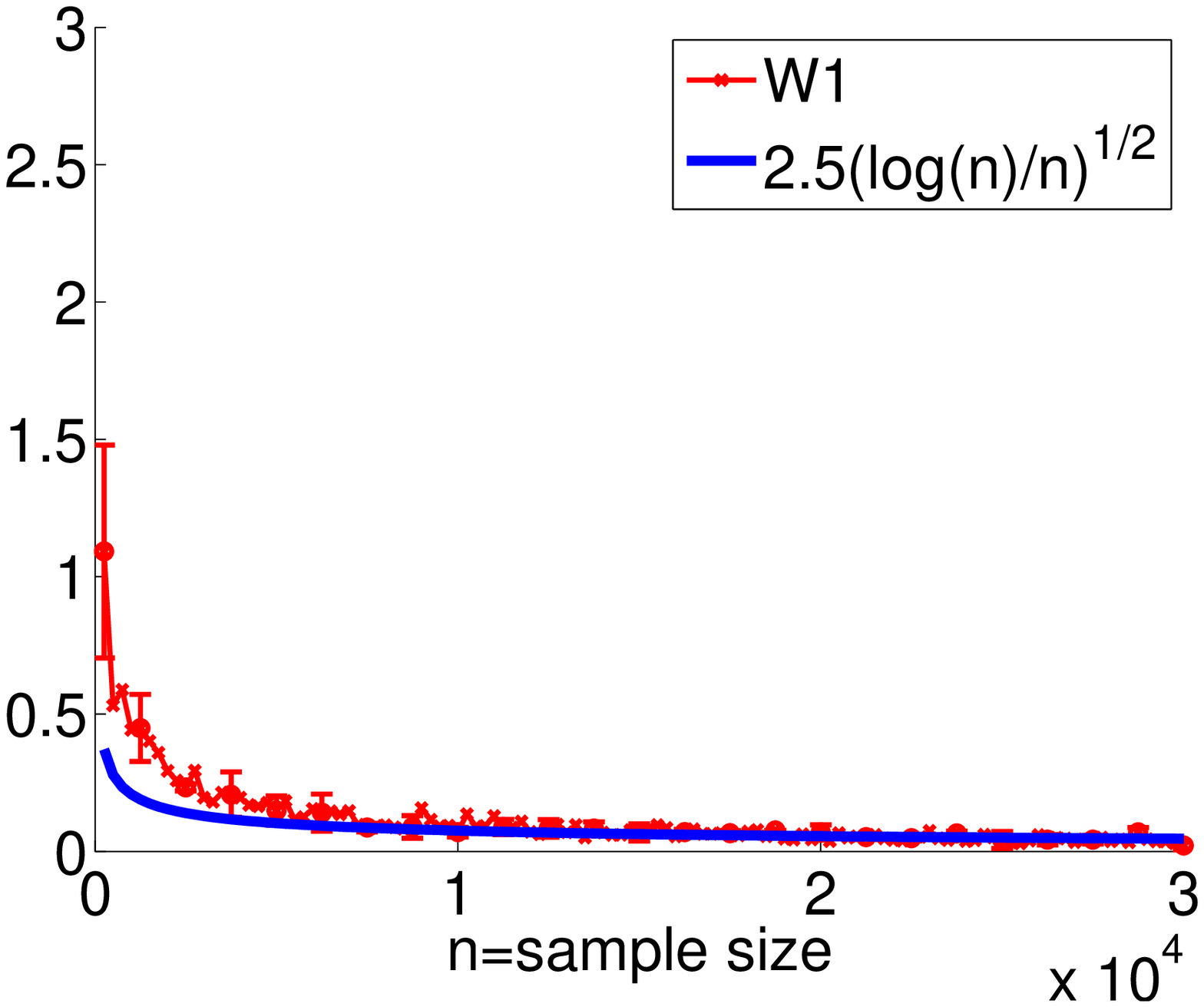}
\end{minipage}
\quad \quad
\begin{minipage}[b]{.2\textwidth}
\includegraphics[width=40mm,height=40mm]{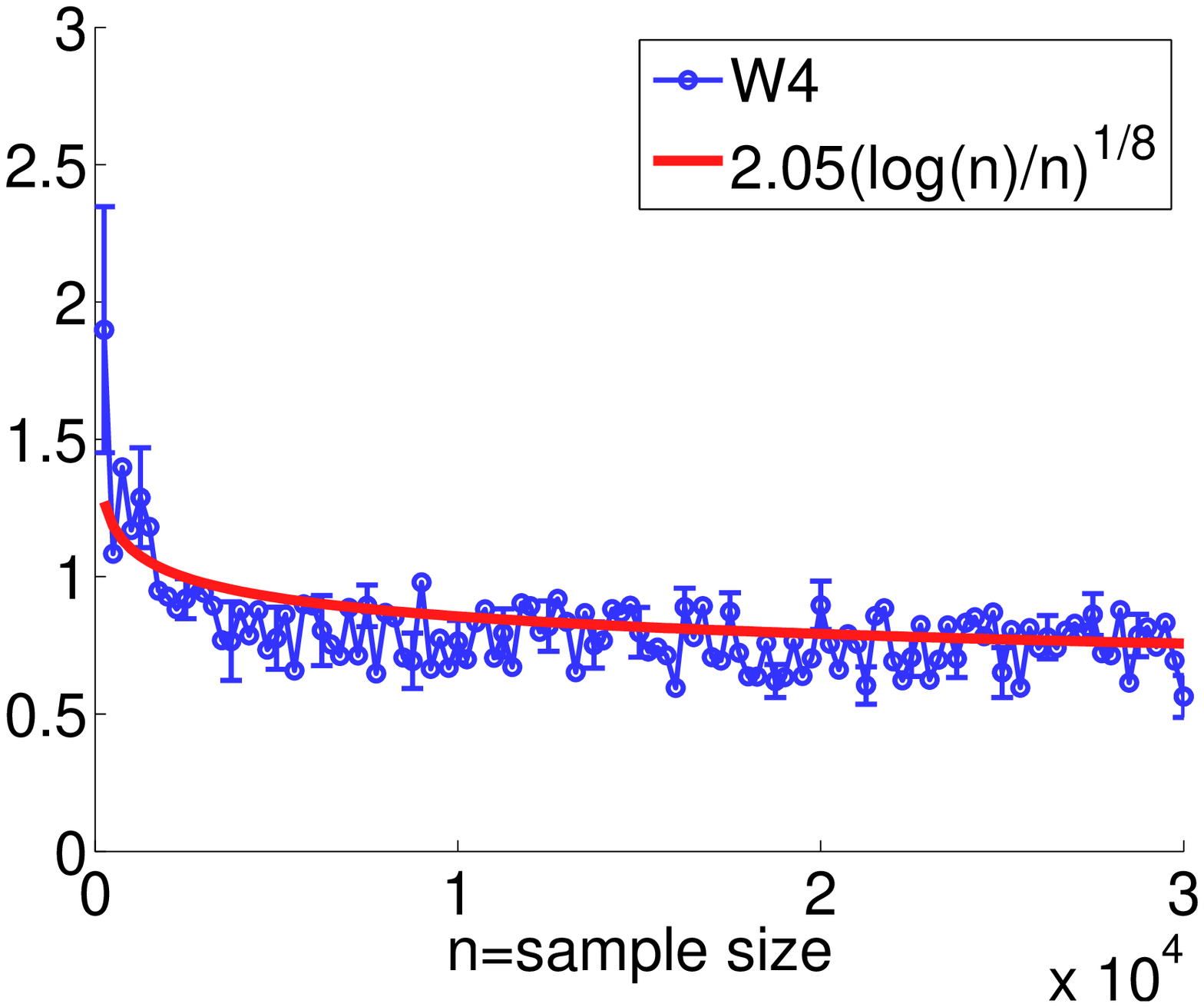} 
\end{minipage} 
\quad \quad
\begin{minipage}[b]{.2\textwidth}
\includegraphics[width=40mm,height=40mm]{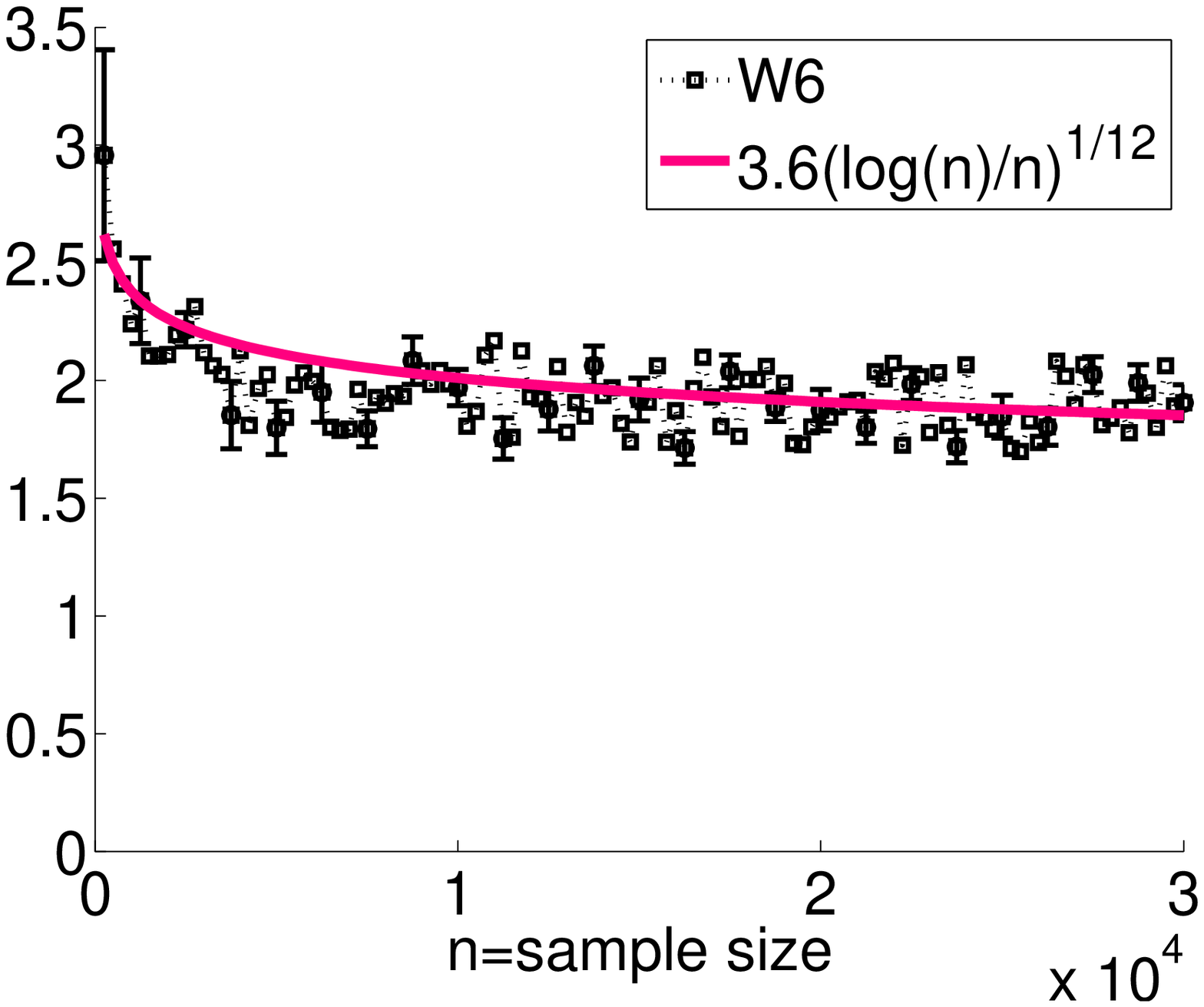}
\end{minipage}
\quad \quad
\begin{minipage}[b]{.2\textwidth}
\includegraphics[width=40mm,height=40mm]{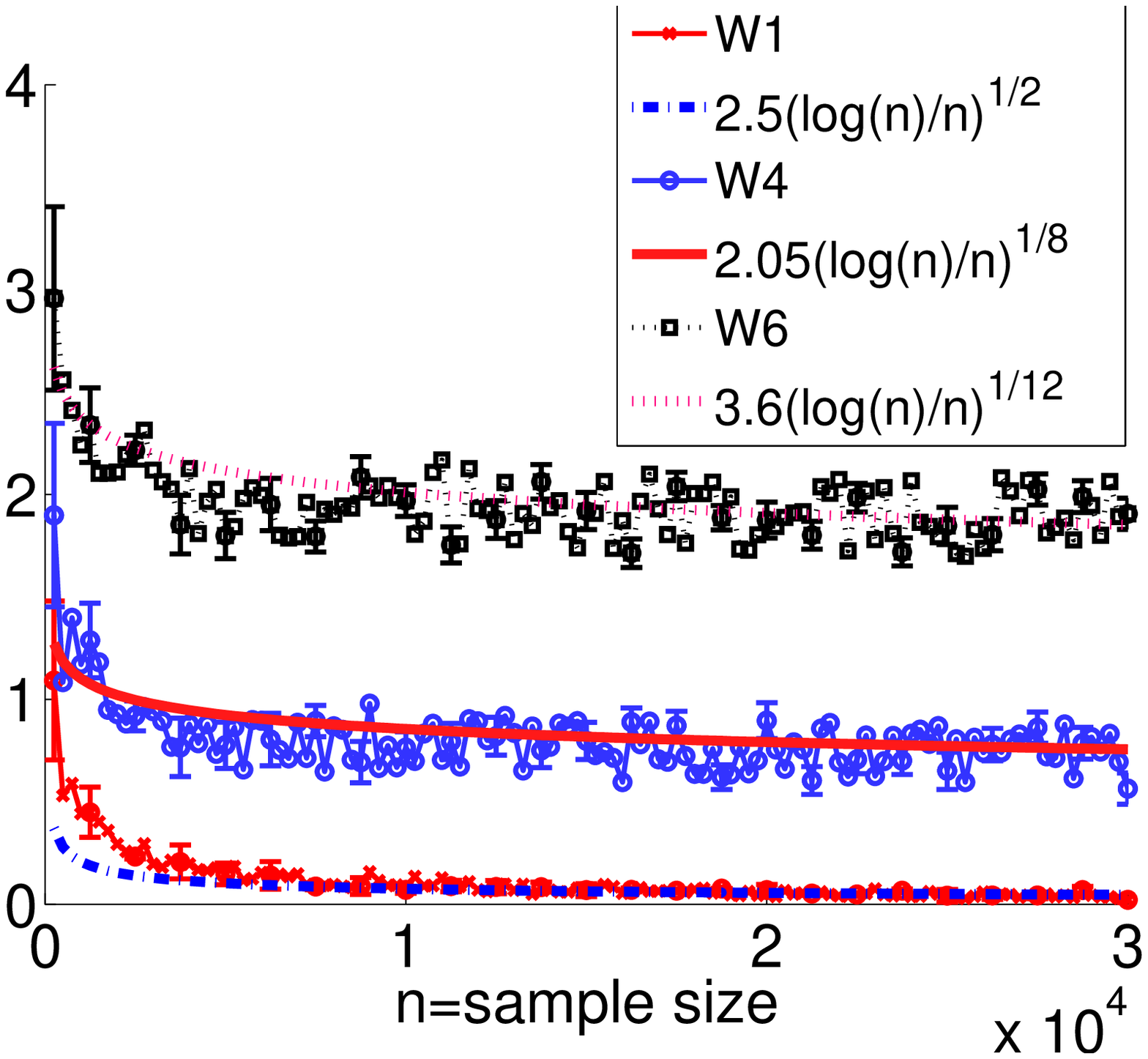}
\end{minipage}
\caption{\footnotesize{
MLE rates for location-scale mixtures of Gaussians. L to R:
(1) Exact-fitted: $W_1 \sim n^{-1/2}$.
(2) Over-fitted by one: $W_4 \sim n^{-1/8}$.
(3) Over-fitted by two: $W_6 \sim n^{-1/12}$.
}}
\label{figure-exactfitted-overfitted-Gaussian}
\end{figure*}

We turn to mixtures of Gamma distributions. There are two cases
\begin{itemize}
\item[•] \textit{Generic case:} We generate $n$-iid samples from Gamma mixture model that has exactly two mixing components. The true parameters for the mixing measure $G_{0}$ are: $a_{1}^{0}=8$, $a_{2}^{0}=2$, $b_{1}^{0}=3$, $b_{2}^{0}=4$, $\pi_{1}^{0}=1/3$, $\pi_{2}^{0}=2/3$. 
%Maximum likelihood estimators are obtained by the EM algorithm when we assume data come from mixture of $k$ Gammas where $k \geq k_{0}=2$. In the first two panels of Figure \ref{figure-exactfitted-overfitted-Gamma}, Wasserstein distances $W_{1}(\widehat{G}_{n},G_{0})$ (first panel, exact-fitted case) and $W_{2}(\widehat{G}_{n},G_{0})$ (second panel, over-fitted by one extra component) are plotted against varying sample size $n$ ($ \leq 27000$) (for each $n$, the experiment is repeated $7$ times to get the error bars). These plots confirm the convergence rate of MLE estimators under standard scenario.
\item[•] \textit{Pathological case:} We carry out the same procedure as that of generic case with the only difference is about the true parameters of $G_{0}$. In fact, we choose $a_{1}^{0}=8$, $a_{2}^{0}=7$, $b_{1}^{0}=3$, $b_{2}^{0}=3$, $\pi_{1}^{0}=1/3$, $\pi_{2}^{0}=2/3$. 
%Wasserstein distances $W_{1}(\widehat{G}_{n},G_{0})$ are plotted in the last two panels of Figure \ref{figure-exactfitted-overfitted-Gamma}. These plots suggest the convergence rate $(\log(n))^{1/2}$ for both settings, which is consistent with theoretical results in both Theorem \ref{theorem:exactfittedGamma} and \ref{theorem:overfittedGamma}.

\end{itemize}

\begin{figure*}
\centering
\captionsetup{justification=centering}
\begin{minipage}[b]{.20\textwidth}
\includegraphics[width=40mm,height=40mm]{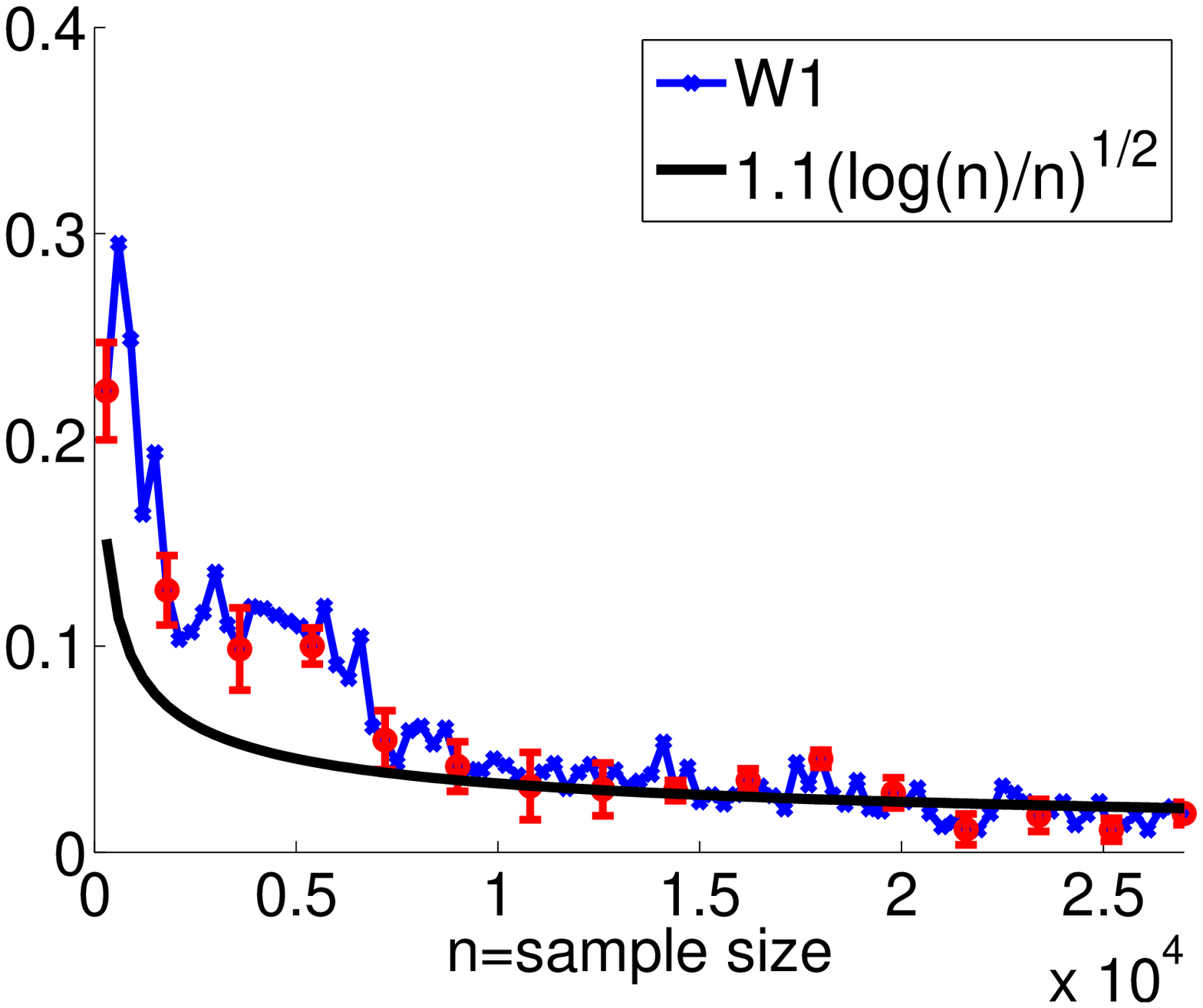}
\end{minipage}
\quad \quad
\begin{minipage}[b]{.20\textwidth}
\includegraphics[width=40mm,height=40mm]{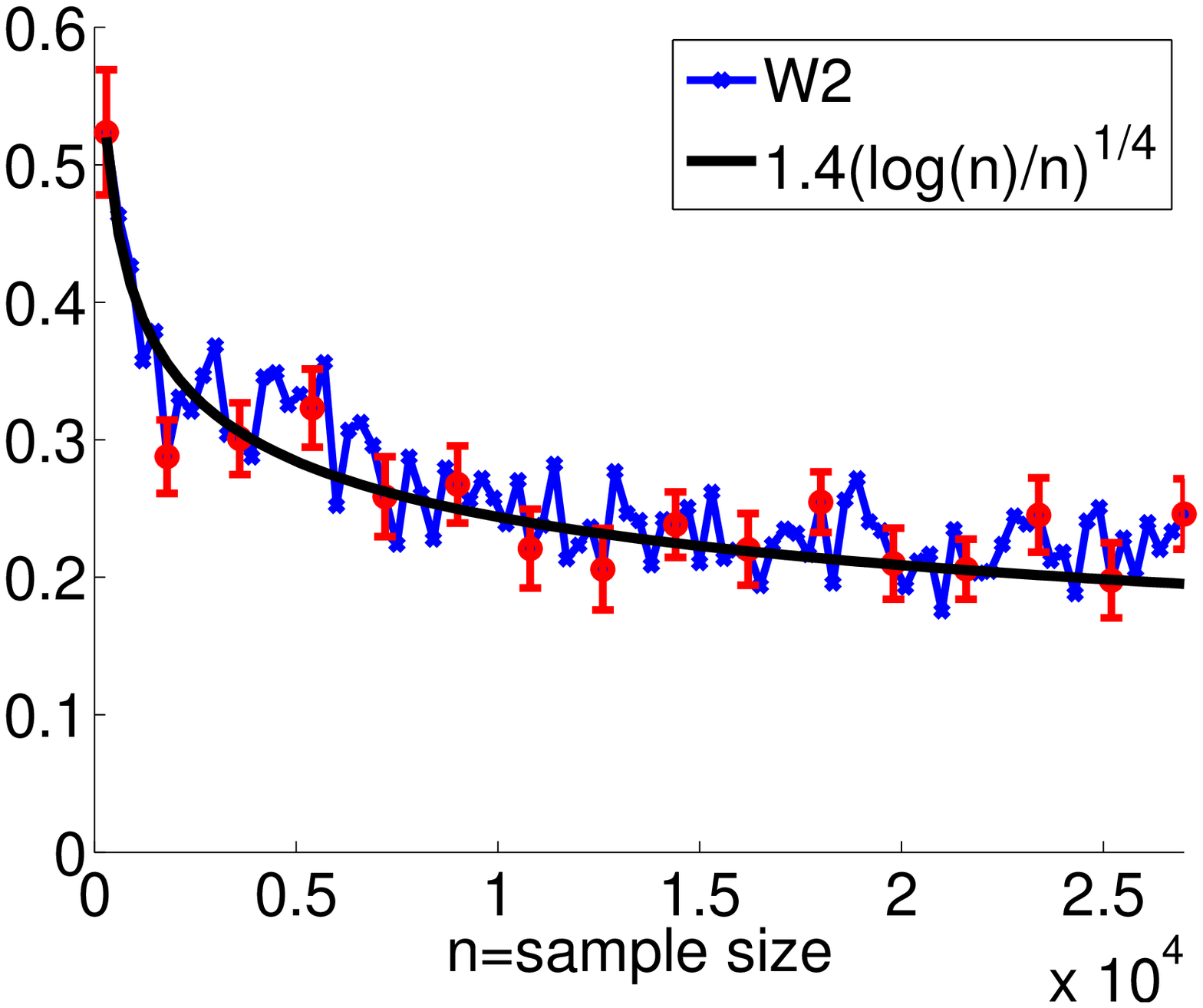} 
\end{minipage} 
\quad \quad 
\begin{minipage}[b]{.20\textwidth}
\includegraphics[width=40mm,height=40mm]{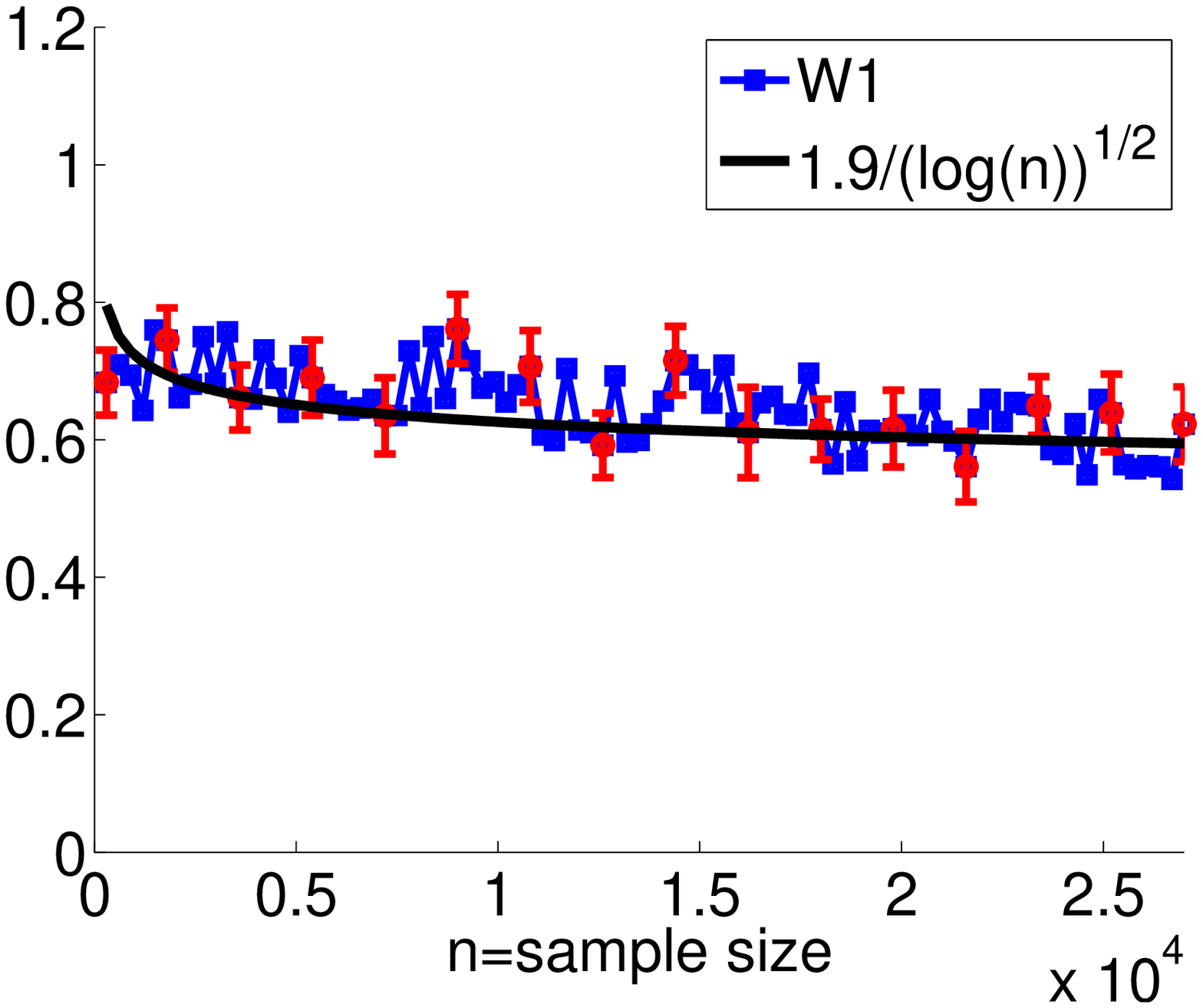}
\end{minipage}
\quad \quad
\begin{minipage}[b]{.20\textwidth}
\includegraphics[width=40mm,height=40mm]{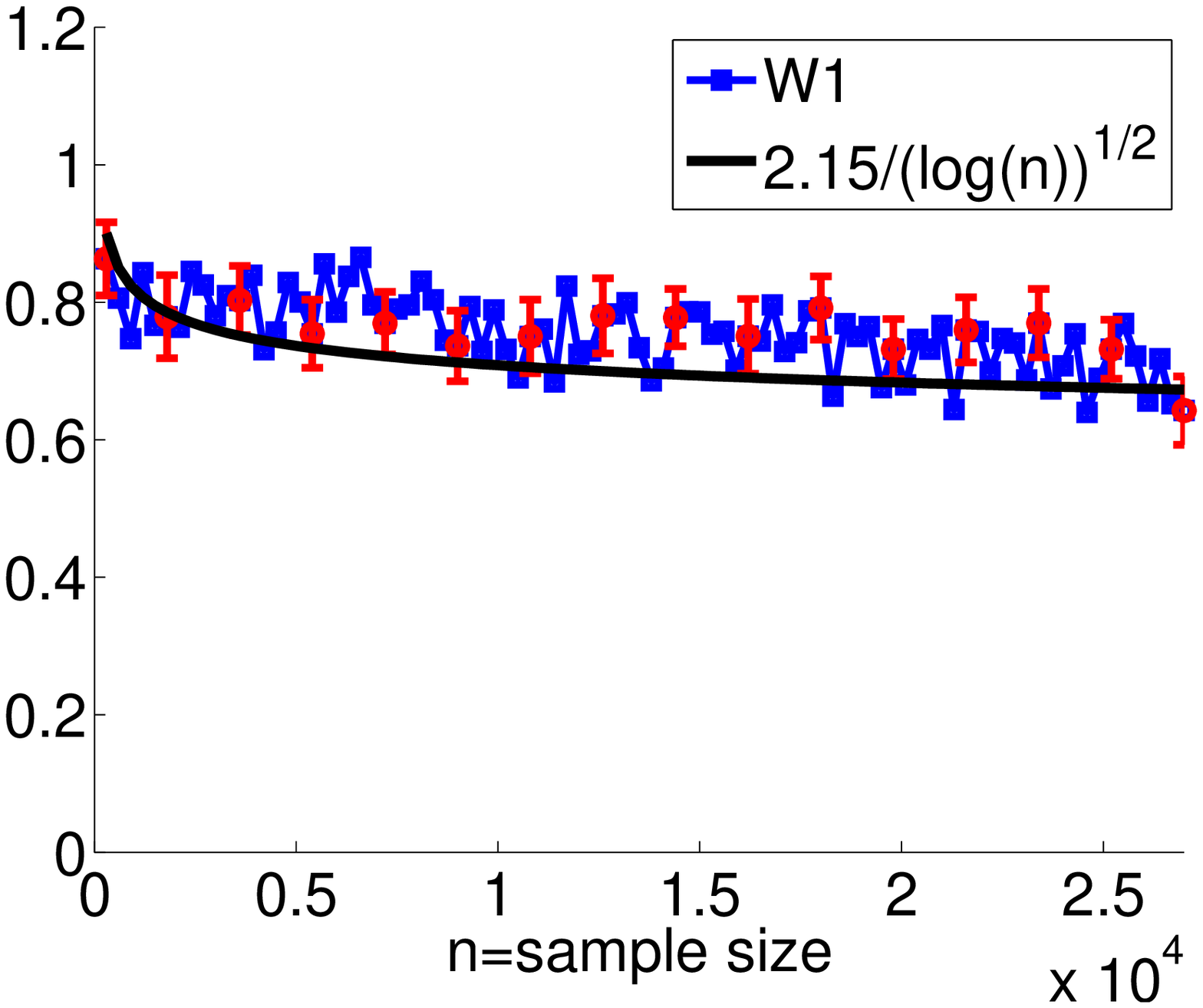}
\end{minipage}
\caption{\footnotesize{MLE rates for shape-rate mixtures of Gamma distributions.
L to R:
(1) Generic/Exact-fitted: $W_1(\widehat{G}_n,G_0) \sim n^{-1/2}$.
(2) Generic/Over-fitted: $W_2 \sim n^{-1/4}$.
(3) Pathological/Exact-fitted: $W_1 \approx 1/(\log(n)^{1/2}$.
(4) Pathological/Over-fitted:  $W_1 \approx 1/(\log(n)^{1/2}$.}}
\label{figure-exactfitted-overfitted-Gamma}
\end{figure*}

It is remarkable to see the wild swing in behaviors within this same class. 
See Figure~\ref{figure-exactfitted-overfitted-Gamma}. Even for
exact-fitted finite mixtures of Gamma, one can achieve very fast convergence rate of $n^{-1/2}$
in the generic case, or sink into a logarithmic rate if the true mixing measure $G_0$
takes on one of the pathological values.

%%% Section 6
\section{Proofs of representative theorems}
\label{Sec-proofs}
There are two types of theorems proved in this paper.
The first type are sharp inequalities of the form
$V(p_G,p_{G_0}) \gtrsim W_r^r(G,G_0)$ for some precise order $r > 0$ depending on 
the specific setting of the mixture models.
The second type of results are characterization theorems presented in 
Section~\ref{Sec:Characterization}. 

In this section we present
the proofs for three representative theorems: Theorem~\ref{theorem-firstorder} 
for strongly identifiable mixtures in the exact-fitted setting, Theorem ~\ref{theorem-secondorder} for strongly identifiable mixtures in the over-fitted setting,
and Theorem~\ref{theorem:generaloverfittedGaussian}
for over-fitted Gaussian mixtures (i.e., a weakly identifiable class) as well as Proposition \ref{proposition-specificvalueoverliner}
These proofs carry important insights underlying the theory
--- they are organized in a sequence of steps to help the reader.
For other density classes (e.g., second order identifiable,
Gamma and skew-Gaussian classes) the proofs are similar in spirit to these two, 
but they are of interest in their own right due to special 
and rich structures of each density class. 
Due to space constraints the proofs for these and all other theorems
are deferred to the Appendix.

\subsection{Strong identifiability in exact-fitted mixtures}
\paragraph{PROOF OF THEOREM~\ref{theorem-firstorder}}

%For any $G \in \mathcal{E}_{k_{0}}(\Theta \times \Omega)$, define $\psi_{1}(G,G_{0})=\dfrac{V(p_{G},p_{G_{0}})}{W_{1}(G,G_{0})}$ if $G \neq G_{0}$ and $\infty$ otherwise. Now, to obtain the conclusion of Theorem \ref{theorem-firstorder}, it is sufficient to prove that
%Let $\Psi(G,G_0):= \sup_{x}|p_G(x)-p_{G_0}(x)|/W_{1}(G,G_{0})$. We shall first prove that
It suffices to show that
\begin{eqnarray}
\mathop {\lim }\limits_{\epsilon \to 0}{\mathop {\inf }
{\biggr \{ V(p_{G},p_{G_0}/W_1(G,G_0) | W_1(G,G_0) \leq \epsilon 
\biggr \}}}>0, \label{eq:theorem1}
\end{eqnarray}
where the infimum is taken over all $G \in \mathcal{E}_{k_{0}}(\Theta \times \Omega)$.

\paragraph{Step 1.} Suppose that~\eqref{eq:theorem1} does not hold, which implies that we have sequence
of $G_{n}=\mathop {\sum }\limits_{i=1}^{k_{0}}{p_{i}^{n}\delta_{(\theta_{i}^{n},\Sigma_{i}^{n})}} \in \mathcal{E}_{k_{0}}(\Theta \times \Omega)$ converging to $G_{0}$ in $W_{1}$ distance  
such that $V(p_{G_n},p_{G_0})/W_1(G_{n},G_{0}) \to 0$ as $n \to \infty$. 
As $W_{1}(G_{n},G_{0}) \to 0$, the support points of $G_n$ must converge to that
of $G_0$. By permutation of the labels $i$, 
it suffices to assume that for each $i=1,\ldots, k_0$, 
$(\theta_i^n,\Sigma_i^n) \rightarrow (\theta_i^0,\Sigma_i^0)$. 
For each pair $(G_n,G_0)$, 
let $\{q_{ij}^n \}$ denote the corresponding probabilities of the
optimal coupling for $(G_n,G_0)$ pair, so we can write:
\[W_{1}(G_{n},G_{0})=
\sum_{1 \leq i,j \leq k_{0}} 
q_{ij}^{n}(\|\theta_{i}^{n}-\theta_{j}^{0}\|+\|\Sigma_{i}^{n}-\Sigma_{j}^{0}\|).\] 

\comment{
we can find a subsequence of $G_{n}$ 
such that each of the support point of $G_{0}$ will have exactly one support point of $G_{n}$ converges to. To avoid notational cluttering, we replace that subsequence by the whole sequence $G_{n}$ and assume that $(\theta_{i}^{n},\Sigma_{i}^{n}) \to (\theta_{i}^{0},\Sigma_{i}^{0})$ for all $1 \leq i \leq k_{0}.
An application of Fatou's lemma yields that ${\displaystyle \int {\mathop {\lim\inf }\limits_{n \to \infty}{\dfrac{|p_{G_{n}}(x)-p_{G_{0}}(x)|}{W_{1}(G_{n},G_{0})}}}dx=0}$. Therefore, $\mathop {\lim\inf }\limits_{n \to \infty}{\dfrac{|p_{G_{n}}(x)-p_{G_{0}}(x)|}{W_{1}(G_{n},G_{0})}}=0$ for almost all $x \in \mathcal{X}$. As a consequence, for almost each $x \in \mathcal{X}$, we can find corresponding subsequence 
$\left\{G_{n_{l,x}}\right\}_{l \geq 1}$ depending on $x$ of $\left\{G_{n}\right\}_{n \geq 1}$ such that 
$|p_{G_{n_{l,x}}}(x)-p_{G_{0}}(x)|/
W_{1}(G_{n_{l,x}},G_{0}) \to 0$ 
as $l \to \infty$. For fixed $x \in \mathcal{X}$, to avoid notational cluttering
in later arguments, we replace the subsequence 
$\left\{G_{n_{l,x}}\right\}_{l \geq 1}$ by $\left\{G_{n}\right\}_{n \geq 1}$. 
Now, there exists an optimal plan (cf.~\citep{Villani-2009})
$\left\{q_{ij}^{n}\right\} \in \mathcal{Q}(p_{n},p_{n}')$ so that 
$W_{1}(G_{n},G_{0})=\mathop {\sum }\limits_{i,j}{q_{ij}^{(n)}(\|\theta_{i}^{n}-\theta_{j}^{0}\|+\|\Sigma_{i}^{n}-\Sigma_{j}^{0}\|)}$ where $1 \leq i,j \leq k_{0}.
}
%
% Since $(\theta_{i}^{n},\Sigma_{i}^{n}) \to (\theta_{i}^{0},\Sigma_{i}^{0})$ and 
% $\left\{q_{ij}^{n}\right\}$ is the optimal plan, when $n$ is sufficiently large , 
% $q_{ii}^{n}=\mathop {\min }{\left\{p_{i}^{n},p_{i}^{0}\right\}}$ for all $1 \leq i \leq k_{0}$. 
% Therefore, 
%
Since $G_n$ and $G_0$ have the same number of support points, it is an easy observation
that for sufficiently large $n$, $q_{ii}^{n} = \min (p_i^n,p_i^0)$. And so,
$\sum_{i\neq j} q_{ij}^n = \sum_{i=1}^{k_0} |p_i^n-p_i^0|$. 
Adopting the notations that
$\Delta \theta_{i}^{n} := \theta_{i}^{n}-\theta_{i}^{0}$,
$\Delta \Sigma_{i}^{n} := \Sigma_{i}^{n}-\Sigma_{i}^{0}$, and
$\Delta p_{i}^{n} := p_{i}^{n}-p_{i}^{0}$ for all $1 \leq i \leq k_{0}$, we have
\begin{eqnarray}
W_{1}(G_{n},G_{0}) & = & \mathop {\sum }\limits_{i=1}^{k_{0}}{q_{ii}^{n}
(\|\theta_{i}^{n}-\theta_{i}^{0}\|+\|\Sigma_{i}^{n}-\Sigma_{i}^{0}\|)}+\mathop {\sum }\limits_{i \neq j}{q_{ij}^{n}(\|\theta_{i}^{n}-\theta_{j}^{0}\|+\|\Sigma_{i}^{n}-\Sigma_{j}^{0}\|)}\nonumber \\
% & \lesssim & \mathop {\sum }\limits_{i=1}^{k_{0}}{p_{i}^{n}(\|\Delta \theta_{i}^{n}\|+\|\Delta \Sigma_{i}^{n}\|)}+\mathop {\sum }\limits_{i \neq j}{q_{ij}^{n}} \nonumber \\
& \lesssim & 
\mathop {\sum }\limits_{i=1}^{k_{0}}{p_{i}^{n}(\|\Delta \theta_{i}^{n}\|+\|\Delta \Sigma_{i}^{n}\|)+|\Delta p_{i}^{n}|} =: d(G_n,G_0). \nonumber
\end{eqnarray}
The inequality in the above display is due to
$q_{ii}^{n} \leq p_{i}^{n}$, and the observation that
$\|\theta_{i}^{n}-\theta_{j}^{0}\|, \|\Sigma_{i}^{n}-\Sigma_{i}^{0}\|$ are 
bounded for all $1 \leq i,j \leq k_{0}$ for sufficiently large $n$. 
Thus, we have $V(p_{G_n},p_{G_0})/d(G_{n},G_{0}) \to 0$. 

\paragraph{Step 2.} Now, consider the following important identity:
\begin{eqnarray}
p_{G_{n}}(x) - p_{G_{0}}(x)
= \mathop {\sum }\limits_{i=1}^{k_{0}}{\Delta p_{i}^{n}f(x|\theta_{i}^{0},\Sigma_{i}^{0})}+ \mathop {\sum }\limits_{i=1}^{k_{0}}{p_{i}^{n}(f(x|\theta_{i}^{n},\Sigma_{i}^{n})-f(x|\theta_{i}^{0},\Sigma_{i}^{0}))}. \nonumber
\end{eqnarray}
For each $x$, applying Taylor expansion to function $f$ to the first order to obtain
\begin{eqnarray}
\mathop {\sum }\limits_{i=1}^{k_{0}}{p_{i}^{n}(f(x|\theta_{i}^{n},\Sigma_{i}^{n})-f(x|\theta_{i}^{0},\Sigma_{i}^{0})} = \mathop {\sum }\limits_{i=1}^{k_{0}}{p_{i}^{n}\left[(\Delta \theta_{i}^{n})^{T}\dfrac{\partial{f}}{\partial{\theta}}(x|\theta_{i}^{0},\Sigma_{i}^{0})+\trace\left(\dfrac{\partial{f}}{\partial{\Sigma}}(x|\theta_{i}^{0},\Sigma_{i}^{0})^{T}\Delta \Sigma_{i}^{n}\right)\right]} \nonumber \\
+ R_{n}(x), \nonumber
\end{eqnarray}
where $R_{n}(x)= O\left(\mathop {\sum }\limits_{i=1}^{k_{0}}{p_{i}^{n}(\|\Delta \theta_{i}^{n}\|^{1+\delta_{1}}+\|\Delta \Sigma_{i}^{n}\|^{1+\delta_{2}}})\right)$, where the appearance of $\delta_1$
and $\delta_2$ are due the assumed Lipschitz conditions, and the big-O constant does not depend on
$x$. It is clear that
%$p_{i}^{n}\|\Delta \theta_{i}^{n}\|^{1+\delta_{1}}/d(G_{n},G_{0}) \leq 
%\|\Delta \theta_{i}^{n}\|^{\delta_{1}} \to 0$ and 
%$p_{i}^{n}\|\Delta \Sigma_{i}^{n}\|^{1+\delta_{2}}/d(G_{n},G_{0}) \leq \|\Delta \Sigma_{i}^{n}\|^{\delta_{2}} \to 0$. 
$\sup_{x} |R_{n}(x)/d(G_{n},G_{0}| \to 0$ as $n \to \infty$.

Denote $A_{n}(x)= \mathop {\sum }\limits_{i=1}^{k_{0}}{p_{i}^{n}\left[(\Delta \theta_{i}^{n})^{T}\dfrac{\partial{f}}{\partial{\theta}}(x|\theta_{i}^{0},\Sigma_{i}^{0})+\trace\left(\dfrac{\partial{f}}{\partial{\Sigma}}(x|\theta_{i}^{0},\Sigma_{i}^{0})^{T}\Delta \Sigma_{i}^{n}\right)\right]}$ 
and \\
$B_{n}(x) = \mathop {\sum }\limits_{i=1}^{k}{\Delta p_{i}^{n}f(x|\theta_{i}^{0},\Sigma_{i}^{0})}$. Then, we can rewrite
\begin{eqnarray}
(p_{G_{n}}(x) - p_{G_{0}}(x))/d(G_{n},G_{0}) = (A_{n}(x)+ B_{n}(x) + R_{n}(x))/d(G_{n},G_{0}). \nonumber
\end{eqnarray}

\paragraph{Step 3.}
We see that $A_{n}(x)/d(G_{n},G_{0})$ and $B_{n}(x)/d(G_{n},G_{0})$ are the linear combination of the
scalar elements of $f(x|\theta,\Sigma)$, $\dfrac{\partial{f}}{\partial{\theta}}(x|\theta,\Sigma)$
and $\dfrac{\partial{f}}{\partial{\Sigma}}(x|\theta,\Sigma)$ such that the coefficients do not
depend on $x$. 
We shall argue that \emph{not} all such coefficients in the linear combination converge to 0 
as $n \to \infty$. Indeed, if the opposite is true, then the summation of the 
absolute values of these coefficients must also tend to 0:
\[\biggr \{
\sum_{i=1}^{k_0}|\Delta p_i^n | + p_i^n (\|\Delta \theta_i^n\|_1 + \|\Delta \Sigma_i^n\|_1)
\biggr \}/d(G_n,G) \rightarrow 0.\]
Since the entrywise $\ell_1$ and $\ell_2$ norms are equivalent, the above entails
$\biggr \{
\sum_{i=1}^{k_0}|\Delta p_i^n | + p_i^n (\|\Delta \theta_i^n\| + \|\Delta \Sigma_i^n\|)
\biggr \}/d(G_n,G_0) \rightarrow 0$,
which contradicts with the definition of $d(G_n,G_0)$.
As a consequence, we can find at least one coefficient of the elements of
$A_n(x)/d(G_n,G_0)$ or $B_n(x)/d(G_n,G_0)$ that does not vanish 
as $n \to \infty$. 

\paragraph{Step 4.} 
Let $m_{n}$ be the maximum of the absolute value of the scalar coefficients of 
$A_{n}(x)/d(G_{n},G_{0})$, $B_{n}(x)/d(G_{n},G_{0})$ and $d_{n}=1/m_{n}$, then 
$d_{n}$ is uniformly bounded from above for all $n$. 
Thus, as $n \to \infty$, $d_{n}A_{n}(x)/d(G_{n},G_{0}) \to \mathop {\sum }\limits_{i=1}^{k_{0}}{\beta_{i}^{T}\dfrac{\partial{f}}{\partial{\theta}}(x|\theta_{i}^{0},\Sigma_{i}^{0})}+\trace\left(\dfrac{\partial{f}}{\partial{\Sigma}}(x|\theta_{i}^{0},\Sigma_{i}^{0})^{T}\gamma_{i}\right)$ and $d_{n}B_{n}(x)/d(G_{n},G_{0}) \to \mathop {\sum }\limits_{i=1}^{k_{0}}{\alpha_{i}f(x|\theta_{i}^{0},\Sigma_{i}^{0})}$, 
such that \emph{not} all scalar elements of $\alpha_i,\beta_i$ and $\gamma_i$ vanish. Moreover,
$\gamma_{i}$ are symmetric matrices because
$\Sigma_{i}^{n}$ are symmetric matrices for all $n,i$. 
Note that
\begin{eqnarray*}
d_n V(p_{G_n},p_{G_0})/d(G_n,G_0) & = &
\int d_n |p_{G_n}(x)-p_{G_0}(x)|/d(G_n,G_0) \\
& = & \int d_n|A_n(x) + B_n(x) + R_n(x)|/d(G_n,G_0) \; \mathrm{d}x \rightarrow 0.
\end{eqnarray*}
By Fatou's lemma, the integrand in the above display vanishes for almost all $x$. Thus,
\begin{eqnarray}
\sum_{i=1}^{k_{0}}
\alpha_{i}f(x|\theta_{i}^{0},\Sigma_{i}^{0})+ \beta_{i}^{T} \dfrac{\partial{f}}
{\partial{\theta}}(x|\theta_{i}^{0},\Sigma_{i}^{0})+\trace \left(\dfrac{\partial{f}}{\partial{\Sigma}}(x|\theta_{i}^{0},\Sigma_{i}^{0})^{T}\gamma_{i}\right)=0 \ \ \text{for almost all} \ x. \nonumber
\end{eqnarray}
By the first-order identifiability criteria of $f$, we have $\alpha_{i}=0, \beta_{i}=\vec{0} \in \mathbb{R}^{d_{1}}$, and $\gamma_{i}=\vec{0} \in \mathbb{R}^{d_{2} \times d_{2}}$ for all $i=1,2,...,k$, which is a contradiction. Hence, \eqref{eq:theorem1} is proved. 
%Moreover, as a byproduct of our argument,
%we have also proved that
%\[\lim_{\epsilon \rightarrow 0} \inf_{G\in \mathcal{E}_{k_0}(\Theta\times\Omega)} 
%\biggr \{ \sup_{x}|p_G(x) - p_{G_0}(x)|/W_1(G,G_0): W_1(G,G_0) \leq \epsilon \biggr \}
%> 0.\]

%%%%%%%%%%%%%%%%%%%%%%%%%%%%%%%%%%%%%%%%%
%%%%%%%%%%%%%
%%%% Strong identifiability in over-fitted mixtures
%%%%%%%%%%%%%
%%%%%%%%%%%%%%%%%%%%%%%%%%%%%%%%%%%%%%%%%

\subsection{Strong identifiability in over-fitted mixtures}
\paragraph{PROOF OF THEOREM~\ref{theorem-secondorder}} 
(a) We only need to establish that
\begin{eqnarray}
\label{eqn:overfittedzero-pr}
\mathop {\lim }\limits_{\epsilon \to 0}{\mathop {\inf }\limits_{G \in \mathcal{O}_{k}(\Theta)}{\left\{\mathop {\sup }\limits_{x \in \mathcal{X}}{|p_{G}(x)-p_{G_{0}}(x)|}/W_{2}^{2}(G,G_{0}):W_{2}(G,G_{0}) \leq \epsilon \right\}}} > 0. 
\end{eqnarray}
The conclusion of the theorem follows from an application of Fatou's lemma in the same manner
as Step 4 in the proof of Theorem~\ref{theorem-firstorder}.
\paragraph{Step 1.} Suppose that~\eqref{eqn:overfittedzero-pr} does not hold, then 
we can find a sequence $G_{n} \in \mathcal{O}_{k}(\Theta)$ tending to
$G_{0}$ in $W_{2}$ distance and 
$\mathop {\sup }\limits_{x \in \mathcal{X}}{\left|p_{G_{n}}(x)- p_{G_{0}}(x)\right|}/W_{2}^{2}(G_{n},G_{0}) \to 0$ as $n \to \infty$. Since $k$ is finite,
there is some $k^* \in [k_0,k]$
such that there exists a subsequence of $G_{n}$ having exactly $k^{*}$ support points.
We cannot have $k^{*}=k_{0}$, due to Theorem~\ref{theorem-firstorder} and the fact that
$W_{2}^{2}(G_{n},G_{0}) \lesssim W_{1}(G_{n},G_{0})$ for all $n$. 
Thus, $k_{0}+1 \leq k^{*} \leq k$. 

Write $G_{n}=\mathop {\sum }\limits_{i=1}^{k^{*}}{p_{i}^{n}\delta_{(\theta_{i}^{n},\Sigma_{i}^{n})}}$ and $G_{0}=\mathop {\sum }\limits_{i=1}^{k_{0}}{p_{i}^{0}\delta_{(\theta_{i}^{0},\Sigma_{i}^{0})}}$. 
Since $W_{2}(G_{n},G_{0}) \to 0$, 
there exists a subsequence of $G_{n}$ such that each support point 
$(\theta_i^0,\Sigma_i^0)$ of
$G_{0}$ is the limit of a subset of $s_i \geq 1$ support points of $G_{n}$.
There may also a subset of support points of $G_{n}$ whose limits 
are not among the support points of $G_{0}$ --- we assume there are $m \geq 0$ such limit points.
To avoid notational cluttering, we replace the subsequence of $G_{n}$ by the whole sequence 
$\left\{G_{n}\right\}$. By re-labeling the support points, $G_n$ can be expressed by
\[G_{n}= \sum_{i=1}^{k_{0}+m} \sum_{j=1}^{s_{i}} p_{ij}^{n}
\delta_{(\theta_{ij}^{n},\Sigma_{ij}^{n})} 
\stackrel{W_2}{\longrightarrow} G_0 = \sum_{i=1}^{k_0+m} p_i^0
\delta_{(\theta_{i}^{0},\Sigma_{i}^{0})} \]
where $(\theta_{ij}^{n}, \Sigma_{ij}^{n}) \to (\theta_{i}^{0},\Sigma_{i}^{0})$ 
for each $i=1,\ldots,k_{0}+m$, $j=1,\ldots,s_i$,
$p_i^0 = 0$ for $i < k_0$, and 
we have that $p_{i\cdot}^n := \sum_{j=1}^{s_i}p_{ij}^n \rightarrow p_i^0$ for all $i$.
Moreover, the constraint $k_0+1 \leq \sum_{i=1}^{k_0+m} s_i \leq k$ must hold.

We note that if matrix $\Sigma$ is (strictly) positive definite whose maximum eigenvalue is bounded
(from above) by constant $M$, then $\Sigma$ is also bounded under the entrywise $\ell_2$ norm.
However if $\Sigma$ is only positive semidefinite, it can be singular and its $\ell_2$ norm
potentially unbounded. In our context, for $i \geq k_0+1$ it is possible that the limiting
matrices
$\Sigma_{i}^{0}$ can be singular.
It comes from the fact that the some eigenvalues of $\Sigma_{ij}^{n}$ can go to $0$ as 
$n \to \infty$, which implies $\textrm{det}(\Sigma_{ij}^{n}) \to 0$ and hence 
$\textrm{det}(\Sigma_{i}^{0})=0$. 
By re-labeling the support points, we may assume without loss of generality that
$\Sigma_{k_{0}+1}^{0},\ldots,\Sigma_{k_{0}+m_{1}}^{0}$ are (strictly) positive definite
matrices and $\Sigma_{k_{0}+m_{1}+1}^{0},\ldots,\Sigma_{k_{0}+m}^{0}$ 
are singular and positive semidefinite matrices for some $m_1 \in [0,m]$.
For those singular matrices, we shall make use of the assumption that
$\mathop {\lim }\limits_{\lambda_{1}(\Sigma) \to 0}{f(x|\theta,\Sigma)}=0$: 
accordingly, for each $x$, $f(x|\theta_{ij}^{n},\Sigma_{ij}^{n}) \to 0$ as $n \to \infty$ for all 
$k_{0}+m_{1}+1 \leq i \leq k_{0}+m, 1 \leq j \leq s_{i}$. 

\paragraph{Step 2.} Using shorthand notations 
$\Delta \theta_{ij}^{n} := \theta_{ij}^{n}-\theta_{i}^{0}$,
$\Delta \Sigma_{ij}^{n} := \Sigma_{ij}^{n} - \Sigma_{i}^{0}$ 
for $i=1,\ldots,k_{0}+m_{1}$ and $j=1,\ldots,s_i$, 
it is simple to see that
 \begin{eqnarray}
W_2^2(G_n,G_0) \lesssim
 d(G_{n},G_{0}) := 
\mathop {\sum }\limits_{i=1}^{k_{0}+m_{1}}{\mathop {\sum }\limits_{j=1}^{s_{i}}{p_{ij}^{n}(\|\Delta \theta_{ij}^{n}\|^{2}+\|\Delta \Sigma_{ij}^{n}\|^{2}})}+ \mathop {\sum }\limits_{i=1}^{k_{0}+m}{\left|p_{i.}^{n}-p_{i}^{0}\right|},
 \end{eqnarray}
because $W_2^2(G_n,G_0)$ is the optimal transport cost with respect to $\ell_2^2$,
while $d(G_n,G_0)$ corresponds to a multiple of the cost of a 
possibly non-optimal transport plan, which is achieved by coupling the 
atoms $(\theta_{ij}^n,\Sigma_{ij}^{n})$ for $j=1,\ldots,s_i$
with $(\theta_i^0,\Sigma_i^0)$ by mass $\min(p_{i\cdot}^n, p_i^0)$, while the 
remaining masses are coupled arbitrarily.
\comment{
As $G_{n},G_{0}$ are both finite discrete probability measures, from (cf.~\citep{Villani-2009}), we can find an optimal plan $q^{n}=\left\{q_{ij}^{n}\right\}_{i,j}$ such that $W_{2}^{2}(G_{n},G_{0})=\mathop {\sum }\limits_{i,j}{q_{ij}^{n}(\|\theta_{i}^{n}-\theta_{j}^{0}\|^{2}+\|\Sigma_{i}^{n}-\Sigma_{j}^{0}\|^{2})}$. Denote $M_{1}=\text{diam}(\Theta)$ and $M_{2}=\text{diam}(\Omega)$, then with the same argument as that of Theorem \ref{theorem-firstorder}, as $n$ is sufficiently large, $W_{2}^{2}(G_{n},G_{0}) \leq (M_{1}+M_{2}) d(G_{n},G_{0})$.

\begin{eqnarray}
 W_{2}^{2}(G_{n},G_{0})= \mathop {\sum }\limits_{i=1}^{k_{0}}{\mathop {\sum }\limits_{j=s_{i}+1}^{s_{i+1}}{q_{ij}^{(n)}\|\theta_{j}^{n}-\theta_{i}^{0}\|^{2}}} + R \leq  \mathop {\sum }\limits_{i=1}^{k_{0}}{\mathop {\sum }\limits_{j=s_{i}+1}^{s_{i+1}}{p_{i}^{n}\|\theta_{j}^{n}-\theta_{i}^{0}\|^{2}}} + R, \nonumber 
 \end{eqnarray}
 where we can check that $R \leq M \mathop {\sum }\limits_{i=1}^{m}{\mathop {\sum }\limits_{j=\tau_{i}+1}^{\tau_{i+1}}{p_{j}^{n}}}+M \mathop {\sum }\limits_{i=1}^{k_{0}}{\left|\mathop {\sum }\limits_{j=s_{i}+1}^{s_{i+1}}{p_{j}^{n}}-p_{i}^{0} \right|} + \mathop {\sum }\limits_{i=1}^{m}{\mathop {\sum }\limits_{j= \tau_{i}+1}^{\tau_{i+1}}{p_{j}^{n}\|\theta_{j}^{n}-\theta_{i}^{*}\|^{2}}}$. Therefore, we obtain $W_{2}^{2}(G_{n},G_{0}) \leq M d(G_{n},G_{0})$ as $n$ is sufficiently large.
 
 It implies that 
 \begin{eqnarray}
 \lim \limits_{n \to \infty} {d(G_{n},G_{0})/W_{2}^{2}(G_{n},G_{0})} \geq 1/(M_{1}+M_{2}). \nonumber
 \end{eqnarray}
}
Since $\mathop {\sup }\limits_{x \in \mathcal{X}}{\left|p_{G_{n}}(x) - p_{G_{0}}(x)\right|}/W_{2}^{2}(G_{n},G_{0})$ vanishes in the limit, so does
$\mathop {\sup }\limits_{x \in \mathcal{X}}{\left|p_{G_{n}}(x)- p_{G_{0}}(x)\right|}/d(G_{n},G_{0})$.

For each $x$, we make use of the key identity:
\begin{eqnarray}
p_{G_{n}}(x) - p_{G_{0}}(x) & = & 
\sum_{i=1}^{k_{0}+m_{1}}
\sum_{j=1}^{s_{i}}
p_{ij}^{n}(f(x|\theta_{ij}^{n},\Sigma_{ij}^{n})- f(x|\theta_{i}^{0},\Sigma_{i}^{0}))+ 
\sum_{i=1}^{k_{0}+m_{1}}(p_{i.}^{n}-p_{i}^{0})f(x|\theta_{i}^{0},\Sigma_{i}^{0}) \nonumber \\
& & + 
\sum_{i=k_{0}+m_{1}+1}^{k_{0}+m}
\sum_{j=1}^{s_{i}} p_{ij}^{n} f(x|\theta_{ij}^{n},\Sigma_{ij}^{n}) \nonumber \\
&:= & A_{n}(x)+B_{n}(x)+C_{n}(x). \label{eqn:overfittedone}
\end{eqnarray}

\paragraph{Step 3.}
By means of Taylor expansion up to the second order:
\begin{eqnarray}
A_{n}(x)=\mathop {\sum }\limits_{i=1}^{k_{0}+m_{1}}{\mathop {\sum }\limits_{j=1}^{s_{i}}{p_{ij}^{n}(f(x|\theta_{ij}^{n},\Sigma_{ij}^{n})-f(x|\theta_{i}^{0},\Sigma_{i}^{0}))}}  =  \mathop {\sum }\limits_{i=1}^{k_{0}+m_{1}}{\mathop {\sum }\limits_{\alpha} A_{\alpha_{1},\alpha_{2}}^{n}(\theta_{i}^{0},\Sigma_{i}^{0})}+R_{n}(x), \nonumber 
 \end{eqnarray}
where $\alpha=(\alpha_{1},\alpha_{2})$ such that 
$\alpha_{1}+\alpha_{2} \in \left\{1,2\right\}$. Specifically,
\begin{eqnarray}
A_{1,0}^{n}(\theta_{i}^{0},\Sigma_{i}^{0}) & = & \mathop {\sum }\limits_{j=1}^{s_{i}}{p_{ij}^{n}
(\Delta \theta_{ij}^{n}})^{T}\dfrac{\partial{f}}{\partial{\theta}}(x|\theta_{i}^{0},\Sigma_{i}^{0}), \nonumber \\
A_{0,1}^{n}(\theta_{i}^{0},\Sigma_{i}^{0}) & = & \mathop {\sum }\limits_{j=1}^{s_{i}}{p_{ij}^{n} \trace \left(\dfrac{\partial{f}}{\partial{\Sigma}}(x|\theta_{i}^{0},\Sigma_{i}^{0})^{T} \Delta \Sigma_{ij}^{n} \right)}, \nonumber \\
A_{2,0}^{n}(\theta_{i}^{0},\Sigma_{i}^{0}) & = & \dfrac{1}{2}\mathop {\sum }\limits_{j=1}^{s_{i}}{p_{ij}^{n}(\Delta \theta_{ij}^{n})^{T} \dfrac{\partial^{2}{f}}{\partial{\theta}^{2}}(x|\theta_{i}^{0},\Sigma_{i}^{0}) \Delta \theta_{ij}^{n}}, \nonumber \\
A_{0,2}^{n}(\theta_{i}^{0},\Sigma_{i}^{0}) & = & \dfrac{1}{2}\mathop {\sum }\limits_{j=1}^{s_{i}}{p_{ij}^{n}\trace\left(\dfrac{\partial{}}{\partial{\Sigma}}\left(\trace\left(\dfrac{\partial{}}{\partial{\Sigma}}(x|\theta_{i}^{0},\Sigma_{i}^{0})^{T}\Delta \Sigma_{ij}^{n}\right)\right)^{T} \Delta \Sigma_{ij}^{n}\right)}, \nonumber \\
A_{1,1}^{n}(\theta_{i}^{0},\Sigma_{i}^{0}) & = & 2\mathop {\sum }\limits_{j=1}^{s_{i}}{(\Delta \theta_{ij}^{n})^{T} \left[\dfrac{\partial{}}{\partial{\theta}}\left(\trace\left(\dfrac{\partial{f}}{\partial{\Sigma}}(x|\theta_{i}^{0},\Sigma_{i}^{0})^{T} \Delta \Sigma_{ij}^{n} \right)\right)\right]}. \nonumber
\end{eqnarray}
In addition, $R_{n}(x)=O \biggr (\sum_{i=1}^{k_{0}+m_{1}} \sum_{j=1}^{s_{i}}
p_{ij}^{n}(\|\Delta \theta_{ij}^{n}\|^{2+\delta}+\|\Delta \Sigma_{ij}^{n}\|^{2+\delta}) \biggr)$
due to the second-order Lipschitz condition. 
It is clear that $\sup_{x}|R_{n}(x)|/d(G_{n},G_{0}) \to 0$ as $n \to \infty$. 

\paragraph{Step 4.}
Write $D_n := d(G_n,G_0)$ for short. Note that 
$(p_{G_{n}}(x)-p_{G_{0}}(x))/D_{n}$ is a linear combination of
the scalar elements of $f(x|\theta,\Sigma)$ and its derivatives taken with respect to
$\theta$ and $\Sigma$ up to the second order, and evaluated at the distinct pairs
$(\theta_i^0,\Sigma_i^0)$ for $i=1,\ldots,k_0+m_1$. (To be specific, the elements of
$f(x|\theta,\Sigma),\dfrac{\partial{f}}{\partial{\theta}}(x|\theta,\Sigma)$, $\dfrac{\partial{f}}{\partial{\Sigma}}(x|\theta,\Sigma)$, $\dfrac{\partial^{2}{f}}{\partial{\theta}^{2}}(x|\theta,\Sigma)$, $\dfrac{\partial^{2}{f}}{\partial{\theta}^{2}}(x|\theta,\Sigma),\dfrac{\partial^{2}{f}}{\partial{\Sigma}^{2}}(x|\theta,\Sigma)$, and $\dfrac{\partial^{2}{f}}{\partial{\theta}\partial{\Sigma}}(x|\theta,\Sigma)$).
In addition, the coefficients
associated with these elements do not depend on $x$.
As in the proof of Theorem~\ref{theorem-firstorder}, we shall argue that \emph{not} all
such coefficients vanish as $n \to \infty$. 
Indeed, if this is not true, then by taking the summation of all the absolute value of 
the coefficients associated with the elements of 
$\dfrac{\partial^{2}{f}}{\partial{\theta}^{2}_{l}}(x|\theta)$ as $1 \leq l \leq d_{1}$ 
and $\dfrac{\partial^{2}{f}}{\partial{\Sigma}^{2}_{uv}}$ for $1 \leq u,v \leq d_{2}$, we obtain
\begin{eqnarray}
\mathop {\sum }\limits_{i=1}^{k_{0}+m_{1}}{\mathop {\sum }\limits_{j=1}^{s_{i}}{p_{ij}^{n}(\|\Delta \theta_{ij}^{n}\|^{2}+\|\Delta  \Sigma_{ij}\|^{2})}} / D_{n} \to 0. \nonumber
\end{eqnarray}
Therefore, $\mathop {\sum }\limits_{i=1}^{k_{0}+m}{|p_{i.}^{n}-p_{i}^{0}|}/D_{n} \to 1$ as $n \to \infty$. It implies that we should have at least one coefficient associated with
a $f(x|\theta)$ (appearing in $B_n(x)/D_n$) does not converge to 0 as $n \to \infty$, 
which is a contradiction. As a consequence, not all the coefficients 
vanish to $0$.

\paragraph{Step 5.}
Let $m_{n}$ be the maximum of the absolute value of the aforementioned coefficients.
%$f(x|\theta,\Sigma),\dfrac{\partial{f}}{\partial{\theta}}(x|\theta,\Sigma)$, $\dfrac{\partial{f}}{\partial{\Sigma}}(x|\theta,\Sigma)$, $\dfrac{\partial^{2}{f}}{\partial{\theta}^{2}}(x|\theta,\Sigma)$, $\dfrac{\partial^{2}{f}}{\partial{\theta}^{2}}(x|\theta,\Sigma),\dfrac{\partial^{2}{f}}{\partial{\Sigma}^{2}}(x|\theta,\Sigma)$, and $\dfrac{\partial^{2}{f}}{\partial{\theta}\partial{\Sigma}}(x|\theta,\Sigma)$ 
and set $d_{n}=1/m_{n}$. Then, $d_{n}$ is uniformly bounded above when $n$ is sufficiently large. 
Therefore, as $n \to \infty$, we obtain 
\begin{eqnarray}
d_{n}B_{n}(x)/D_{n} & \to & \mathop {\sum }\limits_{i=1}^{k_{0}+m_{1}}{\alpha_{i}f(x|\theta_{i}^{0},\Sigma_{i}^{0})}, \nonumber \\
d_{n}\mathop {\sum }\limits_{i=1}^{k_{0}+m_{1}}{A_{1,0}^{n}(\theta_{i}^{0},\Sigma^{0})}/D_{n} & \to & \mathop {\sum }\limits_{i=1}^{k_{0}+m_{1}}{\beta_{i}^{T}\dfrac{\partial{f}}{\partial{\theta}}(x|\theta_{i}^{0},\Sigma_{i}^{0})}, \nonumber \\ 
d_{n}\mathop {\sum } \limits_{i=1}^{k_{0}+m_{1}}{A_{0,1}^{n}(\theta_{i}^{0},\Sigma_{i}^{0})}/D_{n} & \to &\mathop {\sum }\limits_{i=1}^{k_{0}+m_{1}}{\trace\left(\dfrac{\partial{f}}{\partial{\Sigma}}(x|\theta_{i}^{0},\Sigma_{i}^{0})^{T}\gamma_{i}\right)}, \nonumber \\
d_{n}\mathop {\sum } \limits_{i=1}^{k_{0}+m_{1}}{A_{2,0}^{n}(\theta_{i}^{0},\Sigma_{i}^{0})}/D_{n} & \to & \mathop {\sum }\limits_{i=1}^{k_{0}+m_{1}}{\mathop {\sum }\limits_{j=1}^{s_{i}}{\nu_{ij}^{T}\dfrac{\partial^{2}{f}}{\partial{\theta}^{2}}(x|\theta_{i}^{0},\Sigma_{i}^{0})\nu_{ij}}}, \nonumber \\
 d_{n}\mathop {\sum } \limits_{i=1}^{k_{0}+m_{1}}{A_{0,2}^{n}(\theta_{i}^{0},\Sigma_{i}^{0})}/D_{n} & \to & \mathop {\sum }\limits_{i=1}^{k_{0}+m_{1}}{\mathop {\sum }\limits_{j=1}^{s_{i}}{\trace\left(\dfrac{\partial{}}{\partial{\Sigma}}\left(\trace\left(\dfrac{\partial{f}}{\partial{\Sigma}}(x|\theta_{i}^{0},\Sigma_{i}^{0})^{T}\eta_{ij}\right)\right)^{T}\eta_{ij}\right)}}, \nonumber \\
 d_{n}\mathop {\sum } \limits_{i=1}^{k_{0}+m_{1}}{A_{1,1}^{n}(\theta_{i}^{0},\Sigma_{i}^{0})}/D_{n} & \to & \mathop {\sum }\limits_{i=1}^{k_{0}+m_{1}}{\mathop {\sum }\limits_{j=1}^{s_{i}}{\nu_{ij}^{T}\left[\dfrac{\partial{}}{\partial{\theta}}\left(\trace\left(\dfrac{\partial{}}{\partial{\Sigma}}(x|\theta_{i}^{0},\Sigma_{i}^{0})^{T}\eta_{ij}\right)\right)\right]}}, \nonumber
 \end{eqnarray}
 where $\alpha_{i} \in \mathbb{R}, \beta_{i},\nu_{i1},\ldots,\nu_{is_{i}} \in \mathbb{R}^{d_{1}}$, $\gamma_{i}, \eta_{i1}, \ldots, \eta_{is_{i}}$ are symmetric matrices in $\mathbb{R}^{d_{2} \times d_{2}}$ for all $1 \leq i \leq k_{0}+m_{1}, 1 \leq j \leq s_{i}$. Additionally, $d_{n}C_{n}(x)/D_{n}=D_{n}^{-1}\mathop {\sum }\limits_{i=k_{0}+m_{1}+1}^{k_{0}+m}{\mathop {\sum }\limits_{j=1}^{s_{i}}{d_{n}p_{ij}^{n}f(x|\theta_{ij}^{n},\Sigma_{ij}^{n})}} \to 0$ due to the fact that $f(x|\theta_{ij}^{n},\Sigma_{ij}^{n}) \to 0$ for all $k_{0}+m_{1}+1 \leq i \leq k_{0}+m, 1 \leq j \leq s_{i}$. As a consequence, we obtain for all $x$ that
 \begin{eqnarray}
\sum_{i=1}^{k_{0}+m_{1}} \biggr \{ \alpha_{i}f(x|\theta_{i}^{0},\Sigma_{i}^{0}) +
\beta_{i}^{T} \dfrac{\partial{f}}{\partial{\theta}}(x|\theta_{i}^{0},\Sigma_{i}^{0}) +
\mathop {\sum }\limits_{j=1}^{s_{i}}{\nu_{ij}^{T} \dfrac{\partial^{2}{f}}
{\partial{\theta}^{2}} (x|\theta_{i}^{0},\Sigma_{i}^{0}) \nu_{ij}} & + &  \notag \\
\trace \left(\dfrac{\partial{f}}{\partial{\Sigma}}(x|\theta_{i}^{0},\Sigma_{i}^{0})^{T}
\gamma_{i}\right ) 
+ 2\mathop {\sum }\limits_{j=1}^{s_{i}}{\nu_{ij}^{T}\left[\dfrac{\partial}{\partial{\theta}}\left(\trace \left(\dfrac{\partial{f}}{\partial{\Sigma}}(x|\theta_{i}^{0},\Sigma_{i}^{0})^{T}\eta_{ij}\right)\right)\right]}  & + & \notag \\
 \mathop {\sum }\limits_{j=1}^{s_{i}}{\trace \left(\dfrac{\partial{}}{\partial{\Sigma}}\left(\trace\left(\dfrac{\partial{f}}{\partial{\Sigma}}(x|\theta_{i}^{0},\Sigma_{i}^{0})^{T}\eta_{ij}\right)\right)^{T}\eta_{ij}\right) }
\biggr \}
& = & 0. \nonumber
 \end{eqnarray}
From the second-order identifiability of $\left\{f(x|\theta,\Sigma),\theta \in \Theta,\Sigma \in \Omega \right\}$, we obtain $\alpha_{i}=0,\beta_{i}=\nu_{i1}=\ldots=\nu_{is_{i}}=\vec{0} \in \mathbb{R}^{d_{1}}, \gamma_{i}=\eta_{i1}=\ldots=\eta_{is_{i}}=\vec{0} \in \mathbb{R}^{d_{2} \times d_{2}}$ for all $1 \leq i \leq k_{0}+m_{1}$, which is a contradiction to the fact that not all coefficients go to 0 as $n \to \infty$. This concludes the proof of Eq.~\eqref{eqn:overfittedzero-pr} and that of the theorem.

(b) Recall
$G_{0}=\mathop {\sum }\limits_{i=1}^{k_{0}}{p_{i}^{0}\delta_{(\theta_{i}^{0},\Sigma_{i}^{0})}}$. 
Construct a sequence of probability measures $G_n$ having exactly $k_0+1$ support points as follows:
$G_{n}=\mathop {\sum }\limits_{i=1}^{k_{0}+1}{p_{i}^{n}\delta_{(\theta_{i}^{n},\Sigma_{i}^{n})}}$,
where
$\theta_{1}^{n}=\theta_{1}^{0}-\dfrac{1}{n}\vec{1}_{d_{1}}, 
\theta_{2}^{n}=\theta_{1}^{0}+\dfrac{1}{n}\vec{1}_{d_{1}}, 
\Sigma_{1}^{n}=\Sigma_{1}^{0}-\dfrac{1}{n}I_{d_{2}}$ and 
$\Sigma_{2}^{n}=\Sigma_{1}^{0}+\dfrac{1}{n}I_{d_{2}}$. Here,
$I_{d_{2}}$ denotes identity matrix in $\mathbb{R}^{d_{2} \times d_{2}}$ and 
$\vec{1}_{n}$ a vector with all elements being equal to 1.
In addition, $(\theta_{i+1}^{n},\Sigma_{i+1}^{n}) = (\theta_{i}^{0},\Sigma_{i}^{0})$ 
for all $i=2,\ldots, k_{0}$. Also,
$p_{1}^{n}=p_{2}^{n}=\dfrac{p_{1}^{0}}{2}$ and $p_{i+1}^{n}=p_{i}^{0}$ 
for all $i=2,\ldots,  k_{0}$. It is simple to verify that
$E_{n} := W_{1}^{r}(G_{n},G_{0}) = \dfrac{(p_{1}^{0})^{r}}{2^{r}}(\|\theta_{1}^{n}-\theta_{1}^{0}\|+\|\theta_{2}^{n}-\theta_{2}^{0}\|+\|\Sigma_{1}^{n}-\Sigma_{1}^{0}\|+\|\Sigma_{2}^{n}-\Sigma_{1}^{0}\|)^{r}=\dfrac{(p_{1}^{0})^{r}}{2^{r}}(\sqrt{d_{1}}+\sqrt{d_{2}})^{r}\dfrac{1}{n^{r}} \asymp \dfrac{1}{n^{r}}$. 

By means of Taylor's expansion up to the first order, we get that as $n \to \infty$
\begin{eqnarray}
V(p_{G_{n}},p_{G_{0}})  & = & \dfrac{p_{1}^{0}}{2} \int_{x \in \mathcal{X}}
\left |
\mathop {\sum }\limits_{i=1}^{2}{\mathop {\sum }\limits_{\alpha_{1},\alpha_{2}}{(\Delta \theta_{1i}^{n})^{\alpha_{1}}(\Delta \Sigma_{1i}^{n})^{\alpha_{2}}\dfrac{\partial{f}}{\partial{\theta^{\alpha_{1}}}\partial{\Sigma^{\alpha_{2}}}}(x|\theta_{1}^{0},\Sigma_{1}^{0})+R_{1}(x)}}
\right| \; \textrm{d} x \nonumber \\
& = & \int \limits_{x \in \mathcal{X}} |R_1(x)| \; \textrm{d} x, \nonumber
\end{eqnarray}
where $\alpha_{1} \in \mathbb{N}^{d_{1}},\alpha_{2} \in \mathbb{N}^{d_{2} \times d_{2}}$ in the sum 
such that $|\alpha_{1}|+|\alpha_{2}|=1$, $R_{1}$ is Taylor expansion's remainder. 
The second equality in the above equation is due to 
$\mathop {\sum }\limits_{i=1}^{2}{(\Delta \theta_{1i}^{n})^{\alpha_{1}}(\Delta \Sigma_{1i}^{n})^{\alpha_{2}}}=0$ for each $\alpha_{1},\alpha_{2}$ such that $|\alpha_{1}|+|\alpha_{2}|=1$. Since $f$ is second-order differentiable with respect to $\theta,\Sigma$, $R_{1}(x)$ takes the form
\begin{eqnarray}
R_{1}(x)=\mathop {\sum }\limits_{i=1}^{2}{\mathop {\sum }\limits_{|\alpha|=2}{\dfrac{2}{\alpha !} (\Delta \theta_{1i}^{n})^{\alpha_{1}}(\Delta \Sigma_{1i}^{n})^{\alpha_{2}} \int \limits_{0}^{1}{(1-t)\dfrac{\partial^{2}{f}}{\partial{\theta^{\alpha_{1}}}\partial{\Sigma^{\alpha_{2}}}}(x|\theta_{1}^{0}+t\Delta \theta_{1i}^{n}, \Sigma_{1}^{0}+t \Delta \Sigma_{1i}^{n})}dt}}, \nonumber
\end{eqnarray}
where $\alpha=(\alpha_{1},\alpha_{2})$. 
Note that, $\mathop {\sum }\limits_{i=1}^{2}{|\Delta_{1i}^{n}|^{\alpha_{1}}|\Delta \Sigma_{1i}^{n}|^{\alpha_{2}}}
= O(n^{-2})$. Additionally, from the hypothesis, ${\displaystyle \mathop {\sup }\limits_{t \in [0,1]}{\int \limits_{x \in \mathcal{X}}{\left|\dfrac{\partial^{2}{f}}{\partial{\theta^{\alpha_{1}}}\partial{\Sigma^{\alpha_{2}}}}(x|\theta_{1}^{0}+t\Delta \theta_{1i}^{n}, \Sigma_{1}^{0}+t \Delta \Sigma_{1i}^{n})\right|}dx}} < \infty$. 
\comment{
As a consequence, from triangle inequality
\begin{eqnarray}
\int \limits_{x \in \mathcal{X}}{\dfrac{|R_{1}(x)|}{W_{1}^{r}(G_{n},G_{0})}} \; \textrm{d}x & \leq & \mathop {\sum }\limits_{|\alpha|=2}{\mathop {\sum }\limits_{i=1}^{2}{\dfrac{2}{\alpha !} \dfrac{|\Delta \theta_{1i}^{n}|^{\alpha_{1}}|\Delta \Sigma_{1i}^{n}|^{\alpha_{2}}}{W_{1}^{r}(G_{n},G_{0})} \times }} \nonumber \\
& \times & \int \limits_{0}^{1}{(1-t)\int \limits_{x \in \mathcal{X}}{\left|\dfrac{\partial^{2}{f}}{\partial{\theta^{\alpha_{1}}}\partial{\Sigma^{\alpha_{2}}}}(x|\theta_{1}^{0}+t\Delta \theta_{1i}^{n}, \Sigma_{1}^{0}+t \Delta \Sigma_{1i}^{n})\right|}dx}dt \to 0, \nonumber
\end{eqnarray}
}
It follows that $\int |R_1(x)|\; \textrm{d}x = O(n^{-2})$. 
So for any $r<2$, $V(p_{G_n},p_{G_0}) = o(W_1^r(G_n,G_0))$. This concludes the proof.

(c) Continuing with the same sequence $G_{n}$ constructed in part (b), we have
\begin{eqnarray}
h^{2}(p_{G_{n},p_{G_{0}}}) \leq \dfrac{1}{2p_{1}^{0}}\int \limits_{x \in \mathcal{X}}{\dfrac{(p_{G_{n}}(x)-p_{G_{0}}(x))^{2}}{f(x|\theta_{1}^{0},\Sigma_{1}^{0})}} \; \textrm{d}x \lesssim \int \limits_{x \in \mathcal{X}}{\dfrac{R_{1}^{2}(x)}
{f(x|\theta_{1}^{0},\Sigma_{1}^{0})}}\; \textrm{d}x. \nonumber
\end{eqnarray}
where the first inequality is due to $\sqrt{p_{G_{n}}(x)}+\sqrt{p_{G_{0}}(x)}>\sqrt{p_{G_{0}}(x)} >\sqrt{p_{1}^{0}f(x|\theta_{1}^{0},\Sigma_{1}^{0})}$ and the second inequality is because of Taylor expansion 
taken to the first order. 
\comment{
From here, by means of Cauchy-Schwartz's inequality and Holder's inequality, we obtain
\begin{eqnarray}
\int \limits_{x \in \mathcal{X}}{\dfrac{R_{1}^{2}(x)}{W_{1}^{2r}(G_{n},G_{0})f(x|\theta_{1}^{0},\Sigma_{1}^{0})}}dx & \lesssim & \mathop {\sum }\limits_{|\alpha|=2}{\mathop {\sum }\limits_{i=1}^{2}{\dfrac{2}{\alpha !} \dfrac{|\Delta \theta_{1i}^{n}|^{2\alpha_{1}}|\Delta \Sigma_{1i}^{n}|^{2\alpha_{2}}}{W_{1}^{2r}(G_{n},G_{0})} \times }} \nonumber \\
& \times & \int \limits_{0}^{1}{(1-t)\int \limits_{x \in \mathcal{X}}{\dfrac{\left(\dfrac{\partial^{2}{f}}{\partial{\theta^{\alpha_{1}}}\partial{\Sigma^{\alpha_{2}}}}(x|\theta_{1}^{0}+t\Delta \theta_{1i}^{n}, \Sigma_{1}^{0}+t \Delta \Sigma_{1i}^{n})\right)^{2}}{f(x|\theta_{1}^{0},\Sigma_{1}^{0})}}dx}dt, \nonumber
\end{eqnarray}
which converges to 0. Therefore, we obtain the conclusion of part (c) of our theorem.
}
The proof proceeds in the same manner as that of part (b).

%%%%%%%%%%%%%%%%%%%%%%%%%%%%%%%%%%%%%%%%%%%%%%%%%%%%%%%%%%%%%%%%%%%%%%%%%%%%
%%% Gaussian mixtures
%%%%%%%%%%%%%%%%%%%%%%%%%%%%%%%%%%%%%%%%%%%%%%%%%%%%%%%%%%%%%%%%%%%%%%%%%%%%

\subsection{Proofs for over-fitted Gaussian mixtures}
\paragraph{Proof of Theorem~\ref{theorem:generaloverfittedGaussian}.}

For the ease of exposition, we consider the setting of univariate location-scale Gaussian distributions,
i.e., both $\theta$ and $\Sigma=\sigma^{2}$ are scalars.
The proof for general $d \geq 1$ is pretty similar and can be found in Appendix II. 
Let $v=\sigma^{2}$, so we write
$G_{0}=\mathop {\sum }\limits_{i=1}^{k_{0}}{p_{i}^{0}\delta_{(\theta_{i}^{0},v_{i}^{0})}}$. 

\paragraph{Step 1.}
For any sequence $G_{n} \in \mathcal{O}_{k,c_{0}}(\Theta \times \Omega) \to G_{0}$ in $W_{r}$, 
by employing the same subsequencing
argument in the second paragraph in the proof of Theorem~\ref{theorem-secondorder},
we can represent without loss of generality 
\begin{equation}
\label{eqn-Gn}
G_{n}=\mathop {\sum }\limits_{i=1}^{k_{0}}{\mathop {\sum }\limits_{j=1}^{s_{i}}{p_{ij}^{n}\delta_{(\theta_{ij}^{n},v_{ij}^{n})}}},
\end{equation}
where $(p_{ij}^{n},\theta_{ij}^{n},v_{ij}^{n}) \to (p_{i}^{0},\theta_{i}^{0},v_{i}^{0})$ 
for all $i=1,\ldots, k_{0}$ and $j=1,\ldots,s_{i}$, where $s_1,\ldots, s_{k_0}$
are some natural constants less than $k$. All $G_n$ have exactly the same $\sum s_i \leq k$ number
of support points. 
\comment{For sufficiently large $n$, it is simple to see
that 
\[W_r^r(G_n,G_0) \gtrsim \sum_{i=1}^{k_0}\sum_{j=1}^{s_i} p_{ij}^{n} 
(|\Delta\theta_{ij}^{n}| + |\Delta v_{ij}^{n}|)^r.\]
Here we have adopted the notation that
$\Delta \theta_{ij}^{n} := \theta_{ij}^{n}-\theta_{i}^{0}$, $\Delta v_{ij}^{n} := v_{ij}^{n}-v_{i}^{0}$.}

\paragraph{Step 2.} For any $x \in \mathbb{R}$,
\[p_{G_{n}}(x)-p_{G_{0}}(x) =  \mathop {\sum }\limits_{i=1}^{k_{0}}{\mathop {\sum }\limits_{j=1}^{s_{i}}{p_{ij}^{n}(f(x|\theta_{ij}^{n},v_{ij}^{n})-f(x|\theta_{i}^{0},v_{i}^{0}))}}+\mathop {\sum }\limits_{i=1}^{k_{0}}{(p_{i.}^{n}-p_{i}^{0})f(x|\theta_{i}^{0},v_{i}^{0})},\]
where $p_{i\cdot}^{n} := \mathop {\sum }\limits_{j=1}^{s_{i}}{p_{ij}^{n}}$, and
$p_i^0 = 0$ for any $i\geq k_0+1$.
For any $r \geq 1$, integer $N \geq r$
and $x \in \mathbb{R}$, by means of Taylor expansion up to order $N$, we obtain
\begin{eqnarray}
p_{G_n}(x)-p_{G_0}(x) &=&
\mathop {\sum }\limits_{i=1}^{k_{0}}{\mathop {\sum }\limits_{j=1}^{s_{i}}{p_{ij}^{n}\mathop {\sum }\limits_{|\alpha|=1}^{N}{(\Delta\theta_{ij}^{n})^{\alpha_{1}}(\Delta v_{ij}^{n})^{\alpha_{2}}\dfrac{D^{|\alpha|}f(x|\theta_{i}^{0},v_{i}^{0})}{\alpha!}}}} +
A_{1}(x)+
R_{1}(x). \label{eqn:generaloverfittedGaussiangeneralone}
\end{eqnarray}
Here, $\alpha=(\alpha_{1},\alpha_{2})$, $|\alpha|=\alpha_{1}+\alpha_{2}$, $\alpha!=\alpha_{1}!\alpha_{2}!$. 
Additionally,
$A_{1}(x)=\mathop {\sum }\limits_{i=1}^{k_{0}}{(p_{i\cdot}^{n}-p_{i}^{0})f(x|\theta_{i}^{0},v_{i}^{0})}$, 
 and 
$R_{1}(x) = O(\mathop {\sum }\limits_{i=1}^{k_{0}}{\mathop {\sum }\limits_{j=1}^{s_{i}}{p_{ij}^{n}(|\Delta\theta_{ij}^{n}|^{N+\delta}+|\Delta v_{ij}^{n}|^{N+\delta}}})$.

\paragraph{Step 3.}
Enter the key identity~\eqref{key-gaussian} (cf. Lemma \ref{lemma:multivariatenormaldistribution}):
$\dfrac{\partial^{2}{f}}{\partial{\theta^{2}}}(x|\theta,v)=2\dfrac{\partial{f}}{\partial{v}}(x|\theta,v)$ 
for all $x$. This entails, for any natural orders $n_{1},n_{2}$, that
$\dfrac{\partial^{n_{1}+n_{2}}{f}}{\partial{\theta}^{n_{1}}\partial{v}^{n_{2}}}(x|\theta,v)=\dfrac{1}{2^{n_{2}}}\dfrac{\partial^{n_{1}+2n_{2}}{f}}{\theta^{n_{1}+2n_{2}}}(x|\theta,v)$. Thus,
by converting all derivatives to those taken with respect to only $\theta$, we may
rewrite \eqref{eqn:generaloverfittedGaussiangeneralone} as
\begin{eqnarray}
p_{G_{n}}(x)-p_{G_{0}}(x) & = & \mathop {\sum }\limits_{i=1}^{k_{0}}{\mathop {\sum }\limits_{j=1}^{s_{i}}{p_{ij}^{n}{\mathop {\sum }\limits_{\alpha \geq 1}{\mathop {\sum }\limits_{n_{1}, n_{2}}{\dfrac{(\Delta \theta_{ij}^{n})^{n_{1}}(\Delta v_{ij}^{n})^{n_{2}}}{2^{n_{2}}n_{1}!n_{2}!}}\dfrac{\partial^{\alpha}{f}}{\partial{\theta}^{\alpha}}(x|\theta_{i}^{0},v_{i}^{0})}}}} \nonumber \\
& + & A_{1}(x) + R_{1}(x) \nonumber \\
& := & A_{1}(x)+B_{1}(x)+R_{1}(x), \label{eqn:generaloverfittedGaussiangeneralsecond}
\end{eqnarray}
where $n_{1},n_{2}$ in the sum satisfy $n_{1}+2n_{2}=\alpha, n_{1}+n_{2} \leq N$.

%%%%%%%%%%%%%%%%%%%%%%%%%%%%%%%%%%%%%%%%%%%%%%%%%%%%%%%%%%%%%%%%%%%%%%%%%%%%
%%% proof of part (a)
%%%%%%%%%%%%%%%%%%%%%%%%%%%%%%%%%%%%%%%%%%%%%%%%%%%%%%%%%%%%%%%%%%%%%%%%%%%%

\paragraph{Step 4.}
We proceed to proving part (a) of the theorem. From the definition of $\overline{r}$, 
by setting $r = \overline{r}-1$, there exist non-trivial solutions 
$(c_{i}^{*},a_{i}^{*},b_{i}^{*})_{i=1}^{k-k_{0}+1}$ for the system of equations~\eqref{eqn:generalovefittedGaussianzero}. Construct a sequence of probability measures $G_{n} \in \Ocal_{k}(\Theta\times\Omega)$
under the representation given by Eq.~\eqref{eqn-Gn} as follows:
\[\theta_{1j}^{n}=\theta_{1}^{0}+\dfrac{a_{j}^{*}}{n},\;
v_{1j}^{n}=v_{1}^{0}+\dfrac{2b_{j}^{*}}{n^{2}},\; 
p_{1j}^{n}=\frac{p_{1}^{0}(c_{j}^{*})^{2}}{\mathop {\sum }\limits_{j=1}^{k-k_{0}+1}{(c_{j}^{*})^{2}}},\;
\text{for all}\; j=1,\ldots,k-k_{0}+1,\] 
and 
$\theta_{i1}^{n}=\theta_{i}^{0}, \; 
v_{i1}^{n}=v_{i}^{0}, \;
p_{i1}^{n}=p_{i}^{0}$
for all $i = 2,\ldots, k_{0}$. (That is, we set $s_{1}=k-k_{0}+1$, $s_{i}=1$ for all $2 \leq i \leq k_{0}$).
Note that $b_j^*$ may be negative, but we are guaranteed that $v_{1j}^n > 0$ for sufficiently large $n$.
It is easy to verify that
$W_{1}(G_{n},G_{0})= \mathop {\sum }\limits_{i=1}^{k-k_{0}+1}{p_{1i}^{n}\left(\dfrac{|a_{i}^{*}|}{n}+\dfrac{2|b_{i}^{*}|}{n^{2}}\right)} \asymp \dfrac{1}{n}$, because at least one of the $a_i^*$ is non-zero.

\paragraph{Step 5.}
Select $N=\overline{r}$ in Eq.~\eqref{eqn:generaloverfittedGaussiangeneralsecond}.
By our construction of $G_n$, clearly $A_{1}(x)=0$. Moreover, 
\begin{eqnarray}
B_{1}(x) &=&  \mathop {\sum }\limits_{i=1}^{k-k_{0}+1}{p_{1i}^{n}\mathop {\sum }\limits_{\alpha=1}^{\overline{r}-1}{\mathop {\sum }\limits_{n_{1}, n_{2}}{\dfrac{(\Delta \theta_{1i}^{n})^{n_{1}}(\Delta v_{1i}^{n})^{n_{2}}}{2^{n_{2}}n_{1}!n_{2}!}}\dfrac{\partial^{\alpha}{f}}{\partial{\theta}^{\alpha}}(x|\theta_{1}^{0},v_{1}^{0})}} \nonumber \\
&+& \mathop {\sum }\limits_{i=1}^{k-k_{0}+1}{p_{1i}^{n}\mathop {\sum }\limits_{\alpha = \overline{r}}^{2\overline{r}}{\mathop {\sum }\limits_{n_{1}, n_{2}}{\dfrac{(\Delta \theta_{1i}^{n})^{n_{1}}(\Delta v_{1i}^{n})^{n_{2}}}{2^{n_{2}}n_{1}!n_{2}!}}\dfrac{\partial^{\alpha}{f}}{\partial{\theta}^{\alpha}}(x|\theta_{1}^{0},v_{1}^{0})}} \nonumber \\
&:= & \mathop {\sum }\limits_{\alpha=1}^{\overline{r}-1}{B_{\alpha n}\dfrac{\partial^{\alpha}{f}}{\partial{\theta}^{\alpha}}(x|\theta_{1}^{0},v_{1}^{0})}+\mathop {\sum }\limits_{\alpha \geq \overline{r}}{C_{\alpha n}\dfrac{\partial^{\alpha}{f}}{\partial{\theta}^{\alpha}}(x|\theta_{1}^{0},v_{1}^{0})}. \nonumber
\end{eqnarray}
In the above display, 
for each $\alpha \geq \overline{r}$, observe that $C_{\alpha n} = O(n^{-\alpha})$. Moreover,
for each $1 \leq \alpha \leq \overline{r}-1$,
\begin{eqnarray}
B_{\alpha n} =\dfrac{1}{n^{\alpha}\mathop {\sum }\limits_{i=1}^{k-k_{0}+1}{(c_{i}^{*})^{2}}} \mathop {\sum }\limits_{i=1}^{k-k_{0}+1}{(c_{i}^{*})^{2} \mathop {\sum }\limits_{n_{1}+2n_{2}=\alpha}{\dfrac{(a_{i}^{*})^{n_{1}}(b_{i}^{*})^{n_{2}}}{n_{1}!n_{2}!}}} = 0, \nonumber
\end{eqnarray}
because $(c_i^*,a_i^*,b_i^*)_{i=1}^{k-k_0+1}$ form a non-trivial solution to system~\eqref{eqn:generalovefittedGaussianzero}. 

\paragraph{Step 6.} We arrive at an upper bound the Hellinger distance of mixture densities.
\comment{
\begin{eqnarray}
C_{\alpha n}/W_{1}^{r}(G_{n},G_{0})=An^{2r-\alpha}/\left(\mathop {\sum }\limits_{i=1}^{k-k_{0}+1}{p_{1i}^{0}(n|a_{i}^{*}|+|b_{i}^{*}|)}\right)^{r}\to 0, \nonumber
\end{eqnarray}
where $A=\mathop {\sum }\limits_{\substack{n_{1}+2n_{2}=\alpha \\ n_{1}+n_{2} \leq \overline{r}-1}}{\dfrac{(a_{i}^{*})^{n_{1}}(b_{i}^{*})^{n_{2}}}{n_{1}!n_{2}!}}$ and the last result is due to $r < \overline{r}$. 
}
\begin{eqnarray*}
h^{2}(p_{G_{n}},p_{G_{0}})
& \leq & \frac{1}{2p_1^0}\int_{\mathbb{R}} \frac{(p_{G_n}(x)-p_{G_0}(x))^2}{f(x|\theta_1^0,v_1^0)} 
\; \textrm{d}x \\
& \lesssim & 
\int \limits_{\mathbb{R}}{\dfrac{
%\mathop {\sum }\limits_{\alpha = 1}^{\overline{r}-1}{
%B_{\alpha n}^{2}\left(\dfrac{\partial^{\alpha}{f}}{\partial{\theta^{\alpha}}}(x|\theta_{1}^{0},v_{1}^{0})\right)^{2}}+
\mathop {\sum }\limits_{\alpha = \overline{r}}^{2\overline{r}}
{C_{\alpha n}^{2}\left(\dfrac{\partial^{\alpha}{f}}{\partial{\theta^{\alpha}}}(x|\theta_{1}^{0},v_{1}^{0})\right)^{2}}+R_{1}^{2}(x)}{
f(x|\theta_{1}^{0},v_{1}^{0})}} \;\textrm{d} x, \nonumber
\end{eqnarray*}
For Gaussian densities, it can be verified that 
$\left(\dfrac{\partial^{\alpha}{f}}{\partial{\theta^{\alpha}}}(x|\theta_{1}^{0},v_{1}^{0})\right)^{2}/f(x|\theta_{1}^{0},v_{1}^{0})$ is integrable for all $1 \leq \alpha \leq 2\overline{r}$. So,
$h^{2}(p_{G_{n}},p_{G_{0}}) \leq O(n^{-2\rbar}) + \int R_1^2(x)/f(x|\theta_1^0,v_1^0) \; \textrm{d} x$. 
Turning to the Taylor remainder $R_1(x)$, note that
\comment{
Now, using the same argument as that of the proof of part (c) of Theorem \ref{theorem-secondorder}, we obtain that
\begin{eqnarray}
\dfrac{h^{2}(p_{G_{n}},p_{G_{0}})}{W_{1}^{2r}(G_{n},G_{0})}  \lesssim  \int \limits_{\mathbb{R}}{\dfrac{\mathop {\sum }\limits_{\alpha = 1}^{\overline{r}-1}{B_{\alpha n}^{2}\left(\dfrac{\partial^{\alpha}{f}}{\partial{\theta^{\alpha}}}(x|\theta_{1}^{0},v_{1}^{0})\right)^{2}}+\mathop {\sum }\limits_{\alpha = \overline{r}}^{2\overline{r}}{C_{\alpha n}^{2}\left(\dfrac{\partial^{\alpha}{f}}{\partial{\theta^{\alpha}}}(x|\theta_{1}^{0},v_{1}^{0})\right)^{2}}+R_{1}^{2}(x)}{W_{1}^{2r}(G_{n},G_{0})f(x|\theta_{1}^{0},v_{1}^{0})}}dx, \nonumber
\end{eqnarray}
}
\comment{
where the first inequality is due to the fact that $\sqrt{p_{G_{n}}(x)}+\sqrt{p_{G_{0}}(x)} > \sqrt{p_{G_{0}}(x)}>\sqrt{p_{1}^{0}f(x|\theta_{1}^{0},v_{1}^{0})}$ and the second inequality is due to Cauchy-Schwartz's inequality. From Taylor's theorem, since $f$ is $\overline{r}+1$ differentiable, we have the explicit form for $R_{1}(x)$ as follows:
\begin{eqnarray}
R_{1}(x)=\mathop {\sum }\limits_{i=1}^{k-k_{0}+1}{\mathop {\sum }\limits_{\beta = \overline{r}+1}{\dfrac{\overline{r}+1}{\beta !} (\Delta \theta_{1i}^{n})^{\beta_{1}}(\Delta v_{1i}^{n})^{\beta_{2}} \int \limits_{0}^{1}{(1-t)^{\overline{r}}\dfrac{\partial^{\overline{r}+1}{f}}{\partial{\theta^{\beta_{1}}}\partial{v^{\beta_{2}}}}(x|\theta_{1}^{0}+t\Delta \theta_{1i}^{n},v_{1}^{0}+t\Delta v_{1i}^{n})}dt}}, \nonumber
\end{eqnarray}
where $\beta=(\beta_{1},\beta_{2})$. Applying Cauchy-Schwartz's inequality and Holder's inequality, we obtain
}
\begin{eqnarray}
|R_{1}(x)| \lesssim \mathop {\sum }\limits_{i=1}^{k-k_{0}+1}{\mathop {\sum }\limits_{|\beta| 
= \overline{r}+1}{\dfrac{(\overline{r}+1)}{\beta!} 
|\Delta \theta_{1i}^{n}|^{\beta_{1}}
|\Delta v_{1i}^{n}|^{\beta_{2}} 
\int \limits_{0}^{1}{(1-t)^{\overline{r}}\left |\dfrac{\partial^{\overline{r}+1}{f}}{\partial{\theta^{\beta_{1}}}\partial{v^{\beta_{2}}}}(x|\theta_{1}^{0}+t\Delta \theta_{1i}^{n},v_{1}^{0}+t\Delta v_{1i}^{n})\right|}
\; \textrm{d}t}}. \nonumber
\end{eqnarray}
Now,
$(\Delta \theta_{1i}^{n})^{\beta_{1}}(\Delta v_{1i}^{n})^{\beta_{2}} \asymp
n^{-\beta_1-2\beta_2} = o(n^{-2\rbar})$. In addition, as $n$ is sufficiently large, we have for all $|\beta|=\overline{r}+1$ that
\begin{eqnarray}
{\displaystyle \mathop {\sup }\limits_{t \in [0,1]}{\int \limits_{x \in \mathbb{R}}{\left(\dfrac{\partial^{\overline{r}+1}{f}}{\partial{\theta^{\beta_{1}}\partial{v^{\beta_{2}}}}}(x|\theta_{1}^{0}+t\Delta \theta_{1i}^{n},v_{1}^{0}+t\Delta v_{1i}^{n})\right)^{2}/f(x|\theta_{1}^{0},v_{1}^{0})}dx}}< \infty. \nonumber
\end{eqnarray}
It follows that $h(p_{G_n},p_{G_0}) = O(n^{-\rbar})$. As noted above,
$W_1(G_n,G_0) \asymp n^{-1}$, so the claim of part (a) is established.

%%%%%%%%%%%%%%%%%%%%%%%%%%%%%%%%%%%%%%%%%%%%%%%%%%%%%%%%%%%%%%%%%%%%%%%%%%%%
%%% proof of part (b)
%%%%%%%%%%%%%%%%%%%%%%%%%%%%%%%%%%%%%%%%%%%%%%%%%%%%%%%%%%%%%%%%%%%%%%%%%%%%

\paragraph{Step 7.} Turning to part (b) of Theorem~\ref{theorem:generaloverfittedGaussian}, 
it suffices to show that
\begin{eqnarray}
\mathop {\lim }\limits_{\epsilon \to 0}{\mathop {\inf }\limits_{G \in \mathcal{O}_{k,c_{0}}(\Theta)}{\left\{\mathop {\sup }\limits_{x \in \mathcal{X}}{|p_{G}(x)-p_{G_{0}}(x)|}/W_{\overline{r}}^{\overline{r}}(G,G_{0}):W_{\overline{r}}(G,G_{0}) \leq \epsilon \right\}}} > 0. \label{eqn:generaloverfittedGaussiansecond}
\end{eqnarray}
Then one can arrive at theorem's claim by passing through a standard argument using Fatou's lemma
(cf. Step 4 in the proof of Theorem~\ref{theorem-firstorder}). 
Suppose that \eqref{eqn:generaloverfittedGaussiansecond} does not hold.
Then we can find a sequence of probability measures $G_n \in \Ocal_{k,c_0}(\Theta\times \Omega)$ 
that are represented by Eq.~\eqref{eqn-Gn}, such that
$W_\rbar^\rbar(G_n,G_0) \rightarrow 0$ and $\sup_{x}|p_{G_n}(x)-p_{G_0}(x)|/W_\rbar^\rbar(G_n,G_0)
\rightarrow 0$.  Define
\begin{eqnarray}
D_n := d(G_{n},G_{0}) := \mathop {\sum }\limits_{i=1}^{k_{0}}{\mathop {\sum }\limits_{j=1}^{s_{i}}{p_{ij}^{n}(|\Delta \theta_{ij}^{n}|^{\overline{r}}+|\Delta v_{ij}^{n}|^{\overline{r}})}}+\mathop {\sum }\limits_{i=1}^{k_{0}}{|p_{i\cdot}^{n}-p_{i}^{0}|}. \nonumber
\end{eqnarray}
Since $W_\rbar^\rbar(G_n,G) \lesssim D_n$, for all $x \in \mathbb{R}$
$(p_{G_{n}}(x)-p_{G_{0}}(x))/D_n \to 0$.
Combining this fact with \eqref{eqn:generaloverfittedGaussiangeneralsecond},
where $N=\overline{r}$, we obtain
\begin{eqnarray}
(A_{1}(x)+B_{1}(x)+R_{1}(x))/D_n \to 0. \label{eqn:generaloverfittedGaussiangeneralthird}
\end{eqnarray}
We have $R_1(x)/D_n = o(1)$ as $n\rightarrow \infty$. 

\paragraph{Step 8.} $A_{1}(x)/D_n$ and
$B_{1}(x)/D_n$ are the linear combination of elements of $\dfrac{\partial^{\alpha}{f}}{\partial{\theta}^{\alpha}}(x|\theta,v)$ where $\alpha=n_{1}+2n_{2}$ and $n_{1}+n_{2} \leq \overline{r}$. 
Note that the natural order $\alpha$ ranges in $[0,2\overline{r}]$. 
Let $E_{\alpha}(\theta,v)$ denote the corresponding coefficient of 
$\dfrac{\partial^{\alpha}{f}}{\partial{\theta}^{\alpha}}(x|\theta, v)$. 
Extracting from \eqref{eqn:generaloverfittedGaussiangeneralsecond}, 
for $\alpha = 0$, $E_0(\theta_i^0,v_i^0) = (p_{i\cdot}^n - p_i^0)/D_n$. For $\alpha \geq 1$,
\begin{eqnarray}
E_{\alpha}(\theta_{i}^{0},v_{i}^{0})=\left[\mathop {\sum }\limits_{j=1}^{s_{i}}{p_{ij}^{n}\mathop {\sum }\limits_{\substack{n_{1}+2n_{2}=\alpha \\ n_{1}+n_{2} \leq \overline{r}}}{\dfrac{(\Delta \theta_{ij}^{n})^{n_{1}}(\Delta v_{ij}^{n})^{n_{2}}}{2^{n_{2}}n_{1}!n_{2}!}}}\right]/D_n. \nonumber
\end{eqnarray}

Suppose that $E_{\alpha}(\theta_{i}^{0},v_{i}^{0}) \to 0$ for all $i=1,\ldots,k_{0}$ and 
$0 \leq \alpha \leq 2\overline{r}$ as $n \to \infty$. 
By taking the summation of all $|E_{0}(\theta_{i}^{0},v_{i}^{0})|$, we get 
$\mathop {\sum }\limits_{i=1}^{k_{0}}{|p_{i.}^{n}-p_{i}^{0}|}/D_n \to 0  \ \ \text{as } \ n \to \infty$. As a consequence, we get
\begin{eqnarray}
\mathop {\sum }\limits_{i=1}^{k_{0}}{\mathop {\sum }\limits_{j=1}^{s_{i}}{p_{ij}^{n}(|\Delta \theta_{ij}^{n}|^{\overline{r}}+|\Delta v_{ij}^{n}|^{\overline{r}})}}/D_n \to 1 \ \ \text{as } \ n \to \infty. \nonumber
\end{eqnarray}
Hence, we can find an index $i^{*} \in \left\{1,2,\ldots,k_{0}\right\}$ such that $\mathop {\sum }\limits_{j=1}^{s_{i^{*}}}{p_{i^{*}j}^{n}(|\Delta \theta_{i^{*}j}^{n}|^\rbar +|\Delta v_{i^{*}j}^{n})|^{\overline{r}})}/D_n \not \to 0$ as $n \to \infty$. Without loss of generality, we assume that $i^{*}=1$. Accordingly,
\begin{eqnarray}
F_{\alpha}(\theta_{1}^{0},v_{1}^{0})
:= \dfrac{D_nE_{\alpha}(\theta_{1}^{0},\sigma_{1}^{0}) }{\mathop {\sum }\limits_{j=1}^{s_{1}}{p_{1j}^{n}(|\Delta \theta_{1j}^{n}|^{\overline{r}}+|\Delta v_{1j}^{n})|^{\overline{r}})}}= \dfrac{\mathop {\sum }\limits_{j=1}^{s_{1}}{p_{1j}^{n}\mathop {\sum }\limits_{\substack{n_{1}+2n_{2}=\alpha \\ n_{1}+n_{2} \leq \overline{r}}}{\dfrac{(\Delta \theta_{1j}^{n})^{n_{1}}(\Delta v_{1j}^{n})^{n_{2}}}{2^{n_{2}}n_{1}!n_{2}!}}}}{\mathop {\sum }\limits_{j=1}^{s_{1}}{p_{1j}^{n}(|\Delta \theta_{1j}^{n}|^{\overline{r}}+|\Delta v_{1j}^{n})|^{\overline{r}})}} \to 0. \nonumber
\end{eqnarray}
If $s_{1}=1$ then $F_{1}(\theta_{1}^{0},\nu_{1}^{0})$ and $F_{2\overline{r}}(\theta_{1}^{0},\nu_{1}^{0})$ yield $|\Delta \theta_{11}^{n}|^{\overline{r}}/(|\Delta \theta_{11}^{n}|^{\overline{r}}+|\Delta v_{11}^{n}|^{r}), |\Delta v_{11}^{n}|^{\overline{r}}/(|\Delta \theta_{11}^{n}|^{\overline{r}}+|\Delta v_{11}^{n}|^{\overline{r}}) \to 0$ --- a contradiction. As a consequence, $s_{1} \geq 2$. 

Denote $\overline{p}_{n}=\mathop {\max }\limits_{1 \leq j \leq s_{1}}{\left\{p_{1j}^{n}\right\}}$, $\overline{M}_{n}=\mathop {\max }{\left\{|\Delta \theta_{11}^{n}|,\ldots,|\Delta \theta_{1s_{1}}^{n})|,|\Delta v_{11}^{n}|^{1/2},\ldots,|\Delta v_{1s_{1}}^{n}|^{1/2}\right\}} \rightarrow 0$. 
Since $0 < p_{1j}^{n}/\overline{p}_{n} \leq 1$ for all $1 \leq j \leq s_{1}$, by a subsequence argument,
there exist $c_j^2 := \mathop {\lim }\limits_{n \to \infty}{p_{1j}^{n}/\overline{p}_{n}}$ for all 
$j=1,\ldots,s_{1}$. Similarly, define $a_j:= \mathop {\lim }\limits_{n \to \infty}{\Delta \theta_{1j}^{n}/\overline{M}_{n}}$, and $2b_j:= \mathop {\lim }\limits_{n \to \infty}{\Delta v_{1j}^{n}/\overline{M}_{n}^2}$ 
for each $j=1,\ldots, s_{1}$.
By the constraints of $\Ocal_{k,c_0}$, $p_{1j}^{n} \geq c_{0}$,
so all of $c_{j}^{2}$ differ from 0 and at least one of them equals to 1. Likewise, at least one element of $\left(a_{j},b_{j}\right)_{j=1}^{s_{1}}$ equal to -1 or 1. Now, for each 
$\alpha = 1,\ldots,\overline{r}$, divide both the numerator and denominator of $F_{\alpha}(\theta_{1}^{0},v_{1}^{0})$ by 
$\overline{p}_n$ and then $\overline{M}_{n}^{\alpha}$ and let $n \to \infty$, 
we obtain the following system of polynomial equations
\begin{eqnarray}
\mathop {\sum }\limits_{j=1}^{s_{1}}{\mathop {\sum }\limits_{n_{1}+2n_{2} = \alpha}{\dfrac{c_{j}^{2}a_{j}^{n_{1}}b_{j}^{n_{2}}}{n_{1}!n_{2}!}}}=0 \ \ \text{for each } \ \alpha = 1,\ldots,\overline{r}. \nonumber
\end{eqnarray}
Since $s_{1} \geq 2$, we get $\overline{r} \geq 4$. If $a_{i}=0$ for all $1 \leq i \leq s_{1}$ then by choosing $\alpha=4$, we obtain $\mathop {\sum }\limits_{j=1}^{s_{1}}{c_{j}^{2}b_{j}^{2}}=0$. However, it demonstrates that $b_{i}=0$ for all $1 \leq i \leq s_{1}$ --- a contradiction to the fact that at least one element of $(a_{i},b_{i})_{i=1}^{s_{1}}$ is different from 0. Therefore, at least one element of $(a_{i})_{i=1}^{s_{1}}$ is not equal to 0. Observe that $s_{i} \leq k-k_{0}+1$(because the number of distinct atoms of $G_n$ is
$\sum_{i=1}^{k_0}s_i \leq k$ and all $s_i\geq 1$). Thus, the existence of non-trivial
solutions for the system of equations given in the above display entails the existence
of non-trivial solutions for system of 
equations~\eqref{eqn:generalovefittedGaussianzero}. This contradicts with the definition
of $\rbar$. Therefore, our hypothesis that all coefficients 
$E_{\alpha}(\theta_{i}^{0},v_{i}^{0})$ vanish does not hold --- there must be
at least one which does not converge to 0 as $n \to \infty$.

\paragraph{Step 9.}
Let $m_{n}$ to the maximum of the absolute values of $E_{\alpha}(\theta_{i}^{0},v_{i}^{0})$ where $0 \leq \alpha \leq 2\overline{r}$, $1 \leq i \leq k_{0}$ and $d_{n}=1/m_{n}$. As $m_{n} \not \to 0$ as $n \to \infty$, $d_{n}$ is uniformly bounded above for all $n$. As $d_{n}|E_{\alpha}(\theta_{i}^{0},v_{i}^{0})| \leq 1$, 
we have $d_{n}E_{\alpha}(\theta_{i}^{0},v_{i}^{0}) \to \beta_{i,\alpha}$ for all $0 \leq \alpha \leq 2\overline{r}$, $1 \leq i \leq k_{0}$ where at least one of $\beta_{i\alpha}$ differs from 0. 
Incorporating these limits to Eq.\eqref{eqn:generaloverfittedGaussiangeneralthird}, we 
obtain that for all $x \in \mathbb{R}$,
\begin{eqnarray}
(p_{G_{n}}(x)-p_{G_{0}}(x))/D_n \to \mathop {\sum }\limits_{i=1}^{k_{0}}{\mathop {\sum }\limits_{\alpha = 0}^{2\overline{r}}{\beta_{i\alpha}\dfrac{\partial^{\alpha}{f}}{\partial{\theta}^{\alpha}}(x|\theta_{i}^{0},v_{i}^{0})}} = 0. \nonumber 
\end{eqnarray}
By direct calculation, we can rewrite the above equation as
\begin{eqnarray}
\mathop {\sum }\limits_{i=1}^{k_{0}}{\left(\mathop {\sum }\limits_{j=1}^{2\overline{r}+1}{\gamma_{ij}(x-\theta_{i}^{0})^{j-1}}\right)\exp\left(-\dfrac{(x-\theta_{i}^{0})^{2}}{2v_{i}^{0}}\right)} =0 \ \ \text{for all } \  x \in \mathbb{R}, \nonumber
\end{eqnarray}
where $\gamma_{ij}$ for odd $j$ are linear combinations of 
$\beta_{i(2l_{1})}$, for $(j-1)/2 \leq l_{1} \leq \overline{r}$,
such that all of the coefficients are functions of  $v_{i}^{0}$ differing from 0.
For even $j$, $\gamma_{ij}$ are linear combinations of 
$\beta_{i(2l_{2}+1)}$, for $i/2 \leq l_{2} \leq \overline{r}$, 
such that all of the coefficients are functions of 
$v_{i}^{0}$ differing from 0. Employing the 
same argument as that of part (a) of Theorem \ref{identifiability-multivariatecharacterization}, 
we obtain $\gamma_{ij}=0$ for all $i=1,\ldots, k_{0}$, $j=1,\ldots, 2\overline{r}+1$. 
This entails that
$\beta_{i\alpha}=0$ for all $i=1,\ldots, k_{0}$, $\alpha = 0,\ldots, 2\overline{r}$ ---
a contradiction. Thus we achieve the conclusion of~\eqref{eqn:generaloverfittedGaussiansecond}.

%%%%%%%%%%%%%%%%%%%%%%%%%%%%%%%%%%%%%%%%%%%%%%%%%%%%%%%%%%%%%%%%%%%%%%%%

\paragraph{PROOF OF PROPOSITION \ref{proposition-specificvalueoverliner}.}
Our proof is based on Groebner bases method for
determining solutions for a system of polynomial equations. 
(i) For the case  $k-k_{0}=1$, the system~\eqref{eqn:generalovefittedGaussianzero} 
when $r=4$ can be written as
\begin{eqnarray}
& &c_{1}^{2}a_{1}+c_{2}^{2}a_{2}=0 \label{S.1.1} \\ 
& &\dfrac{1}{2}(c_{1}^{2}a_{1}^{2}+c_{2}^{2}a_{2}^{2})+ c_{1}^{2}b_{1}+c_{2}^{2}b_{2} = 0 \label{S.1.2} \\ 
& &\dfrac{1}{3!}(c_{1}^{2}a_{1}^{3}+c_{2}^{2}a_{2}^{3})+ c_{1}^{2}a_{1}b_{1}+c_{2}^{2}a_{2}b_{2} =0 \label{S.1.3} \\ 
& &\dfrac{1}{4!}(c_{1}^{2}a_{1}^{4}+c_{2}^{2}a_{2}^{4})+\dfrac{1}{2!}(c_{1}^{2}a_{1}^{2}b_{1}+c_{2}^{2}a_{2}^{2}b_{2})+\dfrac{1}{2!}(c_{1}^{2}b_{1}^{2}+c_{2}^{2}b_{2}^{2})=0 \label{S.1.4}
\end{eqnarray}
Suppose that the above system has non-trivial solution. If $c_{1}a_{1}=0$, then equation \eqref{S.1.1} implies $c_{2}a_{2}=0$. Since $c_{1},c_{2} \neq 0$, we have 
$a_{1}=a_{2}=0$. 
%Eq.~\eqref{S.1.2} and~\eqref{S.1.4} become 
%$c_{1}^{2}b_{1}+c_{2}^{2}b_{2}=0$ and $c_{1}^{2}b_{1}^{2}+c_{2}^{2}b_{2}^{2}=0$, which implies 
%$b_{1}=b_{2}=0$. 
This violates the constraint that one of $a_{1},a_{2}$ is non-zero. 
Hence, $c_{1}a_{1},c_{2}a_{2} \neq 0$. Divide both sides of \eqref{S.1.1},\eqref{S.1.2},\eqref{S.1.3},\eqref{S.1.4} by $c_{1}^{2}a_{1}$, $c_{1}^{2}a_{1}^{2}$, $c_{1}^{2}a_{1}^{3}$, $c_{1}^{2}a_{1}^{4}$ respectively, we obtain the following system of polynomial equations
\begin{eqnarray}
& &1+x^{2}a=0 \nonumber \\
& &1+x^{2}a^{2}+2(b+x^{2}c)=0 \nonumber \\
& &1+x^{2}a^{3}+6(b+x^{2}ac)=0 \nonumber \\
& & 1+x^{2}a^{4}+12(b+x^{2}a^{2}c)+12(b^{2}+x^{2}c^{2})=0 \nonumber
\end{eqnarray}
where $x=c_{2}/c_{1}, a= a_{2}/a_{1}, b= b_{1}/a_{1}, c=b_{2}/a_{1}$. By taking the lexicographical order $a \succ b \succ c \succ x$, the Groebner basis of the above system contains $x^{6}+2x^{4}+2x^{2}+1>0$ for all 
$x \in \mathbb{R}$. Therefore, the above system of polynomial equations does not have real solutions. 
As a consequence, the original system of polynomial equations does not have non-trivial solution, which means that $\overline{r} \leq 4$. However, we have already shown
that as $r=3$, Eq.\eqref{eqn:generalovefittedGaussianzero} has non-trivial solution. 
Therefore, $\overline{r}=4$.

(ii) The case $k-k_{0}=2$. System~\eqref{eqn:generalovefittedGaussianzero} when $r=6$ takes the form:
\begin{eqnarray}
& &\mathop {\sum }\limits_{i=1}^{3}{c_{i}^{2}a_{i}}=0 \label{S.2.1} \\
& &\dfrac{1}{2}\mathop {\sum }\limits_{i=1}^{3}{c_{i}^{2}a_{i}^{2}}+\mathop {\sum }\limits_{i=1}^{3}{c_{i}^{2}b_{i}}=0 \label{S.2.2} \\
& &\dfrac{1}{6}\mathop {\sum }\limits_{i=1}^{3}{c_{i}^{2}a_{i}^{3}}+\dfrac{1}{2}\mathop {\sum }\limits_{i=1}^{3}{c_{i}^{2}a_{i}b_{i}}=0 \label{S.2.3} \\
& &\dfrac{1}{24}\mathop {\sum }\limits_{i=1}^{3}{c_{i}^{2}a_{i}^{4}}+\dfrac{1}{2}\mathop {\sum }\limits_{i=1}^{3}{c_{i}^{2}a_{i}^{2}b_{i}}+\dfrac{1}{2}\mathop {\sum }\limits_{i=1}^{3}{c_{i}^{2}b_{i}^{2}}=0 \label{S.2.4} \\
& & \dfrac{1}{120}\mathop {\sum }\limits_{i=1}^{3}{c_{i}^{2}a_{i}^{5}}+\dfrac{1}{6}\mathop {\sum }\limits_{i=1}^{3}{c_{i}^{2}a_{i}^{3}b_{i}}+\dfrac{1}{2}\mathop {\sum }\limits_{i=1}^{3}{c_{i}^{2}a_{i}b_{i}^{2}}=0 \label{S.2.5} \\
& & \dfrac{1}{720}\mathop {\sum }\limits_{i=1}^{3}{c_{i}^{2}a_{i}^{6}}+ \dfrac{1}{24}\mathop {\sum }\limits_{i=1}^{3}{c_{i}^{2}a_{i}^{4}b_{i}}+\dfrac{1}{4}\mathop {\sum }\limits_{i=1}^{3}{c_{i}^{2}a_{i}^{2}b_{i}^{2}}+\dfrac{1}{6}\mathop {\sum }\limits_{i=1}^{3}{c_{i}^{2}b_{i}^{3}}=0 \label{S.2.6}
\end{eqnarray}
Non-trivial solution constraints require that
$c_{1},c_{2},c_{3} \neq 0$ and without loss of generality, $a_1 \neq 0$.
% one of $a_{i},b_{i}$ differs from 0. If $a_{1}=a_{2}=a_{3}=0$, then equation $\eqref{S.2.3}$ yields $\mathop {\sum }\limits_{i=1}^{3}{c_{i}^{2}b_{i}^{2}}=0$. It implies that $b_{1}=b_{2}=b_{3}=0$, which is a contradiction to the non-triviality. Therefore, at least one of $a_{1},a_{2},a_{3} \neq 0$. Without loss of generality, assume that $a_{1} \neq 0$. 
Dividing both sides of 
%\eqref{S.2.1},\eqref{S.2.2},\eqref{S.2.3},\eqref{S.2.4},\eqref{S.2.5}, \eqref{S.2.6} 
of the six equations above by 
$c_{1}^{2}a_{1},c_{1}^{2}a_{1}^{2},c_{1}^{2}a_{1}^{3},c_{1}^{2}a_{1}^{4},c_{1}^{2}a_{1}^{5},c_{1}^{2}a_{1}^{6}$,
respectively, we obtain
\begin{eqnarray}
& &1+x^{2}a+y^{2}b=0 \nonumber \\
& &\dfrac{1}{2}(1+x^{2}a^{2}+y^{2}b^{2})+c+x^{2}d+y^{2}e=0 \nonumber \\
& &\dfrac{1}{3}(1+x^{2}a^{3}+y^{2}b^{3})+c+x^{2}ad+y^{2}be=0 \nonumber \\
& &\dfrac{1}{12}(1+x^{2}a^{4}+y^{2}b^{4})+c+x^{2}a^{2}d+y^{2}b^{2}e + c^{2}+x^{2}d^{2}+y^{2}e^{2}=0 \nonumber \\
& &\dfrac{1}{60}(1+x^{2}a^{5}+y^{2}b^{5})+ \dfrac{1}{3}(c+x^{2}a^{3}d+y^{2}b^{3}e)+c^{2}+x^{2}ad^{2}+y^{2}be^{2}=0 \nonumber \\
& &\dfrac{1}{360}(1+x^{2}a^{6}+y^{2}b^{6})+\dfrac{1}{12}(c+x^{2}a^{4}d+y^{2}b^{4}e)+\dfrac{1}{2}(c^{2}+x^{2}a^{3}d+y^{2}b^{3}e)+\dfrac{1}{3}(c^{3}+x^{2}d^{3}+y^{2}e^{3})=0 \nonumber
\end{eqnarray}
where $x=c_{2}/c_{1}, y=c_{3}/c_{1}, a=a_{2}/a_{1}, b=a_{3}/a_{1}, c=b_{1}/a_{1}^{2}, d=b_{2}/a_{1}^{2}, e=b_{3}/a_{1}^{2}$. 
By taking the lexicographical order $a \succ b \succ c \succ d \succ x \succ y$, we can verify
that the Groebner bases of the above system of polynomial equations contains a polynomial 
in terms of $x^{2},y^{2}$ with all of the coefficients positive numbers, which cannot be 0 when 
$x,y \in \mathbb{R}$. Therefore, the original system of polynomial equations does not have 
a non-trivial solution. It follows that $\overline{r} \leq 6$. 

When $r=5$, we retain the first five equations in the system described in the above display.
By choosing $x=y=1$, under lexicographical order $a \succ b \succ c \succ d \succ e$, 
we can verify that the Groebner bases contains a polynomial of $e$ with 
roots $e=\pm \sqrt{2}/3$ or $e=(-3 \pm \sqrt{2})/6$ while $a,b,c,d$ can be uniquely 
determined by $e$. Thus, system of polynomial equations 
\eqref{eqn:generalovefittedGaussianzero} has a non-trivial solution. 
It follows that $\overline{r}=6$. 

(iii) For the case $k-k_{0} \geq 3$, we choose $c_{1}=c_{2}=\ldots=c_{k-k_{0}+1}=1$, 
$a_{i}=b_{i}=0$ for all $4 \leq i \leq k-k_{0}+1$. Additionally, take $a_{1}=a_{2}=1$. 
Now, by choosing $r=6$ in system~\eqref{eqn:generalovefittedGaussianzero}, we 
can check by Groebner bases that this system of polynomial equations has a 
non-trivial solution. As a result, $\overline{r} \geq 7$.

\newpage

\bibliography{Nhat,NPB,Nguyen}

\newpage

%%% Appendix I
\begin{center}
\textbf{APPENDIX I}
\end{center}
In this appendix, we give proofs of the following results: Theorem \ref{identifiability-multivariatecharacterization} regarding the characterization of strong identifiability 
in mixture models with matrix-variate parameters, 
Theorem \ref{theorem:identifiabilitytransformation} regarding
preservability of strong identifiability under transformation, 
Theorem \ref{theorem:exactfittedGamma} for exact-fitted Gamma mixtures, 
Theorem \ref{theorem:exactlocationgamma} for exact-fitted location-exponential mixtures, 
Theorem \ref{theorem:exactfittedskewnormal} for exact-fitted skew-Gaussian mixtures, 
Theorem \ref{theorem:Overfittedskewnormal} for over-fitted skew-Gaussian mixtures. 
Proofs of most propositions and some corollaries are also given. 
Proofs of Theorem~\ref{identifiability-univariatecharacterization},
Theorem~\ref{theorem:overfittedGamma}, 
and 
Theorem~\ref{theorem:nonconformantexactfittedskewnormal}
are quite similar to the ones that we have already mentioned above, and are deferred to Appendix II.
\section{Proofs of other main results}

%%%%%%%%%%%%%%%%%%%%%%%%%%%%%%%%%%%%%%%%%%%%%%%%%%%%%%%%%%%%%%%%%%%%%%%%%%%%%%%%%%%%%%%%%%%

\paragraph{PROOF OF PROPOSITION~\ref{proposition:unboundenesssupport}}
We choose $G_{n}=\mathop {\sum }\limits_{i=1}^{k_{0}+1}{p_{i}^{n}\delta_{(\theta_{i}^{n},\Sigma_{i}^{n})}} \in \mathcal{O}_{k}(\Theta \times \Omega)$ such that $(\theta_{i}^{n},\Sigma_{i}^{n})=(\theta_{i}^{0},\Sigma_{i}^{0})$ for $i=1,\ldots,k_0$, $\theta_{k_0+1}^{n}=\theta_{1}^{0}$, 
$\Sigma_{k_{0}+1}^{n} =\Sigma_{1}^{0}+\dfrac{\exp(n/r)}{n^{\alpha}}I_{d_{2}}$ where $\alpha=\dfrac{1}{2\beta}$. Additionally, $p_{1}^{n}=p_{1}^{0}-\exp(-n), p_{i}^{n}=p_{i}^{0}$ for all $2 \leq i \leq k_{0}$, and $p_{k_{0}+1}^{n}=\exp(-n)$. With this construction, we can check that $W_{r}^{\beta}(G,G_{0}) = d_{2}^{\beta/2}/\sqrt{n}$. Now, as $h^{2}(p_{G_{n}},p_{G_{0}}) \lesssim V(p_{G_{n}},p_{G_{0}})$, we have
\begin{eqnarray}
\exp\left(\dfrac{2}{W_{r}^{\beta}(G_{n},G_{0})}\right) h^{2}(p_{G},p_{G_{0}}) \lesssim \exp\left(-n+\dfrac{2\sqrt{n}}{d_{2}^{\beta/2}}\right)\int \limits_{x \in \mathcal{X}}{|f(x|\theta_{1}^{0},\Sigma_{k_{0}+1}^{n})-f(x|\theta_{1}^{0},\Sigma_{1}^{0})|}dx, \nonumber
\end{eqnarray}
which converges to 0 as $n \to \infty$. The conclusion of our proposition is proved.
 
\paragraph{PROOF OF COROLLARY~\ref{corollary-lowerbound}.}
By Theorem \ref{theorem-firstorder}, 
there are positive constants $\epsilon=\epsilon(G_{0})$ and
$C_0 = C_0(G_0)$ such that $V(p_G, p_{G_0}) \geq C_0 W_1(G,G_0)$
when $W_1(G,G_0) \leq \epsilon$. It remains to show that \\
$\inf \limits_{G \in \Gcal: W_1(G,G_0) > \epsilon} V(p_G,p_{G_0})/W_1(G,G_0) > 0$.
Assume the contrary, then we can find a sequence of 
$G_{n} \in \mathcal{G}$ and $W_{1}(G_{n},G_{0})>\epsilon$ such that 
$\dfrac{V(p_{G_{n}},p_{G_{0}})}{W_{1}(G_{n},G_{0})} \to 0$ as $n \to \infty$.
Since $\mathcal{G}$ is a compact set, we can find $G' \in \mathcal{G}$ and $W_{1}(G',G_{0})>\epsilon$ such that $G_{n} \to G'$ under $W_{1}$ metric. It implies that $W_{1}(G_{n},G_{0}) \to W_{1}(G',G_{0})$ as $n \to \infty$. As $G' \not \equiv G_{0}$, we have $\lim \limits_{n \to \infty}{W_{1}(G_{n},G_{0})}>0$. As a consequence, $V(p_{G_{n}},p_{G_{0}}) \to 0$ as $n \to \infty$.
From the hypothesis, $V(p_{G_{n}},p_{G'}) \leq C(\Theta,\Omega) W_{1}^{\alpha}(G_{n},G')$, 
so $V(p_{G_{n}},p_{G'}) \to 0$ as $W_{1}(G_{n},G') \to 0$. 
Thus, $V(p_{G'},p_{G_{0}})=0$ or equivalently $p_{G_{0}}=p_{G'}$ almost surely. 
From the first-order identifiability of family of density functions 
$\left\{f(x|\theta,\Sigma),\theta \in \Theta,\Sigma \in \Omega \right\}$, it 
implies that $G' \equiv G_{0}$, which is a contradiction. 
This completes the proof.

\comment{So far, under the exact-fitted case, we achieve the result regarding the lower bound of $V(p_{G},p_{G_{0}})$ in terms of first order Wasserstein distance between $G$ and $G_{0}$ by means of the first-order identifiabily of class of density functions $\left\{f(x|\theta,\Sigma),\theta \in \Theta,\Sigma \in \Omega \right\}$. However, when moving to the over-fitted case,  the first-order identifiability appears to be not sufficient to guarantee the lower bound of $V(p_{G},p_{G_{0}})$ in terms of Wasserstein distance between $G$ and $G_{0}$. Therefore, we need to impose stronger identifiability condition into the class of density functions $\left\{f(x|\theta,\Sigma),\theta \in \Theta,\Sigma \in \Omega \right\}$. 
}

%%%%%%%%%%%%%%%%%%%%%%%%%%%%%%%%%%%%%%%%%%%%%%%%%%%%%%%%%%%%%%%%%%%%%%%%%%%%%%%%%%%%%%%%%%%

\subsection{Characterization of strong identifiability}

%%%%%%%%%%%%%%%%%%%%%%%%%%%%%%%%%%%%%%%%%%%%%%%%%%%%%%%%%%%%%%%%%%%%%%%%%%%%%%%%%%%%%%%%%%%
\label{proof-identifiability}
%%%%%%%%%%%%%%%%%%%%%%%%%%%%%%%%%%%%%%%%%%%%%%%%%%%%%%%%%%%%%%%%%%%%%%%%%%%%%%%%%%%%%%%%%%%

\paragraph{PROOF OF THEOREM~\ref{identifiability-multivariatecharacterization}.} 
We present the proof for part (a). The proof for other parts are similar and left to
Appendix II.
Assume that for given $k \geq 1$ and $k$ different tuples $(\theta_{1},\Sigma_{1},m_{1}),\ldots,
(\theta_{k},\Sigma_{k},m_{k})$, we can find $\alpha_{j} \in \mathbb{R}$, $\beta_{j} \in \mathbb{R}^{d}$, symmetric matrices $\gamma_{j} \in \mathbb{R}^{d \times d}$, and $\eta_{j} \in \mathbb{R}$, for $j=1,\ldots,k$ 
such that:
\begin{eqnarray}
\mathop {\sum }\limits_{j=1}^{k}{\alpha_{j}f(x|\theta_{j},\Sigma_{j},m_{j})+\beta_{j}^{T}\dfrac{\partial{f}}{\partial{\theta}}(x|\theta_{j},\Sigma_{j},m_{j})+\trace 
\biggr (\dfrac{\partial{f}}{\partial{\Sigma}}(x|\theta_{j},\Sigma_{j},m_{j})^{T}\gamma_{j}
\biggr )
+\dfrac{\partial{f}}{\partial{m}}(x|\theta_{j},\Sigma_{j},m_{j})}=0, \nonumber
\end{eqnarray}
Substituting the first derivatives of $f$ to get
\begin{eqnarray}
\mathop {\sum }\limits_{j=1}^{k}{
\biggr \{\alpha_{j}'+ 
\biggr ((\beta_{j}')^{T}(x-\theta_{j})+(x-\theta_{j})^{T}\gamma_{j}'(x-\theta_{j})
\biggr )
\biggr [(x-\theta_{j})^{T}\Sigma_{j}^{-1}(x-\theta_{j}) 
\biggr ]^{m_{j}-1}} & + & \nonumber \\
\eta_{j}'\log((x-\theta_{j})^{T}\Sigma_{j}^{-1}(x-\theta_{j}))\biggr \}
\exp \left(-\biggr [(x-\theta_{j})^{T}\Sigma_{j}^{-1}(x-\theta_{j})
\biggr ]^{m_{j}}\right) & = & 0, \label{eq:mulgaussian1}
\end{eqnarray} 
where \\
$\alpha_{j}'=\dfrac{2\alpha_{j}m_{j}\Gamma(d/2)-m_{j}\Gamma(d/2)\trace(\Sigma_{j}^{-1}\gamma_{j})+2\eta_{j}\Gamma(d/2)\left(1-\dfrac{d}{2m_{j}}\psi\left(\dfrac{d}{2m_{j}}\right)\right)}{2\pi^{d/2}\Gamma(d/(2m_{j}))|\Sigma_{j}|^{1/2}}$, \\ $\beta_{j}'=\dfrac{2m_{j}^{2}\Gamma(d/2)}{\pi^{d/2}\Gamma(d/(2m_{j}))|\Sigma_{j}|^{1/2}}\Sigma_{j}^{-1}\beta_{j}$,  $\gamma_{j}'=\dfrac{m_{j}^{2}\Gamma(d/2)}{\pi^{d/2}\Gamma(d/(2m_{j}))|\Sigma_{j}|^{1/2}}\Sigma_{j}^{-1}\gamma_{j}\Sigma_{j}^{-1}$, and \\ $\eta_{j}'=\dfrac{-m_{j}\eta_{j}\Gamma(d/2)}{\pi^{d/2}\Gamma(d/(2m_{j}))|\Sigma_{j}|^{1/2}}$.\\

Without loss of generality, assume $m_{1} \leq m_{2} \leq \ldots \leq m_{k}$. 
Let $\overline{i} \in [1,k]$ be the maximum index such that $m_{1}=m_{\overline{i}}$. 
As the tuples $(\theta_{i},\Sigma_{i},m_{i})$ are distinct, so are the pairs
$(\theta_{1},\Sigma_{1}),\ldots,(\theta_{\overline{i}},\Sigma_{\overline{i}})$.
In what follows, we represent $x$ by
$x=x_{1}x'$ where $x_{1}$ is scalar and $x' \in \mathbb{R}^{d}$.
Define 
\[a_{i}=(x')^{T}\gamma_{i}'x', \;\;
b_{i}=\left[(\beta_{i}')^{T}-2\theta_{i}^{T}\gamma_{i}'\right]x', \;\;
c_{i}=\theta_{i}^{T}\gamma_{i}'\theta_{i}-(\beta_{i}')^{T}\theta_{i},\]
\[d_{i}=(x')^{T}\Sigma_{i}^{-1}x',\;\; 
e_{i}=-2(x')^{T}\Sigma_{i}^{-1}\theta_{i}, \;\;
f_{i}=\theta_{i}^{T}\Sigma_{i}^{-1}\theta_{i}.\]
Borrowing a technique from~\cite{Yakowitzspragins-1968}, since $(\theta_{1},\Sigma_{1}),\ldots,(\theta_{\overline{i}},\Sigma_{\overline{i}})$ are distinct, we have two possibilities:
\paragraph{(i)} If $\Sigma_{j}$ are the same for all $1 \leq j \leq \overline{i}$, then $\theta_{1},\ldots,\theta_{\overline{i}}$ are distinct. For any $i<j$, denote $\Delta_{ij}=\theta_{i}-\theta_{j}$. Note that if $\displaystyle{x' \notin \mathop {\bigcup }\limits_{1 \leq i<j \leq \overline{i}}{\left\{u \in \mathbb{R}^{d}:u^{T}\Delta_{ij}=0\right\}}}$, which is a finite union of hyperplanes, then $(x')^{T}\theta_{1},\ldots,(x')^{T}\theta_{\overline{i}}$ are distinct. Hence, if we choose $x'  \in \mathbb{R}^{d}$ outside this finite union of hyperplanes, we have $((x')^{T}\theta_{1},(x')^{T}\Sigma_{1}x'),\ldots,((x')^{T}\theta_{\overline{i}},(x')^{T}\Sigma_{\overline{i}}x')$ are distinct.
\paragraph{(ii)} If $\Sigma_{j}$ are not the same for all $1 \leq j \leq \overline{i}$, then we assume without loss of generality that $\Sigma_{1},\ldots,\Sigma_{m}$ are the only distinct matrices from $\Sigma_{1},\ldots,\Sigma_{\overline{i}}$, where $m \leq \overline{i}$. Denote $\delta_{ij}=\Sigma_{i}-\Sigma_{j}$ as $1 \leq i<j \leq m$, then as $\displaystyle{x' \notin \mathop {\bigcup }\limits_{1 \leq i<j \leq m}{\left\{u \in \mathbb{R}^{d}:u^{T}\delta_{ij}u=0 \right\}}}$, we have $(x')^{T}\Sigma_{1}x',\ldots,(x')^{T}\Sigma_{m}x'$ are distinct. Therefore, if $\displaystyle{x' \notin \mathop {\bigcup }\limits_{1 \leq i<j \leq m}{\left\{u \in \mathbb{R}^{d}:u^{T}\delta_{ij}u=0 \right\}}}$, which is finite union of conics, $((x')^{T}\theta_{1},(x')^{T}\Sigma_{1}x'),\ldots,
((x')^{T}\theta_{m},(x')^{T}\Sigma_{m}x')$ are distinct. Additionally, 
for any $\theta_{j}$ where $m+1 \leq j \leq \overline{i}$ that shares 
the same $\Sigma_{i}$ where $1 \leq i \leq m$, using the argument in the first case, we can choose 
$x'$ outside a finite hyperplane such that these $(x')^{T}\theta_{j}$ are again distinct. Hence, for 
$x'$ outside a finite union of conics and hyperplanes,
$((x')^{T}\theta_{1}, (x')^{T}\Sigma_{1}x'),\ldots, ((x')^{T}\theta_{\overline{i}},(x')^{T}\Sigma_{\overline{i}}x')$ are all different.

Combining these two cases, we can find a set 
$D$, which is a finite union of conics and hyperplanes, such that for 
$x' \notin D$, $((x')^{T}\theta_{1},(x')^{T}\Sigma_{1}x'),\ldots((x')^{T}\theta_{\overline{i}},(x')^{T}\Sigma_{\overline{i}}x')$ are distinct. Thus, $(d_{i},e_{i})$ are different as $1 \leq i \leq \overline{i}$. 

Choose $d_{i_{1}}=\mathop {\min }\limits_{1 \leq i \leq \overline{i}}{\left\{d_{i}\right\}}$. Denote $J=\left\{1 \leq i \leq \overline{i}: d_{i}=d_{i_{1}}\right\}$. Choose $1 \leq i_{2} \leq \overline{i}$ such that $e_{i_{2}}=\mathop {\max }\limits_{i \in J}{\left\{e_{i}\right\}}$. Multiply both sides of \eqref{eq:mulgaussian1} with $\exp-(d_{i_{2}}x_{1}^{2}+e_{i_{2}}x_{1}+f_{i_{2}})^{m_{i_{2}}}$, we get
\begin{eqnarray}
\alpha_{i_{2}}'+(a_{i_{2}}x_{1}^{2}+b_{i_{2}}x_{1}+c_{i_{2}})(d_{i_{2}}x_{1}^{2}+e_{i_{2}}x_{1}+f_{i_{2}})^{m_{i_{2}}-1}+\eta_{i_{2}}'\log(d_{i_{2}}x_{1}^{2}+e_{i_{2}}x_{1}+f_{i_{2}})  & + & \nonumber \\
\mathop {\sum }\limits_{j \neq i_{2}} \biggr \{\alpha_{j}' +(a_{j}x_{1}^{2}+b_{j}x_{1}+c_{j})(d_{j}x_{1}^{2}+e_{j}x_{1}+f_{j})^{m_{i_{2}}-1}+\eta_{j}'\log(d_{j}x_{1}^{2}+e_{j}x+f_{j})\biggr \} & \times & \nonumber \\
\exp\left[(d_{i_{2}}x_{1}^{2}+e_{i_{2}}x_{1}+f_{i_{2}})^{m_{i_{2}}}-(d_{j}x_{1}^{2}+e_{j}x_{1}+f_{j})^{m_{j}}\right] & = & 0. \quad \quad \label{eq:mulgaussian12}
\end{eqnarray}

Note that if $j \in J \backslash {\left\{i_{2}\right\}}$, $d_{j}=d_{i_{2}}$, $m_{j}=m_{i_{2}}$, and $e_{j} > e_{i_{2}}$. So,
\begin{eqnarray}
(d_{i_{2}}x_{1}^{2}+e_{i_{2}}x_{1}+f_{i_{2}})^{m_{i_{2}}}-(d_{j}x_{1}^{2}+e_{j}x_{1}+f_{j})^{m_{j}} \lesssim -x_{1} \ \text{as} \ x_{1} \ \text{is large enough}. \nonumber
\end{eqnarray}
This implies that when $x_{1} \to \infty$, 
\begin{eqnarray}
A_{1}(x) = \mathop {\sum }\limits_{j \neq J \backslash {\left\{i_{2}\right\}}} \biggr \{\alpha_{j}' +(a_{j}x_{1}^{2}+b_{j}x_{1}+c_{j})(d_{j}x_{1}^{2}+e_{j}x_{1}+f_{j})^{m_{i_{2}}-1}+\eta_{j}'\log(d_{j}x_{1}^{2}+e_{j}x +  f_{j})\biggr \} \times  \nonumber \\
\exp\left[(d_{i_{2}}x_{1}^{2}+e_{i_{2}}x_{1}+f_{i_{2}})^{m_{i_{2}}}-(d_{j}x_{1}^{2}+e_{j}x_{1}+f_{j})^{m_{j}}\right] \to  0. \nonumber
\end{eqnarray}
On the other hand, if 
$j \notin J$ and $1 \leq j \leq \overline{i}$, then $d_{j}>d_{i_{2}}$ and $m_{i_{2}}=m_{j}$. So,
\begin{eqnarray}
(d_{i_{2}}x_{1}^{2}+e_{i_{2}}x_{1}+f_{i_{2}})^{m_{i_{2}}}-(d_{j}x_{1}^{2}+e_{j}x_{1}+f_{j})^{m_{j}} \lesssim -x_{1}^{2m_{i_{2}}} \ \text{as} \ x_{1} \ \text{is large enough}. \nonumber
\end{eqnarray}
This implies that when $x_{1} \to \infty$, 
\begin{eqnarray}
A_{2}(x) = \mathop {\sum }\limits_{\substack{j \notin J, \\ 1 \leq j \leq \overline{i}}} \biggr \{\alpha_{j}' +(a_{j}x_{1}^{2}+b_{j}x_{1}+c_{j})(d_{j}x_{1}^{2}+e_{j}x_{1}+f_{j})^{m_{i_{2}}-1}+\eta_{j}'\log(d_{j}x_{1}^{2}+e_{j}x+f_{j})\biggr \}  \times  \nonumber \\
\exp\left[(d_{i_{2}}x_{1}^{2}+e_{i_{2}}x_{1}+f_{i_{2}})^{m_{i_{2}}}-(d_{j}x_{1}^{2}+e_{j}x_{1}+f_{j})^{m_{j}}\right]  \to  0. \nonumber
\end{eqnarray}
Or else, if $j>\overline{i}$, then $m_{j}>m_{i_{2}}$. 
So, $(d_{i_{2}}x_{1}^{2}+e_{i_{2}}x_{1}+f_{i_{2}})^{m_{i_{2}}}-(d_{j}x_{1}^{2}+e_{j}x_{1}+f_{j})^{m_{j}} \lesssim -x_{1}^{2m_{j}}$. As a result,
\begin{eqnarray}
A_{3}(x) = \mathop {\sum }\limits_{j > \overline{i}} \biggr \{\alpha_{j}' +(a_{j}x_{1}^{2}+b_{j}x_{1}+c_{j})(d_{j}x_{1}^{2}+e_{j}x_{1}+f_{j})^{m_{i_{2}}-1}+\eta_{j}'\log(d_{j}x_{1}^{2}+e_{j}x+f_{j})\biggr \}  \times  \nonumber \\
\exp\left[(d_{i_{2}}x_{1}^{2}+e_{i_{2}}x_{1}+f_{i_{2}})^{m_{i_{2}}}-(d_{j}x_{1}^{2}+e_{j}x_{1}+f_{j})^{m_{j}}\right]  \to  0. \nonumber
\end{eqnarray}
Now, by letting $x_1 \to \infty$,
\begin{eqnarray}
\mathop {\sum }\limits_{j \neq i_{2}} \biggr \{\alpha_{j}' +(a_{j}x_{1}^{2}+b_{j}x_{1}+c_{j})(d_{j}x_{1}^{2}+e_{j}x_{1}+f_{j})^{m_{i_{2}}-1}+\eta_{j}'\log(d_{j}x_{1}^{2}+e_{j}x+f_{j})\biggr \}  & \times & \nonumber \\
\exp\left[(d_{i_{2}}x_{1}^{2}+e_{i_{2}}x_{1}+f_{i_{2}})^{m_{i_{2}}}-(d_{j}x_{1}^{2}+e_{j}x_{1}+f_{j})^{m_{j}}\right] = A_{1}(x) + A_{2}(x) + A_{3}(x) & \to & 0. \quad  \label{eq:mulgaussian13}
\end{eqnarray}
Combing \eqref{eq:mulgaussian12} and \eqref{eq:mulgaussian13}, we obtain that as $x_{1} \to \infty$
\begin{eqnarray}
\alpha_{i_{2}}'+(a_{i_{2}}x_{1}^{2}+b_{i_{2}}x_{1}+c_{i_{2}})(d_{i_{2}}x_{1}^{2}+e_{i_{2}}x_{1}+f_{i_{2}})^{m_{i_{2}}-1}+\eta_{i_{2}}'\log(d_{i_{2}}x_{1}^{2}+e_{i_{2}}x_{1}+f_{i_{2}}) \to 0.  \nonumber
\end{eqnarray}

The only possibility for this result to happen is $a_{i_{2}}=b_{i_{2}}=\eta_{i_{2}}'=0$. Or,
equivalently, $(x')^{T}\gamma_{i_{2}}'x'=\left[(\beta_{i}')^{T}-2\theta_{i_{2}}^{T}\gamma_{i_{2}}'\right]x'=0$. If $\gamma_{i_{2}}' \neq 0$, we can choose the element $x' \notin D$ lying outside the hyperplane $\left\{u \in \mathbb{R}^{d}:u^{T}\gamma_{i_{2}}'u=0\right\}$. It means that $(x')^{T}\gamma_{i_{2}}'x' \neq 0$, which is a contradiction. Therefore, $\gamma_{i_{2}}'=0$. It implies that $(\beta_{i_{2}}')^{T}x'=0$. If $\beta_{i_{2}}' \neq 0$, we can choose $x' \notin D$ such that $(\beta_{i_{2}}')^{T}x' \neq 0$. Hence, $\beta_{i_{2}}'=0$. With these results, $\alpha_{i_{2}}'=0$. Overall, we obtain $\alpha_{i_{2}}'=\beta_{i_{2}}'=\gamma_{i_{2}}'=\eta_{i_{2}}'=0$. Repeating the same argument to the remained parameters $\alpha_{j}',\beta_{j}',\gamma_{j}',\eta_{j}'$ and we get $\alpha_{j}'=\beta_{j}'=\gamma_{j}'=\eta_{j}'=0$ for $1 \leq j \leq k$. It is also equivalent that $\alpha_{j}=\beta_{j}=\gamma_{j}=\eta_{j}=0$ for all $1 \leq j \leq k$.

This concludes the proof of part (a) of our theorem.

\paragraph{PROOF OF THEOREM~\ref{theorem:identifiabilitytransformation}.}
The proof is a straightforward application of the chain rule.
\paragraph{``If'' direction:} 
Let $k \geq 1$ and let $(\eta_{1}^{*},\Lambda_{1}^{*}),(\eta_{2}^{*},\Lambda_{2}^{*})\ldots,(\eta_{k}^{*},\Lambda_{k}^{*}) \in \Theta^{*} \times \Omega^{*}$ be $k$ different pairs. 
Suppose there are
$\alpha_{i} \in \mathbb{R},\beta_{i} \in \mathbb{R}^{d_{1}}$, and symmetric matrices $\gamma_{i} \in \mathbb{R}^{d_{2} \times d_{2}}$ such that
\begin{eqnarray}
\mathop {\sum }\limits_{i=1}^{k}{\alpha_{i}g(x|\eta_{i}^{*},\Lambda_{i}^{*})+\beta_{i}^{T}\dfrac{\partial{g}}{\partial{\eta}}(x|\eta_{i}^{*},\Lambda_{i}^{*})+\trace\left(\dfrac{\partial{g}}{\partial{\Lambda}}(x|\eta_{i}^{*},\Lambda_{i}^{*})^{T}\gamma_{i}\right)}=0 \ \ \text{for almost all} \ x. \quad  \label{eq:bijective1}
\end{eqnarray}

Let $(\theta_{i},\Sigma_{i}):= T(\eta_{i}^{*},\Lambda_{i}^{*})$ for $i=1,\ldots, k$. 
Since $T$ is bijective,
$(\theta_{1},\Sigma_{1}),(\theta_{2},\Sigma_{2}),\ldots,(\theta_{k},\Sigma_{k})$ are distinct.
By the chain rule,
\begin{eqnarray}
\dfrac{\partial{g}}{\partial{\eta_{i}}}(x|\eta,\Lambda) & = &\mathop {\sum }\limits_{l=1}^{d_{1}}\dfrac{\partial{f}}{\partial{\theta_{l}}}(x|\theta,\Sigma)\dfrac{\partial{\theta_{l}}}{\partial{\eta_{i}}}+\mathop {\sum }\limits_{1 \leq u,v \leq d_{2}}{\dfrac{\partial{f}}{\partial{\Sigma_{uv}}}(x|\theta,\Sigma)\dfrac{\partial{\Sigma_{uv}}}{\partial{\eta_{i}}}} \nonumber \\
& = & \mathop {\sum }\limits_{l=1}^{d_{1}}\dfrac{\partial{f}}{\partial{\theta_{l}}}(x|\theta,\Sigma)\dfrac{\partial{\left[T_{1}(\eta,\Lambda)\right]_{l}}}{\partial{\eta_{i}}}+\mathop {\sum }\limits_{1 \leq u,v \leq d_{2}}{\dfrac{\partial{f}}{\partial{\Sigma_{uv}}}(x|\theta,\Sigma)\dfrac{\partial{\left[T_{2}(\eta,\Lambda)\right]_{uv}}}{\partial{\eta_{i}}}} \nonumber
\end{eqnarray}
and similarly,
\begin{eqnarray}
\dfrac{\partial{g}}{\partial{\Lambda_{ij}}}(x|\eta,\Lambda)= \mathop {\sum }\limits_{l=1}^{d_{1}}\dfrac{\partial{f}}{\partial{\theta_{l}}}(x|\theta,\Sigma)\dfrac{\partial{\left[T_{1}(\eta,\Lambda)\right]_{l}}}{\partial{\Lambda_{ij}}}+\mathop {\sum }\limits_{1 \leq u,v \leq d_{2}}{\dfrac{\partial{f}}{\partial{\Sigma_{uv}}}(x|\theta,\Sigma)\dfrac{\partial{\left[T_{2}(\eta,\Lambda)\right]_{uv}}}{\partial{\Lambda_{ij}}}}, \nonumber
\end{eqnarray}
where $\eta=(\eta_{1},\ldots,\eta_{d_{1}})$ and $\Sigma=[\Sigma_{ij}]$ where $1 \leq i,j \leq d_{2}$.
Equation \eqref{eq:bijective1} can be rewritten accordingly as follows
\begin{eqnarray}
\mathop {\sum }\limits_{i=1}^{k}{\alpha_{i}f(x|\theta_{i},\Sigma_{i})+(\beta_{i}')^{T}\dfrac{\partial{f}}{\partial{\theta}}(x|\theta_{i},\Sigma_{i})+\trace\left(\dfrac{\partial{f}}{\partial{\Sigma}}(x|\theta_{i},\Sigma_{i})^{T}\gamma_{i}'\right)}=0 \ \ \text{for almost all} \ x. \quad  \label{eq:bijective2}
\end{eqnarray}
where $\beta_{i}'=((\beta_{i}')^{1},\ldots,(\beta_{i}')^{d_{1}})$, $\gamma_{i}'=[\gamma_{i}']^{uv}$, $\eta_{i}=((\eta_{i})^{1},\ldots,(\eta_{i})^{d_{1}})$, $\Lambda_{i}=[\Lambda_{i}]^{uv}$, $\beta_{i}=(\beta_{i}^{1},\ldots,\beta_{i}^{d_{1}})$, $\gamma_{i}=[\gamma_{i}]^{uv}$, and for all $1 \leq j \leq d_{1}$
\begin{eqnarray}
(\beta_{i}')^{j}=\mathop {\sum }\limits_{h=1}^{d_{1}}{\beta_{i}^{h}\dfrac{\partial{\left[T_{1}(\eta_{i}^{*},\Lambda_{i}^{*})\right]_{j}}}{\partial{(\eta_{i})^{h}}}}+\mathop {\sum }\limits_{1 \leq u,v \leq d_{2}}{\gamma_{i}^{uv}\dfrac{\partial{\left[T_{1}(\eta_{i}^{*},\Lambda_{i}^{*})\right]_{j}}}{\partial{(\Lambda_{i})^{uv}}}} \nonumber
\end{eqnarray}
and for all $1 \leq j,l \leq d_{2}$
\begin{eqnarray}
(\gamma_{i}')^{jl}=\mathop {\sum }\limits_{h=1}^{d_{1}}{\beta_{i}^{h}\dfrac{\partial{\left[T_{2}(\eta_{i}^{*},\Lambda_{i}^{*})\right]_{jl}}}{\partial{(\eta_{i})^{h}}}}+\mathop {\sum }\limits_{1 \leq u,v \leq d_{2}}{\gamma_{i}^{uv}\dfrac{\partial{\left[T_{2}(\eta_{i}^{*},\Lambda_{i}^{*})\right]_{jl}}}{\partial{(\Lambda_{i})^{uv}}}}. \nonumber
\end{eqnarray}

Given that
$\left\{f(x|\theta,\Sigma),\theta \in \Theta, \Sigma \in \Omega\right\}$
is identifiable in the first order, Eq.~\eqref{eq:bijective2} 
entails that $\alpha_{i}=0,\beta_{i}'=\vec{0} \in \mathbb{R}^{d_{1}}$, and $\gamma_{i}'=\vec{0} \in \mathbb{R}^{d_{2} \times d_{2}}$. From the definition of modified Jacobian matrix $J$, the equations $\beta_{i}'=\vec{0}$ and $\gamma_{i}'=\vec{0}$ are equivalent to system of equations $J(\eta_{i}^{*},\Lambda_{i}^{*})\tau_{i}=0$, where $\tau_{i}^{T}=(\beta_{i},\gamma_{i}^{11},\ldots,\gamma_{i}^{1d_{2}},\gamma_{i}^{21},\ldots.,\gamma_{i}^{2d_{2}},\ldots.,\gamma_{i}^{d_{2}1},\ldots,\gamma_{i}^{d_{2}d_{2}}) \in \mathbb{R}^{d_{1}+d_{2}^{2}}$. Since $|J(\eta_{i}^{*},\Lambda_{i}^{*})| \neq 0$, the above system of equations has unique solution $\tau_{i}=0$ for all $1 \leq i \leq k$. These results imply that $\beta_{i}=\vec{0} \in \mathbb{R}^{d_{1}}$ and $\gamma_{i}=\vec{0} \in \mathbb{R}^{d_{2} \times d_{2}}$. Thus, $g$ is also identifiable in the first order.

\paragraph{``Only if'' direction.}
Assume by contrary that the modified Jacobian matrix $J(\eta,\Lambda)$ is not non-singular for all $(\eta,\Lambda) \in \Theta^{*} \times \Omega^{*}$. Then, we can find $(\eta_{0},\Lambda_{0}) \in \Theta^{*} \times \Omega^{*}$ such that $J(\eta_{0},\Lambda_{0})$ is singular matrix. 
Choose $k=1$ and assume that we can find $\alpha_{1} \in \mathbb{R},\beta_{1} \in \mathbb{R}^{d_{1}}$, and symmetric matrix $\gamma_{1} \in \mathbb{R}^{d_{2} \times d_{2}}$ such that:
\begin{eqnarray}
\alpha_{1}g(x|\eta_{0},\Lambda_{0})+\beta_{1}^{T}\dfrac{\partial{g}}{\partial{\eta}}(x|\eta_{0},\Lambda_{0})+\trace\left(\dfrac{\partial{g}}{\partial{\Lambda}}(x|\eta_{0},\Lambda_{0})^{T}\gamma_{1}\right)=0 \ \text{ for almost all} \ x. \nonumber
\end{eqnarray} 
The first-order identifiability of class $\left\{g(x|\eta,\Lambda),\eta \in \Theta^{*},\Lambda \in \Omega^{*}\right\}$ implies that $\alpha_{1}=0$, $\beta_{1}=\vec{0} \in \mathbb{R}^{d_{1}}$, and $\gamma_{1}=\vec{0} \in \mathbb{R}^{d_{2} \times d_{2}}$ are the only possibility for the above equation to hold. However, 
by the same argument as in the first part of the proof, we may rewrite the above equation as
\begin{eqnarray}
\alpha_{1}f(x|\theta_{0},\Sigma_{0})+(\beta_{1}')^{T}\dfrac{\partial{f}}{\partial{\theta}}(x|\theta_{0},\Sigma_{0})+\trace\left(\dfrac{\partial{f}}{\partial{\Sigma}}(x|\theta_{0},\Sigma_{0})^{T}\gamma_{1}'\right)=0 \ \text{for almost all} \ x,\nonumber
\end{eqnarray}
where $T(\eta_{0},\Lambda_{0})=(\theta_{0},\Sigma_{0})$, and $\beta_{1}'$, $\gamma_{1}'$ 
have the same formula as given above. 
The first-order identifiability of $\left\{f(x|\theta,\Sigma),\theta \in \Theta,\Sigma \in \Omega\right\}$ implies that $\beta_{1}'=\vec{0} \in \mathbb{R}^{d_{1}}$ and $\gamma_{1}'=\vec{0} \in \mathbb{R}^{d_{2} \times d_{2}}$. The last equation leads to the system of equations $J(\eta_{0},\Lambda_{0})\tau=0$, where 
\[\tau^{T}=\left(\beta_{1},\gamma_{1}^{11},\ldots,\gamma_{1}^{1d_{2}},\gamma_{1}^{21},\ldots,\gamma_{1}^{2d_{2}},\ldots,\gamma_{1}^{d_{2}1},\ldots,\gamma_{1}^{d_{2}d_{2}}\right).\]
However, the non-singularity of matrix $J(\eta_{0},\Lambda_{0})$ leads to 
non-uniquesness of the solution $\tau$ of this system of equations.
This contradicts with the uniqueness of the solution $\alpha_{1}=0$, $\beta_{1}=\vec{0} \in \mathbb{R}^{d_{1}}$, and $\gamma_{1}=\vec{0} \in \mathbb{R}^{d_{2} \times d_{2}}$. 
The proof is complete.

%%%%%%%%%%%%%%%%%%%%%%%%%%%%%%%%%%%%%%%%%%%%%%%%%%%%%%%%%%%%%%%%%%%%%%%%%%%%%%%%%
\subsection{Over-fitted location-covariance Gaussian mixtures}
%%%%%%%%%%%%%%%%%%%%%%%%%%%%%%%%%%%%%%%%%%%%%%%%%%%%%%%%%%%%%%%%%%%%%%%%%%%%%%%%%

\begin{lemma} \label{lemma:multivariatenormaldistribution} Let $\left\{f(x|\theta, \Sigma), \theta \in \mathbb{R}^{d}, \Sigma \in S_{d}^{++} \right\}$ be a class of multivariate Gaussian distribution. Then, $\dfrac{\partial^{2}{f}}{\partial{\theta}^{2}}(x|\theta,\Sigma)=2\dfrac{\partial{f}}{\partial{\Sigma}}(x|\theta,\Sigma)$ for all $\theta \in \mathbb{R}^{d}$ and $\Sigma \in S_{d}^{++}$.
\end{lemma}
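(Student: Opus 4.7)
The plan is to verify this identity by a direct computation in matrix calculus; it is essentially a case of working out both sides explicitly and observing that the quadratic-form factors match up with a factor of $2$.

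First I would compute $\frac{\partial f}{\partial \theta}$ by applying the log-derivative trick. Writing $\log f(x|\theta,\Sigma) = -\tfrac{d}{2}\log(2\pi) - \tfrac{1}{2}\log|\Sigma| - \tfrac{1}{2}(x-\theta)^{T}\Sigma^{-1}(x-\theta)$ and differentiating in $\theta$ gives $\frac{\partial \log f}{\partial \theta} = \Sigma^{-1}(x-\theta)$, hence $\frac{\partial f}{\partial \theta} = f(x|\theta,\Sigma)\,\Sigma^{-1}(x-\theta)$. Then applying the product rule again,
\begin{equation*}
\frac{\partial^{2} f}{\partial \theta^{2}}(x|\theta,\Sigma) = f(x|\theta,\Sigma)\Bigl(\Sigma^{-1}(x-\theta)(x-\theta)^{T}\Sigma^{-1} - \Sigma^{-1}\Bigr),
\end{equation*}
the negative term coming from the derivative $\partial_{\theta}\bigl[\Sigma^{-1}(x-\theta)\bigr] = -\Sigma^{-1}$.

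Next I would compute $\frac{\partial f}{\partial \Sigma}$ entrywise, using the standard matrix-calculus identities $\frac{\partial \log|\Sigma|}{\partial \Sigma_{ij}} = (\Sigma^{-1})_{ij}$ and $\frac{\partial \Sigma^{-1}}{\partial \Sigma_{ij}} = -\Sigma^{-1}E_{ij}\Sigma^{-1}$, where $E_{ij}$ is the standard basis matrix. A short calculation yields $\frac{\partial}{\partial \Sigma_{ij}}\bigl[(x-\theta)^{T}\Sigma^{-1}(x-\theta)\bigr] = -[\Sigma^{-1}(x-\theta)(x-\theta)^{T}\Sigma^{-1}]_{ij}$, so that
\begin{equation*}
\frac{\partial f}{\partial \Sigma}(x|\theta,\Sigma) = \tfrac{1}{2}f(x|\theta,\Sigma)\Bigl(\Sigma^{-1}(x-\theta)(x-\theta)^{T}\Sigma^{-1} - \Sigma^{-1}\Bigr).
\end{equation*}

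Comparing the two displays termwise gives $\frac{\partial^{2} f}{\partial \theta^{2}} = 2\frac{\partial f}{\partial \Sigma}$, which is the claimed identity. There is no real obstacle here beyond bookkeeping; the only subtlety is being consistent about the convention that $\frac{\partial g}{\partial \Sigma}$ denotes the matrix whose $(i,j)$ entry is $\partial g/\partial \Sigma_{ij}$ (as defined in the Notation paragraph of the paper), and making sure not to prematurely impose symmetry of $\Sigma$ when differentiating with respect to individual entries — the factor of $2$ arises cleanly from the $\tfrac{1}{2}$ in the Gaussian exponent combined with the product-rule doubling in $\partial_{\theta}^{2}$.
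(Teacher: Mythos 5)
Your proof is correct and takes essentially the same approach as the paper: both simply compute $\partial^2 f/\partial\theta^2$ and $\partial f/\partial\Sigma$ explicitly and observe that the resulting expressions differ by a factor of $2$. The paper just records the two final formulas; you have spelled out the intermediate log-derivative and entrywise $\partial/\partial\Sigma_{ij}$ steps, and your aside about not imposing symmetry when differentiating entrywise correctly matches the paper's convention from the Notation section.
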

\begin{proof}
Direct calculation yields
\begin{eqnarray}
\dfrac{\partial^{2}{f}}{\partial{\theta^{2}}}(x|\theta,\Sigma) & = & \dfrac{1}{(\sqrt{2\pi})^{d}|\Sigma|^{1/2}}\left[-\Sigma^{-1}+\Sigma^{-1}(x-\theta)(x-\theta)^{T}\Sigma^{-1}\right]\exp\left(-\dfrac{(x-\theta)^{T}\Sigma^{-1}(x-\theta)}{2}\right), \nonumber \\
\dfrac{\partial{f}}{\partial{\Sigma}}(x|\theta,\Sigma) & = & \dfrac{1}{2(\sqrt{2\pi})^{d}|\Sigma|^{1/2}}\left[-\Sigma^{-1}+\Sigma^{-1}(x-\theta)(x-\theta)^{T}\Sigma^{-1}\right]\exp\left(-\dfrac{(x-\theta)^{T}\Sigma^{-1}(x-\theta)}{2}\right). \nonumber
\end{eqnarray}
From these results, we can easily check the conclusion of our lemma.
\end{proof}

\paragraph{PROOF OF PROPOSITION~\ref{theorem:Gaussianoverfittedbytwo}.}
We only consider the case $k-k_{0}=1$ (the proof for the case $k-k_{0}=2$ 
is rather similar, and deferred to Appendix II). 
As in the proof of Theorem \ref{theorem:generaloverfittedGaussian}, 
it suffices to show for $d=1$ that
\begin{eqnarray}
\mathop {\lim }\limits_{\epsilon \to 0}{\mathop {\inf }\limits_{G \in \mathcal{O}_{k}(\Theta)}{\left\{\mathop {\sup }\limits_{x \in \mathcal{X}}{|p_{G}(x)-p_{G_{0}}(x)|}/W_{4}^{4}(G,G_{0}):W_{4}(G,G_{0}) \leq \epsilon \right\}}} > 0. \label{eqn:overfittedGaussianbytwofirst}
\end{eqnarray}
Denote $v=\sigma^{2}$. Assume that the above result does not hold, i.e we can find a sequence of $G_{n}=\mathop {\sum }\limits_{i=1}^{k_{0}+m}{\mathop {\sum }\limits_{j=1}^{s_{i}}{p_{ij}^{n}\delta_{(\theta_{ij}^{n},v_{ij}^{n})}}} \to G_{0}$ in $W_{4}$ where $(p_{ij}^{n},\theta_{ij}^{n},v_{ij}^{n}) \to (p_{i}^{0},\theta_{i}^{0},v_{i}^{0})$ for all $1 \leq i \leq k_{0}+m$, $1 \leq j \leq s_{i}$ and $p_{i}^{0}=0$ as $k_{0}+1 \leq i \leq k_{0}+m$. As $k-k_{0}=1$, we have $m \leq 1$. 
Repeating the same arguments as the
proof of Theorem \ref{theorem:generaloverfittedGaussian} up to Step 8, and noting
that $\mathop {\sum }\limits_{i=1}^{k_{0}+m}{\mathop {\sum }\limits_{j=1}^{s_{i}}{p_{ij}^{n}|\Delta v_{ij}^{n}|^{4}}}/d(G_{n},G_{0}) \to 0$, we can find $i^{*} \in \left\{1,2,\ldots,k_{0}+m\right\}$ such that as long as $1 \leq \alpha \leq 4$
\begin{eqnarray}
F_{\alpha}'(\theta_{i^{*}}^{0},v_{i^{*}}^{0})=\dfrac{\mathop {\sum }\limits_{j=1}^{s_{i^{*}}}{p_{i^{*}j}^{n}(|\Delta \theta_{i^{*}j}^{n}|^{4}+|\Delta v_{i^{*}j}^{n}|^{4})}}{\mathop {\sum }\limits_{j=1}^{s_{i^{*}}}{p_{i^{*}j}^{n}|\Delta \theta_{i^{*}j}^{n}|^{4}}}F_{\alpha}(\theta_{i^{*}}^{0},v_{i^{*}}^{0}) = \dfrac{\mathop {\sum }\limits_{j=1}^{s_{i^{*}}}{p_{i^{*}j}^{n}\mathop {\sum }\limits_{n_{1},n_{2}}{\dfrac{(\Delta \theta_{i^{*}j}^{n})^{n_{1}}(\Delta v_{i^*j}^{n})^{n_{2}}}{n_{1}!n_{2}!}}}}{\mathop {\sum }\limits_{j=1}^{s_{i^{*}}}{p_{i^{*}j}^{n}|\Delta \theta_{i^{*}j}^{n}|^{4}}} \to 0, \label{eqn:overfittedGaussianbytwo-one}
\end{eqnarray}
where $n_{1}+2n_{2}=\alpha$ and $1 \leq \alpha \leq 4$. As $i^{*} \in \left\{1,2,\ldots,k_{0}+m\right\}$, we have $i^{*} \in \left\{1,\ldots,k_{0}\right\}$ or $i^{*} \in \left\{k_{0}+1,\ldots,k_{0}+m\right\}$. Firstly, we assume that $i^{*} \in \left\{1,\ldots,k_{0}\right\}$. Without loss of generality, let $i^{*}=1$. 
Since $s_{1} \leq k-k_0+1 = 2$, there are two possibilities.

\paragraph{Case 1.} If $s_{1}=1$, then $F_{1}'(\theta_{1}^{0},v_{1}^{0})=\Delta \theta_{11}^{n}/|\Delta \theta_{11}^{n}|^{4} \not \to 0$, which is a contradiction. 

\paragraph{Case 2.} If $s_{1}=2$, without loss of generality, we assume that $p_{11}^{n}|\Delta \theta_{11}^{n}| \leq p_{12}^{n} |\Delta \theta_{12}^{n}|$ for infinitely many $n$, 
which we can assume to hold for all $n$ (by choosing the subsequence). 
Since $p_{11}^{n}(\Delta \theta_{11}^{n})^{4}+p_{12}^{n}(\Delta \theta_{12}^{n})^{4}>0$, we obtain $\theta_{12}^{n} \neq 0$ for all $n$. If $\Delta \theta_{11}^{n}=0$ for infinitely many $n$, then $F_{1}'(\theta_{1}^{0},v_{1}^{0})=\Delta \theta_{12}^{n}/(\Delta \theta_{12}^{n})^{4} \not \to 0$, which is a contradiction. 
Therefore, we may assume $\theta_{11}^{n} \neq 0$ for all $n$.
Let $a: = \mathop {\lim }\limits_{n \to \infty}{p_{11}^{n}\Delta \theta_{11}^{n}/p_{12}^{n}
\Delta \theta_{12}^{n}}\in [-1,1]$.
Dividing both the numerator and denominator of $F_{1}'(\theta_{1}^{0},v_{1}^{0})$ by $p_{12}^{n}\Delta \theta_{12}^{n}$ and letting $n \to \infty$, we obtain $a=-1$. Consider
the following scenarios regarding $p_{11}^{n}/p_{12}^{n}$:

\paragraph{(i)} If $p_{11}^{n}/p_{12}^{n} \to \infty$, then $\Delta \theta_{11}^{n}/\Delta \theta_{12}^{n} \to 0$. Since $\Delta \theta_{11}^{n},\Delta \theta_{12}^{n} \neq 0$, denote $\Delta v_{11}^{n}=k_{1}^{n}(\Delta \theta_{11}^{n})^{2}$, $\Delta v_{12}^{n}=k_{2}^{n} (\Delta \theta_{12}^{n})^{2}$ for all $n$. Now, by dividing the numerator and denominator of $F_{2}'(\theta_{1}^{0},v_{1}^{0}), F_{3}'(\theta_{1}^{0},v_{1}^{0}),$\\$ F_{4}'(\theta_{1}^{0},v_{1}^{0})$ by $p_{12}^{n}(\Delta \theta_{12}^{n})^{2}$, $p_{12}^{n}(\Delta \theta_{12}^{n})^{3}$, and $p_{12}^{n}(\Delta \theta_{12}^{n})^{4}$ respectively, we obtain
\begin{eqnarray}
M_{n,1} & = &\dfrac{1}{2}+k_{2}^{n}+k_{1}^{n}\dfrac{p_{11}^{n}(\Delta \theta_{11}^{n})^{2}}{p_{12}^{n}(\Delta \theta_{12}^{n})^{2}}\to 0, \nonumber \\
M_{n,2} & = & \dfrac{1}{3!}+k_{2}^{n}+k_{1}^{n}\dfrac{p_{11}^{n}(\Delta \theta_{11}^{n})^{3}}{p_{12}^{n}(\Delta \theta_{12}^{n})^{3}}\to 0, \nonumber \\
M_{n,3} & = & \dfrac{1}{4!}+\dfrac{k_{2}^{n}}{2}+\dfrac{(k_{2}^{n})^{2}}{2}+\left(\dfrac{k_{1}^{n}}{2}+\dfrac{(k_{1}^{n})^{2}}{2}\right)\dfrac{p_{11}^{n}(\Delta \theta_{11}^{n})^{4}}{p_{12}^{n}(\Delta \theta_{12}^{n})^{4}}\to 0. \nonumber
\end{eqnarray}
If $|k_{1}^{n}|,|k_{2}^{n}| \to \infty$ then $M_{n,3}>\dfrac{1}{4!}$ for sufficiently
large $n$, which is a contradiction. Therefore, at least one of $|k_{1}^{n}|$, $|k_{2}^{n}|$ does not converge to $\infty$. If $|k_{1}^{n}| \to \infty$ and $|k_{2}^{n}| \not\to \infty$ then $M_{n,1}$ implies that $|k_{1}^{n}\dfrac{p_{11}^{n}(\Delta \theta_{11}^{n})^{2}}{p_{12}^{n}(\Delta \theta_{12}^{n})^{2}}| \not\to \infty$. Therefore, $|k_{1}^{n}\dfrac{p_{11}^{n}(\Delta \theta_{11}^{n})^{3}}{p_{12}^{n}(\Delta \theta_{12}^{n})^{3}}| \to 0$ as $\Delta \theta_{11}^{n}/\Delta \theta_{12}^{n} \to 0$ and $k_{1}^{n}\dfrac{(\Delta \theta_{11}^{n})^{2}}{\Delta \theta_{12}^{n})^{2}} \to 0$ as $p_{11}^{n}/p_{12}^{n} \to \infty$. Combining these results with $M_{n,3}, M_{n,4}$, we get $k_{2}^{n} + \dfrac{1}{3!} \to 0$ and $\dfrac{1}{4!}+\dfrac{k_{2}^{n}}{2}+\dfrac{(k_{2}^{n})^{2}}{2} \to 0$, which cannot happen. If $|k_{1}^{n}| \not \to \infty$, then $M_{n,1}$ and $M_{n,2}$ implies that $k_{2}^{n}+1/2 \to 0$ and $k_{2}^{n}+1/6 \to 0$, which cannot happen either. 
As a consequence, $p_{11}^{n}/p_{12}^{n} \not \to \infty$.

\paragraph{(ii)} If $p_{11}^{n}/p_{12}^{n} \to 0$ then $p_{12}^{n}/p_{11}^{n} \to \infty$. Since $p_{11}^{n}\Delta \theta_{11}^{n}/p_{12}^{n} \Delta \theta_{12}^{n} \to -1$, we have $|\Delta \theta_{11}^{n}/\Delta \theta_{12}^{n}| \to \infty$ or equivalently $\Delta \theta_{12}^{n}/\Delta \theta_{11}^{n} \to 0$. From here, using the same argument 
as that above, we are also led to a contradiction. So, $p_{11}^{n}/p_{12}^{n} \not \to 0$.

\paragraph{(iii)} If $p_{11}^{n}/p_{12}^{n} \to b \not\in \left\{0,\infty\right\}$. It also means that $\Delta \theta_{11}^{n}/\Delta \theta_{12}^{n} \to -1/b$. Therefore, by dividing the numerator and denominator of $F_{2}'(\theta_{1}^{0},v_{1}^{0}), F_{3}'(\theta_{1}^{0},v_{1}^{0}), F_{4}'(\theta_{1}^{0},v_{1}^{0})$ by $p_{12}^{n}(\Delta \theta_{12}^{n})^{2}$, $p_{12}^{n}(\Delta \theta_{12}^{n})^{3}$, and $p_{12}^{n}(\Delta \theta_{12}^{n})^{4}$ and let $n \to \infty$, we arrive at the scaling system of equations~\eqref{eqn:generalovefittedGaussianzero}
when $r=4$ for which we already know that non-trivial solution does not exist. 
Therefore, the case $s_{1}=2$ cannot happen.\\

As a consequence, $i^{*} \not \in \left\{1,\ldots,k_{0} \right\}$. However, since $m \leq 1$, we 
have $i^{*}=k_{0}+1$. This implies that $s_{k_{0}+1}=1$, which we already know from 
Case 1 that \eqref{eqn:overfittedGaussianbytwo-one} cannot hold. 
This concludes the proof.

\subsection{Mixture of Gamma distributions}

\paragraph{PROOF OF THEOREM~\ref{theorem:exactfittedGamma}.}
(a) For the range of generic parameter values of $G_0$, we shall show that
the first-order identifiability still holds for Gamma mixtures, so that the conclusion
can be drawn immediately from Theorem \ref{theorem-firstorder}.
It suffices to show that for 
any $\alpha_{ij} \in \mathbb{R}(1 \leq i \leq 3, 1 \leq j \leq k_{0})$ such that for almost sure $x>0$
\begin{eqnarray}
\mathop {\sum }\limits_{i=1}^{k_{0}}{\alpha_{1i}f(x|a_{i}^{0},b_{i}^{0})+\alpha_{2i}\dfrac{\partial{f}}{\partial{a}}(x|a_{i}^{0},b_{i}^{0})+\alpha_{3i}\dfrac{\partial{f}}{\partial{b}}(x|a_{i}^{0},b_{i}^{0})}=0 \label{eqn:exactfittedGammazero}
\end{eqnarray}
then $\alpha_{ij}=0$ for all $i,j$. Equation \eqref{eqn:exactfittedGammazero} is
rewritten as
\begin{eqnarray}
\mathop {\sum }\limits_{i=1}^{k_{0}}{\left(\beta_{1i}x^{a_{i}^{0}-1}+\beta_{2i}\log(x)x^{a_{i}^{0}-1}+\beta_{3i}x^{a_{i}^{0}}\right)\exp(-b_{i}^{0}x)}=0, \label{eqn:exactfittedGammaone}
\end{eqnarray}
where $\beta_{1i}=\alpha_{1i}\dfrac{(b_{i}^{0})^{a_{i}^{0}}}{\Gamma(a_{i}^{0})}+\alpha_{2i}\dfrac{(b_{i}^{0})^{a_{i}^{0}}(\log(b_{i}^{0})-\psi(a_{i}^{0}))}{\Gamma(a_{i}^{0})}+\alpha_{3i}\dfrac{a_{i}^{0}(b_{i}^{0})^{a_{i}^{0}-1}}{\Gamma(a_{i}^{0})}$, $\beta_{2i}=\alpha_{2i}\dfrac{(b_{i}^{0})^{a_{i}^{0}}}{\Gamma(a_{i}^{0})}$, and $\beta_{3i}=-\alpha_{3i}\dfrac{(b_{i}^{0})^{a_{i}^{0}}}{\Gamma(a_{i}^{0})}$. Without loss of generality, we assume that $b_{1}^{0} \leq b_{2}^{0} \leq \ldots \leq b_{k_{0}}^{0}$. Denote $\overline{i}$ to be the maximum index $i$ such that $b_{i}^{0}=b_{1}^{0}$. Multiply both sides of \eqref{eqn:exactfittedGammaone} with $\exp(b_{\overline{i}}^{0}x)$ and let $x \to +\infty$, we obtain
\begin{eqnarray}
\mathop {\sum }\limits_{i=1}^{\overline{i}}{\beta_{1i}x^{a_{i}^{0}-1}+\beta_{2i}\log(x)x^{a_{i}^{0}-1}+\beta_{3i}x^{a_{i}^{0}}} \to 0. \nonumber
\end{eqnarray}
Since $|a_{i}^{0}-a_{j}^{0}| \neq 1$ and $a_{i}^{0} \geq 1$ for all $1 \leq i,j \leq \overline{i}$, the above result implies that $\beta_{1i}=\beta_{2i}=\beta_{3i}=0$ for all $1 \leq i \leq \overline{i}$ or equivalently $\alpha_{1i}=\alpha_{2i}=\alpha_{3i}$ for all $1 \leq i \leq \overline{i}$. Repeat the same argument for the remained indices, we obtain $\alpha_{1i}=\alpha_{2i}=\alpha_{3i}=0$ for all $1 \leq i \leq k_{0}$.
This concludes the proof.

(b) Without loss of generality, we assume that $\left\{|a_{2}^{0}-a_{1}^{0}|,|b_{2}^{0}-b_{1}^{0}|\right\}=\left\{1,0\right\}$.
In particular,  $b_{1}^{0}=b_{2}^{0}$ and assume $a_{2}^{0}=a_{1}^{0}-1$. 
We construct the following sequence of measures
 $G_{n} = \mathop {\sum }\limits_{i=1}^{k_{0}}{p_{i}^{n}\delta_{(a_{i}^{n},b_{i}^{n})}}$, where
$a_{i}^{n}=a_{i}^{0}$ for all $1 \leq i \leq k_{0}$, $b_{1}^{n}=b_{1}^{0}, b_{2}^{n}=b_{1}^{0}(1+\dfrac{1}{a_{2}^{0}(np_{2}^{0}-1)}), b_{i}^{n}=b_{i}^{0}$ for all $3 \leq i \leq k_{0}$, $p_{1}^{n}=p_{1}^{0}+1/n, p_{2}^{n}=p_{2}^{0}-1/n, p_{i}^{n}=p_{i}^{0}$ for all $3 \leq i \leq k_{0}$. 
We can check that $W_{r}^{r}(G,G_{0}) \asymp 1/n+(p_{2}^{0}-1/n)|b_{2}^{n}-b_{1}^{0}|^r \asymp n^{-1}$ 
as $n \to \infty$. 
For any natural order $r \geq 1$, 
by applying Taylor's expansion up to $([r]+1)$th-order, we obtain:
\begin{eqnarray}
p_{G_{n}}(x)-p_{G_{0}}(x)
=  \mathop {\sum }\limits_{i=1}^{k_{0}}{p_{i}^{n}(f(x|a_{i}^{n},b_{i}^{n})-f(x|a_{i}^{0}, b_{i}^{0}))+(p_{i}^{n}-p_{i}^{0})f(x|a_{i}^{0},b_{i}^{0})} \nonumber \\
=  (p_{1}^{n}-p_{1}^{0})f(x|a_{1}^{0},b_{1}^{0})+(p_{2}^{n}-p_{2}^{0})f(x|a_{2}^{0},b_{2}^{0})
+
\mathop {\sum }\limits_{j=1}^{[r]+1}{p_{2}^{n}\dfrac{(b_{2}^{n}-b_{2}^{0})^{j}}{j!}\dfrac{\partial^{j}{f}}{\partial{b}^{j}}(x|a_{2}^{0},b_{2}^{0})}+R_{n}(x). \label{eqn:Gammapartbone}
\end{eqnarray}
The Taylor expansion remainder
$|R_{n}(x)|=O(p_{2}^{n}|b_{2}^{n}-b_{2}^{0}|^{[r]+1+\delta})$ for some $\delta>0$ 
due to $a_{2}^{0} \geq 1$. Therefore, $R_{n}(x)= o(W_{r}^{r}(G_{n},G_{0}))$ as $n \to \infty$. 
For the choice of $p_{2}^{n}, b_{2}^{n}$, we can check that as 
$j \geq 2$, $p_{2}^{n}(b_{2}^{n}-b_{2}^{0})^{j} = o(W_{r}^{r}(G_{n},G_{0}))$.
Now, we can rewrite \eqref{eqn:Gammapartbone} as
\begin{eqnarray}
p_{G_{n}}(x)-p_{G_{0}}(x)
= A_{n} x^{a_{2}^{0}}\exp(-b_{1}^{0}x)+B_{n}x^{a_{2}^{0}-1}\exp(-b_{1}^{0}x) + \nonumber \\ 
\mathop {\sum }\limits_{j=2}^{[r]+1}{p_{2}^{(n)}\dfrac{(b_{2}^{n}-b_{2}^{0})^{j}}{j!}\dfrac{\partial^{j}{f}}{\partial{b}^{j}}(x|a_{2}^{0},b_{2}^{0})}+R_{n}(x),\nonumber
\end{eqnarray}
where we have $A_{n}=
\dfrac{(b_{1}^{0})^{a_{1}^{0}}}{\Gamma(a_{1}^{0})}(p_{1}^{n}-p_{1}^{0})-\dfrac{(b_{1}^{0})^{a_{2}^{0}}}{\Gamma(a_{2}^{0})}p_{2}^{n}(b_{2}^{n}-b_{1}^{0})= 0$ and similarly 
$B_{n} = \dfrac{(b_{1})^{a_{2}^{0}}}{\Gamma(a_{2}^{0})} (p_{2}^{n}-p_{2}^{0})+\dfrac{a_{2}^{0}(b_{1}^{0})^{a_{2}^{0}-1}}{\Gamma(a_{2}^{0})}p_{2}^{n}(b_{2}^{n}-b_{1}^{0}) = 0$ for all $n$. 
Since $a_{2}^{0} \geq 1$, $\left|\dfrac{\partial^{j}{f}}{\partial{b}^{j}}(x|a_{2}^{0},b_{2}^{0})\right|$ 
is bounded for all $2 \leq j \leq r+1$. 
It follows that $\sup_{x>0}|p_{G_n}(x) - p_{G_0}(x)| = O(n^{-2})$.
Observe that
\begin{eqnarray}
V(p_{G_{n}},p_{G_{0}}) 
= 2{\displaystyle \int \limits_{p_{G_{n}}(x)<p_{G_{0}}(x)}{(p_{G_{0}}(x)-p_{G_{n}}(x)) \; \textrm{d}(x)} }
\leq 2{\displaystyle \int \limits_{x \in (0, a_{2}^{0}/b_{1}^{0})}{|p_{G_{n}}(x)-p_{G_{0}}(x)|}
\textrm{d}x}. \nonumber
\end{eqnarray}
As a consequence $V(p_{G_{n}},p_{G_{0}}) = O(n^{-1/2})$ so for any $r \geq 1$,
$V(p_{G_{n}},p_{G_{0}}) = o(W_r^r(G_n,G_0))$ as $n\rightarrow \infty$.

\comment{
By means of triangle inequality, we obtain that as $n \to \infty$,
\begin{eqnarray}
\dfrac{\mathop {\sup }\limits_{x>0}{|p_{G_{n}}(x)-p_{G_{0}}(x)|}}{W_{r}^{r}(G_{n},G_{0})} \leq \dfrac{\mathop {\sum }\limits_{j=2}^{r+1}{p_{2}^{n}\dfrac{(b_{2}^{n}-b_{2}^{0})^{j}}{j!}\mathop {\sup }\limits_{x>0}{\left|\dfrac{\partial^{j}{f}}{\partial{b}^{j}}(x|a_{2}^{0},b_{2}^{0})\right|}}}{W_{r}^{r}(G_{n},G_{0})}
+ \dfrac{\mathop {\sup }\limits_{x>0}{|R_{n}(x)|}}{W_{r}^{r}(G_{n},G_{0})} \to 0,\nonumber
\end{eqnarray}
where the last result due to $a_{2}^{0} \geq 1$ which implies the boundedness of $\left|\dfrac{\partial^{j}{f}}{\partial{b}^{j}}(x|a_{2}^{0},b_{2}^{0})\right|$ for all $2 \leq j \leq r+1$. 

Now, with the choice of $G_{n}$ above, we can check that as $n$ is sufficiently large, 
\begin{eqnarray}
\dfrac{V(p_{G_{n}},p_{G_{0}})}{W_{r}^{r}(G_{n},G_{0})}= \dfrac{2{\displaystyle \int \limits_{p_{G_{n}}(x)<p_{G_{0}}(x)}{(p_{G_{0}}(x)-p_{G_{n}}(x))}dx}}{W_{r}^{r}(G_{n},G_{0})} \leq \dfrac{2{\displaystyle \int \limits_{x \in (0, a_{2}^{0}/b_{1}^{0})}{|p_{G_{n}}(x)-p_{G_{0}}(x)|}dx}}{W_{r}^{r}(G_{n},G_{0})}, \nonumber
\end{eqnarray}
}
\comment{ In fact, the sign of $p_{G_{n}}(x)-p_{G_{0}}(x)$ is equivalent to the sign of $h(x)=\dfrac{b_{1}^{0}x}{na_{2}^{0}}+(p_{2}^{0}-\dfrac{1}{n})\left(1+\dfrac{1}{a_{2}^{0}(np_{2}^{0}-1)}\right)^{a_{2}^{0}}\exp(-\dfrac{b_{1}^{0}x}{a_{2}^{0}(np_{2}^{0}-1)})-p_{2}^{0}$. We can check that $h^{'}(x)$ has exactly one unique root $x^{*}=\log\left(1+\dfrac{1}{a_{2}^{0}(np_{2}^{0}-1)}\right)(a_{2}^{0})^{2}(np_{2}^{0}-1)/b_{1}^{0}$ and $h(x)$ is strictly decreasing as $x \in (0,x^{*})$ and strictly increasing as $x \in (x^{*},\infty)$. Simultaneously, $h(x^{*})<0$ and $h(a_{2}^{0}/b_{1}^{0})>0$. Therefore, we achieve the conclusion of our observation.}

%%%%%%%%%%%%%%%%%%%%%%%%%%%%%%%%%%%%%%%%%%%%%%%%%%%%%%%%%%%%%%%%%%%%%%%%%%%%%%%%%%%%%%%%%%%%%%%%%%%%%%
\paragraph{PROOF OF THEOREM~\ref{theorem:overfittedGamma}.}
%%%%%%%%%%%%%%%%%%%%%%%%%%%%%%%%%%%%%%%%%%%%%%%%%%%%%%%%%%%%%%%%%%%%%%%%%%%%%%%%%%%%%%%%%%%%%%%%%%%%%%

(a) By the same argument as the beginning of the proof of 
Theorem \ref{theorem-secondorder}, it suffices to show that
\begin{eqnarray}
\mathop {\lim }\limits_{\epsilon \to 0}{\mathop {\inf }\limits_{G \in \mathcal{O}_{k,c_{0}}(\Theta)}{\left\{\mathop {\sup }\limits_{x \in \mathcal{X}}{|p_{G}(x)-p_{G_{0}}(x)|}/W_{2}^{2}(G,G_{0}):W_{2}(G,G_{0}) \leq \epsilon 
\right\}}} > 0. \label{eqn:overfittedzero}
\end{eqnarray}
Suppose this does not hold, by repeating the arguments of the aforementioned proof,
there is a sequence
$G_{n}=\mathop {\sum }\limits_{i=1}^{k^{*}}{\mathop {\sum }\limits_{j=1}^{s_{i}}{p_{ij}^{n}\delta_{(a_{ij}^{n},b_{i}^{n})}}} \to G_{0}$ such that $(a_{ij}^{n},b_{ij}^{n}) \to (a_{i}^{0},b_{i}^{0})$ for all $1 \leq i \leq k^{*}$ where $p_{i}^{0}=0$ as $k_{0}+1 \leq i \leq k^{*}$. 
Invoke the Taylor expansion up to second order, as we let $n \to \infty$, we have for almost surely $x$ 
\begin{eqnarray}
\dfrac{p_{G_{n}}(x)-p_{G_{0}}(x)}{d(G_{n},G_{0})} \to \mathop {\sum }\limits_{i=1}^{k^{*}}\biggr \{\alpha_{1i}f(x|a_{i}^{0},b_{i}^{0})+\alpha_{2i}\dfrac{\partial{f}}{\partial{a}}(x|a_{i}^{0},b_{i}^{0})+\alpha_{3i}\dfrac{\partial{f}}{\partial{b}}(x|a_{i}^{0},b_{i}^{0})+ \nonumber \\
\mathop {\sum }\limits_{j=1}^{s_{i}}{\alpha_{4ij}^{2}}\dfrac{\partial^{2}{f}}{\partial{a}^{2}}(x|a_{i}^{0},b_{i}^{0})+ \mathop {\sum }\limits_{j=1}^{s_{i}}{\alpha_{5ij}^{2}}\dfrac{\partial^{2}{f}}{\partial{b}^{2}}(x|a_{i}^{0},b_{i}^{0})+2\mathop {\sum }\limits_{j=1}^{s_{i}}{\alpha_{4ij}\alpha_{5ij}}\dfrac{\partial^{2}{f}}{\partial{a}\partial{b}}(x|a_{i}^{0},b_{i}^{0})\biggr \}=0,
\end{eqnarray} 
where at least one of $\alpha_{1i},\alpha_{2i},\alpha_{3i},\mathop {\sum }\limits_{j=1}^{s_{i}}{\alpha_{4ij}^{2}}, \mathop {\sum }\limits_{j=1}^{s_{i}}{\alpha_{5ij}^{2}}, 2\mathop {\sum }\limits_{j=1}^{s_{i}}{\alpha_{4ij}\alpha_{5ij}}$ differs from 0.
We can rewrite the above equation as
\begin{eqnarray}
\mathop {\sum }\limits_{i=1}^{k^{*}}\biggr \{\beta_{1i}x^{a_{i}^{0}-1}+\beta_{2i}x^{a_{i}^{0}}+\beta_{3i}x^{a_{i}^{0}+1}+\beta_{4i}\log(x)x^{a_{i}^{0}-1}+\beta_{5i}\log(x)^{2}x^{a_{i}^{0}-1}+ 
\beta_{6i}\log(x)x^{a_{i}^{0}} \biggr \}e^{-b_{i}^{0}x}=0, \nonumber
\end{eqnarray}
where $\beta_{1i}=\alpha_{1i}\dfrac{b_{i}^{0}}{\Gamma(a_{i}^{0})}+\beta_{i}^{0}\dfrac{\partial{}}{\partial{a}}\left(\dfrac{(b_{i}^{0})^{a_{i}^{0}}}{\Gamma(a_{i}^{0})}\right)+\alpha_{3i}\dfrac{a_{i}^{0}(b_{i}^{0})^{a_{i}^{0}-1}}{\Gamma(a_{i}^{0}}+ \mathop {\sum }\limits_{j=1}^{s_{i}}{\alpha_{5ij}^{2}}\dfrac{a_{i}^{0}(a_{i}^{0}-1)(b_{i}^{0})^{a_{i}^{0}-2}}{\Gamma(a_{i}^{0})}+$ \\$ \mathop {\sum }\limits_{j=1}^{s_{i}}{\alpha_{4ij}^{2}}\dfrac{\partial}{\partial{a}^{2}}\left(\dfrac{(b_{i}^{0})^{a_{i}^{0}}}{\Gamma(a_{i}^{0})}\right)+ 2\mathop {\sum }\limits_{j=1}^{s_{i}}{\alpha_{4ij}\alpha_{5ij}}\dfrac{\partial{}}{\partial{a}}\left(\dfrac{a_{i}^{0}(b_{i}^{0})^{a_{i}^{0}-1}}{\Gamma(a_{i}^{0})}\right)$, \;\;
$\beta_{2i}=-\alpha_{3i}\dfrac{(b_{i}^{0})^{a_{i}^{0}}}{\Gamma(a_{i}^{0})}+2\mathop {\sum }\limits_{j=1}^{s_{i}}{\alpha_{5ij}^{2}}\dfrac{a_{i}^{0}(b_{i}^{0})^{a_{i}^{0}-1}}{\Gamma(a_{i}^{0})}+2\mathop {\sum }\limits_{j=1}^{s_{i}}{\alpha_{4ij}\alpha_{5ij}}\dfrac{\partial{}}{\partial{a}}\left(\dfrac{(b_{i}^{0})^{a_{i}^{0}}}{\Gamma(a_{i}^{0})}\right)$, \;\;
$\beta_{3i}=\mathop {\sum }\limits_{j=1}^{s_{i}}{\alpha_{5ij}^{2}}\dfrac{(b_{i}^{0})^{a_{i}^{0}}}{\Gamma(a_{i}^{0})}$, \;\;
$\beta_{4i}=\alpha_{2i}\dfrac{(b_{i}^{0})}{\Gamma(a_{i}^{0})}+2\mathop {\sum }\limits_{j=1}^{s_{i}}{\alpha_{4ij}^{2}}\dfrac{\partial{}}{\partial{a}}\left(\dfrac{(b_{i}^{0})^{a_{i}^{0}}}{\Gamma(a_{i}^{0})}\right)+2\mathop {\sum }\limits_{j=1}^{s_{i}}{\alpha_{4ij}\alpha_{5ij}}\dfrac{a_{i}^{0}(b_{i}^{0})^{a_{i}^{0}-1}}{\Gamma(a_{i}^{0})}$, $\beta_{5i}= \mathop {\sum }\limits_{j=1}^{s_{i}}{\alpha_{4ij}^{2}}\dfrac{(b_{i}^{0})^{a_{i}^{0}}}{\Gamma(a_{i}^{0})}$, 
and $\beta_{6i}=-2\mathop {\sum }\limits_{j=1}^{s_{i}}{\alpha_{4ij}\alpha_{5ij}}\dfrac{(b_{i}^{0})^{a_{i}^{0}}}{\Gamma(a_{i}^{0})}$. 
Using the same argument as that of the proof of part (a) of 
Theorem \ref{theorem:exactfittedGamma}, by multiplying both sides of the above equation 
with $\exp(b_{\overline{i}}^{0}x)$ and let $x \to +\infty$, we obtain
\begin{eqnarray}
\mathop {\sum }\limits_{i=1}^{\overline{i}}{\beta_{1i}x^{a_{i}^{0}-1}+\beta_{2i}x^{a_{i}^{0}}+\beta_{3i}x^{a_{i}^{0}+1}+\beta_{3i}\log(x)x^{a_{i}^{0}-1}+\beta_{4i}\log(x)^{2}x^{a_{i}^{0}-1} + \beta_{5i}\log(x)x^{a_{i}^{0}}} \to 0. \nonumber
\end{eqnarray}
By the constraints of $\mathcal{O}_{k,c_{0}}$, 
we have $|a_{i}^{0}-a_{j}^{0}| \not \in \left\{1,2\right\}$ for all $1 \leq i,j \leq k^{*}$. Therefore, this limit yields $\beta_{1i}=\beta_{2i}=\beta_{3i}=\beta_{4i}=\beta_{5i}=0$ for all $1 \leq i \leq \overline{i}$ or equivalently $\alpha_{1i}=\alpha_{2i}=\alpha_{3i}=\alpha_{4ij}=\alpha_{5ij}=0$ for all $1 \leq i \leq \overline{i}, 1 \leq j \leq s_{i}$. The same argument yields $\alpha_{1i}=\alpha_{2i}=\alpha_{3i}=\alpha_{4ij}=\alpha_{5ij}=0$ for all $1 \leq i \leq k_{0}, 1 \leq j \leq s_{i}$, which leads to contradiction. 
This concludes the proof.

(b) The proof is similar to part (b) of Theorem \ref{theorem:exactfittedGamma}. We choose sequence 
$G_{n}=\mathop {\sum }\limits_{i=1}^{k_{0}+1}{p_{i}^{n}\delta_{(a_{i}^{n},b_{i}^{n})}}$ 
by letting $a_{i}^{n}=a_{i}^{0}$ for all $2 \leq i \leq k_{0}+1$, $a_{1}^{n}=a_{1}^{0}+1$, $b_{1}^{n}=b_{1}^{0},b_{2}^{n}=b_{1}^{0}(1+\dfrac{1}{a_{1}^{0}(np_{1}^{0}-1)})$, $b_{i}^{n}=b_{i-1}^{0}$ for all $3 \leq i \leq k_{0}+1$, $p_{1}^{n}=1/n$, $p_{2}^{n}=p_{1}^{0}-1/n$, $p_{i}^{n}=p_{i-1}^{0}$ for all $3 \leq i \leq k_{0}+1$.
Given this construction, we can check that as $r \geq 1$, $W_{r}^{r}(G_{n},G_{0})=1/n+(p_{1}^{0}-1/n)|b_{2}^{n}-b_{1}^{0}|^{r}$. The remainder of the proof is 
proceeds in the same way as that of Theorem \ref{theorem:exactfittedGamma}.

c) If there exists $(i,j)$ such that $\left\{|a_{i}^{0}-a_{j}^{0}|,|b_{i}^{0}-b_{j}^{0}|\right\} \equiv \left\{1,0\right\}$, then we can use the same way of construction as that of part (b). Now, the only case of interest is when we have some $(i,j)$ such that $\left\{|a_{i}^{0}-a_{j}^{0}|,|b_{j}^{0}-b_{j}^{0}|\right\} \equiv \left\{2,0\right\}$. Without loss of generality, assume that $a_{2}^{0}=a_{1}^{0}-2$. We construct the sequence $G_{n}=\mathop {\sum }\limits_{i=1}^{k_{0}+1}{p_{i}^{n}\delta_{(a_{i}^{n},b_{i}^{n})}}$ as $a_{1}^{n}=a_{1}^{0}, a_{2}^{n}=a_{3}^{n}=a_{2}^{0},a_{i}^{n}=a_{i-1}^{0}$ for all $4 \leq i \leq k_{0}+1$, $b_{1}^{n}=b_{1}^{0}, b_{2}^{n}-b_{1}^{0}=b_{1}^{0}-b_{3}^{n}=\dfrac{b_{1}^{0}}{a_{2}^{0}n}$, $b_{i}^{n}=b_{i-1}^{0}$ for all $4 \leq i \leq k_{0}+1$, $p_{1}^{n}=p_{1}^{0}-c_{n}$, $p_{2}^{n}=\dfrac{p_{2}^{0}}{2}+\dfrac{1}{2}\left(c_{n}+\dfrac{1}{n}\right), p_{3}^{n}=\dfrac{p_{2}^{0}}{2}+\dfrac{1}{2}\left(c_{n}-\dfrac{1}{n}\right)$, $p_{i}^{n}=p_{i-1}^{0}$ for all $4 \leq i \leq k_{0}+1$.  where $c_{n}=\dfrac{(a_{2}^{0}+1)p_{2}^{0}}{(2n^{2}-1)a_{2}^{0}-1}$. Now, we can check that for any $r \geq 1$, $W_{r}^{r}(G_{n},G_{0}) \gtrsim c_{n}+\dfrac{1}{n^{r}}$. As $r \geq 2$, by means of Taylor expansions up to $([r]+1)$-th order, we obtain
\begin{eqnarray}
p_{G_{n}}(x)-p_{G_{0}}(x) 
= (p_{1}^{n}-p_{1}^{0})f(x|a_{1}^{0},b_{1}^{0})+(\mathop{\sum }\limits_{i=2}^{3}{p_{i}^{n}}-p_{2}^{0})f(x|a_{2}^{0},b_{2}^{0}) \nonumber \\ 
+ \mathop {\sum }\limits_{j=1}^{r+1}{\dfrac{\mathop {\sum }\limits_{i=2}^{3}{p_{i}^{n}(b_{i}^{n}-b_{i}^{0})^{j}}}{j!}\dfrac{\partial^{j}{f}}{\partial{b^{j}}}(x|a_{2}^{0},b_{2}^{0})+R_{n}(x)}, 
\label{eqn:overfittedGammatheoremone}
\end{eqnarray}
where $R_{n}(x)$ is the remainder term and therefore $|R_{n}(x)|/W_{r}^{r}(G_{n},G_{0}) \to 0$. We can check that as $j \geq 3$, $\mathop {\sum }\limits_{i=2}^{3}{p_{i}^{n}(b_{i}^{n}-b_{i}^{0})^{j}}/W_{r}^{r}(G_{n},G_{0}) \to 0$ as $n \to \infty$. Additionally, direct computation demonstrates that
\begin{eqnarray}
(p_{1}^{n}-p_{1}^{0})f(x|a_{1}^{0},b_{1}^{0})+(\mathop{\sum }\limits_{i=2}^{3}{p_{i}^{n}}-p_{2}^{0})f(x|a_{2}^{0},b_{2}^{0})+\mathop {\sum }\limits_{j=1}^{2}{\dfrac{\mathop {\sum }\limits_{i=2}^{3}{p_{i}^{n}(b_{i}^{n}-b_{i}^{0})^{j}}}{j!}\dfrac{\partial^{j}{f}}{\partial{b^{j}}}(x|a_{2}^{0},b_{2}^{0})}= 0. \nonumber
\end{eqnarray}
The rest of the proof goes through in the same way as that 
of Theorem \ref{theorem:exactfittedGamma} part (b).

%%%%%%%%%%%%%%%%%%%%%%%%%%%%%%%%%%%%%%%%%%%%%%%%%%%%%%%%%%%%%%%%%%%%%%%%%%%%%%%%%
\paragraph{PROOF OF THEOREM~\ref{theorem:exactlocationgamma}.}
%%%%%%%%%%%%%%%%%%%%%%%%%%%%%%%%%%%%%%%%%%%%%%%%%%%%%%%%%%%%%%%%%%%%%%%%%%%%%%%%%
%
%
Choose the sequence $G_{n}=\mathop {\sum }\limits_{i=1}^{k_{0}}{p_{i}^{n}\delta_{(\theta_{i}^{n},\sigma_{i}^{n})}}$ such that $\sigma_{i}^{n}=\sigma_{i}^{0}$ for all $1 \leq i \leq k_{0}$, $(p_{i}^{n},\theta_{i}^{n})=(p_{i}^{0},\theta_{i}^{0})$ for all $3 \leq i \leq k_{0}$. The parameters $p_{1}^{n},p_{2}^{n},\theta_{1}^{n},\theta_{2}^{n}$ are to be determined. 
With this construction of $G_n$, we obtain 
$W_{1}(G_{n},G_{0})\asymp 
|p_{1}^{n}-p_{1}^{0}|+|p_{2}^{n}-p_{2}^{0}|+p_{1}^{0}|\theta_{1}^{n}-\theta_{1}^{0}|+p_{2}^{0}|\theta_{2}^{n}-\theta_{2}^{0}|$. 
Now, for any 
$x \not \in \left\{\theta_{1}^{0},\theta_{2}^{0}\right\}$ and for any $r \geq 1$, taking the Taylor expansion with respect to $\theta$ up to $([r]+1)$-th order, we obtain
\begin{eqnarray}
p_{G_{n}}(x)-p_{G_{0}}(x) 
& = & \mathop {\sum }\limits_{i=1}^{2}{p_{i}^{0}(f(x|\theta_{i}^{n},\sigma_{i}^{0})-f(x|\theta_{i}^{0},\sigma_{i}^{0}))+(p_{i}^{n}-p_{i}^{0})f(x|\theta_{i}^{n},\sigma_{i}^{0})} \nonumber \\
& = & \mathop {\sum }\limits_{i=1}^{2}{(p_{i}^{n}-p_{i}^{0}) f(x|\theta_i^n,\sigma_i^0)
- p_{i}^{0}\left[\mathop {\sum }\limits_{j=1}^{[r]+1}{\dfrac{(\theta_{i}^{0}-\theta_{i}^{n})^{j}}{j!}\dfrac{\partial^{j}{f}}{\partial{\theta}^{j}}(x|\theta_{i}^{n},\sigma_{i}^{0})}\right]}+ R(x)  \nonumber \\
& = & \mathop {\sum }\limits_{i=1}^{2}{\left[(p_{i}^{n}-p_{i}^{0})-p_{i}^{0}\mathop {\sum }\limits_{j=1}^{[r]+1}{\dfrac{(\theta_{i}^{0}-\theta_{i}^{n})^{j}}{j!(\sigma_{i}^{0})^{j}}}\right]f(x|\theta_{i}^{n},\sigma_{i}^{0})}+ R(x), \nonumber
\end{eqnarray}
where the last inequality is due to the identity \eqref{eqn:identitylocationexponential} and $R(x)$ is remainder of Taylor expansion. Note that
\begin{eqnarray}
\mathop {\sup }\limits_{x \not \in \left\{\theta_{1}^{0},\theta_{2}^{0}\right\}}|R(x)|/W_{1}^{r}(G_{n},G_{0})\leq \mathop {\sum }\limits_{i=1}^{2}{O(|\theta_{i}^{n}-\theta_{i}^{0}|^{r+1+\delta})/|\theta_{i}^{n}-\theta_{i}^{0}|^{r}} \to 0. \nonumber 
\end{eqnarray}
Now, we choose $p_{1}^{n}=p_{1}^{0}+1/n$, $p_{2}^{n}=p_{2}^{0}-1/n$, which means $p_{1}^{n}+p_{2}^{n}=p_{1}^{0}+p_{2}^{0}$ and $p_{1}^{n} \to p_{1}^{0}, p_{2}^{n} \to p_{2}^{0}$. As $p_{i}^{0}/j!(\sigma_{i}^{0})^{j}$ are fixed positive constants for all $1 \leq j \leq r+1$.
It is clear that there exists sequences $\theta_1^n$ and $\theta_2^n$
such that for both $i=1$ and $i=2$,
$\theta_{i}^{n}-\theta_{i}^{0} \to 0$,
the identity $p_{i}^{0}\mathop {\sum }\limits_{j=1}^{[r]+1}{\dfrac{(\theta_{i}^{0}-\theta_{i}^{n})^{j}}{j!(\sigma_{i}^{0})^{j}}}=p_{i}^{n}-p_{i}^{0}$ 
holds for all $n$ (sufficiently large). 
With these choices of $p_{1}^{n},p_{2}^{n},\theta_{1}^{n},\theta_{2}^{n}$, we have
\begin{eqnarray}
\mathop {\sup }\limits_{x \not \in {\left\{\theta_{1}^{0},\theta_{2}^{0}\right\}}}{|p_{G_{n}}(x)-p_{G_{0}}(x)|}/W_{1}^{r}(G_{n},G_{0})=\mathop {\sup }\limits_{x \not \in \left\{\theta_{1}^{0},\theta_{2}^{0}\right\}}|R(x)|/W_{1}^{r}(G_{n},G_{0}) \to 0. \nonumber
\end{eqnarray}
To conclude the proof, note that there exists a positive constant $m_1$ 
such that $m_{1}>\mathop {\min }{\left\{\theta_{1}^{0},\theta_{2}^{0}\right\}}$ 
and for sufficiently large $n$,
\begin{eqnarray}
V(p_{G_{n}},p_{G_{0}})/W_{1}^{r}(G_{n},G_{0}) \lesssim \int \limits_{x \in (\mathop {\min }{\left\{\theta_{1}^{0},\theta_{2}^{0}\right\}},m_{1})\backslash \left\{\theta_{1}^{0},\theta_{2}^{0}\right\}}{|p_{G_{n}}(x)-p_{G_{0}}(x)|}/W_{1}^{r}(G_{n},G_{0}) \to 0. \nonumber
\end{eqnarray}

%%%%%%%%%%%%%%%%%%%%%%%%%%%%%%%%%%%%%%%%%%%%%%%%%%%%%%%%%%%%%%%%%%%%%%%%%%%%%%%%%
\subsection{Mixture of skew-Gaussian distributions}
%%%%%%%%%%%%%%%%%%%%%%%%%%%%%%%%%%%%%%%%%%%%%%%%%%%%%%%%%%%%%%%%%%%%%%%%%%%%%%%%%

\begin{lemma} \label{lemma:skewnormaldistribution} Let $\left\{f(x|\theta,\sigma,m),(\theta,m) \in \mathbb{R}^{2}, \sigma \in \mathbb{R}_{+}\right\}$ be a class of skew normal distribution. Then $\dfrac{\partial^{2}{f}}{\partial{\theta}^{2}}(x|\theta,\sigma^{2},m)-2\dfrac{\partial{f}}{\partial{\sigma}^{2}}(x|\theta,\sigma^{2},m)+\dfrac{m^{3}+m}{\sigma^{2}}\dfrac{\partial{f}}{\partial{m}}(x|\theta,\sigma^{2},m)=0$.
\end{lemma}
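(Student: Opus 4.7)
The plan is to rewrite the skew-Gaussian density in a form that isolates an underlying Gaussian kernel, then use the Gaussian heat equation to handle most of the bookkeeping. Concretely, set $v=\sigma^{2}$, $u=(x-\theta)/\sqrt{v}$, and write
\[
f(x|\theta,v,m) \;=\; 2\,g(x|\theta,v)\,\Phi(mu), \qquad g(x|\theta,v)=\tfrac{1}{\sqrt{v}}\phi\!\left(\tfrac{x-\theta}{\sqrt{v}}\right),
\]
where $g$ is the standard Gaussian density. The identity I will exploit repeatedly is the Gaussian heat equation
\(\partial g/\partial v = \tfrac12\,\partial^{2} g/\partial\theta^{2}\),
together with $\phi'(t)=-t\phi(t)$.

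First I would compute $\partial f/\partial\theta$ by the product rule, using $\partial u/\partial\theta=-1/\sqrt{v}$, which yields
\[
\tfrac{\partial f}{\partial\theta}
= 2\,g_{\theta}\,\Phi(mu)\;-\;\tfrac{2m}{\sqrt{v}}\,g\,\phi(mu),
\]
where $g_{\theta}=u\phi(u)/v$. Differentiating again and applying $\phi'(mu)=-mu\,\phi(mu)$ gives
\[
\tfrac{\partial^{2} f}{\partial\theta^{2}}
= 2\,g_{\theta\theta}\,\Phi(mu)\;-\;\tfrac{4m}{\sqrt{v}}\,g_{\theta}\,\phi(mu)\;-\;\tfrac{2m^{3}u}{v}\,g\,\phi(mu).
\]
In parallel, using $\partial u/\partial v = -u/(2v)$, a direct computation yields
\[
\tfrac{\partial f}{\partial v}
= g_{\theta\theta}\,\Phi(mu)\;-\;\tfrac{mu}{v}\,g\,\phi(mu),
\]
where the first term already reflects the heat equation applied to $g$.

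Forming the key combination, the Gaussian pieces $2g_{\theta\theta}\Phi(mu)$ cancel, leaving
\[
\tfrac{\partial^{2} f}{\partial\theta^{2}} - 2\,\tfrac{\partial f}{\partial v}
= -\tfrac{4m}{\sqrt{v}}\,g_{\theta}\,\phi(mu)\;+\;\tfrac{2mu(1-m^{2})}{v}\,g\,\phi(mu).
\]
Substituting $g=\phi(u)/\sqrt{v}$ and $g_{\theta}=u\phi(u)/v$ collapses the right-hand side to the single expression $-\,2mu(1+m^{2})\,\phi(u)\phi(mu)/v^{3/2}$.

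Finally, a one-line calculation gives $\partial f/\partial m = 2u\,\phi(u)\phi(mu)/\sqrt{v}$, so
\[
\tfrac{m^{3}+m}{v}\,\tfrac{\partial f}{\partial m}
= \tfrac{2um(1+m^{2})\,\phi(u)\phi(mu)}{v^{3/2}},
\]
which exactly cancels the residue, proving the identity. The only delicate step is the bookkeeping when differentiating the $\Phi(mu)$ factor twice with respect to $\theta$—the two cross-terms $-(2m/\sqrt{v})g_{\theta}\phi(mu)$ must be combined correctly, and $\phi'(mu)=-mu\phi(mu)$ must be invoked—but the coefficient $(m^{3}+m)/\sigma^{2}$ in the statement is manufactured precisely so that the $m$-derivative absorbs the leftover; nothing deeper is required.
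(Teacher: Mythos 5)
Your computation is correct, and I verified it line by line against the paper's explicit formulas; the two agree. However, your route is genuinely different and cleaner than the paper's, which simply writes out all three partial derivatives of $f$ in closed form (each a polynomial in $(x-\theta)$ times $\Phi$ and $\phi$ factors) and then invites the reader to verify cancellation by inspection. Your decomposition $f = 2g\Phi(mu)$ with $g$ the Gaussian kernel, together with the heat equation $\partial g/\partial v = \tfrac12\,\partial^2 g/\partial\theta^2$, makes the structure transparent: the Gaussian contribution to $\partial^2 f/\partial\theta^2 - 2\,\partial f/\partial v$ vanishes automatically (exactly as it does for the pure Gaussian identity~\eqref{key-gaussian}), and what remains is purely a product-rule residue from differentiating the skew factor $\Phi(mu)$, which is then absorbed by the $\tfrac{m^3+m}{\sigma^2}\,\partial f/\partial m$ term. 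This explains \emph{why} the nonlinear coefficient $(m^3+m)/\sigma^2$ appears, rather than merely confirming that it works. The paper's brute-force calculation buys nothing structurally but is arguably easier to typecheck; your version is shorter, less error-prone, and connects the skew-Gaussian identity directly to the Gaussian case $m=0$, which is a useful observation the paper also makes (but does not exploit in its proof).
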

\begin{proof}
Direct calculation yields
\begin{eqnarray}
\dfrac{\partial^{2}{f}}{\partial{\theta^{2}}}(x|\theta,\sigma,m) = \biggr \{\left(-\dfrac{2}{\sqrt{2\pi}\sigma^{3}}+\dfrac{2(x-\theta)^{2}}{\sqrt{2\pi}\sigma^{5}}\right)\Phi\left(\dfrac{m(x-\theta)}{\sigma}\right)- \nonumber \\
\dfrac{2m(m^{2}+2)(x-\theta)}{\sqrt{2\pi}\sigma^{4}}f\left(\dfrac{m(x-\theta)}{\sigma}\right)\biggr \}\exp\left(-\dfrac{(x-\theta)^{2}}{2\sigma^{2}}\right), \nonumber \\
\dfrac{\partial{f}}{\partial{\sigma^{2}}}(x|\theta,\sigma,m)  =  \biggr \{\left(-\dfrac{1}{\sqrt{2\pi}\sigma^{3}}+\dfrac{(x-\theta)^{2}}{\sqrt{2\pi}\sigma^{5}}\right)\Phi\left(\dfrac{m(x-\theta)}{\sigma}\right)-\nonumber \\
\dfrac{m(x-\theta)}{\sqrt{2\pi}\sigma^{4}}f\left(\dfrac{m(x-\theta)}{\sigma}\right)\biggr \} \exp\left(-\dfrac{(x-\theta)^{2}}{2\sigma^{2}}\right), \nonumber \\
\dfrac{\partial{f}}{\partial{m}}(x|\theta,\sigma,m)  =  \dfrac{2(x-\theta)}{\sqrt{2\pi}\sigma^{2}}f\left(\dfrac{m(x-\theta)}{\sigma}\right)\exp\left(-\dfrac{(x-\theta)^{2}}{2\sigma^{2}}\right). \nonumber
\end{eqnarray}
From these equations, we can easily verify the conclusion of our lemma.
\end{proof}

\paragraph{PROOF OF PROPOSITION~\ref{proposition-notskewnormal}.} 
For any $k \geq 1$ and $k$ different pairs $(\theta_{1},\sigma_{1},m_{1}),\ldots,(\theta_{k},\sigma_{k},m_{k})$, 
let $\alpha_{ij} \in \mathbb{R}$ for $i=1,\ldots,4,\; j=1,\ldots, k$
such that for almost all $x \in \mathbb{R}$
\begin{eqnarray}
\mathop {\sum }\limits_{j=1}^{k}{\alpha_{1j}f(x|\theta_{j},\sigma_{j},m_{j})+\alpha_{2j}\dfrac{\partial{f}}{\partial{\theta}}(x|\theta_{j},\sigma_{j},m_{j})+\alpha_{3j}\dfrac{\partial{f}}{\partial{\sigma^{2}}}(x|\theta_{j},\sigma_{j},m_{j})+\alpha_{4j}\dfrac{\partial{f}}{\partial{m}}(x|\theta_{j},\sigma_{j},m_{j})}=0. \nonumber
\end{eqnarray}
We can rewrite the above equation as
\begin{eqnarray}
\mathop {\sum }\limits_{j=1}^{k} \biggr \{[\beta_{1j}+\beta_{2j}(x-\theta_{j})+\beta_{3j}(x-\theta_{j})^{2}]\Phi\left(\dfrac{m_{j}(x-\theta_{j})}{\sigma_{j}}\right)\exp\left(-\dfrac{(x-\theta_{j})^{2}}{2\sigma_{j}^{2}}\right)+ \nonumber \\
(\gamma_{1j}+\gamma_{2j}(x-\theta_{j}))f\left(\dfrac{m_{j}(x-\theta_{j})}{\sigma_{j}}\right)\exp\left(-\dfrac{(x-\theta_{j})^{2}}{2\sigma_{j}^{2}}\right) \biggr \}=0, \label{eqn:notidentifiableskewnormalone}
\end{eqnarray}
where $\beta_{1j}=\dfrac{2\alpha_{1j}}{\sqrt{2\pi}\sigma_{j}}-\dfrac{\alpha_{3j}}{\sqrt{2\pi}\sigma_{j}^{3}}$,\;\; $\beta_{2j}=\dfrac{2\alpha_{2j}}{\sqrt{2\pi}\sigma_{j}^{3}}$,\;\; $\beta_{3j}=\dfrac{\alpha_{3j}}{\sqrt{2\pi}\sigma_{j}^{5}}$,\;\; $\gamma_{1j}=-\dfrac{2\alpha_{2j}m_{j}}{\sqrt{2\pi}\sigma_{j}^{2}}$, and $\gamma_{2j}=-\dfrac{\alpha_{3j}m_{j}}{\sqrt{2\pi}\sigma_{j}^{4}}+\dfrac{2\alpha_{4j}}{\sqrt{2\pi}\sigma_{j}^{2}}$ 
for all $j=1,\ldots,k$. 
Now, we identify two scenarios 
in which the first order identifiability of skew-normal distribution fails to hold.

\paragraph{Case 1:} There exists some $m_{j}=0$ as $1 \leq j \leq k$. In this case, we choose $k=1$, $m_{1}=0$. Equation \eqref{eqn:notidentifiableskewnormalone} can be rewritten as
\begin{eqnarray}
\dfrac{\beta_{11}}{2}+\dfrac{\gamma_{11}}{\sqrt{2\pi}}+\left(\dfrac{\beta_{21}}{2}+\dfrac{\gamma_{21}}{\sqrt{2\pi}}\right)(x-\theta_{1})+\dfrac{\beta_{31}}{2}(x-\theta_{1})^{2}=0. \nonumber
\end{eqnarray}
By choosing $\alpha_{31}=0$, $\alpha_{11}=0$, $\alpha_{21}=-\dfrac{\alpha_{41}\sigma_{1}}{\sqrt{2\pi}}$, the above equation always equal to 0. Since $\alpha_{21},\alpha_{41}$ are not necessarily zero, first-order identifiability condition is violated.

\paragraph{Case 2:} There exists two indices $1 \leq i \neq j \leq k$ such that $\left(\dfrac{\sigma_{i}^{2}}{1+m_{i}^{2}},\theta_{i}\right)=\left(\dfrac{\sigma_{j}^{2}}{1+m_{j}^{2}},\theta_{j} \right)$. Now, we choose $k=2$, $i=1,j=2$. Equation in \eqref{eqn:notidentifiableskewnormalone} can be rewritten as
\begin{eqnarray}
\mathop {\sum }\limits_{j=1}^{2} \biggr \{[\beta_{1j}+\beta_{2j}(x-\theta_{j})+\beta_{3j}(x-\theta_{j})^{2}]\Phi\left(\dfrac{m_{j}(x-\theta_{j})}{\sigma_{j}}\right)\exp\left(-\dfrac{(x-\theta_{j})^{2}}{2\sigma_{j}^{2}}\right) \biggr \}+ \nonumber \\
\dfrac{1}{\sqrt{2\pi}}\left(\mathop {\sum }\limits_{j=1}^{2}{\gamma_{1j}}+\mathop {\sum }\limits_{j=1}^{2}{\gamma_{2j}}(x-\theta_{1})^{2}\right)\exp\left(-\dfrac{(m_{1}^{2}+1)(x-\theta_{1})^{2}}{2\sigma_{1}^{2}}\right)=0. \nonumber
\end{eqnarray}
Now, we choose $\alpha_{1j}=\alpha_{2j}=\alpha_{3j}=0$ for all $1 \leq j \leq 2$, $\dfrac{\alpha_{41}}{\sigma_{1}^{2}}+\dfrac{\alpha_{42}}{\sigma_{2}^{2}}=0$ then the above equation always hold. 
Since $\alpha_{41}$ and $\alpha_{42}$ need not be zero, first-order identifiability is
again violated.
%%%%%%%%%%%%%%%%%%%%%%%%%%%%%%%%%%%%%%%%%%%%%%%%%%%%%%%%%%%%%%%%%%%%
\paragraph{PROOF OF THEOREM~\ref{theorem:exactfittedskewnormal}.}
%%%%%%%%%%%%%%%%%%%%%%%%%%%%%%%%%%%%%%%%%%%%%%%%%%%%%%%%%%%%%%%%%%%%y
(a) According to the conclusion of Theorem \ref{theorem-firstorder}, to get the conclusion of part a), it is sufficient to demonstrate that for any $\alpha_{ij} \in \mathbb{R} (1 \leq i \leq 4, 1 \leq j \leq k)$ such that for almost sure $x \in \mathbb{R}$
\begin{eqnarray}
\mathop {\sum }\limits_{j=1}^{k_{0}}{\alpha_{1j}f(x|\theta_{j}^{0},\sigma_{j}^{0},m_{j}^{0})+\alpha_{2j}\dfrac{\partial{f}}{\partial{\theta}}(x|\theta_{j}^{0},\sigma_{j}^{0},m_{j}^{0})+\alpha_{3j}\dfrac{\partial{f}}{\partial{\sigma^{2}}}(x|\theta_{j}^{0},\sigma_{j}^{0},m_{j}^{0})+\alpha_{4j}\dfrac{\partial{f}}{\partial{m}}(x|\theta_{j}^{0},\sigma_{j}^{0},m_{j}^{0})}=0. \nonumber
\end{eqnarray}
then $\alpha_{ij}=0$ for all $1 \leq i \leq 4$ and $1 \leq j \leq k_{0}$. In fact, using the result from Proposition \eqref{proposition:generaloverfittedGaussian}, we can rewrite the above equation as
\begin{eqnarray}
\mathop {\sum }\limits_{j=1}^{k} \biggr \{[\beta_{1j}+\beta_{2j}(x-\theta_{j}^{0})+\beta_{3j}(x-\theta_{j}^{0})^{2}]\Phi\left(\dfrac{m_{j}^{0}(x-\theta_{j}^{0})}{\sigma_{j}^{0}}\right)\exp\left(-\dfrac{(x-\theta_{j}^{0})^{2}}{2(\sigma_{j}^{0})^{2}}\right)+ \nonumber \\
(\gamma_{1j}+\gamma_{2j}(x-\theta_{j}^{0}))f\left(\dfrac{m_{j}(x-\theta_{j}^{0})}{\sigma_{j}^{0}}\right)\exp\left(-\dfrac{(x-\theta_{j}^{0})^{2}}{2(\sigma_{j}^{0})^{2}}\right) \biggr \}=0, \label{eqn:exactfittedidentifiableskewnormalone}
\end{eqnarray}
where $\beta_{1j}=\dfrac{2\alpha_{1j}}{\sqrt{2\pi}\sigma_{j}}-\dfrac{\alpha_{3j}}{\sqrt{2\pi}\sigma_{j}^{3}}$, $\beta_{2j}=\dfrac{2\alpha_{2j}}{\sqrt{2\pi}\sigma_{j}^{3}}$, $\beta_{3j}=\dfrac{\alpha_{3j}}{\sqrt{2\pi}\sigma_{j}^{5}}$, $\gamma_{1j}=-\dfrac{2\alpha_{2j}m_{j}}{\sqrt{2\pi}\sigma_{j}^{2}}$, and $\gamma_{2j}=-\dfrac{\alpha_{3j}m_{j}}{\sqrt{2\pi}\sigma_{j}^{4}}+\dfrac{2\alpha_{4j}}{\sqrt{2\pi}\sigma_{j}^{2}}$ for all $1 \leq j \leq k_{0}$. Denote $\sigma_{j+k_{0}}^{0}=\dfrac{(\sigma_{j}^{0})^{2}}{1+(m_{j}^{0})^{2}}$ for all $1 \leq j \leq k_{0}$. From the assumption that $\sigma_{i}^{0}$ are pairwise different and $\dfrac{v_{i}^{0}}{1+(m_{i}^{0})^{2}} \not \in \left\{(\sigma_{j}^{0})^{2}: 1 \leq j \leq k_{0} \right\}$ for all $1 \leq i \leq k_{0}$, we achieve $\sigma_{j}^{0}$ are pairwise different as $1 \leq j \leq 2k_{0}$. The equation \eqref{eqn:exactfittedidentifiableskewnormalone} can be rewritten as
\begin{eqnarray}
\mathop {\sum }\limits_{j=1}^{2k_{0}}\biggr \{[\beta_{1j}+\beta_{2j}(x-\theta_{j}^{0})+\beta_{3j}(x-\theta_{j}^{0})^{2}]\Phi\left(\dfrac{m_{j}^{0}(x-\theta_{j}^{0})}{\sigma_{j}^{0}}\right)\exp\left(-\dfrac{(x-\theta_{j}^{0})^{2}}{2(\sigma_{j}^{0})^{2}}\right)\biggr \}=0, \label{eqn:exactfittedidentifiableskewnormalsecond}
\end{eqnarray}
where $m_{j}^{0}=0$, $\theta_{j+k_{0}}^{0}=\theta_{j}^{0}$, $\beta_{1(j+k_{0})}=\dfrac{2\gamma_{1j}}{\sqrt{2\pi}},\beta_{2(j+k_{0})}=\dfrac{2\gamma_{2j}}{\sqrt{2\pi}}, \beta_{3j}=0$ as $k_{0}+1 \leq j \leq 2k_{0}$. Denote $\overline{i}=\mathop {\arg \max }\limits_{1 \leq i \leq 2k_{0}}{\left\{\sigma_{i}^{0}\right\}}$. Multiply both sides of \eqref{eqn:exactfittedidentifiableskewnormalsecond} with $\exp\left(\dfrac{(x-\theta_{\overline{i}})^{2}}{2\sigma_{\overline{i}}^{2}}\right)/\Phi\left(\dfrac{m_{\overline{i}}^{0}(x-\theta_{\overline{i}}^{0})}{\sigma_{\overline{i}}^{0}}\right)$ and let $x \to +\infty$ if $m_{\overline{i}} \geq 0$ or let $x \to -\infty$ if $m_{\overline{i}}<0$ on both sides of new equation, we obtain $\beta_{1\overline{i}}+\beta_{2\overline{i}}(x-\theta_{\overline{i}}^{0})+\beta_{2\overline{i}}(x-\theta_{\overline{i}}^{0})^{2} \to 0$. It implies that $\beta_{1\overline{i}}=\beta_{2\overline{i}}=\beta_{3\overline{i}}=0$. Keep repeating the same 
argument to the remained $\sigma_{i}$ until we obtain $\beta_{1i}=\beta_{2i}=\beta_{3i}=0$ for all $1 \leq i \leq 2k_{0}$. It is equivalent to $\alpha_{1i}=\alpha_{2i}=\alpha_{3i}=\alpha_{4i}=0$ for all 
$1 \leq i \leq k_{0}$. This concludes the proof for part (a).

(b) In this section, we denote $v=\sigma^{2}$. Without loss of generality, we assume $m_{1}^{0},m_{2}^{0},\ldots,m_{\overline{i}}^{0}=0$ where $1 \leq \overline{i}_{1} \leq k_{0}$ denotes the largest index $i$ such that $m_{i}^{0}=0$. Denote $s_{1}=\overline{i}_{1}+1 < s_{2} < \ldots < s_{\overline{i}_{2}} \in [\overline{i}_{1}+1,k_{0}]$ such that $(\dfrac{v_{j}^{0}}{1+(m_{j}^{0})^{2}},\theta_{j}^{0})= (\dfrac{v_{l}}{1+(m_{l}^{0})^{2}},\theta_{l}^{0})$ and $m_{j}^{0}m_{l}^{0}>0$ for all $s_{i} \leq j,l \leq s_{i+1}-1$, $1 \leq i \leq \overline{i}_{2}-1$. From that definition, we have $|I_{s_{i}}|=s_{i+1}-s_{i}$ for all $1 \leq i \leq \overline{i}_{2}-1$. 
In order to establish part (b) of 
Theorem \ref{theorem:exactfittedskewnormal}, it suffices to show
\begin{eqnarray}
\mathop {\lim }\limits_{\epsilon \to 0}{\mathop {\inf }\limits_{G \in \mathcal{E}_{k_{0}}(\Theta \times \Omega)}{\left\{\dfrac{\mathop {\sup }\limits_{x \in \mathcal{X}}{|p_{G}(x)-p_{G_{0}}(x)|}}{W_{2}^{2}(G,G_{0})}: W_{2}(G,G_{0}) \leq \epsilon \right\}}} > 0. \label{eqn:exactfittedskewnormalone}
\end{eqnarray}
Assume by contrary that \eqref{eqn:exactfittedskewnormalone} does not hold. It means that we can find a sequence $G_{n} \in \mathcal{E}_{k}(\Theta \times \Omega)$ such that $W_{2}(G_{n},G_{0}) \to 0 $ as $n \to \infty$ and for all $x \in \mathcal{X}$, $(p_{G_{n}}(x)-p_{G_{0}}(x))/W_{2}^{2}(G_{n},G_{0}) \to 0$ as $n \to \infty$. Denote $G_{n}=\mathop {\sum }\limits_{i=1}^{k_{0}}{p_{i}^{n}\delta_{(\theta_{i}^{n},v_{i}^{n},m_{i}^{n})}}$ and assume that $(p_{i}^{n},\theta_{i}^{n},v_{i}^{n},m_{i}^{n}) \to (p_{i}^{0},\theta_{i}^{0},v_{i}^{0},m_{i}^{0})$ for all $1 \leq i \leq k_{0}$. Denote $d(G_{n},G_{0})=\mathop {\sum }\limits_{i=1}^{k_{0}}{p_{i}^{n}(|\Delta \theta_{i}^{n}|^{2}+|\Delta v_{i}^{n}|^{2}+|\Delta m_{i}^{n}|^{2})+|\Delta p_{i}^{n}|}$ where $\Delta \theta_{i}^{n}=\theta_{i}^{n}-\theta_{i}^{0}, \Delta v_{i}^{n}=v_{i}^{n}-v_{i}^{0}, \Delta m_{i}^{n}=m_{i}^{n}-m_{i}^{0}$, and $\Delta p_{i}^{n}=p_{i}^{n}-p_{i}^{0}$ for all $1 \leq i \leq n$. According to the argument of the proof of Theorem \ref{theorem-firstorder}, we have $(p_{G_{n}}(x)-p_{G_{0}}(x))/d(G_{n},G_{0}) \to 0 $ as $n \to \infty$ for all $x \in \mathcal{X}$. By means of Taylor expansion up to second order, we can write $(p_{G_{n}}(x)-p_{G_{0}}(x))/d(G_{n},G_{0})$ as the summation of four parts, which we denote by $A_{n,1}(x)$, $A_{n,2}(x)$, $A_{n,3}(x)$, and $A_{n,4}(x)$.

Regarding $A_{n,4}(x)$, it is the remainder of Taylor expansion, which means as $n \to \infty$
\begin{eqnarray}
A_{n,4}(x)=O(\mathop {\sum }\limits_{i=1}^{k_{0}}{p_{i}^{n}(|\Delta\theta_{i}^{n}|^{2+\delta}+|\Delta\sigma_{i}^{n}|^{2+\delta}+|\Delta m_{i}^{n}|^{2+\delta}}))/d(G_{n},G_{0}) \to 0, \nonumber
\end{eqnarray} 
for some constant $\delta>0$.

Regarding $A_{n,1}(x), A_{n,2}(x)$, $A_{n,3}(x)$, these are linear combinations of $f(x|\theta_{i}^{0},v_{i}^{0},m_{i}^{0})$, $\dfrac{\partial{f}}{\partial{\theta}}(x|\theta_{i}^{0},v_{i}^{0},m_{i}^{0})$, 
$\dfrac{\partial{f}}{\partial{v}}(x|\theta_{i}^{0},v_{i}^{0},m_{i}^{0})$, $\dfrac{\partial{f}}{\partial{m}}(x|\theta_{i}^{0},v_{i}^{0},m_{i}^{0})$, $\dfrac{\partial^{2}{f}}{\partial{\theta^{2}}}(x|\theta_{i}^{0},v_{i}^{0},m_{i}^{0})$, $\dfrac{\partial^{2}{f}}{\partial{v^{2}}}(x|\theta_{i}^{0},v_{i}^{0},m_{i}^{0})$, $\dfrac{\partial^{2}{f}}{\partial{m^{2}}}(x|\theta_{i}^{0},v_{i}^{0},m_{i}^{0})$, \\ 
$\dfrac{\partial^{2}{f}}{\partial{\theta}\partial{v}}(x|\theta_{i}^{0},v_{i}^{0},m_{i}^{0})$, $\dfrac{\partial^{2}{f}}{\partial{\theta}\partial{m}}(x|\theta_{i}^{0},v_{i}^{0},m_{i}^{0})$, $\dfrac{\partial^{2}{f}}{\partial{v}\partial{m}}(x|\theta_{i}^{0},v_{i}^{0},m_{i}^{0})$. However, in $A_{n,1}(x)$, the index $i$ ranges from $1$ to $\overline{i}_{1}$ while in $A_{n,2}(x)$ and $A_{n,3}$, the index $i$ 
ranges from $\overline{i}_{1}+1$ to  $s_{\overline{i}_{2}}-1$ and from $s_{\overline{i}_{2}}$ to $k_{0}$,
 respectively.

Regarding $A_{n,3}(x)$, we denote $B_{\alpha_{1}\alpha_{2}\alpha_{3}}(\theta_{i}^{0},v_{i}^{0},m_{i}^{0})$ to be the coefficient of $\dfrac{\partial^{\alpha}{f}}{\theta^{\alpha_{1}}v^{\alpha_{2}}m^{\alpha_{3}}}(x|\theta_{i}^{0},v_{i}^{0},m_{i}^{0})$ for any $s_{\overline{i}_{2}} \leq i \leq k_{0}$, $0 \leq \alpha \leq 2$ and $\alpha_{1}+\alpha_{2}+\alpha_{3}=\alpha$, $\alpha_{j} \geq 0$ for all $1 \leq j \leq 3$.

Regarding $A_{n,2}(x)$, the structure $(\dfrac{v_{j}^{0}}{1+(m_{j}^{0})^{2}},\theta_{j}^{0})= (\dfrac{v_{l}^{0}}{1+(m_{l}^{0})^{2}},\theta_{l}^{0})$ for all $s_{i} \leq j,l \leq s_{i+1}-1$, $1 \leq i \leq \overline{i}_{2}-1$, 
allows us to rewrite $A_{n,2}(x)$ as  
\begin{eqnarray}
A_{n,2}(x)= \sum_{i=1}^{\overline{i}_{2}-1} \biggr \{\mathop {\sum }\limits_{j=s_{i}}^{s_{i+1}-1}{\left[\alpha_{1ji}^{n}+\alpha_{2ji}^{n}(x-\theta_{s_{i}}^{0})+\alpha_{3ji}^{n}(x-\theta_{s_{i}}^{0})^{2}+\alpha_{4ji}^{n}(x-\theta_{s_{i}}^{0})^{3}+\alpha_{5ji}^{n}(x-\theta_{s_{i}}^{0})^{4}\right] \times } \nonumber \\
f(\dfrac{x-\theta_{s_{i}}^{0}}{\sigma_{j}^{0}})\Phi(\dfrac{m_{j}^{0}(x-\theta_{s_{i}}^{0})}{\sigma_{j}^{0}})\biggr\}+\left[\beta_{1i}^{n}+\beta_{2i}^{n}(x-\theta_{s_{i}}^{0})+\beta_{3i}^{n}(x-\theta_{s_{i}}^{0})^{2}+\beta_{4i}^{n}(x-\theta_{s_{i}}^{0})^{3}\right] \times \nonumber \\
\exp\left(-\dfrac{(m_{s_{i}}^{0})^{2}+1}{2v_{s_{i}}^{0}}(x-\theta_{s_{i}}^{0})^{2}\right), \nonumber
\end{eqnarray}
where $f(x)=\dfrac{1}{\sqrt{2\pi}}\exp(-\dfrac{x^{2}}{2})$.
Moreover,  $d(G_{n},G_{0})\alpha_{l_{1}ji}^{n}$ is a linear combination of elements 
of $\Delta p_{j}^{n}, (\Delta \theta_{j}^{n})^{\alpha_{1}}(\Delta v_{j}^{n})^{\alpha_{2}}$ 
for each $i=1,\ldots, \overline{i}_{2}-1$, $s_{i} \leq j \leq s_{i+1}-1$, 
$1 \leq l_{1} \leq 5$ and $1 \leq  \alpha_{1}+\alpha_{2} \leq 2$. 
Additionally, $d(G_{n},G_{0})\beta_{l_{2}i}^{n}$ is a linear combination of elements of 
$\mathop  {\sum }\limits_{j=s_{i}}^{s_{i+1}-1}{(\Delta \theta_{j}^{n})^{\alpha_{1}}(\Delta v_{j}^{n})^{\alpha_{2}}(\Delta m_{j}^{n})^{\alpha_{3}}}$ for each 
$1 \leq l_{2} \leq 4$, $1 \leq i \leq \overline{i}_{2}-1$, and 
$1 \leq \alpha_{1}+ \alpha_{2} +\alpha_{3} \leq 2$. 
The detailed formula 
of $d(G_{n},G_{0})\alpha_{l_{1}ji}^{n}, d(G_{n},G_{0})\beta_{l_{2}i}^{n}$ are given in 
Appendix II.

Regarding $A_{n,1}(x)$, the structure $m_{1}^{0},m_{2}^{0},\ldots,m_{\overline{i}}^{0}=0$
allow us to rewrite $A_{n,1}(x)$ as
\begin{eqnarray}
A_{n,1}(x)=\sum_{j=1}^{\overline{i}_{1}} {\left[\gamma_{1j}^{n}+\gamma_{2j}^{n}(x-\theta_{j}^{0})+\gamma_{3j}^{n}(x-\theta_{j}^{0})^{2}+\gamma_{4j}^{n}(x-\theta_{j}^{0})^{3}+\gamma_{5j}^{n}(x-\theta_{j}^{0})^{4}\right]f\left(\dfrac{x-\theta_{j}^{0}}{\sigma_{j}^{0}}\right)}, \nonumber
\end{eqnarray}
where $d(G_{n},G_{0})\gamma_{lj}^{n}$ are linear combination of elements of $\Delta p_{j}^{n}, (\Delta \theta_{j}^{n})^{\alpha_{1}}(\Delta v_{j}^{n})^{\alpha_{2}}(\Delta m_{j}^{n})^{\alpha_{3}}$  for all $1 \leq j \leq \overline{i}_{1}$ and $\alpha_{1}+\alpha_{2}+\alpha_{3} \leq 2$. The detail formulae of $d(G_{n},G_{0})\gamma_{lj}^{n}$ are in Appendix II. 

Now, suppose that all $\gamma_{ij}^{n}$ ($1 \leq i \leq 5$,$1 \leq j \leq \overline{i}_{1}$), 
$\beta_{ij}^{n}$ ($1 \leq i \leq 4$, $1 \leq j \leq \overline{i}_{2}-1$), $\alpha_{ijl}^{n}$
($1 \leq i \leq 5$, $s_{l} \leq j \leq s_{l+1}-1$, $1 \leq l \leq \overline{i}_{2}-1$), $B_{\alpha_{1}\alpha_{2}\alpha_{3}}(\theta_{i}^{0},v_{i}^{0},m_{i}^{0})$
(for all $\alpha_{1}+\alpha_{2}+\alpha_{3} \leq 2$) go to $0$ as $n \to \infty$. 
We can find at least one index $1 \leq i^{*} \leq k_{0}$ such that $(|\Delta p_{i^{*}}^{n}|+p_{i^{*}}^{n}(|\Delta \theta_{i^{*}}^{n}|^{2}+|\Delta v_{i^{*}}^{n}|^{2}+|\Delta m_{i^{*}}^{n}|^{2}))/d(G_{n},G_{0}) \not \to 0$ as $n \to \infty$. Define $d(p_{i^{*}}^{n},\theta_{i^{*}}^{n},v_{i^{*}}^{n},m_{i^{*}}^{n})=|\Delta p_{i^{*}}^{n}|+p_{i^{*}}^{n}(|\Delta \theta_{i^{*}}^{n}|^{2}+|\Delta v_{i^{*}}^{n}|^{2}+|\Delta m_{i^{*}}^{n}|^{2})$. There are three possible cases for $i^{*}$:
\paragraph{Case 1:} $1 \leq i^{*} \leq \overline{i}_{1}$. Since $d(p_{i^{*}}^{n},\theta_{i^{*}}^{n},v_{i^{*}}^{n},m_{i^{*}}^{n})/d(G_{n},G_{0}) \not \to 0$, we obtain that for all $1 \leq j \leq 5$
\begin{eqnarray}
C_{j}^{n}:=\dfrac{d(G_{n},G_{0})}{d(p_{i^{*}}^{n},\theta_{i^{*}}^{n},v_{i^{*}}^{n},m_{i^{*}}^{n})}\gamma_{ji^{*}}^{n} \to 0 \ \text{as } \ n \to \infty. \nonumber
\end{eqnarray}

Within this scenario our argument is organized into four steps.
\paragraph{Step 1.1:} We can argue that $\Delta \theta_{i^{*}}^{n}, \Delta v_{i^{*}}^{n}, \Delta m_{i^{*}}^{n} \neq 0$ for infinitely many $n$. The detailed argument is left to Appendix II. 

\paragraph{Step 1.2:} If $|\Delta \theta_{i^{*}}^{n}|$ is the maximum among $|\Delta \theta_{i^{*}}^{n}|$, $|\Delta v_{i^{*}}^{n}|$, $|\Delta m_{i^{*}}^{n}|$ for infinitely many $n$, 
then we can assume that it holds for all $n$. Denote $\Delta v_{i^{*}}^{n}=k_{1}^{n}\Delta \theta_{i^{*}}^{n}$ and $\Delta m_{i^{*}}^{n}=k_{2}^{n}\Delta \theta_{i^{*}}^{n}$ where $k_{1}^{n},k_{2}^{n} \in [-1,1]$. Assume that 
$k_{1}^{n} \to k_{1}$ and $k_{2}^{n} \to k_{2}$ as $n \to \infty$. As $C_{5}^{n} \to 0$, 
dividing both the numerator and denominator by $(\Delta \theta_{i^{*}}^{n})^{2}$, we obtain that as $n \to \infty$
\begin{eqnarray}
\dfrac{(k_{1}^{n})^{2}}{\dfrac{|\Delta p_{i^{*}}^{n}|}{(\Delta \theta_{i^{*}}^{n})^{2}}+1+(k_{1}^{n})^{2}+(k_{2}^{n})^{2}} \to 0. \label{eqn:skewnormaldistributionone}
\end{eqnarray}
If $\dfrac{|\Delta p_{i^{*}}^{n}|}{(\Delta \theta_{i^{*}}^{n})^{2}} \to \infty$ as $n \to \infty$, then $C_{1}^{n}+(\sigma_{i^{*}}^{0})^{2}C_{3}^{n} \not \to 0$ as $n \to \infty$, which is a contradiction to the fact that $C_{1}^{n},C_{3}^{n} \to 0$ as $n \to \infty$. Therefore, $\dfrac{|\Delta p_{i^{*}}^{n}|}{(\Delta \theta_{i^{*}}^{n})^{2}} \not \to \infty$ as $n \to \infty$. Combining this result with \eqref{eqn:skewnormaldistributionone}, we obtain $k_{1}=0$. Similarly, by dividing both the numerator and denominator of $C_{2}^{n}$ and $C_{3}^{n}$, we obtain the following equations $\dfrac{1}{2\sqrt{2\pi}\sigma_{i^{*}}^{0}}+\dfrac{2k_{2}}{\pi}=0$ and $\dfrac{1}{\sqrt{2\pi}\sigma_{i^{*}}^{0}}+\dfrac{k_{2}}{\pi}=0$. These equations imply that $1/\sigma_{i^{*}}^{0}=0$, which is a contradiction. 

\paragraph{Step 1.3:} If $|\Delta v_{i^{*}}^{n}|$ is the maximum among $|\Delta \theta_{i^{*}}^{n}|$, $|\Delta v_{i^{*}}^{n}|$, $|\Delta m_{i^{*}}^{n}|$ for infinitely many $n$, then we can assume that it holds for all $n$.  However, the formation of $C_{5}^{n}$ implies that $\dfrac{|\Delta p_{i^{*}}^{n}|}{(\Delta v_{i^{*}}^{n})^{2}} \to \infty$ as $n \to \infty$. It again leads to $C_{1}^{n}+(\sigma_{i^{*}}^{0})^{2}C_{3}^{n} \not \to 0$ as $n \to \infty$, which is a contradiction.

\paragraph{Step 1.4:} If $|\Delta m_{i^{*}}^{n}|$ is  the maximum among $|\Delta \theta_{i^{*}}^{n}|$, $|\Delta v_{i^{*}}^{n}|$, $|\Delta m_{i^{*}}^{n}|$ for infinitely many $n$, then we can assume that it holds for all $n$. Denote $\Delta \theta_{i^{*}}^{n}=k_{3}^{n}\Delta m_{i^{*}}^{n}$ and $\Delta v_{i^{*}}^{n}=k_{4}^{n}\Delta m_{i^{*}}^{n}$. Let $k_{3}^{n} \to k_{3}$ and $k_{4}^{n} \to k_{4}$. With the same argument as the case $|\Delta \theta_{i^{*}}^{n}|$ is the maximum, we obtain $k_{4}=0$. By dividing both the numerator and denominator of $C_{2}^{n}$ and $C_{3}^{n}$ by $(\Delta m_{i^{*}}^{n})^{2}$, we obtain the following equations $\dfrac{k_{3}}{\sqrt{2\pi}\sigma_{i^{*}}^{0}}+\dfrac{1}{\pi}=0$ and $\dfrac{2k_{3}}{\pi}+\dfrac{k_{3}^{2}}{2\sqrt{2\pi}\sigma_{i^{*}}^{0}}=0$, 
for which there is no real solution.

\noindent In sum, Case 1 cannot happen.

\paragraph{Case 2:} $s_{1} \leq i^{*} \leq s_{\overline{i}_{2}}-1$. Without loss of generality, we assume that $s_{1} \leq i^{*} \leq s_{2}-1$. Denote 
\begin{eqnarray}
d_{\text{new}}(p_{i^{*}}^{n},\theta_{i^{*}}^{n},v_{i^{*}}^{n},m_{i^{*}}^{n})=\mathop {\sum }\limits_{j=s_{1}}^{s_{2}-1}{|\Delta p_{j}^{n}|+p_{j}^{n}(|\Delta \theta_{j}^{n}|^{2}+|\Delta v_{j}^{n}|^{2}+|\Delta m_{j}^{n}|^{2})}, \nonumber
\end{eqnarray}
Since $d(p_{i^{*}}^{n},\theta_{i^{*}}^{n},v_{i^{*}}^{n},m_{i^{*}}^{n})/d(G_{n},G_{0}) \not \to 0$, we have $d_{\text{new}}(p_{i^{*}}^{n},\theta_{i^{*}}^{n},v_{i^{*}}^{n},m_{i^{*}}^{n})/d(G_{n},G_{0}) \not \to 0$ as $n \to \infty$. Therefore, for $1 \leq j \leq 5$ and $s_{1} \leq i \leq s_{2}-1$,
\begin{eqnarray}
D_{j}^{n}:=\dfrac{d(G_{n},G_{0})}{d_{\text{new}}(p_{i^{*}}^{n},\theta_{i^{*}}^{n},v_{i^{*}}^{n},m_{i^{*}}^{n})}\alpha_{ji1}^{n} \to 0 \ \text{as } \ n \to \infty. \nonumber
\end{eqnarray}
Our argument is organized into three steps.
\paragraph{Step 2.1:} From $D_{2}^{n}$ and $D_{4}^{n}$, we obtain $p_{i}^{n}\Delta \theta_{i}^{n}/d_{\text{new}}(p_{i^{*}}^{n},\theta_{i^{*}}^{n},v_{i^{*}}^{n},m_{i^{*}}^{n}) \to 0$ as $n \to \infty$ for all $s_{1} \leq i \leq s_{2}-1$. Combining with $D_{1}^{n}$ and $D_{5}^{n}$, we achieve 
\begin{eqnarray}
\Delta p_{i}^{n}/d_{\text{new}}(p_{i^{*}}^{n},\theta_{i^{*}}^{n},v_{i^{*}}^{n},m_{i^{*}}^{n}), p_{i}^{n}\Delta v_{i}^{n}/d_{\text{new}}(p_{i^{*}}^{n},\theta_{i^{*}}^{n},v_{i^{*}}^{n},m_{i^{*}}^{n}) \to 0 \ \text{as} \ n \to \infty \ \text{for all} \ s_{1} \leq i \leq s_{2}-1. \nonumber
\end{eqnarray}
Therefore, we also have $p_{i}^{n}(\Delta \theta_{i}^{n})^{2}/d_{\text{new}}(p_{i^{*}}^{n},\theta_{i^{*}}^{n},v_{i^{*}}^{n},m_{i^{*}}^{n}) \to 0$ and $p_{i}^{n}(v_{i}^{n})^{2}/d_{\text{new}}(p_{i^{*}}^{n},\theta_{i^{*}}^{n},v_{i^{*}}^{n},m_{i^{*}}^{n}) \to 0$ as $n \to \infty$ for all $s_{1} \leq i \leq s_{2}-1$. These results show that 
\begin{eqnarray}
U_{n}=\left[\mathop {\sum }\limits_{j=s_{1}}^{s_{2}-1}{p_{j}^{n}(\Delta m_{j}^{n})^{2}}\right]/d_{\text{new}}(p_{i^{*}}^{n},\theta_{i^{*}}^{n},v_{i^{*}}^{n},m_{i^{*}}^{n}) \not \to 0 \ \text{as} \ n \to \infty. \nonumber
\end{eqnarray}

\paragraph{Step 2.2:} Now for $1 \leq j \leq 4$, we also have
\begin{eqnarray}
E_{j}^{n} :=\dfrac{d(G_{n},G_{0})}{d_{\text{new}}(p_{i^{*}}^{n},\theta_{i^{*}}^{n},v_{i^{*}}^{n},m_{i^{*}}^{n})}\beta_{j1}^{n} \to 0 \ \text{as } \ n \to \infty. \nonumber
\end{eqnarray}
Since $p_{i}^{n}\Delta v_{i}^{n}/d_{\text{new}}(p_{i^{*}}^{n},\theta_{i^{*}}^{n},v_{i^{*}}^{n},m_{i^{*}}^{n})$ and $p_{i}^{n} (\Delta v_{i}^{n})^{2}/d_{\text{new}}(p_{i^{*}}^{n},\theta_{i^{*}}^{n},v_{i^{*}}^{n},m_{i^{*}}^{n})$ go to $0$ as $n \to \infty$ for all $s_{1} \leq i \leq s_{2}-1$, we obtain that as $n \to \infty$
\begin{eqnarray}
\left[\mathop {\sum }\limits_{j=s_{1}}^{s_{2}-1}{\dfrac{p_{j}^{n}((m_{j}^{0})^{2}+1)\Delta m_{j}^{n}\Delta v_{j}^{n}}{\pi(\sigma_{j}^{0})^{6}}}\right]/d_{\text{new}}(p_{i^{*}}^{n},\theta_{i^{*}}^{n},v_{i^{*}}^{n},m_{i^{*}}^{n})\to 0, \nonumber
\end{eqnarray}
and
\begin{eqnarray}
\left[\mathop {\sum }\limits_{j=s_{i}}^{s_{i+1}-1}{-\dfrac{p_{j}^{n}((m_{j}^{0})^{3}+2m_{j}^{0})(\Delta v_{j}^{n})^{2}}{8\pi(\sigma_{i}^{0})^{8}}}\right]/d_{\text{new}}(p_{i^{*}}^{n},\theta_{i^{*}}^{n},v_{i^{*}}^{n},m_{i^{*}}^{n}) \to 0. \nonumber
\end{eqnarray}
Combining these results with $E_{4}^{n}$, we have 
\begin{eqnarray}
V_{n}=\left[\mathop {\sum }\limits_{j=s_{1}}^{s_{2}-1}{p_{j}^{n}m_{j}^{0}(\Delta m_{j}^{n})^{2}}\right]/d_{\text{new}}(p_{i^{*}}^{n},\theta_{i^{*}}^{n},v_{i^{*}}^{n},m_{i^{*}}^{n}) \to 0. \nonumber
\end{eqnarray}
\paragraph{Step 2.3:} As $U_{n} \not \to 0$ as $n \to \infty$, we obtain
\begin{eqnarray}
V_{n}/U_{n}=\mathop {\sum }\limits_{j=s_{1}}^{s_{2}-1}{p_{j}^{n}m_{j}^{0}(\Delta m_{j}^{n})^{2}}/\mathop {\sum }\limits_{j=s_{1}}^{s_{2}-1}{p_{j}^{n}(\Delta m_{j}^{n})^{2}} \to 0. \label{eqn:exactfittedskewnormalgeneralkey}
\end{eqnarray}
Since $m_{i}^{0}m_{j}^{0}>0 $ for all $s_{1} \leq i,j \leq s_{2}-1$, without loss of generality we assume that $m_{j}^{0}>0$ for all $s_{1} \leq j \leq s_{2}-1$. However, it implies that
\begin{eqnarray}
\mathop {\sum }\limits_{j=s_{1}}^{s_{2}-1}{p_{j}^{n}m_{j}^{0}(\Delta m_{j}^{n})^{2}}/\mathop {\sum }\limits_{j=s_{1}}^{s_{2}-1}{p_{j}^{n}(\Delta m_{j}^{n})^{2}} \geq \mathop {\min }\limits_{s_{1} \leq j \leq s_{2}-1}{\left\{m_{j}^{0}\right\}} \mathop {\sum }\limits_{j=s_{1}}^{s_{2}-1}{p_{j}^{n}(\Delta m_{j}^{n})^{2}}/\mathop {\sum }\limits_{j=s_{1}}^{s_{2}-1}{p_{j}^{n}(\Delta m_{j}^{n})^{2}} , \label{eqn:exactfittedskewnormalgeneral}
\end{eqnarray}
which means $\mathop {\min }\limits_{s_{1} \leq j \leq s_{2}-1}{\left\{m_{j}^{0}\right\}}=0$.
This is a contradiction. 
\noindent In sum, Case 2 cannot happen.

\paragraph{Case 3:} $s_{\overline{i}_{2}} \leq i^{*} \leq k_{0}$. Since $d(p_{i^{*}}^{n},\theta_{i^{*}}^{n},v_{i^{*}}^{n},m_{i^{*}}^{n})/d(G_{n},G_{0}) \not \to 0$, we obtain
\begin{eqnarray}
\tau(p_{i^{*}}^{n},\theta_{i^{*}}^{n},v_{i^{*}}^{n},m_{i^{*}}^{n})/d(G_{n},G_{0}) \not \to 0 \ \text{as } \ n \to \infty, \nonumber
\end{eqnarray} 
where $\tau(p_{i^{*}}^{n},\theta_{i^{*}}^{n},v_{i^{*}}^{n},m_{i^{*}}^{n})=|\Delta p_{i^{*}}^{n}|+p_{i^{*}}^{n}(|\Delta \theta_{i^{*}}^{n}|+|\Delta v_{i^{*}}^{n}|+|\Delta m_{i^{*}}^{n}|) \gtrsim d(p_{i^{*}}^{n},\theta_{i^{*}}^{n},v_{i^{*}}^{n},m_{i^{*}}^{n})$. As a consequence, for any $\alpha_{1}+\alpha_{2}+\alpha_{3} \leq 1$, as $n \to \infty$
\begin{eqnarray}
\dfrac{d(G_{n},G_{0})}{\tau(p_{i^{*}}^{n},\theta_{i^{*}}^{n},v_{i^{*}}^{n},m_{i^{*}}^{n})}B_{\alpha_{1}\alpha_{2}\alpha_{3}}(\theta_{i^{*}}^{0},v_{i^{*}}^{0},m_{i^{*}}^{0}) \to 0. \nonumber
\end{eqnarray}
However, from the proof of part (a), at least one of the above coefficients does not go to 0, which is a contradiction. Therefore, Case 3 cannot happen either.

Summarizing from the arguments with the three cases above, we conclude that
not all of 
$\gamma_{ij}^{n}$ ($1 \leq i \leq 5$, $1 \leq j \leq \overline{i}_{1}$), $\beta_{ij}^{n}$ ($1 \leq i \leq 4$, $1 \leq j \leq \overline{i}_{2}-1$), $\alpha_{ijl}^{n}$ ($1 \leq i \leq 5$, $s_{1} \leq j \leq s_{\overline{i}_{2}}-1$, $1 \leq l \leq \overline{i}_{2}-1$), $B_{\alpha_{1}\alpha_{2}\alpha_{3}}(\theta_{i}^{0},v_{i}^{0},m_{i}^{0})$ ($\alpha_{1}+\alpha_{2}+\alpha_{3} \leq 2$) go to $0$ as $n \to \infty$. Denote $m_{n}$ to be the the maximum of the absolute values of these coefficients and $d_{n}=1/m_{n}$. Then, $d_{n}\alpha_{ijl}^{n} \to \alpha_{ijl}$ for all $1 \leq i \leq 5, s_{l} \leq j \leq s_{j+1}-1$, $1 \leq l \leq \overline{i}_{2}-1$, $d_{n}\beta_{ij}^{n} \to \beta_{ij}$ for all $1 \leq i \leq 4$, $1 \leq j \leq \overline{i}_{2}-1$, $d_{n}\gamma_{ij}^{n} \to \gamma_{ij}$ for all $1 \leq i \leq 5$, $1 \leq j \leq \overline{i}_{1}$, and $d_{n}B_{\alpha_{1}\alpha_{2}\alpha_{3}}(\theta_{i}^{0},\sigma_{i}^{0},m_{i}^{0}) \to \lambda_{\alpha_{1}\alpha_{2}\alpha_{3}i}$ for all $s_{\overline{i}_{2}} \leq i \leq k_{0}$. Therefore, by letting $n \to \infty$, we obtain for all $x \in \mathbb{R}$ that
\begin{eqnarray}
\dfrac{d_{n}(p_{G_{n}}(x)-p_{G_{0}}(x))}{d(G_{n},G_{0})} \to A_{1}(x)+A_{2}(x)+A_{3}(x)=0, \nonumber
\end{eqnarray}
where $A_{1}(x)=\sum \limits_{j=1}^{\overline{i}_{1}} {\left(\mathop {\sum }\limits_{i=1}^{5}{\gamma_{ij}(x-\theta_{j}^{0})^{i-1}}\right)f\left(\dfrac{x-\theta_{j}^{0}}{\sigma_{j}^{0}}\right)}$,  $A_{2}(x)=\sum \limits_{l=1}^{\overline{i}_{2}-1} \biggr \{\mathop {\sum }\limits_{j=s_{i}}^{s_{i+1}-1}{\mathop {\sum }\limits_{i=1}^{5}{\alpha_{ijl}(x-\theta_{s_{l}}^{0})^{i-1}}}$ \\
$\times f\left(\dfrac{x-\theta_{s_{l}}^{0}}{\sigma_{j}^{0}}\right)\Phi\left(\dfrac{m_{j}^{0}(x-\theta_{s_{l}}^{0})}{\sigma_{j}^{0}}\right)+\mathop {\sum }\limits_{i=1}^{4}{\beta_{il}(x-\theta_{s_{l}}^{0})^{i-1}}\exp\left(-\dfrac{(m_{s_{l}}^{0})^{2}+1}{2v_{s_{l}}^{0}}(x-\theta_{s_{l}}^{0})^{2}\right)\biggr\}$, and \\ $A_{3}(x)=\mathop {\sum }\limits_{i=s_{\overline{i}_{2}}}^{k_{0}}{\mathop {\sum }\limits_{|\alpha| \leq 2}{\lambda_{\alpha_{1}\alpha_{2}\alpha_{3}i}\dfrac{\partial^{|\alpha|}{f}}{\partial{\theta}^{\alpha_{1}}v^{\alpha_{2}}m^{\alpha_{3}}}(x|\theta_{i}^{0},v_{i}^{0},m_{i}^{0})}}$.

Using the same argument as that of part (a), we obtain $\alpha_{ijl}=0$ for all $1 \leq i \leq 5$, $s_{l} \leq j \leq s_{l+1}-1$, $1 \leq l \leq \overline{i}_{2}-1$, $\beta_{ij}=0$ for all $1 \leq i \leq 4$, $1 \leq j \leq \overline{i}_{2}-1$, and $\gamma_{ij}=0$ for all $1 \leq i \leq 5$ and $1 \leq j \leq \overline{i}_{1}$. However, we do not have $\lambda_{\alpha_{1}\alpha_{2}\alpha_{3}i}=0$ for all $s_{\overline{i}_{2}} \leq i \leq k_{0}$ and $0 \leq |\alpha| \leq 2$. It comes from the identity in Lemma \ref{lemma:skewnormaldistribution}, which implies that all $\dfrac{\partial^{|\alpha|}{f}}{\partial{\theta}^{\alpha_{1}}v^{\alpha_{2}}m^{\alpha_{3}}}(x|\theta_{i}^{0},v_{i}^{0},m_{i}^{0})$ are not linear independent as $0 \leq |\alpha| \leq 2$. 
Therefore, this case needs a new treatment, which is divided into three steps

\paragraph{Step F.1:} From the definition of $m_{n}$, at least one coefficient $\alpha_{ijl}, \beta_{ij}, \gamma_{ij}, \lambda_{\alpha_{1}\alpha_{2}\alpha_{3}i}$ equals to 1. As all $\alpha_{ijl}, \beta_{ij}, \gamma_{ij}$ equal to 0, this result implies that at least one coefficient $\lambda_{\alpha_{1}\alpha_{2}\alpha_{3}i}$ equal to 1. Therefore, $m_{n} = |B_{\alpha_{1}^{*}\alpha_{2}^{*}\alpha_{3}^{*}}(\theta_{i^{'}}^{0},v_{i^{'}}^{0},m_{i^{'}}^{0})|$ for some $\alpha_{1}^{*},\alpha_{2}^{*},\alpha_{3}^{*}$ and $s_{\overline{i}_{2}} \leq i^{'} \leq k_{0}$. As $\Delta \theta_{i^{'}}^{n},\Delta v_{i^{'}}^{n},\Delta m_{i^{'}}^{n} \to 0$, $|B_{\alpha_{1}\alpha_{2}\alpha_{3}}(\theta_{i^{'}}^{0},v_{i^{'}}^{0},m_{i^{'}}^{0})|$ when $\alpha_{1}+\alpha_{2}+\alpha_{3}=2$ is dominated by $|B_{\alpha_{1}\alpha_{2}\alpha_{3}}(\theta_{i^{'}}^{0},v_{i^{'}}^{0},m_{i^{'}}^{0})|$ when $\alpha_{1}+\alpha_{2}+\alpha_{3}\leq 1$.Therefore, $\alpha_{1}^{*}+\alpha_{2}^{*}+\alpha_{3}^{*} \leq 1$, i.e,
 at most first order derivative. 

\paragraph{Step F.2:} As $(p_{G_{n}}(x)-p_{G_{0}}(x))/W_{2}^{2}(G_{n},G_{0}) \to 0$, we also have $(p_{G_{n}}(x)-p_{G_{0}}(x))/W_{1}(G_{n},G_{0}) \to 0$. From here, by applying Taylor expansion up to first order, we can write $(p_{G_{n}}(x)-p_{G_{0}}(x))/W_{1}(G_{n},G_{0})$ as $L_{n,1}(x)+L_{n,2}(x)+L_{n,3}(x)+L_{n,4}(x)$ where $L_{n,4}(x)$ is Taylor's remainder term, which means that $L_{n,4}(x)/W_{1}(G_{n},G_{0}) \to 0$. Additionally. $L_{n,1}(x),L_{n,2}(x),L_{n,3}(x)$ are the linear combinations of elements of $f(x|\theta_{i}^{0},v_{i}^{0},m_{i}^{0})$, $\dfrac{\partial{f}}{\partial{\theta}}(x|\theta_{i}^{0},v_{i}^{0},m_{i}^{0})$, $\dfrac{\partial{f}}{\partial{v}}(x|\theta_{i}^{0},v_{i}^{0},m_{i}^{0})$, $\dfrac{\partial{f}}{\partial{m}}(x|\theta_{i}^{0},v_{i}^{0},m_{i}^{0})$. In $L_{n,1}(x)$, the index $i$ ranges from $1$ to $\overline{i}_{1}$ while in $L_{n,2}(x)$,$L_{n,3}$, the index $i$ ranges from $\overline{i}_{1}+1$ to  $s_{\overline{i}_{2}}-1$ and from $s_{\overline{i}_{2}}$ to $k_{0}$ respectively. Assume that all of these coefficients go to $0$ as $n \to +\infty$, then we have
\begin{eqnarray} 
|B_{\alpha_{1}^{*}\alpha_{2}^{*}\alpha_{3}^{*}}(\theta_{i^{'}}^{0},v_{i^{'}}^{0},m_{i^{'}}^{0})|d(G_{n},G_{0})/W_{1}(G_{n},G_{0}) \to 0, \label{eqn:exactfittedskewnormalgeneralcasef2}
\end{eqnarray}
where the limit is due to the fact that $|B_{\alpha_{1}^{*}\alpha_{2}^{*}\alpha_{3}^{*}}(\theta_{i^{'}}^{0},v_{i^{'}}^{0},m_{i^{'}}^{0})|d(G_{n},G_{0})/W_{1}(G_{n},G_{0})$ is the maximum coefficient of $L_{n,1}(x),L_{n,2}(x),L_{n,3}(x)$. However, from the result of the proof of Theorem \ref{theorem-firstorder}, we have
\begin{eqnarray}
W_{1}(G_{n},G_{0}) \lesssim \mathop {\sum }\limits_{i=1}^{k_{0}}{p_{i}^{n}(|\Delta \theta_{i}^{n}|+|\Delta v_{i}^{n}|+ |\Delta m_{i}^{n}|)+|\Delta p_{i}^{n}|} & \lesssim & \mathop {\max }\limits_{1 \leq i \leq k_{0}}{\left\{|\Delta p_{i}^{n}|, |\Delta \theta_{i}^{n}|,|\Delta v_{i}^{n}|, \Delta m_{i}^{n}|\right\}} \nonumber \\
& = & |B_{\alpha_{1}^{*}\alpha_{2}^{*}\alpha_{3}^{*}}(\theta_{i^{'}}^{0},\sigma_{i^{'}}^{0},m_{i^{'}}^{0})|d(G_{n},G_{0}), \nonumber
\end{eqnarray}   
which contradicts to \eqref{eqn:exactfittedskewnormalgeneralcasef2}. Therefore, at least one coefficient does not vanish to 0. 

\paragraph{Step F.3:} Denote $m_{n}'$ to be the maximum among the absolute values of these coefficients and $d_{n}'=1/m_{n}'$. Then, we achieve
\begin{eqnarray}
d_{n}^{'}|B_{\alpha_{1}^{*}\alpha_{2}^{*}\alpha_{3}^{*}}(\theta_{i^{'}}^{0},v_{i^{'}}^{0},m_{i^{'}}^{0})|d(G_{n},G_{0})/W_{1}(G_{n},G_{0})=1 \ \text{for all} \ n.\nonumber
\end{eqnarray}
Therefore, as $n \to \infty$
\begin{eqnarray}
\mathop {\sum }\limits_{i=1}^{3}{d_{n}^{'}L_{n,i}(x)} \to \sum \limits_{i=1}^{k_{0}}\biggr \{\alpha_{1i}'f(x|\theta_{i}^{0},v_{i}^{0},m_{i}^{0})+\alpha_{2i}'\dfrac{\partial{f}}{\partial{\theta}}(x|\theta_{i}^{0},v_{i}^{0},m_{i}^{0})+\alpha_{3i}'\dfrac{\partial{f}}{\partial{\sigma_{i}^{2}}}(x|\theta_{i}^{0},v_{i}^{0},m_{i}^{0}) \nonumber \\
+\alpha_{4i}'\dfrac{\partial{f}}{\partial{m}}(x|\theta_{i}^{0},v_{i}^{0},m_{i}^{0})\biggr \}=0. \nonumber
\end{eqnarray}
where one of $\alpha_{1i'}',\alpha_{2i'}',\alpha_{3i'}',\alpha_{4i'}'$ differs from 0.
However, using the same argument as that of part (a), this equation will imply that $\alpha_{ji}'=0$ for all $1 \leq j \leq 4$ and $s_{\overline{i}_{2}} \leq i \leq k_{0}$, which is a contradiction. 

\noindent We have reached the
conclusion \eqref{eqn:exactfittedskewnormalone} which completes the proof.

\paragraph{Best lower bound of $V(p_{G},p_{G_{0}})$ as $G_{0}$ satisfies condition (S.2):} We have two cases
\paragraph{Case b.1:} There exists $m_{i}^{0}=0$ for some $1 \leq i \leq k_{0}$. Without loss of generality, we assume $m_{1}^{0}=0$. We construct the sequence $G_{n} \in \mathcal{E}_{k_{0}}(\Theta \times \Omega)$ as $(\Delta p_{i}^{n}, \Delta \theta_{i}^{n},\Delta v_{i}^{n}, \Delta m_{i}^{n})=(0,0,0,0)$ for all $2 \leq i \leq k_{0}$ and $\Delta p_{1}^{n}=\Delta v_{1}^{n}=0$, $\Delta \theta_{1}^{n}=\dfrac{1}{n}$, $\Delta m_{1}^{n}=-\dfrac{\sqrt{2\pi}}{\sigma_{1}^{0}n}$. With this construction, we can check that $\Delta \theta_{1}^{n}+\Delta m_{1}^{n} \sigma_{1}^{0}/\sqrt{2\pi}=0$. Using the same argument as that of part (b) of the proof of Theorem \ref{theorem-secondorder} with the notice that $V(p_{G_{n}},p_{G_{0}})={\displaystyle \int \limits_{\mathbb{R}}{|R(x)|}dx}$ where $R(x)$ is Taylor expansion's remainder in the first order, we readily achieve the conclusion of our theorem.

\paragraph{Case b.2:} There exists conformant cousin set $I_{i}$ for some $1 \leq i \leq k_{0}$. Without loss of generality, we assume $i=1$ and $j=2 \in I_{1}$. Now, we choose $G_{n}$ such that $\Delta p_{i}^{n}=\Delta \theta_{i}^{n}=\Delta v_{i}^{n}=0$ for all $1 \leq i \leq k_{0}$, $\Delta m_{i}^{n}=0$ for all $3 \leq i \leq k_{0}$, $\Delta m_{1}^{n}=\dfrac{1}{n}, \Delta m_{2}^{n}=-\dfrac{v_{2}^{0}}{v_{1}^{0}n}$. Then, we can guarantee that $\Delta m_{1}^{n}/v_{1}^{0}+\Delta m_{2}^{n}/v_{2}^{0}=0$. By means of Taylor expansion up to first order, we can check that $V(p_{G_{n}},p_{G_{0}})={\displaystyle \int \limits_{\mathbb{R}}{|R(x)|}dx}$ where $R(x)$ is Taylor remainder. From then, using the same argument as case b.1, we get the conclusion of our theorem.

\paragraph{Remark:} With extra hard work, we can also prove that $W_{2}^{2}$ is the best lower bound of $h(p_{G},p_{G_{0}})$ as $G_{0}$ satisfies condition (S.2). Therefore, for any standard estimation method ( such as the MLE) which yields $n^{-1/2}$ convergence rate for $p_{G}$, the induced rate of convergence for the mixing measure $G$ is the minimax optimal $n^{-1/4}$ under $W_{2}$ when $G_{0}$ satisfies condition (S.2) while it is the minimax optimal $n^{-1/2}$ under $W_{1}$ when $G_{0}$ satisfies condition (S.1).

%%%%%%%%%%%%%%%%%%%%%%%%%%%%%%%%%%%%%%%%%%%%%%%%%%%%%%%%%%%%%%%%%%%%%%%%%%%%%%%%
\paragraph{PROOF OF THEOREM \ref{theorem:Overfittedskewnormal}}
%%%%%%%%%%%%%%%%%%%%%%%%%%%%%%%%%%%%%%%%%%%%%%%%%%%%%%%%%%%%%%%%%%%%%%%%%%%%%%%%
This proof is quite similar to that of Theorem~\ref{theorem:generaloverfittedGaussian},
so we shall give only a sketch. It is sufficient to demonstrate that 
\begin{eqnarray}
\mathop {\lim }\limits_{\epsilon \to 0}{\mathop {\inf }\limits_{G \in \mathcal{O}_{k}(\Theta \times \Omega)}{\left\{\dfrac{\mathop {\sup }\limits_{x \in \mathcal{X}}{|p_{G}(x)-p_{G_{0}}(x)|}}{W_{\overline{m}}^{\overline{m}}(G,G_{0})}: W_{\overline{m}}(G,G_{0}) \leq \epsilon \right\}}} > 0. \label{eqn:overfittedskewnormalgeneralone}
\end{eqnarray}
%The conclusion of the theorem follows from an application of Fatou's lemma in the same way
%as Step 4 in the proof of Theorem~\ref{theorem-firstorder}.
Assume by contrary that \eqref{eqn:overfittedskewnormalgeneralone} does not hold. Here, 
we assume $\overline{r}$ is even (the case $\overline{r}$ is odd number can be addressed in the same way). 
In this case, $\overline{m}=\overline{r}$. Denote $v=\sigma^{2}$. 
%Repeating the same argument as Step 1 in the proof of 
%Theorem~\ref{theorem-secondorder},
Then, there is a sequence 
$G_{n}=\mathop {\sum }\limits_{i=1}^{k_{0}}{\mathop {\sum }\limits_{j=1}^{s_{i}}{p_{ij}^{n}\delta_{(\theta_{ij}^{n},v_{ij}^{n},m_{ij}^{n})}}}$ such that $(p_{ij}^{n}, \theta_{ij}^{n},v_{ij}^{n},m_{ij}^{n}) \to (p_{i}^{0}, \theta_{i}^{0},v_{i}^{0},m_{i}^{0})$ for all $1 \leq i \leq k_{0}, 1 \leq j \leq s_{i}$. 
Define
\begin{eqnarray}
d(G_{n},G_{0})=\mathop {\sum }\limits_{i=1}^{k_{0}}{\mathop {\sum }\limits_{j=1}^{s_{i}}{p_{ij}^{n}(|\Delta \theta_{ij}^{n}|^{\overline{r}}+|\Delta v_{ij}^{n}|^{\overline{r}}+|\Delta m_{ij}^{n}|^{\overline{r}})}+|p_{i.}^{n}-p_{i}^{0}|}, \nonumber
\end{eqnarray}
where $\Delta \theta_{ij}^{n}=\theta_{ij}^{n}-\theta_{i}^{0}, \Delta v_{ij}^{n}=v_{ij}^{n}-v_{i}^{0}, \Delta m_{ij}^{n}= m_{ij}^{n} - m_{i}^{0}$. 
Now, applying Taylor's expansion up to $\overline{r}$-th order, we obtain
\begin{eqnarray}
p_{G_{n}}(x)-p_{G_{0}}(x)= 
\mathop {\sum }\limits_{i=1}^{k_{0}}{\mathop {\sum }\limits_{j=1}^{s_{i}}{p_{ij}^{n}\dfrac{(\Delta \theta_{ij}^{n})^{\alpha_{1}}(\Delta v_{ij}^{n})^{\alpha_{2}}(\Delta m_{ij}^{n})^{\alpha_{3}}}{\alpha_{1}!\alpha_{2}!\alpha_{3}!}\dfrac{\partial^{|\alpha|}{f}}{\partial{\theta^{\alpha_{1}}\partial{v^{\alpha_{2}}}\partial{m^{\alpha_{3}}}}}(x|\theta_{i}^{0},v_{i}^{0},m_{i}^{0})}} + \nonumber \\
+ \mathop {\sum }\limits_{i=1}^{k_{0}}{(p_{i.}-p_{i}^{0})f(x|\theta_{i}^{0},v_{i}^{0},m_{i}^{0})}+R_{1}(x)
 := A_{1}(x)+B_{1}(x)+R_{1}(x), \nonumber 
\end{eqnarray}
where $\alpha=(\alpha_{1},\alpha_{2},\alpha_{3})$, $R_{1}(x)$ is Taylor remainder and $R_{1}(x)/d(G_{n},G_{0}) \to 0$. 

Now we invoke the key identity (cf. Lemma \ref{lemma:skewnormaldistribution})
\begin{eqnarray}
\dfrac{\partial{f}}{\partial{v}}(x|\theta,v,m)=\dfrac{1}{2}\dfrac{\partial^{2}{f}}{\partial{\theta^{2}}}(x|\theta,v,m)+\dfrac{m^{3}+m}{2v}\dfrac{\partial{f}}{\partial{m}}(x|\theta,v,m). \nonumber
\end{eqnarray}
It follows by induction that, for any $\alpha_{2} \geq 1$
\begin{eqnarray}
\dfrac{\partial^{\alpha_{2}}f}{\partial{v^{\alpha_{2}}}}=\dfrac{1}{2^{\alpha_{2}}}\dfrac{\partial^{2\alpha_{2}}{f}}{\partial{\theta^{2\alpha_{2}}}}+\mathop {\sum }\limits_{i=1}^{\alpha_{2}}{\dfrac{1}{2^{\alpha_{2}-i}}\dfrac{\partial^{i-1}{f}}{\partial{v^{i-1}}}\left(\dfrac{m^{3}+m}{2v}\dfrac{\partial^{2\alpha_{2}-2i+1}{f}}{\partial{\theta^{2(\alpha_{2}-i)}}\partial{m}}\right)}. \nonumber
\end{eqnarray}
Therefore, for any $\alpha=(\alpha_{1},\alpha_{2},\alpha_{3})$ such that $\alpha_{2} \geq 1$, we have
\begin{eqnarray}
\dfrac{\partial^{|\alpha|}{f}}{\partial{\theta^{\alpha_{1}}\partial{v^{\alpha_{2}}}\partial{m^{\alpha_{3}}}}}=\dfrac{1}{2^{\alpha_{2}}}\dfrac{\partial^{\alpha_{1}+2\alpha_{2}+\alpha_{3}}{f}}{\partial{\theta^{\alpha_{1}+2\alpha_{2}}}\partial{m^{\alpha_{3}}}}+\mathop {\sum }\limits_{i=1}^{\alpha_{2}}{\dfrac{1}{2^{\alpha_{2}-i}}\dfrac{\partial^{\alpha_{1}+\alpha_{3}+i-1}}{\partial{\theta^{\alpha_{1}}\partial{m^{\alpha_{3}}}\partial{v^{i-1}}}}\left(\dfrac{m^{3}+m}{2v}\dfrac{\partial^{2\alpha_{2}-2i+1}{f}}{\partial{\theta^{2(\alpha_{2}-i)}}\partial{m}}\right)}. \nonumber
\end{eqnarray}
Continue this identity until the right hand side of this equation only contains derivatives in terms of $\theta$ and $m$, which means all the derivatives involving $v$ can be reduced to the derivatives with only $\theta$ and $m$. As a consequence, $A_{1}(x)/d(G_{n},G_{0})$ is the linear combination of elements of $\dfrac{\partial^{|\beta|}{f}}{\partial{\theta^{\beta_{1}}m^{\beta_{2}}}}(x|\theta,v,m)$ where $0 \leq |\beta| \leq 2\overline{r}$ (not necessarily all the value of $\beta$ in this range). 
We can check that 
for each $\gamma = 1,\ldots, 2\overline{r}$, the
coefficient of $\dfrac{\partial^{\gamma}{f}}{\partial{\theta^{\gamma}}}(x|\theta_{i}^{0},v_{i}^{0},m_{i}^{0})$ is 
\begin{eqnarray}
E_{\gamma}(\theta_{i}^{0},v_{i}^{0},m_{i}^{0})=\left[\mathop {\sum }\limits_{j=1}^{s_{i}}{p_{ij}^{n}\mathop {\sum }\limits_{\substack{n_{1}+2n_{2}=\gamma \\ n_{1}+n_{2} \leq \overline{r}}}{\dfrac{(\Delta \theta_{ij}^{n})^{n_{1}}(\Delta v_{ij}^{n})^{n_{2}}}{2^{n_{2}}n_{1}!n_{2}!}}}\right]/d(G_{n},G_{0}). \nonumber
\end{eqnarray}
Additionally, the coefficient of the $\overline{r}$-th order
derivative with respect to $m$, $\dfrac{\partial^{\overline{r}}{f}}{\partial{m^{\overline{r}}}}(x|\theta_{i}^{0},v_{i}^{0},m_{i}^{0})$, 
is $\mathop {\sum }\limits_{j=1}^{s_{i}}{p_{ij}^{n}(\Delta m_{ij})^{\overline{r}}}/d(G_{n},G_{0})$. Therefore, if all of the coefficients of $A_{1}(x)/d(G_{n},G_{0}),B_{1}(x)/d(G_{n},G_{0})$ go to $0$, then as $\overline{r}$ is even, we obtain $\mathop {\sum }\limits_{j=1}^{s_{i}}{p_{ij}^{n}|\Delta m_{ij}|^{\overline{r}}}/d(G_{n},G_{0}) \to 0$ for all $1 \leq i \leq k_{0}$ and $\mathop {\sum }\limits_{i=1}^{k_{0}}{|p_{i.}^{n}-p_{i}^{0}|}/d(G_{n},G_{0}) \to 0$. It implies that 
\begin{eqnarray}
\mathop {\sum }\limits_{i=1}^{k_{0}}{\mathop {\sum }\limits_{j=1}^{s_{i}}{p_{ij}^{n}(|\Delta \theta_{ij}^{n}|^{\overline{r}}+|\Delta m_{ij}|^{\overline{r}})}}/d(G_{n},d_{G_{0}}) \to 1. \nonumber
\end{eqnarray}
Therefore, we can find an index $i^{*} \in \left\{1,\ldots,k_{0}\right\}$ such that $\mathop {\sum }\limits_{j=1}^{s_{i^{*}}}{p_{i^{*}j}^{n}(|\Delta \theta_{i^{*}j}^{n}|^{\overline{r}}+|\Delta m_{i^{*}j}|^{\overline{r}})}/d(G_{n},d_{G_{0}}) \not \to 0$. By multiply this term with $E_{\gamma}(\theta_{i^{*}}^{0},v_{i^{*}}^{0},m_{i^{*}}^{0})$ as $1 \leq \gamma \leq \overline{r}$, we obtain
\begin{eqnarray}
\left[\mathop {\sum }\limits_{j=1}^{s_{i^{*}}}{p_{i^{*}j}^{n}\mathop {\sum }\limits_{\substack{n_{1}+2n_{2}=\gamma \\ n_{1}+n_{2} \leq \overline{r}}}{\dfrac{(\Delta \theta_{i^{*}j}^{n})^{n_{1}}(\Delta v_{i^{*}j}^{n})^{n_{2}}}{2^{n_{2}}n_{1}!n_{2}!}}}\right]/\mathop {\sum }\limits_{j=1}^{s_{i^{*}}}{p_{i^{*}j}^{n}(|\Delta \theta_{i^{*}j}^{n}|^{\overline{r}}+|\Delta m_{i^{*}j}|^{\overline{r}})} \to 0,\nonumber
\end{eqnarray}
which is a contradiction due to the proof of Theorem \ref{theorem:generaloverfittedGaussian}. Therefore, not all the coefficients of $A_{1}(x),B(x)$ go to 0. As a consequence, for all $x \in \mathbb{R}$, $(p_{G_{n}}(x)-p_{G_{0}}(x))/d(G_{n},G_{0})$ converges to the linear combinations of $\dfrac{\partial^{|\beta|}{f}}{\partial{\theta^{\beta_{1}}m^{\beta_{2}}}}(x|\theta_{I}^{0},v_{i}^{0},m_{i}^{0})$ where at least one coefficient differs from 0. 
However, due to Assumption (S1) on $G_{0}$, 
the collection of 
$\dfrac{\partial^{|\beta|}{f}}{\partial{\theta^{\beta_{1}}\partial{m^{\beta_{2}}}}}(x|\theta_{i}^{0},v_{i}^{0},m_{i}^{0})$ are linearly independent, which is a contradiction. This concludes our proof. 

The following addresses the remarks following the statement of Theorem
\ref{theorem:Overfittedskewnormal}.

\paragraph{Best lower bound when $k-k_{0}=1$:} The remark 
regarding the removal of the constraint $\mathcal{O}_{k,c_{0}}$ is immediate from (the proof of)
Proposition \ref{theorem:Gaussianoverfittedbytwo}.
To show the bound is sharp in this case, 
we construct sequence $G_{n}=\mathop {\sum }\limits_{i=1}^{k}{\mathop {\sum }\limits_{j=1}^{s_{i}}{p_{ij}^{n}\delta_{(\theta_{i}^{n},v_{i}^{n},m_{i}^{n})}}}$ as follows $s_{1}=2,s_{i}=1$ for all $2 \leq i \leq k_{0}$, $p_{11}^{n}=p_{12}^{n}=p_{1}^{0}/2$, $\Delta \theta_{11}^{n}=1/n, \Delta \theta_{12}^{n}=-1/n$, $\Delta v_{11}^{n}=\Delta v_{12}^{n}=-1/n^{2}$, $\Delta m_{11}^{n}=\Delta m_{12}^{n}=a_{n}$ where $a_{n}$ is the solution of following equation
\begin{eqnarray}
\dfrac{3m_{1}^{0}}{n^{2}v_{1}^{0}}a_{n}^{2}-\left(2-\dfrac{3(m_{1}^{0})^{2}+1}{n^{2}v_{i}^{0}}-\dfrac{3(m_{1}^{0})^{2}+1}{n^{4}(v_{i}^{0})^{2}}\right)a_{n}+\dfrac{(m_{1}^{0})^{3}+m_{1}^{0}}{n^6(v_{1}^{0})^{3}}+\dfrac{(m_{1}^{0})^{3}+m_{1}^{0}}{n^{4}(v_{1}^{0})^{2}} & + & \nonumber \\
\dfrac{(m_{1}^{0})^{3}+ \nonumber m_{1}^{0}}{n^{2}v_{1}^{0} }  -  \dfrac{m_{1}^{0}((m_{1}^{0})^{2}+1)^{2}}{4n^{4}(v_{1}^{0})^{2}} & = & 0,  \label{eqn:bestlowerboundoverfittedskewnormal}
\end{eqnarray}
which has the solution when $n$ is sufficiently large. Additionally, $|a_{n}| \asymp 1/n^{2} \to 0$ when $n \to \infty$. The choice of $a_{n}$ will be discussed in the sequel. 
Now, for any $1 \leq r <4$, we have $W_{1}^{r}(G_{n},G_{0}) \gtrsim 1/n^{r}$. By using Taylor expansion up to the fourth order, we can write $(p_{G_{n}}(x)-p_{G_{0}}(x))/W_{1}^{r}(G_{n},G_{0})$ as the linear combination of the first part, which consists of $\dfrac{\partial{f}}{\partial{m}}(x|\theta_{i}^{0},v_{i}^{0},m_{i}^{0})$, $\dfrac{\partial{f}}{\partial{v^{2}}}(x|\theta_{i}^{0},v_{i}^{0},m_{i}^{0})$, $\dfrac{\partial{f}}{\partial{\theta}}(x|\theta_{i}^{0},v_{i}^{0},m_{i}^{0})$, $\dfrac{\partial^{2}{f}}{\partial{\theta}\partial{m}}(x|\theta_{i}^{0},v_{i}^{0},m_{i}^{0})$, $\dfrac{\partial^{2}{f}}{\partial{\theta}\partial{v}}(x|\theta_{i}^{0},v_{i}^{0},m_{i}^{0})$ plus the second part, which consists of the remaining 
derivatives and the Taylor remainder. 
Note that the second part always converges to 0. 
For $i=2, \ldots, k_0$, the coefficients of the derivatives in the first part are 0,
thanks to our construction of $G_n$.
Thus, only the case left is when $i=1$. 
By direct computation, the 
coefficient of $\dfrac{\partial{f}}{\partial{m}}(x|\theta_{1}^{0},v_{1}^{0},m_{1}^{0})$ is
\begin{eqnarray*}
\dfrac{(m_{1}^{0})^{3} + m_{1}^{0}}{2v_{1}^{0}}\mathop{\sum}\limits_{i=1}^{2}{p_{1i}^{n}(\Delta \theta_{1i}^{n})^{2}}+
\dfrac{(m_{1}^{0})^{3}+m_{1}^{0}}{2(v_{1}^{0})^{2}}\mathop {\sum }\limits_{i=1}^{2}{p_{1i}^{n}(\Delta \theta_{1i}^{n})^{2}\Delta v_{1i}^{n}} \\
-\dfrac{3(m_{1}^{0})^{2}+1}{2v_{1}^{0}}\mathop {\sum }\limits_{i=1}^{2}{p_{1i}^{n}(\Delta \theta_{1i}^{n})^{2}\Delta m_{1i}^{n}}+ \nonumber 
\dfrac{m_{1}^{0}((m_{1}^{0})^{2}+1)^{2}}{8(v_{1}^{0})^{2}}\mathop {\sum }\limits_{i=1}^{2}{p_{i}^{n}(\Delta \theta_{1i}^{n})^{4}} - \\
\dfrac{(m_{1}^{0})^{3}+m_{1}^{0}}{2(v_{1}^{0})^{3}}\mathop {\sum }\limits_{i=1}^{2}{p_{i}^{n}(\Delta \theta_{1i}^{n})^{2}(\Delta v_{1i}^{n})^{2}} + 
\dfrac{3(m_{1}^{0})^{2}+1}{2(v_{1}^{0})^{2}}\mathop {\sum }\limits_{i=1}^{2}{p_{1i}^{n}(\Delta \theta_{1i}^{n})^{2}\Delta v_{1i}^{n} \Delta m_{1i}^{n}}- \\
\dfrac{3m_{1}^{0}}{2v_{1}^{0}}\mathop {\sum }\limits_{i=1}^{2}{p_{1i}^{n}(\Delta \theta_{1i}^{n})^{2}(\Delta m_{1i}^{n})^{2}}+ 
 \mathop {\sum }\limits_{i=1}^{2}{p_{i}^{n}\Delta m_{1i}^{n}}=0, \nonumber
\end{eqnarray*}
where the equality is due to the fact that the left hand side of this equation is equal to the left hand side of equation \eqref{eqn:bestlowerboundoverfittedskewnormal}. Therefore, the choice of $a_{n}$ is to guarantee the coefficient of $\dfrac{\partial{f}}{\partial{m}}$ to be 0. With similar calculation, we can easily check that all the coefficients of $\dfrac{\partial{f}}{\partial{v}}$, $\dfrac{\partial{f}}{\partial{\theta}}$, $\dfrac{\partial{f}}{\partial{\theta}\partial{m}}$, $\dfrac{\partial{f}}{\partial{\theta}\partial{v}}$ are also 0. Therefore, the assertion about the best lower bound immediately follows. 

\paragraph{Case $k-k_{0}=2$:} In this scenario,
we conjecture that $W_{4}^{4}(G,G_{0})$ is still the best lower bound of $V(p_{G},p_{G_{0}})$. 
Following the same proof recipe as above, such a conclusion follows from the hypothesis
that for any fixed value $m \neq 0$, $\sigma^{2}>0$, the following system of 8 polynomial equations
\begin{eqnarray*}
\mathop {\sum_{i=1}^{3} }{d_{i}^{2}a_{i}} = 0, \;\;
\mathop {\sum_{i=1}^{3} }{d_{i}^{2}(a_{i}^{2}+b_{i})}  =  0, \;\;
\mathop {\sum_{i=1}^{3} }{d_{i}^{2}(\dfrac{a_{i}^{3}}{3}+a_{i}b_{i})}  =  0, \;\; 
\mathop {\sum_{i=1}^{3} }{d_{i}^{2}(\dfrac{a_{i}^{4}}{6}+a_{i}^{2}b_{i}+\dfrac{b_{i}^{2}}{2})} = 0 \nonumber \\
\sum_{i=1}^{3} d_{i}^{2} 
\biggr ( -\dfrac{m^{3}+m}{2\sigma^{2}} a_{i}^{2} 
+ \dfrac{m^{3}+m}{2\sigma^{4}}a_{i}^{2}b_{i} 
-\dfrac{3m^{2}+1}{2\sigma^{2}}a_{i}^{2}c_{i}  \\
+ \dfrac{3(m^{2}+1)(m^{3}+m)}{4!\sigma^{4}}a_{i}^{4} 
- \dfrac{m^{3}+m}{2\sigma^{6}}a_{i}^{2}b_{i}^{2}   
+ \dfrac{3m^{2}+1}{2\sigma^{4}}a_{i}^{2}b_{i}c_{i}
- \dfrac{3m}{2\sigma^{2}} a_{i}^{2}c_{i}^{2} 
+ c_{i}   \biggr )
  = 0  \\
\mathop {\sum_{i=1}^{3} }{d_{i}^{2}(-\dfrac{m^{3}+m}{6\sigma^{2}}a_{i}^{3}+\dfrac{m^{3}+m}{6\sigma^{4}}a_{i}^{3}b_{i}-\dfrac{3m^{2}+1}{6\sigma^{2}}a_{i}^{3}c_{i}+a_{i}c_{i})}=0 \nonumber \\
\mathop {\sum_{i=1}^{3} }{d_{i}^{2}(-\dfrac{m^{3}+m}{6\sigma^{2}}a_{i}^{4}+\dfrac{m^{3}+m}{2\sigma^{4}}a_{i}^{2}b_{i}^{2}-\dfrac{3m^{2}+1}{2\sigma^{2}}a_{i}^{2}b_{i}c_{i}+\dfrac{b_{i}c_{i}}{2})}=0 \nonumber \\
\mathop {\sum_{i=1}^{3} }{d_{i}^{2}(\dfrac{(m^{3}+m)^{2}}{4!\sigma^{4}}a_{i}^{4}+\dfrac{m^{3}+m}{4\sigma^{4}}a_{i}^{2}b_{i}c_{i}-\dfrac{3m^{2}+1}{2\sigma^{2}}a_{i}^{2}c_{i}^{2}+\dfrac{c_{i}^{2}}{2})}=0 \nonumber
\end{eqnarray*}
does not have any non-trivial solution, i.e $d_{i} \neq 0$ for all $1 \leq i \leq 3$ and at least one among $a_{1},\ldots,a_{3},b_{1},$\\ $\ldots,b_{3},c_{1},\ldots,c_{3}$ is non-zero.

%%% Appendix II
\newpage
\begin{center}
\textbf{APPENDIX II}
\end{center}

For the sake of completeness, we collect herein the proof of technical results and auxiliary arguments that
were left out of the main text and Appendix I. 

\paragraph{PROOF OF COROLLARY \ref{corollary:lowerboundtransformation}} From Theorem \ref{theorem:identifiabilitytransformation}, the class $\left\{g(x|\eta,\Lambda),\eta \in \Theta^{*}, \Lambda \in \Omega^{*} \right\}$ is identifiable in the first order. From the proof of Theorem \ref{theorem-firstorder}, in order to achieve the conclusion of our theorem, it remains to verify that $g(x|\eta,\Lambda)$ satisfies conditions \eqref{Eqn:Lipschitz1} and \eqref{Eqn:Lipschitz2}. As the first derivative of $f$ in terms of $\theta$ and $\Sigma$ is $\alpha$-Holder continuous, $f(x|\theta,\Sigma)$ satisfies conditions \eqref{Eqn:Lipschitz1} and \eqref{Eqn:Lipschitz2} with $\delta_{1}=\delta_{2}=\alpha$.

Now, for any $\eta^{1},\eta^{2} \in \Theta^{*}$, $\Lambda \in \Omega^{*}$, we have $T(\eta^{1},\Lambda)=(\theta^{1},\Sigma)$ and $T(\eta^{2},\Lambda)=(\theta^{2},\Sigma)$. For any $1 \leq i \leq d_{1}$, we obtain
\begin{eqnarray}
\dfrac{\partial{}}{\partial{\eta_{i}}}(g(x|\eta^{1},\Lambda)-g(x|\eta^{2},\Lambda)) = \mathop {\sum }\limits_{l=1}^{d_{1}}\dfrac{\partial{f}}{\partial{\theta_{l}}}(x|\theta^{1},\Sigma)\dfrac{\partial{\left[T_{1}(\eta^{1},\Lambda)\right]_{l}}}{\partial{\eta_{i}}}-\mathop {\sum }\limits_{l=1}^{d_{1}}\dfrac{\partial{f}}{\partial{\theta_{l}}}(x|\theta^{2},\Sigma)\dfrac{\partial{\left[T_{1}(\eta^{2},\Lambda)\right]_{l}}}{\partial{\eta_{i}}}+ \nonumber \\
\mathop {\sum }\limits_{1 \leq u,v \leq d_{2}}{\dfrac{\partial{f}}{\partial{\Sigma_{uv}}}(x|\theta^{1},\Sigma)\dfrac{\partial{\left[T_{2}(\eta^{1},\Lambda)\right]_{uv}}}{\partial{\eta_{i}}}}-\mathop {\sum }\limits_{1 \leq u,v \leq d_{2}}{\dfrac{\partial{f}}{\partial{\Sigma_{uv}}}(x|\theta^{2},\Sigma)\dfrac{\partial{\left[T_{2}(\eta^{2},\Lambda)\right]_{uv}}}{\partial{\eta_{i}}}}. \nonumber
\end{eqnarray}
Notice that,
\begin{eqnarray}
\mathop {\sum }\limits_{l=1}^{d_{1}}\dfrac{\partial{f}}{\partial{\theta_{l}}}(x|\theta^{1},\Sigma)\dfrac{\partial{\left[T_{1}(\eta^{1},\Lambda)\right]_{l}}}{\partial{\eta_{i}}}-\mathop {\sum }\limits_{l=1}^{d_{1}}\dfrac{\partial{f}}{\partial{\theta_{l}}}(x|\theta^{2},\Sigma)\dfrac{\partial{\left[T_{1}(\eta^{2},\Lambda)\right]_{l}}}{\partial{\eta_{i}}} \leq  \|\dfrac{\partial{f}}{\partial{\theta}}(x|\theta^{1},\Sigma)- \dfrac{\partial{f}}{\partial{\theta}}(x|\theta^{2},\Sigma) \| & &\nonumber \\
 \times \|\dfrac{\partial{T_{1}}}{\partial{\eta_{i}}}(\eta^{1},\Lambda)\| + \|\dfrac{\partial{f}}{\partial{\theta}}(x|\theta^{2},\Sigma)\|\|\dfrac{\partial{T_{1}}}{\partial{\eta_{i}}}(\eta^{1},\Lambda)-\dfrac{\partial{T_{1}}}{\partial{\eta_{i}}}(\eta^{2},\Lambda)\| \nonumber \\
\leq  L_{1}\|\theta^{1}-\theta^{2}\|^{\alpha}+L_{2}\|\eta^{1}-\eta^{2}\|^{\alpha}, & & \nonumber
\end{eqnarray}
where $L_{1}$, $L_{2}$ are two positive constants from the $\alpha$-Holder continuity and the boundedness of the first derivative of $f(x|\theta,\Sigma)$ and $T(\eta,\Lambda)$. Moreover, since $T$ is Lipschitz continuous, it implies that $\|\theta^{1}-\theta^{2}\| \lesssim \|\eta^{1}-\eta^{2}\|$. Therefore, the above inequality can be rewritten as
\begin{eqnarray}
\mathop {\sum }\limits_{l=1}^{d_{1}}\dfrac{\partial{f}}{\partial{\theta_{l}}}(x|\theta^{1},\Sigma)\dfrac{\partial{\left[T_{1}(\eta^{1},\Lambda)\right]_{l}}}{\partial{\eta_{i}}}-\mathop {\sum }\limits_{l=1}^{d_{1}}\dfrac{\partial{f}}{\partial{\theta_{l}}}(x|\theta^{2},\Sigma)\dfrac{\partial{\left[T_{1}(\eta^{2},\Lambda)\right]_{l}}}{\partial{\eta_{i}}} \lesssim \|\eta^{1}-\eta^{2}\|^{\alpha}. \nonumber
\end{eqnarray}
With the similar argument, we get
\begin{eqnarray}
\mathop {\sum }\limits_{1 \leq u,v \leq d_{2}}{\dfrac{\partial{f}}{\partial{\Sigma_{uv}}}(x|\theta^{1},\Sigma)\dfrac{\partial{\left[T_{2}(\eta^{1},\Lambda)\right]_{uv}}}{\partial{\eta_{i}}}}-\mathop {\sum }\limits_{1 \leq u,v \leq d_{2}}{\dfrac{\partial{f}}{\partial{\Sigma_{uv}}}(x|\theta^{2},\Sigma)\dfrac{\partial{\left[T_{2}(\eta^{2},\Lambda)\right]_{uv}}}{\partial{\eta_{i}}}} \lesssim \|\eta^{1}-\eta^{2}\|^{\alpha}. \nonumber
\end{eqnarray}
Thus, for any $1 \leq i \leq d_{1}$,
\begin{eqnarray}
\left|\dfrac{\partial{}}{\partial{\eta_{i}}}(g(x|\eta^{1},\Lambda)-g(x|\eta^{2},\Lambda)) \right| \lesssim \|\eta^{1}-\eta^{2}\|^{\alpha}. \nonumber
\end{eqnarray}
As a consequence, for any $\gamma_{1} \in \mathbb{R}^{d_{1}}$,
\begin{eqnarray}
\left|\gamma_{1}^{T} \biggr ( \dfrac{\partial{g}}{\partial{\eta}}(x|\eta^{1},\Sigma)-\dfrac{\partial{g}}{\partial{\eta}}(x|\eta^{2},\Sigma) \biggr ) \right| \lesssim \|\dfrac{\partial{g}}{\partial{\eta}}(x|\eta^{1},\Sigma)-\dfrac{\partial{g}}{\partial{\eta}}(x|\eta^{2},\Sigma)\|\|\gamma_{1}\| \lesssim \|\eta^{1}-\eta^{2}\|^{\alpha}\|\gamma_{1}\|, \nonumber
\end{eqnarray}
which means that condition \eqref{Eqn:Lipschitz1} is satisfied by $g(x|\eta,\Lambda)$.
Likewise, we also can demonstrate that condition \eqref{Eqn:Lipschitz2} is satisfied by $g(x|\theta,\Lambda)$. Therefore, the conclusion of our corollary is achieved.

\paragraph{PROOF OF THEOREM \ref{identifiability-univariatecharacterization}}
(a)  Assume that we have $\alpha_{j},\beta_{j},\gamma_{j} \in \mathbb{R}$ as $1 \leq j \leq k$, $k \geq 1$ such that:
\begin{eqnarray}
\mathop {\sum }\limits_{j=1}^{k}{\alpha_{j}f(x|\theta_{j},\sigma_{j})+\beta_{j}\dfrac{\partial{f}}{\partial{\theta}}(x|\theta_{j},\sigma_{j})+\gamma_{j}\dfrac{\partial{f}}{\partial{\sigma}}(x|\theta_{j},\sigma_{j})}=0. \nonumber
\end{eqnarray}

Multiply both sides of the above equation with $\exp(itx)$ and take the integral in $\mathbb{R}$, we obtain the following result:
\begin{eqnarray}
\mathop {\sum }\limits_{j=1}^{k}{\left[(\alpha_{j}'+\beta_{j}'(it))\phi(\sigma_{j}t)+\gamma_{j}'\psi(\sigma_{j}t)\right]\exp(it\theta_{j})}=0, \label{eq:logistic1}
\end{eqnarray}
where $\alpha_{j}'=\alpha_{j}-\dfrac{\gamma_{j}}{\sigma_{j}},\beta_{j}'=\beta_{j},\gamma_{j}'=-\dfrac{\gamma_{j}}{\sigma_{j}}$, ${\displaystyle \phi(t)=\int \limits_{\mathbb{R}}{\exp(itx)f(x)}dx}$, and ${\displaystyle \psi(t)=\int \limits_{\mathbb{R}}{\exp(itx)xf'(x)}dx}$.\\

By direct calculation, we obtain $\phi(t)=\dfrac{\Gamma(p+it)\Gamma(q-it)}{\Gamma(p)\Gamma(q)}$.  Additionally, from the property of Gamma function and Euler's reflection formula, as $p,q$ are two positive integers, we have
\begin{eqnarray}
\Gamma(p+it)\Gamma(q-it)=\begin{cases} \prod \limits_{j=1}^{p-1}(p-j+it)\prod \limits_{j=1}^{q-1}(q-j-it)\dfrac{\pi t}{\sinh(\pi t)}, & \mbox{if } p,q \geq 2 \\ \prod \limits_{j=1}^{p-1}(p-j+it)\dfrac{\pi t}{\sinh(\pi t)}, & \mbox{if } p \geq 2, q=1 \\ \prod \limits_{j=1}^{q-1}(q-j-it)\dfrac{\pi t}{\sinh(\pi t)}, & \mbox{if } p=1, q \geq 2 \\ \dfrac{\pi t}{\sinh(\pi t)}, & \mbox{if } p=q=1 \end{cases}. \label{eq:logistic12}
\end{eqnarray}

From now, we only consider the case $p,q \geq 2$ as other cases can be argued in the same way. Denote $\prod \limits_{j=1}^{p-1}(p-j+it)\prod \limits_{j=1}^{q-1}(q-j-it)= \mathop {\sum }\limits_{u=0}^{p+q-2}{a_{u}t^{u}}$. It is clear that $a_{0}=\prod \limits_{j=1}^{p-1}{(p-j)}\prod \limits_{j=1}^{q-1}{(q-j)}$ and $a_{p+q-2}=(-1)^{q-1}.i^{p+q-2} \neq 0$.\\
From \eqref{eq:logistic12}, the characteristic function $\phi(t)$ can be rewritten as
\begin{eqnarray}
\phi(t)=\dfrac{2 \pi \exp(\pi t)(\mathop {\sum }\limits_{u=0}^{p+q-2}{a_{u}t^{u+1}})}{\Gamma(p)\Gamma(q)(\exp(2\pi t)-1)}. \label{eq:logistic13}
\end{eqnarray}
Additionally, since $xf'(x)$ and $f'(x)$ are integrable functions,
\begin{eqnarray}
{\displaystyle \psi(t)=\int \limits_{\mathbb{R}}{\exp(itx)xf'(x)}dx=-i\dfrac{\partial{}}{\partial{t}}\left(\int \limits_{\mathbb{R}}{\exp(itx)f'(x)}dx\right)=-i\dfrac{\partial{}}{\partial{t}}\left(it\phi(t)\right)=\phi(t)+t\phi'(t)}. \quad  \nonumber
\end{eqnarray}
By direct computation, we obtain
\begin{eqnarray}
\psi(t)=\dfrac{2\pi(\mathop {\sum }\limits_{u=0}^{p+q-2}{a_{u}(u+2)t^{u+1}})\exp(\pi t)}{\Gamma(p)\Gamma(q)(\exp(2\pi t)-1)}-\dfrac{2\pi^{2}(\mathop {\sum }\limits_{u=0}^{p+q-2}{a_{u}t^{u+2}})(\exp(2\pi t)+1)\exp(\pi t)}{\Gamma(p)\Gamma(q)(\exp(\pi t)-1)^{2}}. \label{eq:logistic16}
\end{eqnarray}
Combining \eqref{eq:logistic13} and \eqref{eq:logistic16}, we can rewrite \eqref{eq:logistic1} as
\begin{eqnarray}
\mathop {\sum }\limits_{j=1}^{k}{(\alpha_{j}'+\beta_{j}'(it))}\dfrac{(\mathop {\sum }\limits_{u=0}^{p+q-2}{a_{u}\sigma_{j}^{u+1}t^{u+1}})\exp((\pi\sigma_{j}+\theta_{j}) t)}{(\exp(2\pi \sigma_{j} t)-1)} & + & \nonumber \\ \dfrac{\gamma_{j}'(\mathop {\sum }\limits_{u=0}^{p+q-2}{a_{u}(u+2)\sigma_{j}^{u+1}t^{u+1}})\exp((\pi \sigma_{j}+i\theta_{j}) t)}{(\exp(2\pi \sigma_{j}t)-1)} & - & \nonumber \\
\dfrac{\gamma_{j}'\pi (\mathop {\sum }\limits_{u=0}^{p+q-2}{a_{u}\sigma_{j}^{u+2}t^{u+2}})(\exp(2\pi \sigma_{j}t)+1)\exp((\pi\sigma_{j}+i\theta_{j}) t)}{\Gamma(p)\Gamma(q)(\exp(\pi \theta_{j} t)-1)^{2}} & = &  0. \nonumber
\end{eqnarray}

Denote $t'=\pi t$, $\theta_{j}'=\dfrac{\theta_{j}}{\pi}$, $\beta_{j}^{''}=\dfrac{\beta_{j}'}{\pi}$, $a_{u}^{(j)}=\dfrac{a_{u}\sigma_{j}^{u+1}}{\pi^{u+1}}$, $b_{u}^{(j)}=\dfrac{a_{u}(u+2)\sigma_{j}^{u+1}}{\pi^{u+1}}$, and $c_{u}^{(j)}=\dfrac{a_{u}\sigma_{j}^{u+2}}{\pi^{u+2}}$ for all $1 \leq j \leq k$, $0 \leq u \leq p+q-2$ and multiply both sides of the above equation with $\displaystyle{\prod \limits_{j=1}^{k}{(\exp(2\sigma_{j}t)-1)^{2}}}$, we can rewrite it as
\begin{eqnarray}
\mathop {\sum }\limits_{j=1}^{k}{((\alpha_{j}'+\beta_{j}^{''}(it'))(\mathop {\sum }\limits_{u=0}^{p+q-2}{a_{u}^{(j)}(t')^{u+1}})} & + & \nonumber \\
\gamma_{j}'(\mathop {\sum }\limits_{u=0}^{p+q-2}{b_{u}^{(j)}(t')^{u+1}}))\exp((\sigma_{j}+i\theta_{j}')t')(\exp(2\sigma_{j}t')-1) \prod \limits_{l \neq j}{(\exp(2\sigma_{l}t')-1)^{2}} & - & \nonumber \\
\pi \gamma_{j}'(\mathop {\sum }\limits_{u=0}^{p+q-2}{c_{u}^{(j)}(t')^{u+2}})\exp((\sigma_{j}+i\theta_{j}')t')(\exp(2\sigma_{j}t')+1)\prod \limits_{l \neq j}{(\exp(2\sigma_{l}t')-1)^{2}} & = &  0. \label{eq:logistic17}
\end{eqnarray}

Without loss of generality, we assume that $\sigma_{1} \leq \sigma_{2} \leq ... \leq \sigma_{k}$. Note that, we can view $\exp(t'\sigma_{j})(\exp(2\sigma_{j}t')-1)\prod \limits_{l \neq j}{(\exp(2\sigma_{l}t')-1)^{2}}$ as $\mathop {\sum }\limits_{u=1}^{m_{j}}{d_{u}^{(j)}\exp(t'e_{u}^{(j)})}$ where $e_{1}^{(j)} < e_{2}^{(j)} < ... < e_{m_{j}}^{(j)}$ are just the combinations of $\sigma_{1},\sigma_{2},...,\sigma_{k}$ and $m_{j} \geq 1$ for all $1 \leq j \leq k$. Similarly, we can write $\exp(t'\sigma_{j})(\exp(2\sigma_{j}t')+1)\prod \limits_{l \neq j}{(\exp(2\sigma_{l}t')-1)^{2}}$ as $\mathop {\sum }\limits_{u=1}^{n_{j}}{k_{u}^{(j)}\exp(t'h_{u}^{(j)})}$, where $h_{1}^{(j)}<...<h_{n_{j}}^{(j)}$ and $n_{j} \geq 1$ for all $1 \leq j \leq k$.\\

Direct calculation yields $e_{m_{j}}^{(j)}=h_{n_{j}}^{(j)}=4\mathop {\sum }\limits_{l \neq j}{\sigma_{l}}+3\sigma_{j}$ and $e_{m_{j}}^{(j)}=h_{n_{j}}^{(j)}=1$ for all $1 \leq j \leq k$.  From the assumption, it is straightforward that $e_{m_{1}}^{(1)} \geq e_{m_{2}}^{(2)} \geq ... \geq e_{m_{k}}^{(k)}$. Additionally, by denoting $(\alpha_{j}'+\beta_{j}^{''}(it'))(\mathop {\sum }\limits_{u=0}^{p+q-2}{a_{u}^{(j)}(t')^{u+1}})+\gamma_{j}'(\mathop {\sum }\limits_{u=0}^{p+q-2}{b_{u}^{(j)}(t')^{u+1}})=\mathop {\sum }\limits_{u=0}^{p+q-1}{f_{u}^{(j)}(t')^{u+1}}$, we obtain $f_{0}^{(j)}=\alpha_{j}'a_{0}^{(j)}+\gamma_{j}'b_{0}^{(j)}$ and $f_{p+q-1}^{(j)}=i\beta_{j}^{''}a_{p+q-2}^{(j)}$ for all $1 \leq j \leq k$.\\

By applying the Laplace transformation in both sides of equation \eqref{eq:logistic17}, we get:
\begin{eqnarray}
\mathop {\sum }\limits_{j=1}^{k}{\mathop {\sum }\limits_{u=0}^{p+q-1}{f_{u}^{(j)}\mathop {\sum }\limits_{u_{1}=1}^{m_{j}}{\dfrac{d_{u_{1}}^{(j)}(u+1)!}{(s-z_{u_{1}}^{(j)})^{u+2}}}}-\mathop {\sum }\limits_{u=0}^{p+q-2}{\gamma_{j}'\pi c_{u}^{(j)}\mathop {\sum }\limits_{u_{1}=1}^{n_{j}}{\dfrac{k_{u_{1}}^{(j)}(u+2)!}{(s-w_{u_{1}}^{(j)})^{u+3}}}}}=0 \ \text{as Res} (s)>e_{m_{1}}^{(1)}.\label{eq:logistic18}
\end{eqnarray}
where $z_{u_{1}}^{(j)}=i\theta_{j}'+e_{u_{1}}^{(j)}$ as $1 \leq u_{1} \leq m_{j}$ and $w_{u_{1}}^{(j)}=i\theta_{j}'+h_{u_{1}}^{(j)}$ as $1 \leq u_{1} \leq n_{j}$.\\

Multiplying both sides of equation \eqref{eq:logistic18} with $(s-z_{m_{1}}^{(1)})^{p+q+1}$ and letting $s \to z_{m_{1}}^{(1)}$, as $e_{u_{1}}^{(j)}<e_{m_{1}}^{(1)}$ for all $(u_{1},j) \neq (m_{1},1)$ and $h_{u_{1}}^{(j)}<h_{n_{1}}^{(1)}=e_{m_{1}}^{(1)}$ for all $(u_{1},j) \neq (n_{1},1)$, we obtain $|f_{p+q-1}^{(1)}d_{m_{1}}^{(1)}-\gamma_{1}'\pi c_{p+q-2}^{(1)}k_{n_{1}}^{(1)}|=0$. Since $d_{m_{1}}^{(1)}=k_{n_{1}}^{(1)}=1$, $f_{p+q-1}^{(1)}=i\beta_{1}^{''}a_{p+q-2}^{(1)}$, $c_{p+q-2}^{(1)}=\dfrac{\sigma_{1}}{\pi}a_{p+q-2}^{(1)}$, and $a_{p+q-2}^{(1)}=\dfrac{a_{p+q-2}\sigma_{1}^{p+q-1}}{\pi^{p+q-1}} \neq 0$, it implies that $|i\beta_{1}^{''}-\gamma_{1}'\sigma_{1}|=0$ or equivalently $\beta_{1}^{''}=\gamma_{1}'=0$. Likewise, multiplying both sides of \eqref{eq:logistic18} with $(s-z_{m_{1}}^{(1)})^{p+q}$ and let $s \to z_{m_{1}}^{(1)}$, as $\gamma_{1}'=0$, we obtain $f_{p+q-2}^{(1)}=0$. Continue this fashion until we multiply both sides of \eqref{eq:logistic18} with $(s-z_{m_{1}}^{(1)})$ and let $s \to z_{m_{1}}^{(1)}$ to get $f_{0}^{(1)}=0$ or equivalently $\alpha_{1}'a_{0}^{(1)}=0$. As $a_{0}^{(1)}=\sigma_{1}\prod \limits_{j=1}^{p-1}{(p-j)}\prod \limits_{j=1}^{q-1}{(q-j)}/\pi \neq 0$, it implies that $\alpha_{1}'=0$. Overall, we achieve $\alpha_{1}'=\beta_{1}^{''}=\gamma_{1}'=0$. Repeat the same argument until we achieve $\alpha_{j}'=\beta_{j}^{''}=\gamma_{j}'=0$ for all $1 \leq j \leq k$ or equivalently $\alpha_{j}=\beta_{j}=\gamma_{j}=0$.\\

\noindent
(b) Assume that we can find $\alpha_{j},\beta_{j},\gamma_{j}.\eta_{j} \in \mathbb{R}$ such that
\begin{eqnarray}
\mathop {\sum }\limits_{j=1}^{k}{\alpha_{j}f(x|\theta_{j},\sigma_{j},\lambda_{j})+\beta_{j}\dfrac{\partial{f}}{\partial{\theta}}(x|\theta_{j},\sigma_{j},\lambda_{j})+\gamma_{j}\dfrac{\partial{f}}{\partial{\sigma}}(x|\theta_{j},\sigma_{j},\lambda_{j})+\eta_{j}\dfrac{\partial{f}}{\partial{\lambda}}(x|\theta_{j},\sigma_{j},\lambda_{j})}=0. \label{eq:gumbel1}
\end{eqnarray}
\comment{Direct calculation yields that
\begin{eqnarray} 
\int \limits_{\mathbb{R}}{\exp(tx)f(x|\theta,\sigma,\lambda)}dx=\dfrac{\exp(\theta t)\lambda^{\sigma t}\Gamma(\lambda-\sigma t)}{\Gamma(\lambda)} \quad \text{as } t < \dfrac{\lambda}{\sigma}. \nonumber
\end{eqnarray}
Additionally,  as $t <\dfrac{\lambda}{\sigma}$
\begin{eqnarray}
\int \limits_{\mathbb{R}}{\exp(tx)\dfrac{\partial{f}}{\partial{\theta}}(x|\theta,\sigma,\lambda)}dx &=&\dfrac{\exp(t\theta)}{\sigma}\int \limits_{\mathbb{R}}{\exp(\sigma tx)\dfrac{\partial{f}}{\partial{x}}(x,\lambda)}dx \nonumber \\
&=& \dfrac{\exp(t\theta)\lambda^{\lambda+1}}{\sigma \Gamma(\lambda)}\int \limits_{\mathbb{R}}{\exp(\sigma  tx)(\exp(-x)-1)\exp(-\lambda(x+\exp(-x))}dx \nonumber \\
&=& \dfrac{\exp(t\theta)\lambda^{\lambda+1}}{\sigma \Gamma(\lambda)} \int \limits_{0}^{\infty}{y^{\lambda-\sigma t-1}(y-1)\exp(-\lambda y)}dy = \dfrac{t\exp(\theta t)\lambda^{\sigma t}\Gamma(\lambda-\sigma t)}{\Gamma(\lambda)}. \nonumber
\end{eqnarray}
where the second last equality is due to the change of variable $y=\exp(-x)$. By some calculations, we obtain
\begin{eqnarray}
\int \limits_{\mathbb{R}}{\exp(tx)\dfrac{\partial{f}}{\partial{\sigma}}(x|\theta,\sigma,\lambda)}dx=\dfrac{t\exp(\theta t)\lambda^{\sigma t}\Gamma(\lambda-\sigma t)}{\Gamma(\lambda)}\left(\log(\lambda)-\psi(\lambda-\sigma t)\right) \nonumber
\end{eqnarray}
and 
\begin{eqnarray}
\int \limits_{\mathbb{R}}{\exp(tx)\dfrac{\partial{f}}{\partial{\lambda}}(x|\theta,\sigma,\lambda)}dx=\dfrac{\exp(\theta t)\lambda^{\sigma t}\Gamma(\lambda-\sigma t)}{\Gamma(\lambda)}\left(\dfrac{\sigma t}{\lambda}+\psi(\lambda-\sigma t)-\psi(\lambda)\right), \nonumber
\end{eqnarray}
where $\psi$ denotes di-gamma function.\\}
Applying the moment generating function to both sides of equation \eqref{eq:gumbel1}, we obtain
\begin{eqnarray}
\mathop {\sum }\limits_{j=1}^{k}{(\alpha_{j}'+\beta_{j}' t+\gamma_{j}'t\psi(\lambda_{j}-\sigma_{j} t)+\eta_{j}'\psi(\lambda_{j}-\sigma_{j}t))\exp(\theta_{j}'t)\Gamma(\lambda_{j}-\sigma_{j} t)}=0 \quad \text{as } t<\mathop {\min }\limits_{1 \leq j \leq k}{\left\{\dfrac{\lambda_{j}}{\sigma_{j}}\right\}}, \label{eq:gumbel12}
\end{eqnarray}
where $\alpha_{j}'=\dfrac{\alpha_{j}-\eta_{j}\psi(\lambda_{j})}{\Gamma(\lambda_{j})}$, $\beta_{j}'=\dfrac{\beta_{j}+\gamma_{j}\log(\lambda_{j})+\eta_{j}\sigma_{j}\lambda_{j}^{-1}}{\Gamma(\lambda_{j})}, \gamma_{j}'=-\dfrac{\gamma_{j}}{\Gamma(\lambda_{j})}$, $\eta_{j}'=\dfrac{\eta_{j}}{\Gamma(\lambda_{j})}$, and $\theta_{j}'=\theta_{j}+\log(\lambda_{j})\sigma_{j}$ as $\psi$ is di-gamma function.\\

Without loss of generality, we assume that $\sigma_{1} \leq \sigma_{2} \leq \ldots \leq \sigma_{k}$. We choose $\overline{i}$ to be minimum index such that $\sigma_{\overline{i}}=\sigma_{k}$. Denote $i_{1} \in [\overline{i},k]$ as the index such that $\theta_{i_{1}}'=\mathop {\min }\limits_{\overline{i} \leq i \leq k}{\left\{\theta_{i}'\right\}}$. Denote $I=\left\{i \in [\overline{i},k]:\theta_{i}'=\theta_{i_{1}}'\right\}$. From the formation of $\theta_{j}'$, it implies that $\lambda_{i}$ are pairwise different as $i \in I$. Choose $i_{2} \in I$ such that $\lambda_{i_{2}}=\mathop {\max }\limits_{i \in I}{\lambda_{i}}$, i.e $\lambda_{i_{2}} > \lambda_{i}$ for all $i \in I$. Divide both sides of equation \eqref{eq:gumbel12} by $t\Gamma(1-\sigma_{i_{2}}t)\psi(1-\sigma_{i_{2}}t)\exp(\theta_{i_{2}}'t)$, we get that as $t<\dfrac{1}{\sigma_{k}}$
\begin{eqnarray}
\dfrac{\alpha_{i_{2}}'}{t\psi(\lambda_{i_{2}}-\sigma_{i_{2}}t)}+\dfrac{\beta_{i_{2}}'}{\psi(\lambda_{i_{2}}-\sigma_{i_{2}}t)}+\gamma_{i_{2}}'+\dfrac{\eta_{i_{2}}'}{t}+\mathop {\sum }\limits_{j \neq i_{2}}{\dfrac{\alpha_{j}'\Gamma(\lambda_{j}-\sigma_{j}t)\exp(\theta_{j}'t)}{t\Gamma(\lambda_{i_{2}}-\sigma_{i_{2}}t)\psi(\lambda_{i_{2}}-\sigma_{i_{2}}t)\exp(\theta_{i_{2}}'t)}} & + & \nonumber \\
\dfrac{\beta_{j}\Gamma(\lambda_{j}-\sigma_{j}t)\exp(\theta_{j}'t)}{\Gamma(\lambda_{i_{2}}-\sigma_{i_{2}}t)\psi(\lambda_{i_{2}}-\sigma_{i_{2}}t)\exp(\theta_{i_{2}}'t)}
+\dfrac{\gamma_{j}'\exp(\theta_{j}'t)\Gamma(\lambda_{j}-\sigma_{j}t)\psi(\lambda_{j}-\sigma_{j}t)}{\exp(\theta_{i_{2}}'t)\Gamma(\lambda_{i_{2}}-\sigma_{i_{2}}t)\psi(\lambda_{i_{2}}-\sigma_{i_{2}}t)} & + &\nonumber\\ \dfrac{\eta_{j}'\exp(\theta_{j}'t)\Gamma(\lambda_{j}-\sigma_{j}t)\psi(\lambda_{j}-\sigma_{j}t)}{t\exp(\theta_{i_{2}}'t)\Gamma(\lambda_{i_{2}}-\sigma_{i_{2}}t)\psi(\lambda_{i_{2}}-\sigma_{i_{2}}t)} & = & 0. \quad  \quad  \label{eq:gumbel13}
\end{eqnarray}
Note that $\lim \limits_{t \to -\infty}{\psi(\lambda_{j}-\sigma_{j}t)/\psi(\lambda_{i_{2}}-\sigma_{i_{2}}t)}=1$ for all $1 \leq j \leq k$. Additionally, when $j \in I$ and $j \neq i_{2}$, as $\lambda_{j} <\lambda_{i_{2}}$, we see that $\Gamma(\lambda_{j}-\sigma_{j}t)/\Gamma(\lambda_{i_{2}}-\sigma_{i_{2}}t) \to 0$ as $t \to -\infty$ and $\exp((\theta_{j}'-\theta_{i_{2}}')(t))=1$. It implies that $\exp(\theta_{j}'t)\Gamma(\lambda_{j}-\sigma_{j}t)/\exp(\theta_{i_{2}}'t)\Gamma(\lambda_{i_{2}}-\sigma_{i_{2}}t) \to 0$ as $t \to -\infty$. Since $\psi(\lambda_{i_{2}}-\sigma_{i_{2}}t) \to +\infty$ as $t \to -\infty$, if we let $t \to -\infty$, we obtain
\begin{eqnarray}
\mathop {\sum }\limits_{j \in I \backslash {i_{2}}}{\dfrac{\alpha_{j}'\Gamma(\lambda_{j}-\sigma_{j}t)\exp(\theta_{j}'t)}{t\Gamma(\lambda_{i_{2}}-\sigma_{i_{2}}t)\psi(\lambda_{i_{2}}-\sigma_{i_{2}}t)\exp(\theta_{i_{2}}'t)}+\dfrac{\beta_{j}\Gamma(\lambda_{j}-\sigma_{j}t)\exp(\theta_{j}'t)}{\Gamma(\lambda_{i_{2}}-\sigma_{i_{2}}t)\psi(\lambda_{i_{2}}-\sigma_{i_{2}}t)\exp(\theta_{i_{2}}'t)}} & + & \nonumber \\
\dfrac{\gamma_{j}'\exp(\theta_{j}'t)\Gamma(\lambda_{j}-\sigma_{j}t)\psi(\lambda_{j}-\sigma_{j}t)}{\exp(\theta_{i_{2}}'t)\Gamma(\lambda_{i_{2}}-\sigma_{i_{2}}t)\psi(\lambda_{i_{2}}-\sigma_{i_{2}}t)} + \dfrac{\eta_{j}'\exp(\theta_{j}'t)\Gamma(\lambda_{j}-\sigma_{j}t)\psi(\lambda_{j}-\sigma_{j}t)}{t\exp(\theta_{i_{2}}'t)\Gamma(\lambda_{i_{2}}-\sigma_{i_{2}}t)\psi(\lambda_{i_{2}}-\sigma_{i_{2}}t)} & \to & 0 .\quad \quad \label{eq:gumbel14}
\end{eqnarray} 
Additionally, as $j \geq \overline{i}$ and $j \notin I$, we have $\sigma_{j}=\sigma_{i_{2}}$ and $\theta_{j}'>\theta_{i_{2}}'$. Therefore, we obtain $\exp((\theta_{j}'-\theta_{i_{2}}')t)\Gamma(\lambda_{j}-\sigma_{j}t)/\Gamma(\lambda_{i_{2}}-\sigma_{i_{2}}t) \to 0$ as $t \to -\infty$. As a consequence, if we let $t \to -\infty$, then
\begin{eqnarray}
\mathop {\sum }\limits_{j \notin I, j \geq \overline{i}}{\dfrac{\alpha_{j}'\Gamma(\lambda_{j}-\sigma_{j}t)\exp(\theta_{j}'t)}{t\Gamma(\lambda_{i_{2}}-\sigma_{i_{2}}t)\psi(\lambda_{i_{2}}-\sigma_{i_{2}}t)\exp(\theta_{i_{2}}'t)}+\dfrac{\beta_{j}\Gamma(\lambda_{j}-\sigma_{j}t)\exp(\theta_{j}'t)}{\Gamma(\lambda_{i_{2}}-\sigma_{i_{2}}t)\psi(\lambda_{i_{2}}-\sigma_{i_{2}}t)\exp(\theta_{i_{2}}'t)}} & + & \nonumber \\
\dfrac{\gamma_{j}'\exp(\theta_{j}'t)\Gamma(\lambda_{j}-\sigma_{j}t)\psi(\lambda_{j}-\sigma_{j}t)}{\exp(\theta_{i_{2}}'t)\Gamma(\lambda_{i_{2}}-\sigma_{i_{2}}t)\psi(\lambda_{i_{2}}-\sigma_{i_{2}}t)} + \dfrac{\eta_{j}'\exp(\theta_{j}'t)\Gamma(\lambda_{j}-\sigma_{j}t)\psi(\lambda_{j}-\sigma_{j}t)}{t\exp(\theta_{i_{2}}'t)\Gamma(\lambda_{i_{2}}-\sigma_{i_{2}}t)\psi(\lambda_{i_{2}}-\sigma_{i_{2}}t)} & \to & 0. \quad \quad \label{eq:gumbel15}
\end{eqnarray} 
Now, as $j< \overline{i}$, we have $\sigma_{j} <\sigma_{i_{2}}$. Therefore, as $\Gamma(\lambda_{j}-\sigma_{j}t)/\Gamma(\lambda_{i_{2}}-\sigma_{i_{2}}t) \sim (-t)^{(\sigma_{i_{2}}-\sigma_{j})t}$ when $t <0$, we get $\exp((\theta_{j}'-\theta_{i_{2}}')t)\Gamma(\lambda_{j}-\sigma_{j}t)/\Gamma(\lambda_{i_{2}}-\sigma_{i_{2}}t) \to 0$ as $t \to -\infty$. As a consequence, if we let $t \to -\infty$, then
\begin{eqnarray}
\mathop {\sum }\limits_{j <\overline{i}}{\dfrac{\alpha_{j}'\Gamma(\lambda_{j}-\sigma_{j}t)\exp(\theta_{j}'t)}{t\Gamma(\lambda_{i_{2}}-\sigma_{i_{2}}t)\psi(\lambda_{i_{2}}-\sigma_{i_{2}}t)\exp(\theta_{i_{2}}'t)}+\dfrac{\beta_{j}\Gamma(\lambda_{j}-\sigma_{j}t)\exp(\theta_{j}'t)}{\Gamma(\lambda_{i_{2}}-\sigma_{i_{2}}t)\psi(\lambda_{i_{2}}-\sigma_{i_{2}}t)\exp(\theta_{i_{2}}'t)}} & + & \nonumber \\
\dfrac{\gamma_{j}'\exp(\theta_{j}'t)\Gamma(\lambda_{j}-\sigma_{j}t)\psi(\lambda_{j}-\sigma_{j}t)}{\exp(\theta_{i_{2}}'t)\Gamma(\lambda_{i_{2}}-\sigma_{i_{2}}t)\psi(\lambda_{i_{2}}-\sigma_{i_{2}}t)} + \dfrac{\eta_{j}'\exp(\theta_{j}'t)\Gamma(\lambda_{j}-\sigma_{j}t)\psi(\lambda_{j}-\sigma_{j}t)}{t\exp(\theta_{i_{2}}'t)\Gamma(\lambda_{i_{2}}-\sigma_{i_{2}}t)\psi(\lambda_{i_{2}}-\sigma_{i_{2}}t)} & \to & 0. \quad \quad \label{eq:gumbel16}
\end{eqnarray} 
Combining \eqref{eq:gumbel14}, \eqref{eq:gumbel15}, and \eqref{eq:gumbel16}, by letting $t \to -\infty$ in \eqref{eq:gumbel13}, we get $\gamma_{i_{2}}'=0$.
With this result, we divide both sides of \eqref{eq:gumbel13} by $t\exp(\theta_{i_{2}}'t)\Gamma(\lambda_{i_{2}}-\sigma_{i_{2}}t)$, we obtain that as $t \to -\infty$
\begin{eqnarray}
\dfrac{\alpha_{i_{2}}'}{t}+\beta_{i_{2}}'+\dfrac{\eta_{i_{2}}'\psi(\lambda_{i_{2}}-\sigma_{i_{2}}t)}{t}+\mathop {\sum }\limits_{j \neq i_{2}}{\dfrac{\alpha_{j}'\Gamma(\lambda_{j}-\sigma_{j}t)\exp(\theta_{j}'t)}{t\Gamma(\lambda_{i_{2}}-\sigma_{i_{2}}t)\exp(\theta_{i_{2}}'t)}+\dfrac{\beta_{j}\Gamma(\lambda_{j}-\sigma_{j}t)\exp(\theta_{j}'t)}{\Gamma(\lambda_{i_{2}}-\sigma_{i_{2}}t)\exp(\theta_{i_{2}}'t)}} & + &\nonumber \\
\dfrac{\gamma_{j}'\exp(\theta_{j}'t)\Gamma(\lambda_{j}-\sigma_{j}t)\psi(\lambda_{j}-\sigma_{j}t)}{\exp(\theta_{i_{2}}'t)\Gamma(\lambda_{i_{2}}-\sigma_{i_{2}}t)}+ \dfrac{\eta_{j}'\exp(\theta_{j}'t)\Gamma(\lambda_{j}-\sigma_{j}t)\psi(\lambda_{j}-\sigma_{j}t)}{t\exp(\theta_{i_{2}}'t)\Gamma(\lambda_{i_{2}}-\sigma_{i_{2}}t)} & = & 0. \nonumber
\end{eqnarray}
Using the same argument with the notice that $\exp((\theta_{j}'-\theta_{i_{1}}')t)\psi(\lambda_{j}-\sigma_{j}t)\Gamma(\lambda_{j}-\sigma_{j}t)/\Gamma(\lambda_{i_{2}}-\sigma_{i_{2}}t) \to 0$ as $t \to -\infty$ for all $j \neq i_{1}$ and $\psi(\lambda_{i_{2}}-\sigma_{i_{2}}t)/t \to 0$ as $t \to -\infty$, we obtain $\beta_{i_{2}}'=0$. Continue in this fashion, we divide both sides of \eqref{eq:gumbel13} by $\psi(\lambda_{i_{2}}-\sigma_{i_{2}}t)\exp(\theta_{i_{2}}'t)\Gamma(\lambda_{i_{2}}-\sigma_{i_{2}}t)$ and $\exp(\theta_{i_{2}}'t)\Gamma(\lambda_{i_{2}}-\sigma_{i_{2}}t)$ respectively and by letting $t \to -\infty$, we get $\alpha_{i_{2}}'=\eta_{i_{2}}'=0$. Applying this argument to the remained indices $i$, we achieve $\alpha_{j}'=\beta_{j}'=\gamma_{j}'=\eta_{j}'=0$ for $1 \leq j \leq k$ or equivalently $\alpha_{j}=\beta_{j}=\gamma_{j}=\eta_{j}=0$ for $1 \leq j \leq k$.\\\\
(c) Assume that we can find $\alpha_{j},\beta_{j},\gamma_{j} \in \mathbb{R}$ such that
\begin{eqnarray}
\mathop {\sum }\limits_{j=1}^{k}{\alpha_{j}f_{X}(x|\nu_{j},\lambda_{j})+\beta_{j}\dfrac{\partial{f_{X}}}{\partial{\nu}}(x|\nu_{j},\lambda_{j})+\gamma_{j}\dfrac{\partial{f_{X}}}{\partial{\lambda}}(x|\nu_{j},\lambda_{j})}=0. \nonumber
\end{eqnarray}
It implies that by the transformation $Y=\log(X)$, we still have:
\begin{eqnarray}
\mathop {\sum }\limits_{j=1}^{k}{\alpha_{j}f_{Y}(y|\nu_{j},\lambda_{j})+\beta_{j}\dfrac{\partial{f_{Y}}}{\partial{\nu}}(y|\nu_{j},\lambda_{j})+\gamma_{j}\dfrac{\partial{f_{Y}}}{\partial{\lambda}}(y|\nu_{j},\lambda_{j})}=0. \label{eq:weibull1}
\end{eqnarray}
where $f_{Y}(y)$ is the density function of $Y$.\\
\comment{Denote $\phi_{Y}(t|\nu,\lambda)$ as the moment generating function of $Y$. Then, $\phi_{Y}(t)=\lambda^{t}\Gamma(\dfrac{t}{\nu}+1)$ as $t>-\nu$.\\
Now, notice that
\begin{eqnarray}
\int \limits_{\mathbb{R}}{\exp(ty)\dfrac{\partial{f_{Y}}}{\partial{\nu}}(y|\nu,\lambda)}dy=\dfrac{1-\nu\log(\lambda)}{\lambda^{\nu}}\int \limits_{\mathbb{R}}{\exp\left((t+\nu)y)-\left(\dfrac{\exp(y)}{\lambda}\right)^{\nu}\right)}dy + \nonumber \\
\dfrac{\nu}{\lambda^{\nu}}\int \limits_{\mathbb{R}}{\exp\left((t+\nu)y-\left(\dfrac{\exp(y)}{\lambda}\right)^{\nu}\right)\left(y-\log\left(\dfrac{\exp(y)}{\lambda}\right)\left(\dfrac{\exp(y)}{\lambda}\right)^{\nu}\right)}dy. \nonumber
\end{eqnarray}
By changing of variable $x=\left(\dfrac{\exp(y)}{\lambda}\right)^{\nu}$, we can directly compute that as $t>-\nu$
\begin{eqnarray}
\int \limits_{\mathbb{R}}{\exp\left((t+\nu)y)-\left(\dfrac{\exp(y)}{\lambda}\right)^{\nu}\right)}dy =\dfrac{\lambda^{t+\nu}}{\nu}\int \limits_{0}^{+\infty}{x^{(t+\nu)/\nu-1}\exp(-x)}dx=\dfrac{\lambda^{t+\nu}}{\nu}\Gamma(\dfrac{t}{\nu}+1). \label{eq:weibull2}
\end{eqnarray}
Similarly, as $t>-\nu$
\begin{eqnarray}
\int \limits_{\mathbb{R}}{y\exp\left((t+\nu)y)-\left(\dfrac{\exp(y)}{\lambda}\right)^{\nu}\right)}dy=\dfrac{\lambda^{t+\nu}}{\nu}\log(\lambda)\Gamma(\dfrac{t}{\nu}+1)+\dfrac{\lambda^{t+\nu}}{\nu^{2}}\psi(\dfrac{t}{\nu}+1)\Gamma(\dfrac{t}{\nu}+1) \label{eq:weibull3}
\end{eqnarray}
and 
\begin{eqnarray}
\int \limits_{\mathbb{R}}{\exp(ty)\log\left(\dfrac{\exp(y)}{\lambda}\right)\left(\dfrac{\exp(y)}{\lambda}\right)^{\nu}\exp\left((t+\nu)y)-\left(\dfrac{\exp(y)}{\lambda}\right)^{\nu}\right)}dy=\dfrac{t+\nu}{\nu}\psi(\dfrac{t}{\nu}+1)\Gamma(\dfrac{t}{\nu}+1)  \nonumber \\
+\Gamma(\dfrac{t}{\nu}+1). \label{eq:weibull4}
\end{eqnarray}
Combining \eqref{eq:weibull2}, \eqref{eq:weibull3}, and \eqref{eq:weibull4}, we obtain
\begin{eqnarray}
\int \limits_{\mathbb{R}}{\exp(ty)\dfrac{\partial{f_{Y}}}{\partial{\nu}}(y|\nu,\lambda)}=-\dfrac{t\lambda^{t}}{\nu^{2}}\psi(\dfrac{t}{\nu}+1)\Gamma(\dfrac{t}{\nu}+1) \ \text{as } \ t >-\nu. \nonumber
\end{eqnarray}
Using the same argument, we can compute
\begin{eqnarray}
\int \limits_{\mathbb{R}}{\exp(ty)\dfrac{\partial{f_{Y}}}{\partial{\lambda}}(y|\nu,\lambda)}dy=t\lambda^{t-1}\Gamma(\dfrac{t}{\nu}+1) \ \text{as } \ t>-\nu. \nonumber
\end{eqnarray}}

Applying the moment generating function to both sides of \eqref{eq:weibull1}, we obtain
\begin{eqnarray}
\mathop {\sum }\limits_{j=1}^{k}{\alpha_{j}\lambda_{j}^{t}\Gamma(\dfrac{t}{\nu_{j}}+1)-\dfrac{\beta_{j}t\lambda_{j}^{t}}{\nu_{j}^{2}}\Gamma(\dfrac{t}{\nu_{j}}+1)\psi(\dfrac{t}{\nu_{j}}+1)+\gamma_{j}t\lambda_{j}^{t-1}\Gamma(\dfrac{t}{\nu_{j}}+1)}=0 \ \text{as } \ t>-\mathop {\min }\limits_{1 \leq i \leq k}{\left\{\nu_{i}\right\}}. \label{eq:weibull5}
\end{eqnarray}
Without loss of generality, assume that $\nu_{1} \leq \nu_{2} \leq \ldots \leq \nu_{k}$. Denote $\overline{i}$ as the minimum index such that $\nu_{\overline{i}}=\nu_{1}$ and $i_{1}$ is index such that $\lambda_{i_{1}}=\mathop {\min }\limits_{1 \leq i \leq i_{1}}{\left\{\lambda_{i}\right\}}$, which implies that $\lambda_{i_{1}}<\lambda_{i}$ for all $1 \leq i \leq \overline{i}$. Using the same argument as that of generalized gumbel density function case, we firstly divide both sides of  \eqref{eq:weibull5} by $t\Gamma(t/\nu_{i_{1}}+1)\psi(t/\nu_{j}+1)$ and let $t \to +\infty$, we obtain $\beta_{i_{1}}=0$. Then, with this result, we divide both sides of \eqref{eq:weibull5} by $t\Gamma(t/\nu_{j}+1)$ and let $t \to +\infty$, we get $\gamma_{i_{1}}=0$. Finally, divide both sides of \eqref{eq:weibull5} by $\Gamma(t/\nu_{j}+1)$ and let $t \to +\infty$, we achieve $\alpha_{i_{1}}=0$. Repeat the same argument until we obtain $\alpha_{i}=\beta_{i}=\gamma_{i}=0$ for all $1 \leq i \leq k$.\\\\
(d) The idea of this proof is based on main theorem of \cite{Kent-1983}. Assume that we can find $\alpha_{j},\beta_{j},\gamma_{j} \in \mathbb{R}$ such that
\begin{eqnarray}
\mathop {\sum }\limits_{j=1}^{k}{\alpha_{j}f(x|\mu_{j},\kappa_{j})+\beta_{j}\dfrac{\partial{f}}{\partial{\mu}}(x|\mu_{j},\kappa_{j})+\gamma_{j}\dfrac{\partial{f}}{\partial{\kappa}}(x|\mu_{j},\kappa_{j})}=0. \nonumber
\end{eqnarray}
We can rewrite the above equation as
\begin{eqnarray}
\mathop {\sum }\limits_{j=1}^{k}{\left[\alpha_{j}'+\beta_{j}'\sin(x-\mu_{j})+\gamma_{j}'\cos(x-\mu_{j})\right]\exp(\kappa_{j}\cos(x-\mu_{j}))}=0 \ \text{for all } x \in [0,2\pi). \label{eq:vonmises1}
\end{eqnarray}
where $C(\kappa)=\dfrac{1}{2\pi I_{0}(k)}$, $\alpha_{j}'=C(\kappa_{j})\alpha_{j}+C'(\kappa_{j})\gamma_{j}$, $\beta_{j}'=-C(\kappa_{j})\beta_{j}$, and $\gamma_{j}'=C(\kappa_{j})\gamma_{j}$ for all $1 \leq j \leq k$. 

Since the functions $\exp(\kappa_{j}(x-\mu_{j}))$, $\cos(x-\mu_{j})\exp(\kappa_{j}(x-\mu_{j}))$, and $\sin(x-\mu_{j})\exp(\kappa_{j}(x-\mu_{j}))$ are analytic functions of $x$, we can extend equation\eqref{eq:vonmises1} to the whole range $x \in \mathbb{C}$. Denote $x=y+iz$, where $y,z  \in \mathbb{R}$. Direct calculation yields $\cos(x-\mu_{j})=\cos(y-\mu_{j})\cosh(z)-i\sin(y-\mu_{j})\sinh(z)$, $\sin(x-\mu_{j})=\sin(y-\mu_{j})\cosh(z)+i\cos(y-\mu_{j})\sinh(z)$, and
\begin{eqnarray}
\exp(\kappa_{j}\cos(x-\mu_{j}))=\exp(\kappa_{j}\left[\cos(y-\mu_{j})\cosh(z)-i\sin(y-\mu_{j})\sinh(z)\right]). \nonumber
\end{eqnarray}
Therefore, we can rewrite equation \eqref{eq:vonmises1} as
for all $y,z \in \mathbb{R}$ 
\begin{eqnarray}
\mathop {\sum }\limits_{j=1}^{k}{\left\{\alpha_{j}'+\left[\beta_{j}'\cos(y-\mu_{j})+\gamma_{j}'\sin(y-\mu_{j})\right]\cosh(z)-i\left[\beta_{j}'\sin(y-\mu_{j})-\gamma_{j}'\cos(y-\mu_{j})\right]\sinh(z)\right\}} \nonumber \\
\exp\left(\kappa_{j}\left[\cos(y-\mu_{j})\cosh(z)-i\sin(y-\mu_{j})\sinh(z)\right]\right)=0. \label{eq:vonmises2}
\end{eqnarray}
As $(\mu_{j},\kappa_{j})$ are pairwise different as $1 \leq j \leq k$, we can choose at least one $y^{*} \in [0,2\pi)$ such that $m_{j}=\kappa_{j}\cos(y^{*}-\mu_{j})$ are pairwise different as $1 \leq j \leq k$ and $\cos(y^{*}-\mu_{j})$, $\sin(y^{*}-\mu_{j})$ are all different from 0 for all $1 \leq j \leq k$. Without loss of generality, we assume that $m_{1}<m_{2}<\ldots<m_{k}$. Multiply both sides of \eqref{eq:vonmises2} with $\exp(-m_{k}+i\kappa_{k}\sin(y^{*}-\mu_{k})\sinh(z))$, we obtain
\begin{eqnarray}
\alpha_{k}'+\left[\beta_{k}'\cos(y^{*}-\mu_{k}) +  \gamma_{k}'\sin(y^{*}-\mu_{k})\right]\cosh(z) -  i(\beta_{k}'\sin(y^{*}-\mu_{k}) & - & \gamma_{k}'\cos(y^{*}-\mu_{k}))\sinh(z)| \nonumber \\
 = \mathop {\sum }\limits_{j=1}^{k-1}{|\alpha_{j}'+\left[\beta_{j}'\cos(y^{*}-\mu_{j}) + \gamma_{j}'\sin(y^{*}-\mu_{j})\right]\cosh(z)}  & - & \nonumber \\
 i\left[\beta_{j}'\sin(y^{*}-\mu_{j})-\gamma_{j}'\cos(y^{*}-\mu_{j})\right]\sinh(z)| & \times & \exp((m_{j}-m_{k})\cosh(z)). \nonumber
\end{eqnarray}
Noted that as $m_{j}<m_{k}$ for all $1 \leq j \leq k-1$, 
\begin{eqnarray}
\lim \limits_{z \to \infty}{\cosh(z)\exp((m_{j}-m_{k})\cosh(z))}=\lim \limits_{z \to \infty}{\sinh(z)\exp((m_{j}-m_{k})\cosh(z))}=0. \nonumber
\end{eqnarray}
Therefore, by letting $z \to \infty$ in both sides of the above equation, we obtain
\begin{eqnarray}
|\alpha_{k}'+\left[\beta_{k}'\cos(y^{*}-\mu_{k})+\gamma_{k}'\sin(y^{*}-\mu_{k})\right]\cosh(z)-i(\beta_{k}'\sin(y^{*}-\mu_{k})- \nonumber \\
\gamma_{k}'\cos(y^{*}-\mu_{k}))\sinh(z)|  \to 0. \nonumber
\end{eqnarray}
It implies that $\alpha_{k}'=0$, $\beta_{k}'\cos(y^{*}-\mu_{k})+\gamma_{k}'\sin(y^{*}-\mu_{k})=0$, and $\beta_{k}'\sin(y^{*}-\mu_{k})-\gamma_{k}'\cos(y^{*}-\mu_{k})=0$. These equations imply $\alpha_{k}'=\beta_{k}'=\gamma_{k}'=0$. Repeat the same argument for the remained $\alpha_{j}',\beta_{j}',\gamma_{j}'$ as $1 \leq j \leq k-1$, we eventually achieve $\alpha_{j}'=\gamma_{j}'=\gamma_{j}'=0$ for all $1 \leq j \leq k$ or equivalently $\alpha_{j}=\beta_{j}=\gamma_{j}=0$ for all $1 \leq j \leq k$.

\paragraph{PROOF OF THEOREM \ref{identifiability-multivariatecharacterization} (Continue)}
Part (a) was proved in Appendix I. The following is the proof for the remaining parts.

\noindent
(b)  Consider that for given $k \geq 1$ and $k$ different pairs $(\theta_{1},\Sigma_{1}),...,(\theta_{k},\Sigma_{k})$, where $\theta_{j} \in \mathbb{R}^{d}$, $\Sigma_{j} \in S_{d}^{++}$ for all $1 \leq j \leq k$, we can find $\alpha_{j} \in \mathbb{R},\beta_{j} \in \mathbb{R}^{d}$, and symmetric matrices $\gamma_{j} \in \mathbb{R}^{d \times d}$ such that:
\begin{eqnarray}
\mathop {\sum }\limits_{j=1}^{k}{\alpha_{j}f(x|\theta_{j},\Sigma_{j})+\beta_{j}^{T}\dfrac{\partial{f}}{\partial{\theta}}(x|\theta_{j},\Sigma_{j})+\trace(\dfrac{\partial{f}}{\partial{\Sigma}}(x|\theta_{j},\Sigma_{j})^{T}\gamma_{j})}=0. \label{eq:mustudent1}
\end{eqnarray}
Multiply both sides with $\exp(it^{T}x)$ and take the integral in $\mathbb{R}^{d}$, we get:
\begin{eqnarray}
\mathop {\sum }\limits_{j=1}^{k}{\int \limits_{\mathbb{R}^{d}}{\exp(it^{T}x)\left[\alpha_{j}f(x|\theta_{j},\Sigma_{j})+\beta_{j}^{T}\dfrac{\partial{f}}{\partial{\theta}}(x|\theta_{j},\Sigma_{j})+\trace(\dfrac{\partial{f}}{\partial{\Sigma}}(x|\theta_{j},\Sigma_{j})^{T}\gamma_{j})\right]}dx}=0. \label{eq:mustudent2}
\end{eqnarray}
Notice that
\begin{eqnarray}
\int \limits_{\mathbb{R}^{d}}{\exp(it^{T}x)f(x|\theta_{j},\Sigma_{j})}dx & = &\exp(it^{T}\theta_{j})\int \limits_{R^{d}}{\exp(i(\Sigma_{j}^{1/2}t)^{T}x)\dfrac{1}{(\nu+\|x\|^{2})^{(\nu+d)/2}}}dx. \nonumber \\
 \int \limits_{\mathbb{R}^{d}}{\exp(it^{T}x)\beta_{j}^{T}\dfrac{\partial{f}}{\partial{\theta}}(x|\theta_{j},\Sigma_{j})}dx & = & \dfrac{C(\nu+d)}{2}\int \limits_{R^{d}}{\dfrac{\exp(i(\Sigma_{j}^{1/2}t)^{T}x)\beta_{j}^{T}\Sigma_{j}^{-1/2}x}{(\nu+\|x\|^{2})^{(\nu+d+2)/2}}}dx. \nonumber
\end{eqnarray}
and
\begin{eqnarray}
\int \limits_{\mathbb{R}^{d}}{\exp(it^{T}x)\trace(\dfrac{\partial{f}}{\partial{\Sigma}}(x|\theta_{j},\Sigma_{j})^{T}\gamma_{j})}dx  =  -\dfrac{C}{2}\trace(\Sigma_{j}^{-1}\gamma_{j})\exp(it^{T}\theta_{j})\times \nonumber \\
\times \int \limits_{R^{d}}\exp(it^{T}\theta_{j}){\dfrac{\exp(i(\Sigma_{j}^{1/2}t)^{T}x)}{(\nu+\|x\|^{2})^{(\nu+d)/2}}}dx + \nonumber \\ {\displaystyle \dfrac{C(\nu+d)}{2}\exp(it^{T}\theta_{j})\int \limits_{R^{d}}{\dfrac{\exp(i(\Sigma_{j}^{1/2}t)^{T}x)\trace(\Sigma_{j}^{-1/2}xx^{T}\Sigma_{j}^{-1/2}\gamma_{j})}{(\nu+\|x\|^{2})^{(\nu+d+2)/2}}}dx}. \nonumber
 \end{eqnarray}
From the property of trace of matrix, $\trace(\Sigma_{j}^{-1/2}xx^{T}\Sigma_{j}^{-1/2})\gamma_{j})=x^{T}\Sigma_{j}^{-1/2}\gamma_{j}\Sigma_{j}^{-1/2}x$. Equation \eqref{eq:mustudent2} can be rewritten as
\begin{eqnarray}
\mathop {\sum }\limits_{j=1}^{k}{\left[\int \limits_{\mathbb{R}^{d}}{\left(\dfrac{\alpha_{j}'\exp(i(\Sigma_{j}^{1/2}t)^{T}x)}{(\nu+\|x\|^{2})^{(\nu+d)/2}}+\dfrac{\exp(i(\Sigma_{j}^{1/2}t)^{T}x)(\beta_{j}')^{T}x}{(\nu+\|x\|^{2})^{(\nu+d+2)/2}}+\dfrac{\exp(i(\Sigma_{j}^{1/2}t)^{T}x)x^{T}M_{j}x}{(\nu+\|x\|^{2})^{(\nu+d+2)/2}}\right)}dx\right]} \times \nonumber \\
\times \exp(it^{T}\theta_{j})=0, \label{eq:mustudent3}
\end{eqnarray}
where $\alpha_{j}'=\alpha_{j}-\dfrac{\trace(\Sigma_{j}^{-1}\gamma_{j})}{2},\beta_{j}'=\dfrac{(\nu+d)}{2}\Sigma^{-1/2}\beta_{j}$, and $M_{j}=\dfrac{\nu+d}{2}\Sigma_{j}^{-1/2}\gamma_{j}\Sigma_{j}^{-1/2}$.\\

To simplify the left hand side of equation \eqref{eq:mustudent3}, it is sufficient to calculate the following quantities $A={\displaystyle \int \limits_{\mathbb{R}^{d}}{\dfrac{\exp(it^{T}x)}{(\nu+\|x\|^{2})^{(\nu+d)/2}}}dx}$,$B={\displaystyle \int \limits_{\mathbb{R}^{d}}{\dfrac{\exp(it^{T}x)(\beta')^{T}x}{(\nu+\|x\|^{2})^{(\nu+d+2)/2}}}dx}$, and $C={\displaystyle \int \limits_{\mathbb{R}^{d}}{\dfrac{\exp(it^{T}x)x^{T}Mx}{(\nu+\|x\|^{2})^{(\nu+d+2)/2}}}dx}$, where $\beta' \in \mathbb{R}^{d}$ and $M=(M_{ij}) \in \mathbb{R}^{d \times d}$.\\
In fact, using orthogonal transformation $x=O.z$, where $O \in \mathbb{R}^{d \times d}$ and its first column to be $(\dfrac{t_{1}}{\|t\|},...,\dfrac{t_{d}}{\|t\|})^{T}$, then it is not hard to verify that $\exp(it^{T}x)=\exp(i\|t\|z_{1})$, $\|x\|^{2}=\|z\|^{2}$, and $dx=|\det(O)|dz=dz$, then we obtain the following results:
\begin{eqnarray}
A &= &\int \limits_{\mathbb{R}^{d}}{\dfrac{\exp(i\|t\|z_{1})}{(\nu+\|z\|^{2})^{(\nu+d)/2}}}dz \nonumber \\
&=& \int \limits_{\mathbb{R}}{\exp(i\|t\|z_{1})\int \limits_{\mathbb{R}}{...\int \limits_{\mathbb{R}}{\dfrac{1}{(\nu+\|z\|^{2})^{(\nu+d)/2}}}}}dz_{d}dz_{d-1}...dz_{1} \nonumber \\
&=& C_{1}A_{1}(\|t\|), \nonumber
\end{eqnarray}
where ${\displaystyle C_{1}=\prod \limits_{j=2}^{d}{\int \limits_{\mathbb{R}}{\dfrac{1}{(1+z^{2})^{(\nu+j)/2}}}dz}}$ and ${\displaystyle A_{1}(t')=\int \limits_{\mathbb{R}}{\dfrac{\exp(i|t'|z)}{(v+z^{2})^{(\nu+1)/2}}}dz}$ for any $t' \in \mathbb{R}$.\\
Hence, for all $1 \leq j \leq k$
\begin{eqnarray}
\int \limits_{\mathbb{R}^{d}}{\dfrac{\exp(i(\Sigma_{j}^{1/2}t)^{T}x)}{(\nu+\|x\|^{2})^{(\nu+d)/2}}}dx=C_{1}A_{1}(\|\Sigma_{j}^{1/2}t\|). \label{eq:mustudent4}
\end{eqnarray}
Turning to $B$:
\begin{eqnarray}
B = \mathop {\sum }\limits_{j=1}^{d}{\beta_{j}'\int \limits_{\mathbb{R}^d}{\dfrac{\exp(it^{T}x)x_{j}}{(\nu+\|x\|^{2})^{(\nu+d+2)/2}}}dx} = \mathop {\sum }\limits_{j=1}^{d}{\beta_{j}'\int \limits_{\mathbb{R}^d}{\dfrac{\exp(i\|t\|z_{1})(\mathop {\sum }\limits_{l=1}^{d}{O_{jl}z_{l}})}{(\nu+\|z\|^{2})^{(\nu+d+2)/2}}}dz}. \label{eq:mustudent5}
\end{eqnarray}
When $j \neq 1$, since $\dfrac{z_{j}}{(\nu+\|z\|^{2})^{(\nu+d+2)/2}}$ is an integrable odd function, ${\displaystyle \int \limits_{\mathbb{R}^{d}}{\dfrac{\exp(i\|t\|z_{1})z_{j}}{(\nu+\|z\|^{2})^{(\nu+d+2)/2}}}dz=0}$. Simultaneously, using the same argument as \eqref{eq:mustudent4}, we get
\begin{eqnarray}
\int \limits_{\mathbb{R}^{d}}{\dfrac{\exp(i\|t\|z_{1})z_{1}}{(\nu+\|z\|^{2})^{(\nu+d+2)/2}}}dz=C_{2}A_{2}(\|t\|), \nonumber
\end{eqnarray}
where ${\displaystyle C_{2}=\prod \limits_{j=2}^{d}{\int \limits_{\mathbb{R}}{\dfrac{1}{(1+z^{2})^{(\nu+2+j)/2}}}dz}}$ and ${\displaystyle A_{2}(t')=\int \limits_{\mathbb{R}}{\dfrac{\exp(i|t'|z)z}{(\nu+z^{2})^{(\nu+3)/2}}}dz}$ for any $t' \in \mathbb{R}$. \\
Therefore, we can rewrite \eqref{eq:mustudent5} as
\begin{eqnarray}
B = \left(\mathop {\sum }\limits_{j=1}^{d}{O_{j1}\beta_{j}'}\right)\int \limits_{\mathbb{R}^{d}}{\dfrac{\exp(it^{t}z_{1})z_{1}}{(\nu+\|z\|^{2})^{(\nu+d+2)/2}}}dz &=& \left(\mathop {\sum }\limits_{j=1}^{d}{O_{j1}\beta_{j}'}\right)C_{2}A_{2}(\|t\|) \nonumber \\
&=& \dfrac{C_{2}(\beta')^{T}tA_{2}(\|t\|)}{\|t\|}. \nonumber
\end{eqnarray}
It demonstrates that for all $1 \leq j  \leq k$
\begin{eqnarray}
\int \limits_{\mathbb{R}^{d}}{\dfrac{\exp(i(\Sigma_{j}^{1/2}t)^{T}x)(\beta_{j}')^{T}x}{(\nu+\|x\|^{2})^{(\nu+d+2)/2}}}dx=\dfrac{C_{2}(\beta_{j}')^{T}\Sigma_{j}^{1/2}tA_{2}(\|\Sigma_{j}^{1/2}t\|)}{\|t\|}. \label{eq:mustudent6}
\end{eqnarray}
Turning to $C$:
\begin{eqnarray}
C =\mathop {\sum }\limits_{j=1}^{d}{M_{jj}\int \limits_{\mathbb{R}^{d}}{\dfrac{\exp(it^{T}x)x_{j}^{2}}{(\nu+\|x\|^{2})^{(\nu+d+2)/2}}}dx}+2\mathop {\sum }\limits_{j < l}{M_{jl}\int \limits_{\mathbb{R}^{d}}{\dfrac{\exp(it^{T}x)x_{j}x_{l}}{(\nu+\|x\|^{2})^{(\nu+d+2)/2}}}dx}. \label{eq:mustudent7}
\end{eqnarray}
Notice that, for each $1 \leq j \leq d$:
\begin{eqnarray}
\int \limits_{\mathbb{R}^{d}}{\dfrac{\exp(it^{T}x)x_{j}^{2}}{(\nu+\|x\|^{2})^{(\nu+d+2)/2}}}dx & = & \int \limits_{\mathbb{R}^{d}}{\dfrac{\exp(i\|t\|z_{1})(\mathop {\sum }\limits_{l=1}^{d}{O_{jl}z_{l})^{2}})^{2}}{(\nu+\|z\|^{2})^{(\nu+d+2)/2}}}dz \nonumber \\
& = & \mathop {\sum }\limits_{l=1}^{d}{O_{jl}^{2}\int \limits_{\mathbb{R}^{d}}{\dfrac{\exp(i\|t\|z_{1})z_{l}^{2}}{(\nu+\|z\|^{2})^{(\nu+d+2)/2}}}dz} + \nonumber \\
& & 2\mathop {\sum }\limits_{u<v}{O_{ju}O_{jv}} \int \limits_{\mathbb{R}^{d}}{\dfrac{\exp(i\|t\|z_{1})z_{u}z_{v}}{(\nu+\|z\|^{2})^{(\nu+d+2)/2}}}dz. \nonumber
\end{eqnarray}
As $u < v$, then one of $u$,$v$ will differ from 1. It follows that ${\displaystyle \int \limits_{\mathbb{R}^{d}}{\dfrac{\exp(i\|t\|z_{1})z_{u}z_{v}}{(\nu+\|z\|^{2})^{(\nu+d+2)/2}}}dz=0}$. Additionally, as $l \neq 1$, we see that 
\begin{eqnarray}
{\displaystyle \int \limits_{\mathbb{R}^{d}}{\dfrac{\exp(i\|t\|z_{1})z_{l}^{2}}{(\nu+\|z\|^{2})^{(\nu+d+2)/2}}}dz=\int \limits_{\mathbb{R}^{d}}{\dfrac{\exp(i\|t\|z_{1})z_{2}^{2}}{(\nu+\|z\|^{2})^{(\nu+d+2)/2}}}dz=C_{3}A_{1}(\|t\|)}. \nonumber
\end{eqnarray}
where ${\displaystyle C_{3}=\int \limits_{\mathbb{R}}{\dfrac{z^{2}}{(1+z^{2})^{(\nu+4)/2}}}dz\prod \limits_{j=3}^{k}{\int \limits_{\mathbb{R}}{\dfrac{1}{(1+z^{2})^{(\nu+2+j)/2}}}dz}}$.\\ Similarly, ${\displaystyle \int \limits_{\mathbb{R}^{d}}{\dfrac{\exp(i\|t\|z_{1})z_{1}^{2}}{(\nu+\|z\|^{2})^{(\nu+d+2)/2}}}dz=C_{2}A_{3}(\|t\|)}$, where ${\displaystyle A_{3}(t')=\int \limits_{\mathbb{R}}{\dfrac{\exp(i|t'|z)z^{2}}{(\nu+z^{2})^{(\nu+3)/2}}}dz}$ for any $t' \in \mathbb{R}$.\\
Therefore,
\begin{eqnarray}
\int \limits_{\mathbb{R}}{\dfrac{\exp(it^{T}x)x_{j}^{2}}{(\nu+\|x\|^{2})^{(\nu+d+2)/2}}}dx  = C_{2}O_{j1}^{2}A_{3}(\|t\|)+C_{3}(1-O_{j1}^{2})A_{1}(\|t\|). \nonumber
\end{eqnarray}
As a consequence,
\begin{eqnarray}
\mathop {\sum }\limits_{j=1}^{d}{M_{jj}\int \limits_{\mathbb{R}^{d}}{\dfrac{\exp(it^{T}y)x_{j}^{2}}{(\nu+\|x\|^{2})^{(\nu+d+2)/2}}}dx} =  C_{2}(\mathop {\sum }\limits_{j=1}^{d}{M_{jj}O_{j1}^{2}})A_{3}(\|t\|) & + & \nonumber \\
C_{3}(\mathop {\sum }\limits_{j=1}^{d}{M_{jj}(1-O_{j1}^{2})})A_{1}(\|t\|) & &.  \label{eq:mustudent8}
\end{eqnarray}
Simultaneously, as $j \neq l$
\begin{eqnarray}
\int \limits_{\mathbb{R}^{d}}{\dfrac{\exp(it^{T}x)x_{j}x_{l}}{(\nu+\|x\|^{2})^{(\nu+d+2)/2}}}dx = \mathop {\sum }\limits_{u=1}^{d}{O_{ju}O_{lu}\int \limits_{\mathbb{R}^{d}}{\dfrac{\exp(i|t\|z_{1})z_{u}^{2}}{(\nu+\|z\|^{2})^{(\nu+d+2)/2}}}dz} = \nonumber \\ 
C_{2}O_{j1}O_{l1}A_{3}(\|t\|)+C_{3}(\mathop {\sum }\limits_{u=2}^{d}{O_{ju}O_{lu}})A_{1}(\|t\|)= O_{j1}O_{l1}(C_{2}A_{3}(\|t\|)-C_{3}A_{1}(\|t\|)). \label{eq:mustudent9}
\end{eqnarray}
Combining \eqref{eq:mustudent8} and \eqref{eq:mustudent9}, we can rewrite \eqref{eq:mustudent7} as:
\begin{eqnarray}
C &=& C_{3}(\mathop {\sum }\limits_{j=1}^{d}{M_{jj}})A_{1}(\|t\|)+(\mathop {\sum }\limits_{jl}{M_{jl}O_{j1}O_{l1})(C_{2}A_{3}(\|t\|)-C_{3}A_{1}(\|t\|))} \nonumber \\
&=& C_{3}(\mathop {\sum }\limits_{j=1}^{d}{M_{jj}})A_{1}(\|t\|)+\dfrac{1}{\|t\|^{2}}(\mathop {\sum }\limits_{j,l}{M_{jl}t_{j}t_{l}})(C_{2}A_{3}(\|t\|)-C_{3}A_{1}(\|t\|)). \nonumber
\end{eqnarray}
Thus, for all $1 \leq j \leq d$
\begin{eqnarray}
\int \limits_{\mathbb{R}^{d}}{\dfrac{\exp(i(\Sigma_{j}^{1/2}t)^{T}x)x^{T}M_{j}x}{(\nu+\|x\|^{2})^{(\nu+d+2)/2}}}dx=\dfrac{1}{\|\Sigma_{j}^{1/2}t\|^{2}}(\mathop {\sum }\limits_{u,v}{M_{uv}^{j}[\Sigma_{j}^{1/2}t]_{u}[\Sigma_{j}^{1/2}t]_{v}}) \times \nonumber \\ 
\times (C_{2}A_{3}(\|\Sigma_{j}^{1/2}t\|)-C_{3}A_{1}(\|\Sigma_{j}^{1/2}t\|))+ C_{3}(\mathop {\sum }\limits_{l=1}^{d}{M_{ll}^{j})A_{1}(\|\Sigma_{j}^{1/2}t\|)}, \label{eq:mustudent10}
\end{eqnarray}
where $M_{uv}^{j}$ indicates the element at $u$-th row and $v$-th column of $M_{j}$ and $[\Sigma_{j}^{1/2}t]_{u}$ simply means the $u$-th component of $\Sigma_{j}^{1/2}t$.\\
As a consequence, by combining \eqref{eq:mustudent4},\eqref{eq:mustudent6}, and \eqref{eq:mustudent10}, we can rewrite \eqref{eq:mustudent3} as:\\
\begin{eqnarray}
\mathop {\sum }\limits_{j=1}^{k}{[\alpha_{j}'A_{1}(\|\Sigma_{j}^{1/2}t\|)+C_{2}\dfrac{(\Sigma_{j}^{1/2}t)^{T}\beta_{j}'}{\|\Sigma_{j}^{1/2}t\|}A_{2}(\|\Sigma_{j}^{1/2}t\|)+C_{3}(\mathop {\sum }\limits_{l=1}^{d}{M_{ll}^{j}})A_{1}(\|\Sigma_{j}^{1/2}t\|)} & + & \nonumber \\
\left(\mathop {\sum }\limits_{u,v}{M_{uv}^{j}\dfrac{[\Sigma_{j}^{1/2}t]_{u}[\Sigma_{j}^{1/2}t]_{v}}{\|\Sigma_{j}^{1/2}t\|^{2}}}\right)(C_{2}A_{3}(\|\Sigma_{j}^{1/2}t\|)-C_{3}A_{1}(\|\Sigma_{j}^{1/2}t\|))]\exp(it^{T}\theta_{j}) & = & 0. \label{eq:mustudent11}
\end{eqnarray}

Define $t=t_{1}t'$, where $t_{1} \in \mathbb{R}$ and $t' \in \mathbb{R}^{d}$. By using the same argument as that of multivariate generalized Gaussian distribution, we can find $D$ to be the finite union of conics and hyperplanes such that as $t' \notin D$, $((t')^{T}\theta_{1},(t')^{T}\Sigma_{1}t'),...((t')^{T}\theta_{k},(t')^{T}\Sigma_{k}t')$ are pairwise distinct. By denoting $\theta_{j}'=(t')^{T}\theta_{j}$, $\sigma_{j}=(t')^{T}\Sigma_{j}t'$, we can rewrite \eqref{eq:mustudent11} as:
\begin{eqnarray}
\mathop {\sum }\limits_{j=1}^{k}{[\alpha_{j}'A_{1}(\sigma_{j}|t_{1}|)+C_{2}\dfrac{t_{1}(\Sigma_{j}^{1/2}t')^{T}\beta_{j}'}{|t_{1}|\sigma_{j}}A_{2}(\sigma_{j}|t_{1}|)+C_{3}(\mathop {\sum }\limits_{l=1}^{d}{M_{ll}^{j}})A_{1}(\sigma_{j}|t_{1}|)} & + &\nonumber \\
(\mathop {\sum }\limits_{u,v}{M_{uv}^{j}\dfrac{[\Sigma_{j}^{1/2}t']_{u}[\Sigma_{j}^{1/2}t']_{v}}{\sigma_{j}^{2}}})(C_{2}A_{3}(\sigma_{j}|t_{1}|)-C_{3}A_{1}(\sigma_{j}|t_{1}|)]\exp(i\theta_{j}'t_{1}) & = & 0.\nonumber
\end{eqnarray}
Since $A_{2}(\sigma_{j}|t_{1}|)=(i|t_{1}|)A_{1}(\sigma_{j}|t_{1}|)$, the above equation can be rewritten as:
\begin{eqnarray}
\mathop {\sum }\limits_{j=1}^{k}{[(\alpha_{j}'+C_{3}(\mathop {\sum }\limits_{l=1}^{d}{M_{ll}^{j}})-C_{3}(\mathop {\sum }\limits_{u,v}{M_{uv}^{j}\dfrac{[\Sigma_{j}^{1/2}t']_{u}[\Sigma_{j}^{1/2}t']_{v}}{\sigma_{j}^{2}}}))A_{1}(\sigma_{j}|t_{1}|)} & + &\nonumber \\
C_{2}(it_{1})\dfrac{(\Sigma_{j}^{1/2}t')^{T}\beta_{j}'}{\sigma_{j}}A_{1}(\sigma_{j}|t_{1}|)+C_{2}(\mathop {\sum }\limits_{u,v}{M_{uv}^{j}\dfrac{[\Sigma_{j}^{1/2}t']_{u}[\Sigma_{j}^{1/2}t']_{v}}{\sigma_{j}^{2}}})A_{3}(\sigma_{j}|t_{1}|)]\exp(i\theta_{j}'t_{1}) & = & 0.  \quad \quad \label{eq:mustudent12}
\end{eqnarray}
As $\nu$ is odd number, we assume $\nu=2l-1$. By applying Lemma \ref{lemma-complexformula} 
(stated and proved in the sequel), we obtain for any $m \in \mathbb{N}$ that
\begin{eqnarray}
\int \limits_{-\infty}^{+\infty}{\dfrac{\exp(i|t_{1}|z)}{(z^{2}+\nu)^{m}}}dz=\dfrac{2\pi\exp(-|t_{1}|\sqrt{2l-1})}{(2\sqrt{2l-1})^{2m-1}}\left[\mathop {\sum }\limits_{j=1}^{m}{\dbinom{2m-1-j}{m-j}\dfrac{(2|t_{1}|\sqrt{2l-1})^{j-1}}{(j-1)!}}\right]. \nonumber
\end{eqnarray}
It means that we can write 
\begin{eqnarray}
A_{1}(t_{1})=C_{4}\exp(-|t_{1}|\sqrt{2l-1})\mathop {\sum }\limits_{u=0}^{l-1}{a_{u}|t_{1}|^{u}},\nonumber
\end{eqnarray}
where $C_{4}=\dfrac{2\pi}{(2\sqrt{2l-1})^{2m-1}}$, $a_{u}=\dbinom {2l-u-2}{l-u-1}\dfrac{(2\sqrt{2l-1})^{u}}{u!}$.\\
Simultaneously, as $\displaystyle{A_{3}(t_{1})=A_{1}(t_{1})-\nu\int \limits_{\mathbb{R}}{\dfrac{\exp(i|t_{1}|z)}{(\nu+z^{2})^{(\nu+3)/2}}}dz}$, we can write 
\begin{eqnarray}
\displaystyle{A_{3}(t_{1})=C_{4}\exp(-|t_{1}|\sqrt{2l-1})\mathop {\sum }\limits_{u=0}^{l}{b_{u}|t_{1}|^{u}}},\nonumber
\end{eqnarray}
where $b_{u}=\left[\dbinom{2l-u-2}{l-u-1}-\dfrac{1}{4}\dbinom{2l-u}{l-u}\right]\dfrac{(2\sqrt{2l-1})^{u}}{u!}$ as $0 \leq u \leq l-1$, and $b_{l}=-\dfrac{1}{4}\dfrac{(2\sqrt{2l-1})^{l}}{l !}$. It is not hard to notice that $a_{0},a_{l-1},b_{l}\neq 0$. \\

Now, for all $t_{1} \in \mathbb{R}$, equation \eqref{eq:mustudent12} can be rewritten as:
\begin{eqnarray}
\mathop {\sum }\limits_{j=1}^{k}{\left[\left(\alpha_{j}^{''}+\beta_{j}^{''}(it_{1})\right)\mathop {\sum }\limits_{u=0}^{l-1}{a_{u}\sigma_{j}^{u}|t_{1}|^{u}}+\gamma_{j}^{''}\mathop {\sum }\limits_{u=0}^{l}{b_{u}\sigma_{j}^{u}|t_{1}|^{u}}\right]\exp(it\theta_{j}'-\sigma_{j}\sqrt{2l-1}|t_{1}|)}=0, \nonumber
\end{eqnarray}
where $\alpha_{j}^{''}=\alpha_{j}'+C_{3}(\mathop {\sum }\limits_{l=1}^{d}{M_{ll}^{j}})-C_{3}(\mathop {\sum }\limits_{u,v}{M_{uv}^{j}\dfrac{[\Sigma_{j}^{1/2}t']_{u}[\Sigma_{j}^{1/2}t']_{v}}{\sigma_{j}^{2}}})$, $\beta_{j}^{''}=C_{2}\dfrac{(\Sigma_{j}^{1/2}t')^{T}\beta_{j}'}{\sigma_{j}}$, and $\gamma_{j}^{''}=C_{2}(\mathop {\sum }\limits_{u,v}{M_{uv}^{j}\dfrac{[\Sigma_{j}^{1/2}t']_{u}[\Sigma_{j}^{1/2}t']_{v}}{\sigma_{j}^{2}}})$.\\
The above equation yields that for all $t_{1} \geq 0$
\begin{eqnarray}
\mathop {\sum }\limits_{j=1}^{k}{\left[\left(\alpha_{j}^{''}+\beta_{j}^{''}(it_{1})\right)\mathop {\sum }\limits_{u=0}^{l-1}{a_{u}\sigma_{j}^{u}t_{1}^{u}}+\gamma_{j}^{''}\mathop {\sum }\limits_{u=0}^{l}{b_{u}\sigma_{j}^{u}t_{1}^{u}}\right]\exp(it_{1}\theta_{j}'-\sigma_{j}\sqrt{2l-1}t_{1})}=0. \label{eq:mustudent13}
\end{eqnarray}
Using the Laplace transformation on both sides of \eqref{eq:mustudent13} and denoting $c_{j}=\sigma_{j}\sqrt{2l-1}-i\theta_{j}'$ as $1 \leq j \leq k$, we obtain that as $\text{Re}(s)>\mathop {\max }\limits_{1 \leq j \leq k}{\left\{-\sigma_{j}\sqrt{2l-1}\right\}}$
\begin{eqnarray}
\mathop {\sum }\limits_{j=1}^{k}{\alpha_{j}^{''}\mathop {\sum }\limits_{u=0}^{l-1}{\dfrac{u!a_{u}\sigma_{j}^{u}}{(s+c_{j})^{u+1}}}+i\beta_{j}^{''}\mathop {\sum }\limits_{u=1}^{l}{\dfrac{u!a_{u-1}\sigma_{j}^{u-1}}{(s+c_{j})^{u+1}}}+\gamma_{j}^{''}\mathop {\sum }\limits_{u=0}^{l}{\dfrac{u!b_{u}\sigma_{j}^{u}}{(s+c_{j})^{u+1}}}}=0.\label{eq:mustudent14}
\end{eqnarray}

Without loss of generality, we assume that $\sigma_{1} \leq \sigma_{2} \leq ... \leq \sigma_{k}$. It demonstrates that $-\sigma_{1}\sqrt{2l-1}=\mathop {\max }\limits_{1 \leq j \leq k}{\left\{-\sigma_{j}\sqrt{2l-1}\right\}}$. Denote $a_{u}^{(j)}=a_{u}\sigma_{j}^{u}$ and $b_{u}^{(j)}=b_{u}\sigma_{j}^{u}$ for all $u$. By multiplying both sides of \eqref{eq:mustudent14} with $(s+c_{1})^{l+1}$, as $\text{Re}(s)>-\sigma_{1}\sqrt{2l-1}$ and $s \to -c_{1}$, we obtain $|i\beta_{1}^{''}l!a_{l-1}^{(1)}+\gamma_{1}^{''}b_{l}l!b_{l}^{(1)}|=0$ or equivalently $\beta_{1}^{''}=\gamma_{1}^{''}=0$ since $a_{l-1}^{(1)},b_{l}^{(1)} \neq 0$. Likewise, multiply both sides of \eqref{eq:mustudent14} with $(s+c_{1})^{l}$ and using the same argument, as $s \to -c_{1}$, we obtain $\alpha_{1}^{''}=0$. Overall, we obtain $\alpha_{1}^{''}=\beta_{1}^{''}=\gamma_{1}^{''}=0$. Continue this fashion until we get $\alpha_{j}^{''}=\beta_{j}^{''}=\gamma_{j}^{''}=0$ for all $1 \leq j \leq k$ or equivalently $\alpha_{j}=\beta_{j}=\gamma_{j}=0$ for all $1 \leq j \leq k$.\\
 As a consequence, for all $1 \leq j \leq k$, we have
\begin{eqnarray}
 \alpha_{j}'+C_{3}(\mathop {\sum }\limits_{l=1}^{d}{M_{ll}^{j}})-C_{3}(\mathop {\sum }\limits_{u,v}{M_{uv}^{j}\dfrac{[\Sigma_{j}^{1/2}t']_{u}[\Sigma_{j}^{1/2}t']_{v}}{\sigma_{j}^{2}}})=0, \ \dfrac{(\Sigma_{j}^{1/2}t')^{T}\beta_{j}'}{\sigma_{j}}=0, \nonumber
 \end{eqnarray}
 and $\mathop {\sum }\limits_{u,v}{M_{uv}^{j}\dfrac{[\Sigma_{j}^{1/2}t']_{u}[\Sigma_{j}^{1/2}t']_{v}}{\sigma_{j}^{2}}}=0$. \\
 Since $\mathop {\sum }\limits_{u,v}{M_{uv}^{j}[\Sigma_{j}^{1/2}t']_{u}[\Sigma_{j}^{1/2}t']_{v}}=(t')^{T}\Sigma_{j}^{1/2}M_{j}\Sigma_{j}^{1/2}t'=(t')^{T}\gamma_{j}t'$, it is equivalent that 
\begin{eqnarray}
\alpha_{j}'+C_{3}(\mathop {\sum }\limits_{l=1}^{d}{M_{ll}^{j}})=0, (t')^{T}\Sigma_{j}^{1/2}\beta_{j}'=0, \ \text{and} \ (t')^{T}\gamma_{j}t'=0. \nonumber
\end{eqnarray}

With the same argument as the last paragraph of part (a) of Theorem \ref{identifiability-multivariatecharacterization}, we readily obtain that $\alpha_{j}'=0$, $\beta_{j}'=0 \in \mathbb{R}^{d}$, and $\gamma_{j}=0 \in \mathbb{R}^{d \times d}$. From the formation of $\alpha_{j}',\beta_{j}'$, it follows that $\alpha_{j}=0$, $\beta_{j}=0 \in \mathbb{R}^{d}$, and $\gamma_{j}=0 \in \mathbb{R}^{d \times d}$ for all $1 \leq j \leq k$.\\

\noindent
(c) Assume that we can find $\alpha_{i} \in \mathbb{R},\beta_{i} \in \mathbb{R}^{d},\eta_{i} \in \mathbb{R}^{d}$, and $\gamma_{i} \in \mathbb{R}^{d \times d}$ symmetric matrices such that:
\begin{eqnarray}
\mathop {\sum }\limits_{i=1}^{k}{\alpha_{i}f(x|\theta_{i},\Sigma_{i},\lambda_{i})+\beta_{i}^{T}\dfrac{\partial{f}}{\partial{\theta}}(x|\theta_{i},\Sigma_{i},\lambda_{i})+\trace((\dfrac{\partial{f}}{\partial{\Sigma}}(x|\theta_{i},\Sigma_{i},\lambda_{i}))^{T}\gamma_{i})+\eta_{i}^{T}\dfrac{\partial{f}}{\partial{\lambda}}(x|\theta_{i},\Sigma_{i},\lambda_{i})}=0. \label{eq:mmulstudent1}
\end{eqnarray}
where $\theta_{i} \in \mathbb{R}^{d}$, $\Sigma_{i} \in S_{d}^{++}$, and $\lambda_{i} \in \mathbb{R}^{d,+}$.

From the formation of $f$, we have $f=f_{Y}*f_{Z}$, where $f_{Y}(x|\theta,\Sigma)=\dfrac{1}{|\Sigma|^{1/2}}g((x-\theta)^{T}\Sigma^{-1}(x-\theta))$, $g(x)=C_{\nu}/(\nu+x)^{(\nu+d)/2}$, $C_{\nu}=\Gamma(\dfrac{\nu+d}{2})\nu^{\nu/2}/\Gamma(\dfrac{\nu}{2})\pi^{d/2}$, $f_{Z}(x|\lambda')=\prod \limits_{i=1}^{d}{\dfrac{(\lambda_{i}')^{b_{i}}}{\Gamma(b_{i})}x_{i}^{b_{i}-1}\exp(-\lambda_{i}'x_{i}).1_{\left\{x_{i}>0\right\}}}$ where $b_{1},\ldots,b_{k} \in \mathbb{N}$ are fixed number and $\lambda' \in \mathbb{R}^{d,+}$.\\\\
Denote ${\displaystyle \phi_{Z}(t|\lambda)= \int \limits_{\mathbb{R}^{d}}{\exp(it^{T}x)f_{Z}(x|\lambda)}dx}$. Multiplying both sides of \eqref{eq:mmulstudent1} with $\exp(it^{T}x)$ and take the integral in $\mathbb{R}^{d}$ , we have following results:
\begin{eqnarray}
\mathop {\sum }\limits_{j=1}^{k}{\alpha_{j}\int \limits_{\mathbb{R}^{d}}{\exp(it^{T}x)f(x|\theta_{j},\Sigma_{j},\lambda_{j})}} &=& \mathop {\sum }\limits_{j=1}^{k}{\alpha_{j}\sigma_{Z}(t|\lambda_{j})\int \limits_{\mathbb{R}^{d}}{\exp(it^{T}x)f_{Y}(x|\theta_{j},\Sigma_{j})}dx}. \nonumber \\
\mathop {\sum }\limits_{j=1}^{k}{\int \limits_{\mathbb{R}^{d}}{\exp(it^{T}x)\beta^{T}\dfrac{\partial{f}}{\partial{\theta}}(x|\theta_{j},\Sigma_{j},\lambda_{j})}dx}& = & \mathop {\sum }\limits_{j=1}^{k}{\sigma_{Z}(t|\lambda_{j})\int \limits_{\mathbb{R}^{d}}{\exp(it^{T}x)\beta_{j}^{T}\dfrac{\partial{f_{Y}}}{\partial{\theta}}(x|\theta_{j},\Sigma_{j})}dx}. \nonumber \\
\mathop {\sum }\limits_{j=1}^{d}{\int \limits_{\mathbb{R}^{d}}{\exp(it^{T}x)\trace((\dfrac{\partial{f}}{\partial{\Sigma}}(x|\theta_{j},\Sigma_{j},\lambda_{j})^{T}\gamma_{j})}dx}& = & \mathop {\sum }\limits_{j=1}^{d}{\sigma_{Z}(t|\lambda_{j})\int \limits_{\mathbb{R}^{d}}{\exp(it^{T}x)\trace((\dfrac{\partial{f_{Y}}}{\partial{\Sigma}}(x|\theta_{j},\Sigma_{j}))^{T}\gamma_{j})}dx}. \nonumber \\
\mathop {\sum }\limits_{j=1}^{d}{\int \limits_{\mathbb{R}^{d}}{\exp(it^{T}x)\eta_{j}^{T}\dfrac{\partial{f}}{\partial{\lambda}}(x|\theta_{j},\Sigma_{j},\lambda_{j})}dx} & = & \mathop {\sum }\limits_{j=1}^{d}{\int \limits_{\mathbb{R}^{d}}{\exp(it^{T}x)f_{Y}(x|\theta_{j},\Sigma_{j})}dx} \times \nonumber \\
& \times & \int \limits_{\mathbb{R}^{d}}{\exp(it^{T}x)\eta_{j}^{T}\dfrac{\partial{f_{Z}}}{\partial{\lambda}}(x|\lambda_{j})}dx. \nonumber
\end{eqnarray}
Therefore, under this transformation, equation \eqref{eq:mmulstudent1} can be rewritten as
\begin{eqnarray}
\mathop {\sum }\limits_{j=1}^{k}{\sigma_{Z}(t|\lambda_{j})(\alpha_{j}\int \limits_{\mathbb{R}^{d}}{\exp(it^{T}x)f_{Y}(x|\theta_{j},\Sigma_{j})}dx} & + & \nonumber \\
\int \limits_{\mathbb{R}^{d}}{\exp(it^{T}x)\beta_{j}^{T}\dfrac{\partial{f_{Y}}}{\partial{\theta}}(x|\theta_{j},\Sigma_{j})}dx + \int \limits_{\mathbb{R}^{d}}{\exp(it^{T}x)\trace((\dfrac{\partial{f_{Y}}}{\partial{\Sigma}}(x|\theta_{j},\Sigma_{j}))^{T}\gamma_{j})}dx) & + & \nonumber \\
\int \limits_{\mathbb{R}^{d}}{\exp(it^{T}x)f_{Y}(x|\theta_{j},\Sigma_{j})}dx\int \limits_{\mathbb{R}^{d}}{\exp(it^{T}x)\eta_{j}^{T}\dfrac{\partial{f_{Z}}}{\partial{\lambda}}(x|\lambda_{j})}dx  & = &  0. \quad \quad \label{eq:mmulstudent2}
\end{eqnarray}
Using \eqref{eq:mustudent11}, we have 
\begin{eqnarray}
{\displaystyle \int \limits_{\mathbb{R}^{d}}{\exp(it^{T}x)f_{Y}(x|\theta_{j},\Sigma_{j})}dx=C_{\nu}C_{1}\exp(it^{T}\theta_{j}A_{1}(||\Sigma_{j}^{1/2}t||))} \nonumber
\end{eqnarray}
and
\begin{eqnarray}
\mathop {\sum }\limits_{j=1}^{k}{\int \limits_{\mathbb{R}^{d}}{\left(\alpha_{j}f_{Y}(x|\theta_{j},\Sigma_{j})+\beta_{j}^{T}\dfrac{\partial{f_{Y}}}{\partial{\theta}}(x|\theta_{j},\Sigma_{j})+\exp(it^{T}x)\trace((\dfrac{\partial{f_{Y}}}{\partial{\Sigma}}(x|\theta_{j},\Sigma_{j}))^{T}\gamma_{j})\right)}}\exp(it^{T}x)dx & = & \nonumber \\
 \mathop {\sum }\limits_{j=1}^{k}{C_{\nu}\left[\left(\alpha_{j}'+C_{3}\trace(M_{j})-\dfrac{C_{3}t^{T}t^{T}\gamma_{j}t}{t^{T}\Sigma_{j}t}+\dfrac{i C_{2}(\Sigma_{j}^{1/2}t)^{T}\beta_{j}'}{\nu+1}\right)A_{1}(||\Sigma^{1/2}t||)\right]}\exp(it^{T}\theta_{j}) & + & \nonumber \\
 \left[\dfrac{C_{2}t^{T}\gamma_{j}t}{t^{T}\Sigma_{j}t}A_{3}(||\Sigma_{j}^{1/2}t||)\right]\exp(it^{T}\theta_{j}). \nonumber
\end{eqnarray} 
where ${\displaystyle A_{1}(t')=\int \limits_{\mathbb{R}}{\dfrac{\exp(i|t'|z)}{(v+z^{2})^{(\nu+1)/2}}}dz}$, ${\displaystyle A_{3}(t')=\int \limits_{\mathbb{R}}{\dfrac{\exp(i|t'|z)z^{2}}{(\nu+z^{2})^{(\nu+3)/2}}}dz}$ for any $t' \in \mathbb{R}$, and $\alpha_{j}'=\alpha_{j}-\dfrac{\trace(\Sigma_{j}^{-1}\gamma_{j})}{2}$, $\beta_{j}'=\dfrac{\nu+d}{2}\Sigma^{-1/2}\beta_{j}$, and $M_{j}=\dfrac{\nu+d}{2}\Sigma_{j}^{-1/2}\gamma_{j}\Sigma_{j}^{-1/2}$.\\
Denote $f_{Z_{l}}(x_{l}|\lambda_{l}')=\dfrac{(\lambda_{l}')^{b_{l}}}{\Gamma(b_{l})}x_{l}^{b_{l}-1}\exp(-\lambda_{l}'x_{l}).1_{\left\{x_{l}>0\right\}}$ and ${\displaystyle \phi_{Z_{l}}(t|\lambda_{l}')=\int \limits_{\mathbb{R}}{\exp(itx_{l})f_{Z_{l}}(x_{l}|\lambda_{l}')}dx_{l}}$ as $\lambda_{l}' \in \mathbb{R}$, we obtain 
\begin{eqnarray}
\phi_{Z}(t|\lambda_{j})=\prod \limits_{l=1}^{d}{\phi_{Z_{l}}(x_{l}|\lambda_{j}^{l})}=\prod \limits_{l=1}^{d}{\dfrac{(\lambda_{j}^{l})^{b_{l}}}{(\lambda_{j}^{l}-it)^{b_{l}}}}, \nonumber
\end{eqnarray} 
where $\lambda_{j}=(\lambda_{j}^{1},\ldots,\lambda_{j}^{d})$.\\
Additionally, by denoting $\eta_{j}=(\eta_{j}^{1},\ldots,\eta_{j}^{d})$
\begin{eqnarray}
\int \limits_{\mathbb{R}^{d}}{\exp(it^{T}x)\eta_{j}^{T}\dfrac{\partial{f_{Z}}}{\partial{\lambda}}(x|\lambda_{j})}dx  &=& \mathop {\sum }\limits_{l=1}^{d}{\eta_{j}^{l}\mathop {\prod }\limits_{u \neq l}{\phi_{Z_{u}}(t_{u}|\lambda_{j}^{u})}\int \limits_{R}{\exp(it_{l}x_{l})\dfrac{\partial{f_{Z_{l}}}}{\partial{\lambda_{l}'}}(x|\lambda_{j}^{l})}dx_{l}} \nonumber \\
&=& \mathop {\sum }\limits_{l=1}^{d}{\eta_{j}^{l}\mathop {\prod }\limits_{u \neq l}{\phi_{Z_{u}}(t_{u}|\lambda_{j}^{u})}\dfrac{\partial{\phi_{Z_{l}}}}{\partial{\lambda_{l}}}(t_{l}|\lambda_{j}^{l})} \nonumber \\
&=& -i \mathop {\sum }\limits_{l=1}^{d}{\eta_{j}^{l}\dfrac{\beta_{l}(\lambda_{j}^{l})^{b_{l}-1}t_{l}}{b_{j}^{l}-it_{l})^{b_{l}+1}}\prod \limits_{u \neq l}{\dfrac{(\lambda_{j}^{u})^{b_{u}}}{(\lambda_{j}^{u}-it_{u})^{b_{u}}}}}. \nonumber
\end{eqnarray}
Multiplying both sides of equation \eqref{eq:mmulstudent2} with $\prod \limits_{j=1}^{k}{\prod \limits_{u=1}^{d}{(\lambda_{j}^{u}-it_{u})^{b_{u}+1}}}$, we obtain:
\begin{eqnarray}
\mathop {\sum }\limits_{j=1}^{k}{\left[\left(\nu_{j}'-\dfrac{C_{3}t^{T}t^{T}\gamma_{j}t}{t^{T}\Sigma_{j}t}+\dfrac{i C_{2}(\Sigma_{j}^{1/2}t)^{T}\beta_{j}'}{\nu+1}\right)A_{1}(||\Sigma^{1/2}t||)+\dfrac{C_{2}t^{T}\gamma_{j}t}{t^{T}\Sigma_{j}t}A_{3}(||\Sigma_{j}^{1/2}t||)\right]} & \times & \nonumber \\
\exp(it^{T}\theta_{j})\prod \limits_{u=1}^{d}{(\lambda_{j}^{u})^{b_{u}}(\lambda_{j}^{u}-it_{u})}\prod \limits_{l \neq j}{\prod \limits_{u=1}^{d}{(\lambda_{l}^{u}-it_{u})^{b_{u}+1}}}-iC_{1}\exp(it^{T}\theta_{j})A_{1}(||\Sigma_{j}^{1/2}t||) & \times & \nonumber \\
\left(\mathop {\sum }\limits_{l=1}^{d}{\eta_{j}^{l}b_{l}(\lambda_{j}^{l})^{b_{l}-1}\prod \limits_{u \neq l}{(\lambda_{j}^{u})^{b_{u}}}t_{l}\prod \limits_{u \neq l}{(\lambda_{j}^{u}-it_{u})}}\right)\prod \limits_{l \neq j}{\prod \limits_{u=1}^{d}{(\lambda_{l}^{u}-it_{u})^{b_{u}+1}}} & = & 0, \quad \quad \label{eq:mmulstudent3}
\end{eqnarray}
where $\nu_{j}'=\alpha_{j}'+C_{3}(\mathop {\sum }\limits_{l=1}^{d}{M_{ll}^{j}})$. Using the same argument as that of multivariate generalized Gaussian distribution, we can find set $D$ being the union of finite hyperplanes and cones such that as $t' \notin D$, $((t')^{T}\theta_{1},(t')^{T}\Sigma_{1}t'),\ldots,((t')^{T}\theta_{k},(t')^{T}\Sigma_{k}t')$ are pairwise different. Denote $t=t_{1}t'$, where $t_{1} \in R$ and $t' \notin D$ and $\theta_{j}'=(t')^{T}\theta_{j}$, $\sigma_{j}^{2}=(t')^{T}\Sigma_{j}t'$.
For all $t_{1} \geq 0$, using the result from multivariate Student's t-distribution, we can denote $A_{1}(t_{1})=C_{1}'\exp(-t_{1}\sqrt{\nu})\mathop {\sum }\limits_{u=0}^{l_{1}-1}{a_{u}t_{1}^{u}}$ and $A_{3}(t_{1})=C_{1}'\exp(-t_{1}\sqrt{\nu})\mathop {\sum }\limits_{u=0}^{l_{1}}{b_{u}t_{1}^{u}}$, where $\nu=2l_{1}-1$ and $a_{0},a_{l_{1}-1},b_{0},b_{l_{1}} \neq 0$.\\
Define $\left(\mathop {\sum }\limits_{u=0}^{l_{1}-1}{a_{u}t_{1}^{u}}\right)\prod \limits_{u=1}^{d}{(\lambda_{j}^{u})^{b_{u}}(\lambda_{j}^{u}-it_{u}'t_{1})}\prod \limits_{l \neq j}{\prod \limits_{u=1}^{d}{(\lambda_{l}^{u}-it_{u}'t_{1})^{b_{u}+1}}}=\mathop {\sum }\limits_{u=0}^{m_{1}}{c_{u}^{j}t_{1}^{u}}$, where $m_{1}=l_{1}+d-2+(d+\mathop {\sum }\limits_{u=1}^{d}{b_{u}})(k-1)$. Additionally, we define
\begin{eqnarray}
\mathop {\sum }\limits_{u=0}^{m_{1}+1}{d_{u}^{j}t_{1}^{u}} : = \left(\mathop {\sum }\limits_{u=0}^{l_{1}}{b_{u}t_{1}^{u}}\right)\prod \limits_{u=1}^{d}{(\lambda_{j}^{u})^{b_{u}}(\lambda_{j}^{u}-it_{u}'t_{1})}\prod \limits_{l \neq j}{\prod \limits_{u=1}^{d}{(\lambda_{l}^{u}-it_{u}'t_{1})^{b_{u}+1}}} \nonumber
\end{eqnarray} 
and 
\begin{eqnarray}
\mathop {\sum }\limits_{u=1}^{m_{1}+1}{e_{u}^{j}t_{1}^{u}} := \left(\mathop {\sum }\limits_{u=0}^{l_{1}-1}{a_{u}t_{1}^{u}}\right)\left(\mathop {\sum }\limits_{l=1}^{d}{\eta_{j}^{l}b_{l}(\lambda_{j}^{l})^{b_{l}-1}\prod \limits_{u \neq l}{(\lambda_{j}^{u})^{b_{u}}}t_{l}'t_{1}\prod \limits_{u \neq l}{(\lambda_{j}^{u}-it_{u}'t_{1})}}\right)\prod \limits_{l \neq j}{\prod \limits_{u=1}^{d}{(\lambda_{l}^{u}-it_{u}'t_{1})^{b_{u}+1}}}.\nonumber
\end{eqnarray}
Equation \eqref{eq:mmulstudent3} can be rewritten as
\begin{eqnarray}
\mathop {\sum }\limits_{j=1}^{k}{\left[(\alpha_{j}^{''}+\beta_{j}^{''}(it_{1}))\mathop {\sum }\limits_{u=0}^{m_{1}}{c_{u}^{j}t_{1}^{u}}+\gamma_{j}^{''}\mathop {\sum }\limits_{u=0}^{m_{1}+1}{d_{u}^{j}t_{1}^{u}}-iC_{1}\mathop {\sum }\limits_{u=1}^{m_{1}+1}{e_{u}^{j}t_{1}^{u}}\right]\exp(i\theta_{j}'t_{1}-\sigma_{j}\sqrt{\nu})}=0, \label{eq:mmulstudent4}
\end{eqnarray}
where $\alpha_{j}^{''}=\alpha_{j}'+C_{3}\trace(M_{j})-\dfrac{C_{3}(t')^{T}\gamma_{j}t'}{\sigma_{j}^{2}}$, $\beta_{j}^{''}=\dfrac{(\Sigma_{j}^{1/2}t')^{T}\beta_{j}'}{\nu+1}$, and $\gamma_{j}^{''}=\dfrac{C_{2}(t')^{T}\gamma_{j}t'}{\sigma_{j}^{2}}$.\\

Without loss of generality, we assume $\sigma_{1} \leq \sigma_{2} \leq \ldots \leq \sigma_{k}$. Denote $h_{j}=\sigma_{j}\sqrt{\nu}-i\theta_{j}'$ and apply Laplace transformation to \eqref{eq:mmulstudent4}, we obtain that as $\text{Re}(s)>-\sigma_{1}\sqrt{\nu}$
\begin{eqnarray}
\mathop {\sum }\limits_{j=1}^{k}{\alpha_{j}^{''}\mathop {\sum }\limits_{u=0}^{m_{1}}{\dfrac{c_{u}^{j}u!}{(s+h_{j})^{u+1}}}+i\beta_{j}^{''}\mathop {\sum }\limits_{u=1}^{m_{1}+1}{\dfrac{c_{u-1}^{j}u!}{(s+h_{j})^{u+1}}}+\gamma_{j}^{''}\mathop {\sum }\limits_{u=0}^{m_{1}+1}{\dfrac{d_{u}^{j}u!}{(s+h_{j})^{u+1}}}} & - & \nonumber \\
iC_{1}\mathop {\sum }\limits_{u=1}^{m_{1}}{\dfrac{e_{u}^{j}u!}{(s+h_{j})^{u+1}}} & = & 0. \label{eq:mmulstudent5}
\end{eqnarray}
Using the same argument as that of multivariate Student's t-distribution, by multiplying both sides of equation \eqref{eq:mmulstudent5} with $(s+h_{1})^{m_{1}+2}$ and let $s \to -h_{1}$, we obtain $|i\beta_{1}^{''}c_{m_{1}}^{1}+\gamma_{1}^{''}d_{m_{1}+1}^{1}|=0$. Since 
\begin{eqnarray}
c_{m_{1}}^{1}=(-i)^{(d+\mathop {\sum }\limits_{u=0}^{d}{b_{u}})(k-1)+d}a_{l_{1}-1}\prod \limits_{u=1}^{d}{(\lambda_{1}^{''})^{b_{u}}(t_{u}')^{(b_{u}+1)(k-1)+1}} \nonumber
\end{eqnarray}
and 
\begin{eqnarray}
d_{m_{1}}^{1}=(-i)^{(d+\mathop {\sum }\limits_{u=0}^{d}{b_{u}})(k-1)+d}b_{l_{1}}\prod \limits_{u=1}^{d}{(\lambda_{1}^{''})^{b_{u}}(t_{u}')^{(b_{u}+1)(k-1)+1}}, \nonumber
\end{eqnarray} the equation $|i\beta_{1}^{''}c_{m_{1}}^{1}+\gamma_{1}^{''}d_{m_{1}+1}^{1}|=0$ is equivalent to $|i\beta_{1}^{''}a_{l_{1}-1}+\gamma_{1}^{''}b_{l_{1}}|=0$, which yields that $\beta_{1}^{''}a_{l_{1}-1}=\gamma_{1}^{''}b_{l_{1}}=0$. As $a_{l_{1}-1},b_{l_{1}} \neq 0$, we obtain $\beta_{1}^{''}=\gamma_{1}^{''}=0$.\\

With this result, we multiply two sides of \eqref{eq:mmulstudent5} with $(s+h_{1})^{m_{1}+1}$ and let $s \to -h_{1}$, we obtain $|a_{1}^{''}c_{m_{1}}^{1}-iC_{1}e_{m_{1}}^{1}|=0$. Then, we multiply both sides of \eqref{eq:mmulstudent5} with $(s+h_{1})^{m_{1}}$ and let $s \to -h_{1}$, we get $|\alpha_{1}^{''}c_{m_{1}-1}^{1}(m_{1}-1)!-iC_{1}e_{m_{1}-1}^{1}(m_{1}-1)!|=0$. Repeat this argument until we obtain $|\alpha_{1}^{''}c_{0}^{1}|=0$ and $|\alpha_{1}^{''}c_{1}^{1}-iC_{1}e_{1}^{1}|=0$, which implies that $\alpha_{1}^{''}=0$ as $c_{0}^{1}=a_{0}\prod \limits_{l=1}^{k}{\prod \limits_{u=1}^{d}{(\lambda_{l}^{u})^{b_{u}+1}}} \neq 0$ and $e_{1}^{1}=0$.\\
From the formation of $e_{1}^{1}$, it yields that
\begin{eqnarray} 
a_{0}\left(\mathop {\sum }\limits_{l=1}^{d}{\eta_{1}^{l}t_{l}'b_{l}(\lambda_{1}^{l})^{b_{l}-1}\prod \limits_{u \neq l}{(\lambda_{1}^{u})^{b_{u}+1}}}\right)\prod \limits_{l \neq 1}{\prod \limits_{u=1}^{d}{(\lambda_{l}^{u})^{b_{u}+1}}}=0. \nonumber
\end{eqnarray}
As $a_{0} \neq 0$, it implies that 
\begin{eqnarray}
\mathop {\sum }\limits_{l=1}^{d}{\eta_{1}^{l}t_{l}'b_{l}(\lambda_{1}^{l})^{b_{l}-1}\prod \limits_{u \neq l}{(\lambda_{1}^{u})^{b_{u}+1}}}=0. \nonumber
\end{eqnarray}Denote $\eta_{1}^{l}b_{l}(\lambda_{1}^{l})^{b_{l}-1}\prod \limits_{u \neq l}{(\lambda_{1}^{u})^{b_{u}+1}}=\psi_{1}^{l}$ for all $1 \leq l \leq d$ then we have $\mathop {\sum }\limits_{l=1}^{d}{\psi_{1}^{l}t_{l}'}=0$. If there is any $\psi_{1}^{l} \neq 0$, by choosing $t'$ to lie outside that hyperplane, we will not get the equality $\mathop {\sum }\limits_{l=1}^{d}{\psi_{1}^{l}t_{l}'}=0$. Therefore, $\psi_{1}^{l}=0$ for all $1 \leq l \leq d$, which implies that $\eta_{1}^{l}=0$ for all $1 \leq l \leq d$ or equivalently $\eta_{1}=0$. Repeating the above argument until we obtain $\alpha_{j}^{''}=\beta_{j}^{''}=\gamma_{j}^{''}=0 \in \mathbb{R}$ and $\eta_{j}=0 \in \mathbb{R}^{d}$ for all $1 \leq j \leq k$. From the formation of $\alpha_{j}^{''},\beta_{j}^{''},\gamma_{j}^{''}$, using the same argument as that of multivariate Student's t-distribution, by choosing $t'$ appropriately, we will have $\alpha_{j}=0$, $\beta_{j}=0 \in \mathbb{R}^{d}$, and $\gamma_{j}=0 \in \mathbb{R}^{d \times d}$ for all $1 \leq j \leq k$.\\

\noindent
(d) Assume that we can find $\alpha_{j} \in \mathbb{R}$,$\beta_{j} \in \mathbb{R}^{d}$, symmetric matrices $\gamma_{j} \in \mathbb{R}^{d \times d}$,$\eta_{j} \in \mathbb{R}^{d}$, and $\tau_{j} \in \mathbb{R}^{d}$ such that
\begin{eqnarray}
\mathop {\sum }\limits_{j=1}^{k}{\alpha_{j}f(x|\theta_{j},\Sigma_{j},a_{j},b_{j})+\beta_{j}^{T}\dfrac{\partial{f}}{\partial{\theta}}(x|\theta_{j},\Sigma_{j},a_{j},b_{j})+\trace(\dfrac{\partial{f}}{\partial{\Sigma}}(x|\theta_{j},\Sigma_{j},a_{j},b_{j})^{T}\gamma_{j})} & + & \nonumber \\
\eta_{j}^{T}\dfrac{\partial{f}}{\partial{a}}(x|\theta_{j},\Sigma_{j},a_{j},b_{j})+\tau_{j}^{T}\dfrac{\partial{f}}{\partial{b}}(x|\theta_{j},\Sigma_{j},a_{j},b_{j}) & = & 0. \quad \quad \label{eq:gamgauss1} 
\end{eqnarray}
Denote $Z=\prod \limits_{j=1}^{d}{Z_{j}}$, where $Z_{j} \sim \text{Gamma}(a_{j},b_{j})$. Let $\phi_{Z_{j}}(t_{j}|a_{j},b_{j})$ to be the moment generating function of $Z_{j}$, then $\phi_{Z_{j}}(t_{j}|a_{j},b_{j})=b_{j}^{a_{j}}/(b_{j}-a_{j})^{a_{j}}$ as $t_{j}<b_{j}$. Therefore, the moment generating function $\phi_{Z}(t|a,b)$ of $Z$ is $\prod \limits_{j=1}^{d}{\dfrac{b_{j}^{a_{j}}}{(b_{j}-t_{j})^{a_{j}}}}$ as $t_{j}<b_{j}$ for all $1 \leq j \leq d$.\\

Multiply both sides of \eqref{eq:gamgauss1} with $\exp(t^{T}x)$ and take the integral in $\mathbb{R}^{d}$, using the same argument as that of multivariate generalized Gaussian case, we obtain that as $t_{i}<\mathop {\min }\limits_{1 \leq j \leq k}{\left\{b_{j}^{i}\right\}}$ for all $1 \leq i \leq k$
\begin{eqnarray}
\mathop {\sum }\limits_{j=1}^{k}{\left(\alpha_{j}+\beta_{j}^{T}t +\dfrac{t^{T}\gamma_{j}t}{2}+\mathop {\sum }\limits_{l=1}^{d}{\eta_{j}^{l}\log\left(\dfrac{b_{j}^{l}}{b_{j}^{l}-t_{l}}\right)}-\mathop {\sum }\limits_{l=1}^{d}{\tau_{j}^{l}\dfrac{a_{j}^{l}t_{l}}{b_{j}^{l}(b_{j}^{l}-t_{l})}}\right)} & \times & \nonumber \\
\times \exp(t^{T}\theta_{j}+\dfrac{1}{2}t^{T}\Sigma_{j}t)\prod \limits_{i=1}^{d}{\dfrac{(b_{j}^{i})^{a_{j}^{i}}}{(b_{j}^{i}-t_{i})^{a_{j}^{i}}}} & = & 0. \nonumber
\end{eqnarray}
Multiply both sides of the above equation with $\prod \limits_{u=1}^{k}{\prod \limits_{i=1}^{d}{(b_{u}^{i}-t_{i})^{a_{u}^{i}+1}}}$, we can rewrite it as 
\begin{eqnarray}
\mathop {\sum }\limits_{j=1}^{k}{\left((\alpha_{j}+\beta_{j}^{T}t+\dfrac{t^{T}\gamma_{j}t}{2}+\mathop {\sum }\limits_{l=1}^{d}{\eta_{j}^{l}\log\left(\dfrac{b_{j}^{l}}{b_{j}^{l}-t_{l}}\right)}\right)\prod \limits_{i=1}^{d}{(b_{j}^{i}-t_{i})}} & - &\nonumber \\
-\mathop {\sum }\limits_{l=1}^{d}{\tau_{j}^{l}a_{j}^{l}t_{l}\prod \limits_{u \neq l}{(b_{j}^{u}-t_{u}}})) \exp(t^{T}\theta_{j}+\dfrac{1}{2}t^{T}\Sigma_{j}t)\prod \limits_{i=1}^{d}{(b_{j}^{i})^{a_{j}^{i}}}\prod \limits_{u \neq j}{\prod \limits_{i=1}^{d}{(b_{u}^{i}-t_{i})^{a_{u}^{i}+1}}} & = & 0. \label{eq:gamgauss3}
\end{eqnarray}
Put $t=t_{1}t'$ as $t_{1} \in \mathbb{R}$ and $t' \in \mathbb{R}^{d,+}$. We can find set $D$, which is the finite union of hyperplanes and cones such that as $t' \notin D$ and $t' \in \mathbb{R}^{d,+}$, we get that $((t')^{T}\theta_{1},(t')^{T}\Sigma_{1}t'),\ldots,((t')^{T}\theta_{k},(t')^{T}\Sigma_{k}t')$ are pairwise different. Therefore as $t_{i}< \mathop {\min }\limits_{1 \leq j \leq k}{\left\{b_{j}^{i}\right\}}$ for all $1 \leq i \leq k$, we get $t_{1}<t^{*}=\mathop {\min }\limits_{1 \leq j \leq k,1 \leq i \leq d}{\left\{\dfrac{b_{j}^{i}}{t_{i}'}\right\}}$. Denote $\theta_{j}'=t^{T}\theta_{j}$ and $\sigma_{j}^{2}=t^{T}\Sigma_{j}t$, as $t_{1} <t^{*}$, we can rewrite \eqref{eq:gamgauss3} as follows 
\begin{eqnarray}
\mathop {\sum }\limits_{j=1}^{k}{\left(\alpha_{j}+t_{1}\beta_{j}^{T}t'+t_{1}^{2}\dfrac{(t')^{T}\gamma_{j}t'}{2}+\mathop {\sum }\limits_{l=1}^{d}{\eta_{j}^{l}\log\left(\dfrac{b_{j}^{l}}{b_{j}^{l}-t_{l}'t_{1}}\right)}\right)\prod \limits_{i=1}^{d}{(b_{j}^{i}-t_{i}'t_{1})}} & - &\nonumber \\
\mathop {\sum }\limits_{l=1}^{d}{\tau_{j}^{l}a_{j}^{l}t_{l}'t_{1}\prod \limits_{u \neq l}{(b_{j}^{u}-t_{u}'t_{1}}})) \exp(\theta_{j}'t_{1}+\dfrac{\sigma_{j}^{2}t^{2}}{2})\prod \limits_{i=1}^{d}{(b_{j}^{i})^{a_{j}^{i}}}\prod \limits_{u \neq j}{\prod \limits_{i=1}^{d}{(b_{u}^{i}-t_{i}'t_{1})^{a_{u}^{i}+1}}} & = & 0. \quad \quad \label{eq:gamgauss4}
\end{eqnarray}

Without loss of generality, we assume that $\sigma_{1} \leq \sigma_{2} \leq \ldots \leq \sigma_{k}$. By using the same argument as that of multivariate generalized Gaussian distribution in Theorem \eqref{identifiability-multivariatecharacterization}, we denote $\overline{i}$ to be minimum index such that $\sigma_{\overline{i}}=\sigma_{k}$ and $i_{k}$ as the index such that $\theta_{i_{k}}'=\mathop {\min }\limits_{\overline{i} \leq j \leq k}{\left\{\theta_{j}'\right\}}$. Multiply both sides of \eqref{eq:mmulstudent5} with $\exp(-\theta_{i_{k}}'t_{1}-\dfrac{\sigma_{i_{k}}^{2}t_{1}^{2}}{2})$ and let $t_{1} \to -\infty$, using the convergence argument of generalized Gaussian case, we eventually obtain as $t_{1} \to -\infty$
\begin{eqnarray}
\left(\alpha_{i_{k}}+t_{1}\beta_{i_{k}}^{T}t'+t_{1}^{2}\dfrac{(t')^{T}\gamma_{i_{k}}t'}{2}+\mathop {\sum }\limits_{l=1}^{d}{\eta_{i_{k}}^{l}\log\left(\dfrac{b_{j}^{l}}{b_{i_{k}}^{l}-t_{l}'t_{1}}\right)}\right)\prod \limits_{i=1}^{d}{(b_{i_{k}}^{i}-t_{i}'t_{1})} & - &\nonumber \\
\mathop {\sum }\limits_{l=1}^{d}{\tau_{i_{k}}^{l}a_{i_{k}}^{l}t_{l}'t_{1}\prod \limits_{u \neq l}{(b_{i_{k}}^{u}-t_{u}'t_{1}}}))\prod \limits_{i=1}^{d}{(b_{i_{k}}^{i})^{a_{i_{k}}^{i}}}\prod \limits_{u \neq i_{k}}{\prod \limits_{i=1}^{d}{(b_{u}^{i}-t_{i}'t_{1})^{a_{u}^{i}+1}}} & \to & 0. \nonumber
\end{eqnarray}
Since $\prod \limits_{i=1}^{d}{(b_{i_{k}}^{i})^{a_{i_{k}}^{i}}}\prod \limits_{u \neq i_{k}}{\prod \limits_{i=1}^{d}{(b_{u}^{i}-t_{i}'t_{1})^{a_{u}^{i}+1}}} \to +\infty$ as $t_{1} \to -\infty$, the above result implies that as $t_{1} \to -\infty$,
\begin{eqnarray}
B(t_{1})=\left(\alpha_{i_{k}}+t_{1}\beta_{i_{k}}^{T}t'+t_{1}^{2}\dfrac{(t')^{T}\gamma_{i_{k}}t'}{2}+\mathop {\sum }\limits_{l=1}^{d}{\eta_{i_{k}}^{l}\log\left(\dfrac{b_{j}^{l}}{b_{i_{k}}^{l}-t_{l}'t_{1}}\right)}\right)  & \times & \nonumber \\
\times \prod \limits_{i=1}^{d}{(b_{i_{k}}^{i}-t_{i}'t_{1})} - \mathop {\sum }\limits_{l=1}^{d}{\tau_{i_{k}}^{l}a_{i_{k}}^{l}t_{l}'t_{1}\prod \limits_{u \neq l}{(b_{i_{k}}^{u}-t_{u}'t_{1}}}) & \to & 0. \quad \quad  \label{eq:gamgauss5}
\end{eqnarray}
Note that the highest degree in terms of $t_{1}$ in $B(t_{1})$ is $d+2$ and its corresponding coefficient is $(-1)^{d}\prod \limits_{i=1}^{d}{t_{i}'}\dfrac{(t')^{T}\gamma_{i_{k}}t'}{2}$. As $B(t_{1}) \to 0$ as $t_{1} \to -\infty$, it implies that $(t')^{T}\gamma_{i_{k}}t'=0$, which yields that $\gamma_{i_{k}}=0$ under appropriate choice of $t'$. Similarly, the coefficient of $t_{1}^{d+1}$ in $B(t_{1})$ is $(-1)^{d}\prod \limits_{i=1}^{d}{t_{i}'}\beta_{i_{k}}^{T}t'$. Therefore, $\beta_{i_{k}}^{T}t'=0$, which implies that $\beta_{i_{k}}=0$. With these results, from \eqref{eq:gamgauss5}, we see that 
\begin{eqnarray}
\left(\mathop {\sum }\limits_{l=1}^{d}{\eta_{i_{k}}^{l}\log(b_{i_{k}}^{l}-t_{l}'t_{1})}\right)\prod \limits_{i=1}^{d}{(b_{i_{k}}-t_{i}'t_{1})} \to 0 \ \ \text{as} \ t_{1} \to -\infty. \nonumber
\end{eqnarray}
It follows that $\eta_{i_{k}}^{l}=0$ for all $1 \leq l \leq d$.  Now, the coefficient of $t_{1}^{0}$ in $B(t_{1})$ is $\alpha_{i_{k}}\prod \limits_{i=1}^{d}{b_{i_{k}}^{i}}$; therefore, it implies that $\alpha_{i_{k}}=0$. Last but not least, the coefficient of $t_{1}$ now is $-\mathop {\sum }\limits_{l=1}^{d}{\tau_{i_{k}}^{l}a_{i_{k}}^{l}t_{l}'\prod \limits_{u \neq l}{b_{i_{k}}^{u}}}$. Thus, we have $\mathop {\sum }\limits_{l=1}^{d}{\tau_{i_{k}}^{l}a_{i_{k}}^{l}t_{l}'\prod \limits_{u \neq l}{b_{i_{k}}^{u}}}=0$. By an appropriate choice of $t'$, we obtain $\tau_{i_{k}}^{l}=0$ for all $1 \leq l \leq d$. Repeat the above argument until we get $\alpha_{i}=0$, $\beta_{i}=\eta_{i}=\tau_{i}=0 \in \mathbb{R}^{d}$, and $\gamma_{i}=0 \in \mathbb{R}^{d \times d}$, which yields the conclusion of our theorem.

\begin{lemma}\label{lemma-complexformula}
For any $m \in \mathbb{N}$, we have
\begin{eqnarray}
\int \limits_{-\infty}^{+\infty}{\dfrac{\exp(itx)}{(x^{2}+1)^{m}}}dx=\dfrac{2\pi\exp(-|t|)}{2^{2m-1}}\left[\mathop {\sum }\limits_{j=1}^{m}{\dbinom{2m-1-j}{m-j}\dfrac{(2|t|)^{j-1}}{(j-1)!}}\right]. \label{eq:complex}
\end{eqnarray}
\end{lemma}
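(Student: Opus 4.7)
The approach will be standard contour integration, with the combinatorial identity matching the stated closed form being the delicate step. Assume first $t > 0$. Consider the function $F(z) = e^{itz}/(z^2+1)^m$, which is meromorphic on $\mathbb{C}$ with poles of order $m$ at $z = \pm i$. On a semicircular contour $\Gamma_R$ in the upper half-plane of radius $R$, the integrand decays as $e^{-tR\sin\theta}/R^{2m}$, so Jordan's lemma (applied together with the polynomial decay of $1/(z^2+1)^m$) shows that the contribution of the arc tends to $0$ as $R \to \infty$. Only the pole at $z = i$ lies inside the contour, so
\begin{eqnarray}
\int_{-\infty}^{+\infty} \frac{e^{itx}}{(x^2+1)^m} \, dx = 2\pi i \cdot \mathop{\mathrm{Res}}\limits_{z=i} \frac{e^{itz}}{(z-i)^m(z+i)^m}. \nonumber
\end{eqnarray}

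Next I would compute the residue by expanding
\begin{eqnarray}
\mathop{\mathrm{Res}}\limits_{z=i} = \frac{1}{(m-1)!} \lim_{z \to i} \frac{d^{m-1}}{dz^{m-1}}\left[\frac{e^{itz}}{(z+i)^m}\right] \nonumber
\end{eqnarray}
via Leibniz's rule. Using $\frac{d^{m-1-k}}{dz^{m-1-k}} e^{itz} = (it)^{m-1-k}e^{itz}$ and $\frac{d^k}{dz^k}(z+i)^{-m} = (-1)^k \frac{(m+k-1)!}{(m-1)!}(z+i)^{-m-k}$, the residue at $z = i$ becomes
\begin{eqnarray}
\frac{e^{-t}}{(m-1)!} \sum_{k=0}^{m-1} \binom{m-1}{k}(it)^{m-1-k}(-1)^k \frac{(m+k-1)!}{(m-1)!}(2i)^{-m-k}. \nonumber
\end{eqnarray}
Collecting powers of $i$ via $i^{m-1-k} \cdot i^{-m-k}(-1)^k = -i$, and multiplying by $2\pi i$, this simplifies to a real expression with overall sign $+1$.

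The remaining task is purely combinatorial: after the substitution $j = m - k$, the inner sum takes the form
\begin{eqnarray}
\sum_{j=1}^{m} \binom{m-1}{m-j}\frac{(2m-j-1)!}{(m-1)!\,(m-1)!} \, \frac{(2t)^{j-1}}{2^{2m-1}}, \nonumber
\end{eqnarray}
and I verify the identity $\binom{m-1}{m-j}\frac{(2m-j-1)!}{(m-1)!(m-1)!} = \frac{\binom{2m-1-j}{m-j}}{(j-1)!}$ by expanding both binomial coefficients in terms of factorials; both sides collapse to $(2m-j-1)!/[(m-j)!(m-1)!(j-1)!]$. This matches the target formula with $|t| = t$.

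Finally, the case $t < 0$ follows either by closing the contour in the lower half-plane (the pole at $z=-i$, with $|t|$ appearing in the exponent) or, more cheaply, by observing that the integral is an even function of $t$ since $(x^2+1)^m$ is even in $x$ (change of variable $x \mapsto -x$ converts $e^{itx}$ to $e^{-itx}$). The $t=0$ case is the limiting value of the RHS, and matches the standard evaluation of $\int (x^2+1)^{-m}\,dx$ as a beta integral, providing a sanity check. The primary obstacle is the combinatorial identity in the last step; everything else is mechanical.
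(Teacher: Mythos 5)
Your proposal is correct and follows essentially the same route as the paper: close a semicircular contour in the upper half-plane for $t>0$, evaluate the order-$m$ residue at $z=i$ via the Leibniz rule, simplify combinatorially, and reduce $t<0$ to $t>0$ by evenness in $t$. The paper asserts the residue computation and the binomial simplification by "direct calculation," whereas you spell out the Leibniz expansion and verify the factorial identity explicitly, but these are presentational differences, not a different argument.
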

\begin{proof}
Assume that $t>0$ and for any $R>0$, we define $C_{R}=I_{R} \cup \Gamma_{R}$, where $\Gamma_{R}$ is the upper half of the circle $|z|=R$ and $I_{R}=\left\{z \in C:|\text{Re}(z)| \leq R \quad \text{and }  \text{Im}(z)=0\right\}$. Now, we have the following formula:
\begin{eqnarray}
\oint \limits_{C_{R}}{\dfrac{\exp(itz)}{(z^{2}+1)^{m}}}dz=\oint \limits_{I_{R}}{\dfrac{\exp(itz)}{(z^{2}+1)^{m}}}dz+\oint \limits_{\Gamma_{R}}{\dfrac{\exp(itz)}{(z^{2}+1)^{m}}}dz. \nonumber
\end{eqnarray}
Notice that ${\displaystyle \oint \limits_{I_{R}}{\dfrac{\exp(itz)}{(z^{2}+1)^{m}}}dz=\int \limits_{-R}^{R}{\dfrac{\exp(itx)}{(x^{2}+1)^{m}}}dx}$, therefore
\begin{eqnarray}
 {\displaystyle \oint \limits_{C_{R}}{\dfrac{\exp(itz)}{(z^{2}+1)^{m}}}dz=\int \limits_{-R}^{R}{\dfrac{\exp(itx)}{(x^{2}+1)^{m}}}dx}+\oint \limits_{\Gamma_{R}}{\dfrac{\exp(itz)}{(z^{2}+1)^{m}}}dz. \nonumber
\end{eqnarray}
Regarding the term ${\displaystyle \oint \limits_{I_{R}}{\dfrac{\exp(itz)}{(z^{2}+1)^{m}}}dz}$, from residue's theorem, we have
\begin{eqnarray} 
\oint \limits_{I_{R}}{\dfrac{\exp(itz)}{(z^{2}+1)^{m}}}dz=2\pi i.\underset{z=i}{\text{Res }}\left(\dfrac{\exp(itz)}{(z^{2}+1)^{m}}\right)=\dfrac{2\pi i}{(m-1)!}\lim \limits_{z \to i}{\dfrac{d^{m-1}}{dz^{m-1}}\dfrac{\exp(itz)}{(z+i)^{m}}}.\nonumber
\end{eqnarray}
By direct calculations, we obtain
\begin{eqnarray}
\lim \limits_{z \to i}{\dfrac{d^{m-1}}{dz^{m-1}}\dfrac{\exp(itz)}{(z+i)^{m}}}=\dfrac{\exp(-t)}{i}\mathop {\sum} \limits_{j=1}^{m}{\dfrac{(2m-j-1)!}{2^{2m-j}}\dbinom{m-1}{j-1}t^{j-1}}. \nonumber
\end{eqnarray}
Thus, it yields that
\begin{eqnarray}
\oint \limits_{I_{R}}{\dfrac{\exp(itz)}{(z^{2}+1)^{m}}}dz &=& \dfrac{2\pi\exp(-t)}{(m-1)!}\mathop {\sum }\limits_{j=1}^{m}{\dfrac{(2m-j-1)!}{2^{2m-j}}\dbinom{m-1}{j-1}t^{j-1}} \nonumber \\
&=& \dfrac{2\pi\exp(-t)}{2^{2m-1}}\left[\mathop {\sum }\limits_{j=1}^{m}{\dbinom{2m-1-j}{m-j}\dfrac{(2t)^{j-1}}{(j-1)!}}\right]. \nonumber
\end{eqnarray} 
Additionally, ${\displaystyle \left|\oint \limits_{\Gamma_{R}}{\dfrac{\exp(itz)}{(z^{2}+1)^{m}}}dz\right| \leq \oint \limits_{\Gamma_{R}}{\dfrac{1}{|(z^{2}+1)^{m}|}|dz|}=\dfrac{\pi R}{(R^{2}+1)^{m}} \to 0}$ as $R \to \infty$.\\
As a consequence, as $t>0$, by letting $R \to \infty$, we get:
\begin{eqnarray}
\int \limits_{-\infty}^{+\infty}{\dfrac{\exp(itx)}{(x^{2}+1)^{m}}}dx=\dfrac{2\pi\exp(-t)}{2^{2m-1}}\left[\mathop {\sum }\limits_{j=1}^{m}{\dbinom{2m-1-j}{m-j}\dfrac{(2t)^{j-1}}{(j-1)!}}\right]. \nonumber
\end{eqnarray} 
For the case $t<0$, notice that $\int \limits_{-\infty}^{\infty}{\dfrac{itx}{(x^{2}+1)^{m}}}dx=\int \limits_{-\infty}^{\infty}{\dfrac{\exp(-itx)}{(x^{2}+1)^{m}}}dx$, we achieve
\begin{eqnarray}
\int \limits_{-\infty}^{+\infty}{\dfrac{\exp(itx)}{(x^{2}+1)^{m}}}dx=\dfrac{2\pi\exp(t)}{2^{2m-1}}\left[\mathop {\sum }\limits_{j=1}^{m}{\dbinom{2m-1-j}{m-j}\dfrac{(-2t)^{j-1}}{(j-1)!}}\right]. \nonumber
\end{eqnarray}
The lemma is proved completely.
\end{proof}

%%%%%%%%%%%%%%%%%%%%%%%%%%%%%%%%%%%%%%%%%%%%%%%%%%%%%%%%%%%%%%%%%%%%%%%%%%%%%%%%%%%%%%%%

\paragraph{PROOF OF THEOREM \ref{theorem:generaloverfittedGaussian} (Continue)}
We present here the proof for general $d \geq 1$. This proof is 
similar to the case $d=1$, with extra care for handling matrix-variate parameters.
%Now, we will demonstrate 
%\eqref{eqn:generaloverfittedGaussianone} and 
%\eqref{eqn:generaloverfittedGaussiansecond} for general $d \geq 1$. 
For any sequence $G_{n} \in \mathcal{O}_{k,c_{0}}(\Theta \times \Omega) \to G_{0}$ in $W_{\overline{r}}$, we can denote $G_{n}=\mathop {\sum }\limits_{i=1}^{k_{0}}{\mathop {\sum }\limits_{j=1}^{s_{i}}{p_{ij}^{n}\delta_{(\theta_{ij}^{n},\Sigma_{ij}^{n})}}}$ where $(p_{ij}^{n},\theta_{ij}^{n},\Sigma_{ij}^{n}) \to (p_{i}^{0},\theta_{i}^{0},\Sigma_{i}^{0})$ for all $1 \leq i \leq k_{0}$ and $1 \leq j \leq s_{i} \leq k-k_{0}+1$. Let $N$ be any positive integer. For any $r \geq 1$ and for each $x \in \mathbb{R}$, by means of Taylor expansion up to any $N$ order, we obtain
\begin{eqnarray}
p_{G_{n}}(x)-p_{G_{0}}(x) & = & \mathop {\sum }\limits_{i=1}^{k_{0}}{\mathop {\sum }\limits_{j=1}^{s_{i}}{p_{ij}^{n}(f(x|\theta_{ij}^{n},\Sigma_{ij}^{n})-f(x|\theta_{i}^{0},\Sigma_{i}^{0}))}}+\mathop {\sum }\limits_{i=1}^{k_{0}+m}{(p_{i.}^{n}-p_{i}^{0})f(x|\theta_{i}^{0},\Sigma_{i}^{0})} \nonumber \\
&=& A_{1}(x)+\mathop {\sum }\limits_{i=1}^{k_{0}}{\mathop {\sum }\limits_{j=1}^{s_{i}}{p_{ij}^{n}\mathop {\sum }\limits_{|\alpha|=1}^{N}{(\Delta\theta_{ij}^{n})^{\alpha_{1}}(\Delta \Sigma_{ij}^{n})^{\alpha_{2}}\dfrac{D^{|\alpha|}f(x|\theta_{i}^{0},\Sigma_{i}^{0})}{\alpha!}}}}+R_{1}(x), \label{eqn:generaloverfittedGaussiangeneralmultivariateone}
\end{eqnarray}
where $p_{i.}^{n}=\mathop {\sum }\limits_{j=1}^{s_{i}}{p_{ij}^{n}}$, $A_{1}(x)=\mathop {\sum }\limits_{i=1}^{k_{0}}{(p_{i.}^{n}-p_{i}^{0})f(x|\theta_{i}^{0},\Sigma_{i}^{0})}$, $\Delta \theta_{ij}^{n}=\theta_{ij}^{n}-\theta_{i}^{0}$, $\Delta \Sigma_{ij}^{n}=\Sigma_{ij}^{n}-\Sigma_{i}^{0}$ for all $1 \leq i \leq k_{0}$, $1 \leq j \leq s_{i}$, and $R_{1}(x) \leq O(\mathop {\sum }\limits_{i=1}^{k_{0}}{\mathop {\sum }\limits_{j=1}^{s_{i}}{p_{i,j}^{(n)}(|\Delta\theta_{ij}^{n}|^{N+\delta}+|\Delta \Sigma_{ij}^{n}|^{N+\delta}}})$. Additionally, $\alpha=(\alpha_{1},\alpha_{2})$, where $\alpha_{1}=(\alpha_{1}^{1},\ldots,\alpha_{d}^{1}) \in \mathbb{N}^{d}$, $\alpha_{2}=(\alpha_{uv}^{2})_{uv} \in \mathbb{N}^{d \times d}$, $|\alpha|=\mathop {\sum }\limits_{i=1}^{d}{\alpha_{i}^{1}}+\mathop {\sum }\limits_{1 \leq u,v \leq d}{\alpha_{uv}^{2}}$, and  $\alpha!=\prod \limits_{i=1}^{d}{\alpha_{i}^{1}!}\prod \limits_{1 \leq u,v \leq d}{\alpha_{uv}^{2}!}$. Moreover, $(\Delta \theta_{ij}^{n})^{\alpha_{1}}=\prod \limits_{l=1}^{d}{(\Delta \theta_{ij}^{n})_{l}^{\alpha_{l}^{1}}}$ and $(\Delta \Sigma_{ij}^{n})^{\alpha_{2}}=\prod \limits_{1 \leq u,v \leq d}{(\Delta \Sigma_{ij}^{n})_{uv}^{\alpha_{uv}^{2}}}$ where $(.)_{l}$ denotes the $l$-th component and $(.)_{uv}$ denotes the element in $u$-th row and $v$-th column. Finally, $D^{|\alpha|}f(x|\theta_{i}^{0},\Sigma_{i}^{0})=\dfrac{\partial^{|\alpha|}{f}}{\partial{\theta}^{\alpha_{1}}\partial{\Sigma}^{\alpha_{2}}} = \dfrac{\partial^{|\alpha|}{f}}{\prod \limits_{l=1}^{d}{\partial{\theta_{l}^{\alpha_{l}^{1}}}}\prod \limits_{1 \leq u,v \leq d}{\partial{\Sigma_{uv}^{\alpha_{uv}^{2}}}}}(x|\theta_{i}^{0},\Sigma_{i}^{0})$. 

From Lemma \ref{lemma:multivariatenormaldistribution}, we have the identity $\dfrac{\partial^{2}{f}}{\partial{\theta^{2}}}(x|\theta,\Sigma)=2\dfrac{\partial{f}}{\partial{\Sigma}}(x|\theta,\Sigma)$ for all $\theta \in \mathbb{R}^{d}$ and $\Sigma \in S_{d}^{++}$. Therefore, for any $\alpha=(\alpha_{1},\alpha_{2})$, we can check that
\begin{eqnarray}
\dfrac{\partial^{|\alpha|}{f}}{\partial{\theta}^{\alpha_{1}}\partial{\Sigma}^{\alpha_{2}}}=\dfrac{1}{2^{|\alpha_{2}|}}\dfrac{\partial^{|\beta|}{f}}{\theta^{\beta}}, \label{eqn:generaloverfittedGaussiangeneralmultivariatesecond}
\end{eqnarray}
where $\beta_{l}=\alpha_{l}^{1}+\mathop {\sum }\limits_{j=1}^{d}{\alpha_{lj}^{2}}+\mathop {\sum }\limits_{j=1}^{d}{\alpha_{jl}^{2}}$ for all $1 \leq l \leq d$, which means $|\beta|=|\alpha_{1}|+2|\alpha_{2}|$. This equality means that we can convert all the derivatives involving $\Sigma$ to the derivatives only respect to $\theta$. Therefore, we can rewrite \eqref{eqn:generaloverfittedGaussiangeneralmultivariateone} as follows:
\begin{eqnarray}
p_{G_{n}}(x)-p_{G_{0}}(x) & = & \mathop {\sum }\limits_{i=1}^{k_{0}}{\mathop {\sum }\limits_{j=1}^{s_{i}}{p_{ij}^{n}{\mathop {\sum }\limits_{|\beta| \geq 1}{\dfrac{(\Delta \theta_{ij}^{n})^{\alpha_{1}}(\Delta \Sigma_{ij}^{n})^{\alpha_{2}}}{2^{|\alpha_{2}|}\alpha_{1}!\alpha_{2}!}}\dfrac{\partial^{|\beta|}{f}}{\theta^{\beta}}(x|\theta_{i}^{0},\Sigma_{i}^{0})}}} \nonumber \\
& + & A_{1}(x) + R_{1}(x) \nonumber \\
& := & A_{1}(x)+B_{1}(x)+R_{1}(x), \label{eqn:generaloverfittedGaussiangeneralmultivariatethird}
\end{eqnarray}
where $\beta$ is defined as in equation \ref{eqn:generaloverfittedGaussiangeneralmultivariatesecond}. 

Now, we proceed to proving part (a) of the theorem.
From the hypothesis for $\overline{r}$, we have non-trivial solutions $(x_{i}^{*},a_{i}^{*},b_{i}^{*})_{i=1}^{k-k_{0}+1}$ for equation \eqref{eqn:generalovefittedGaussianzero} when $r=\overline{r}-1$.  We choose the sequence of probability measures $G_{n}=\mathop {\sum }\limits_{i=1}^{k}{p_{i}^{n}\delta_{(\theta_{i}^{n},\Sigma_{i}^{n})}}$ as $(\theta_{i}^{n})_{1}=(\theta_{1}^{0})_{1}+a_{i}^{*}/n, (\theta_{i}^{n})_{j}=(\theta_{1}^{0})_{j}$ for $2 \leq j \leq d$, $(\Sigma_{i}^{n})_{11}=(\Sigma_{1}^{0})_{11}+2b_{i}^{*}/n^{2}$, $(\Sigma_{i}^{n})_{uv}=(\Sigma_{1}^{0})_{uv}$ for $(u,v) \neq (1,1)$, $p_{i}^{n}=p_{1}^{0}(x_{i}^{*})^{2}/\mathop {\sum }\limits_{j=1}^{k-k_{0}+1}{(x_{j}^{*})^{2}}$ when $1 \leq i \leq k-k_{0}+1$, and $\theta_{i}^{n}=\theta_{i-k+k_{0}}^{0}$, $\Sigma_{i}^{n}=\Sigma_{i-k+k_{0}}^{0}$, $p_{i}^{n}=p_{i-k+k_{0}}^{0}$ when $k-k_{0}+2 \leq i \leq k$. As $n$ is sufficiently large, we still guarantee that $\Sigma_{i}^{n}$ are positive definite matrices as $1 \leq i \leq k-k_{0}+1$. We can check that $W_{1}^{r}(G_{n},G_{0})= \left(\mathop {\sum }\limits_{i=1}^{k-k_{0}+1}{p_{1i}^{n}\left(\dfrac{|a_{i}^{*}|}{n}+\dfrac{|b_{i}^{*}|}{n^{2}}\right)}\right)^{r}>0$ for all $r \geq 1$. Additionally, under this construction, $s_{1}=k-k_{0}+1$, $s_{i}=1$ for all $2 \leq i \leq k_{0}$, $(\Delta \theta_{1j}^{n})_{l}=(\Delta \Sigma_{1j}^{n})_{uv}=0$ for all $1 \leq j \leq s_{1}$, $2 \leq l \leq d$ and  $(u,v) \neq (1,1)$, $\Delta \theta_{ij}^{n}=\vec{0} \in \mathbb{R}^{d}$, $\Delta \Sigma_{ij}^{n}=\vec{0} \in \mathbb{R}^{d \times d}$ for all $k-k_{0}+2 \leq i \leq k$ and $1 \leq j \leq s_{i}$. Now, by choosing $N=\overline{r}$ in \ref{eqn:generaloverfittedGaussiangeneralsecond}, we obtain $A_{1}(x)=0$ and $\mathop {\sup }\limits_{x \in \mathbb{R}^{d}}{|R_{1}(x)|}/W_{r}^{r}(G_{n},G_{0}) \to 0$. Moreover, we can rewrite $B_{1}(x)$ in \eqref{eqn:generaloverfittedGaussiangeneralmultivariatethird} as follows
\begin{eqnarray}
B_{1}(x) &=&  \mathop {\sum }\limits_{i=1}^{k-k_{0}+1}{p_{1i}^{n}\mathop {\sum }\limits_{\gamma=1}^{\overline{r}-1}{\mathop {\sum }\limits_{\alpha_{1}^{1}, \alpha_{11}^{2}}{\dfrac{(\Delta \theta_{1i}^{n})^{\alpha_{1}^{1}}(\Delta \Sigma_{1i}^{n})^{\alpha_{11}^{2}}}{2^{\alpha_{11}^{2}}\alpha_{1}!\alpha_{11}^{2}!}}\dfrac{\partial^{\gamma}{f}}{\partial{\theta_{1}^{\gamma}}}(x|\theta_{1}^{0},\Sigma_{1}^{0})}} \nonumber \\
&+& \mathop {\sum }\limits_{i=1}^{k-k_{0}+1}{p_{1i}^{n}\mathop {\sum }\limits_{\gamma \geq \overline{r}}{\mathop {\sum }\limits_{\alpha_{1}^{1}, \alpha_{11}^{2}}{\dfrac{(\Delta \theta_{1i}^{n})^{\alpha_{1}^{1}}(\Delta \Sigma_{1i}^{n})^{\alpha_{11}^{2}}}{2^{\alpha_{11}^{2}}\alpha_{1}^{1}!\alpha_{11}^{2}!}}\dfrac{\partial^{\alpha}{f}}{\partial{\theta_{1}^{\gamma}}}(x|\theta_{1}^{0},\Sigma_{1}^{0})}} \nonumber \\
&:= & \mathop {\sum }\limits_{\gamma=1}^{\overline{r}-1}{B_{\gamma,n}\dfrac{\partial^{\gamma}{f}}{\partial{\theta_{1}}^{\gamma}}(x|\theta_{1}^{0},\Sigma_{1}^{0})}+\mathop {\sum }\limits_{\gamma \geq \overline{r}}{C_{\gamma,n}\dfrac{\partial^{\gamma}{f}}{\partial{\theta_{1}}^{\gamma}}(x|\theta_{1}^{0},\Sigma_{1}^{0})}. \nonumber
\end{eqnarray}
where $\gamma= \alpha_{1}^{1}+2\alpha_{11}^{2}$. From the formation of $G_{n}$, for each $1 \leq \gamma \leq \overline{r}-1$,
\begin{eqnarray}
B_{\gamma,n} =\dfrac{1}{Cn^{\gamma}} \mathop {\sum }\limits_{i=1}^{k-k_{0}+1}{(x_{i}^{*})^{2} \mathop {\sum }\limits_{\alpha_{1}^{1}+2\alpha_{11}^{2}=\alpha}{\dfrac{(a_{i}^{*})^{\alpha_{1}^{1}}(b_{i}^{*})^{\alpha_{11}^{2}}}{\alpha_{1}^{1}!\alpha_{11}^{2}!}}} = 0, \nonumber
\end{eqnarray}
where $C=\mathop {\sum }\limits_{i=1}^{k-k_{0}+1}{(x_{i}^{*})^{2}}$. As a consequence, $B_{\gamma,n}/W_{r}^{r}(G_{n},G_{0})=0$ for all $1 \leq \gamma \leq \overline{r}-1$. Similarly, for each $\gamma \geq \overline{r}$,
\begin{eqnarray}
C_{\gamma,n}/W_{r}^{r}(G_{n},G_{0})=An^{2r-\gamma}/\left(\mathop {\sum }\limits_{i=1}^{k-k_{0}+1}{p_{1i}^{0}(n|a_{i}^{*}|+|b_{i}^{*}|)}\right)^{r}\to 0, \nonumber
\end{eqnarray}
where $A=\mathop {\sum }\limits_{\substack{\alpha_{1}^{1}+2\alpha_{11}^{2}=\gamma \\ \alpha_{1}^{1}+\alpha_{11}^{2} \leq \overline{r}-1}}{\dfrac{(a_{i}^{*})^{\alpha_{1}^{1}}(b_{i}^{*})^{\alpha_{11}^{2}}}{\alpha_{1}^{1}!\alpha_{11}^{2}!}}$ and the last result is due to $r < \overline{r}$. From now, it is straightforward to extend this argument to address the Hellinger distance of
mixture densities in the same way as the proof for the case $d=1$.\\

\noindent
We now turn to part (b). It suffices to show that \eqref{eqn:generaloverfittedGaussiansecond} holds.
Assume by contrary that it does not hold. Follow the same argument as that of Theorem \ref{theorem-secondorder}, we can find a sequence $G_{n}=\mathop {\sum }\limits_{i=1}^{k_{0}}{\mathop {\sum }\limits_{j=1}^{s_{i}}{p_{ij}^{n}\delta_{(\theta_{ij}^{n},\Sigma_{ij}^{n})}}} \in \mathcal{O}_{k,c_{0}}(\Theta \times \Omega) \to G_{0}$ in $W_{\overline{r}}$ as $n \to \infty$ and $G_{n}$ have exactly $k^{*}$ support points where $k_{0} \leq k^{*} \leq k$. Additionally, $(p_{ij}^{n},\theta_{ij}^{n},\Sigma_{ij}^{n}) \to (p_{i}^{0},\theta_{i}^{0},\Sigma_{i}^{0})$ for all $1 \leq i \leq k_{0}$ and $1 \leq j \leq s_{i} \leq k-k_{0}+1$. Denote
\begin{eqnarray}
d(G_{n},G_{0})=\mathop {\sum }\limits_{i=1}^{k_{0}}{\mathop {\sum }\limits_{j=1}^{s_{i}}{p_{ij}^{n}(|\Delta \theta_{ij}^{n}|^{\overline{r}}+|\Delta \Sigma_{ij}^{n}|^{\overline{r}})}}+\mathop {\sum }\limits_{i=1}^{k_{0}}{|p_{i.}^{n}-p_{i}^{0}|}, \nonumber
\end{eqnarray}
As we point out in the proof of Theorem \ref{theorem-secondorder}, the assumption $(p_{G_{n}}(x)-p_{G_{0}}(x))/W_{\overline{r}}^{\overline{r}}(G_{n},G_{0}) \to 0$ for all $x \in \mathbb{R}$ leads to $(p_{G_{n}}(x)-p_{G_{0}}(x))/d(G_{n},G_{0}) \to 0$ for all $x \in \mathbb{R}$. Now, by combining this fact with \eqref{eqn:generaloverfittedGaussiangeneralsecond} and choosing $N=\overline{r}$, we obtain
\begin{eqnarray}
(A_{1}(x)+B_{1}(x)+R_{1}(x))/d(G_{n},G_{0}) \to 0. \label{eqn:generaloverfittedGaussiangeneralmultivariatefour}
\end{eqnarray}
Now, $A_{1}(x)/d(G_{n},G_{0})$, $B_{1}/d(G_{n},G_{0})$ are just the linear combination of elements of $\dfrac{\partial^{|\beta|}{f}}{\partial{\theta}^{\beta}}(x|\theta,\Sigma)$ where $\beta$ is defined in equation \eqref{eqn:generaloverfittedGaussiangeneralmultivariatesecond}, i.e  $\beta_{l}=\alpha_{l}^{1}+\mathop {\sum }\limits_{j=1}^{d}{\alpha_{lj}^{2}}+\mathop {\sum }\limits_{j=1}^{d}{\alpha_{jl}^{2}}$ for all $1 \leq l \leq d$, $|\beta|=|\alpha_{1}|+2|\alpha_{2}|$, and $|\alpha_{1}|+|\alpha_{2}| \leq \overline{r}$. Therefore, it implies that $0 \leq |\beta| \leq 2\overline{r}$, which is the range of all possible values of $|\beta|$. Denote $E_{\beta}(\theta,\Sigma)$ to be the corresponding coefficient of $\dfrac{\partial^{|\beta|}{f}}{\partial{\theta}^{\beta}}(x|\theta,\Sigma)$. Assume that $E_{\beta}(\theta_{i}^{0},\Sigma_{i}^{0}) \to 0$ for all $1 \leq i \leq k_{0}$ and $0 \leq |\beta| \leq 2\overline{r}$ as $n \to \infty$. Using the result from \eqref{eqn:generaloverfittedGaussiangeneralsecond}, the specific formula for $E_{\beta}(\theta_{i}^{0},\Sigma_{i}^{0})$ as $|\beta| \geq 1$ is
\begin{eqnarray}
E_{\beta}(\theta_{i}^{0},\Sigma_{i}^{0})=\left[\mathop {\sum }\limits_{j=1}^{s_{i}}{p_{ij}^{n}\mathop {\sum }\limits_{\alpha_{1},\alpha_{2}}{\dfrac{(\Delta \theta_{ij}^{n})^{\alpha_{1}}(\Delta \Sigma_{ij}^{n})^{\alpha_{2}}}{2^{|\alpha_{2}|}\alpha_{1}!\alpha_{2}!}}}\right]/d(G_{n},G_{0}). \nonumber
\end{eqnarray}
where $\alpha_{1},\alpha_{2}$ satisfies $\alpha_{l}^{1}+\mathop {\sum }\limits_{j=1}^{d}{\alpha_{lj}^{2}}+\mathop {\sum }\limits_{j=1}^{d}{\alpha_{jl}^{2}}=\beta_{l}$ for all $1 \leq l \leq d$.

By taking the summation of all $|E_{0}(\theta_{i}^{0},\Sigma_{i}^{0})|$, i.e $\beta=\vec{0}$, we get 
$\mathop {\sum }\limits_{i=1}^{k_{0}}{|p_{i.}^{n}-p_{i}^{0}|}/d(G_{n},G_{0}) \to 0  \ \ \text{as } \ n \to \infty$. As a consequence, we get
\begin{eqnarray}
\mathop {\sum }\limits_{i=1}^{k_{0}}{\mathop {\sum }\limits_{j=1}^{s_{i}}{p_{ij}^{n}(||\Delta \theta_{ij}^{n}||^{\overline{r}}+||\Delta \Sigma_{ij}^{n}||^{\overline{r}})}}/d(G_{n},G_{0}) \to 1 \ \ \text{as } \ n \to \infty. \nonumber
\end{eqnarray}
As $||.||$ and $||.||_{\overline{r}}$ are equivalent, the above result also implies that
\begin{eqnarray}
\mathop {\sum }\limits_{i=1}^{k_{0}}{\mathop {\sum }\limits_{j=1}^{s_{i}}{p_{ij}^{n}(||\Delta \theta_{ij}^{n}||_{\overline{r}}^{\overline{r}}+||\Delta \Sigma_{ij}^{n}||_{\overline{r}}^{\overline{r}})}}/d(G_{n},G_{0}) \not \to 0 \ \ \text{as } \ n \to \infty. \nonumber
\end{eqnarray}
Therefore, we can find an index $1 \leq i^{*} \leq d$ such that
\begin{eqnarray}
\mathop {\sum }\limits_{j=1}^{s_{i^{*}}}{p_{i^{*}j}^{n}(||\Delta \theta_{i^{*}j}^{n}||_{\overline{r}}^{\overline{r}}+||\Delta \Sigma_{i^{*}j}^{n}||_{\overline{r}}^{\overline{r}})}/d(G_{n},G_{0}) \not \to 0.
\end{eqnarray}
Without loss of generality, we assume $i^{*}=1$. There are two cases regarding the above result:
\paragraph{Case 1:} There exists $1 \leq u^{*} \leq d$ and  such that $U_{n}=\mathop {\sum }\limits_{j=1}^{s_{1}}{p_{1j}^{n}(|(\Delta \theta_{1j})_{u^{*}}|^{\overline{r}}+|(\Delta \Sigma_{1j}^{n})_{u^{*}u^{*}}|^{\overline{r}})}/d(G_{n},G_{0}) \not \to 0$. Without loss of generality, we assume $u^{*}=1$. With this result, for any $|\beta| \geq 1$, we obtain
\begin{eqnarray}
F_{\beta}(\theta_{1}^{0},\Sigma_{1}^{0})=\dfrac{E_{\beta}(\theta_{1}^{0},\Sigma_{1}^{0})}{U_{n}}=\dfrac{\mathop {\sum }\limits_{j=1}^{s_{i}}{p_{1j}^{n}\mathop {\sum }\limits_{\alpha_{1},\alpha_{2}}{\dfrac{(\Delta \theta_{1j}^{n})^{\alpha_{1}}(\Delta \Sigma_{1j}^{n})^{\alpha_{2}}}{2^{|\alpha_{2}|}\alpha_{1}!\alpha_{2}!}}}}{\mathop {\sum }\limits_{j=1}^{s_{1}}{p_{1j}(|(\Delta \theta_{1j})_{1}|^{\overline{r}}+|(\Delta \Sigma_{1j}^{n})_{11}|^{\overline{r}})}} \to 0. \nonumber
\end{eqnarray}
Now, we choose $\alpha_{l}^{1}=0$ for all $2 \leq l \leq d$ and $\alpha_{uv}^{2}=0$ for all $(u,v) \neq (1,1)$, then $|\beta|=\alpha_{1}^{1}+2\alpha_{11}^{2}$. Therefore,
\begin{eqnarray}
H_{|\beta|}(\theta_{1}^{0},\Sigma_{1}^{0})=\dfrac{\mathop {\sum }\limits_{j=1}^{s_{i}}{p_{1j}^{n}\mathop {\sum }\limits_{\alpha_{1}^{1},\alpha_{11}^{2}}{\dfrac{(\Delta \theta_{1j}^{n})_{1}^{\alpha_{1}^{1}}(\Delta \Sigma_{1j}^{n})_{11}^{\alpha_{11}^{2}}}{2^{\alpha_{11}^{2}}\alpha_{1}^{1}!\alpha_{11}^{2}!}}}}{\mathop {\sum }\limits_{j=1}^{s_{1}}{p_{1j}(|(\Delta \theta_{1j})_{1}|^{\overline{r}}+|(\Delta \Sigma_{1j}^{n})_{11}|^{\overline{r}})}} \to 0,
\end{eqnarray}
where $\alpha_{1}^{1}+2\alpha_{11}^{2}=|\beta|$ and $1 \leq |\beta| \leq 2\overline{r}$.\\\\
Denote $\overline{p}_{n}=\mathop {\max }\limits_{1 \leq j \leq s_{1}}{\left\{p_{1j}^{n}\right\}}$, $\overline{M}_{n}=\mathop {\max }{\left\{|(\Delta \theta_{11}^{n})_{1}|,\ldots,|(\Delta \theta_{1s_{1}}^{n})_{1})|,|(\Delta \Sigma_{11}^{n})_{11}|^{1/2},\ldots,|(\Delta \Sigma_{1s_{1}}^{n})_{11}|^{1/2}\right\}}$. Since $0 < p_{1j}^{n}/\overline{p}_{n} \leq 1$ for all $1 \leq j \leq s_{1}$, we define $\mathop {\lim }\limits_{n \to \infty}{p_{1j}^{n}/\overline{p}_{n}} = c_{j}^{2}$ for all $1 \leq j \leq s_{i^{*}}$. Similarly, define $\mathop {\lim }\limits_{n \to \infty}{(\Delta \theta_{1j}^{n})_{1}/\overline{M}_{n}}=a_{j}$ and $\mathop {\lim }\limits_{n \to \infty}{(\Delta \Sigma_{1j}^{n})_{11}/\overline{M}_{n}^{2}}=2b_{j}$ for all $1 \leq j \leq s_{1}$.Since $p_{1j}^{n} \geq c_{0}$ for all $1 \leq j \leq s_{1}$, all of $x_{j}^{2}$ differ from 0 and at least one of them equals to 1. Likewise, at least one element of $\left(a_{j},b_{j}\right)_{j=1}^{s_{1}}$ equal to -1 or 1. Now, for $1 \leq |\beta| \leq \overline{r}$, divide both the numerator and denominator of $H_{|\beta|}(\theta_{1}^{0},\Sigma_{1}^{0})$ by $\overline{M}_{n}^{|\beta|}$ and let $n \to \infty$, we obtain the following system of polynomial equations
\begin{eqnarray}
\mathop {\sum }\limits_{j=1}^{s_{1}}{\mathop {\sum }\limits_{\alpha_{1}^{1}+2\alpha_{11}^{2} = |\beta|}{\dfrac{c_{j}^{2}a_{j}^{\alpha_{1}^{1}}b_{j}^{\alpha_{11}^{2}}}{\alpha_{1}^{1}!\alpha_{11}^{2}!}}}=0 \ \ \text{for all } \ 1 \leq |\beta| \leq \overline{r}. \nonumber
\end{eqnarray}
As $2 \leq s_{1} \leq k-k_{0}+1$, the hardest scenario is when $s_{1}=k-k_{0}+1$. However, from the hypothesis, as $s_{1}=k-k_{0}+1$, the above system of polynomial equations does not have non-trivial solution, which is a contradiction. 

\paragraph{Case 2:} There exists $1 \leq u^{*} \neq v^{*} \leq d$ such that $V_{n}=\mathop {\sum }\limits_{j=1}^{s_{1}}{p_{1j}^{n}|(\Delta \Sigma_{1j}^{n})_{u^{*}v^{*}}|^{\overline{r}}}/d(G_{n},G_{0}) \not \to 0$. Without loss of generality, we assume $u^{*}=1, v^{*}=2$. With this result, for any $|\beta| \geq 1$, we obtain
\begin{eqnarray}
F_{\beta}'(\theta_{1}^{0},\Sigma_{1}^{0})=\dfrac{E_{\beta}(\theta_{i}^{0},\Sigma_{i}^{0})}{V_{n}}=\dfrac{\mathop {\sum }\limits_{j=1}^{s_{i}}{p_{1j}^{n}\mathop {\sum }\limits_{\alpha_{1},\alpha_{2}}{\dfrac{(\Delta \theta_{1j}^{n})^{\alpha_{1}}(\Delta \Sigma_{1j}^{n})^{\alpha_{2}}}{2^{|\alpha_{2}|}\alpha_{1}!\alpha_{2}!}}}}{\mathop {\sum }\limits_{j=1}^{s_{1}}{p_{1j}^{n}|(\Delta \Sigma_{1j}^{n})_{12}|^{\overline{r}}}} \to 0. \nonumber
\end{eqnarray}
By choosing $\alpha_{1}=\vec{0} \in \mathbb{N}^{d}$, $\alpha_{uv}^{2}=0$ for all $(u,v) \not \in \left\{(1,2),(2,1)\right\}$, then $|\beta|=\alpha_{12}^{2}+\alpha_{21}^{2}$. Therefore,
\begin{eqnarray}
H_{|\beta|}'(\theta_{1}^{0},\Sigma_{1}^{0})=\dfrac{\mathop {\sum }\limits_{j=1}^{s_{1}}{p_{1j}^{n}\mathop {\sum }\limits_{\alpha_{12}^{2},\alpha_{21}^{2}}{\dfrac{(\Delta \Sigma_{1j}^{n})_{12}^{\alpha_{12}^{2}+\alpha_{21}^{2}}}{2^{|\beta|}\alpha_{12}^{2}!\alpha_{21}^{2}!}}}}{\mathop {\sum }\limits_{j=1}^{s_{1}}{p_{1j}^{n}|(\Delta \Sigma_{1j}^{n})_{12}|^{\overline{r}}}} \to 0. \nonumber 
\end{eqnarray}
Denote $\overline{p}_{n}'=\mathop {\max }\limits_{1 \leq j \leq s_{1}}{\left\{p_{1j}^{n}\right\}}$, $\overline{M}_{n}'=\mathop {\max }\limits_{1 \leq j \leq s_{1}}{\left\{|(\Delta \Sigma_{1j}^{n})_{12}|\right\}}$. Then, we have $p_{1j}^{n}/\overline{p}_{n}' \to (c_{j}^{'})^{2} > 0$ and $(\Delta \Sigma_{1j}^{n})_{12}/\overline{M}_{n}'=d_{j}$ for all $1 \leq j \leq s_{1}$. Again, we have at least one of $d_{j}$ differs from 0. Now, by dividing both the numerator and denominator of $H_{2}'(\theta_{1}^{0},\Sigma_{1}^{0})$ by $(\overline{M}_{n}')^{2}$ and letting $n \to \infty$, we obtain $\mathop {\sum }\limits_{j=1}^{s_{1}}{(c_{j}')^{2}d_{j}^{2}}=0$. This equation implies $d_{j}=0$ for all $1 \leq j \leq d$, which is a contradiction. 

Therefore, at least one of the coefficients $E_{\beta}(\theta_{i}^{0},\Sigma_{i}^{0})$ does not converge to 0 as $n \to \infty$. Now, we denote $m_{n}$ to the maximum of the absolute values of $E_{\beta}(\theta_{i}^{0},\Sigma_{i}^{0})$ where $\beta$ is defined as in equation \eqref{eqn:generaloverfittedGaussiangeneralmultivariatesecond}, $1 \leq i \leq k_{0}$ and let $d_{n}=1/m_{n}$. As $m_{n} \not \to 0$ as $n \to \infty$, $d_{n}$ is uniformly bounded above for all $n$. As $d_{n}|E_{\beta}(\theta_{i}^{0},\sigma_{i}^{0})| \leq 1$, we denote $d_{n}E_{\beta}(\theta_{i}^{0},\Sigma_{i}^{0}) \to \tau_{i\beta}$ where at least one of $\tau_{i\beta}$ differs from 0. Combining these notations with \eqref{eqn:generaloverfittedGaussiangeneralmultivariatefour} we get that for all $x \in \mathbb{R}^{d}$,
\begin{eqnarray}
\dfrac{p_{G_{n}}(x)-p_{G_{0}}(x)}{d(G_{n},G_{0})} \to \mathop {\sum }\limits_{i=1}^{k_{0}}{\mathop {\sum }\limits_{\beta}{\tau_{i\beta}\dfrac{\partial^{|\beta|}{f}}{\partial{\theta}^{\beta}}(x|\theta_{i}^{0},\Sigma_{i}^{0})}} = 0. \nonumber 
\end{eqnarray}
Using the technique we have in the proof of part (a) of Theorem \ref{identifiability-multivariatecharacterization}, it is sufficient to demonstrate the above equation as $d=1$. However, from the result when $d=1$, we have already known that $\tau_{i\beta}=0$ for all $1 \leq i \leq k_{0}$, $0 \leq |\beta| \leq 2\overline{r}$, which is a contradiction. Therefore, the assertion of our theorem follows immediately.

\paragraph{PROOF OF PROPOSITION~\ref{theorem:Gaussianoverfittedbytwo} (Continue)}
The case $k-k_0 = 1$ was shown in Appendix I. Here we consider the 
case $k-k_{0}=2$. As in the argument of case when $k-k_{0}=1$, 
we can find $i^{*} \in \left\{1,2,\ldots,k_{0}+m\right\}$ where $0 \leq m \leq 2$ such that
\begin{eqnarray}
F_{\alpha}'(\theta_{i^{*}}^{0},v_{i^{*}}^{0}) & = & \dfrac{\mathop {\sum }\limits_{j=1}^{s_{i^{*}}}{p_{i^{*}j}^{n}(|\Delta \theta_{i^{*}j}^{n}|^{6}+|\Delta v_{i^{*}j}^{n}|^{6})}}{\mathop {\sum }\limits_{j=1}^{s_{i^{*}}}{p_{i^{*}j}^{n}|\Delta \theta_{i^{*}j}^{n}|^{6}}}F_{\alpha}(\theta_{i^{*}}^{0},v_{i^{*}}^{0}) \nonumber \\
& = & \dfrac{\mathop {\sum }\limits_{j=1}^{s_{i^{*}}}{p_{i^{*}j}^{n}\mathop {\sum }\limits_{n_{1},n_{2}}{\dfrac{(\Delta \theta_{i^{*}j}^{n})^{n_{1}}(\Delta v_{i^*j}^{n})^{n_{2}}}{n_{1}!n_{2}!}}}}{\mathop {\sum }\limits_{j=1}^{s_{i^{*}}}{p_{i^{*}j}^{n}|\Delta \theta_{i^{*}j}^{n}|^{6}}} \to 0, \label{eqn:overfittedGaussianbythird}
\end{eqnarray}
where $n_{1}+2n_{2}=\alpha$ and $1 \leq \alpha \leq 6$. As $i^{*} \in \left\{1,2,\ldots,k_{0}+m\right\}$, we have $i^{*} \in \left\{1,\ldots,k_{0}\right\}$ or $i^{*} \in \left\{k_{0}+1,\ldots,k_{0}+m\right\}$. Firstly, we assume that $i^{*} \in \left\{1,\ldots,k_{0}\right\}$. Without loss of generality, let $i^{*}=1$. 
Since $s_{1} \leq k-k_0+1 = 3$, there are two possibilities.

\paragraph{Case 1.} If $s_{1}\leq 2$, then since $\mathop {\sum }\limits_{j=1}^{s_{1}}{p_{1j}^{n}|\Delta \theta_{1j}^{n}|^{6}} \lesssim \mathop {\sum }\limits_{j=1}^{s_{1}}{p_{1j}^{n}|\Delta \theta_{1j}^{n}|^{4}}$, we also obtain
\begin{eqnarray}
\mathop {\sum }\limits_{j=1}^{s_{i^{*}}}{p_{i^{*}j}^{n}\mathop {\sum }\limits_{n_{1},n_{2}}{\dfrac{(\Delta \theta_{i^{*}j}^{n})^{n_{1}}(\Delta v_{i^*j}^{n})^{n_{2}}}{n_{1}!n_{2}!}}}/\mathop {\sum }\limits_{j=1}^{s_{i^{*}}}{p_{i^{*}j}^{n}|\Delta \theta_{i^{*}j}^{n}|^{4}} \to 0, \nonumber
\end{eqnarray}
which we easily get the contradiction by means of the argument of Case $k-k_{0}=1$.

\paragraph{Case 2.} If $s_{1}=3$, we assume WLOG
that $p_{11}^{n}|\Delta \theta_{11}^{n}| 
\leq p_{12}^{n}|\Delta \theta_{12}^{n}| \leq p_{13}^{n}|\Delta \theta_{13}^{n}|$ for all $n$. With the same argument as that of Case $k-k_{0}=1$, we can get $\Delta \theta_{11}^{n},\Delta \theta_{12}^{n}, \Delta \theta_{13}^{n} \neq 0$ for all $n$. Denote $a_{1} := p_{11}^{n}\Delta \theta_{11}^{n}/p_{13}^{n}\Delta_{13}^{n}  \in [-1,1]$, $a_{2} := p_{12}^{n}\Delta \theta_{12}^{n}/p_{13}^{n} \Delta_{13}^{n} \in [-1,1]$. By dividing both the numerator and denominator of $F_{1}'(\theta_{1}^{0},v_{1}^{0})$ by $p_{13}^{n}\Delta \theta_{13}^{n}$ and letting $n \to \infty$, we obtain $a_{1}+a_{2}=-1$. We have the following cases regarding $p_{11}^{n}/p_{13}^{n}, p_{12}^{n}/p_{13}^{n}$:
\noindent
\paragraph{Case 2.1:} If both $p_{11}^{n}/p_{13}^{n}, p_{12}^{n}/p_{13}^{n} \to \infty$ then $\Delta \theta_{11}^{n}/\Delta \theta_{13}^{n}$, $\Delta \theta_{12}^{n}/\Delta \theta_{13}^{n} \to 0$. Since $\Delta \theta_{11}^{n}, \Delta \theta_{12}^{n}, \Delta \theta_{13}^{n} \neq 0$, we denote $\Delta v_{1i}^{n}=h_{i}^{n}(\Delta \theta_{1i}^{n})^{2}$ for all $1 \leq i \leq 3$. By dividing the numerator and denominator of $F_{i}'(\theta_{1}^{0},v_{1}^{0})$ by $p_{13}^{n}(\Delta \theta_{13}^{n})^{i}$ for all $2 \leq i \leq 6$, we obtain
\begin{eqnarray}
K_{n,1} & = & \dfrac{1}{2}+h_{3}^{n}+\mathop {\sum }\limits_{i=1}^{2}{h_{i}^{n}\dfrac{p_{1i}^{n}(\Delta \theta_{1i}^{n})^{2}}{p_{13}^{n}(\Delta \theta_{13}^{n})^{2}}} \to 0, \nonumber \\
K_{n,2} & = & \dfrac{1}{3!}+h_{3}^{n}+\mathop {\sum }\limits_{i=1}^{2}{\left(\dfrac{1}{3!}+h_{i}^{n}\right)\dfrac{p_{1i}^{n}(\Delta \theta_{1i}^{n})^{3}}{p_{13}^{n}(\Delta \theta_{13}^{n})^{3}}} \to 0, \nonumber \\
K_{n,3} & = & \dfrac{1}{4!}+\dfrac{h_{3}^{n}}{2}+\dfrac{(h_{3}^{n})^{2}}{2}+\mathop {\sum }\limits_{i=1}^{2}{\left(\dfrac{1}{4!}+\dfrac{h_{i}^{n}}{2}+\dfrac{(h_{i}^{n})^{2}}{2}\right)\dfrac{p_{1i}^{n}(\Delta \theta_{1i}^{n})^{4}}{p_{13}^{n}(\Delta \theta_{13}^{n})^{4}}} \to 0, \nonumber \\
K_{n,4} & = & \dfrac{1}{5!}+\dfrac{h_{3}^{n}}{6}+\dfrac{(h_{3}^{n})^{2}}{2}+\mathop {\sum }\limits_{i=1}^{2}{\left(\dfrac{1}{5!}+\dfrac{h_{i}^{n}}{6}+\dfrac{(h_{i}^{n})^{2}}{2}\right)\dfrac{p_{1i}^{n}(\Delta \theta_{1i}^{n})^{5}}{p_{13}^{n}(\Delta \theta_{13}^{n})^{5}}} \to 0, \nonumber \\
K_{n,5} & = & \dfrac{1}{6!}+\dfrac{h_{3}^{n}}{4!}+\dfrac{(h_{3}^{n})^{2}}{4}+\dfrac{(h_{3}^{n})^{3}}{6}+\mathop {\sum }\limits_{i=1}^{2}{\left(\dfrac{1}{6!}+\dfrac{h_{i}^{n}}{4!}+\dfrac{(h_{i}^{n})^{2}}{4}+\dfrac{(h_{i}^{n})^{3}}{6}\right)\dfrac{p_{1i}^{n}(\Delta \theta_{1i}^{n})^{6}}{p_{13}^{n}(\Delta \theta_{13}^{n})^{6}}} \to 0. \nonumber
\end{eqnarray}
If $|h_{1}^{n}|,|h_{2}^{n}|$,$|h_{3}^{n}| \to \infty$ then $K_{n,3}>1/4!$ as $n$ is sufficiently large, which is a contradiction. Therefore, at least one of them is finite. If either $|h_{1}^{n}|$ or $h_{2}^{n}| \not \to \infty$, then we reduce to the case when $s_{1}=2$, which eventually leads to a contradiction. Therefore, $|h_{1}^{n}|,|h_{2}^{n}| \to \infty$ and $|h_{3}^{n}| \not \to \infty$. Now, $K_{n,3}$ implies that $(h_{i}^{n})^{2}\dfrac{p_{1i}^{n}(\Delta \theta_{1i}^{n})^{4}}{p_{13}^{n}(\Delta \theta_{13}^{n})^{4}} \not \to \infty$ for all $1 \leq i \leq 2$. As $p_{1i}^{n}/p_{13}^{n} \to \infty$ for all $1 \leq i \leq 2$, we obtain $h_{i}^{n}\dfrac{(\Delta \theta_{1i}^{n})^{2}}{(\Delta \theta_{13}^{n})^{2}} \to 0$. Combining these results with $K_{n,4}$ and $K_{n,5}$, we obtain $\dfrac{1}{5!}+\dfrac{h_{3}^{n}}{6}+\dfrac{(h_{3}^{n})^{2}}{2} \to 0$ and $\dfrac{1}{6!}+\dfrac{h_{3}^{n}}{4!}+\dfrac{(h_{3}^{n})^{2}}{4}+ \dfrac{(h_{3}^{n})^{3}}{6} \to 0$, which cannot happen. As a consequence, both $p_{11}^{n}/p_{13}^{n}$ and $p_{12}^{n}/p_{13}^{n} \to \infty$ cannot hold.

\noindent
\paragraph{Case 2.2:} Exactly one of $p_{11}^{n}/p_{13}^{n}, p_{12}^{n}/p_{13}^{n} \to \infty$. If  $p_{11}^{n}/p_{13}^{n} \to \infty$ and $p_{12}^{n}/p_{13}^{n} \not \to \infty$. It implies that $\Delta \theta_{11}^{n}/\Delta \theta_{13}^{n}\to 0$. Denote $p_{12}^{n}/p_{13}^{n} \to c$. If $c>0$ then as $p_{12}^{n}\Delta \theta_{12}^{n}/p_{13}^{n}\Delta \theta_{13}^{n} \to a_{2}$, $\Delta \theta_{12}^{n}/\Delta \theta_{13}^{n} \to a_{2}/c$. From the previous case 3.1, we know that at least one of $|h_{1}^{n}|,|h_{2}^{n}|,h_{3}^{n}|$ will not converge to $\infty$. If $|h_{1}^{n}| \not \to \infty$, then $K_{n,3}$ implies that 
\begin{eqnarray}
\dfrac{1}{4!}+\dfrac{h_{3}^{n}}{2}+\dfrac{(h_{3}^{n})^{2}}{2}+\left(\dfrac{h_{2}^{n}}{2}+\dfrac{(h_{2}^{n})^{2}}{2}\right)\dfrac{p_{12}^{n}(\Delta \theta_{12}^{n})^{4}}{p_{13}^{n}(\Delta \theta_{13}^{n})^{4}} \to 0, \nonumber
\end{eqnarray}
which means that at least one of $|h_{2}^{n}|, |h_{3}^{n}| \not \to \infty$. As $\left|\dfrac{p_{12}^{n}(\Delta \theta_{1i}^{n})^{j}}{p_{13}^{n}(\Delta\theta_{13}^{n})^{j}}\right|\not \to \infty$ for all $1 \leq j \leq 6$, we have both $|h_{2}^{n}|,|h_{3}^{n}| \not \to \infty$. Denote $h_{2}^{n} \to h_{2}$ and $h_{3}^{n} \to h_{3}$. Now, $K_{n,1},K_{n,2}$,$K_{n,3}$, and $K_{n,4}$ yield the following system of polynomial equations
\begin{eqnarray}
\dfrac{1}{2}+h_{3}+\left(\dfrac{1}{2}+h_{2}\right)\dfrac{a_{2}^{2}}{c}=0, \nonumber \\
\dfrac{1}{3!}+h_{3}+\left(\dfrac{1}{3!}+h_{2}\right)\dfrac{a_{2}^{3}}{c^2}=0, \nonumber \\
\dfrac{1}{4!}+\dfrac{h_{3}}{2}+\dfrac{h_{3}^{2}}{2}+\left(\dfrac{1}{4!}+\dfrac{h_{2}}{2}+\dfrac{h_{2}^{2}}{2}\right)\dfrac{a_{2}^4}{c^3}=0, \nonumber \\
\dfrac{1}{5!}+\dfrac{h_{3}}{6}+\dfrac{h_{3}^{2}}{2}+\left(\dfrac{1}{5!}+\dfrac{h_{3}}{6}+\dfrac{h_{3}^{2}}{2}\right)\dfrac{a_{2}^{5}}{c^{4}}=0. \nonumber
\end{eqnarray}
By converting the above equations into polynomial equations and using Groebner bases, we obtain that the bases contains an equation in terms of $c$ with all positive coefficient,which does not admit any solution since $c>0$. Therefore, the above system of polynomial equations does not admit any real solutions $(h_{2},h_{3},c,a_{2})$ where $c>0$. Therefore, the assumption $|h_{1}^{n}| \not \to \infty$ does not hold. As a consequence, $|h_{1}^{n}| \to \infty$. 

Now, if $|h_{2}^{n}| \not \to \infty$ then $K_{n,3}$ demonstrates that $|h_{3}^{n}| \not \to \infty$. Hence, $K_{n,1}$ yields $\left|h_{1}^{n}\dfrac{p_{11}^{n}(\Delta \theta_{11}^{n})^{2}}{p_{13}^{n}(\Delta \theta_{13}^{n})^{2}}\right| \not \to \infty$. As $\Delta \theta_{11}^{n}/\Delta \theta_{13}^{n} \to 0$ and $p_{11}^{n}/p_{13}^{n} \to \infty$, we achieve $h_{1}^{n}(\Delta \theta_{11}^{n})^{i}/p_{13}^{n}(\Delta \theta_{13}^{n})^{i} \to 0$ for all $3 \leq i \leq 6$, $(h_{1}^{n})^{2}p_{11}^{n}(\Delta \theta_{11}^{n})^{i}/p_{13}^{n}(\Delta \theta_{13}^{n})^{i} \to 0$ for all $4 \leq i \leq 6$, and $(h_{1}^{n})^{3}p_{11}^{n}(\Delta \theta_{11}^{n})^{6}/p_{13}^{n}(\Delta \theta_{13}^{n})^{6} \to 0$. With these results, by denoting $h_{2}^{n} \to h_{2}$ and $h_{3}^{n} \to h_{3}$, $K_{n,3},K_{n,4},K_{n,5},K_{n,6}$ yield the following system of polynomial equations
\begin{eqnarray}
\dfrac{1}{2}+h_{3}+\left(\dfrac{1}{2}+h_{2}\right)\dfrac{a_{2}^{2}}{c}=0, \nonumber \\
\dfrac{1}{3!}+h_{3}+\left(\dfrac{1}{3!}+h_{2}\right)\dfrac{a_{2}^{3}}{c^2}=0, \nonumber \\
\dfrac{1}{4!}+\dfrac{h_{3}}{2}+\dfrac{h_{3}^{2}}{2}+\left(\dfrac{1}{4!}+\dfrac{h_{2}}{2}+\dfrac{h_{2}^{2}}{2}\right)\dfrac{a_{2}^4}{c^3}=0, \nonumber \\
\dfrac{1}{5!}+\dfrac{h_{3}}{6}+\dfrac{h_{3}^{2}}{2}+\left(\dfrac{1}{5!}+\dfrac{h_{3}}{6}+\dfrac{h_{3}^{2}}{2}\right)\dfrac{a_{2}^{5}}{c^{4}}=0, \nonumber \\
\dfrac{1}{6!}+\dfrac{h_{3}}{4!}+\dfrac{h_{3}^{2}}{4}+\dfrac{h_{3}^{3}}{6}+\left(\dfrac{1}{6!}+\dfrac{h_{2}}{4!}+\dfrac{h_{2}^{2}}{4}+\dfrac{h_{2}^{3}}{6}\right)\dfrac{a_{2}^{6}}{c^{5}}=0.\nonumber
\end{eqnarray}
We can check again that Groebner bases contains a polynomial of $c$ with all positive coefficients. Therefore, the possibility that $h_{2}^{n}$ is finite does not hold. As a consequence, $|h_{2}^{n}| \to \infty$. However, as both $|h_{1}^{n}|, |h_{2}^{n}| \to \infty$, we get $|h_{3}^{n}| \to \infty$, which is a contradiction. Therefore, $c>0$ cannot happen. It implies that $p_{12}^{n}/p_{13}^{n} \to c=0$. 

If $a_{2} \neq 0$ then $\Delta \theta_{13}^{n}/\Delta \theta_{12}^{n} \to 0$. Since $p_{11}^{n}/p_{12}^{n}, p_{13}^{n}/p_{12}^{n} \to \infty$, $p_{11}^{n}\Delta \theta_{11}^{n}/p_{12}^{n}\Delta\theta_{12}^{n}, p_{13}^{n}\Delta \theta_{13}^{n}/p_{12}^{n}\Delta \theta_{12}^{n}$ are finite, with the same argument as that of Case 3.1, we get the contradiction. Thus, $a_{2}=0$. However, as $\left|\dfrac{p_{11}^{n}\Delta \theta_{11}^{n}}{p_{13}^{n}\Delta \theta_{13}^{n}}\right| \leq \left|\dfrac{p_{12}^{n}\Delta \theta_{12}^{n}}{p_{13}^{n}\Delta \theta_{13}^{n}}\right|$, it implies that $p_{11}^{n}\Delta \theta_{11}^{n}/p_{13}^{n}\Delta \theta_{13}^{n} \to 0$. It follows that $a_{1}+a_{2}=0$, which is a contradiction to the fact that $a_{1}+a_{2}=1$. Overall, the possibility that $p_{11}^{n}/p_{13}^{n} \to \infty$ and $p_{12}^{n}/p_{13}^{n} \not \to \infty$ cannot happen. 

As a consequence, $p_{11}^{n}/p_{13}^{n} \not \to \infty$ and $p_{12}^{n}/p_{13}^{n} \to \infty$. Using the same argument as before, eventually, we get to the case when $p_{11}^{n}/p_{13}^{n} \to 0$ and $a_{1}=0$. If $\Delta \theta_{11}^{n}/\Delta \theta_{13}^{n}$ is finite then $p_{11}^{n}(\Delta \theta_{11}^{n})^{j}/p_{13}^{n}(\Delta \theta_{13}^{n})^{j} \to 0$ for all $1 \leq j \leq 6$. As we also have $p_{12}^{n}(\Delta \theta_{12}^{n})^{j}/p_{13}^{n}(\Delta \theta_{13}^{n})^{j} \to 0$ for all $1 \leq j \leq 6$, $K_{n,1},K_{n,2},K_{n,3}, K_{n,4}$ demonstrate that $|h_{1}^{n}|,|h_{2}^{n}| \to \infty$. However, it also implies that $|h_{3}^{n}| \to \infty$, which is a contradiction. Therefore, $\left|\dfrac{\Delta \theta_{11}^{n}}{\Delta \theta_{13}^{n}}\right| \to \infty$. 

If $h_{2}^{n}$ is finite then at least one of $h_{1}^{n}$ and $h_{3}^{n}$ is finite. First, we assume that $h_{1}^{n}$ is finite. Now, if $p_{11}^{n} (\Delta \theta_{11} ^{n})^{2}/p_{13}^{n}(\Delta \theta_{13}^{n})^{2} \not \to 0$ then $p_{11}^{n}(\theta_{11}^{n})^{j}/p_{13}^{n}(\theta_{13}^{n})^{j}$ becomes infinite for all $j \geq 3$. Consider $K_{n,2}-K_{n,1}$, we achieve $\dfrac{1}{3!}+h_{1}^{n}\to 0$.Similarly, consider $K_{n,4}-K_{n,3}+\dfrac{1}{3}K_{n,2}$, we obtain $\dfrac{1}{5!}+\dfrac{h_{1}^{n}}{6}+\dfrac{(h_{1}^{n})^{2}}{2} \to 0$, which contradicts to $\dfrac{1}{3!}+h_{1}^{n}\to 0$. Therefore, $p_{11}^{n}(\Delta\theta_{11}^{n})^{2}/p_{13}^{n}(\Delta\theta_{13}^{n})^{2} \to 0$. From $K_{n,1}$, it shows that $h_{3}^{n}+\dfrac{1}{2} \to 0$. Combining this result with $K_{n,2}, K_{n,3}, K_{n,4}, K_{n,5}$, we obtain $p_{11}^{n}(\Delta \theta_{11}^{n})^{j}/p_{13}^{n}(\Delta \theta_{13}^{n})^{j}$ are finite for all $2 \leq j \leq 6$. However, as $\Delta \theta_{11}^{n}/\Delta \theta_{13}^{n}$ is infinite, we obtain $p_{11}^{n}(\theta_{11}^{n})^{2}/p_{13}^{n}(\theta_{13}^{n})^{3} \to 0$. Combining it with $K_{n,2}$,we obtain $h_{3}^{n}+\dfrac{1}{3!} \to 0$, which contradicts $h_{3}^{n}+1/2 \to 0$. As a consequence, $h_{1}^{n}$ is not finite, which also implies that $h_{3}^{n}$ is finite. 

However, it means that $p_{11}^{n}(\Delta \theta_{11}^{n})^{j}/p_{13}^{n}(\Delta \theta_{13}^{n})^{j} \to 0$ for all $2 \leq j \leq 6$. If $h_{1}^{n}\dfrac{p_{11}^{n}(\Delta \theta_{11}^{n})^{2}}{p_{13}^{n}(\Delta \theta_{13}^{n})^{2}} \not \to 0$ then $K_{n,2}$ cannot happen as $\Delta \theta_{11}^{n}/\Delta \theta_{13}^{n}$ is infinite. Hence, $h_{1}^{n}\dfrac{p_{11}^{n}(\Delta \theta_{11}^{n})^{2}}{p_{13}^{n}(\Delta \theta_{13}^{n})^{2}} \to 0$, which implies $h_{3}^{n}+1/2 \to 0$. From $K_{n,4}$, since $h_{1}^{n}$ is infinite, we achieve $(h_{1}^{n})^{2}\dfrac{p_{11}^{n}(\Delta \theta_{11}^{n})^{4}}{p_{13}^{n}(\Delta \theta_{13}^{n})^{4}}$ is finite. It also means that $h_{1}^{n}\dfrac{p_{11}^{n}(\Delta \theta_{11}^{n})^{3}}{p_{13}^{n}(\Delta \theta_{13}^{n})^{3}} \to 0$. Combining this result with $K_{n,2}$, we achieve $h_{3}^{n}+\dfrac{1}{3!} \to 0$, which contradicts $h_{3}^{n}+1/2 \to 0$. Thus, the possibility that $h_{2}^{n}$ is finite does not hold. Therefore, $|h_{2}^{n}| \to \infty$. Using the same line of argument as before, we also obtain $h_{1}^{n},h_{3}^{n}$ are infinite, which is a contradiction. As a consequence, case 3.2 cannot hold.

\noindent
\paragraph{Case 2.3:} At least one of $p_{11}^{n}/p_{13}^{n}$ and $p_{12}^{n}/p_{13}^{n} \to 0$ and they are both finite. As $a_{1}+a_{2}=-1$, it means that at least one of $a_{1},a_{2}$ is different from 0. Without loss of generality, we assume $a_{1} \neq 0$. It implies that $p_{12}^{n}\Delta \theta_{12}^{n}/ p_{11}^{n}\Delta \theta_{11}^{n} \to a_{2}/a_{1} \neq \infty$ and $p_{13}^{n}\Delta \theta_{13}^{n}/p_{11}^{n} \Delta \theta_{11}^{n} \to 1/a_{1} \neq \infty$. Since $p_{11}^{n}/p_{13}^{n}$ is finite, $p_{13}^{n}/p_{11}^{n}\not \to 0$. Additionally, if $a_{2}=0$ then $p_{13}^{n}\Delta\theta_{13}^{n}/p_{12}^{n}\Delta \theta_{12}^{n}\to \infty$ and $p_{11}^{n}\Delta\theta_{11}^{n}/p_{12}^{n}\Delta\theta_{12}^{n} \to \infty$, which is a contradiction to $p_{11}^{n}|\Delta \theta_{11}^{n}| \leq p_{12}^{n} |\Delta \theta_{12}^{n}|$. Therefore, $a_{2} \neq 0$. 

If $p_{12}^{n}/p_{11}^{n} \not \to \left\{0,\infty\right\}$ then by dividing the numerator and denominator of $F_{\alpha}'(\theta_{1}^{0},v_{1}^{0})$ by $p_{11}^{n}(\Delta \theta_{11}^{n})^{\alpha}$ for all $1 \leq \alpha \leq 6$ and letting $n \to \infty$, we achieve the scaling system of polynomial equations \eqref{eqn:generalovefittedGaussianzero} when $r=6$, which we have already known that it does not have any soltution. 

If $p_{12}^{n}/p_{11}^{n} \to \infty$ then we can argue in the same way as that of Case 3.2 by dividing both the numerator and denominator of $F_{\alpha}'(\theta_{1}^{0},v_{1}^{0})$ by $p_{11}^{n}(\Delta \theta_{11}^{n})^{\alpha}$ for all $1 \leq \alpha \leq 6$ to get the contradiction. 

If $p_{12}^{n}/p_{11}^{n} \to 0$ then it implies that $p_{11}^{n}/p_{12}^{n} \to \infty$ and $p_{13}^{n}/p_{12}^{n} \to \infty$. Now, we also have $p_{13}^{n}\Delta \theta_{13}^{n}/p_{12}^{n}\Delta \theta_{12}^{n}\to 1/a_{2} \neq \infty$ and $p_{11}^{n}\Delta \theta_{11}^{n}/p_{12}^{n}\Delta\theta_{12}^{n} \to a_{1}/a_{2} \neq \infty$. Therefore, we can argue in the same way as that of Case 3.1 by dividing both the numerator and denominator of $F_{\alpha}'(\theta_{1}^{0},v_{1}^{0})$ by $p_{12}^{n}(\Delta \theta_{11}^{n})^{\alpha}$ to get the contradiction. Therefore, case 3.3 cannot happen.

\noindent
\paragraph{Case 2.4:} Both $p_{11}^{n}/p_{13}^{n}, p_{12}^{n}/p_{13}^{n} \not \to \left\{0,\infty\right\}$. By diving both the numerator and denominator of $F_{\alpha}'(\theta_{1},v_{1})$ by $p_{13}^{n}(\Delta \theta_{13}^{n})^{\alpha}$ for all $1 \leq \alpha \leq 6$, we achieve the scaling system of polynomial equations \eqref{eqn:generalovefittedGaussianzero} when $r=6$, which does not admit any solution.\\

As a consequence, $i^{*} \not \in \left\{1,\ldots,k_{0}\right\}$. Therefore, $i^{*} \in \left\{k_{0}+1,\ldots,k_{0}+m\right\}$. However, since $m \leq 2$, with the observation that when $k_{0}+1 \leq i \leq k_{0}+m$, each support point $(\theta_{i}^{0},v_{i}^{0})$ only has at most 2 points converge to, we can use the same argument as that of Case 1 to get the contradiction. Overall, we get the conclusion of our theorem. 

%%%%%%%%%%%%%%%%%%%%%%%%%%%%%%%%%%%%%%%%%%%%%%
%%%%%%%%%%%%%
%%%%% Proof of non-conformant skew-Gaussian
%%%%%%%%%%%%%
%%%%%%%%%%%%%%%%%%%%%%%%%%%%%%%%%%%%%%%%%%%%%% 

\paragraph{PROOF OF THEOREM~\ref{theorem:nonconformantexactfittedskewnormal}.}

(a) As we have seen the proof of part (b) in Theorem \ref{theorem:exactfittedskewnormal}, 
condition $m_{j}^{0}m_{i}^{0}>0$ plays an important role to get the inequality in \eqref{eqn:exactfittedskewnormalgeneral}
to yield a contradiction. If this condition does not hold, then it is possible that $\mathop {\sum }\limits_{j=s_{1}}^{s_{2}-1}{p_{j}^{n}m_{j}^{0}(\Delta m_{j}^{n})^{2}}/\mathop {\sum }\limits_{j=s_{1}}^{s_{2}-1}{p_{j}^{n}(\Delta m_{j}^{n})^{2}} \to 0$. Therefore, we need a 
special treatment for this situation.  
For the simplicity of our argument later, 
we first consider the case $k^{*}=1$ to illustrate why 
$W_{\overline{s}}^{\overline{s}}$ may not be the best lower bound in general. 
All the notations in this proof are the same as those of part (b) of the proof of Theorem \ref{theorem:exactfittedskewnormal}. 
Going back to Equation \eqref{eqn:exactfittedskewnormalgeneralkey}, we divide our argument into two cases:

\paragraph{Case 1:} If cousin set $I_{s_{1}}$ is conformant, i.e, 
$m_{i}^{0}$ share the same sign for all $i \in I_{s_{1}}$. 
Then, we can proceed the proof in the same fashion as the part following Equation 
\eqref{eqn:exactfittedskewnormalgeneral} in part (b) of the proof of 
Theorem \ref{theorem:exactfittedskewnormal}. 
\paragraph{Case 2:} If cousin set $I_{s_{1}}$ is not conformant, 
from the assumption of part (a) of Theorem \ref{theorem:nonconformantexactfittedskewnormal} and 
$k^{*}=1$, we should have $|I_{s_{1}}|=1$. So, $s_{2}=s_{1}+2$.  
From Case 2 of part (b) of Theorem \ref{theorem:exactfittedskewnormal}, we have $\Delta p_{i}^{n}/d_{\text{new}}(p_{i^{*}}^{n},\theta_{i^{*}}^{n},v_{i^{*}}^{n},m_{i^{*}}^{n})$, $p_{i}^{n}\Delta v_{i}^{n}/d_{\text{new}}(p_{i^{*}}^{n},\theta_{i^{*}}^{n},v_{i^{*}}^{n},m_{i^{*}}^{n})$, $p_{i}^{n}(\Delta \theta_{i}^{n})^{2}/d_{\text{new}}(p_{i^{*}}^{n},\theta_{i^{*}}^{n},v_{i^{*}}^{n},m_{i^{*}}^{n})$,\\ $p_{i}^{n}(v_{i}^{n})^{2}/d_{\text{new}}(p_{i^{*}}^{n},\theta_{i^{*}}^{n},v_{i^{*}}^{n},m_{i^{*}}^{n}) \to 0$ for all $s_{1} \leq i \leq s_{2}-1$. Combining these results with the assumption that $\beta_{2i}^{n} \to 0$, we obtain 
\begin{eqnarray}
\mathop {\sum }\limits_{j=s_{1}}^{s_{2}-1}{p_{j}^{n}\Delta m_{j}^{n}}/d_{\text{new}}(p_{i^{*}}^{n},\theta_{i^{*}}^{n},v_{i^{*}}^{n},m_{i^{*}}^{n}) \to 0. \nonumber
\end{eqnarray}
Since  $U_{n}=\mathop {\sum }\limits_{j=s_{1}}^{s_{2}-1}{p_{j}^{n}(\Delta m_{j}^{n})^{2}}/d_{\text{new}}(p_{i^{*}}^{n},\theta_{i^{*}}^{n},v_{i^{*}}^{n},m_{i^{*}}^{n}) \not \to 0$, we get 
\begin{eqnarray}
Z_{n} :=\mathop {\sum }\limits_{j=s_{1}}^{s_{2}-1}{p_{j}^{n}\Delta m_{j}^{n}}/ \mathop {\sum }\limits_{j=s_{1}}^{s_{2}-1}{p_{j}^{n}(\Delta m_{j}^{n})^{2}} \to 0.\nonumber
\end{eqnarray} 
Without loss of generality, we assume $|\Delta m_{s_{1}+1}^{n}| \geq |\Delta m_{s_{1}}^{n}|$ for infinitely many $n$, which to avoid notational cluttering we also assume it holds for all $n$. Denote $\Delta m_{s_{1}}^{n}/\Delta m_{s_{1}+1}^{n} \to a$. Divide both the numerator and denominator of $Z_{n}$ by $\Delta m_{s_{1}}^{n}$ and let $n \to \infty$, we obtain $p_{s_{1}}^{0}+p_{s_{1}+1}^{0}a=0$. Similarly, from \eqref{eqn:exactfittedskewnormalgeneralkey}, by dividing both the numerator and denominator of $V_{n}/U_{n}$ for $(\Delta m_{s_{1}}^{n})^{2}$, we obtain $p_{s_{1}}^{0}+p_{s_{1}+1}^{0}a^{2}=0$. Therefore, we achieve a system of equations
\begin{eqnarray}
p_{s_{1}}^{0}+p_{s_{1}+1}^{0}a=0, \nonumber \\
p_{s_{1}}^{0}m_{s_{1}}^{0}+p_{s_{1}+1}^{0}m_{s_{1}+1}^{0}a^{2}=0. \nonumber
\end{eqnarray}
This is actually equation \eqref{eqn:noncannonicalexactfittedskewnorma} when $k^{*}=1$ and   $r=2$. Solving the first equation, we obtain $a=-p_{s_{1}}^{0}/p_{s_{1}+1}^{0}$. However, by substituting this result to the second equation, we get $p_{s_{1}}^{0}m_{s_{1}+1}^{0}+p_{s_{1}+1}^{0}m_{s_{1}}^{0}=0$. We have the following two small cases:
\paragraph{Case 2.1:} Assume we have $p_{s_{1}}^{0}m_{s_{1}}^{0}+p_{s_{1}+1}^{0}m_{s_{1}}^{0} \neq 0$, then it means the system of equation does not have any solution. Hence, in this case, the lower bound of $V(p_{G},p_{G_{0}})$ is still $W_{2}^{2}(G,G_{0})$.

\paragraph{Case 2.2:} Assume we have $p_{s_{1}}^{0}m_{s_{1}+1}^{0}+p_{s_{1}+1}^{0}m_{s_{1}}^{0}=0$ . We have two important steps:

\paragraph{Step 1- Construction to show that $V(p_{G},p_{G_{0}})$ cannot be lower bounded by $W_{1}^{r}$ as $r < \overline{s}=3$:}  We construct $G_{n}$ such that both $Z_{n}$ and $U_{n}/V_{n}$ can go to $0$. We choose $G_{n}=\mathop {\sum }\limits_{i=1}^{s_{i}}{p_{i}^{n}\delta_{(\theta_{i}^{n},v_{i}^{n},m_{i}^{n})}}$ such that $(p_{i}^{n},\theta_{i}^{n},v_{i}^{n})=(p_{i}^{0},\theta_{i}^{0},v_{i}^{0})$ for all $1 \leq i \leq k_{0}$, $m_{i}^{n}=m_{i}^{0}$ for all $i \not \in \left\{s_{1},s_{1}+1\right\}$. Choose $\Delta m_{s_{1}}^{n}=-p_{s_{1}+1}^{0}/(p_{s_{1}}^{0}n)$ and $\Delta m_{s_{1}+1}^{n}=1/n$, then we can check that $\mathop {\sum }\limits_{j=s_{1}}^{s_{1}+1}{p_{j}^{n}\Delta m_{j}^{n}}=\mathop {\sum }\limits_{j=s_{1}}^{s_{1}+1}{p_{j}^{n}m_{j}^{0}(\Delta m_{j}^{n})^{2}}=0$. Additionally, for any $1 \leq r <\overline{s}=3$, $W_{1}^{r}(G_{n},G_{0})=(p_{s_{1}}^{0}+p_{s_{1}+1}^{0})^{r}/n^{r}$.  By means of Taylor expansion up to third order, we can check that $\mathop {\sup }\limits_{x \in \mathbb{R}}{|p_{G_{n}}(x)-p_{G_{0}}(x)|}/W_{1}^{r}(G_{n},G_{0}) \to 0$ as $n \to \infty$ for all $x \in \mathbb{R}$. With this choice of $G_{n}$, we also have 
\begin{eqnarray}
V(p_{G_{n}},p_{G_{0}})/W_{1}^{r}(G_{n},G_{0}) \lesssim \int \limits_{(-\delta,\delta)}{|p_{G_{n}}(x)-p_{G_{0}}(x)|}dx/W_{1}^{r}(G_{n},G_{0}) \to 0, \nonumber
\end{eqnarray}
where $\delta$ is sufficiently large constant. Therefore, we achieve that for any $1 \leq r <3$
\begin{eqnarray}
\mathop {\lim }\limits_{\epsilon \to 0}{\mathop {\inf }\limits_{G \in \mathcal{O}_{k}(\Theta \times \Omega)}{\left\{\dfrac{V(p_{G},p_{G_{0}})}{W_{1}^{r}(G,G_{0})}: W_{1}(G,G_{0}) \leq \epsilon \right\}}} = 0. \nonumber
\end{eqnarray}

\paragraph{Step 2 - We show that $V(p_{G},p_{G_{0}}) \gtrsim W_{3}^{3}(G,G_{0})$:} In fact, it is sufficient to demonstrate that
\begin{eqnarray}
\mathop {\lim }\limits_{\epsilon \to 0}{\mathop {\inf }\limits_{G \in \mathcal{O}_{k}(\Theta \times \Omega)}{\left\{\dfrac{V(p_{G},p_{G_{0}})}{W_{3}^{3}(G,G_{0})}: W_{3}(G,G_{0}) \leq \epsilon \right\}}} > 0. \nonumber
\end{eqnarray}
Now, by assuming the contrary and carrying out the same argument as the proof of part (b) of Theorem \ref{theorem:exactfittedskewnormal} with Taylor expansion go up to third order, we can see that Case 1 and Case 3 of part (b) still applicable to the third order, i.e yield the contradiction, because they are not affected by the non-conformant conditions. Now, Case 2 will yield us the following results 
\begin{eqnarray}
\mathop {\sum }\limits_{j=s_{1}}^{s_{2}-1}{p_{j}^{n}\Delta m_{j}^{n}}/\mathop {\sum }\limits_{j=s_{1}}^{s_{2}-1}{p_{j}^{n}|\Delta m_{j}^{n}|^{3}} \to 0, \nonumber \\
\mathop {\sum }\limits_{j=s_{1}}^{s_{2}-1}{p_{j}^{n}m_{j}^{0}(\Delta m_{j}^{n})^{2}}/\mathop {\sum }\limits_{j=s_{1}}^{s_{2}-1}{p_{j}^{n}|\Delta m_{j}^{n}|^{3}} \to 0, \nonumber \\
\mathop {\sum }\limits_{j=s_{1}}^{s_{2}-1}{p_{j}^{n}(\Delta m_{j}^{n})^{3}}/\mathop {\sum }\limits_{j=s_{1}}^{s_{2}-1}{p_{j}^{n}|\Delta m_{j}^{n}|^{3}} \to 0, \nonumber \\
\mathop {\sum }\limits_{j=s_{1}}^{s_{2}-1}{p_{j}^{n}(m_{j}^{0})^{2}(\Delta m_{j}^{n})^{3}}/\mathop {\sum }\limits_{j=s_{1}}^{s_{2}-1}{p_{j}^{n}|\Delta m_{j}^{n}|^{3}} \to 0. \nonumber
\end{eqnarray}
Remind that $s_{2}=s_{1}+2$ and $\Delta m_{s_{1}}^{n}/\Delta m_{s_{1}+1}^{n} \to a$. By dividing both the numetor and denominator of first above result by $\Delta m_{s_{1}}^{n}$, second above result by $(\Delta m_{s_{1}}^{n})^{2}$, and third and fourth above result by $(\Delta m_{s_{1}}^{n})^{3}$, we obtain the following system of equations
\begin{eqnarray}
p_{s_{1}}^{0}+p_{s_{1}+1}^{0}a=0, \nonumber \\
p_{s_{1}}^{0}m_{s_{1}}^{0}+p_{s_{1}+1}^{0}m_{s_{1}+1}^{0}a^{2}=0, \nonumber \\
p_{s_{1}}^{0}+p_{s_{1}+1}^{0}a^{3}=0, \nonumber \\
p_{s_{1}}^{0}(m_{s_{1}}^{0})^{2}+p_{s_{1}+1}^{0}(m_{s_{1}+1}^{0})^{2}a^{3}=0. \nonumber
\end{eqnarray}
As $(p_{s_{1}}^{0},m_{s_{1}}^{0}) \neq (p_{s_{1}+1}^{0},-m_{s_{1}^{0}+1})$, the above system of equations does not admit any solution, which is a contradiction. Therefore, our assertion follows immediately.

\paragraph{General argument for $k^{*}$:} Now, for general case of $k^{*}$, we argue exactly the same way as that of Step 2 of Case 2.2. More specifically, by carrying out Taylor expansion up to $\overline{s}$-th order and using the same argument as the proof of part (b) of Theorem \ref{theorem:exactfittedskewnormal}, Case 1 and Case 3 under $W_{\overline{s}}^{\overline{s}}$ still yield the contradiction. As a consequence, we only need to deal with Case 2. In fact, it leads to the following results:
\begin{eqnarray}
\mathop {\sum }\limits_{j=s_{1}}^{s_{2}-1}{p_{j}^{n}(m_{j}^{0})^{u}(\Delta m_{j}^{n})^{v}}/ \mathop {\sum }\limits_{j=s_{1}}^{s_{2}+1}{p_{j}^{n}|\Delta m_{j}^{n}|^{\overline{s}}} \to 0, \label{eqn:partdgeneralskewnormal}
\end{eqnarray}
for any $v \leq \overline{s}$, $u \leq v$ are all odd numbers when $v$ is even or $0 \leq u \leq v$ are all even numbers when $v$ is odd. Notice that, now $s_{2}-s_{1} \leq k^{*}+1$. Without loss of generality, we assume $|\Delta m_{s_{1}}^{n}|=\mathop {\max }\limits_{s_{1} \leq j \leq s_{2}-1}{|\Delta m_{j}^{n}|}$. Denote $\Delta m_{l}^{n}/\Delta m_{s_{1}}^{n} \to x_{l}$ for all $s_{1}+1 \leq l \leq s_{2}-1$. and $x_{s_{1}}=1$. Then by dividing both the numerator and denominator of $\mathop {\sum }\limits_{j=s_{1}}^{s_{2}+1}{p_{j}^{n}(m_{j}^{0})^{u}(\Delta m_{j}^{n})^{v}}/ \mathop {\sum }\limits_{j=s_{1}}^{s_{2}+1}{p_{j}^{n}|\Delta m_{j}^{n}|^{\overline{s}}}$ by $(\Delta m_{s_{1}}^{n})^{v}$ and let $n \to \infty$, we achieve the following system of polynomial equations
\begin{eqnarray}
\mathop {\sum }\limits_{j=s_{1}}^{s_{2}-1}{p_{j}^{0}(m_{j}^{0})^{u}x_{j}^{v}}=0, \nonumber
\end{eqnarray}
for all $1 \leq v \leq \overline{s}$, $u \leq v$ are all odd numbers when $v$ is even or $0 \leq u \leq v$ are all even numbers when $v$ is odd. Since $s_{2}-s_{1} \leq k^{*}+1$, the hardest case will be when $s_{2}-s_{1}=k^{*}+1$. In this case, the above system of equations becomes system \eqref{eqn:noncannonicalexactfittedskewnorma}. From the hypothesis, we have already known that with that value of $\overline{s}$, the above system of equations does not have any highly non-trivial solution, which is a contradiction. Therefore, the assertion of our theorem follows immediately.\\\\
(b) Without loss of generality, we assume that $(p_{1}^{0},m_{1}^{0})=(p_{2}^{0},-m_{2}^{0})$. Now, we proceed to choose sequence $G_{n}$ as that of Step 1 of Case 2.2 where $s_{1}$ is replaced by $1$. Then we can check that $\mathop {\sum }\limits_{j=1}^{2}{p_{j}^{n}(m_{j}^{0})^{u}(\Delta m_{j}^{n})^{v}}=0$ for all odd number $u \leq v$ when $v$ is even number or for all even number $0 \leq u \leq v$ when $v$ is odd number. Therefore, for any $r \geq 1$, by carrying out Taylor expansion up to $[r]+1$-th order, we can check that $\mathop {\sup }\limits_{x \in \mathbb{R}}{|p_{G_{n}}(x)-p_{G_{0}}(x)|}/W_{1}^{r}(G_{n},G_{0}) \to 0$, thereby leading to $V(p_{G_{n}},p_{G_{0}})/W_{1}^{r}(G_{n},G_{0}) \to 0$. As a consequence, we obtain the conclusion of part (b) of our theorem.

\paragraph{Remark:} As we can see from the case $k^{*}=1$, $W_{3}^{3}$ is a lower bound of $V(p_{G},p_{G_{0}})$ under the condition (S.3), but it is not the best lower bound. More specifically, under the scenario of Case 2.1, $W_{2}^{2}$ is the best lower bound of $V(p_{G},p_{G_{0}})$(also $h(p_{G},p_{G_{0}})$) while under the scenario of Case 2.2, $W_{3}^{3}$ is the best lower bound of $V(p_{G},p_{G_{0}})$( also $h(p_{G},p_{G_{0}})$). It suggests the minimax optimal convergence rate $n^{-1/4}$ under $W_{2}$ distance in Case 2.1 or $n^{-1/6}$ under $W_{3}$ distance in Case 2.2. As $k^{*}$ is bigger, such as $k^{*}=2$, the minimax optimal convergence rate can be $n^{-1/8}$ under $W_{4}$ or $n^{-1/10}$ under $W_{5}$ or so on. These rates just reflect how broad convergence rate behaviors of skew-Gaussian are.

%%%%%%%%%%%%%%%%%%%%%%%%%%%%%%%%%%%%%%%%%%%
%%%%%%%%%%%%
%%%%% Supplementary material to the proof of part (b) of Theorem 4.5
%%%%%%%%%%%%
%%%%%%%%%%%%%%%%%%%%%%%%%%%%%%%%%%%%%%%%%%%
\paragraph{Supplementary arguments for the proof of Theorem \ref{theorem:exactfittedskewnormal}} 
Here, we give additional arguments and detailed calculations for the
proof of Theorem \ref{theorem:exactfittedskewnormal}, which are presented in Appendix I.
\paragraph{Detailed formulae of $A_{n,2}(x)$:}
\begin{eqnarray}
d(G_{n},G_{0})\alpha_{1ji}^{n} &=& \dfrac{2\Delta p_{j}^{n}}{\sigma_{j}^{0}}-\dfrac{p_{j}^{n}\Delta v_{j}^{n}}{(\sigma_{j}^{0})^{3}}-\dfrac{p_{j}^{n}(\Delta \theta_{j}^{n})^{2}}{(\sigma_{j}^{0})^{3}} + \dfrac{3p_{j}^{n}(\Delta v_{j}^{n})^{2}}{4(\sigma_{j}^{0})^{5}}, \nonumber \\
 d(G_{n},G_{0})\alpha_{2ji}^{n} &=& \dfrac{2p_{j}^{n}\Delta \theta_{j}^{n}}{(\sigma_{j}^{0})^{3}}-\dfrac{6p_{j}^{n}\Delta \theta_{j}^{n}\Delta v_{j}^{n}}{(\sigma_{j}^{0})^{5}}, \nonumber \\ 
 d(G_{n},G_{0})\alpha_{3ji}^{n} &=& \dfrac{p_{j}^{n}\Delta v_{j}^{n}}{(\sigma_{j}^{0})^{5}}+ \dfrac{p_{j}^{n}(\Delta \theta_{j}^{n})^{2}}{(\sigma_{j}^{0})^{5}}-\dfrac{3p_{j}^{n}(\Delta v_{j}^{n})^{2}}{2(\sigma_{j}^{0})^{7}}, \nonumber \\
 d(G_{n},G_{0})\alpha_{4ji}^{n} &=& \dfrac{2p_{j}^{n}\Delta \theta_{j}^{n}\Delta v_{j}^{n}}{(\sigma_{j}^{0})^{7}}, \nonumber \\
 d(G_{n},G_{0})\alpha_{5ji}^{n} &=& \dfrac{p_{j}^{n}(\Delta v_{j}^{n})^{2}}{4(\sigma_{j}^{0})^{9}}, \nonumber \\
d(G_{n},G_{0})\beta_{1i}^{n} &=& \mathop {\sum }\limits_{j=s_{i}}^{s_{i+1}-1}{-\dfrac{p_{j}^{n}m_{j}^{0}\Delta \theta_{j}^{n}}{\pi(\sigma_{j}^{0})^{2}}+\dfrac{2p_{j}^{n}m_{j}^{0}\Delta \theta_{j}^{n}\Delta v_{j}^{n}}{\pi(\sigma_{j}^{0})^{4}}}- \dfrac{2p_{j}^{n}\Delta \theta_{j}^{n}\Delta m_{j}^{n}}{\pi(\sigma_{j}^{0})^{2}}, \nonumber \\
d(G_{n},G_{0})\beta_{2i}^{n} &=& \mathop {\sum }\limits_{j=s_{i}}^{s_{i+1}-1}{ -\dfrac{p_{j}^{n}m_{j}^{0}\Delta v_{j}^{n}}{2\pi(\sigma_{j}^{0})^{4}}-\dfrac{p_{j}^{n}((m_{j}^{0})^{3}+2m_{j}^{0})(\Delta \theta_{j}^{n})^{2}}{2\pi(\sigma_{j}^{0})^{4}}} + \dfrac{p_{j}^{n}\Delta m_{j}^{n}}{\pi(\sigma_{j}^{0})^{2}} \nonumber \\
&+& \dfrac{5p_{j}^{n}m_{j}^{0}(\Delta v_{j}^{n})^{2}}{8\pi(\sigma_{i}^{0})^{6}}-\dfrac{p_{j}^{n}\Delta m_{j}^{n}\Delta v_{j}^{n}}{\pi(\sigma_{j}^{0})^{4}}, \nonumber \\
d(G_{n},G_{0})\beta_{3i}^{n}&=&\mathop {\sum }\limits_{j=s_{i}}^{s_{i+1}-1}{\dfrac{p_{j}^{n}(2(m_{j}^{0})^{2}+2)\Delta m_{j}^{n}\Delta \theta_{j}^{n}}{\pi(\sigma_{j}^{0})^{4}}} - \dfrac{p_{j}^{n}((m_{j}^{0})^{3}+2m_{j}^{0})\Delta \theta_{j}^{n}\Delta v_{j}^{n}}{2\pi(\sigma_{j}^{0})^{6}}, \nonumber \\
d(G_{n},G_{0})\beta_{4i}^{)} &=& \mathop {\sum }\limits_{j=s_{i}}^{s_{i+1}-1}{-\dfrac{p_{j}^{n}((m_{j}^{0})^{3}+2m_{j}^{0})(\Delta v_{j}^{n})^{2}}{8\pi(\sigma_{i}^{0})^{8}}-\dfrac{p_{j}^{n}m_{j}^{0}(\Delta m_{j}^{n})^{2}}{2\pi(\sigma_{j}^{0})^{4}}} \nonumber \\
&+& \dfrac{p_{j}^{n}((m_{j}^{0})^{2}+1)\Delta m_{j}^{n}\Delta v_{j}^{n}}{\pi(\sigma_{j}^{0})^{6}}. \nonumber
\end{eqnarray}

\paragraph{Detailed formulae of $A_{n,1}(x)$:}
\begin{eqnarray}
d(G_{n},G_{0})\gamma_{1j}^{n} &=& -\dfrac{p_{j}^{n}\Delta v_{j}^{n}}{2\sqrt{2\pi}(\sigma_{j}^{0})^{3}}-\dfrac{p_{j}^{n}(\Delta \theta_{j}^{n})^{2}}{2\sqrt{2\pi}(\sigma_{j}^{0})^{3}}+\dfrac{3p_{j}^{n}(\Delta v_{j}^{n})^{2}}{8\sqrt{2\pi}(\sigma_{j}^{0})^{5}} - \dfrac{2p_{j}^{n}\Delta \theta_{j}^{n}\Delta m_{j}^{n}}{\pi(\sigma_{j}^{0})^{2}}+\dfrac{\Delta p_{j}^{n}}{\sqrt{2\pi}\sigma_{j}^{0}}, \nonumber \\
d(G_{n},G_{0})\gamma_{2j}^{n} &=& \dfrac{p_{j}^{n}\Delta \theta_{j}^{n}}{\sqrt{2\pi}(\sigma_{j}^{0})^{3}}+ \dfrac{p_{j}^{n}\Delta m_{j}^{n}}{\pi(\sigma_{j}^{0})^{2}}-\dfrac{3p_{j}^{n}\Delta \theta_{j}^{n}\Delta v_{j}^{n}}{\sqrt{2\pi}(\sigma_{j}^{0})^{5}} - \dfrac{p_{j}^{n}\Delta v_{j}^{n}\Delta m_{j}^{n}}{\pi(\sigma_{j}^{0})^{4}}, \nonumber \\
d(G_{n},G_{0})\gamma_{3j}^{n} &=& \dfrac{p_{j}^{n}\Delta v_{j}^{n}}{2\sqrt{2\pi}(\sigma_{j}^{0})^{5}}+\dfrac{p_{j}^{n}(\Delta \theta_{j}^{n})^{2}}{2\sqrt{2\pi}(\sigma_{j}^{0})^{5}}-\dfrac{6p_{j}^{n}(\Delta v_{j}^{n})^{2}}{8\sqrt{2\pi}(\sigma_{j}^{0})^{7}} + \dfrac{2p_{j}^{n}\Delta \theta_{j}^{n}\Delta m_{j}^{n}}{\pi(\sigma_{j}^{0})^{4}}, \nonumber \\
d(G_{n},G_{0})\gamma_{4j}^{n} &=& \dfrac{p_{j}^{n}\Delta \theta_{j}^{n}\Delta v_{j}^{n}}{\sqrt{2\pi}(\sigma_{j}^{0})^{7}}+\dfrac{p_{j}^{n}\Delta v_{j}^{n}\Delta m_{j}^{n}}{\pi(\sigma_{j}^{0})^{6}}, \nonumber \\
d(G_{n},G_{0})\gamma_{5j}^{n} &=& \dfrac{p_{j}^{n}(\Delta v_{j}^{n})^{2}}{8\sqrt{2\pi}(\sigma_{j}^{0})^{9}}. \nonumber
\end{eqnarray}

\paragraph{Additional arguments for Step 1.1:} We divide this step into three further cases:

\paragraph{Case 1.1.1:} If $\theta_{i^{*}}^{n}=\theta_{i^{*}}^{0}$ for infinitely $n$, which without loss of generality, we can assume $\theta_{i^{*}}^{n}=\theta_{i^{*}}^{0}$ for all $n$, then as $C_{3}^{n},C_{5}^{n} \to 0$ as $n \to \infty$, we achieve $p_{i^{*}}^{n}\Delta v_{i^{*}}^{n}/d(p_{i^{*}}^{n},\theta_{i^{*}}^{n},v_{i^{*}}^{n},m_{i^{*}}^{n}) \to 0$ as $n \to \infty$. Combining this result with $C_{1}^{n}$, we get $\Delta p_{i^{*}}^{n}/d(p_{i^{*}}^{n},\theta_{i^{*}}^{n},v_{i^{*}}^{n},m_{i^{*}}^{n}) \to 0$ as $n  \to \infty$. With these results, $C_{2}^{n}$ yields that $p_{i^{*}}^{n}\Delta m_{i^{*}}^{n}/d(p_{i^{*}}^{n},\theta_{i^{*}}^{n},v_{i^{*}}^{n},m_{i^{*}}^{n}) \to 0$ as $n \to \infty$. As a consequence, by summing these terms up, we obtain 
\begin{eqnarray}
1 \lesssim \left(|\Delta p_{i^{*}}^{n}|+|\Delta \theta_{i^{*}}^{n}|+|\Delta v_{i^{*}}^{n}|+|\Delta m_{i^{*}}^{n}|\right) /d(p_{i^{*}}^{n},\theta_{i^{*}}^{n},v_{i^{*}}^{n},m_{i^{*}}^{n}) \to 0, \nonumber
\end{eqnarray}
which is a contradiction.

\paragraph{Case 1.1.2:} If $v_{i^{*}}^{n}=v_{i^{*}}^{0}$ for infinitely $n$, then we also can assume it holds for all $n$. From $C_{2}^{n}$ and $C_{3}^{n}$, we have $p_{i^{*}}^{n}(\Delta \theta_{i^{*}}^{n})^{2}/d(p_{i^{*}}^{n},\theta_{i^{*}}^{n},v_{i^{*}}^{n},m_{i^{*}}^{n}) \to 0$. Therefore, $p_{i^{*}}^{n}\Delta \theta_{i^{*}}^{n}\Delta m_{i^{*}}^{n}/d(p_{i^{*}}^{n},\theta_{i^{*}}^{n},v_{i^{*}}^{n},m_{i^{*}}^{n}) \to 0$. Combining these results with $C_{1}^{n}$, we get $\Delta  p_{i^{*}}^{n}/d(p_{i^{*}}^{n},\theta_{i^{*}}^{n},v_{i^{*}}^{n},m_{i^{*}}^{n}) \to 0$ as $n  \to \infty$. Additionally, by taking square of $C_{2}^{n}$, we obtain $p_{i^{*}}^{n}(\Delta m_{i^{*}}^{n})^{2}/d(p_{i^{*}}^{n},\theta_{i^{*}}^{n},v_{i^{*}}^{n},m_{i^{*}}^{n}) \to 0$ as $n \to \infty$. These results imply that $d(p_{i^{*}}^{n},\theta_{i^{*}}^{n},v_{i^{*}}^{n},m_{i^{*}}^{n})/d(p_{i^{*}}^{n},\theta_{i^{*}}^{n},v_{i^{*}}^{n},m_{i^{*}}^{n}) \to 0$ as $n \to \infty$, which is a contradiction.

\paragraph{Case 1.1.3:} If $m_{i^{*}}^{n}=m_{i^{*}}^{0}$ for infinitely $n$, then we can assume that it holds for all $n$. Combining $C_{2}^{n}$ and $C_{4}^{n}$, we obtain $p_{i^{*}}^{n}\Delta \theta_{i^{*}}^{n}/d(p_{i^{*}}^{n},\theta_{i^{*}}^{n},(v_{i^{*}}^{n})^{2},m_{i^{*}}^{n}) \to 0$ as $n \to \infty$. Combining this result with $C_{3}^{n}$ and $C_{1}^{n}$, we achieve $p_{i^{*}}^{n}\Delta v_{i^{*}}^{n}/d(p_{i^{*}}^{n},\theta_{i^{*}}^{n},v_{i^{*}}^{n},m_{i^{*}}^{n}) \to 0$ and $\Delta p_{i^{*}}^{n}/d(p_{i^{*}}^{n},\theta_{i^{*}}^{n},(v_{i^{*}}^{n})^{2},m_{i^{*}}^{n}) \to 0$ as $n  \to \infty$. This leads to a contradiction as well.

\end{document}